\documentclass{amsart}
\usepackage{amsmath,graphicx,amsfonts,amssymb,amsthm,hyperref,amscd,esint,bbm,verbatim,mathabx,abstract}
\usepackage[margin=1in]{geometry}
\usepackage{mathtools}
\mathtoolsset{showonlyrefs}
\newtheorem{theorem}{Theorem}[section]
\newtheorem{proposition}{Proposition}[section]

\newtheorem{lemma}[theorem]{Lemma}
\title{Infinite time blow-up solutions to the energy critical wave maps equation}
\date{}
\author{Mohandas Pillai} 
\subjclass[2010]{Primary 35L05, 35Q75}
\numberwithin{equation}{section}
\begin{document}
\maketitle

\begin{abstract} We consider the wave maps problem with domain $\mathbb{R}^{2+1}$ and target $\mathbb{S}^{2}$ in the 1-equivariant, topological degree one setting. In this setting, we recall that the soliton is a harmonic map from $\mathbb{R}^{2}$ to $\mathbb{S}^{2}$, with polar angle equal to $Q_{1}(r) = 2 \arctan(r)$. By applying the scaling symmetry of the equation, $Q_{\lambda}(r) = Q_{1}(r \lambda)$ is also a harmonic map, and the family of all such $Q_{\lambda}$ are the unique minimizers of the harmonic map energy among finite energy, 1-equivariant, topological degree one maps. In this work, we construct infinite time blowup solutions along the $Q_{\lambda}$ family. More precisely, for $b>0$, and for all $\lambda_{0,0,b} \in C^{\infty}([100,\infty))$ satisfying, for some $C_{l}, C_{m,k}>0$,
$$\frac{C_{l}}{\log^{b}(t)} \leq \lambda_{0,0,b}(t) \leq \frac{C_{m}}{\log^{b}(t)}, \quad |\lambda_{0,0,b}^{(k)}(t)| \leq \frac{C_{m,k}}{t^{k} \log^{b+1}(t) }, k\geq 1 \quad t \geq 100$$
there exists a wave map with the following properties. If $u_{b}$ denotes the polar angle of the wave map into $\mathbb{S}^{2}$, we have
$$u_{b}(t,r) = Q_{\frac{1}{\lambda_{b}(t)}}(r) + v_{2}(t,r) + v_{e}(t,r), \quad t \geq T_{0}$$ 
where
$$-\partial_{tt}v_{2}+\partial_{rr}v_{2}+\frac{1}{r}\partial_{r}v_{2}-\frac{v_{2}}{r^{2}}=0$$
$$||\partial_{t}(Q_{\frac{1}{\lambda_{b}(t)}}+v_{e})||_{L^{2}(r dr)}^{2}+||\frac{v_{e}}{r}||_{L^{2}(r dr)}^{2} + ||\partial_{r}v_{e}||_{L^{2}(r dr)}^{2} \leq \frac{C}{t^{2} \log^{2b}(t)}, \quad t \geq T_{0}$$
and $$\lambda_{b}(t) = \lambda_{0,0,b}(t) + O\left(\frac{1}{\log^{b}(t) \sqrt{\log(\log(t))}}\right)$$
 \end{abstract}
\tableofcontents
\section{Introduction}
This paper considers the wave maps equation, with domain $\mathbb{R}^{2+1}$ and target $\mathbb{S}^{2}$. This equation is the Euler-Lagrange equation associated to the functional
$$\mathcal{L}(\Phi) = \int_{\mathbb{R}^{2+1}} \langle \partial^{\alpha} \Phi(t,x),\partial_{\alpha}\Phi(t,x) \rangle_{g(\Phi(t,x))} dtdx$$
where $g$ denotes the round metric on $\mathbb{S}^{2}$, and $\Phi: \mathbb{R}^{2+1} \rightarrow \mathbb{S}^{2}$. We will only work with 1-equivariant maps $\Phi$, which we describe by first regarding $\Phi$ as a map into $\mathbb{R}^{3}$ with unit norm, and then writing
$$\Phi_{u}(t,r,\phi) = (\cos(\phi) \sin(u(t,r)), \sin(\phi)\sin(u(t,r)),\cos(u(t,r)))$$
where $(r,\phi)$ are polar coordinates on $\mathbb{R}^{2}$.  Then, the wave maps equation becomes
\begin{equation}\label{wavemaps}-\partial_{tt}u+\partial_{rr}u+\frac{1}{r}\partial_{r}u=\frac{\sin(2 u)}{2r^{2}}\end{equation}
\cite{stz} studied (a more general problem which includes) the Cauchy problem associated to \eqref{wavemaps}, with data $(u_{0},u_{1})$ such that 
$$(x_{1},x_{2}) \mapsto (\frac{x_{1}u_{0}(r)}{r},\frac{x_{2} u_{0}(r)}{r})\in H^{1}_{\text{loc}}(\mathbb{R}^{2}), \quad (x_{1},x_{2}) \mapsto (\frac{x_{1}u_{1}(r)}{r},\frac{x_{2} u_{1}(r)}{r})\in L^{2}_{\text{loc}}(\mathbb{R}^{2})$$ 
We will say that $u$ is a finite energy solution to \eqref{wavemaps} if $u$ is a distributional solution, with $\Phi_{u}\in C^{0}_{t}\dot{H}^{1}(\mathbb{R}^{2})$ and $\partial_{t}\Phi_{u}\in C^{0}_{t}L^{2}(\mathbb{R}^{2})$.  
Define the energy by
\begin{equation}\label{equivariantenergy} E_{\text{WM}}(u,v) = \pi \int_{0}^{\infty} \left(v^{2}+\frac{\sin^{2}(u)}{r^{2}} + \left(\partial_{r}u\right)^{2}\right) r dr\end{equation}
Then, if $u$ solves \eqref{wavemaps}, $E_{\text{WM}}(u,\partial_{t}u)$ is formally independent of time. We recall the ground state soliton, $Q_{1}(r) = 2\arctan(r)$ is a solution to \eqref{wavemaps}, with the property that the family of all $Q_{\lambda}(r) = Q_{1}(r \lambda)$ are the unique minimizers of $E_{\text{WM}}(u,0)$ among finite energy $u$ with $\Phi_{u}$ having topological degree one. In order to state our main theorem, it will also be useful to consider the following non-degenerate energy which will be used to measure perturbations of $Q_{\lambda}$.
\begin{equation} \label{rsquaredpotenergy} E(u,v) = \pi \int_{0}^{\infty}\left(v^{2}+(\partial_{r}u)^{2}+\frac{u^{2}}{r^{2}}\right) r dr\end{equation} 
The quantity $E(u,\partial_{t}u)$ is formally conserved for solutions to the wave equation
\begin{equation} \label{rsquaredpoteqn} -\partial_{tt}u+\partial_{rr}u+\frac{1}{r}\partial_{r}u-\frac{u}{r^{2}}=0\end{equation}
 We consider the problem of constructing 1-equivariant, topological degree one, global, non-scattering solutions to \eqref{wavemaps}, which have energy strictly greater than $E_{\text{WM}}((Q_{1},0))$. The authors of \cite{ckls} classified 1-equivariant topological degree one solutions to the wave maps equation with energy strictly between $E_{\text{WM}}((Q_{1},0))$ and $3 E_{\text{WM}}((Q_{1},0))$. The part of their result which is relevant for this paper is the statement that any such solutions which are global in time admit a generic decomposition into the form 
\begin{equation}\label{udecomp}u=Q_{\frac{1}{\lambda(t)}}+\phi_{L}+\epsilon\end{equation}
where $\phi_{L}$ solves \ref{rsquaredpoteqn}, $\lambda(t) = o(t), \quad t \rightarrow \infty$, and $\epsilon \rightarrow 0$ (in an appropriate sense) as $t \rightarrow \infty$.
There are many possible asymptotic behaviors of $\lambda$ allowed by the above result, and, according to \cite{ckls}, there were no known constructions of solutions of the above form, with $\lambda(t) \rightarrow 0$ or $\lambda(t) \rightarrow \infty$. \\
\\
This paper constructs a family of finite energy solutions to \eqref{wavemaps}, say $\{u_{b}\}_{b>0}$, where each $u_{b}$ can be decomposed as in \eqref{udecomp}, with $\lambda_{b}(t)=\lambda_{0,0,b}(t) + O\left(\frac{1}{\log^{b}(t)\sqrt{\log(\log(t))}}\right)$  (see the main theorem below for the sense in which the $\epsilon$ term for our solution, which is called $v_{e}$, vanishes as $t$ goes to infinity). Here, $\lambda_{0,0,b} \in C^{\infty}([100,\infty))$ satisfies, for some $b, C_{l}, C_{m,k}>0$,
$$\frac{C_{l}}{\log^{b}(t)} \leq \lambda_{0,0,b}(t) \leq \frac{C_{m}}{\log^{b}(t)}, \quad |\lambda_{0,0,b}^{(k)}(t)| \leq \frac{C_{m,k}}{t^{k} \log^{b+1}(t) },k \geq 1 \quad t \geq 100$$ To the author's knowledge, these are the first examples of such solutions of \eqref{wavemaps}. First, we will prove the following theorem, which corresponds to the special case of $\lambda_{0,0,b}(t) = \frac{1}{\log^{b}(t)}$. Solutions with more general $\lambda_{0,0,b}$ can be constructed with a slight modification of the proof of this special case, outlined in the appendix. 
\begin{theorem}\label{mainthm} For each $b>0$, there exists $T_{0}>0$ and a (real-valued) finite energy solution $u_{b}$ to \eqref{wavemaps} for $t \geq T_{0}$, of the form
$$u_{b}(t,r) = Q_{\frac{1}{\lambda_{b}(t)}}(r) + v_{2}(t,r) + v_{e}(t,r)$$
where $\lambda_{b} \in C^{4}([T_{0},\infty))$,
$$E_{\emph{WM}}(u_{b},\partial_{t}u_{b}) < \infty$$ 
$$-\partial_{tt}v_{2}+\partial_{rr}v_{2}+\frac{1}{r}\partial_{r}v_{2}-\frac{v_{2}}{r^{2}}=0, \quad E(v_{2},\partial_{t}v_{2}) < \infty, \quad v_{2} \in C^{\infty}([T_{0},\infty) \times [0,\infty))$$
$$E(v_{e},\partial_{t}\left(Q_{\frac{1}{\lambda_{b}}}+v_{e}\right)) \leq \frac{C}{t^{2} \log^{2b}(t)}, \quad t \geq T_{0}$$
and $$\lambda_{b}(t) = \frac{1}{\log^{b}(t)} + e(t), \quad |e(t)| \leq \frac{C}{\log^{b}(t)\sqrt{\log(\log(t))}},\quad |e^{(j)}(t)| \leq \begin{cases}\frac{C}{t^{j} \log^{b+1}(t) \sqrt{\log(\log(t))}}, \quad j=1,2\\
\frac{C}{t^{j}\log^{b+1}(t)}, \quad j=3,4\end{cases}$$
\end{theorem}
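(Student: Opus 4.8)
The plan is to obtain $u_b$ as a small, decaying perturbation of an explicitly constructed approximate solution $u_{\mathrm{app}}$, with the scale $\lambda_b$ essentially prescribed and pinned down only up to a small modulation correction $e$. For the construction, write $y=r/\lambda(t)$ for a slowly varying profile $\lambda(t)\sim\log^{-b}(t)$ obeying the derivative bounds of the statement (to be adjusted later). Since $Q_{1/\lambda(t)}(r)=Q_1(y)$ solves the spatial part of \eqref{wavemaps} exactly, substituting it into \eqref{wavemaps} leaves an error concentrated at $r\sim\lambda(t)$, of the schematic form $\frac{\ddot\lambda}{\lambda}\Lambda Q(y)+\frac{\dot\lambda^2}{\lambda^2}(\text{explicit profile})$, where $\Lambda Q(y)=yQ_1'(y)=\frac{2y}{1+y^2}$. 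I would remove this error, and the errors it spawns, by an iteration $u_{\mathrm{app}}=Q_{1/\lambda}+\sum_{k\ge1}w_k$, where each $w_k$ solves, in the quasi-static approximation (dropping $\partial_{tt}$),
\begin{equation*}
L_\lambda w_k := -\partial_{rr}w_k - \frac{1}{r}\partial_r w_k + \frac{\cos(2Q_{1/\lambda})}{r^2}w_k = \big(\text{leading-order error produced at stage }k-1\big),
\end{equation*}
with $w_k$ strictly smaller in $t$ than its source. The delicate feature is that $L_\lambda$ has the explicit zero mode $\Lambda Q_{1/\lambda}(r)=\Lambda Q(y)$, which decays like $r^{-1}$ and just fails to be square-integrable in two dimensions; inverting $L_\lambda$ therefore forces logarithmic losses and endows the $w_k$ with slowly-decaying $r^{-1}$-type tails that carry genuine radiation. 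The leading such contribution is split off as a solution $v_2$ of \eqref{rsquaredpoteqn} with finite $E$-energy, chosen to match the tail in the wave zone $r\gtrsim t$; this is the term $v_2$ of the statement.

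\textbf{Modulation.} The non-square-integrability of $\Lambda Q$ in two dimensions governs the whole construction: it obstructs inverting $L_\lambda$ on sources carrying a $\Lambda Q_{1/\lambda}$-component without either localizing — which brings in a factor $\sim\log(t/\lambda)$ from the logarithmically divergent integral $\int^{t/\lambda}(\Lambda Q(y))^2\,y\,dy$ — or correcting the scale, both routes producing $\log t$ factors. This is exactly why the profile $\lambda_{0,0,b}(t)\sim\log^{-b}(t)$, with the derivative bounds in the statement, is the natural admissible choice rendering the iteratively-generated errors summable and small. The genuine modulation parameter is then the correction $e$ in $\lambda_b=\lambda_{0,0,b}+e$, determined in the fixed point so as to annihilate the $\Lambda Q_{1/\lambda_b}$-direction that the error evolution would otherwise feed; it satisfies a second-order modulation ODE with an already-small right-hand side, and integrating that ODE twice in time yields the gain of $\sqrt{\log\log t}$ in $|e|,|\dot e|,|\ddot e|$, while $e^{(3)},e^{(4)}$ inherit only the weaker bound since the two integrations have been expended.

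\textbf{Error equation and fixed point.} With $u_b=u_{\mathrm{app}}+v_e$ — so that $v_e$ absorbs the finitely many explicit corrections beyond $v_2$ together with the genuine small remainder — the equation for $v_e$ reads
\begin{equation*}
-\partial_{tt}v_e + \partial_{rr}v_e + \frac{1}{r}\partial_r v_e - \frac{\cos(2Q_{1/\lambda_b})}{r^2}v_e = \mathcal{E}_{\mathrm{app}} + \mathcal{N}(v_e),
\end{equation*}
where $\mathcal{E}_{\mathrm{app}}$ is the small residual error of $u_{\mathrm{app}}$ and $\mathcal{N}$ collects quadratic-and-higher terms. I would run a continuity/bootstrap argument on a weighted energy of $v_e$ together with its first and second time derivatives — the higher derivatives circumvent the derivative loss intrinsic to energy estimates for wave equations with a potential, which is also why $\lambda_b$ is asked to be $C^4$ — at the level $t^{-2}\log^{-2b}(t)$. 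The analytic input is the factorization of the linearized operator through $\Lambda Q_{1/\lambda_b}$: conjugation by $\Lambda Q_{1/\lambda_b}$ replaces $L_{\lambda_b}$ by a supersymmetric partner with a repulsive $r^{-2}$ potential, on which a Hardy inequality gives coercivity modulo the single $\Lambda Q_{1/\lambda_b}$ direction, and that one direction is exactly what the choice of $e$ eliminates through an orthogonality condition. A contraction argument on $[T_0,\infty)$, for $T_0$ large, then closes the bootstrap, after which one checks a posteriori that $E_{\mathrm{WM}}(u_b,\partial_t u_b)<\infty$.

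\textbf{Main obstacle.} Essentially all the difficulty is in the last step: carrying out the energy estimate for $v_e$ on the \emph{infinite} time interval with no logarithmic loss. The potential $-\cos(2Q_{1/\lambda_b})/r^2$ is time-dependent and not sign-definite, the naive linearized energy degenerates on the $\Lambda Q_{1/\lambda_b}$ direction, and the target rate $t^{-2}\log^{-2b}(t)$ is borderline, so any stray factor of $\log t$ would be fatal. Making the ground-state transform, the Hardy coercivity, the modulation orthogonality and the bookkeeping of the $\log t$ and $\log\log t$ weights all fit together — while tracking the slowly-decaying $r^{-1}$ tails as they propagate into the region $r\sim t$ — is where the real work lies; the formal construction and the modulation analysis are comparatively mechanical.
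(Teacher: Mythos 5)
Your overall architecture (approximate solution by iterated corrections, modulation of the scale, then a perturbative argument for the remainder) parallels the paper, but two of your concrete steps have genuine gaps. First, the quasi-static inversion of $L_\lambda$ does not produce admissible corrections: the soliton error $\partial_t^2 Q_{1/\lambda}\sim -2r\lambda''(t)/(r^2+\lambda^2)$ decays only like $1/r$, and the elliptic solution of $L_\lambda w_1=\partial_t^2Q_{1/\lambda}$ grows like $\lambda''(t)\,r\log r$, so $w_1$ has infinite energy and the hierarchy degrades rather than improves in the wave zone $r\sim t$; a finite-energy free wave $v_2$ cannot simply be ``matched'' to such a tail. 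This is why the paper's corrections $v_1,v_3,v_4,v_5$ are defined as solutions of the $\ell=1$ \emph{wave} equation with zero Cauchy data at infinity, not by elliptic inversion. Relatedly, because $\Lambda Q\notin L^2(r\,dr)$ the orthogonality condition couples $\lambda$ to these wave corrections through time integrals: the resulting modulation equation is a Volterra integral equation of the second kind in $\lambda''$ (kernel $\sim\int_t^\infty\frac{\lambda''(s)}{1+s-t}ds$, comparable in size to the local term), not ``a second-order ODE with an already-small right-hand side,'' and the law $\lambda\sim\log^{-b}t$ together with the $\sqrt{\log\log t}$ gains on $e,e',e''$ comes from a leading-order cancellation against a source produced by the specific choice of data for $v_2$ and from a contraction in a weighted $C^2$ space — not from integrating an ODE twice.

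Second, and decisively, the step you label ``where the real work lies'' is exactly the missing proof. A Gronwall/energy bootstrap against the time-dependent potential loses a factor $\exp\bigl(C\int^t \frac{ds}{s\log s}\bigr)=(\log t)^C$, which you yourself note is fatal at the borderline rate; and the ground-state conjugation plus Hardy coercivity is precisely the device that is problematic in the $1$-equivariant class, since $\Lambda Q\sim 1/r$ is a zero-energy resonance and the relevant norms are log-divergent. Moreover, the principal error of any such ansatz (the interaction $\frac{\cos(2Q_{1/\lambda})-1}{r^2}(v_1+v_2+\cdots)$, the paper's $F_4$) has $L^2$ decay only of order $t^{-2}(\log t)^{-(2b+1-2\alpha b)}$, so a plain Duhamel bound cannot reach the decay needed to close the nonlinear iteration; the orthogonality to $\phi_0(\cdot/\lambda(t))$ must be converted into \emph{time} decay. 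The paper does this by passing to the Krieger--Schlag--Tataru distorted Fourier transform: orthogonality gives vanishing of $\mathcal{F}(\sqrt{\cdot}\,F_4)$ at low frequency, which permits integration by parts in the oscillatory Duhamel integrals despite the singular spectral measure, and the contraction is run in weighted $L^\infty_t L^2_\omega$ norms rather than by an energy bootstrap. Without supplying a substitute for this mechanism, your scheme does not close, so the proposal as written is incomplete at its central step.
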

Regarding more general $\lambda_{0,0,b}$, we have the following. For $b>0$, let $\Lambda_{b}$ be the set of $f \in C^{\infty}([100,\infty))$  such that there exist $C_{l},C_{m},C_{m,k} >0$ with 
$$\frac{C_{l}}{\log^{b}(t)} \leq f(t) \leq \frac{C_{m}}{\log^{b}(t)}, \quad |f^{(k)}(t)| \leq \frac{C_{m,k}}{t^{k} \log^{b+1}(t)}, k\geq 1, \quad t \geq 100$$
\begin{theorem}\label{genthm}
Let $b>0$. For all $\lambda_{0,0,b} \in \Lambda_{b}$, there exists $T_{0}>100$ and a (real-valued) finite energy solution $u_{b}$ to \eqref{wavemaps} for $t \geq T_{0}$, of the form
$$u_{b}(t,r) = Q_{\frac{1}{\lambda_{b}(t)}}(r) + v_{2}(t,r) + v_{e}(t,r)$$
where $\lambda_{b} \in C^{4}([T_{0},\infty))$,
$$E_{\emph{WM}}(u_{b},\partial_{t}u_{b}) < \infty$$ 
$$-\partial_{tt}v_{2}+\partial_{rr}v_{2}+\frac{1}{r}\partial_{r}v_{2}-\frac{v_{2}}{r^{2}}=0, \quad E(v_{2},\partial_{t}v_{2}) < \infty, \quad v_{2} \in C^{\infty}([T_{0},\infty) \times [0,\infty))$$
$$E(v_{e},\partial_{t}\left(Q_{\frac{1}{\lambda_{b}}}+v_{e}\right)) \leq \frac{C}{t^{2} \log^{2b}(t)}, \quad t \geq T_{0}$$
and $$\lambda_{b}(t) = \lambda_{0,0,b}(t) + e(t), \quad |e(t)| \leq \frac{C}{\log^{b}(t)\sqrt{\log(\log(t))}},\quad |e^{(j)}(t)| \leq \begin{cases}\frac{C}{t^{j} \log^{b+1}(t) \sqrt{\log(\log(t))}}, \quad j=1,2\\
\frac{C}{t^{j}\log^{b+1}(t)}, \quad j=3,4\end{cases}$$
\end{theorem}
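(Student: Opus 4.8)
The plan is to reduce Theorem \ref{genthm} to the construction carried out for Theorem \ref{mainthm}, tracking exactly where the specific choice $\lambda_{0,0,b}(t) = \log^{-b}(t)$ was used and verifying that only the qualitative estimates in the class $\Lambda_b$ were actually needed. First I would recall the ansatz from the proof of Theorem \ref{mainthm}: one writes $u_b = Q_{1/\lambda_b(t)} + (\text{corrections}) + v_2 + v_e$, where the corrections are an explicitly constructed, finite list of profiles built from $\lambda_b$, its derivatives, and the fundamental solution of \eqref{rsquaredpoteqn}, designed so that the error of this approximate solution is small; $v_2$ is a free wave absorbing a non-decaying but harmless piece; and $v_e$ is obtained by a fixed-point/energy-estimate argument on the remaining equation. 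The modulation parameter is then split as $\lambda_b = \lambda_{0,0,b} + e$, and $e$ is determined by a final fixed-point argument (an ODE-type modulation equation with a forcing term coming from the inner products of the error against the unstable/zero directions). The key observation is that every estimate on the correction profiles and on the error term was expressed, in the proof of Theorem \ref{mainthm}, in terms of upper bounds on $\lambda_{0,0,b}$, lower bounds on $\lambda_{0,0,b}$, and bounds on the derivatives $\lambda_{0,0,b}^{(k)}$ — precisely the three families of inequalities defining $\Lambda_b$.

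The key steps, in order, are: (i) isolate from the proof of Theorem \ref{mainthm} the finite set of inequalities on $\lambda_{0,0,b}$ that were invoked, and check each is a consequence of membership in $\Lambda_b$ (in particular $\frac{C_l}{\log^b t}\le \lambda_{0,0,b}(t)\le\frac{C_m}{\log^b t}$ controls all the "size of the soliton scale" terms, and $|\lambda_{0,0,b}^{(k)}(t)|\le \frac{C_{m,k}}{t^k\log^{b+1}t}$ controls all terms involving time derivatives of the modulation, e.g. $\dot\lambda_b$, $\ddot\lambda_b$, and the source terms $\lambda_b^2\ddot\lambda_b$, $\lambda_b\dot\lambda_b^2$, etc., which for the model case were computed by differentiating $\log^{-b}$); (ii) re-run the construction of the correction profiles verbatim, now with $\lambda_{0,0,b}$ a general element of $\Lambda_b$, obtaining the same bounds on the approximate-solution error with constants depending only on $b$ and the constants $C_l, C_m, C_{m,k}$; (iii) re-run the $v_e$ fixed-point argument, which is unchanged since it only sees the error bounds; and (iv) re-run the modulation fixed-point for $e$, noting that the contraction is set up in the same weighted spaces and that the forcing term obeys the same bounds, so one again obtains $|e(t)|\le \frac{C}{\log^b t\sqrt{\log\log t}}$ together with the stated derivative bounds on $e^{(j)}$, $j=1,\dots,4$. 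Finally I would check that $\lambda_b = \lambda_{0,0,b}+e$ still lies in (a slightly enlarged version of) $\Lambda_b$ and in $C^4$, closing the argument, and that the finite-energy and $v_2$-regularity conclusions are identical to those of Theorem \ref{mainthm} since their proofs never used the explicit form of $\lambda_{0,0,b}$.

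The main obstacle I anticipate is bookkeeping rather than a new idea: one must make sure that at no point in the original proof a special algebraic identity satisfied by $\log^{-b}(t)$ (for instance a clean relation between $\lambda_{0,0,b}$ and $t\dot\lambda_{0,0,b}$, such as $t\,\frac{d}{dt}\log^{-b}(t) = -b\,\log^{-b-1}(t)$) was used in a way that cannot be replaced by the one-sided inequalities of $\Lambda_b$. The delicate place is the modulation equation for $e$: there the leading behavior of the forcing is tied to $\dot\lambda_{0,0,b}(t)\sim -\frac{b}{t\log^{b+1}t}$, and for a general $f\in\Lambda_b$ one only has $|f'(t)|\le \frac{C_{m,1}}{t\log^{b+1}t}$ without a matching lower bound; I would handle this by checking that the construction only ever needs the \emph{upper} bound on $|\dot\lambda_{0,0,b}|$ (the lower bound on $\lambda_{0,0,b}$ itself suffices to control the coercivity/invertibility of the linearized operator), so that the contraction estimate and the resulting bound on $e$ go through with $\sqrt{\log\log t}$ gain exactly as before. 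Once this single point is verified, the remaining modifications are routine, and this is precisely the "slight modification of the proof" referred to in the statement, with the details deferred to the appendix.
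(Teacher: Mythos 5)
There is a genuine gap: your central claim, that every step of the proof of Theorem \ref{mainthm} uses only the one-sided bounds defining $\Lambda_{b}$, fails at exactly the point you flag as ``delicate'' — and the fix is not a bookkeeping check but a change of the ansatz. In the proof of Theorem \ref{mainthm} the Cauchy data $v_{2,0}$ is an \emph{explicit} Fourier profile ($\widehat{v_{2,0}}(\xi)=c_{b}\chi_{\leq 1/4}(\xi)\log^{-(b-1)}(1/\xi)$, etc.), and the oscillatory-integral computation of $\int_{0}^{t/2}\frac{\sin u}{u\log^{a}(t/u)}\,du$ shows its contribution to the modulation equation is precisely $\frac{4b}{\lambda(t)t^{2}\log^{b}(t)}$, which cancels $-\frac{4}{\lambda(t)}\int_{t}^{\infty}\frac{\lambda_{0,0}''(s)}{1+s-t}\,ds$ to leading order only because $\lambda_{0,0}(t)=\log^{-b}(t)$ exactly. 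For a general $f\in\Lambda_{b}$ (e.g.\ the oscillating example $\frac{2+\sin(\log\log t)}{\log^{b}t}$) the two-sided comparability and derivative bounds do not force $\int_{t}^{\infty}\frac{f''(s)}{1+s-t}\,ds$ to equal $\frac{b}{t^{2}\log^{b}(t)}$ up to $O(t^{-2}\log^{-b-1}(t))$, so keeping the same $v_{2}$ leaves a source in the equation for $e$ of size $t^{-2}\log^{-b}(t)$ rather than the subleading size needed; then $e$ cannot stay in the unit ball of $X$ and the conclusion $\lambda_{b}=\lambda_{0,0,b}+O(\log^{-b}(t)(\log\log t)^{-1/2})$ is lost. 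The paper's appendix resolves this by \emph{redefining} $v_{2}$ as a function of $\lambda_{0,0,b}$: one sets $F(t)=\bigl(4\int_{t}^{\infty}\frac{\lambda_{0,0,b}''(s)\,ds}{1+s-t}\bigr)\psi(t)$ with a smooth cutoff $\psi$, and takes $\widehat{v_{2,0}}(\xi)=-\frac{1}{\pi\xi}\int_{0}^{\infty}F(t)\sin(t\xi)\,dt$, so that by sine-transform inversion the $v_{2}$ inner product reproduces $\frac{4}{\lambda(t)}\int_{t}^{\infty}\frac{\lambda_{0,0,b}''(s)}{1+s-t}\,ds$ exactly, and the near-origin behavior of $v_{2}$ (the analogue of \eqref{v2precisenearorigin}, needed for the $F_{4}$ estimates) is re-derived from the representation $v_{2}(t,r)=-\frac{r}{4\pi}\int_{0}^{\pi}\sin^{2}\theta\,(F(t+r\cos\theta)+F(t-r\cos\theta))\,d\theta$. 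Your proposal contains no analogue of this tailoring of the radiation data to $\lambda_{0,0,b}$, which is the actual content of the ``slight modification.''

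A secondary omission: the class in \eqref{lambdarestr} assumes $\lambda'(t)<0$, and monotonicity of $\lambda$ is used repeatedly in the body of the paper (for instance $\lambda(s)^{\alpha-1}\geq\lambda(t)^{\alpha-1}$ for $s\geq t$ in the $v_{3}$ estimates, and the sign computation establishing the kernel inequality \eqref{kineq} that underlies the resolvent bound \eqref{restimate}). General elements of $\Lambda_{b}$ need not be monotone, so these steps must be re-argued using only the two-sided comparability $\frac{C_{2}}{\log^{b}t}\leq\lambda(t)\leq\frac{C_{3}}{\log^{b}t}$ and, for \eqref{kineq}, a quantitative bound such as $-\lambda_{0}'(-t)(1-\alpha)\lambda_{0}(-t)^{-\alpha}+1\geq\frac{7}{8}$; the paper also replaces $\lambda_{0,1}$ by $\int_{t}^{\infty}\int_{t_{1}}^{\infty}\frac{\lambda_{0,0,b}''(t_{2})\log(\lambda_{0,0,b}(t_{2}))}{\log(t_{2})}\,dt_{2}\,dt_{1}$. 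These adjustments are closer to the routine checking you describe, but they should be stated rather than subsumed under ``re-run verbatim.''
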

\bigskip
\noindent \emph{Remark} 1. Our proof yields more information about the regularity of $v_{e}$ appearing in the main theorem. In particular, we have
$$v_{e}=v_{e,0}+v_{6}$$
where the function $v_{e,0}$ is fairly explicit (but complicated), and $v_{6}$ is constructed with a fixed point argument, but has more Sobolev regularity than what follows from $u$ having finite energy, namely:  if $(r,\theta)$ denote polar coordinates on $\mathbb{R}^{2}$, then
$$(t,r,\theta) \mapsto e^{i \theta} v_{6}(t,r) \in C^{0}_{t}([T_{0},\infty),H^{2}(\mathbb{R}^{2}))$$
$$(t,r,\theta) \mapsto e^{i \theta} \partial_{t}v_{6}(t,r) \in C^{0}_{t}([T_{0},\infty),H^{1}(\mathbb{R}^{2}))$$
\emph{Remark} 2. Theorem \ref{genthm} includes solutions with mildly oscillating $\lambda_{0,0,b}$, such as
$$\lambda_{0,0,b} = \frac{2+\sin(\log(\log(t)))}{\log^{b}(t)}, \quad t \geq 100$$
\emph{Remark} 3. It is expected that the method used in this paper can be extended to allow for the construction of such solutions with $\lambda(t) \rightarrow \infty, \quad t \rightarrow \infty$.\\
\\
Finally, this method should also be applicable to higher equivariance classes, the energy critical Yang-Mills problem in $4$ dimensions with gauge group $SO(4)$, as well as the quintic, focusing semilinear wave equation in $\mathbb{R}^{1+3}$. All of these extensions are work in progress by the author. \\
\\
\emph{Remark} 4. The leading behavior of $\lambda(t)$ is partially motivated by the fact that, for $k=1,2$,
$$\frac{|\lambda^{(k)}(t)|}{\lambda(t)} \leq \frac{C}{t^{k}\log(t)} =o(t^{-k}), \quad t \rightarrow \infty$$
This fact is very important so that certain error terms appearing in this work which involve the ``transferrence operator'' defined in \cite{kst} can be treated perturbatively. In particular, the method of this work can not be directly applied to construct solutions with $\lambda(t)$ being all powers of $t$.\\
\\
In order to understand this work in a larger context, we review previous work regarding dynamical behavior of solutions to the critical wave maps problem with $\mathbb{S}^{2}$ target. Firstly, the works \cite{stthreshold1} and \cite{stthreshold2} (which apply to much more general targets than $\mathbb{S}^{2}$) show that for data with energy strictly less than that of the lowest energy non-trivial harmonic map, one has global well-posedness and scattering. In the 1-equivariant setting, \cite{ckls1} showed that global existence and scattering holds for  smooth, topological degree zero data with energy less than twice that of $Q_{1}$ (which is the appropriate threshold in this setting). An analogous result, but without the equivariance assumption, is implied by \cite{lo}.  Then, \cite{j} constructed, for all $k \geq 2$, $k$ equivariant, topological degree zero, two-bubble solutions with energy exactly equal to 2 times the energy of $Q_{k}(r) := Q_{1}(r^{k})$.  In \cite{jl}, all $k$-equivariant solutions (with $k \geq 2$) of toplogical degree zero, and energy exactly equal to twice the energy of $Q_{k}(r)$ were classified. A classification of 1-equivariant, topological degree 0 solutions with energy equal to twice $E(Q_{1})$ was obtained in \cite{r}, and shows that the dynamical behavior of such solutions can be quite different than in the higher equivariance case. \cite{r} also constructs a finite-time blow-up solution in this setting. The methods used in this work differ significantly from these previous works.\\
\\
1-equivariant, topological degree one, finite time blow-up solutions have been constructed in \cite{kst}, and the work \cite{gk} extended the range of possible blow-up rates of these solutions. The method of construction of the ansatz of this paper differs significantly from that used in \cite{kst}. However, we do eventually use some technical information, most importantly, the distorted Fourier transform, from \cite{kst} as part of the process to complete our ansatz to an exact solution. Analogs of the solutions of \cite{kst} for the $4+1$-dimensional Yang-Mills equation with gauge group $SO(4)$ and the quintic, focusing nonlinear wave equation in $\mathbb{R}^{1+3}$ were also constructed in \cite{kstym} and \cite{kstslw}. The work of \cite{km} studies the stability of the solutions of \cite{kst} and \cite{gk} under certain equivariant perturbations. In addition, \cite{rs} constructs finite-time blow-up, $k-$equivariant solutions with $k \geq 4$, and \cite{rr} constructs finite time blow-up solutions for the $(4+1)$-dimensional Yang-Mills problem with gauge group $SO(4)$, as well as in all equivariance classes for energy critical wave maps. The method of this work is quite different from the methods used in \cite{rr} and \cite{rs}. \\
\\
The work of \cite{bkt} constructs modulated soliton solutions, where $\lambda$ is bounded away from zero and infinity for all time. Some facts from \cite{bkt} about the wave maps equation linearized around $Q_{1}$ will be utilized in this paper, but the ansatz construction is again quite different. Finally, infinite time blow-up and infinite time relaxation solutions to the quintic, focusing, nonlinear wave equation in $\mathbb{R}^{1+3}$ have been constructed in \cite{dk}, but the method of this paper is again quite different.
\subsection{Acknowledgements}
The author thanks his adviser, Daniel Tataru, for suggesting the problem, and many useful discussions. This material is based upon work partially supported by the National Science Foundation under Grant No.  DMS-1800294.
\section{Notation}
We will occasionally use the Hankel transform of order $1$, and it will be denoted as
\begin{equation} \widehat{f}(\xi) = \int_{0}^{\infty} f(r) J_{1}(r \xi) r dr\end{equation}
The main Fourier transform we will use is the distorted Fourier transform of \cite{kst}, which we denote by
\begin{equation} \mathcal{F}(f)(\xi) = \int_{0}^{\infty} \phi(r,\xi) f(r) dr\end{equation}
Briefly, we will make some use of the distorted Fourier transforms of \cite{bkt}, which are denoted by $\mathcal{F}_{H}$ and $\mathcal{F}_{\tilde{H}}$. Finally, we use the same notation as \cite{bkt} for the following norm
\begin{equation} ||f||_{\dot{H}^{1}_{e}}^{2} = ||\partial_{r}f||_{L^{2}(r dr)}^{2} + ||\frac{f}{r}||_{L^{2}(r dr)}^{2}\end{equation}
We denote by $\phi_{0}$, the zero resonance of the elliptic part of the wave equation linearized around $Q_{1}$:
\begin{equation}\phi_{0}(r) = \frac{d}{d\lambda}\Bigr|_{\lambda=1}Q_{\lambda}(r)=\frac{2r}{1+r^{2}}\end{equation}
(This notation for $\phi_{0}$ is the same as that used in \cite{bkt}, but is different from that used in \cite{kst}).

\section{Overview of the proof}
We remind the reader that we will prove theorem \ref{mainthm}, with the extra arguments needed to establish theorem \ref{genthm} summarized in the appendix. The argument used in this paper proceeds in two parts. First, we construct an approximate solution, $u$, to \eqref{wavemaps}. Second, we construct an exact solution to \eqref{wavemaps} which is close to our approximate one. \\
\\
\textbf{Part 1: Constructing the approximate solution} \\
\\
One way to understand the intuition behind our approximate solution is as follows. One could start by looking for an approximate solution to \eqref{wavemaps} which consists of a dynamically rescaled soliton and a radiation field, along with an appropriate compatibility condition between the soliton length scale, and the radiation field. This would correspond to an approximate solution of the form $u_{a}=Q_{\frac{1}{\lambda(t)}}+v_{2}$, where $v_{2}$ is some solution to \eqref{rsquaredpoteqn}, $\lambda(t)$ is not yet chosen, and one can look for an appropriate relation between $v_{2}$ and $\lambda(t)$ so as to make the approximate solution accurate. Two difficulties immediately arise with this procedure. One is that $\partial_{t}Q_{\frac{1}{\lambda(t)}}(r) \not \in L^{2}(r dr)$, which means that $E_{\text{WM}}(u_{a},\partial_{t}u_{a})$ is not finite. Another key difficulty is that the elliptic part of the wave maps equation linearized around $Q_{\frac{1}{\lambda(t)}}$ has a zero resonance, $\phi_{0}(\frac{r}{\lambda(t)})$. So, one would like the principal part of the error term of $u_{a}$ to be orthogonal to $\phi_{0}(\frac{r}{\lambda(t)})$. On the other hand, the soliton error term 
$$\partial_{t}^{2}Q_{\frac{1}{\lambda(t)}}(r)=\frac{-2 r \lambda''(t)}{r^{2}+\lambda(t)^{2}} + \frac{4 r \lambda(t) \lambda'(t)^{2}}{(r^{2}+\lambda(t)^{2})^{2}}$$
does not decay fast enough in $r$ for its inner product with $\phi_{0}(\frac{r}{\lambda(t)})$ to even be defined in the first place. \\
\\
Hence, we start by introducing an additional correction, $v_{1}$, which is independent of $v_{2}$, and whose purpose is both to make the ansatz have finite energy and to eliminate an appropriate principal part of the soliton error term, $\partial_{t}^{2}Q_{\frac{1}{\lambda(t)}}$, for large $r$. More precisely the starting point for our ansatz is 
$$u_{a,1}=Q_{\frac{1}{\lambda(t)}}(r)+v_{1}(t,r)+v_{2}(t,r)$$
Here, $\lambda(t)$ is not yet chosen, and $v_{1}$ solves the nondegenerate wave equation
$$ -\partial_{tt}v_{1}+\partial_{rr}v_{1}+\frac{1}{r}\partial_{r} v_{1} - \frac{v_{1}}{r^{2}}=-2\lambda''(t) \frac{r}{1+r^{2}}$$
with $0$ Cauchy data at infinity. As explained in more detail later on, we do not have the entire soliton error term on the right-hand side of the $v_{1}$ equation, because doing so would be more difficult and also would hide the leading, linear part in the modulation equation for $\lambda$. \\
\\
On the other hand, $v_{2}$ solves \eqref{rsquaredpoteqn} with Cauchy data 
$$v_{2}(0,r)=0, \quad \partial_{t}v_{2}(0,r)=v_{2,0}(r)$$
where $v_{2,0}$ will be a prescribed function depending on a fixed parameter $b>0$. The choices for $v_{2,0}$ and $\lambda(t)$ are closely related. Later, we will choose $\lambda(t)$ to solve a modulation equation involving $v_{2,0}$, thereby correlating the length scale of the dynamically rescaled soliton with the radiation profile.  \\
\\
At this stage, one could choose $\lambda(t)$ by requiring that the principal part of the error term associated to $u_{a,1}$, say $e_{a,1}$, is orthogonal to $\phi_{0}(\frac{r}{\lambda(t)})$. This is not exactly what is done in our argument, since we will need to add additional corrections to $u_{a,1}$ before imposing the orthogonality condition on the principal part of the error term of our final ansatz. Nevertheless, computing $\langle e_{a,1}(t,R \lambda(t)),\phi_{0}(R)\rangle_{L^{2}(R dR)}$ allows one to see, in a simpler context than our final equation for $\lambda(t)$, a relation between the leading order behavior of $\lambda''(t)$ and $v_{2}$. For the purposes of this discussion, the principal part of the $u_{a,1}$ error term is 
$$e_{a,1}(t,r)=\partial_{tt}Q_{\frac{1}{\lambda(t)}}(r)+\frac{2 \lambda''(t) r}{1+r^{2}} + \left(\frac{\cos(2Q_{\frac{1}{\lambda(t)}}(r))-1}{r^{2}}\right)v_{1}(t,r)+ \left(\frac{\cos(2Q_{\frac{1}{\lambda(t)}}(r))-1}{r^{2}}\right)v_{2}(t,r)$$
Using the Hankel transform of order 1 to express $v_{2}$, the contribution to $\langle e_{a,1}(t,R \lambda(t)),\phi_{0}(R)\rangle_{L^{2}(R dR)}$ from the $v_{2}$-related term above is given by
$$  \langle \left(\frac{\cos(2Q_{1}(R))-1}{R^{2}\lambda(t)^{2}}\right)v_{2}(t,R\lambda(t)),\phi_{0}(R)\rangle_{L^{2}(R dR)}=-2  \int_{0}^{\infty}  \sin(t\xi) \widehat{v_{2,0}}(\xi) \xi^{2} K_{1}(\xi \lambda(t))d\xi$$
where $K_{1}$ denotes the modified Bessel function of the second kind. We thus get
\begin{equation}\begin{split}\nonumber&\langle e_{a,1}(t,R \lambda(t)),\phi_{0}(R)\rangle_{L^{2}(R dR)} \\
&= -\frac{4}{\lambda(t)} \int_{t}^{\infty} \frac{\lambda''(s)}{1+s-t} ds -2  \int_{0}^{\infty}  \sin(t\xi) \widehat{v_{2,0}}(\xi) \xi^{2} K_{1}(\xi \lambda(t))d\xi +\frac{4 \lambda''(t) \log (\lambda(t))}{\lambda(t)}+f_{1}(\lambda(t),\lambda'(t),\lambda''(t))\end{split}\end{equation}
We will choose $v_{2,0}$ and the principal part of $\lambda$, denoted by $\lambda_{0,0}$, in order to have a leading order cancellation in the above equation. The term $f_{1}(\lambda(t),\lambda'(t),\lambda''(t))$ turns out to be subleading for all $\lambda$ close (in a $C^{2}$ sense) to the choice of $\lambda_{0,0}$ which we make. Our choice of $v_{2,0}$ gives the following equation
\begin{equation}\begin{split}\nonumber \langle e_{a,1}(t,R \lambda(t)),\phi_{0}(R)\rangle_{L^{2}(R dR)} &=-\frac{4}{\lambda(t)} \int_{t}^{\infty} \frac{\lambda''(s)}{1+s-t} ds + \frac{4 b}{\lambda(t)t^{2}\log^{b}(t)} +\frac{4 \lambda''(t) \log (\lambda(t))}{\lambda(t)}+f_{2}(\lambda(t),\lambda'(t),\lambda''(t))\end{split}\end{equation}
Again, the term $f_{2}(\lambda(t),\lambda'(t),\lambda''(t))$ is subleading, for all $\lambda(t)$ close (in a $C^{2}$ sense) to $\lambda_{0,0}(t)=\frac{1}{\log^{b}(t)}$, which is a leading order solution to 
$$-\frac{4}{\lambda(t)} \int_{t}^{\infty} \frac{\lambda''(s)}{1+s-t} ds + \frac{4 b}{\lambda(t)t^{2}\log^{b}(t)} +\frac{4 \lambda''(t) \log (\lambda(t))}{\lambda(t)}=0$$
The term 
$$\frac{4 b}{\lambda(t)t^{2}\log^{b}(t)}$$
occurs in the above computation, as a consequence of our particular choice of Cauchy data for $v_{2}$. Despite the fact that our final equation for $\lambda(t)$ is not exactly the one given above, the leading behavior of our $\lambda(t)$ is indeed $\lambda_{0,0}(t)$, due to the same cancellation seen above.\\
\\
Even if the principal part of the error term of $u_{a,1}$ is chosen to be orthogonal to $\phi_{0}(\frac{\cdot}{\lambda(t)})$, it does not have enough decay to be treated perturbatively via our methods. So, we need to add three more corrections to $u_{a,1}$, denoted by $v_{3},v_{4}$, and $v_{5}$, in order to achieve an acceptable error term.\\
\\
When the error term resulting from $Q_{\frac{1}{\lambda(t)}}$ and $v_{1}$ is computed in the renormalized spatial coordinate $R$, defined by $r=R\lambda(t)$, it has insufficient decay for large $R$ (because factors of $\frac{1}{\lambda(t)}$ will turn out to correspond to logarithmic growth in $t$, once we choose $\lambda(t)$). On the other hand, if this error term is completely eliminated, the resulting modulation equation for $\lambda$ becomes much more difficult to study. The third correction, $v_{3}$, solves an inhomogeneous version of \eqref{rsquaredpoteqn} with $0$ Cauchy data at infinity in order to correct the problem of this error term for large $R$ while also not complicating the final modulation equation for $\lambda$. In the process of doing this, we introduce a small, positive parameter $\alpha$. \\
\\
The soliton, along with these three corrections can be regarded as the principal components of the ansatz. However, we introduce two more corrections, $v_{4}$ and $v_{5}$, both of which solve inhomogeneous versions of \eqref{rsquaredpoteqn}, with 0 Cauchy data at infinity, in order to improve the error terms resulting from the previous terms in the ansatz. More precisely, $v_{4}$ eliminates a large $r$ portion of linear error terms associated to $v_{k}, \quad k=1,2,3$, as well as an error term arising from the combination of the right-hand sides of the $v_{3}$ and $v_{1}$ equations and $\partial_{t}^{2}Q_{\frac{1}{\lambda(t)}}$. $v_{5}$ eliminates error terms associated to the nonlinear interaction between $v_{k}, \quad k=1,2,3,4$.\\
\\
Our final ansatz 
$$u_{ansatz}= Q_{\frac{1}{\lambda(t)}}(r) + \sum_{k=1}^{5} v_{k}(t,r)$$ 
satisfies
$$E_{\text{WM}}(u_{ansatz},\partial_{t}u_{ansatz}) < \infty$$
$$ -\partial_{tt}u_{ansatz}+\partial_{rr}u_{ansatz}+\frac{1}{r}\partial_{r}u_{ansatz}-\frac{\sin(2 u_{ansatz})}{2r^{2}} = -\left(F_{4}+F_{5}+F_{6}\right)$$ 
where $F_{5}+F_{6}$ has sufficiently fast time decay in sufficiently many norms, and is perturbative:
$$\frac{||F_{5}(t,r)+F_{6}(t,r)||_{L^{2}(r dr)}}{\lambda(t)^{2}} \leq \frac{C}{t^{4} \log^{3b+2N-1}(t)}$$
$$\frac{||F_{5}+F_{6}||_{\dot{H}^{1}_{e}}}{\lambda(t)} \leq \frac{C \log^{6+b}(t)}{t^{35/8}}$$
and $F_{4}$, the principal part of the error term, does not have as fast decay as $F_{5}+F_{6}$, but satisfies
$$\langle F_{4}(t,\cdot),\phi_{0}(\frac{\cdot}{\lambda(t)})\rangle=0$$
and the following estimate (in a symbol-type fashion, see theorem \ref{approxsolnthm} for the precise statement):
$$ |F_{4}(t,r)| \leq \frac{C \mathbbm{1}_{\{r \leq \log^{N}(t)\}} r}{t^{2} \log^{3b+1-2\alpha b}(t) (r^{2}+\lambda(t)^{2})^{2}}+\frac{C \mathbbm{1}_{\{r \leq \frac{t}{2}\}} r}{t^{2} \log^{5b+2N-2}(t) (r^{2}+\lambda(t)^{2})^{2}}$$
where $\alpha$ and $N$ are parameters associated to $v_{3}$ and $v_{4}$, respectively. $\alpha$ is small relative to $b$ and $1$, while $N$ and is large relative to $b$ and 1.\\
\\
The modulation equation for $\lambda$, 
$$\langle F_{4}(t,\cdot),\phi_{0}(\frac{\cdot}{\lambda(t)})\rangle=0$$
has a principal part which is of the form of a Volterra equation of the second kind in the variable $\lambda''$ (with kernel and coefficients weakly depending on $\lambda$). In particular, this equation is of the form
$$-4 \int_{t}^{\infty} \frac{\lambda''(s)}{1+s-t} ds + \frac{4 b}{t^{2}\log^{b}(t)} + 4 \alpha \log(\lambda(t)) \lambda''(t) - 4 \int_{t}^{\infty} \frac{\lambda''(s)}{(\lambda(t)^{1-\alpha}+s-t)(1+s-t)^{3}} ds=f_{3}(\lambda(t),\lambda'(t),\lambda''(t))$$
We emphasize that the leading behavior of $\lambda$ is independent of the small parameter $\alpha>0$, associated to $v_{3}$, and the crucial source term 
$$\frac{4b}{t^{2} \log^{b}(t)}$$
comes from $v_{2}$, and is a consequence of the particular choice of data for $v_{2}$.  The other terms on the left-hand side of the above equation come from $v_{1},v_{3},Q_{\frac{1}{\lambda(t)}}$. In particular, $v_{4}$ and $v_{5}$ do not contribute to the leading order part of the equation for $\lambda(t)$ because the terms contained in $f_{3}$ are subleading, for all $\lambda$ close (in a $C^{2}$ sense) to $\frac{1}{\log^{b}(t)}$. \\
\\
To the knowledge of the author, a modulation equation of the above form is quite different from that of previous works. Moreover, in the context of our ansatz, the Volterra form of the modulation equation seems to be related to the fact that $\frac{d}{d\lambda}\Bigr|_{\lambda=1}Q_{\lambda}(r) \not \in L^{2}(r dr)$. Indeed, the integral operators acting on $\lambda''$ arising in the principal part of the modulation equation come from $v_{1}$ and $v_{3}$, which were introduced to correct soliton related error terms for large $r$.\\
\\
As motivated by the discussion of the simpler ansatz $u_{a,1}$ above, we find an approximate solution to the modulation equation \eqref{modulationfinal} for $\lambda$ of the form
$$\lambda_{0,0}(t) = \frac{1}{\log^{b}(t)}$$
The equation for $\lambda$ is then exactly solved around $\lambda_{0,0}$ with a fixed point argument in a weighted $C^{2}$ space, and we obtain an exact solution
$$\lambda(t) = \frac{1}{\log^{b}(t)} + O\left(\frac{1}{\log^{b}(t) \sqrt{\log(\log(t))}}\right)$$ 
Afterwards, we show that $\lambda$ is in fact a $C^{4}$ function, and prove quantitative estimates on its derivatives. Along the way, we thus obtain estimates on higher time derivatives of the corrections $v_{k}$, whose right-hand sides depend on $\lambda(t)$ (for all $k \neq 2$).\\
\\
\textbf{Part 2: Construction of the exact solution}\\
\\
Once we construct $u_{ansatz}$, we substitute $u=u_{ansatz}+v_{6}$ into \eqref{wavemaps}, thereby obtaining an equation for $v_{6}$. Our goal is to solve this equation for $v_{6}$  perturbatively, with $0$ Cauchy data at infinity. We achieve this, by studying the distorted Fourier transform (in the sense of \cite{kst}), of $v_{6}$. In particular, we (formally) derive the equation for
$$y(t,\omega) = \mathcal{F}(\sqrt{\cdot} v_{6}(t,\cdot \lambda(t)))(\omega \lambda(t)^{2})$$
where we recall that $\mathcal{F}$ denotes the distorted Fourier transform of \cite{kst}. The particular choice of the rescaling used in the definition of $y$ is explained by noting that the equation for $y$ takes the form
$$\partial_{tt}y+\omega y = -\mathcal{F}(\sqrt{\cdot}F(t,\cdot \lambda(t)))(\omega \lambda(t)^{2})+F_{2}(y)(t,\omega) -\mathcal{F}(\sqrt{\cdot}F_{3}(v_{6}(y))(t,\cdot \lambda(t)))(\omega \lambda(t)^{2})$$
for appropriate $F_{2}$, $F_{3}$, where 
$$F=F_{4}+F_{5}+F_{6}$$
is the sum of the error terms of $u_{ansatz}$. In deriving the $y$ equation, a few properties about the distorted Fourier transform, most importantly, the transferrence identity from \cite{kst} are used. The equation for $y$ is solved by showing that the map
$$T(y)(t,\omega) = -\int_{t}^{\infty} \frac{\sin((t-x)\sqrt{\omega})}{\sqrt{\omega}}\left(F_{2}(y)(x,\omega) - \mathcal{F}(\sqrt{\cdot}F(x,\cdot\lambda(x)))(\omega \lambda(x)^{2}) - \mathcal{F}(\sqrt{\cdot}F_{3}(u(y))(x,\cdot\lambda(x)))(\omega \lambda(x)^{2})\right)dx$$
has a fixed point in an appropriate Banach space (whose norm is roughly the sum of  weighted $L^{\infty}_{t}L^{2}_{\omega}$ norms of $y$ and $\partial_{t}y$, see \eqref{znorm} for the precise definition) via the contraction mapping theorem. The most delicate term to estimate is
$$-\int_{t}^{\infty} \frac{\sin((t-x)\sqrt{\omega})}{\sqrt{\omega}}\left(- \mathcal{F}(\sqrt{\cdot}F_{4}(x,\cdot\lambda(x)))(\omega \lambda(x)^{2})\right)dx$$
In order to obtain sufficient estimates on this term (and its time derivative) in appropriate norms, we must utilize the previously discussed orthogonality condition on $F_{4}$. The orthogonality condition on $F_{4}$ is uitilized by noting that it implies that $\mathcal{F}(\sqrt{\cdot}F_{4}(x,\cdot\lambda(x)))(\omega \lambda(x)^{2})$ has a certain degree of vanishing at small frequencies, and this allows us to integrate by parts in the $x$ variable in the integral above. This, combined with the symbol-type nature of the pointwise estimates on $F_{4}$ turn out provide sufficient decay of the integral above in all norms required by the fixed point argument. Since the density of the spectral measure associated to the distorted Fourier transform of \cite{kst} (which appears in the weighted norms of our iteration space) has a singularity at low frequencies, such integration by parts would be impossible without the orthogonality condition. In particular, we can not integrate by parts for the analogous integrals involving $F_{5}+F_{6}$. The faster time decay of the $L^{2}$ and $\dot{H}^{1}_{e}$ norms of this term, relative to $F_{4}$ is what allows it to be perturbative, despite the non-orthogonality to $\phi_{0}(\frac{\cdot}{\lambda(t)})$.
 
\section{Construction of the ansatz}
Fix $b>0$, $0 < \alpha < \text{min}\{\frac{1}{b (1040!)},\frac{1}{1040!}\}$, and $N > (5000!) (b+1)$. We consider \eqref{wavemaps} for $t \geq T_{0}$, where
\begin{equation}\label{T0initialconstraint} T_{0}> 2e^{e^{1000(b+1)}} +T_{0,1}\end{equation}
and is otherwise arbitrary for now, and where
$T_{0,1} > e^{(128000)^{\frac{4}{b}}}+e^{N^{2}}$ is  such that
$$|\frac{d^{j}}{dt^{j}}\left(\frac{\log(\log(t))}{\log^{b+1}(t)}\right)|+|\frac{d^{j}}{dt^{j}}\left(\frac{1}{b \log^{b}(t) \sqrt{\log(\log(t))}}\right)| \leq \frac{1}{200} |\frac{d^{j}}{dt^{j}}\left(\frac{1}{\log^{b}(t)}\right)|, \quad t \geq T_{0,1}, \quad  j=0,1,2$$
$\lambda:[T_{0},\infty) \rightarrow (0,\infty)$ denotes a $C^{2}([T_{0},\infty))$ map satisfying, for some $C>0$, \emph{independent} of $T_{0}$
\begin{equation}\label{lambdarestr}\lambda'(t) <0, \quad |\lambda''(t)| \leq \frac{C}{t^{2} \log^{b+1}(t)}, \quad \text{ and } \frac{1}{C \log^{C}(t)}<\lambda(t) < \frac{1}{2}, \quad t \geq T_{0}\end{equation}
and is otherwise arbitrary for now. (Note that the first and third requirements above are not strictly necessary for the validity of most of our procedure. However, for this work, there is no loss of generality in assuming them, since $\lambda$ will later on be restricted to a class of functions of the form $\lambda=\lambda_{0}+e$, where $\lambda_{0}$ is some explicit function to be specified later, and $e$ belongs to a certain space of functions such that (among other things) \eqref{lambdarestr} holds for $\lambda_{0}+e$). \\
\\
Also for all estimates appearing in the entire paper, we use the convention that $C$ will always denote a positive constant \emph{independent} of $T_{0}$, whose value may change from line to line. Although we have already summarized the ansatz construction in the overview of the proof, we now provide an outline which clarifies the logical structure of the process.
\subsection{Outline}
\textbf{Step 1, Definitions of the corrections, $v_{k}$:} For all $T_{0}$ and $\lambda$ as above, and $t \geq T_{0}$, we define functions $v_{k}, \quad 1\leq k \leq 5$, which were roughly described in the overview of the proof, thereby obtaining
$$u_{ansatz}(t,r) = Q_{\frac{1}{\lambda(t)}}(r)+\sum_{k=1}^{5} v_{k}(t,r)$$
\textbf{Step 2, Splitting of the error term of $u_{ansatz}$, and setup of the modulation equation for $\lambda$:} We define functions $F_{4},F_{5}$, and $F_{6}$, which split the error term of $u_{ansatz}$ into $-\left(F_{4}+F_{5}+F_{6}\right)$. Then, we consider the modulation equation for $\lambda(t)$ resulting from setting
$$\langle F_{4}(t,\cdot),\phi_{0}(\frac{\cdot}{\lambda(t)})\rangle =0, \quad t \geq T_{0}$$
(Note that $F_{4}$ depends on $\lambda(t)$). As described (to a certain extent) in the overview of the proof, this modulation equation takes the form of a Volterra equation of the second kind in the variable $\lambda''(t)$ (with kernel and coefficients weakly depending on $\lambda$), and our choice of Cauchy data for $v_{2}$ gives rise to a leading order solution 
$$\lambda_{0}=\frac{1}{\log^{b}(t)}+ \int_{t}^{\infty} \int_{t_{1}}^{\infty} \frac{-b^{2} \log(\log(t_{2}))}{t_{2}^{2}\log^{b+2}(t_{2})}dt_{2}dt_{1}$$ 
to this equation. We then introduce a weighted $C^{2}$ space, $X$, with norm \eqref{ynorm}. From this point on, we further restrict $\lambda(t)$ to be of the form
$$\lambda(t) = \lambda_{0}(t)+e(t), \quad e \in B$$
where
$$B=\overline{B_{1}(0)} \subset X$$
(Note that $\lambda_{0}+B$ is contained within the class of $\lambda(t)$ we initially started with (which is described by \eqref{lambdarestr})).\\
\\
\textbf{Step 3, Solving the modulation equation for $\lambda(t)$:} Writing $F_{4}(t,r) = F_{4}^{\lambda(t)}(t,r)$ to emphasize the $\lambda$ dependence of $F_{4}$, we show that there exists $T_{3}>0$ such that, for all $T_{0} \geq T_{3}$, the equation
$$\langle F_{4}^{\lambda_{0}(t)+e(t)}(t,\cdot),\phi_{0}(\frac{\cdot}{\lambda_{0}(t)+e(t)})\rangle =0, \quad t \geq T_{0}$$
can be solved for $e(t) \in B$, using the contraction mapping principle. An important part of this procedure is that the kernel appearing in the Volterra equation for $e$ (which is independent of $e$, modulo error terms which can be treated perturbatively) satisfies the structural condition \eqref{kineq}. From here on, we work under the constraint $T_{0} \geq T_{3}$.\\
\\
\textbf{Step 4, Estimates on higher derivatives of $\lambda''$:}
Denoting the solution to the above equation for $e$ by $e_{0}(t)$, we fix $\lambda(t) = \lambda_{0}(t)+e_{0}(t)$. Then, we prove that $\lambda(t)$, which is apriori only in $C^{2}([T_{0},\infty))$, is actually in $C^{4}([T_{0},\infty))$, and establish quantitative estimates on $\lambda'''$ and $\lambda''''$.\\
\\
\textbf{Step 5, Estimates on $F_{k}$:} At this stage, we prove pointwise estimates on $F_{4}$, as well as estimates on $||F_{k}||_{L^{2}(r dr)}$, $||F_{k}||_{\dot{H}^{1}_{e}}$ for $k=5,6$. This completes the ansatz construction.
More precisely, our main result of this section is
\begin{theorem}[Approximate solution to \eqref{wavemaps}]\label{approxsolnthm}
For all $b>0$, there exists $T_{3}>0$ such that, for all $T_{0} \geq T_{3}$, there exists $v_{corr} \in C^{4}([T_{0},\infty); C^{2}((0,\infty)))$, and $\lambda \in C^{4}([T_{0},\infty))$ such that, if $u=Q_{\frac{1}{\lambda(t)}}+v_{corr}$, then
$$E_{\text{WM}}(u,\partial_{t}u) < \infty$$
\begin{equation} -\partial_{tt}u+\partial_{rr}u+\frac{1}{r}\partial_{r}u-\frac{\sin(2 u)}{2r^{2}} = -\left(F_{4}+F_{5}+F_{6}\right)\end{equation}
where
\begin{equation}\label{f5f6l2thm} \frac{1}{\lambda(t)^{2}} ||\left(F_{5}+F_{6}\right)(t,r)||_{L^{2}(r dr)} \leq \frac{C}{t^{4} \log^{3b+2N-1}(t)}\end{equation}
\begin{equation}\label{f5f6h1thm}\frac{||\left(F_{5}+F_{6}\right)(t)||_{\dot{H}^{1}_{e}}}{\lambda(t)} \leq \frac{C \log^{6+b}(t)}{t^{35/8}}\end{equation}

$$\langle F_{4}(t,\cdot),\phi_{0}(\frac{\cdot}{\lambda(t)}) \rangle =0$$
For $0 \leq k \leq 2, \quad 0 \leq j \leq 1, \quad j+k \leq 2$, we have
\begin{equation}  t^{j} r^{k} |\partial_{r}^{k}\partial_{t}^{j} F_{4}(t,r)| \leq \frac{C \mathbbm{1}_{\{r \leq \log^{N}(t)\}} r}{t^{2} \log^{3b+1-2\alpha b}(t) (r^{2}+\lambda(t)^{2})^{2}}+\frac{C \mathbbm{1}_{\{r \leq \frac{t}{2}\}} r}{t^{2} \log^{5b+2N-2}(t) (r^{2}+\lambda(t)^{2})^{2}}\end{equation}
In addition, we have
\begin{equation}  |\partial_{t}^{2} F_{4}(t,r)| \leq \frac{C \mathbbm{1}_{\{r \leq \log^{N}(t)\}} r}{t^{4} \log^{3b+1-2\alpha b}(t) (r^{2}+\lambda(t)^{2})^{2}}+\frac{C \mathbbm{1}_{\{r \leq \frac{t}{2}\}} r}{t^{4} \log^{5b+2N-2}(t) (r^{2}+\lambda(t)^{2})^{2}} + \frac{C \mathbbm{1}_{\{r \leq \frac{t}{2}\}}}{t^{4} \log^{5b+N-2}(t) (r^{2}+\lambda(t)^{2})^{2}}\end{equation}
We also have the following estimates on $v_{corr}$.
\begin{equation}\label{vcorrcofthm} ||\frac{v_{corr}(x,R\lambda(x))}{R \lambda(x)}||_{L^{\infty}}^{2}+||\frac{v_{corr}(x,R\lambda(x))}{R\lambda(x)^{2}(1+R^{2})}||_{L^{\infty}} \leq \frac{C \log(\log(x))}{x^{2}\log(x)}\end{equation}

\begin{equation}\label{1cofthm} 1+||\frac{v_{corr}(x,R\lambda(x))}{R}||_{L^{\infty}}+||\partial_{R}(v_{corr}(x,R\lambda(x)))||_{L^{\infty}} \leq C\end{equation}

\begin{equation}\label{vcorrdrvcorrcofthm}\begin{split} &||\frac{v_{corr}(x,R\lambda(x)) \partial_{R}(v_{corr}(x,R\lambda(x)))}{R\lambda(x)^{2}}||_{L^{\infty}_{R}((0,1))}+||\frac{v_{corr}(x,R\lambda(x)) \partial_{R}(v_{corr}(x,R\lambda(x)))}{R^{2}\lambda(x)^{2}}||_{L^{\infty}_{R}((1,\infty))}\\
&+||\frac{\partial_{R}(v_{corr}(x,R\lambda(x)))}{(1+R^{2})\lambda(x)^{2}}||_{L^{\infty}} \\
&\leq \frac{C \log(\log(x))}{x^{2}\log(x)}\end{split}\end{equation}
Finally, $\lambda$ is described by
\begin{equation} \lambda(t) = \frac{1}{\log^{b}(t)} + e(t), \quad |e(t)| \leq \frac{C}{\log^{b}(t)\sqrt{\log(\log(t))}},\quad |e^{(j)}(t)| \leq \begin{cases}\frac{C}{t^{j} \log^{b+1}(t) \sqrt{\log(\log(t))}}, \quad j=1,2\\
\frac{C}{t^{j}\log^{b+1}(t)}, \quad j=3,4\end{cases}\end{equation}
\end{theorem}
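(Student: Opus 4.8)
The plan is to execute the five-step outline already announced before the theorem statement, taking as a black box the functional-analytic and PDE facts imported from \cite{kst}, \cite{bkt}, and \cite{ckls}, and organizing the work so that each correction $v_k$ is defined and estimated before moving to the modulation equation. First I would carry out Step 1: define $v_1$ as the solution of the inhomogeneous nondegenerate wave equation $-\partial_{tt}v_1+\partial_{rr}v_1+\frac1r\partial_r v_1-\frac{v_1}{r^2} = -2\lambda''(t)\frac{r}{1+r^2}$ with zero Cauchy data at $t=\infty$, using the Duhamel formula for \eqref{rsquaredpoteqn} (whose fundamental solution is explicit in terms of Bessel functions / the Hankel transform of order $1$), and similarly define $v_3,v_4,v_5$ as solutions of inhomogeneous versions of \eqref{rsquaredpoteqn} whose source terms are built, in order, from: the large-$R$ tail of the $Q_{1/\lambda}+v_1$ error (introducing the parameter $\alpha$), the large-$r$ linear errors from $v_1,v_2,v_3$ together with the cross term between the $v_1,v_3$ sources and $\partial_{tt}Q_{1/\lambda}$ (introducing $N$), and the nonlinear interactions among $v_1,\dots,v_4$. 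The function $v_2$ solves \eqref{rsquaredpoteqn} with the prescribed data $\partial_t v_2(0,r)=v_{2,0}(r)$ chosen (via its Hankel transform) precisely so that the computation sketched in the overview produces the source term $\frac{4b}{\lambda(t)t^2\log^b(t)}$. Throughout this step the main technical labor is pointwise and weighted-$L^2$ estimates on $v_k$ and its $r$- and $t$-derivatives in the self-similar coordinate $R=r/\lambda(t)$, using the explicit Duhamel representations and stationary-phase / Bessel asymptotics, exploiting the a priori bounds \eqref{lambdarestr} on $\lambda$; these estimates are uniform in $T_0$ with the constant conventions fixed in the text.

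Next I would do Step 2 and Step 3: collect the full error $-\partial_{tt}u+\partial_{rr}u+\frac1r\partial_r u-\frac{\sin(2u)}{2r^2}$, separate it into $-(F_4+F_5+F_6)$ where $F_5+F_6$ already satisfies \eqref{f5f6l2thm}–\eqref{f5f6h1thm} by construction of $v_4,v_5$, and impose the orthogonality condition $\langle F_4(t,\cdot),\phi_0(\tfrac{\cdot}{\lambda(t)})\rangle=0$. Using $\phi_0(R)=\frac{2R}{1+R^2}$, the explicit form of $\partial_{tt}Q_{1/\lambda}$, the Hankel representation of $v_2$, and the asymptotics of $K_1$, this inner product reduces to the Volterra-type equation in $\lambda''$ displayed in the overview, namely $-4\int_t^\infty\frac{\lambda''(s)}{1+s-t}\,ds+\frac{4b}{t^2\log^b(t)}+4\alpha\log(\lambda(t))\lambda''(t)-4\int_t^\infty\frac{\lambda''(s)}{(\lambda(t)^{1-\alpha}+s-t)(1+s-t)^3}\,ds=f_3(\lambda,\lambda',\lambda'')$, where $f_3$ is a remainder that is subleading in the weighted $C^2$ sense for $\lambda$ near $\frac{1}{\log^b(t)}$. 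I would identify the leading solution $\lambda_{0,0}(t)=\frac{1}{\log^b(t)}$ (and its refinement $\lambda_0$ with the double-integral correction displayed in the outline) by checking that the inverse-Laplace / Volterra kernel $\frac{1}{1+s-t}$ acting on $\lambda_{0,0}''$ cancels $\frac{4b}{t^2\log^b t}$ to leading order, and then set $\lambda=\lambda_0+e$ and solve for $e$ in the unit ball $B=\overline{B_1(0)}\subset X$ of the weighted $C^2$ space $X$ via the contraction mapping principle. Verifying the contraction requires the structural condition \eqref{kineq} on the Volterra kernel (not in the excerpt, but assumed available) together with the $\lambda$-Lipschitz dependence of $f_3$ and of the correction sources; this yields $e$ with $|e(t)|\le C\log^{-b}(t)(\log\log t)^{-1/2}$ and the stated $C^2$ derivative bounds.

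Then Step 4: with $\lambda=\lambda_0+e_0$ now fixed, bootstrap regularity. A priori $\lambda\in C^2$ only, but the Volterra equation can be differentiated: writing it schematically as $\mathcal{K}[\lambda'']=g(\lambda,\lambda',\lambda'')$ with $\mathcal{K}$ invertible on the relevant weighted spaces, and noting that $g$ is smooth in its arguments and in $t$, one differentiates once and twice in $t$, solves the resulting Volterra equations for $\lambda'''$ and $\lambda''''$, and reads off the bounds $|e^{(j)}(t)|\le C t^{-j}\log^{-b-1}(t)$ for $j=3,4$ (without the $(\log\log t)^{-1/2}$ gain, since differentiating the double-integral part of $\lambda_0$ loses that factor). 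Finally Step 5: with $\lambda\in C^4$ and quantitative derivative bounds in hand, re-examine the error terms to produce the precise pointwise bounds on $F_4$, $\partial_r F_4$, $\partial_t F_4$, $\partial_{tt}F_4$ with the stated indicator-function supports ($r\le\log^N t$ and $r\le t/2$) and powers of $\log t$ involving $3b+1-2\alpha b$ and $5b+2N-2$, together with the $L^2$ and $\dot H^1_e$ bounds \eqref{f5f6l2thm}–\eqref{f5f6h1thm} on $F_5+F_6$ and the $L^\infty$-type bounds \eqref{vcorrcofthm}–\eqref{vcorrdrvcorrcofthm} on $v_{corr}=\sum_{k=1}^5 v_k$; finite energy of $u$ follows from the finite-energy estimates on each $v_k$ plus the fact that $v_1$ was designed to cancel the non-$L^2$ part of $\partial_t Q_{1/\lambda}$.

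I expect the main obstacle to be Step 3, the solvability of the modulation equation: one must show that the Volterra operator $e\mapsto$ (right side) is a genuine contraction on $B\subset X$, which demands (i) that the leading Volterra kernel $\frac{1}{1+s-t}$, after inversion, maps the weighted $C^2$ ball into itself — this is exactly where the structural condition \eqref{kineq} enters and where the choice of the weights $\log^{-b}(t)(\log\log t)^{-1/2}$ in $X$ is dictated; (ii) that the perturbative pieces $4\alpha\log\lambda\cdot\lambda''$, the quartic-decay integral kernel, and the remainder $f_3$ are all small as maps on $X$, uniformly in $T_0\ge T_3$ for suitable $T_3$; and (iii) that the a priori constraints \eqref{lambdarestr} are preserved by $\lambda_0+e$ for every $e\in B$, so the whole construction of $v_k$ from Step 1 remains valid along the iteration. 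Coupled to this is the delicate bookkeeping of which correction contributes which power of $\log t$ to the final $F_4$ estimate — the interplay of the three small/large parameters $b,\alpha,N$ must be tracked carefully so that $F_5+F_6$ is genuinely perturbative while $F_4$ retains only the orthogonality-protected leading part.
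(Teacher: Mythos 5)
Your plan follows the paper's route step for step: the same corrections $v_{1},\dots,v_{5}$, the same splitting $F_{4}+F_{5}+F_{6}$, the same Volterra reformulation of $\langle F_{4},\phi_{0}(\cdot/\lambda)\rangle=0$ solved by contraction in the weighted $C^{2}$ ball (with the kernel monotonicity condition \eqref{kineq} and the resolvent bound playing exactly the role you assign them), and the same bootstrap in which the equation is differentiated to get $\lambda''',\lambda''''$ after a preliminary estimate justifies the formal manipulations. So the architecture is right. However, there is one concrete gap in your Step 5, which you dismiss as ``bookkeeping'': the stated pointwise bound on $F_{4}$ with the exponent $3b+1-2\alpha b$ \emph{cannot} be obtained by estimating $v_{1},v_{2},v_{3}$ separately and multiplying by $\frac{\cos(2Q_{1/\lambda})-1}{r^{2}}\sim\frac{\lambda(t)^{2}}{(r^{2}+\lambda(t)^{2})^{2}}$. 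Individually one has, for $r\lesssim\log^{N}(t)$, $v_{1}(t,r)\approx r\int_{t}^{\infty}\frac{\lambda''(s)}{1+s-t}\,ds\sim\frac{rb}{t^{2}\log^{b}(t)}$ and $v_{2}(t,r)\approx\frac{-br}{t^{2}\log^{b}(t)}$, so separate estimates only give $F_{4}\lesssim\frac{r}{t^{2}\log^{3b}(t)(r^{2}+\lambda(t)^{2})^{2}}$, short of the required $\log^{3b+1-2\alpha b}(t)$ by essentially a full power of $\log t$ (and this extra power is genuinely needed later, e.g.\ for the oscillatory-integral estimates feeding the final fixed point with $\epsilon=2b+\tfrac12(1-2\alpha b)$).

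The missing idea is that the modulation equation must be fed back into the pointwise expansions: since $\lambda$ has been chosen so that $-4\int_{t}^{\infty}\frac{\lambda''(s)}{1+s-t}\,ds+\frac{4b}{t^{2}\log^{b}(t)}$ equals a sum of terms of size $O\bigl(\frac{\log\log t}{t^{2}\log^{b+1}(t)}\bigr)$, the leading parts of $v_{1}$ and $v_{2}$ cancel near the origin, giving $|v_{1}+v_{2}|\lesssim\frac{r\log\log t}{t^{2}\log^{b+1}(t)}$ for $r\le\log^{N}(t)$; the paper isolates this in a dedicated lemma, together with near-origin refinements of $\partial_{r}v_{3}$ and $r^{2}\partial_{rr}v_{3}$ obtained by a more careful splitting than the crude $\partial_{r}v_{3}$ bound used elsewhere. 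The derivative bounds $t^{j}r^{k}|\partial_{r}^{k}\partial_{t}^{j}F_{4}|$ and $|\partial_{t}^{2}F_{4}|$ require the same mechanism applied to the once- and twice-differentiated modulation equation (so that $\partial_{t}(v_{1}+v_{2})$ and $\partial_{t}^{2}(v_{1}+v_{2})$ enjoy the same gain), plus the symbol-type structure in $r$ coming from using the wave equations satisfied by $v_{1},v_{2},v_{3}$ to convert $r^{2}\partial_{rr}$ into already-controlled quantities. Without building this cancellation into Step 5 explicitly, the claimed $F_{4}$ estimates, and hence the downstream orthogonality-plus-decay argument, do not close.
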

  
\subsection{Correcting the large $r$ behavior of $Q_{\frac{1}{\lambda(t)}}$}
The error term $\partial_{t}^{2}Q_{\frac{1}{\lambda(t)}}$, which arises from inserting $Q_{\frac{1}{\lambda(t)}}$ into \eqref{wavemaps}, does not decay quickly enough for its inner product with $\phi_{0}(\frac{\cdot}{\lambda(t)})$ to be defined. The first term in the ansatz, $v_{1}$, is designed to correct this problem. Note that we do not choose $v_{1}$ to solve an equation whose right-hand side is equal to $\partial_{t}^{2}Q_{\frac{1}{\lambda(t)}}$, as doing so would result in a much more difficult equation to solve when eventually choose $\lambda(t)$. Instead, $v_{1}$ is  defined as the solution to the equation 
\begin{equation} \label{v1eqn} -\partial_{tt}v_{1}+\partial_{rr}v_{1}+\frac{1}{r}\partial_{r} v_{1} - \frac{v_{1}}{r^{2}}=-2\lambda''(t) \frac{r}{1+r^{2}}\end{equation}
with $0$ Cauchy data at infinity. I.e., by Duhamel's principle, we have
\begin{equation}\label{v1eqn} v_{1}(t,r) = \int_{t}^{\infty} v_{s}(t,r) ds\end{equation}
where $v_{s}$ is the solution to the following Cauchy problem
\begin{equation}\label{vseqn}\begin{cases} -\partial_{tt} v_{s} + \partial_{rr}v_{s} + \frac{1}{r} \partial_{r}v_{s}-\frac{v_{s}}{r^{2}} = 0\\
v_{s}(s,r)=0\\
\partial_{t}v_{s}(s,r) = -2 \lambda''(s) \frac{r}{1+r^{2}}
\end{cases}
\end{equation}
We can determine a fairly explicit formula for $v_{s}$. The procedure used to determine the formula for $v_{s}$ may be slightly formal, but the final expression obtained can be seen to be the solution to the Cauchy problem \eqref{vseqn}.
Firstly, note that if $u$ is a solution to 
\begin{equation} -\partial_{tt}u + \partial_{rr} u + \frac{1}{r} \partial_{r} u=0\end{equation}
then, $w=\partial_{r}u$ is formally a solution to
\begin{equation} -\partial_{tt}w+\partial_{rr}w+\frac{1}{r}\partial_{r}w-\frac{w}{r^{2}}=0\end{equation}
So, we will first write down the spherical means representation formula for the problem
\begin{equation}\label{ueqn}\begin{cases} -\partial_{tt}u_{1}+\Delta u_{1}=0\\
u_{1}(s)=0\\
\partial_{t}u_{1}(s)=-\lambda''(s)\log(1+|x|^{2})
\end{cases}
\end{equation}
then define $u$ by $u(t,|x|) = u_{1}(t,x)$ (which is possible, since $u_{1}$ is radially symmetric). Then, 
$$v_{s}(t,r)=\partial_{r}u(t,r)$$ 
will be seen to be the solution to \eqref{vseqn}. From the spherical means representation formula, for $t>s$, we have
\begin{equation} \begin{split} u_{1}(t,x) &= -\frac{\lambda''(s)}{2\pi} \int_{B(0,t-s)} \frac{ \log(1+|y+x|^{2})}{((t-s)^{2}-|y|^{2})^{1/2}} dy\\
&=-\frac{\lambda''(s)}{2\pi}\int_{0}^{t-s} \rho d\rho \int_{0}^{2 \pi} d\theta \frac{ \log(1+|x|^{2}+2|x|\rho \cos(\theta)+\rho^{2})}{((t-s)^{2}-\rho^{2})^{1/2}}
\end{split}
\end{equation} 
Recalling that $u$ is the radial coordinate representative of $u_{1}$, we have
\begin{equation}\label{dxu} \begin{split} \partial_{|x|} u &=-\frac{\lambda''(s)}{2 \pi}\int_{0}^{t-s} \rho d\rho \int_{0}^{2 \pi} d\theta \frac{2(|x|+\rho \cos(\theta))}{(1+|x|^{2}+2|x|\rho \cos(\theta)+\rho^{2})((t-s)^{2}-\rho^{2})^{1/2}}
\end{split}
\end{equation}
To do the integral over the angular coordinate, we can regard it as 
\begin{equation} \int_{C} \frac{dz}{i z} \frac{2(|x|+\frac{\rho}{2} (z+\frac{1}{z}))}{(1+|x|^{2}+|x|\rho(z+\frac{1}{z})+\rho^{2})((t-s)^{2}-\rho^{2})^{1/2}}\end{equation}
where $C$ is the boundary of the unit circle in the complex plane, traversed in the counterclockwise direction. \\
\\
The integrand is a meromorphic function on $\mathbb{C}$, with poles at $$z=0, -\left(\frac{1+|x|^{2}+\rho^{2}\pm \sqrt{-4|x|^{2}\rho^{2}+(1+|x|^{2}+\rho^{2})^{2}}}{2|x|\rho}\right)$$

Note that $$-4 |x|^{2}\rho^{2}+(1+|x|^{2}+\rho^{2})^{2} = 1+2(|x|^{2}+\rho^{2})+(|x|^{2}-\rho^{2})^{2} \geq 1$$
So, $$\frac{1+|x|^{2}+\rho^{2} + \sqrt{-4|x|^{2}\rho^{2}+(1+|x|^{2}+\rho^{2})^{2}}}{2|x|\rho} > \frac{1+|x|^{2}+\rho^{2}}{2|x|\rho} >\frac{|x|^{2}+\rho^{2}}{2|x|\rho} \geq 1$$
On the other hand, we have
$$\sqrt{-4|x|^{2}\rho^{2}+(1+|x|^{2}+\rho^{2})^{2}} \leq (1+|x|^{2}+\rho^{2})$$
so that
$$\frac{1+|x|^{2}+\rho^{2}-\sqrt{-4|x|^{2}\rho^{2}+(1+|x|^{2}+\rho^{2})^{2}}}{2|x|\rho} >0$$
and
\begin{equation} \frac{1+|x|^{2}+\rho^{2}-\sqrt{-4|x|^{2}\rho^{2}+(1+|x|^{2}+\rho^{2})^{2}}}{2|x|\rho} = \frac{2 |x| \rho}{1+|x|^{2}+\rho^{2}+\sqrt{-4 |x|^{2}\rho^{2}+(1+|x|^{2}+\rho^{2})^{2}}} \leq 1\end{equation}

So, the only two poles of the integrand located inside the unit disk in the complex plane are $z=0$ and $z=z_{1}=-\left(\frac{1+|x|^{2}+\rho^{2}-\sqrt{-4|x|^{2}\rho^{2}+(1+|x|^{2}+\rho^{2})^{2}}}{2|x|\rho}\right)$. The corresponding residues are
$$Res_{0} = \frac{1}{i |x| \sqrt{(s-t)^{2}-\rho^{2}}}$$
$$Res_{z_{1}} = \frac{-1+|x|^{2}-\rho^{2}}{i |x| \sqrt{-4|x|^{2}\rho^{2}+(1+|x|^{2}+\rho^{2})^{2}}\sqrt{(s-t)^{2}-\rho^{2}}}$$

Returning to \eqref{dxu}, we get, for $t>s$, \begin{equation}\begin{split}\label{finaldxu} (\partial_{|x|} u)(t,r) &= -\frac{\lambda''(s)}{|x|} \int_{0}^{t-s} \frac{\rho d\rho}{\sqrt{(t-s)^{2}-\rho^{2}}} \left(1+\frac{|x|^{2}-1-\rho^{2}}{\sqrt{(1+|x|^{2}+\rho^{2})^{2}-4 |x|^{2}\rho^{2}}}\right)
\end{split}
\end{equation}
By substitution, we see that $\partial_{|x|} u(t,r)$ solves \eqref{vseqn} for $t>s$. We can extend the solution to $t<s$ with the same Cauchy data at $t=s$ by defining $$v_{s}(t,r) = (-\partial_{|x|} u)(s-(t-s),r), \quad t<s$$ so that, for $t<s$, we have
\begin{equation}\label{finalvs} v_{s}(t,r) = \frac{\lambda''(s)}{r} \int_{0}^{s-t} \frac{\rho d\rho}{\sqrt{(s-t)^{2}-\rho^{2}}} \left(1+\frac{r^{2}-1-\rho^{2}}{\sqrt{(1+r^{2}+\rho^{2})^{2}-4 r^{2}\rho^{2}}}\right)\end{equation}
We will only need to use pointwise estimates on $v_{1}$, which will be proven shortly, but to give the reader some idea of the behavior of $v_{s}$, we note that
\begin{equation} v_{s}(t,r) \sim \frac{2 \lambda''(s)\left(1-\sqrt{1-a^{2}}\right)}{a}, \quad 0<a=\frac{r}{(s-t)}<1\ll s-t\end{equation}
We have
\begin{equation}\label{v1formula}v_{1}(t,r) = \int_{t}^{\infty} ds\frac{\lambda''(s)}{r} \int_{0}^{s-t} \frac{\rho d\rho}{\sqrt{(s-t)^{2}-\rho^{2}}} \left(1+\frac{r^{2}-1-\rho^{2}}{\sqrt{(1+r^{2}+\rho^{2})^{2}-4 r^{2}\rho^{2}}}\right) \end{equation}

\subsubsection{Pointwise estimates on $\partial^{j}_{r}v_{1}$}
In this section, we will prove the following
\begin{lemma} 
\begin{equation}\label{v1smallrest} v_{1}(t,r) = r \int_{t}^{\infty} \frac{\lambda''(s)}{1+s-t} ds +\emph{Err}(t,r)\end{equation}
where
\begin{equation} |\emph{Err}(t,r)| \leq C r \log(3+2r) \sup_{x \geq t} |\lambda''(x)|, \quad r >0\end{equation}\\
\\
In addition,
\begin{equation} \label{v1largerest} |v_{1}(t,r)| \leq \frac{C}{r} \int_{t}^{\infty}  |\lambda''(s)| (s-t) ds, \quad r>0\end{equation}
Moreover, similar results are true for $\partial_{r}v_{1}$:
\begin{equation}\label{drv1smallrest} \partial_{r}v_{1}(t,r) = \int_{t}^{\infty}  \frac{\lambda''(s)}{1+s-t}ds + E_{\partial_{r}v_{1}}(t,r)\end{equation}
with
$$|E_{\partial_{r}v_{1}}(t,r)| \leq C \sup_{x \geq t}\left(|\lambda''(x)|\right) \log(3+2r), \quad r>0$$
and
\begin{equation}\label{largerdrv1} |\partial_{r}v_{1}(t,r)| \leq \frac{C}{r^{2}}\left(\sup_{x \geq t} \left(|\lambda''(x)| (1+(x-t)^{2})\right)+\int_{t}^{\infty} ds |\lambda''(s)| (s-t)\right), \quad r>2\end{equation}

\end{lemma}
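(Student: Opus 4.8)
The plan is to start from the explicit formula \eqref{v1formula}, namely
$$v_{1}(t,r) = \int_{t}^{\infty} ds\,\frac{\lambda''(s)}{r} \int_{0}^{s-t} \frac{\rho\, d\rho}{\sqrt{(s-t)^{2}-\rho^{2}}} \left(1+\frac{r^{2}-1-\rho^{2}}{\sqrt{(1+r^{2}+\rho^{2})^{2}-4 r^{2}\rho^{2}}}\right),$$
and to analyze the inner $\rho$-integral
$$I(r,L) := \frac{1}{r}\int_{0}^{L} \frac{\rho\, d\rho}{\sqrt{L^{2}-\rho^{2}}} \left(1+\frac{r^{2}-1-\rho^{2}}{\sqrt{(1+r^{2}+\rho^{2})^{2}-4 r^{2}\rho^{2}}}\right), \qquad L=s-t,$$
uniformly in $r>0$ and $L>0$. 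Splitting the integrand into its two pieces, the first piece gives $\frac{1}{r}\int_{0}^{L}\frac{\rho\,d\rho}{\sqrt{L^{2}-\rho^{2}}}=\frac{L}{r}$ exactly; this produces a term that, after multiplying by $\lambda''(s)$ and integrating, would be too large, so the point is that it must partially cancel against the leading behavior of the second piece. For the second piece I would substitute $\rho = L\sin\phi$ to remove the endpoint singularity and then expand the factor $\frac{r^{2}-1-\rho^{2}}{\sqrt{(1+r^{2}+\rho^{2})^{2}-4r^{2}\rho^{2}}}$; note $(1+r^{2}+\rho^{2})^{2}-4r^{2}\rho^{2}=(1+(r-\rho)^{2})(1+(r+\rho)^{2})$, which is the clean factorization to exploit. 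Carrying out the expansion for small $r$ (more precisely, extracting the part linear in $r$) should yield $I(r,L)=\frac{r}{1+L}+O\!\big(r\log(3+2r)\big)$ with the implied constant absolute, after the $\frac{L}{r}$ term is cancelled by the $-\rho^{2}/\sqrt{\cdots}$-type contribution; multiplying by $\lambda''(s)$, integrating in $s$, and bounding $\sup_{x\ge t}|\lambda''(x)|$ out of the error integral gives \eqref{v1smallrest}. For the large-$r$ bound \eqref{v1largerest}, I would instead bound the full bracket $\big|1+\frac{r^{2}-1-\rho^{2}}{\sqrt{(1+(r-\rho)^{2})(1+(r+\rho)^{2})}}\big|$ crudely; for $\rho\le L$ and $r$ large this bracket is $O(1/r)$ (the $1$ and the fraction, whose leading term is $+1$ when $r\gg\rho$, nearly cancel, leaving a remainder of size $\rho^{2}/r^{2}+1/r^{2}$ times $r$, hence $O(1/r)$ after dividing by an extra... ) — more carefully one checks $1 + \frac{r^2-1-\rho^2}{\sqrt{(1+(r-\rho)^2)(1+(r+\rho)^2)}} = O\big(\frac{1+\rho^2}{r^2}\big)$, so that $|I(r,L)|\le \frac{C}{r}\cdot\frac{1}{r}\int_0^L\frac{(1+\rho^2)\rho\,d\rho}{\sqrt{L^2-\rho^2}}\le \frac{C L (1+L^2)}{r^2}$; actually since we only need $\frac{C}{r}\int_t^\infty|\lambda''(s)|(s-t)\,ds$ a cheaper bound $|I(r,L)|\le \frac{CL}{r^2}$ suffices, obtained from $|{\rm bracket}|\le C/r$ valid once one absorbs the $\rho$-range into constants — then integrate in $s$.

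For the derivative estimates one differentiates under the integral sign in \eqref{v1formula}. The cleanest route is to write $v_{1}(t,r)=\frac{1}{r}G(t,r)$ where $G(t,r)=\int_{t}^{\infty}\lambda''(s)\,g(r,s-t)\,ds$ with $g(r,L)=\int_{0}^{L}\frac{\rho\,d\rho}{\sqrt{L^{2}-\rho^{2}}}\big(1+\frac{r^{2}-1-\rho^{2}}{\sqrt{(1+(r-\rho)^2)(1+(r+\rho)^2)}}\big)$, so that $\partial_{r}v_{1}=\frac{1}{r}\partial_{r}G-\frac{1}{r^{2}}G$. From \eqref{v1smallrest} we have $\frac{1}{r^{2}}G(t,r)=\frac{1}{r}\int_{t}^{\infty}\frac{\lambda''(s)}{1+s-t}ds+O\!\big(\frac{\log(3+2r)}{r}\sup|\lambda''|\big)$, so for \eqref{drv1smallrest} the main work is to show $\partial_{r}g(r,L)=\frac{1}{1+L}+\frac{L}{r}+O(\log(3+2r))$ in a way that the $\frac{L}{r}$ combines with the $-\frac{1}{r^2}G$ piece to leave exactly $\int\frac{\lambda''(s)}{1+s-t}ds+$error; concretely $\partial_{r}g(r,L)=\int_{0}^{L}\frac{\rho\,d\rho}{\sqrt{L^{2}-\rho^{2}}}\,\partial_{r}\big(\frac{r^{2}-1-\rho^{2}}{\sqrt{(1+(r-\rho)^2)(1+(r+\rho)^2)}}\big)$, and I would do the same $\rho=L\sin\phi$ substitution and small-$r$ expansion of this (bounded, explicit) $r$-derivative. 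For the large-$r$ bound \eqref{largerdrv1}, differentiating the integrand in $r$ produces terms of size $O(1/r)\cdot(1+\rho^2)/r$ plus boundary-in-$L$ contributions that are controlled by $\sup_{x\ge t}(|\lambda''(x)|(1+(x-t)^2))$ after an integration by parts in $s$ (the $\partial_t$ acting through $L=s-t$); combined with the $\frac{1}{r^2}G$ term bounded as in \eqref{v1largerest} this yields the stated $\frac{C}{r^2}$ bound for $r>2$.

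The main obstacle I expect is bookkeeping the exact cancellation of the $\frac{L}{r}$ (equivalently $\frac{s-t}{r}$) terms: the ``$1$'' in the bracket and the leading part of the fraction $\frac{r^{2}-1-\rho^{2}}{\sqrt{(1+(r-\rho)^2)(1+(r+\rho)^2)}}$ each individually produce contributions that, after division by $r$ and integration against $\lambda''(s)$, would only be controlled by $\int|\lambda''(s)|(s-t)\,ds$ and would swamp the claimed $r\int\frac{\lambda''(s)}{1+s-t}ds$ leading term and the $r\log(3+2r)\sup|\lambda''|$ error. So the delicate point is to expand the bracket to one more order than naively necessary, isolating the piece that is exactly $\frac{r}{1+L}$-producing, and to show all remaining pieces are genuinely $O\!\big(r\log(3+2r)\big)$ uniformly — in particular handling the regime $\rho\sim r$ (where $(r-\rho)$ is small and the square root factor $\sqrt{1+(r-\rho)^2}$ degenerates) carefully; this is where the $\log(3+2r)$ loss enters, via $\int_{0}^{L}\frac{\rho\,d\rho}{\sqrt{L^{2}-\rho^{2}}\,(1+(r-\rho)^{2})}$-type integrals. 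Everything else is routine once this expansion is pinned down.
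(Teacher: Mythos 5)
Your starting point (the explicit formula \eqref{v1formula}) and the factorization $(1+r^{2}+\rho^{2})^{2}-4r^{2}\rho^{2}=(1+(r-\rho)^{2})(1+(r+\rho)^{2})$ are right, but the central step of your plan does not deliver the lemma as stated. You propose to prove a pointwise-in-$L$ expansion $I(r,L)=\frac{r}{1+L}+O\bigl(r\log(3+2r)\bigr)$ with an $L$-independent error, and then ``bound $\sup_{x\ge t}|\lambda''(x)|$ out of the error integral.'' That last step is where the argument breaks: an error bound with no decay in $L=s-t$ integrated against $|\lambda''(s)|\,ds$ over the infinite interval $[t,\infty)$ only yields $C\,r\log(3+2r)\int_{t}^{\infty}|\lambda''(s)|\,ds$, not $C\,r\log(3+2r)\sup_{x\ge t}|\lambda''(x)|$; for the admissible class of $\lambda$ these differ by a factor of order $t$, and the weaker bound is not enough for the later uses of this lemma (the near-origin cancellation between $v_{1}$ and $v_{2}$ and the $F_{4}$ estimates need the $\sup$-structure). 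So the real content of \eqref{v1smallrest} is producing an error whose $s$-kernel is integrable with total mass $\lesssim r\log(3+2r)$, and your proposal does not show this. In the paper this is achieved not by a small-$r$ expansion of the $\rho$-integral but by a different decomposition: one writes $\frac{1}{\sqrt{(s-t)^{2}-\rho^{2}}}=\frac{1}{s-t}+\bigl(\frac{1}{\sqrt{(s-t)^{2}-\rho^{2}}}-\frac{1}{s-t}\bigr)$; the difference-kernel part is disposed of by Fubini, using that its $s$-integral equals $\log 2$ for every fixed $\rho$ together with $\int_{0}^{\infty}\rho\,B(r,\rho)\,d\rho\le Cr^{2}$, which is exactly how a bound $Cr\sup|\lambda''|$ appears with no leftover time integral; the $\frac{1}{s-t}$-weighted part is computed \emph{exactly} in $\rho$ and rationalized to $\frac{2r^{2}(s-t)}{1+r^{2}+(s-t)^{2}+\sqrt{(1+(r+s-t)^{2})(1+(r-(s-t))^{2})}}$, an identity valid for all $r$ (no small-$r$ expansion, so the constants are uniform in $r$ as the lemma requires); splitting the $s$-integral at $s-t=2(r+1)$ then gives the leading term, error kernels decaying like $(s-t)^{-3}$ (so $\sup|\lambda''|$ can be pulled out of convergent integrals), and the $\log(3+2r)$ comes from adding back $\int_{t}^{t+2(r+1)}\frac{r\,ds}{1+s-t}$ — not from a $\rho\sim r$ degeneracy as you suggest.

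The same gap propagates to your treatment of $\partial_{r}v_{1}$. Writing $\partial_{r}v_{1}=\frac{1}{r}\partial_{r}G-\frac{1}{r^{2}}G$ and citing \eqref{v1smallrest} for the $\frac{1}{r^{2}}G=\frac{v_{1}}{r}$ piece is fine for the error size, but your proposed expansion of $\partial_{r}g(r,L)$ (``$\frac{1}{1+L}+\frac{L}{r}+O(\log(3+2r))$'') is structurally off — the ``$1$'' in the bracket contributes $L$ to $g$, which has zero $r$-derivative, so no $\frac{L}{r}$ term arises there — and, as the paper itself warns, the near-origin leading behavior of $\partial_{r}v_{1}$ comes from a \emph{combination} of the $-\frac{v_{1}}{r}$ piece and the $\partial_{r}$-of-the-bracket piece, which the paper combines algebraically before expanding; in any case each piece again has to be estimated through the same $\frac{1}{s-t}$ versus difference-kernel split so that all error $s$-integrals converge and only $\sup|\lambda''|$ (or, for \eqref{largerdrv1}, $\sup\bigl(|\lambda''(x)|(1+(x-t)^{2})\bigr)$ inserted against the weight $\frac{1}{1+(s-t)^{2}}$, rather than an integration by parts in $s$) is used. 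Until you replace the ``uniform $O(r\log(3+2r))$ error plus pull out the sup'' step by an argument producing $L$-integrable error kernels with the stated total mass, uniformly in $r>0$, the proof is incomplete.
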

\begin{proof}
We start with
\begin{equation}\label{v1eqn2}\begin{split} v_{1}(t,r) &= \int_{t}^{\infty}ds \frac{\lambda''(s)}{r} \int_{0}^{s-t}d\rho \rho \left(\frac{1}{\sqrt{(s-t)^{2}-\rho^{2}}}-\frac{1}{s-t}\right)\left(1+\frac{r^{2}-1-\rho^{2}}{\sqrt{(r^{2}-1-\rho^{2})^{2}+4r^{2}}}\right) \\
&+\int_{t}^{\infty}ds \frac{\lambda''(s)}{r} \int_{0}^{s-t}d\rho \frac{\rho}{(s-t)} \left(1+\frac{r^{2}-1-\rho^{2}}{\sqrt{(r^{2}-1-\rho^{2})^{2}+4r^{2}}}\right) 
\end{split}
\end{equation}
The first line of \eqref{v1eqn2} can be estimated as follows
\begin{equation} \begin{split} &|\int_{t}^{\infty}ds \frac{\lambda''(s)}{r} \int_{0}^{s-t}d\rho \rho \left(\frac{1}{\sqrt{(s-t)^{2}-\rho^{2}}}-\frac{1}{s-t}\right)\left(1+\frac{r^{2}-1-\rho^{2}}{\sqrt{(r^{2}-1-\rho^{2})^{2}+4r^{2}}}\right)|\\
&\leq \frac{\sup_{x \geq t}\left(|\lambda''(x)|\right)}{r} \int_{0}^{\infty} \rho d\rho \int_{\rho+t}^{\infty} ds \left(\frac{1}{\sqrt{(s-t)^{2}-\rho^{2}}}-\frac{1}{s-t}\right)\left(1+\frac{r^{2}-1-\rho^{2}}{\sqrt{(r^{2}-1-\rho^{2})^{2}+4r^{2}}}\right)\\
&\leq \frac{\sup_{x \geq t}\left(|\lambda''(x)|\right)}{r}\int_{0}^{\infty} \rho d\rho \log(2) \left(1+\frac{r^{2}-1-\rho^{2}}{\sqrt{(r^{2}-1-\rho^{2})^{2}+4r^{2}}}\right)\\
&\leq C \frac{\sup_{x \geq t}\left(|\lambda''(x)|\right)}{r} \cdot r^{2}\\
&\leq C r \sup_{x \geq t}\left(|\lambda''(x)|\right)\end{split}
\end{equation}
The second line of \eqref{v1eqn2} is split into the following two terms
\begin{equation}\label{v1eqn2part1} \begin{split} &\int_{t}^{\infty}ds \frac{\lambda''(s)}{r} \int_{0}^{s-t}d\rho \frac{\rho}{(s-t)} \left(1+\frac{r^{2}-1-\rho^{2}}{\sqrt{(r^{2}-1-\rho^{2})^{2}+4r^{2}}}\right)\\
&=\int_{t}^{t+2(r+1)}ds \frac{\lambda''(s)}{r} \int_{0}^{s-t}d\rho \frac{\rho}{(s-t)} \left(1+\frac{r^{2}-1-\rho^{2}}{\sqrt{(r^{2}-1-\rho^{2})^{2}+4r^{2}}}\right)\\
&+\int_{t+2(r+1)}^{\infty}ds \frac{\lambda''(s)}{r} \int_{0}^{s-t}d\rho \frac{\rho}{(s-t)} \left(1+\frac{r^{2}-1-\rho^{2}}{\sqrt{(r^{2}-1-\rho^{2})^{2}+4r^{2}}}\right)
\end{split}
\end{equation}
For the second line of \eqref{v1eqn2part1}, we have
\begin{equation} \begin{split} &\int_{t}^{t+2(r+1)}ds \frac{\lambda''(s)}{r} \int_{0}^{s-t}d\rho \frac{\rho}{(s-t)} \left(1+\frac{r^{2}-1-\rho^{2}}{\sqrt{(r^{2}-1-\rho^{2})^{2}+4r^{2}}}\right)\\
&=\int_{t}^{t+2(r+1)} ds \frac{\lambda''(s)}{r(s-t)} \frac{\left(1+r^{2}+(s-t)^{2}-\sqrt{(1+(r+s-t)^{2})(1+(r-(s-t))^{2})}\right)}{2}\\
&=\int_{t}^{t+2(r+1)} ds \frac{\lambda''(s)}{r(s-t)} \frac{2 r^{2}(s-t)^{2}}{(1+r^{2}+(s-t)^{2}+\sqrt{(1+(r+s-t)^{2})(1+(r-(s-t))^{2})})} 
\end{split}
\end{equation}
So, we have
\begin{equation}\begin{split}&|\int_{t}^{t+2(r+1)}ds \frac{\lambda''(s)}{r} \int_{0}^{s-t}d\rho \frac{\rho}{(s-t)} \left(1+\frac{r^{2}-1-\rho^{2}}{\sqrt{(r^{2}-1-\rho^{2})^{2}+4r^{2}}}\right)|\\
&\leq C r \sup_{x \geq t}\left(|\lambda''(x)|\right) \int_{t}^{t+2(r+1)} \frac{(s-t)}{1+r^{2}} ds\\
&\leq C r \sup_{x \geq t}\left(|\lambda''(x)|\right)\end{split}\end{equation}
For the third line of \eqref{v1eqn2part1}, we have
\begin{equation} \label{partialv1est} \begin{split} &\int_{t+2(r+1)}^{\infty}  \lambda''(s) \frac{2 r (s-t)}{(1+r^{2}+(s-t)^{2}+\sqrt{(1+(r+s-t)^{2})(1+(r-(s-t))^{2})}}ds\\
&=\int_{t+2(r+1)}^{\infty} \lambda''(s) \frac{2r}{(s-t)} \left(\frac{1}{2}+O\left(\frac{(1+r)^{2}}{(s-t)^{2}}\right)\right) ds\\
&=\int_{t+2(r+1)}^{\infty}  \lambda''(s) \frac{r}{(s-t)}ds +E_{1}(t,r)
\end{split}\end{equation}
where
\begin{equation}\begin{split} |E_{1}(t,r)| &\leq \sup_{x \geq t}\left(|\lambda''(x)|\right) r (1+r)^{2} \int_{t+2(r+1)}^{\infty} \frac{ds}{(s-t)^{3}}\\
&\leq C r \sup_{x \geq t}\left(|\lambda''(x)|\right)\end{split}\end{equation}
Next, 
\begin{equation} \begin{split} \int_{t+2(r+1)}^{\infty} \frac{\lambda''(s) r}{(s-t)} ds &=\int_{t+2(r+1)}^{\infty} \frac{\lambda''(s) r}{1+s-t} ds + \int_{t+2(r+1)}^{\infty} \lambda''(s) r \left(\frac{1}{s-t}-\frac{1}{1+s-t}\right) ds\end{split}\end{equation}
and
\begin{equation}\begin{split} |\int_{t+2(r+1)}^{\infty} \lambda''(s) r\left(\frac{1}{s-t}-\frac{1}{1+s-t}\right) ds| &\leq r \sup_{x \geq t} \left(|\lambda''(x)|\right) \log(1+\frac{1}{2+2r})\\
&\leq C \sup_{x \geq t}\left(|\lambda''(x)|\right) r\end{split}\end{equation}
Finally, 
\begin{equation} \int_{t+2(r+1)}^{\infty} \frac{\lambda''(s) r}{1+s-t} ds = \int_{t}^{\infty} \frac{\lambda''(s) r}{1+s-t} ds - \int_{t}^{t+2(r+1)} \frac{\lambda''(s) r}{1+s-t} ds\end{equation}
with
\begin{equation}|\int_{t}^{t+2(r+1)} \frac{\lambda''(s) r}{1+s-t} ds| \leq C \sup_{x \geq t}\left(|\lambda''(x)|\right) r \log(3+2r)\end{equation}
Thus, we obtain the decomposition \eqref{v1smallrest}, with the desired estimate on $\text{Err}$.\\
\\
Next, we have
\begin{equation} \begin{split} |v_{1}(t,r)| &= |\int_{t}^{\infty} ds \frac{\lambda''(s)}{r} \int_{0}^{s-t} \frac{\rho d\rho}{\sqrt{(s-t)^{2}-\rho^{2}}} \left(1+\frac{r^{2}-1-\rho^{2}}{\sqrt{(r^{2}-1-\rho^{2})^{2}+4r^{2}}}\right)|\\
&\leq \frac{C}{r} \int_{t}^{\infty}  |\lambda''(s)| (s-t) ds
\end{split}
\end{equation}
Now we treat $\partial_{r}v_{1}$: 
\begin{equation}\label{drv1est} \begin{split} \partial_{r}v_{1}(t,r) &= \int_{t}^{\infty} ds \frac{-\lambda''(s)}{r^{2}} \int_{0}^{s-t} \frac{\rho d\rho}{\sqrt{(s-t)^{2}-\rho^{2}}} \left(1+\frac{r^{2}-1-\rho^{2}}{\sqrt{(r^{2}-1-\rho^{2})^{2}+4r^{2}}}\right)\\
&+\int_{t}^{\infty} ds \frac{\lambda''(s)}{r} \int_{0}^{s-t} \frac{\rho d\rho}{\sqrt{(s-t)^{2}-\rho^{2}}} \frac{4 r \left(p^2+r^2+1\right)}{\left(\left(p^2-r^2+1\right)^2+4 r^2\right)^{3/2}}
\end{split}
\end{equation}
Even though the first line of \eqref{drv1est} is equal to $\frac{-v_{1}}{r}$, the principal contribution to $\partial_{r}v_{1}$ near the origin actually comes from a combination of effects from appropriate parts of both the first and second lines of \eqref{drv1est}. So, we do not simply divide the previous $v_{1}$ estimates by $-r$ to treat the first line of \eqref{drv1est}. Instead, we split the first line of \eqref{drv1est} as
\begin{equation} \begin{split} &\int_{t}^{\infty} ds \frac{-\lambda''(s)}{r^{2}} \int_{0}^{s-t} \frac{\rho d\rho}{\sqrt{(s-t)^{2}-\rho^{2}}} \left(1+\frac{r^{2}-1-\rho^{2}}{\sqrt{(r^{2}-1-\rho^{2})^{2}+4r^{2}}}\right)\\
&= \int_{t}^{\infty} ds \frac{-\lambda''(s)}{r^{2}} \int_{0}^{s-t} \rho d\rho\left(\frac{1}{\sqrt{(s-t)^{2}-\rho^{2}}}-\frac{1}{(s-t)}\right) \left(1+\frac{r^{2}-1-\rho^{2}}{\sqrt{(r^{2}-1-\rho^{2})^{2}+4r^{2}}}\right)\\
&+\int_{t}^{\infty} ds \frac{-\lambda''(s)}{r^{2}} \int_{0}^{s-t} \frac{\rho d\rho}{(s-t)} \left(1+\frac{r^{2}-1-\rho^{2}}{\sqrt{(r^{2}-1-\rho^{2})^{2}+4r^{2}}}\right)
\end{split}
\end{equation}
and
\begin{equation} \begin{split} &|\int_{t}^{\infty} ds \frac{-\lambda''(s)}{r^{2}} \int_{0}^{s-t} \rho d\rho\left(\frac{1}{\sqrt{(s-t)^{2}-\rho^{2}}}-\frac{1}{(s-t)}\right) \left(1+\frac{r^{2}-1-\rho^{2}}{\sqrt{(r^{2}-1-\rho^{2})^{2}+4r^{2}}}\right)|\\
&\leq C\frac{\sup_{x \geq t}\left(|\lambda''(x)|\right)}{r^{2}} \int_{0}^{\infty} \rho d\rho \left(1+\frac{r^{2}-1-\rho^{2}}{\sqrt{(r^{2}-1-\rho^{2})^{2}+4r^{2}}}\right) \int_{\rho+t}^{\infty} ds \left(\frac{1}{\sqrt{(s-t)^{2}-\rho^{2}}}-\frac{1}{s-t}\right)\\
&\leq \frac{C \sup_{x \geq t}\left(|\lambda''(x)|\right)}{r^{2}} \int_{0}^{\infty} \rho d\rho \left(1+\frac{r^{2}-1-\rho^{2}}{\sqrt{(r^{2}-1-\rho^{2})^{2}+4r^{2}}}\right)\\
&\leq C \sup_{x \geq t}\left(|\lambda''(x)|\right)\end{split}\end{equation}
The second line of \eqref{drv1est} is split in the same way
\begin{equation} \begin{split} &\int_{t}^{\infty} ds \frac{\lambda''(s)}{r} \int_{0}^{s-t} \frac{\rho d\rho}{\sqrt{(s-t)^{2}-\rho^{2}}} \frac{4 r \left(p^2+r^2+1\right)}{\left(\left(p^2-r^2+1\right)^2+4 r^2\right)^{3/2}}\\
&=\int_{t}^{\infty} ds \frac{\lambda''(s)}{r} \int_{0}^{s-t} \rho d\rho \left(\frac{1}{\sqrt{(s-t)^{2}-\rho^{2}}}-\frac{1}{(s-t)}\right) \frac{4 r \left(p^2+r^2+1\right)}{\left(\left(p^2-r^2+1\right)^2+4 r^2\right)^{3/2}}\\
&+\int_{t}^{\infty} ds \frac{\lambda''(s)}{r} \int_{0}^{s-t} \frac{\rho d\rho}{(s-t)} \frac{4 r \left(p^2+r^2+1\right)}{\left(\left(p^2-r^2+1\right)^2+4 r^2\right)^{3/2}}
\end{split}
\end{equation}
and we again have
\begin{equation}\begin{split} &|\int_{t}^{\infty} ds \frac{\lambda''(s)}{r} \int_{0}^{s-t} \rho d\rho \left(\frac{1}{\sqrt{(s-t)^{2}-\rho^{2}}}-\frac{1}{(s-t)}\right) \frac{4 r \left(p^2+r^2+1\right)}{\left(\left(p^2-r^2+1\right)^2+4 r^2\right)^{3/2}}|\\
&\leq C \sup_{x \geq t}\left(|\lambda''(x)|\right) \int_{0}^{\infty} \rho d\rho \frac{(\rho^{2}+r^{2}+1)}{((\rho^{2}-r^{2}+1)^{2}+4r^{2})^{3/2}} \int_{\rho+t}^{\infty} ds \left(\frac{1}{\sqrt{(s-t)^{2}-\rho^{2}}}-\frac{1}{s-t}\right) \\
&\leq C \sup_{x \geq t}\left(|\lambda''(x)|\right)\end{split}\end{equation}
So far, we have
\begin{equation}\label{drv1middleest} \begin{split} \partial_{r}v_{1}(t,r) &=\int_{t}^{\infty} ds \frac{-\lambda''(s)}{r^{2}} \int_{0}^{s-t} \frac{\rho d\rho}{(s-t)} \left(1+\frac{r^{2}-1-\rho^{2}}{\sqrt{(r^{2}-1-\rho^{2})^{2}+4r^{2}}}\right)\\
&+\int_{t}^{\infty} ds \frac{\lambda''(s)}{r} \int_{0}^{s-t} \frac{\rho d\rho}{(s-t)} \frac{4 r \left(p^2+r^2+1\right)}{\left(\left(p^2-r^2+1\right)^2+4 r^2\right)^{3/2}}\\
&+E_{0,\partial_{r}v_{1}}\end{split}\end{equation}
where
$$|E_{0,\partial_{r}v_{1}}(t,r)| \leq C \sup_{x \geq t}\left(|\lambda''(x)|\right)$$
Now, we combine the first and second lines of \eqref{drv1middleest}, to get
\begin{equation} \begin{split} &\int_{t}^{\infty} ds \frac{-\lambda''(s)}{r^{2}} \int_{0}^{s-t} \frac{\rho d\rho}{(s-t)} \left(1+\frac{r^{2}-1-\rho^{2}}{\sqrt{(r^{2}-1-\rho^{2})^{2}+4r^{2}}}\right)\\
&+\int_{t}^{\infty} ds \frac{\lambda''(s)}{r} \int_{0}^{s-t} \frac{\rho d\rho}{(s-t)} \frac{4 r \left(p^2+r^2+1\right)}{\left(\left(p^2-r^2+1\right)^2+4 r^2\right)^{3/2}}\\
&= -2 \int_{t}^{\infty} ds \lambda''(s)(s-t) \left(\frac{(r^{2}-1-(s-t)^{2})}{\sqrt{\beta} (1+r^{2}+(s-t)^{2}+\sqrt{\beta})}\right)\end{split}\end{equation}
where
$$\beta = 4r^{2}+(r^{2}-1-(s-t)^{2})^{2}$$
Now, we proceed as in the estimates for $v_{1}$.
\begin{equation} \begin{split} &|-2 \int_{t}^{t+2(r+1)} ds \lambda''(s)(s-t) \left(\frac{(r^{2}-1-(s-t)^{2})}{\sqrt{\beta} (1+r^{2}+(s-t)^{2}+\sqrt{\beta})}\right)|\\
&\leq C \sup_{x \geq t}\left(|\lambda''(x)|\right) \int_{t}^{t+2(r+1)} ds \frac{|s-t|}{1+r^{2}}\\
&\leq C \sup_{x \geq t}\left(|\lambda''(x)|\right)\end{split}\end{equation}
Next, note that
\begin{equation}\begin{split} \left(\frac{(r^{2}-1-(s-t)^{2})}{\sqrt{\beta} (1+r^{2}+(s-t)^{2}+\sqrt{\beta})}\right)&=\frac{-\left(1+\frac{1-r^{2}}{(s-t)^{2}}\right)}{(s-t)^{2}} \left(\frac{1}{\sqrt{1+q}}\cdot \frac{1}{1+y+\sqrt{1+q}}\right)\end{split}\end{equation}
where
$$q=\frac{2(1-r^{2})}{(s-t)^{2}} + \frac{(r^{2}+1)^{2}}{(s-t)^{4}}$$
$$y=\frac{1+r^{2}}{(s-t)^{2}}$$
So, for $s-t \geq 2(r+1)$,
$$|q| \leq \frac{9}{16}, \quad |y| \leq \frac{1}{4}$$
Using this, we have
\begin{equation} \left(\frac{(r^{2}-1-(s-t)^{2})}{\sqrt{\beta} (1+r^{2}+(s-t)^{2}+\sqrt{\beta})}\right) = \frac{-1}{2(s-t)^{2}} \left(1+O\left(\frac{1+r^{2}}{(s-t)^{2}}\right)\right)\end{equation}
So,
\begin{equation}\begin{split} &-2 \int_{t+2(r+1)}^{\infty} ds \lambda''(s)(s-t) \left(\frac{(r^{2}-1-(s-t)^{2})}{\sqrt{\beta} (1+r^{2}+(s-t)^{2}+\sqrt{\beta})}\right)\\
&= \int_{t+2(r+1)}^{\infty} ds \frac{\lambda''(s)}{(s-t)} + E_{1,\partial_{r}v_{1}}(t,r)\end{split}\end{equation}
where
$$|E_{1,\partial_{r}v_{1}}(t,r)| \leq C \int_{t+2(r+1)}^{\infty} ds \frac{|\lambda''(s)|}{(s-t)} \frac{(1+r^{2})}{(s-t)^{2}} \leq C \sup_{x \geq t}\left(|\lambda''(x)|\right)$$
Then,
\begin{equation} \int_{t+2(r+1)}^{\infty} ds \frac{\lambda''(s)}{(s-t)} = \int_{t+2(r+1)}^{\infty} ds \frac{\lambda''(s)}{1+s-t} + E_{2,\partial_{r}v_{1}}(t,r)\end{equation}
with
$$|E_{2,\partial_{r}v_{1}}(t,r)|\leq \int_{t+2(r+1)}^{\infty} ds |\lambda''(s)| \left(\frac{1}{s-t}-\frac{1}{1+s-t}\right) \leq C \sup_{x \geq t}\left(|\lambda''(x)|\right)$$
Finally,
\begin{equation}\begin{split} \int_{t+2(r+1)}^{\infty} ds \frac{\lambda''(s)}{1+s-t} &= \int_{t}^{\infty} ds \frac{\lambda''(s)}{1+s-t} - \int_{t}^{t+2(r+1)} ds \frac{\lambda''(s)}{1+s-t}\\
&=\int_{t}^{\infty} ds \frac{\lambda''(s)}{1+s-t} +E_{3,\partial_{r}v_{1}}(t,r)\end{split}\end{equation}
where
$$|E_{3,\partial_{r}v_{1}}(t,r)| \leq C \sup_{x \geq t}\left(|\lambda''(x)|\right) \log(3+2r)$$
and we get the desired result
\begin{equation}\partial_{r}v_{1}(t,r) = \int_{t}^{\infty}  \frac{\lambda''(s)}{1+s-t}ds + E_{\partial_{r}v_{1}}(t,r)\end{equation}
with
$$|E_{\partial_{r}v_{1}}(t,r)| \leq C \sup_{x \geq t}\left(|\lambda''(x)|\right) \log(3+2r)$$
It remains to prove the last estimate in the lemma statement: The first term of \eqref{drv1est} is estimated by
\begin{equation}\begin{split} |\int_{t}^{\infty} ds \frac{-\lambda''(s)}{r^{2}} \int_{0}^{s-t} \frac{\rho d\rho}{\sqrt{(s-t)^{2}-\rho^{2}}} \left(1+\frac{r^{2}-1-\rho^{2}}{\sqrt{(r^{2}-1-\rho^{2})^{2}+4r^{2}}}\right)|&\leq \frac{C}{r^{2}} \int_{t}^{\infty} ds |\lambda''(s)|(s-t)\end{split}\end{equation}
Turning to the second term of \eqref{drv1est}, we have
\begin{equation} \begin{split} &|\int_{t}^{\infty} ds \frac{\lambda''(s)}{r} \int_{0}^{s-t} \frac{\rho d\rho}{\sqrt{(s-t)^{2}-\rho^{2}}} \frac{4 r \left(p^2+r^2+1\right)}{\left(\left(p^2-r^2+1\right)^2+4 r^2\right)^{3/2}}|\\
&\leq C \int_{0}^{\infty} d\rho \frac{\rho(\rho^{2}+r^{2}+1)}{((\rho^{2}-r^{2}+1)^{2}+4r^{2})^{3/2}} \int_{\rho+t}^{\infty} ds \frac{|\lambda''(s)| (1+(s-t)^{2})}{\sqrt{(s-t)^{2}-\rho^{2})}(1+(s-t)^{2})}\\
&\leq C \text{sup}_{x \geq t} \left(|\lambda''(x)| (1+(x-t)^{2})\right) \int_{0}^{\infty}d\rho \frac{\rho(\rho^{2}+r^{2}+1)}{((\rho^{2}-r^{2}+1)^{2}+4r^{2})^{3/2}} \int_{\rho+t}^{\infty} \frac{ds}{\sqrt{(s-t)^{2}-\rho^{2}}} \frac{1}{(1+(s-t)^{2})}\\
&\leq C\text{sup}_{x \geq t} \left(|\lambda''(x)| (1+(x-t)^{2})\right) \int_{0}^{\infty}d\rho \frac{(\rho^{2}+r^{2}+1)}{((\rho^{2}-r^{2}+1)^{2}+4r^{2})^{3/2}} \frac{1}{\sqrt{1+\rho^{2}}} \end{split}\end{equation}
Let us first note that, for $ \rho \leq \frac{r}{2}$, we have
$$r^{2}-1-\rho^{2} > C r^{2}, \quad \text{ for } r >2$$
so, 
$$\frac{(\rho^{2}+r^{2}+1)}{((\rho^{2}-r^{2}+1)^{2}+4r^{2})^{3/2}} \leq \frac{C}{r^{4}}$$
and
\begin{equation}\begin{split} \int_{0}^{r/2}d\rho \frac{(\rho^{2}+r^{2}+1)}{((\rho^{2}-r^{2}+1)^{2}+4r^{2})^{3/2}} \frac{1}{\sqrt{1+\rho^{2}}} \leq \frac{C}{r^{4}} \sinh^{-1}\left(\frac{r}{2}\right)\end{split}\end{equation}
On the other hand, when $\rho \geq r/2$, we have
\begin{equation} \begin{split}&\int_{r/2}^{\infty}d\rho \frac{(\rho^{2}+r^{2}+1)}{((\rho^{2}-r^{2}+1)^{2}+4r^{2})^{3/2}} \frac{1}{\sqrt{1+\rho^{2}}} \\
&\leq C \int_{r/2}^{\infty} d\rho \frac{(\rho^{2}+r^{2}+1)}{\rho((\rho^{2}-r^{2}+1)^{2}+4r^{2})^{3/2}}\\
&=\frac{r^2+\frac{3 r^4}{\sqrt{9 r^4+40 r^2+16}}-\frac{9 r^2}{\sqrt{9 r^4+40 r^2+16}}-\frac{12}{\sqrt{9 r^4+40 r^2+16}}}{4 \left(r^2+1\right)^2}\\
&+\frac{2 \log \left(3 r^4+\left(\sqrt{9 r^4+40 r^2+16}+9\right) r^2+\sqrt{9 r^4+40 r^2+16}+4\right)-4 \log (r)+1-\log (4)}{4 \left(r^2+1\right)^2}\\
&\leq \frac{C}{r^{2}}\end{split}\end{equation}
So, we conclude that
\begin{equation} \begin{split} &|\int_{t}^{\infty} ds \frac{\lambda''(s)}{r} \int_{0}^{s-t} \frac{\rho d\rho}{\sqrt{(s-t)^{2}-\rho^{2}}} \frac{4 r \left(p^2+r^2+1\right)}{\left(\left(p^2-r^2+1\right)^2+4 r^2\right)^{3/2}}|\\
&\leq C\text{sup}_{x \geq t} \left(|\lambda''(x)| (1+(x-t)^{2})\right) \frac{1}{r^{2}}\end{split}\end{equation}
Combining these estimates, we get
\begin{equation} |\partial_{r}v_{1}(t,r)| \leq \frac{C}{r^{2}}\left(\text{sup}_{x \geq t} \left(|\lambda''(x)| (1+(x-t)^{2})\right)+\int_{t}^{\infty}  |\lambda''(s)| (s-t) ds\right), \quad r >2\end{equation}

\end{proof}
\subsection{The free wave correction}
The next correction is designed to produce a crucial source term in the equation that we will eventually use to choose a specific $\lambda$. In fact, it is this correction which ultimately determines the asymptotics of the $\lambda$ which we will choose.
Define $v_{2}$ to be the solution to
\begin{equation} \begin{cases} -\partial_{tt}v_{2}+\partial_{rr}v_{2}+\frac{1}{r}\partial_{r}v_{2}-\frac{v_{2}}{r^{2}}=0\\
v_{2}(0)=0\\
\partial_{t}v_{2}(0)=v_{2,0}\end{cases}\end{equation}
where $$\widehat{v_{2,0}}(\xi) = \int_{0}^{\infty} v_{2,0}(r) J_{1}(r\xi)r dr =  \begin{cases}\frac{4b}{\pi(b-1)}\frac{\chi_{\leq \frac{1}{4}}(\xi)}{\log^{b-1}(\frac{1}{\xi})}, \quad b \neq 1\\
\frac{-4}{\pi} \chi_{\leq \frac{1}{4}}(\xi) \log(\log(\frac{1}{\xi})), \quad b=1\end{cases}$$
where $\chi_{\leq\frac{1}{4}} \in C^{\infty}_{c}([0,\infty))$, $0 \leq \chi_{\leq \frac{1}{4}}(x) \leq 1$ for all $x$, and $\chi_{\leq \frac{1}{4}}$ satisfies $$\chi_{\leq \frac{1}{4}}(x) = \begin{cases}1, \quad 0\leq x \leq \frac{1}{8}\\
0, \quad x >\frac{1}{4}\end{cases}$$ and is otherwise arbitrary. For ease of notation, let
\begin{equation} c_{b} = \begin{cases}\frac{4 b}{\pi(b-1)}, \quad b \neq 1\\
\frac{-4}{\pi}, \quad b=1\end{cases}\end{equation} 
Note that this particular form of $c_{b}$ is due to the fact that part of $\widehat{v_{2,0}}(\xi)$ involves an antiderivative of $\frac{1}{\xi \log^{b}(\frac{1}{\xi})}$.   
We have a formula for $v_{2}$, namely: 
\begin{equation}\label{v2def}v_{2}(t,r) = c_{b} \int_{0}^{\infty} d\xi \sin(t \xi) J_{1}(r\xi) \chi_{\leq \frac{1}{4}}(\xi)\cdot \begin{cases}\frac{1}{\log^{b-1}(\frac{1}{\xi})}, \quad b \neq 1\\
\log(\log(\frac{1}{\xi})), \quad b=1 \end{cases}\end{equation}
The significance of this particular choice of Cauchy data will be seen later on, when we identify the $v_{2}$-related contribution to the equation we use to choose $\lambda$.\\
\\
We will prove pointwise estimates on $v_{2}$ later on, but, to give the reader some idea of the behavior of $v_{2}$, we note that (for instance, for $b > 1$)
\begin{equation}\begin{split} v_{2}(t,r) &= \frac{c_{b}(1-\text{sgn}(t-r))}{2\sqrt{2 r} \sqrt{|t-r|} \log^{b-1}(|t-r|)} +E_{2}(t,r)\end{split}\end{equation}
with
\begin{equation}\begin{split} |E_{2}(t,r)| &\leq C\left(\frac{1}{\sqrt{r}\sqrt{|t-r|}\log^{b}(|t-r|)}\right)+\frac{C}{\sqrt{r}\sqrt{t+r}\log^{b-1}(t+r)}+\frac{C}{r \log^{b-1}(r)}, \quad r>\frac{t}{2}, \quad |t-r| > 5\end{split}\end{equation}
This can be established by a procedure similar to the one which we use later on to compute the inner product of the $v_{2}$ linear error term with $\phi_{0}(\frac{\cdot}{\lambda(t)})$.

\subsection{Further improvement of the soliton error term}
If we substitute $u=Q_{\frac{1}{\lambda(t)}}+v_{1}+v_{2}+u_{3}$ into the wave maps equation $$-\partial_{tt}u+\partial_{rr}u+\frac{1}{r}\partial_{r}u-\frac{\sin(2 u)}{2r^{2}} =0$$
we get
\begin{equation}\label{v3eqn} \begin{split} -\partial_{tt}u_{3}+\partial_{rr}u_{3}+\frac{1}{r}\partial_{r}u_{3}-\frac{\cos(2Q_{\frac{1}{\lambda(t)}})}{r^{2}}u_{3} &= \partial_{tt}Q_{\frac{1}{\lambda(t)}} + \frac{2\lambda''(t) r}{1+r^{2}}+\left(\frac{\cos(2Q_{\frac{1}{\lambda(t)}})-1}{r^{2}}\right)v_{1}\\
&+\left(\frac{\cos(2Q_{\frac{1}{\lambda(t)}})-1}{r^{2}}\right)v_{2}\\
&+N_{2}(v_{1}+v_{2})+N(u_{3})+L(u_{3})\end{split}\end{equation}
where $$N(f) = \left(\frac{\sin(2f)-2f}{2r^{2}}\right)\cos(2Q_{\frac{1}{\lambda(t)}}) + \left(\frac{\cos(2f)-1}{2r^{2}}\right)\sin(2(Q_{\frac{1}{\lambda(t)}}+v_{1}+v_{2}))$$
$$L(f)=\frac{\sin(2f)}{2r^{2}} \cos(2Q_{\frac{1}{\lambda(t)}})(\cos(2(v_{1}+v_{2}))-1) -\frac{\sin(2f)}{2r^{2}}\sin(2Q_{\frac{1}{\lambda(t)}})\sin(2(v_{1}+v_{2}))$$
$$N_{2}(f) = \frac{\sin(2Q_{\frac{1}{\lambda(t)}})}{2r^{2}}(\cos(2f)-1)+\frac{\cos(2Q_{\frac{1}{\lambda(t)}})}{2r^{2}}(\sin(2f)-2f)$$
Note that
$$F_{0}(t,r) = \partial_{tt}Q_{\frac{1}{\lambda(t)}} + \frac{2\lambda''(t) r}{1+r^{2}}$$
appears on the right-hand side of \eqref{v3eqn}. When the spatial coordinate is renormalized, via $$r=R \lambda(t)$$ one term arising in the the large $R$ expansion of $F_{0}$ has insufficient decay in time.  To remedy this, we will add another correction, to be denoted $v_{3}$. On one hand, choosing $v_{3}$ to solve an equation whose right-hand side is excatly equal to $F_{0}$ would eventually lead to a much more difficult equation that we use to determine $\lambda(t)$. On the other hand, the error terms remaining after adding the correction $v_{3}$ should no longer have insufficient decay in time for large values of $R$. We therefore proceed as follows: recall that $\alpha$ has been introduced just before \ref{T0initialconstraint}, and satisfies 
$$0 < \alpha < \text{min}\{\frac{1}{b (1040!)},\frac{1}{1040!}\}$$
 and let
$$F_{0,1}(t,r) = \frac{2 r \lambda''(t)}{(\lambda(t)^{2}+r^{2})} \left(\frac{-1+\lambda(t)^{2}}{1+r^{2}} + \frac{1-\lambda(t)^{2\alpha}}{1+r^{2}\lambda(t)^{2\alpha -2}}\right)$$
Next, we consider $v_{3}$, defined as the solution (with 0 Cauchy data at infinity) to the equation
\begin{equation} -\partial_{tt}v_{3}+\partial_{rr}v_{3}+\frac{1}{r}\partial_{r}v_{3}-\frac{v_{3}}{r^{2}} = F_{0,1}(t,r)\end{equation}
Following the same steps as for $v_{1}$, we get
\begin{equation}v_{3}(t,r) = -\frac{1}{2\pi} \int_{t}^{\infty} ds \int_{0}^{s-t} \frac{\rho d\rho}{\sqrt{(s-t)^{2}-\rho^{2}}} \int_{0}^{2\pi} d\theta \frac{F_{0,1}(s,\sqrt{r^{2}+\rho^{2}+2 r \rho \cos(\theta)})(r+\rho \cos(\theta))}{\sqrt{r^{2}+\rho^{2}+2 r \rho \cos(\theta)}}\end{equation}
which gives
\begin{equation}\label{v3def}\begin{split}v_{3}(t,r)=-\frac{1}{r} \int_{t}^{\infty} ds \int_{0}^{s-t} \frac{\rho d\rho}{\sqrt{(s-t)^{2}-\rho^{2}}} \lambda''(s)&\left(\frac{-1-\rho^{2}+r^{2}}{\sqrt{(1+\rho^{2}-r^{2})^{2}+4r^{2}}}\right.\\
&\left.+\frac{\lambda(s)^{2}-(r^{2}-\rho^{2})\lambda(s)^{2\alpha}}{\lambda(s)^{2}\sqrt{1+2(\rho^{2}+r^{2})\lambda(s)^{2\alpha-2} + (\rho^{2}-r^{2})^{2}\lambda(s)^{4\alpha-4}}}\right)\end{split} \end{equation}
The main result of this section is a decomposition of $v_{3}$ which will be useful in understanding its contribution to the equation for $\lambda$:
\begin{lemma} \begin{equation}\label{v3preciseforip} v_{3}(t,r) = -r \int_{t+6r}^{\infty} ds \lambda''(s)(s-t)\left(\frac{1}{1+(s-t)^{2}}-\frac{1}{\lambda(t)^{2-2\alpha}+(s-t)^{2}}\right)+E_{5}(t,r)\end{equation}
where
\begin{equation}\begin{split} |E_{5}(t,r)|&\leq C \left(\sup_{x \geq t}|\lambda''(x)|\right) \cdot\sup_{x \geq t}\left(\frac{|\lambda'(x)| \lambda(t)^{\alpha}}{\lambda(x)^{\alpha}}\right) \lambda(t)^{1-2\alpha}\\
&+C r \sup_{x \geq t}\left(|\lambda''(x)| \lambda(x)^{\alpha-1}(\lambda(x)^{\alpha-1}-\lambda(t)^{\alpha-1})\right)\lambda(t)^{2-2\alpha}\\
&+C r \sup_{x \geq t}|\lambda''(x)| \end{split}\end{equation}
\end{lemma}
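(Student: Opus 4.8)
The plan is to follow the scheme used for the $v_{1}$ estimates. Starting from the representation \eqref{v3def}, I split
$$\frac{1}{\sqrt{(s-t)^{2}-\rho^{2}}}=\left(\frac{1}{\sqrt{(s-t)^{2}-\rho^{2}}}-\frac{1}{s-t}\right)+\frac{1}{s-t},$$
put the contribution of the difference term into the error $E_{5}$, and extract the principal part from the term carrying the factor $\frac{1}{s-t}$. The structural fact that makes the whole computation close — and which has no analogue for $v_{1}$ — is that the two summands of the $\rho$-integrand of \eqref{v3def}, call them $A(\rho,r)$ (the one with $\sqrt{(1+\rho^{2}-r^{2})^{2}+4r^{2}}$ in the denominator) and $B(\rho,r,\lambda(s))$, are related by the exact rescaling identity
$$B(\rho,r,\lambda(s))=-A\!\left(\frac{\rho}{\lambda(s)^{1-\alpha}},\,\frac{r}{\lambda(s)^{1-\alpha}}\right),$$
which one checks by pulling $\lambda(s)^{2}$ out of the numerator of $B$ and inside its square root. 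This identity is precisely what produces the difference $\frac{1}{1+(s-t)^{2}}-\frac{1}{\lambda(t)^{2-2\alpha}+(s-t)^{2}}$ in the final formula: the first fraction is the $A$-contribution at unit scale, the second is the $B$-contribution at the scale $\lambda(s)^{1-\alpha}$.

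For the computation itself, the substitution $w=r^{2}-1-\rho^{2}$ gives the closed form $\int_{0}^{M}\rho A(\rho,r)\,d\rho=\tfrac12\big(r^{2}+1-\sqrt{(r^{2}-1-M^{2})^{2}+4r^{2}}\big)$, and by the rescaling identity $\int_{0}^{M}\rho B(\rho,r,\lambda(s))\,d\rho=-\tfrac12\big(r^{2}+\lambda(s)^{2-2\alpha}-\sqrt{(r^{2}-\lambda(s)^{2-2\alpha}-M^{2})^{2}+4r^{2}\lambda(s)^{2-2\alpha}}\big)$. The $O(M^{2})$ leading terms of these two integrals cancel exactly, and expanding the two square roots shows that, for $M=s-t\ge 6r$, the sum $\int_{0}^{s-t}\rho(A+B)\,d\rho$ equals $r^{2}(s-t)^{2}\big(\tfrac{1}{1+(s-t)^{2}}-\tfrac{1}{\lambda(s)^{2-2\alpha}+(s-t)^{2}}\big)$ up to a controllable remainder (here the smallness of $\alpha$ and the bound $\lambda(s)<\tfrac12$ are used to compare the various powers of $\lambda$). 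Inserting this into \eqref{v3def} and dividing by the prefactor $r$ yields the stated principal term once one replaces $\lambda(s)^{2-2\alpha}$ by $\lambda(t)^{2-2\alpha}$ inside the kernel. I would also record the pointwise decay $|A(\rho,r)+B(\rho,r,\lambda(s))|\le \frac{C r^{2}}{(1+|\rho^{2}-r^{2}|)^{2}}$ for large $\rho$ (the $\lambda$-dependence drops out of the leading term of $1-f(\rho^{2},c)\sim 2r^{2}c/\rho^{4}$, where $f(x,c)=\tfrac{x-r^{2}+c}{\sqrt{(x-r^{2}+c)^{2}+4r^{2}c}}$), so that $\int_{0}^{\infty}\rho|A+B|\,d\rho\le Cr^{2}$ for $r\le 1$ and $\le Cr$ for $r\ge 1$; as in the $v_{1}$ argument this, together with $\int_{t+\rho}^{\infty}\big(\tfrac{1}{\sqrt{(s-t)^{2}-\rho^{2}}}-\tfrac{1}{s-t}\big)ds=\log 2$, bounds the difference-kernel contribution by $C r\sup_{x\ge t}|\lambda''(x)|$, and the contribution of $s\in[t,t+6r]$, estimated directly from \eqref{v3def} via $|A|,|B|\le 1$ and $\int_{0}^{s-t}\frac{\rho\,d\rho}{\sqrt{(s-t)^{2}-\rho^{2}}}=s-t$, is likewise $\le C r\sup_{x\ge t}|\lambda''(x)|$. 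Together these form the third term of $E_{5}$.

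It remains to account for the replacement $\lambda(s)^{2-2\alpha}\to\lambda(t)^{2-2\alpha}$, which enters in two places: in the kernel $\frac{1}{\lambda(\cdot)^{2-2\alpha}+(s-t)^{2}}$ and in the square root $\sqrt{(r^{2}-\lambda(\cdot)^{2-2\alpha}-(s-t)^{2})^{2}+4r^{2}\lambda(\cdot)^{2-2\alpha}}$ hidden in the remainder. Both are handled by the fundamental theorem of calculus: $\lambda(s)^{2-2\alpha}-\lambda(t)^{2-2\alpha}=(2-2\alpha)\int_{t}^{s}\lambda(x)^{1-2\alpha}\lambda'(x)\,dx$ and, after the rescaling, $\lambda(s)^{\alpha-1}-\lambda(t)^{\alpha-1}=(\alpha-1)\int_{t}^{s}\lambda(x)^{\alpha-2}\lambda'(x)\,dx$; combining these with $\lambda'<0$ and $|\lambda''|\le Ct^{-2}\log^{-b-1}t$, and keeping track of whether $s-t$ is small or large relative to $\lambda(t)^{1-\alpha}$ (the scale at which $\frac{(s-t)}{\lambda(t)^{2-2\alpha}+(s-t)^{2}}$ transitions), produces the first and second terms of $E_{5}$ respectively. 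I expect this last step to be the main obstacle: it requires simultaneously tracking the near-cancellation of the two $O((s-t)^{2})$ pieces to the next order and comparing $\lambda(s)$ to $\lambda(t)$ across the transition scale, and it works only because $\lambda$ is slowly varying in the sense of Remark 4, so that $|\lambda'(x)|\lambda(x)^{-\alpha}$ and $|\lambda''(x)|\lambda(x)^{2\alpha-2}$ are, up to harmless logarithmic factors, decreasing and their suprema over $x\ge t$ are comparable to their values at $x=t$.
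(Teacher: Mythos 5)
Your overall skeleton matches the paper's: the same split of the kernel into $\frac{1}{s-t}$ plus the difference, the same explicit evaluation of the $\rho$-integral (your rescaling identity $F_{3}(r,\rho,\lambda(s))=-A(\rho/\lambda(s)^{1-\alpha},r/\lambda(s)^{1-\alpha})$ and the closed form are a clean way to package what the paper computes directly), the same treatment of $s\in[t,t+6r]$, and the same origin of the first error term (replacing $\lambda(s)$ by $\lambda(t)$ only in the kernel $\frac{1}{\lambda(\cdot)^{2-2\alpha}+(s-t)^{2}}$, via the mean value theorem and monotonicity of $\lambda$). The square-root expansion errors in the principal part are bounded by $Cr\sup_{x\ge t}|\lambda''(x)|$ with $\lambda(s)$ left in place, so no replacement is needed there; your attribution of the second term of $E_{5}$ to that replacement is a misassignment, though by itself harmless.

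The genuine gap is in your treatment of the difference-kernel piece. First, the pointwise bound $|A+F_{3}(r,\rho,\lambda(s))|\le \frac{Cr^{2}}{(1+|\rho^{2}-r^{2}|)^{2}}$ is not uniform in $\lambda(s)$: near $\rho^{2}\approx r^{2}$ the $F_{3}$-part is of size $\min\bigl(1,\,r^{2}\lambda(s)^{2\alpha-2}\bigr)$, not $O(r^{2})$. More seriously, to use $\int_{t+\rho}^{\infty}\bigl(\frac{1}{\sqrt{(s-t)^{2}-\rho^{2}}}-\frac{1}{s-t}\bigr)ds=\log 2$ you must exchange the $s$- and $\rho$-integrations, and then you need an $s$-\emph{uniform} majorant of $|A+F_{3}(r,\rho,\lambda(s))|$ whose $\rho$-integral is $O(r^{2})$. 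No such majorant exists: while $\int_{0}^{\infty}\rho\bigl[(1+A)+(1-F_{3}(r,\rho,\lambda(s)))\bigr]d\rho=2r^{2}$ for each \emph{fixed} $s$, the transition scale of $1-F_{3}$ moves with $\lambda(s)^{1-\alpha}$, and $\sup_{s\ge t}(1-F_{3}(r,\rho,\lambda(s)))$ only integrates to $O\bigl(r^{2}(1+\log_{+}(\lambda(t)^{1-\alpha}/r))\bigr)$, which is not covered by the three terms of $E_{5}$ (the logarithm is unbounded in $t$ for, say, $r\sim\lambda(t)^{10}$). This is exactly why the paper does not bound this piece outright by $Cr\sup|\lambda''|$: it writes $F_{3}(r,\rho,\lambda(s))=F_{3}(r,\rho,\lambda(t))+\bigl(F_{3}(r,\rho,\lambda(s))-F_{3}(r,\rho,\lambda(t))\bigr)$; the frozen part has a fixed scale and gives the $Cr\sup|\lambda''|$ contribution as in your $v_{1}$-style argument, while the commutator is estimated by the mean value theorem in $z=\lambda^{\alpha-1}$ (using $\lambda(t)^{\alpha-1}\le z_{\sigma}\le\lambda(s)^{\alpha-1}$, which relies on $\lambda$ decreasing) together with a $\rho$-integral computation split according to $2r\gtrless\lambda(t)^{1-\alpha}$, and this commutator is precisely the source of the second term of $E_{5}$. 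To repair your argument you need to insert this freezing-plus-commutator step; without it the claimed bound on the difference-kernel contribution does not follow.
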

\begin{proof}
\begin{equation}\begin{split}v_{3}(t,r)=-\frac{1}{r} \int_{t}^{\infty} ds \int_{0}^{s-t} \frac{\rho d\rho}{\sqrt{(s-t)^{2}-\rho^{2}}} \lambda''(s)&\left(\frac{-1-\rho^{2}+r^{2}}{\sqrt{(1+\rho^{2}-r^{2})^{2}+4r^{2}}}\right.\\
&\left.+\frac{\lambda(s)^{2}-(r^{2}-\rho^{2})\lambda(s)^{2\alpha}}{\lambda(s)^{2}\sqrt{1+2(\rho^{2}+r^{2})\lambda(s)^{2\alpha-2} + (\rho^{2}-r^{2})^{2}\lambda(s)^{4\alpha-4}}}\right)\end{split} \end{equation}
$v_{3}$ is then decomposed as 
\begin{equation} v_{3}(t,r) = v_{3,1}(t,r)+v_{3,2}(t,r)\end{equation}
where
\begin{equation}\label{v31def}\begin{split} v_{3,1}(t,r) =-\frac{1}{r} \int_{t}^{\infty} ds \int_{0}^{s-t} \frac{\rho d\rho}{(s-t)} \lambda''(s)&\left(\frac{-1-\rho^{2}+r^{2}}{\sqrt{(1+\rho^{2}-r^{2})^{2}+4r^{2}}}\right.\\
&\left.+\frac{\lambda(s)^{2}-(r^{2}-\rho^{2})\lambda(s)^{2\alpha}}{\lambda(s)^{2}\sqrt{1+2(\rho^{2}+r^{2})\lambda(s)^{2\alpha-2} + (\rho^{2}-r^{2})^{2}\lambda(s)^{4\alpha-4}}}\right)\end{split} \end{equation} 
$$v_{3,2}=v_{3}-v_{3,1}$$
Then, we record some pointwise estimates on $v_{3,2}$. If
$$F_{3}(r,\rho,\lambda(s)) = \frac{1-(r^{2}-\rho^{2})\lambda(s)^{2\alpha-2}}{\sqrt{1+2(\rho^{2}+r^{2})\lambda(s)^{2\alpha-2} + (\rho^{2}-r^{2})^{2} \lambda(s)^{4\alpha-4}}}=\frac{1-(r^{2}-\rho^{2})\lambda(s)^{2\alpha-2}}{\sqrt{4r^{2}\lambda(s)^{2\alpha-2}+(1-(r^{2}-\rho^{2})\lambda(s)^{2\alpha-2})^{2}}}$$
then,
\begin{equation}\label{v32eqn}\begin{split}v_{3,2}(t,r)&= \frac{-1}{r} \int_{t}^{\infty} ds \int_{0}^{s-t} \rho d\rho \left(\frac{1}{\sqrt{(s-t)^{2}-\rho^{2}}}-\frac{1}{(s-t)}\right)\lambda''(s)\left(\frac{-1-\rho^{2}+r^{2}}{\sqrt{(1+\rho^{2}-r^{2})^{2}+4r^{2}}}+F_{3}(r,\rho,\lambda(t))\right)\\
&-\frac{1}{r} \int_{t}^{\infty} ds \int_{0}^{s-t} \rho d\rho \left(\frac{1}{\sqrt{(s-t)^{2}-\rho^{2}}}-\frac{1}{(s-t)}\right)\lambda''(s) \left(F_{3}(r,\rho,\lambda(s))-F_{3}(r,\rho,\lambda(t))\right)\end{split}\end{equation}
For the first line of \eqref{v32eqn}, we have
\begin{equation}\begin{split}&|-\frac{1}{r} \int_{t}^{\infty} ds \int_{0}^{s-t} \rho d\rho \left(\frac{1}{\sqrt{(s-t)^{2}-\rho^{2}}}-\frac{1}{s-t}\right) \lambda''(s) \left(\frac{-1-\rho^{2}+r^{2}}{\sqrt{(-1-\rho^{2}+r^{2})^{2}+4r^{2}}}+F_{3}(r,\rho,\lambda(t))\right)|\\
&\leq \frac{C}{r} \left(\sup_{x \geq t}|\lambda''(x)|\right) \int_{0}^{\infty} \rho d\rho |\frac{-1-\rho^{2}+r^{2}}{\sqrt{(-1-\rho^{2}+r^{2})^{2}+4r^{2}}}+1-1+F_{3}(r,\rho,\lambda(t))| \int_{t+\rho}^{\infty} \left(\frac{1}{\sqrt{(s-t)^{2}-\rho^{2}}}-\frac{1}{s-t}\right) ds\\
&\leq \frac{C}{r} \left(\sup_{x \geq t}|\lambda''(x)|\right)\left( \int_{0}^{\infty} \rho d\rho \left(1-\frac{1+\rho^{2}-r^{2}}{\sqrt{(-1-\rho^{2}+r^{2})^{2}+4r^{2}}}\right) +\int_{0}^{\infty} \rho d\rho \left(1-F_{3}(r,\rho,\lambda(t))\right)\right)\\
&\leq C r \left(\sup_{x \geq t}|\lambda''(x)|\right)\end{split}\end{equation}
where we used the facts that
$$\frac{-1-\rho^{2}+r^{2}}{\sqrt{(-1-\rho^{2}+r^{2})^{2}+4r^{2}}} \geq -1$$
and
$$F_{3}(r,\rho,\lambda(t)) \leq 1$$
To estimate the second line of \eqref{v32eqn}, we first note that
\begin{equation} F_{3}(r,\rho,\lambda(s))-F_{3}(r,\rho,\lambda(t)) = \int_{0}^{1} \frac{-4 r^{2} z_{\sigma} (1+(r^{2}-\rho^{2})z_{\sigma}^{2})}{(1+2(\rho^{2}+r^{2})z_{\sigma}^{2}+(\rho^{2}-r^{2})^{2}z_{\sigma}^{4})^{3/2}}\left(\lambda(s)^{\alpha-1}-\lambda(t)^{\alpha-1}\right) d\sigma\end{equation}
where
$$z_{\sigma} = \sigma \lambda(s)^{\alpha-1}+(1-\sigma) \lambda(t)^{\alpha-1}$$
First, we note that
\begin{equation} 1+2(\rho^{2}+r^{2}) z^{2}+(\rho^{2}-r^{2})^{2} z^{4} = 4\rho^{2}z^{2}+(1+(r^{2}-\rho^{2})z^{2})^{2}\end{equation}
So,
\begin{equation} |\frac{-4 r^{2} z_{\sigma} (1+(r^{2}-\rho^{2})z_{\sigma}^{2})}{(1+2(\rho^{2}+r^{2})z_{\sigma}^{2}+(\rho^{2}-r^{2})^{2}z_{\sigma}^{4})^{3/2}}\left(\lambda(s)^{\alpha-1}-\lambda(t)^{\alpha-1}\right)| \leq C \frac{r^{2} |z_{\sigma}|\left(\lambda(s)^{\alpha-1}-\lambda(t)^{\alpha-1}\right)}{(1+2(\rho^{2}+r^{2})z_{\sigma}^{2}+(\rho^{2}-r^{2})^{2}z_{\sigma}^{4})}\end{equation} 
Since $\lambda$ is a decreasing function, and $0<\alpha<\frac{1}{8}$, we have
\begin{equation} \lambda(t)^{\alpha -1} \leq|z_{\sigma}|= |\sigma \lambda(s)^{\alpha -1}+(1-\sigma)\lambda(t)^{\alpha-1}| \leq \lambda(s)^{\alpha -1}, \quad 0 \leq \sigma \leq 1, \quad s \geq t\end{equation}
So, \begin{equation} |F_{3}(r,\rho,\lambda(s))-F_{3}(r,\rho,\lambda(t))| \leq C\frac{r^{2} \lambda(s)^{\alpha -1} |\lambda(s)^{\alpha -1}-\lambda(t)^{\alpha -1}|}{(1+2(\rho^{2}+r^{2})\lambda(t)^{2\alpha-2}+(\rho^{2}-r^{2})^{2}\lambda(t)^{4\alpha -4})}, \quad s \geq t\end{equation}
This gives
\begin{equation} \begin{split} &|-\frac{1}{r} \int_{t}^{\infty} ds \int_{0}^{s-t} \rho d\rho \left(\frac{1}{\sqrt{(s-t)^{2}-\rho^{2}}}-\frac{1}{(s-t)}\right)\lambda''(s) \left(F_{3}(r,\rho,\lambda(s))-F_{3}(r,\rho,\lambda(t))\right)|\\
&\leq C r \int_{0}^{\infty} \rho d\rho \int_{\rho+t}^{\infty} ds \left(\frac{1}{\sqrt{(s-t)^{2}-\rho^{2}}}-\frac{1}{(s-t)}\right) |\lambda''(s)| \frac{\lambda(s)^{\alpha -1}|\lambda(s)^{\alpha -1}-\lambda(t)^{\alpha -1}|}{(1+2(\rho^{2}+r^{2})\lambda(t)^{2\alpha-2}+(\rho^{2}-r^{2})^{2}\lambda(t)^{4\alpha -4})}\\
&\leq C r \sup_{x \geq t}\left(|\lambda''(x)| \lambda(x)^{\alpha-1} |\lambda(x)^{\alpha-1}-\lambda(t)^{\alpha-1}|\right)\int_{0}^{\infty} \frac{\rho d\rho}{(1+2(\rho^{2}+r^{2})\lambda(t)^{2\alpha-2} +(\rho^{2}-r^{2})^{2} \lambda(t)^{4\alpha-4})}\end{split}\end{equation}
Then, if $2r > \lambda(t)^{1-\alpha}$, we have \begin{equation}\begin{split} \int_{0}^{\infty} \frac{\rho d\rho}{(1+2(\rho^{2}+r^{2})\lambda(t)^{2\alpha-2} +(\rho^{2}-r^{2})^{2} \lambda(t)^{4\alpha-4})}&\leq C\left( \int_{0}^{\lambda(t)^{1-\alpha}} \rho d\rho + \int_{\lambda(t)^{1-\alpha}}^{2r} \frac{\rho d\rho}{r^{2}\lambda(t)^{2\alpha-2}} + \int_{2r}^{\infty} \frac{\rho d\rho}{\rho^{4} \lambda(t)^{4\alpha-4}}\right)\\
&\leq C \lambda(t)^{2-2\alpha}\end{split}\end{equation}
On the other hand, if $2r \leq \lambda(t)^{1-\alpha}$, we have
\begin{equation}\begin{split} \int_{0}^{\infty} \frac{\rho d\rho}{(1+2(\rho^{2}+r^{2})\lambda(t)^{2\alpha-2} +(\rho^{2}-r^{2})^{2} \lambda(t)^{4\alpha-4})}&\leq \int_{0}^{\lambda(t)^{1-\alpha}} \rho d\rho + \int_{\lambda(t)^{1-\alpha}}^{\infty} \frac{d\rho}{\rho^{3}\lambda(t)^{4\alpha-4}}\\
&\leq C \lambda(t)^{2-2\alpha}\end{split}\end{equation}
In total, we get
\begin{equation}\begin{split} &|-\frac{1}{r} \int_{t}^{\infty} ds \int_{0}^{s-t} \rho d\rho \left(\frac{1}{\sqrt{(s-t)^{2}-\rho^{2}}}-\frac{1}{(s-t)}\right)\lambda''(s) \left(F_{3}(r,\rho,\lambda(s))-F_{3}(r,\rho,\lambda(t))\right)|\\
& \leq C r \sup_{x \geq t}\left(|\lambda''(x)| \lambda(x)^{\alpha-1} |\lambda(x)^{\alpha-1}-\lambda(t)^{\alpha-1}|\right) \lambda(t)^{2-2\alpha}\end{split}\end{equation}
It then suffices to study $v_{3,1}$. Firstly, we have
\begin{equation}\begin{split} &|-\frac{1}{r} \int_{t}^{t+6r} ds \int_{0}^{s-t} \frac{\rho d\rho}{(s-t)} \lambda''(s)\left(\frac{-1-\rho^{2}+r^{2}}{\sqrt{(-1-\rho^{2}+r^{2})^{2}+4r^{2}}}+F_{3}(r,\rho,\lambda(s))\right)| \\
&\leq \frac{1}{r} \int_{t}^{t+6r} ds \int_{0}^{s-t} \frac{\rho d\rho}{(s-t)} |\lambda''(s)|\cdot 2\\
&\leq C r \sup_{x \geq t}|\lambda''(x)|\end{split}\end{equation}
Next, we have
\begin{equation}\begin{split}&-\frac{1}{r} \int_{t+6r}^{\infty} ds \int_{0}^{s-t} \frac{\rho d\rho}{(s-t)} \lambda''(s)\left(\frac{-1-\rho^{2}+r^{2}}{\sqrt{(-1-\rho^{2}+r^{2})^{2}+4r^{2}}}+F_{3}(r,\rho,\lambda(s))\right)\\
=-2r \int_{6r}^{\infty} dw &\lambda''(t+w) w\left(\frac{1}{(1+w^{2})\left(\frac{r^{2}}{1+w^{2}}+1+\sqrt{(\frac{r^{2}}{1+w^{2}}+1)^{2}-4\frac{r^{2}w^{2}}{(1+w^{2})^{2}}}\right)}\right.\\
&\left.+\frac{-1}{(\lambda(t+w)^{2-2\alpha}+w^{2})\left(\frac{r^{2}}{\lambda(t+w)^{2-2\alpha}+w^{2}}+1+\sqrt{(\frac{r^{2}}{\lambda(t+w)^{2-2\alpha}+w^{2}}+1)^{2}-\frac{4r^{2}w^{2}}{(\lambda(t+w)^{2-2\alpha}+w^{2})^{2}}}\right)}\right)\end{split}\end{equation}
Using the fact that $w=s-t \geq 6r$ in the integral below, we get
\begin{equation}\begin{split} &-\frac{1}{r} \int_{t+6r}^{\infty} ds \int_{0}^{s-t} \frac{\rho d\rho}{(s-t)} \lambda''(s)\left(\frac{-1-\rho^{2}+r^{2}}{\sqrt{(-1-\rho^{2}+r^{2})^{2}+4r^{2}}}+F_{3}(r,\rho,\lambda(s))\right)\\
&=-2r \int_{6r}^{\infty} dw \lambda''(t+w) w\left(\frac{1}{2(1+w^{2})}-\frac{1}{2(\lambda(t+w)^{2-2\alpha}+w^{2})}\right)+E_{4}\end{split}\end{equation}
where
\begin{equation}\begin{split}|E_{4}| &\leq C r \sup_{x \geq t}|\lambda''(x)| \int_{6r}^{\infty} dw \frac{r^{2} w}{w^{2}} \left(\frac{1}{1+w^{2}}+\frac{1}{\lambda(t+w)^{2-2\alpha} + w^{2}}\right)\\
&\leq C r \sup_{x \geq t}|\lambda''(x)|\end{split}\end{equation}
So, it suffices to study 
\begin{equation}  -2r \int_{6r}^{\infty} dw \lambda''(t+w) w\left(\frac{1}{2(1+w^{2})}-\frac{1}{2(\lambda(t+w)^{2-2\alpha}+w^{2})}\right)\end{equation}
We will make one more reduction, which is to replace $\lambda(t+w)$ in the above expression with $\lambda(t)$. The error in doing this replacement is
\begin{equation} \begin{split} &-r \int_{t+6r}^{\infty} ds \lambda''(s)(s-t)\left(\frac{1}{(\lambda(t)^{2-2\alpha}+(s-t)^{2})}-\frac{1}{(\lambda(s)^{2-2\alpha}+(s-t)^{2})}\right)\end{split}\end{equation}
But, if
$$F_{4}(x,s-t)=\frac{1}{x^{2}+(s-t)^{2}}$$
then,
\begin{equation}\begin{split}|F_{4}(\lambda(s)^{1-\alpha},s-t)-F_{4}(\lambda(t)^{1-\alpha},s-t)| &\leq  C \frac{\lambda(t)^{1-\alpha}}{(s-t)^{4}} \cdot |\lambda(t)^{1-\alpha}-\lambda(s)^{1-\alpha}|\\
&\leq C \frac{\lambda(t)^{1-2\alpha}}{(s-t)^{3}} \cdot \sup_{x \geq t}\left(\frac{|\lambda'(x)| \lambda(t)^{\alpha}}{\lambda(x)^{\alpha}}\right)\end{split}\end{equation}
where we use the fact that $\lambda$ is a decreasing function. 
So, 
\begin{equation} \begin{split} &|-r \int_{t+6r}^{\infty} ds \lambda''(s)(s-t)\left(\frac{1}{(\lambda(t)^{2-2\alpha}+(s-t)^{2})}-\frac{1}{(\lambda(s)^{2-2\alpha}+(s-t)^{2})}\right)|\\
&\leq C r \left(\sup_{x \geq t}|\lambda''(x)|\right)\cdot \sup_{x \geq t}\left(\frac{|\lambda'(x)| \lambda(t)^{\alpha}}{\lambda(x)^{\alpha}}\right) \lambda(t)^{1-2\alpha} \int_{6r}^{\infty} dw \frac{w}{w^{3}}\\
&\leq C \left(\sup_{x \geq t}|\lambda''(x)|\right) \cdot\sup_{x \geq t}\left(\frac{|\lambda'(x)| \lambda(t)^{\alpha}}{\lambda(x)^{\alpha}}\right) \lambda(t)^{1-2\alpha}\end{split}\end{equation}
This finally gives \eqref{v3preciseforip}.
\end{proof}
\subsection{The linear error terms for large $r$}
Despite the decay of $\frac{1-\cos(2Q_{1}(\frac{r}{\lambda(t)}))}{r^{2}}$ for large $r$, we will still need to add another correction which improves the linear error terms of $v_{1},v_{2},v_{3}$, as well as $F_{0,2}=-F_{0,1}+\frac{2 \lambda''(t) r}{1+r^{2}}+\partial_{t}^{2} Q_{\frac{1}{\lambda(t)}}$, for large $r$. The addition of this correction will not change the leading order contribution of these error terms to the modulation equation, but, will improve the overall error term of the final ansatz for large $r$.
Let $\chi_{\geq 1} \in C^{\infty}(\mathbb{R})$ satisfy
$$\chi_{\geq 1}(x) = \begin{cases} 1, \quad x \geq 2\\
0, \quad x < 1\end{cases}$$
and
$$0 \leq \chi_{\geq 1}(x) \leq 1, \quad x \in \mathbb{R}$$
Then, we recall that $N$ has been defined just before \eqref{T0initialconstraint}, let 
$$v_{4,c}(t,r) = \chi_{\geq 1}(\frac{2r}{\log^{N}(t)}) \left(\left(\frac{\cos(2Q_{1}(\frac{r}{\lambda(t)}))-1}{r^{2}}\right)\left(v_{1}+v_{2}+v_{3}\right)+F_{0,2}(t,r)\right)$$
and define $v_{4}$ as the solution to
$$-\partial_{tt}v_{4}+\partial_{rr}v_{4}+\frac{1}{r}\partial_{r}v_{4}-\frac{v_{4}}{r^{2}} = v_{4,c}(t,r)$$
with 0 Cauchy data at infinity. In other words, we have
\begin{equation} v_{4}(t,r) = \int_{t}^{\infty} v_{4,s}(t,r) ds\end{equation}
where
$v_{4,s}$ solves
\begin{equation}\begin{cases} -\partial_{tt}v_{4,s} + \partial_{rr}v_{4,s} + \frac{1}{r} \partial_{r}v_{4,s} - \frac{v_{4,s}}{r^{2}}=0\\
v_{4,s}(s,r) = 0\\
\partial_{t}v_{4,s}(s,r) = v_{4,c}(s,r)\end{cases}\end{equation}
So, $$v_{4,s}(t,r) = \partial_{r}w_{4,s}(t,r)$$
where
$$u_{4,s}(t,y) = w_{4,s}(t,|y|)$$
and $u_{4,s}:[T_{0},\infty) \times \mathbb{R}^{2} \rightarrow \mathbb{R}$ solves
\begin{equation}\begin{cases} \partial_{tt} u_{4,s} - \Delta u_{4,s} = 0\\
u_{4,s}(s,x) =0\\
\partial_{t}u_{4,s}(s,x) = -\int_{|x|}^{\infty} v_{4,c}(s,q) dq\end{cases}\end{equation}
We get, for $s \geq t$,
\begin{equation} u_{4,s}(t,x) = \frac{1}{2\pi} \int_{B_{s-t}(x)} \frac{\left(\int_{|y|}^{\infty} v_{4,c}(s,q) dq\right)}{\sqrt{(s-t)^{2}-|y-x|^{2}}} dy=\frac{1}{2\pi} \int_{B_{s-t}(0)} \frac{\left(\int_{|z+x|}^{\infty} v_{4,c}(s,q) dq\right)}{\sqrt{(s-t)^{2}-|z|^{2}}} dz\end{equation}
We use polar coordinates in the $z$ variable, with origin $0$, and polar axis $\hat{x}$ for $x \neq 0$. Then, we obtain, apriori for $r \neq 0$, 
\begin{equation}\label{v4prelimformula} v_{4}(t,r) = \frac{-1}{2\pi} \int_{t}^{\infty} ds \int_{0}^{s-t} \frac{\rho d\rho}{\sqrt{(s-t)^{2}-\rho^{2}}} \int_{0}^{2\pi} d\theta \frac{v_{4,c}(s,\sqrt{r^{2}+2 r \rho \cos(\theta) + \rho^{2}})}{\sqrt{r^{2}+2 r \rho \cos(\theta) + \rho^{2}}} \left(r+\rho \cos(\theta)\right)\end{equation}
If we let
\begin{equation}\label{gforv4}\begin{split} G(s,r,\rho) &=\int_{0}^{2\pi} d\theta \frac{v_{4,c}(s,\sqrt{r^{2}+2 r \rho \cos(\theta) + \rho^{2}})}{\sqrt{r^{2}+2 r \rho \cos(\theta) + \rho^{2}}} \left(r+\rho \cos(\theta)\right)\\
& s \geq t,\quad r \geq 0,\quad s-t \geq \rho \geq 0\end{split}\end{equation}
Then,
$$G(s,0,\rho)=0$$
and
\begin{equation}\begin{split} G(s,r,\rho) &= r \int_{0}^{1} \partial_{2}G(s,r\beta,\rho) d\beta\\
&=r \int_{0}^{1} d\beta \int_{0}^{2\pi} d\theta \left(\partial_{2}v_{4,c}(s,\sqrt{\beta^{2}r^{2}+2 \beta r \rho \cos(\theta) + \rho^{2}}) \frac{(\beta r+\rho \cos(\theta))^{2}}{\beta^{2} r^{2}+2 \beta r \rho \cos(\theta) + \rho^{2}}\right.\\
&-\left.v_{4,c}(s,\sqrt{\beta^{2} r^{2}+2 \beta r \rho \cos(\theta) + \rho^{2}}) \frac{(\beta r+\rho \cos(\theta))^{2}}{(\beta^{2} r^{2}+2 \beta r \rho \cos(\theta) + \rho^{2})^{3/2}}+\frac{v_{4,c}(s,\sqrt{\beta^{2} r^{2}+2 \beta r \rho \cos(\theta) + \rho^{2}})}{\sqrt{\beta^{2} r^{2}+2 \beta r \rho \cos(\theta) + \rho^{2}}}\right)\end{split}\end{equation}
So, $v_{4}(t,\cdot)$ is (for instance) continuous on $[0,\infty)$ and we have, for all $r \geq 0$ (including $r=0$) 
\begin{equation}\label{v4repformsimp} \begin{split} v_{4}(t,r) &=\frac{-r}{2\pi} \int_{t}^{\infty} ds \int_{0}^{s-t} \frac{\rho d\rho}{\sqrt{(s-t)^{2}-\rho^{2}}} \int_{0}^{1} \partial_{2}G(s,r\beta,\rho) d\beta\end{split}\end{equation}
We will use this formula to prove estimates on $v_{4}$, but this will be done later on, once we futher restrict the class of functions $\lambda$ under consideration (which will be done once we introduce an iteration space in which to solve the eventual equation for $\lambda$).

\subsection{The nonlinear error terms involving $v_{1},v_{2},v_{3},v_{4}$}
Let
$$f_{v_{5}}=v_{1}+v_{2}+v_{3}+v_{4}$$
and 
$$N_{2}(f_{v_{5}})(t,r) = \frac{\sin(2Q_{\frac{1}{\lambda(t)}}(r))}{2r^{2}} \left(\cos(2 f_{v_{5}})-1\right)+\frac{\cos(2Q_{\frac{1}{\lambda(t)}}(r))}{2r^{2}} \left(\sin(2f_{v_{5}})-2f_{v_{5}}\right)$$

Then, we consider $v_{5}$, defined as the solution with $0$ Cauchy data at infinity, to the problem
\begin{equation} -\partial_{tt}v_{5}+\partial_{rr}v_{5}+\frac{1}{r}\partial_{r}v_{5}-\frac{v_{5}}{r^{2}} = N_{2}(f_{v_{5}})(t,r)\end{equation}
Following the same steps used to obtain \eqref{v4repformsimp}, we obtain the analogous formula for $v_{5}$:
\begin{equation} \begin{split} v_{5}(t,r) &=\frac{-r}{2\pi} \int_{t}^{\infty} ds \int_{0}^{s-t} \frac{\rho d\rho}{\sqrt{(s-t)^{2}-\rho^{2}}} \int_{0}^{1} \partial_{2}G_{5}(s,r\beta,\rho) d\beta\end{split}\end{equation}
where
\begin{equation}\begin{split} G_{5}(s,r,\rho) &= r \int_{0}^{1} \partial_{2}G_{5}(s,r\beta,\rho) d\beta\\
&=r \int_{0}^{1} d\beta \int_{0}^{2\pi} d\theta \left(\partial_{2}N_{2}(f_{v_{5}})(s,\sqrt{\beta^{2}r^{2}+2 \beta r \rho \cos(\theta) + \rho^{2}}) \frac{(\beta r+\rho \cos(\theta))^{2}}{\beta^{2} r^{2}+2 \beta r \rho \cos(\theta) + \rho^{2}}\right.\\
&-\left.N_{2}(f_{v_{5}})(s,\sqrt{\beta^{2} r^{2}+2 \beta r \rho \cos(\theta) + \rho^{2}}) \frac{(\beta r+\rho \cos(\theta))^{2}}{(\beta^{2} r^{2}+2 \beta r \rho \cos(\theta) + \rho^{2})^{3/2}}+\frac{N_{2}(f_{v_{5}})(s,\sqrt{\beta^{2} r^{2}+2 \beta r \rho \cos(\theta) + \rho^{2}})}{\sqrt{\beta^{2} r^{2}+2 \beta r \rho \cos(\theta) + \rho^{2}}}\right)\end{split}\end{equation}
As was the case for $v_{4}$, we will prove estimates on $v_{5}$ later on, once we further restrict the class of functions $\lambda$ under consideration.

\subsection{The equation resulting from $u_{ansatz}$}
If we substitute $u=Q_{\frac{1}{\lambda(t)}}+v_{1}+v_{2}+v_{3}+v_{4}+v_{5}+v_{6}$ into the wave maps equation $$-\partial_{tt}u+\partial_{rr}u+\frac{1}{r}\partial_{r}u-\frac{\sin(2 u)}{2r^{2}} =0$$
and use the equations solved by $v_{1},v_{2},v_{3},v_{4},v_{5}$, then, we obtain
\begin{equation}\label{v6eqn}\begin{split} &-\partial_{tt}v_{6}+\partial_{rr}v_{6}+\frac{1}{r}\partial_{r}v_{6}-\frac{\cos(2Q_{\frac{1}{\lambda(t)}})}{r^{2}} v_{6}\\
&=F_{0,2}(t,r) + N(v_{6})+L_{1}(v_{6})+N_{2}(v_{5})\\
&+\frac{\sin(2(v_{1}+v_{2}+v_{3}+v_{4}))}{2r^{2}}\left(\cos(2Q_{\frac{1}{\lambda(t)}}+2v_{5})-\cos(2Q_{\frac{1}{\lambda(t)}})\right)\\
&+\left(\frac{\cos(2(v_{1}+v_{2}+v_{3}+v_{4}))-1}{2r^{2}}\right)\left(\sin(2Q_{\frac{1}{\lambda(t)}}+2v_{5})-\sin(2Q_{\frac{1}{\lambda(t)}})\right)\\
&+\left(\frac{\cos(2Q_{\frac{1}{\lambda(t)}})-1}{r^{2}}\right)\left(v_{1}+v_{2}+v_{3}+v_{4}+v_{5}\right)\\
&-\chi_{\geq 1}(\frac{2r}{\log^{N}(t)}) \left(\frac{\cos(2Q_{\frac{1}{\lambda(t)}})-1}{r^{2}}\right)\left(v_{1}+v_{2}+v_{3}\right)-\chi_{\geq 1}(\frac{2r}{\log^{N}(t)}) F_{0,2}(t,r)\end{split}\end{equation}

where $$N(f) = \left(\frac{\sin(2f)-2f}{2r^{2}}\right)\cos(2Q_{\frac{1}{\lambda(t)}}) + \left(\frac{\cos(2f)-1}{2r^{2}}\right)\sin(2(Q_{\frac{1}{\lambda(t)}}+v_{1}+v_{2}+v_{3}+v_{4}+v_{5}))$$
$$L_{1}(f)=\frac{\sin(2f)}{2r^{2}} \cos(2Q_{\frac{1}{\lambda(t)}})(\cos(2(v_{1}+v_{2}+v_{3}+v_{4}+v_{5}))-1) -\frac{\sin(2f)}{2r^{2}}\sin(2Q_{\frac{1}{\lambda(t)}})\sin(2(v_{1}+v_{2}+v_{3}+v_{4}+v_{5}))$$
$$N_{2}(f) = \frac{\sin(2Q_{\frac{1}{\lambda(t)}})}{2r^{2}}(\cos(2f)-1)+\frac{\cos(2Q_{\frac{1}{\lambda(t)}})}{2r^{2}}(\sin(2f)-2f)$$
$$F_{0,2}(t,r) = -F_{0,1}(t,r)+\frac{2 \lambda''(t) r}{1+r^{2}}+\partial_{t}^{2}Q_{\frac{1}{\lambda(t)}}$$
Note that we do not combine the terms involving $\chi_{\geq 1}$ with analogous terms having coefficient $1$ because the terms involving $\chi_{\geq 1}$ will turn out to have a subleading contribution to the modulation equation for $\lambda$. When we solve the final equation, after choosing $\lambda$, we will of course make use of the fact that $1-\chi_{\geq 1}(x)$ is supported on the set $x \leq 2$.\\
We can re-write the right-hand side of \eqref{v6eqn} as $F+F_{3}$
where
$$F=F_{4}+F_{5}+F_{6}$$
with 
\begin{equation}\label{f4def}\begin{split} F_{4}(t,r) &=F_{0,2}(t,r)+\left(\frac{\cos(2Q_{\frac{1}{\lambda(t)}})-1}{r^{2}}\right)\left(v_{1}+v_{2}+v_{3}+\left(1-\chi_{\geq 1}(\frac{4 r}{t})\right)(v_{4}+v_{5})\right)\\
&-\chi_{\geq 1}(\frac{2r}{\log^{N}(t)}) \left(\frac{\cos(2Q_{\frac{1}{\lambda(t)}})-1}{r^{2}}\right)\left(v_{1}+v_{2}+v_{3}\right)-\chi_{\geq 1}(\frac{2r}{\log^{N}(t)}) F_{0,2}(t,r)\end{split}\end{equation}
\begin{equation}\label{f5def}\begin{split} F_{5}(t,r) &= N_{2}(v_{5}) + \frac{\sin(2(v_{1}+v_{2}+v_{3}+v_{4}))}{2r^{2}} \left(\cos(2Q_{\frac{1}{\lambda(t)}}+2v_{5})-\cos(2Q_{\frac{1}{\lambda(t)}})\right)\\
&+\left(\frac{\cos(2(v_{1}+v_{2}+v_{3}+v_{4}))-1}{2r^{2}}\right)\left(\sin(2Q_{\frac{1}{\lambda(t)}}+2v_{5})-\sin(2Q_{\frac{1}{\lambda(t)}})\right)\end{split}\end{equation}
\begin{equation}\label{f6def}F_{6}(t,r) = \chi_{\geq 1}(\frac{4r}{t})\left(\frac{\cos(2Q_{\frac{1}{\lambda(t)}})-1}{r^{2}}\right)\left(v_{4}(t,r)+v_{5}(t,r)\right)\end{equation} 
and $$F_{3} = N(v_{6})+L_{1}(v_{6})$$

\subsection{Choosing $\lambda(t)$}
$\lambda$ will be chosen so that the term
\begin{equation}\begin{split}\label{mdef}F_{4}(t,r) &= F_{0,2}(t,r) + \left(\frac{\cos(2Q_{\frac{1}{\lambda(t)}}(r))-1}{r^{2}}\right) \left(v_{1}+v_{2}+v_{3}+\left(1-\chi_{\geq 1}(\frac{4 r}{t})\right)(v_{4}+v_{5})\right)\\
&-\chi_{\geq 1}(\frac{2r}{\log^{N}(t)})\left(F_{0,2}(t,r)+ \left(v_{1}(t,r) + v_{2}(t,r) + v_{3}(t,r)\right) \left(\frac{\cos(2Q_{\frac{1}{\lambda(t)}}(r))-1}{r^{2}}\right)\right)\end{split}\end{equation}
which appears on the right-hand side of \eqref{v6eqn}, is orthogonal to $\phi_{0}\left(\frac{\cdot}{\lambda(t)}\right)$. \\
\\
Define the space $(X,||\cdot||_{X})$ by
$$X=\{f \in C^{2}([T_{0},\infty))| ||f||_{X}< \infty\}$$
where
\begin{equation}\label{ynorm}||f||_{X} = \text{sup}_{t \geq T_{0}} \left(|f(t)| b \log^{b}(t) \sqrt{\log(\log(t))} + |f'(t)| t \log^{b+1}(t) \sqrt{\log(\log(t))} +|f''(t)| t^{2} \log^{b+1}(t) \sqrt{\log(\log(t))}\right)\end{equation}
In this section, we will first prove the following proposition
\begin{proposition}\label{lambdaexist} There exists $T_{3}>0$ such that, for all $T_{0} \geq T_{3}$, there exists $\lambda \in C^{2}([T_{0},\infty))$ which solves 
$$\langle F_{4}(t),\phi_{0}\left(\frac{\cdot}{\lambda(t)}\right)\rangle =0, \quad t \geq T_{0}$$ 
Moreover, 
\begin{equation} \lambda(t) = \lambda_{0}(t) + e_{0}(t), \quad ||e_{0}||_{X} \leq 1\end{equation}
and
$$ \lambda_{0}(t) = \frac{1}{\log^{b}(t)}+ \int_{t}^{\infty} \int_{t_{1}}^{\infty} \frac{-b^{2} \log(\log(t_{2}))}{t_{2}^{2}\log^{b+2}(t_{2})}dt_{2}dt_{1}$$
\end{proposition}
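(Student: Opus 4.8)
The plan is to convert the orthogonality condition $\langle F_4(t),\phi_0(\cdot/\lambda(t))\rangle=0$ into an explicit integral equation for $\lambda''$, to identify $\lambda_0$ as the solution of its principal part, and then to solve the full equation for the correction $e$ by the contraction mapping principle in $B=\overline{B_1(0)}\subset X$.

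First I would insert the definitions \eqref{f4def}, \eqref{mdef} of $F_4$, change variables via $r=R\lambda(t)$ so that $\phi_0(r/\lambda(t))=\phi_0(R)$, and compute the inner product term by term, using: the explicit formula \eqref{v1formula} and the small-$r$ decomposition \eqref{v1smallrest} of $v_1$; the decomposition \eqref{v3preciseforip} of $v_3$; the Hankel representation \eqref{v2def} of $v_2$ together with the overview computation identifying the $v_2$-contribution with $-2\int_0^\infty\sin(t\xi)\widehat{v_{2,0}}(\xi)\xi^2 K_1(\xi\lambda(t))\,d\xi$ and the small-argument behaviour of $K_1$; and the estimates on $v_4,v_5$ (valid once $\lambda$ is restricted to $\lambda_0+B$). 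The terms coming from $\partial_t^2 Q_{1/\lambda}$, $F_{0,2}$, $v_1$ and $v_3$ assemble into the linear operators $-4\int_t^\infty\frac{\lambda''(s)}{1+s-t}\,ds$, $4\alpha\log(\lambda(t))\lambda''(t)$ and $-4\int_t^\infty\frac{\lambda''(s)}{(\lambda(t)^{1-\alpha}+s-t)(1+s-t)^3}\,ds$; the particular Cauchy data for $v_2$ produces the source term $\frac{4b}{t^2\log^b(t)}$ — this is exactly why $\widehat{v_{2,0}}$ was built from an antiderivative of $\xi^{-1}\log^{-b}(1/\xi)$, cf. the choice of $c_b$; and everything else (the $\chi_{\ge1}$ cut-off pieces, the $v_4$ and $v_5$ contributions, the quadratic-in-$\lambda'$ terms, and the errors from freezing $\lambda(s)$ at $\lambda(t)$ inside the kernels) is collected into $f_3(\lambda(t),\lambda'(t),\lambda''(t))$, for which I would prove — using \eqref{lambdarestr} and $\|e\|_X\le1$ — both a bound of size $t^{-2}\log^{-b-1}(t)(\log\log t)^{-1/2}$ and a Lipschitz estimate in $e$ with constant $o(1)$ as $T_0\to\infty$. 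This produces the modulation equation \eqref{modulationfinal}.

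Next I would identify $\lambda_0$ as an approximate solution of the principal part of \eqref{modulationfinal}, namely the Volterra equation of the second kind $-4\int_t^\infty\frac{\lambda''(s)}{1+s-t}\,ds+\frac{4b}{t^2\log^b(t)}+4\alpha\log(\lambda(t))\lambda''(t)-4\int_t^\infty\frac{\lambda''(s)}{(\lambda(t)^{1-\alpha}+s-t)(1+s-t)^3}\,ds=0$: solving it by iteration from the ansatz $1/\log^b(t)$ — for which $-4\int_t^\infty\frac{\lambda''(s)}{1+s-t}\,ds$ is, to leading order, $-4\lambda''(t)\log t$, balancing the source — produces as the first correction exactly the double integral in the statement, whose second derivative $-b^2 t^{-2}\log^{-b-2}(t)\log\log t$ cancels the next discrepancy, so that $\lambda_0$ satisfies \eqref{modulationfinal} up to an error small compared with the weight that $\lambda''$ carries in \eqref{ynorm}. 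Writing $\lambda=\lambda_0+e$ and moving the $e$-dependence of the kernels and coefficients to the right, \eqref{modulationfinal} becomes an equation $\mathcal L[e''](t)=\mathcal R[e](t)$ in which $\mathcal L$ is the linear Volterra-plus-local operator with data frozen at $\lambda_0$ and $\mathcal R[e]$ gathers $f_3$, the residual of $\lambda_0$, and the kernel perturbations. I would solve this for $e''$ in the space $\{g:\|t^2\log^{b+1}(t)\sqrt{\log\log t}\,g\|_{L^\infty}<\infty\}$ and then recover $e',e$ by integrating from $+\infty$ (once, respectively twice), which is precisely what reproduces the other two weights of \eqref{ynorm}; checking that $\mathcal R$ maps the ball into data of the required size (using the extra $(\log\log t)^{-1/2}$ decay of the residual and of $f_3$, and $T_0\ge T_3$ large) and is a contraction (using the Lipschitz estimates above together with the Lipschitz dependence on $e$ of $\lambda(t)^{1-\alpha}$ and of $v_1,\dots,v_5$) then yields, by Banach's fixed point theorem, $e_0\in B$ with $\lambda=\lambda_0+e_0\in C^2([T_0,\infty))$ solving the modulation equation.

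The main obstacle is the analysis of the linear Volterra operator. Its leading kernel $\frac{1}{1+s-t}$ is \emph{not} integrable in $s$, so $\mathcal L^{-1}$ is not given by a naive Neumann series; one must exploit the precise structural condition \eqref{kineq} on the kernel (for instance, after differentiating \eqref{modulationfinal} once in $t$, which replaces $\frac{1}{1+s-t}$ by the integrable $\frac{1}{(1+s-t)^2}$ at the cost of a local $\lambda''$ term and turns the equation into a genuine second-kind equation amenable to iteration) and then verify that the resulting inversion reproduces — rather than loses — all three weights of the $C^2$ norm \eqref{ynorm}, the weight on $\lambda''$ being the delicate one. Keeping track of the $\lambda$-dependence of this kernel through $\lambda(t)^{1-\alpha}$ and of the corrections $v_4,v_5$ in this same weighted $C^2$ sense is the most laborious part of the estimate.
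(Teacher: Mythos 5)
Your overall architecture matches the paper's: derive the modulation equation \eqref{modulationfinal} by computing the inner products term by term (with the $v_{2}$ data engineered to produce the source $\tfrac{4b}{t^{2}\log^{b}(t)}$), identify $\lambda_{0}=\lambda_{0,0}+\lambda_{0,1}$ through the leading-order cancellations, and then run a fixed point for $e$ in $\overline{B_{1}(0)}\subset X$, recovering $e$ and $e'$ by integrating $e''$ from $+\infty$ and using the Lipschitz estimate on the right-hand side together with $T_{0}$ large to get a contraction. All of that is faithful to the paper.

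The gap is in the one step you yourself flag as the main obstacle: inverting the linear operator $e''\mapsto 4\alpha e''(t)-4\int_{t}^{\infty}\frac{e''(s)\,ds}{\log(\lambda_{0}(s))(1+s-t)}-4\int_{t}^{\infty}\frac{e''(s)\,ds}{\log(\lambda_{0}(s))(\lambda_{0}(t)^{1-\alpha}+s-t)(1+s-t)^{3}}$ while preserving the weight $t^{2}\log^{b+1}(t)\sqrt{\log(\log(t))}$ on $e''$. Your concrete device --- differentiate \eqref{modulationfinal} once in $t$ so that $\tfrac{1}{1+s-t}$ becomes the integrable $\tfrac{1}{(1+s-t)^{2}}$, then iterate --- does not work as stated. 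If you integrate the differentiated equation back from infinity, the local term and the new kernel recombine into a total derivative and you recover exactly the original non-integrable kernel, so nothing is gained; if instead you keep the integro-differential form and Picard-iterate, the term $\int_{t}^{\infty}e''_{\mathrm{old}}(s)\,\frac{ds}{|\log\lambda_{0}(s)|(1+s-t)}$ with $|e''_{\mathrm{old}}(s)|\lesssim s^{-2}\log^{-b-1}(s)(\log\log s)^{-1/2}$ is of size $\tfrac{\log t}{t^{2}\log^{b+1}(t)(\log\log t)^{3/2}}$, i.e.\ larger by a factor $\log t/\log\log t$ than the ball you need to land in, so the map is not even bounded on the weighted $C^{2}$ ball, let alone a contraction. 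You invoke \eqref{kineq} but never say how it enters; in the paper it is precisely the mechanism that replaces your Neumann-series attempt: after time reversal the equation becomes a Volterra equation of the second kind $x+K*x=H$ with a nonnegative kernel of type $L^{\infty}_{\mathrm{loc}}$, the log-convexity condition \eqref{kineq} and the quotient bound \eqref{kquotest} let one invoke Theorems 8.5 and 8.6 of \cite{inteqns} to produce a \emph{nonnegative resolvent} $r$ with $\int_{-\infty}^{t}r(t,u)\,du\le 2$ (see \eqref{restimate}), and the explicit representation $e''=\tfrac{RHS}{4\alpha}-r*\tfrac{RHS}{4\alpha}$ then gives $|e''(t)|\le C\sup_{s\ge t}|RHS(e,s)|$, which is what preserves all three weights of \eqref{ynorm} and makes the outer contraction close. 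Without this resolvent estimate (or an equivalent substitute exploiting the sign and monotonicity structure of the kernel), your scheme cannot reproduce the decay $t^{-2}\log^{-b-1}(t)(\log\log t)^{-1/2}$ for $e''$, and the proposition does not follow.
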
 
After fixing $\lambda$ as in the above proposition, we will then show that $\lambda$, apriori only in $C^{2}([T_{0},\infty))$, is actually in $C^{4}([T_{0},\infty))$, with the estimates
\begin{equation} |\lambda^{(2+k)}(t)| \leq \frac{C}{t^{2+k} \log^{b+1}(t)}, \quad t \geq T_{0}, \quad k=1,2\end{equation}
To prove the proposition, we will first show that the equation
$$\langle F_{4}(t),\phi_{0}\left(\frac{\cdot}{\lambda(t)}\right)\rangle =0$$
is equivalent to
\begin{equation}\label{modulationfinal1}\begin{split} &-4 \int_{t}^{\infty} \frac{\lambda''(s)}{1+s-t} ds + \frac{4 b}{t^{2}\log^{b}(t)} + 4 \alpha \log(\lambda(t)) \lambda''(t) - 4 \int_{t}^{\infty} \frac{\lambda''(s)}{(\lambda(t)^{1-\alpha}+s-t)(1+s-t)^{3}} ds\\
&= -\lambda(t) E_{0,1}(\lambda(t),\lambda'(t),\lambda''(t)) - 16 \int_{t}^{\infty} \lambda''(s) \left(K_{3}(s-t,\lambda(t))-K_{3,0}(s-t,\lambda(t))\right) ds\\
&+\frac{16}{\lambda(t)^{2}} \int_{t}^{\infty} K(s-t,\lambda(t)) \lambda''(s) ds - \lambda(t) E_{v_{2},ip}(t,\lambda(t)) +\frac{16}{\lambda(t)^{2}} \int_{t}^{\infty} ds \lambda''(s) \left(K_{1}(s-t,\lambda(t))-\frac{\lambda(t)^{2}}{4(1+s-t)}\right)\\
&- \lambda(t) \langle \left(\frac{\cos(2Q_{\frac{1}{\lambda(t)}})-1}{r^{2}}\right)\left((v_{4}+v_{5})\left(1-\chi_{\geq 1}(\frac{4r}{t})\right)+E_{5}-\chi_{\geq 1}(\frac{2r}{\log^{N}(t)})\left(v_{1}+v_{2}+v_{3}\right)\right)\vert_{r=R\lambda(t)},\phi_{0}\rangle\\
&+\lambda(t) \langle \chi_{\geq 1}(\frac{2r}{\log^{N}(t)}) F_{0,2}(t,r) \vert_{r=R\lambda(t)},\phi_{0}\rangle\\
&:=G(t,\lambda(t))\end{split}\end{equation}
Then, we will substitute $\lambda(t) = \lambda_{0}(t)+e_{0}(t)$, for $e_{0} \in \overline{B_{1}(0)} \subset X$, and solve the resulting equation for $e_{0}$ with a fixed point argument. \\
\\
We start by studying the relevant inner products of the $v_{1},v_{2}$ and $v_{3}$ terms above.

\subsubsection{The inner product of the (rescaled) $v_{1}$ linear error term with $\phi_{0}$}

In this section, we will prove
\begin{lemma} For $v_{1}$ defined by \eqref{v1formula}, we have
\begin{equation} \label{innerproductkernelrepresentation} \begin{split} \langle \left(\frac{\cos(2Q_{\frac{1}{\lambda(t)}})-1}{r^{2}}\right)v_{1}\vert_{r=R\lambda(t)},\phi_{0}\rangle &= \frac{-16}{\lambda(t)^{3}} \int_{t}^{\infty} ds \lambda''(s) K(s-t,\lambda(t)) + \frac{-16}{\lambda(t)^{3}} \int_{t}^{\infty} ds \lambda''(s) K_{1}(s-t,\lambda(t))\end{split}\end{equation}
where
\begin{equation} \int_{t}^{\infty} |K(s-t,\lambda(t))| ds \leq C \lambda(t)^{2}\end{equation}
\begin{equation} |K_{1}(x,\lambda(t))| \leq \frac{C \lambda(t)^{2} x}{1+x^{2}}\end{equation}
\begin{equation} \label{k1diffint} \int_{t}^{\infty} |K_{1}(s-t,\lambda(t))-\frac{\lambda(t)^{2}}{4(1+s-t)}| ds \leq C \lambda(t)^{2}\end{equation}
\begin{equation} |K_{1}(s-t,\lambda(t))-\frac{\lambda(t)^{2}}{4(s-t)}| \leq \frac{C \lambda(t)^{2} (1+\lambda(t)^{2})}{(s-t) (1+(s-t)^{2})}, \quad s-t \geq 1\end{equation}
\end{lemma}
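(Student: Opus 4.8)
The plan is to turn the inner product into a one-dimensional integral against $\lambda''$ with an explicit (but non-elementary) kernel, and then to split that kernel into a main piece $K_1$ carrying the decisive $\tfrac{1}{1+(s-t)}$-decay and a remainder $K$ that is integrable in $s-t$. Throughout write $\mu := \lambda(t)$.

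\textbf{Step 1: reduction to a kernel representation.} First I would use $Q_{\frac{1}{\lambda(t)}}(R\lambda(t)) = Q_1(R) = 2\arctan R$, so that $\frac{\cos(2Q_1(R))-1}{R^2} = \frac{-8}{(1+R^2)^2}$, while $\phi_0(R) = \frac{2R}{1+R^2}$ and the measure is $R\,dR$. Writing the bracket appearing in \eqref{v1formula} as
\[
h(v,\rho) := 1 + \frac{v^2-1-\rho^2}{\sqrt{(v^2-1-\rho^2)^2+4v^2}} \qquad \Bigl(= 1 + \frac{v^2-1-\rho^2}{\sqrt{(1+v^2+\rho^2)^2-4v^2\rho^2}}\Bigr),
\]
inserting the formula \eqref{v1formula} at $r=R\lambda(t)$ (its factor $1/r$ cancels one power of $R$) and interchanging the order of integration gives
\[
\Bigl\langle \Bigl(\tfrac{\cos(2Q_{\frac{1}{\lambda(t)}})-1}{r^2}\Bigr)v_1\big|_{r=R\lambda(t)},\,\phi_0\Bigr\rangle = \frac{-16}{\mu^3}\int_t^\infty \lambda''(s)\,\mathcal{K}(s-t,\mu)\,ds,
\]
where $\mathcal{K}(x,\mu) = \int_0^x \frac{\rho\, g(\rho,\mu)}{\sqrt{x^2-\rho^2}}\,d\rho$ and $g(\rho,\mu) = \int_0^\infty \frac{R\, h(R\mu,\rho)}{(1+R^2)^3}\,dR$. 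All interchanges are justified by Tonelli, since $h\ge 0$ (the numerator of its defining fraction is dominated in modulus by the denominator) and the only sign is carried by $\lambda''$; the finiteness needed for Fubini will follow from the bounds of Step 2 together with $|\lambda''(s)|\le \frac{C}{s^2\log^{b+1}(s)}$.

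\textbf{Step 2: the kernel $g$ and an exact moment.} From $h(0,\rho)=0$ and $\partial_v h(v,\rho) = \frac{4v(v^2+1+\rho^2)}{((v^2-1-\rho^2)^2+4v^2)^{3/2}}\ge 0$ (the same expression occurs in \eqref{drv1est}) one gets $0\le h(v,\rho)\le 2$, and, bounding the denominator of $\partial_v h$ from below on $0\le v\le\tfrac12\sqrt{1+\rho^2}$, also $h(v,\rho)\le C v^2(1+\rho^2)^{-2}$ there. Splitting the $R$-integral defining $g$ at $R\mu=\tfrac12\sqrt{1+\rho^2}$ (quadratic bound below, $h\le 2$ above, using $\mu<\tfrac12$) then yields the uniform estimate $0\le g(\rho,\mu)\le C\mu^2(1+\rho^2)^{-2}$. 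The key computational point is the \emph{exact} identity
\[
\int_0^\infty \rho\, h(v,\rho)\,d\rho = v^2,
\]
obtained by substituting $u=\rho^2$ and then $p=u-(v^2-1)$: the integrand becomes $\tfrac12\bigl(1-p/\sqrt{p^2+4v^2}\bigr)$, with antiderivative $\tfrac12(p-\sqrt{p^2+4v^2})$, and the endpoints $p=1-v^2$ (value $-v^2$) and $p=\infty$ (value $0$) give the result. By Tonelli this forces
\[
\int_0^\infty \rho\, g(\rho,\mu)\,d\rho = \int_0^\infty \frac{R\,(R\mu)^2}{(1+R^2)^3}\,dR = \frac{\mu^2}{4}.
\]

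\textbf{Step 3: the splitting and the estimates.} Write $\frac1{\sqrt{x^2-\rho^2}} = \frac1x + \bigl(\frac1{\sqrt{x^2-\rho^2}}-\frac1x\bigr)$ and set
\[
K_1(x,\mu) := \frac1x\int_0^x \rho\, g(\rho,\mu)\,d\rho, \qquad K(x,\mu) := \int_0^x \rho\, g(\rho,\mu)\Bigl(\frac1{\sqrt{x^2-\rho^2}}-\frac1x\Bigr)\,d\rho,
\]
so that $\mathcal K = K_1 + K$, which gives \eqref{innerproductkernelrepresentation}. Since $K\ge 0$, Tonelli together with $\int_\rho^\infty\bigl(\frac1{\sqrt{x^2-\rho^2}}-\frac1x\bigr)dx = \ln\frac{x+\sqrt{x^2-\rho^2}}{x}\big|_\rho^\infty = \ln 2$ gives $\int_t^\infty K(s-t,\mu)\,ds = \ln 2\int_0^\infty \rho\, g(\rho,\mu)\,d\rho = \tfrac{\ln 2}{4}\mu^2$, which is the first bound. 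For $K_1$: bounding $g\le C\mu^2$ yields $|K_1(x,\mu)|\le C\mu^2 x$, while the exact moment gives $|K_1(x,\mu)|\le \mu^2/(4x)$; combining them gives $|K_1(x,\mu)|\le C\mu^2 x/(1+x^2)$. Using the exact moment again, $K_1(x,\mu)-\frac{\mu^2}{4x} = -\frac1x\int_x^\infty \rho\, g(\rho,\mu)\,d\rho$, and $\int_x^\infty \rho\, g(\rho,\mu)\,d\rho\le C\mu^2(1+x^2)^{-1}$ gives $|K_1(x,\mu)-\frac{\mu^2}{4x}|\le C\mu^2\, x^{-1}(1+x^2)^{-1}$ (a fortiori bounded by $C\mu^2(1+\mu^2)x^{-1}(1+x^2)^{-1}$), the fourth claim. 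Finally, combining this pointwise bound for $x\ge 1$ with $|\frac1x-\frac1{1+x}| = \frac1{x(1+x)}$ and the small-$x$ bound $|K_1(x,\mu)|\le C\mu^2 x$ for $x\le 1$, one integrates over $x\in(0,\infty)$ (splitting at $x=1$) to obtain $\int_t^\infty|K_1(s-t,\mu)-\frac{\mu^2}{4(1+s-t)}|\,ds\le C\mu^2$, which is \eqref{k1diffint}.

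\textbf{Main obstacle.} Apart from routine bookkeeping, the two substantive points are the uniform-in-$\mu$ bound $g(\rho,\mu)\le C\mu^2(1+\rho^2)^{-2}$, which needs the full qualitative behaviour of $h(v,\rho)$ over all of $(0,\infty)^2$ (in particular across the transition at $v\sim\sqrt{1+\rho^2}$, where the denominator of $\partial_v h$ attains its minimum $\sim\rho^2$, much smaller than $(1+\rho^2)^2$), and—more importantly—the exact identity $\int_0^\infty \rho\, h(v,\rho)\,d\rho = v^2$. Without the latter the constant multiplying $\mu^2/x$ in $K_1$ would be only approximately $\tfrac14$, with an $O(\mu^4\log\mu)$ error that, being independent of $x$, fails to be integrable against $ds$ at the rate required by \eqref{k1diffint}; the identity is precisely what makes the decomposition work.
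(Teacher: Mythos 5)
Your proof is correct and follows essentially the same route as the paper: the same splitting of $\tfrac{1}{\sqrt{(s-t)^2-\rho^2}}$ into $\tfrac{1}{s-t}$ plus a remainder produces exactly the paper's kernels $K_{1}$ and $K$, and the same two exact computations — the moment identity $\int_0^\infty \rho\, h(v,\rho)\,d\rho = v^2$ and $\int_\rho^\infty\bigl(\tfrac{1}{\sqrt{x^2-\rho^2}}-\tfrac1x\bigr)dx=\log 2$ — drive both arguments. The only difference is organizational: you integrate in $R$ first and prove the uniform bound $g(\rho,\mu)\le C\mu^2(1+\rho^2)^{-2}$, from which the tail formula $K_1(x,\mu)-\tfrac{\mu^2}{4x}=-\tfrac1x\int_x^\infty\rho\,g(\rho,\mu)\,d\rho$ gives the pointwise difference estimate directly, whereas the paper splits the $R$-integral at $R\lambda(t)\sim x$ (picking up the harmless extra factor $(1+\lambda(t)^2)$) and also records an unneeded closed form for $K_1$.
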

\begin{proof}

We have
\begin{equation}\label{innerproduct}\begin{split}\langle\left.\frac{(\cos(2Q_{\frac{1}{\lambda(t)}})-1)}{r^{2}} v_{1}\right\vert_{r=R \lambda(t)},\phi_{0}\rangle&=\frac{-1}{\lambda(t)^{2}}\int_{0}^{\infty}v_{1}(t,R \lambda(t)) \frac{8}{(1+R^{2})^{2}}\frac{2R}{1+R^{2}} RdR\\
&=-\frac{16}{\lambda(t)^{2}} \int_{0}^{\infty}  \text{  } v_{1}(t, R\lambda(t))\frac{R^{2}}{(1+R^{2})^{3}}dR
\end{split}
\end{equation}
\begin{equation}\label{initialsplitting} \begin{split} &-\frac{16}{\lambda(t)^{2}} \int_{0}^{\infty}  v_{1}(t, R\lambda(t))\frac{R^{2}}{(1+R^{2})^{3}}dR\\
&=-\frac{16}{\lambda(t)^{3}} \int_{t}^{\infty} ds \lambda''(s) \int_{0}^{\infty} dR \frac{R}{(1+R^{2})^{3}} \int_{0}^{s-t} \frac{\rho d\rho}{\sqrt{(s-t)^{2}-\rho^{2}}} \left(1+\frac{R^{2}\lambda(t)^{2}-1-\rho^{2}}{\sqrt{(R^{2}\lambda(t)^{2}-1-\rho^{2})^{2}+4R^{2}\lambda(t)^{2}}}\right)\\
&=-\frac{16}{\lambda(t)^{3}} \int_{t}^{\infty} ds \lambda''(s) \int_{0}^{\infty} dR \frac{R}{(1+R^{2})^{3}} \int_{0}^{s-t} \frac{\rho d\rho}{s-t} \left(1+\frac{R^{2}\lambda(t)^{2}-1-\rho^{2}}{\sqrt{(R^{2}\lambda(t)^{2}-1-\rho^{2})^{2}+4R^{2}\lambda(t)^{2}}}\right)\\
&-\frac{16}{\lambda(t)^{3}} \int_{t}^{\infty} ds \lambda''(s) \int_{0}^{\infty} dR \frac{R}{(1+R^{2})^{3}} \int_{0}^{s-t} \rho d\rho\left(\frac{1}{\sqrt{(s-t)^{2}-\rho^{2}}}-\frac{1}{s-t}\right) \left(1+\frac{R^{2}\lambda(t)^{2}-1-\rho^{2}}{\sqrt{(R^{2}\lambda(t)^{2}-1-\rho^{2})^{2}+4R^{2}\lambda(t)^{2}}}\right)
\end{split}
\end{equation}
The last line of \eqref{initialsplitting} is of the form
\begin{equation} -\frac{16}{\lambda(t)^{3}} \int_{t}^{\infty} ds \lambda''(s) K(s-t,\lambda(t))\end{equation}
where
\begin{equation}\label{kitself}K(x,\lambda(t)) = \int_{0}^{\infty} dR \frac{R}{(1+R^{2})^{3}} \int_{0}^{x} \rho d\rho\left(\frac{1}{\sqrt{x^{2}-\rho^{2}}}-\frac{1}{x}\right) \left(1+\frac{R^{2}\lambda(t)^{2}-1-\rho^{2}}{\sqrt{(R^{2}\lambda(t)^{2}-1-\rho^{2})^{2}+4R^{2}\lambda(t)^{2}}}\right) \geq 0\end{equation}
and
\begin{equation}\label{kintegralestimate} \begin{split} &\int_{t}^{\infty}  K(s-t,\lambda(t)) ds \\
&= \int_{t}^{\infty} ds \int_{0}^{\infty} dR \frac{R}{(1+R^{2})^{3}} \int_{0}^{s-t} \rho d\rho\left(\frac{1}{\sqrt{(s-t)^{2}-\rho^{2}}}-\frac{1}{s-t}\right) \left(1+\frac{R^{2}\lambda(t)^{2}-1-\rho^{2}}{\sqrt{(R^{2}\lambda(t)^{2}-1-\rho^{2})^{2}+4R^{2}\lambda(t)^{2}}}\right)\\
&=\int_{0}^{\infty} \rho d\rho \int_{0}^{\infty} dR \frac{R}{(1+R^{2})^{3}} \int_{\rho + t}^{\infty} ds \left(\frac{1}{\sqrt{(s-t)^{2}-\rho^{2}}}-\frac{1}{s-t}\right)\left(1+\frac{R^{2}\lambda(t)^{2}-1-\rho^{2}}{\sqrt{(R^{2}\lambda(t)^{2}-1-\rho^{2})^{2}+4R^{2}\lambda(t)^{2}}}\right)\\
&=\int_{0}^{\infty} \rho d\rho \int_{0}^{\infty} dR \frac{R}{(1+R^{2})^{3}} \left(1+\frac{R^{2}\lambda(t)^{2}-1-\rho^{2}}{\sqrt{(R^{2}\lambda(t)^{2}-1-\rho^{2})^{2}+4R^{2}\lambda(t)^{2}}}\right)\log(2)\\
&=\int_{0}^{\infty} dR \frac{R}{(1+R^{2})^{3}} \log(2) R^{2} \lambda(t)^{2}\\
&=\frac{\log(2)}{4} \lambda(t)^{2}
\end{split}
\end{equation}

Hence, it remains to calculate 
\begin{equation} \label{innerprod0}\begin{split} &-\frac{16}{\lambda(t)^{3}} \int_{t}^{\infty} ds \lambda''(s) \int_{0}^{\infty} dR \frac{R}{(1+R^{2})^{3}} \int_{0}^{s-t} \frac{\rho d\rho}{s-t} \left(1+\frac{R^{2}\lambda(t)^{2}-1-\rho^{2}}{\sqrt{(R^{2}\lambda(t)^{2}-1-\rho^{2})^{2}+4R^{2}\lambda(t)^{2}}}\right)\\
&=-\frac{16}{\lambda(t)^{3}} \int_{t}^{\infty} ds \lambda''(s) \int_{0}^{\infty} dR \frac{R}{(1+R^{2})^{3}} \left(\frac{r^2-\sqrt{\left((r+s-t)^2+1\right) \left((r-s+t)^2+1\right)}+(s-t)^2+1}{2 (s-t)}\right)\vert_{r=R\lambda(t)}\\
&=-\frac{16}{\lambda(t)^{3}} \int_{t}^{\infty} ds \lambda''(s) K_{1}(s-t,\lambda(t))
\end{split}
\end{equation}
where 
\begin{equation} \label{k1} \begin{split} K_{1}(w,\lambda(t))&=\int_{0}^{\infty} \frac{R dR}{(1+R^{2})^{3}} \left(\frac{R^{2} \lambda(t)^{2}+w^{2}+1-\sqrt{(1+R^{2}\lambda(t)^{2}+w^{2})^{2}-4R^{2}\lambda(t)^{2}w^{2}}}{2w}\right)\\
&=\frac{w \lambda(t)^2 \left(\lambda(t)^2+w^2+1\right)}{4 (y^{2}+4w^{2})}+\frac{w \lambda(t)^4 \log \left(\lambda(t)^2 \left(\sqrt{y^{2}+4w^{2}}-y\right)\right)}{2 \left(y^{2}+4w^{2}\right)^{3/2}}\\
&-w \lambda(t)^{4}\frac{\log \left(4w^{2}+y^{2}-\lambda(t)^{2}y+\left(w^2+1\right) \sqrt{y^{2}+4w^{2}}\right)}{2 \left(y^{2}+4w^{2}\right)^{3/2}}
\end{split}
\end{equation}
where
\begin{equation} y=\lambda(t)^2+w^2-1\end{equation} 
and, for the first line of \eqref{k1}, we used the fact that
\begin{equation} \label{algsimp}((r+w)^{2}+1)((r-w)^{2}+1) = (r^{2}+w^{2}+1)^{2}-4r^{2}w^{2}\end{equation}
Now, we will prove a pointwise estimate on $K_{1}$. Using \eqref{algsimp}, we have
\begin{equation}\begin{split} 0 \leq  K_{1}(w,\lambda(t)) &= \int_{0}^{\infty} \frac{R dR}{2 w (1+R^{2})^{3}} \left(\frac{4 R^{2}\lambda(t)^{2} w^{2}}{1+R^{2}\lambda(t)^{2}+w^{2}+\sqrt{(1+(R \lambda(t)+w)^{2})(1+(R\lambda(t)-w)^{2})}}\right)\\
&\leq C \int_{0}^{\infty} \frac{R^{3} dR}{(1+R^{2})^{3}} \frac{\lambda(t)^{2} w}{1+w^{2}} \leq C \lambda(t)^{2} \frac{w}{1+w^{2}}\end{split}\end{equation}
So,
\begin{equation}\label{k1ptwseest}|K_{1}(x,\lambda(t))| \leq C \lambda(t)^{2} \frac{x}{1+x^{2}} \end{equation}
For use later on, we will also need to estimate
\begin{equation} \int_{t}^{\infty} ds |K_{1}(s-t,\lambda(t))-\frac{\lambda(t)^{2}}{4(1+s-t)}|\end{equation}
Note that $K_{1}(w,\lambda(t)) \geq 0$, by its definition as the integral of a non-negative function. We start with
\begin{equation}\begin{split} \int_{t}^{t+1} ds |K_{1}(s-t,\lambda(t))-\frac{\lambda(t)^{2}}{4(1+s-t)}|&\leq \int_{t}^{t+1} ds K_{1}(s-t,\lambda(t) + \int_{t}^{t+1} ds \frac{\lambda(t)^{2}}{4(1+s-t)}\\
&\leq C \lambda(t)^{2} \end{split}\end{equation}
where we used \eqref{k1ptwseest}. \\
\\
Now, we consider the region $s-t \geq 1$. Returning to \eqref{k1}, we see that
\begin{equation}\label{k1intstep} \begin{split} K_{1}(w,\lambda(t)) &= \int_{0}^{\infty} dR \frac{R}{(1+R^{2})^{3}} \frac{R^{2}\lambda(t)^{2}}{w} \\
&- \int_{0}^{\infty} dR \frac{R}{(1+R^{2})^{3}} \frac{4 R^{2}\lambda(t)^{2}}{2 w \left(\sqrt{\left(-R^{2}\lambda(t)^{2}+w^2+1\right)^2+4 R^{2}\lambda(t)^{2}}-R^{2}\lambda(t)^{2}+w^2+1\right)}\end{split}\end{equation}
The right-hand side of the first line of \eqref{k1intstep} is equal to
$$\int_{0}^{\infty} dR \frac{R}{(1+R^{2})^{3}} \frac{R^{2}\lambda(t)^{2}}{w} = \frac{\lambda(t)^{2}}{4w}$$
So,
\begin{equation} K_{1}(w,\lambda(t))-\frac{\lambda(t)^{2}}{4 w} = -\int_{0}^{\infty} \frac{R dR}{(1+R^{2})^{3}} \frac{4 R^{2}\lambda(t)^{2}}{2w\left(\sqrt{(1+w^{2}-R^{2}\lambda(t)^{2})^{2}+4R^{2}\lambda(t)^{2}}+1+w^{2}-R^{2}\lambda(t)^{2}\right)}\end{equation}
First, we note that
$$1+w^{2}-R^{2}\lambda(t)^{2} \geq C(1+w^{2}), \quad\text{ if } R\lambda(t) \leq \frac{w}{2}$$
So,
\begin{equation}\begin{split} &\int_{0}^{\frac{w}{2\lambda(t)}} \frac{R dR}{(1+R^{2})^{3}} \frac{4 R^{2}\lambda(t)^{2}}{2w\left(\sqrt{(1+w^{2}-R^{2}\lambda(t)^{2})^{2}+4R^{2}\lambda(t)^{2}}+1+w^{2}-R^{2}\lambda(t)^{2}\right)} \\
&\leq C \int_{0}^{\infty} dR \frac{R}{(1+R^{2})^{3}} \frac{R^{2} \lambda(t)^{2}}{w(1+w^{2})}\leq \frac{C \lambda(t)^{2}}{w (1+w^{2})}, \quad w \geq 1\end{split}\end{equation}
Next,
\begin{equation} \frac{1}{\sqrt{(1+w^{2}-R^{2}\lambda(t)^{2})^{2}+4R^{2}\lambda(t)^{2}}+1+w^{2}-R^{2}\lambda(t)^{2}} = \frac{1+w^{2}-R^{2}\lambda(t)^{2} - \sqrt{(1+w^{2}-R^{2}\lambda(t)^{2})^{2}+4R^{2}\lambda(t)^{2}}}{-4R^{2}\lambda(t)^{2}}\end{equation}
and
\begin{equation}\begin{split}&|\frac{1+w^{2}-R^{2}\lambda(t)^{2} - \sqrt{(1+w^{2}-R^{2}\lambda(t)^{2})^{2}+4R^{2}\lambda(t)^{2}}}{-4R^{2}\lambda(t)^{2}}| \leq C \frac{R^{2}\lambda(t)^{2}}{R^{2}\lambda(t)^{2}} \leq C, \\
&\text{ if} \quad R \lambda(t) > \frac{w}{2}, \quad w \geq 1 \end{split}\end{equation}

So, \begin{equation}\begin{split}&\int_{\frac{w}{2\lambda(t)}}^{\infty} \frac{R dR}{(1+R^{2})^{3}} \frac{4 R^{2}\lambda(t)^{2}}{2w\left(\sqrt{(1+w^{2}-R^{2}\lambda(t)^{2})^{2}+4R^{2}\lambda(t)^{2}}+1+w^{2}-R^{2}\lambda(t)^{2}\right)} \\
&\leq \int_{\frac{w}{2\lambda(t)}}^{\infty} \frac{dR}{R^{5}} \frac{R^{2} \lambda(t)^{2}}{w} \leq \frac{C\lambda(t)^{4}}{w^{3}}, \quad w \geq 1\end{split}\end{equation}
Combining these, we get
\begin{equation} \label{k1diffptwse} |K_{1}(w,\lambda(t))-\frac{\lambda(t)^{2}}{4w}| \leq \frac{C \lambda(t)^{2}(1+\lambda(t)^{2})}{w(1+w^{2})}, \quad w \geq 1\end{equation}
So,
\begin{equation}\label{k1intstep2}\begin{split} \int_{t+1}^{\infty} ds |K_{1}(s-t,\lambda(t))-\frac{\lambda(t)^{2}}{4(1+s-t)}| &\leq C \int_{t+1}^{\infty} ds \frac{\lambda(t)^{2}}{4}\left(\frac{1}{s-t}-\frac{1}{1+s-t}\right) + C \int_{t+1}^{\infty}ds \frac{C \lambda(t)^{2}(1+\lambda(t)^{2})}{(s-t)(1+(s-t)^{2})}\\
&\leq C \lambda(t)^{2}(1+\lambda(t)^{2})\end{split}\end{equation}
Recalling \eqref{lambdarestr}, we conclude
\begin{equation} \int_{t}^{\infty} ds |K_{1}(s-t,\lambda(t))-\frac{\lambda(t)^{2}}{4(1+s-t)}| \leq C \lambda(t)^{2}\end{equation}
 \end{proof}

\subsubsection{The inner product of the (rescaled) $v_{2}$ linear error term with $\phi_{0}$}
\begin{lemma} For all $b>0$, and $v_{2}$ defined by \eqref{v2def}, we have
\begin{equation}\label{v2linearinnerproduct} \int_{0}^{\infty} R dR \left(\frac{\cos(2Q_{1}(R))-1}{R^{2}\lambda(t)^{2}}\right)\phi_{0}(R) v_{2}(t,R\lambda(t)) = \frac{4b}{\lambda(t) t^{2} \log^{b}(t)} +E_{v_{2},ip}(t,\lambda(t))\end{equation}
where
\begin{equation} \label{ev2ipest}|E_{v_{2},ip}(t,\lambda(t))| \leq \frac{C}{\lambda(t) t^{2}\log^{b+1}(t)}\end{equation}
\end{lemma}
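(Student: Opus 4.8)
The plan is to collapse the inner product into a single oscillatory integral in the Hankel variable and then integrate by parts. First, since $Q_1(R)=2\arctan R$, one has $\frac{\cos(2Q_1(R))-1}{R^2}=\frac{-8}{(1+R^2)^2}$, and with $\phi_0(R)=\frac{2R}{1+R^2}$ the left-hand side becomes $-\frac{16}{\lambda(t)^2}\int_0^\infty \frac{R^2}{(1+R^2)^3}\,v_2(t,R\lambda(t))\,dR$. I would then insert $v_2(t,r)=\int_0^\infty\sin(t\xi)\,\widehat{v_{2,0}}(\xi)\,J_1(r\xi)\,d\xi$ coming from \eqref{v2def}. The double integral in $(R,\xi)$ converges absolutely, because $\widehat{v_{2,0}}$ is supported in $[0,\frac14]$ and locally integrable there while $\int_0^\infty\frac{R^2}{(1+R^2)^3}|J_1(aR)|\,dR$ is bounded uniformly in $a>0$, so Fubini is legitimate, and the inner $R$-integral is the classical Hankel integral $\int_0^\infty\frac{R^2}{(1+R^2)^3}J_1(aR)\,dR=\frac{a^2}{8}K_1(a)$ with $a=\lambda(t)\xi$. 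This gives
\[ \int_0^\infty R\,dR\left(\frac{\cos(2Q_1(R))-1}{R^2\lambda(t)^2}\right)\phi_0(R)\,v_2(t,R\lambda(t))=-2\int_0^\infty\sin(t\xi)\,\widehat{v_{2,0}}(\xi)\,\xi^2\,K_1(\lambda(t)\xi)\,d\xi, \]
the expression flagged in the overview of the proof.

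Next I would split $K_1(\lambda(t)\xi)=\frac{1}{\lambda(t)\xi}+\bigl(K_1(\lambda(t)\xi)-\frac1{\lambda(t)\xi}\bigr)$, which is legitimate because $\lambda(t)\xi\le\frac18$ on the support of $\widehat{v_{2,0}}$, placing us in the small-argument regime of $K_1$. The first piece contributes $A:=-\frac{2}{\lambda(t)}\int_0^\infty\sin(t\xi)\,\xi\,\widehat{v_{2,0}}(\xi)\,d\xi$, the leading term. Integrating by parts twice in $\xi$ (the boundary terms at $0$ and $\infty$ vanish — at $0$ because $\sin(t\xi)\sim t\xi$ overwhelms the at most logarithmic singularities of $\widehat{v_{2,0}}$ and its derivatives) yields $A=\frac{2}{\lambda(t)t^2}\int_0^\infty\sin(t\xi)\,(\xi\widehat{v_{2,0}})''(\xi)\,d\xi$. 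The decisive point — and the reason for the precise form of $c_b$ — is that $\widehat{v_{2,0}}$ is a constant multiple of an antiderivative of $\frac{1}{\xi\log^b(1/\xi)}$, namely $\widehat{v_{2,0}}'(\xi)=\frac{4b}{\pi\,\xi\log^b(1/\xi)}$ on $0<\xi\le\frac18$ (and the same identity holds at $b=1$ with $\widehat{v_{2,0}}=-\frac4\pi\log\log(1/\xi)$). Hence on that range $(\xi\widehat{v_{2,0}})''(\xi)=\frac{4b}{\pi\,\xi\log^b(1/\xi)}+\frac{4b^2}{\pi\,\xi\log^{b+1}(1/\xi)}$, on $(\frac18,\frac14)$ it is a fixed smooth compactly supported function, and it vanishes beyond $\frac14$; so the contribution of $[\frac18,\frac14]$ is $O(1/t)$ (one more integration by parts), and $A=\frac{2}{\lambda(t)t^2}\Bigl(\frac{4b}{\pi}\int_0^{1/8}\frac{\sin(t\xi)}{\xi\log^b(1/\xi)}\,d\xi+O(\log^{-b-1}t)\Bigr)$. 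The lemma then follows, with $A=\frac{4b}{\lambda(t)t^2\log^b(t)}+O\bigl(\frac{1}{\lambda(t)t^2\log^{b+1}(t)}\bigr)$, once we prove $\int_0^{1/8}\frac{\sin(t\xi)}{\xi\log^b(1/\xi)}\,d\xi=\frac{\pi}{2\log^b(t)}+O(\log^{-b-1}t)$.

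The main obstacle is exactly this $\log t$-precise asymptotic. I would substitute $\eta=t\xi$ to turn it into $\int_0^{t/8}\frac{\sin\eta}{\eta(\log t-\log\eta)^b}\,d\eta$ and split at $\eta=\log t$. On $[\log t,\,t/8]$ the weight $\frac{1}{\eta(\log t-\log\eta)^b}$ is monotone decreasing (since $\log 8>b$; if $b\ge\log 8$ one first peels off a smooth bump so the cutoff drops below $e^{-b}$, the error being $O(1/t)$), so Dirichlet's test bounds that part by $\frac{C}{\log t\,(\log t-\log\log t)^b}=O(\log^{-b-1}t)$. On $(0,\log t]$ I would write $(\log t-\log\eta)^{-b}=\log^{-b}(t)\,(1-\frac{\log\eta}{\log t})^{-b}$ and expand $(1-x)^{-b}=1+bx+x^2\rho(x)$ with $\rho$ bounded on the range occurring here: the main term gives $\frac{1}{\log^b t}\int_0^{\log t}\frac{\sin\eta}{\eta}\,d\eta=\frac{\pi}{2\log^b(t)}+O(\log^{-b-1}t)$, and the $bx$ and $x^2\rho(x)$ corrections — where one must keep the oscillation of $\sin\eta$ and control $\int_A^{B}\frac{\sin\eta\,\log^k\eta}{\eta}\,d\eta$ via Dirichlet's test rather than by absolute values (otherwise the bound degrades for small $b$) — contribute $O(\log^{-b-1}t)$. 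This is the delicate point; the rest is bookkeeping.

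Finally I would dispose of the remainder $B:=-2\int_0^\infty\sin(t\xi)\,\widehat{v_{2,0}}(\xi)\,\xi^2\bigl(K_1(\lambda(t)\xi)-\frac1{\lambda(t)\xi}\bigr)\,d\xi$. Writing $\xi^2\bigl(K_1(\lambda(t)\xi)-\frac1{\lambda(t)\xi}\bigr)=\lambda(t)\,\xi^3\,\psi(\lambda(t)\xi)$ with $\psi(z)=\frac{K_1(z)-1/z}{z}=\frac12\log z+(\text{analytic in }z)$, the two extra powers of $\xi$ absorb the singularity of $\widehat{v_{2,0}}$ at the origin, so three integrations by parts in $\xi$ (with vanishing boundary terms, for the same reason as above) give $|B|\le C\lambda(t)/t^3$; since $\lambda(t)<\frac12$ and $\log^{b+1}(t)/t\to0$ this is $\le\frac{C}{\lambda(t)t^2\log^{b+1}(t)}$ for $t$ large, and is absorbed into $E_{v_2,ip}$. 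The case $b=1$ runs verbatim with $\widehat{v_{2,0}}=-\frac4\pi\log\log(1/\xi)$ (the key identity $\widehat{v_{2,0}}'=\frac4{\pi\,\xi\log(1/\xi)}$ persists), and the case $0<b<1$ — where $\widehat{v_{2,0}}$ grows like $\log^{1-b}(1/\xi)$ at $0$ — also runs verbatim, because $\xi\widehat{v_{2,0}}(\xi)\to0$ and the factor $\sin(t\xi)\sim t\xi$ make every integration-by-parts boundary term vanish. Combining $A$ and $B$ yields the stated identity with the claimed bound on $E_{v_2,ip}$.
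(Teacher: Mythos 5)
Your proposal is correct, and its reduction of the inner product is the same as the paper's: the Hankel identity $\int_0^\infty \frac{R^2 J_1(aR)}{(1+R^2)^3}\,dR=\frac{a^2}{8}K_1(a)$ collapses the left-hand side to $-2\int_0^\infty \sin(t\xi)\,\widehat{v_{2,0}}(\xi)\,\xi^2 K_1(\lambda(t)\xi)\,d\xi$ (the paper cites G\&R 6.532.4), the small-argument splitting of $K_1$ isolates the same leading term, two integrations by parts produce exactly the paper's combination $\frac{b-1}{\xi\log^b(1/\xi)}+\frac{b(b-1)}{\xi\log^{b+1}(1/\xi)}$ (your observation that $(\xi\widehat{v_{2,0}})''=\frac{4b}{\pi\xi\log^b(1/\xi)}+\frac{4b^2}{\pi\xi\log^{b+1}(1/\xi)}$ is precisely the paper's remark that $c_b$ comes from an antiderivative of $\frac{1}{\xi\log^b(1/\xi)}$), and your remainder $B$ plays the role of the paper's $F_{v_2}$ and $\psi_{v_2}$ pieces. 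Where you genuinely diverge is the key asymptotic $\int_0^{T}\frac{\sin u}{u\log^a(t/u)}\,du=\frac{\pi}{2\log^a t}+O(\log^{-a-1}t)$: the paper rotates the contour to the imaginary axis and extracts the leading term from the resulting Laplace-type integral in $y$, which gives the constant $\frac{\pi}{2}$ and the error uniformly in $a$ rather mechanically (and this machinery is reused later for the $\lambda'''$, $\lambda''''$ estimates), whereas you split at $\eta=\log t$, use Dirichlet's test on the tail (with the correct caveat that the weight is monotone only once the upper cutoff is below $te^{-b}$), and Taylor-expand $(1-\tfrac{\log\eta}{\log t})^{-b}$ on the head, correctly noting that the $bx$-correction must be handled through cancellation (via convergence of $\int \frac{\sin\eta\,\log\eta}{\eta}\,d\eta$) since absolute values would cost $(\log\log t)$ factors and overshoot the stated $O(\log^{-b-1}t)$ error; for the $x^2\rho(x)$ term absolute values actually suffice because of the extra $\log^{-1}t$. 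Two harmless points of slack: your bound $|B|\le C\lambda(t)/t^3$ should carry an extra $|\log\lambda(t)|$ (from $\psi(z)\sim\tfrac12\log z$ and the logarithmic growth of $\widehat{v_{2,0}}'''\xi^3\psi$), and your parenthetical diagnosis of when absolute values fail ("for small $b$") is not quite the right reason — but neither affects the final estimate, which lands comfortably inside $\frac{C}{\lambda(t)t^2\log^{b+1}(t)}$.
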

\begin{proof}
We start with the case $b \neq 1$. Using our formula for $v_{2}$, we get
 \begin{equation}\label{v2innerp}\int_{0}^{\infty} R dR \left(\frac{\cos(2Q_{1}(R))-1}{R^{2} \lambda(t)^{2}}\right) \phi_{0}(R) v_{2}(t,R\lambda(t)) =-2c_{b}\int_{0}^{\infty}\xi^{2} d\xi \sin(t \xi)\frac{\chi_{\leq \frac{1}{4}}(\xi)}{\log^{b-1}(\frac{1}{\xi})} K_{1}(\xi\lambda(t))\end{equation}
where $K_{1}$ denotes the modified Bessel function of the second kind. (This follows, for instance, from equation (\textbf{6.532} 4) of the table of integrals \cite{gr}). Recalling that $$K_{1}(x) = \frac{1}{x} +O(x \log(x)), \quad x \rightarrow 0$$
we can integrate by parts two times, and get
$$-2 c_{b} \int_{0}^{\infty} \xi^{2} d\xi \sin(t\xi) \frac{\chi_{\leq \frac{1}{4}}(\xi)}{\log^{b-1}(\frac{1}{\xi})} K_{1}(\xi \lambda(t)) = 2 c_{b} \int_{0}^{\infty} d\xi \frac{\sin(t \xi)}{t^{2}} \partial_{\xi}^{2} \left(\xi^{2} \frac{\chi_{\leq \frac{1}{4}}(\xi)}{\log^{b-1}(\frac{1}{\xi})} K_{1}(\xi \lambda(t))\right) $$
Let $$F_{b}(\xi) = \partial_{\xi}^{2}\left(\xi^{2} \frac{\chi_{\leq \frac{1}{4}}(\xi)}{\log^{b-1}(\frac{1}{\xi})} K_{1}(\xi \lambda(t))\right)$$
Note that \begin{equation}\label{psidef2}F_{b}(\xi) = \chi_{\leq \frac{1}{4}}(\xi) \partial_{\xi}^{2}\left(\xi^{2}\frac{K_{1}(\xi \lambda(t))}{\log^{b-1}(\frac{1}{\xi})}\right)+\psi_{v_{2}}(\xi,\lambda(t))\end{equation}
where $$\psi_{v_{2}} \in C^{\infty}_{c}([\frac{1}{8},\frac{1}{4}])$$
So, integration by parts (for instance, once) gives
$$|2 c_{b} \int_{0}^{\infty} d\xi \frac{\sin(t\xi)}{t^{2}} \psi_{v_{2}}(\xi,\lambda(t))| \leq \frac{C}{t^{3}\lambda(t)}$$
To study $$2 c_{b} \int_{0}^{\infty} d\xi \chi_{\leq \frac{1}{4}}(\xi) \frac{\sin(t\xi)}{t^{2}} \partial_{\xi}^{2}\left(\xi^{2}\frac{K_{1}(\xi \lambda(t))}{\log^{b-1}(\frac{1}{\xi})}\right)$$
we can use the asymptotics of $K_{1}$ to write
\begin{equation}\label{Fdef2}\partial_{\xi}^{2}\left(\xi^{2}\frac{K_{1}(\xi\lambda(t))}{\log^{b-1}(\frac{1}{\xi})}\right)= \partial_{\xi}^{2}\left(\frac{\xi}{\lambda(t)\log^{b-1}(\frac{1}{\xi})}\right) + F_{v_{2}}(\xi,\lambda(t)), \quad \xi \leq \frac{1}{4}\end{equation}
where
\begin{equation}\begin{split}F_{v_{2}}(\xi,y) &= \frac{\xi ^2 y^2 \log \left(\frac{1}{\xi }\right) \left(-3 \log \left(\frac{1}{\xi }\right)-2 b+2\right) K_0(y \xi )+(b-1) \left(\log \left(\frac{1}{\xi }\right)+b\right) (\xi  y K_1(y \xi )-1)}{\xi y \log^{b+1}(\frac{1}{\xi})}\\
&+\frac{\xi ^3 y^3 \log ^2\left(\frac{1}{\xi }\right) K_1(y \xi )}{\log^{b+1}(\frac{1}{\xi})\xi  y}, \quad \xi \leq \frac{1}{4}\end{split}\end{equation}
Using the simple estimates, valid for all $x >0$,
\begin{equation}\label{k1simpest}\begin{split} |-1+x K_{1}(x)| &\leq C x^{2}(|\log(x)|+1)\\
|K_{0}(x)| &\leq C(|\log(x)|+1)\\
|x K_{1}(x)| &\leq C\end{split}\end{equation}
we get, for any $\lambda(t) >0$,
$$|F_{v_{2}}(\xi,\lambda(t))| \leq C \frac{\xi \lambda(t)}{\log^{b-1}(\frac{1}{\xi})}\left(|\log(\xi)|+|\log(\lambda(t))|\right), \quad \xi \leq \frac{1}{4}$$
Similarly, we have
\begin{equation}\begin{split}\partial_{\xi}F_{v_{2}}(\xi,y) &= \frac{(b-1) \left(-\log ^2\left(\frac{1}{\xi }\right)+b^2+b\right) \log ^{-b-2}\left(\frac{1}{\xi }\right) (\xi  y K_1(y \xi )-1)}{\xi ^2 y}\\
&+y  \frac{\left(\xi  y \log \left(\frac{1}{\xi }\right) \left(4 \log \left(\frac{1}{\xi }\right)+3 b-3\right) K_1(y \xi )\right)}{\log ^{b+1}\left(\frac{1}{\xi }\right)}\\
&-y K_0(y \xi ) \frac{\left(6 (b-1) \log \left(\frac{1}{\xi }\right)+3 (b-1) b+\log ^2\left(\frac{1}{\xi }\right) \left(\xi ^2 y^2+3\right)\right)}{\log^{b+1}(\frac{1}{\xi})}\end{split}\end{equation}
Again, using \eqref{k1simpest}, we get, for all $\lambda(t)>0$,
$$|\partial_{\xi}F_{v_{2}}(\xi,\lambda(t))| \leq \frac{C \lambda(t) \left(|\log(\xi)|+|\log(\lambda(t))|\right)}{\log^{b-1}(\frac{1}{\xi})}\left(1+\lambda(t)^{2} \xi^{2}\right), \quad \xi \leq \frac{1}{4}$$
So, we can integrate by parts (for instance) once, and recall \eqref{lambdarestr}, to get
$$|2 c_{b}\int_{0}^{\infty} d\xi \chi_{\leq \frac{1}{4}}(\xi) \frac{\sin(t\xi)}{t^{2}} F_{v_{2}}(\xi,\lambda(t))| \leq \frac{C \lambda(t) |\log(\lambda(t))|}{t^{3}}$$
Next, we need to consider
$$2c_{b}\int_{0}^{\infty} d\xi \chi_{\leq\frac{1}{4}}(\xi) \frac{\sin(t \xi)}{t^{2}} \partial_{\xi}^{2}\left(\frac{\xi}{\lambda(t)\log^{b-1}(\frac{1}{\xi})}\right)=\frac{2 c_{b}}{\lambda(t)}\int_{0}^{1/2} d\xi \chi_{\leq \frac{1}{4}}(\xi) \frac{\sin(t\xi)}{t^{2}} \left(\frac{b-1}{\xi\log^{b}(\frac{1}{\xi})}+\frac{b(b-1)}{\xi\log^{b+1}(\frac{1}{\xi})}\right)$$
From one integration by parts we see that $$|\frac{2 c_{b}}{\lambda(t)} \int_{0}^{1/2} d\xi \left(\chi_{\leq \frac{1}{4}}(\xi)-1\right) \frac{\sin(t \xi)}{t^{2}}\left(\frac{b-1}{\xi \log^{b}(\frac{1}{\xi})}+\frac{b(b-1)}{\xi\log^{b+1}(\frac{1}{\xi})}\right)| \leq \frac{C}{t^{3} \lambda(t)}$$
So, we need only consider
\begin{equation}\label{innerprodlogs} \frac{2 c_{b}}{\lambda(t)} \int_{0}^{1/2} d\xi \frac{\sin(t \xi)}{t^{2}}\left(\frac{b-1}{\xi \log^{b}(\frac{1}{\xi})}+\frac{b(b-1)}{\xi\log^{b+1}(\frac{1}{\xi})}\right)= \frac{2 c_{b}}{\lambda(t)} \int_{0}^{t/2} du \frac{\sin(u)}{t^{2}}\left(\frac{b-1}{u \log^{b}(\frac{t}{u})}+\frac{b(b-1)}{u\log^{b+1}(\frac{t}{u})}\right)\end{equation}
Let us start by studying, for $a>0$,
$$\int_{0}^{t/2} du \frac{\sin(u)}{u \log^{a}(t/u)}$$
Let$$f(z) = \frac{e^{iz}}{z\log^{a}(t/z)}$$
where we use the principal branch of $\log$ and $(\cdot)^{a}$ Then, $f$ is analytic on (for instance) $D$ given by $$D =\mathbb{C}\setminus ((-\infty,0] \cup [2t/3,\infty))$$
For $0 < \epsilon \leq \frac{1}{2}$, consider the contour $C_{t,\epsilon}$ in $D$ given by:
$$C_{t,\epsilon} = \{\frac{t}{2} e^{i \theta}| 0\leq \theta \leq \frac{\pi}{2}\} \cup \{i y| \epsilon \leq y \leq \frac{t}{2}\} \cup \{\epsilon e^{i \theta} | 0 \leq \theta \leq \frac{\pi}{2}\} \cup [\epsilon,t/2]$$
traversed in the counter-clockwise direction. By Cauchy's residue theorem, $$\int_{C_{t,\epsilon}} f(z) dz =0$$
On the other hand, 
$$\int_{C_{t,\epsilon}} f(z) dz = \int_{0}^{\frac{\pi}{2}} i d\theta \frac{e^{\frac{i t}{2} \cos(\theta)} e^{-\frac{t}{2} \sin(\theta)}}{\log^{a}(2 e^{-i \theta})}-\int_{\epsilon}^{t/2} dy \frac{e^{-y}}{y \log^{a}(\frac{t}{i y})}-\int_{0}^{\frac{\pi}{2}} i d\theta \frac{e^{i \epsilon e^{i \theta}}}{\log^{a}(\frac{t e^{-i\theta}}{\epsilon})}+\int_{\epsilon}^{t/2} \frac{e^{i u}}{u \log^{a}(t/u)} du$$ 
So, $$\int_{\epsilon}^{t/2} \frac{\sin(u)}{u \log^{a}(t/u)} du = \text{Im}\left(\int_{\epsilon}^{t/2} dy \frac{e^{-y}}{y\log^{a}(\frac{t}{i y})}\right)+\text{Im}\left(\int_{0}^{\frac{\pi}{2}} i d\theta \frac{e^{i \epsilon e^{i \theta}}}{\log^{a}(\frac{t e^{-i\theta}}{\epsilon})}\right)-\text{Im}\left(\int_{0}^{\frac{\pi}{2}} i d\theta \frac{e^{\frac{i t}{2} \cos(\theta)} e^{-\frac{t}{2} \sin(\theta)}}{\log^{a}(2 e^{-i \theta})}\right)$$
Letting $\epsilon \rightarrow 0$, we have
\begin{equation}\label{sinintegral}\int_{0}^{t/2} \frac{\sin(u)}{u \log^{a}(t/u)} du = \lim_{\epsilon \rightarrow 0}\text{Im}\left(\int_{\epsilon}^{t/2} dy \frac{e^{-y}}{y\log^{a}(\frac{t}{i y})}\right)-\text{Im}\left(\int_{0}^{\frac{\pi}{2}} i d\theta \frac{e^{\frac{i t}{2} \cos(\theta)} e^{-\frac{t}{2} \sin(\theta)}}{\log^{a}(2 e^{-i \theta})}\right)\end{equation}
Note that 
$$\text{Im}\left(\int_{\epsilon}^{t/2} dy \frac{e^{-y}}{y\log^{a}(\frac{t}{i y})}\right) = \int_{\epsilon}^{t/2} dy \frac{e^{-y}\sin(a\tan^{-1}(\frac{\pi}{2(\log(t)-\log(y))}))}{y((\log(t)-\log(y))^{2}+\frac{\pi^{2}}{4})^{a/2}}$$
So, \begin{equation}\label{verticalcontour}\lim_{\epsilon \rightarrow 0} \text{Im}\left(\int_{\epsilon}^{t/2} dy \frac{e^{-y}}{y\log^{a}(\frac{t}{i y})}\right) = \int_{0}^{t/2} dy \frac{e^{-y}\sin(a\tan^{-1}(\frac{\pi}{2(\log(t)-\log(y))}))}{y((\log(t)-\log(y))^{2}+\frac{\pi^{2}}{4})^{a/2}}\end{equation}
We have 
$$\frac{e^{-y}\sin(a\tan^{-1}(\frac{\pi}{2(\log(t)-\log(y))}))}{y((\log(t)-\log(y))^{2}+\frac{\pi^{2}}{4})^{a/2}} = a\frac{\pi}{2}\frac{e^{-y}}{y \log^{a+1}(t/y)} + \text{Err}, \quad 0 \leq y \leq t/2$$
where
$$|\text{Err}| \leq C \frac{e^{-y}}{y}\frac{1}{\log^{3+a}(t/y)}, \quad 0 \leq y \leq t/2$$
So, 
\begin{equation} \label{seriesofintegrand}\begin{split} \int_{0}^{t/2} dy \frac{e^{-y}\sin(a\tan^{-1}(\frac{\pi}{2(\log(t)-\log(y))}))}{y((\log(t)-\log(y))^{2}+\frac{\pi^{2}}{4})^{a/2}}&=a\frac{\pi}{2}\int_{0}^{t/2} dy \frac{e^{-y}}{y\log^{a+1}(t/y)} + \int_{0}^{t/2} dy \text{Err} \end{split}\end{equation}
Let us start by considering
\begin{equation}\label{leadingexp}\int_{0}^{1} \frac{dy}{y} \frac{e^{-y}}{\log^{a+1}(t/y)} = \int_{0}^{1} \frac{dy}{y \log^{a+1}(t/y)} +\int_{0}^{1} dy \frac{\left(e^{-y}-1\right)}{y \log^{a+1}(\frac{t}{y})}\end{equation}
Treating each term individually, we have:
$$ \int_{0}^{1} \frac{dy}{y \log^{a+1}(t/y)} = \frac{1}{a \log^{a}(t)}$$
\begin{equation}\label{logintegral2}|\int_{0}^{1} dy \frac{\left(e^{-y}-1\right)}{y \log^{a+1}(\frac{t}{y})}| \leq \frac{C}{\log^{a+1}(t)} \int_{0}^{1} dy \frac{|e^{-y}-1|}{y} \leq \frac{C}{\log^{a+1}(t)} \end{equation}
where we used the fact that
$$y \mapsto \frac{1}{\log^{a+1}(\frac{t}{y})},\text{ is increasing on }(0,1]$$
So, $$\int_{0}^{1} \frac{dy}{y} \frac{e^{-y}}{\log^{a+1}(t/y)} = \frac{1}{a \log^{a}(t)} +O\left(\frac{1}{\log^{a+1}(t)}\right)$$
Also,
$$\int_{1}^{\sqrt{t}} \frac{dy}{y} \frac{e^{-y}}{(\log(t)-\log(y))^{a+1}} \leq 2^{a+1} \int_{1}^{\sqrt{t}} \frac{dy}{y} \frac{e^{-y}}{\log^{a+1}(t)} =O(\frac{1}{\log^{a+1}(t)}) $$ 
Finally,
$$\int_{\sqrt{t}}^{t/2} \frac{dy}{y} \frac{e^{-y}}{(\log(t)-\log(y))^{a+1}} \leq \frac{1}{\sqrt{t}} \int_{\sqrt{t}}^{t/2} dy \frac{e^{-y}}{\log^{a+1}(2)}=O(\frac{e^{-\sqrt{t}}}{\sqrt{t}})$$
So, \begin{equation}\label{fullleadingexp}\int_{0}^{t/2} \frac{dy}{y} \frac{e^{-y}}{\log^{a+1}(t/y)} = \frac{1}{a \log^{a}(t)} + O\left(\frac{1}{\log^{a+1}(t)}\right)\end{equation}

Treating the $\text{Err}$ term in \eqref{seriesofintegrand} in the same way as was used to obtain \eqref{fullleadingexp}, we have
\begin{equation}\begin{split} \int_{0}^{t/2} dy \frac{e^{-y}\sin(a\tan^{-1}(\frac{\pi}{2(\log(t)-\log(y))}))}{y((\log(t)-\log(y))^{2}+\frac{\pi^{2}}{4})^{a/2}} &= a \frac{\pi}{2}\left(\frac{1}{a \log^{a}(t)} + O\left(\frac{1}{\log^{a+1}(t)}\right)\right) + O\left(\frac{1}{\log^{a+2}(t)}\right) \\
&=\frac{\pi}{2 \log^{a}(t)} + O\left(\frac{1}{\log^{a+1}(t)}\right)\end{split}\end{equation}
Lastly, let us estimate
\begin{equation} |-\text{Im}\left(\int_{0}^{\frac{\pi}{2}} i d\theta \frac{e^{\frac{i t}{2} \cos(\theta)} e^{-\frac{t}{2} \sin(\theta)}}{\log^{a}(2 e^{-i \theta})}\right)| \leq C \int_{0}^{\frac{\pi}{2}}d\theta \frac{e^{-\frac{t}{2} \sin(\theta)}}{\log^{a}(2)} \leq C \int_{0}^{\frac{\pi}{2}}d\theta e^{-\frac{t}{2}\frac{2}{\pi}\theta} =O\left(\frac{1}{t}\right)\end{equation}
So, \eqref{sinintegral} can be written as
\begin{equation} \int_{0}^{t/2} \frac{\sin(u)}{u\log^{a}(t/u)} du = \frac{\pi}{2\log^{a}(t)} +O\left(\frac{1}{\log^{a+1}(t)}\right)\end{equation}
Since \eqref{sinintegral} is valid for all $a >0$, we can repeat the identical steps done in this section for the case $b=1$, and apply the above result to \eqref{innerprodlogs} (and its analog for $b=1$), thereby obtaining, for all $b>0$:
\begin{equation} \int_{0}^{\infty} R dR \left(\frac{\cos(2Q_{1}(R))-1}{R^{2}\lambda(t)^{2}}\right)\phi_{0}(R) v_{2}(t,R\lambda(t)) = \frac{4b}{\lambda(t) t^{2} \log^{b}(t)} +E_{v_{2},ip}(t,\lambda(t))\end{equation}
where, for $b \neq 1$:
\begin{equation}\label{ev2ip}\begin{split}E_{v_{2},ip}(t,\lambda(t))&=2 c_{b}\int_{0}^{\infty} d\xi \frac{\sin(t\xi)}{t^{2}}\psi_{v_{2}}(\xi,\lambda(t))+2 c_{b}\int_{0}^{\infty} d\xi \chi_{\leq \frac{1}{4}}(\xi) \frac{\sin(t\xi)}{t^{2}} F_{v_{2}}(\xi,\lambda(t))\\
&+2 \frac{c_{b}}{\lambda(t)} \int_{0}^{\frac{1}{2}} d\xi \left(\chi_{\leq \frac{1}{4}}(\xi)-1\right) \frac{\sin(t\xi)}{t^{2}}\left(\frac{b-1}{\xi\log^{b}(\frac{1}{\xi})}+\frac{b(b-1)}{\xi \log^{b+1}(\frac{1}{\xi})}\right)\\
&+\frac{2 c_{b}}{\lambda(t)} \left(\int_{0}^{\frac{t}{2}} \frac{du \sin(u) (b-1)}{t^{2}u \log^{b}(\frac{t}{u})} -\frac{(b-1)\pi}{2 t^{2}\log^{b}(t)} + \int_{0}^{\frac{t}{2}} du \frac{\sin(u)b(b-1)}{t^{2}u\log^{b+1}(\frac{t}{u})}\right)\end{split}\end{equation}
When $b=1$, $E_{v_{2},ip}$ has the same form as \eqref{ev2ip}, except the second and third lines changed to
\begin{equation}\begin{split} &\frac{2 c_{1}}{\lambda(t)} \int_{0}^{\frac{1}{2}} d\xi \left(\chi_{\leq \frac{1}{4}}(\xi)-1\right) \frac{\sin(t\xi)}{\xi t^{2}} \left(-\frac{1}{\log^{2}(\frac{1}{\xi})}-\frac{1}{\log(\frac{1}{\xi})}\right)\\
&+\frac{2 c_{1}}{\lambda(t)} \left(\int_{0}^{\frac{1}{2}} d\xi \frac{\sin(t\xi)}{\xi t^{2}} \left(\frac{-1}{\log(\frac{1}{\xi})}-\frac{1}{\log^{2}(\frac{1}{\xi})}\right) + \frac{\pi}{2 t^{2} \log(t)}\right)\end{split}\end{equation}
For all $b>0$,
$$|E_{v_{2},ip}(t,\lambda(t))| \leq \frac{C}{\lambda(t) t^{2}\log^{b+1}(t)}$$
\end{proof}
\subsubsection{Pointwise estimates on $\partial_{t}^{i}\partial_{r}^{j}v_{2}$}
\begin{lemma} For any $b>0$, there exists $C>0$ such that\\
\\
For $0 \leq j \leq 2, \quad 0 \leq k \leq 1, \quad j+k \leq 2$,
\begin{equation}\label{v2precisenearorigin} \partial_{t}^{j}\partial_{r}^{k}v_{2}(t,r) = \frac{\partial_{t}^{j}\partial_{r}^{k}\left(\frac{-b r}{t^{2}}\right)}{\log^{b}(t)} + E_{\partial_{t}^{j}\partial_{r}^{k}v_{2}}(t,r)\end{equation}
where
$$|E_{\partial_{t}^{j}\partial_{r}^{k}v_{2}}(t,r)| \leq C\partial_{t}^{j}\partial_{r}^{k}\left(\frac{r}{t^{2} \log^{b+1}(t)}\right) + C \partial_{t}^{j}\partial_{r}^{k}\left(\frac{r^{2}}{t^{3} \log^{b}(t)}\right), \quad r \leq \frac{t}{2}$$

For $0 \leq j \leq 2, \quad 0 \leq k \leq 2, \quad j+k \leq 2$,
\begin{equation}\label{v2sqrtrest}|\partial_{t}^{j}\partial_{r}^{k}v_{2}(t,r)| \leq \frac{C}{\sqrt{r}}, \quad r \geq \frac{t}{2}\end{equation}

For $0 \leq j \leq 1, \quad 0 \leq k \leq 1, \quad j+k \leq 2$,
\begin{equation}\label{v2singularconeest}|\partial_{t}^{j}\partial_{r}^{k}v_{2}(t,r)| \leq \frac{C\log(r)}{|t-r|^{1+j+k}}, \quad r \geq \frac{t}{2}, \quad r \neq t\end{equation}

Finally, we also have the estimates
\begin{equation} \label{drrv2mainest}|\partial_{r}^{2} v_{2}(t,r)| \leq \begin{cases} \frac{C r}{t^{4} \log^{b}(t)}, \quad r \leq \frac{t}{2}\\
\frac{C \log(r)}{|t-r|^{3}}, \quad \frac{t}{2} < r \neq t\\
\end{cases}\end{equation}

\end{lemma}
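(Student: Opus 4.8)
\emph{Proof strategy.} The plan is to work directly from the oscillatory-integral representation \eqref{v2def}, differentiating under the integral sign: $\partial_{t}^{j}\partial_{r}^{k}v_{2}$ is again an integral of the form $c_{b}\int_{0}^{\infty}e_{j}(t\xi)\,\xi^{j}\,\xi^{k}J_{1}^{(k)}(r\xi)\,\chi_{\le1/4}(\xi)\,m_{b}(\xi)\,d\xi$, where $e_{j}$ is $\sin$ or $\cos$ according to the parity of $j$ and $m_{b}(\xi)=\log^{1-b}(1/\xi)$ (resp.\ $\log\log(1/\xi)$ when $b=1$). I would then split into the regimes $r\le t/2$ and $r\ge t/2$, and within each regime according to whether $r\xi\lesssim1$ or $r\xi\gtrsim1$.

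\textbf{Near-origin region $r\le t/2$.} Here I write $J_{1}(r\xi)=\tfrac{r\xi}{2}+\bigl(J_{1}(r\xi)-\tfrac{r\xi}{2}\bigr)$, the remainder being $O((r\xi)^{3})$. After the substitution $u=t\xi$, the main term reduces to integrals of exactly the shape $\int_{0}^{t/2}\frac{\sin u}{u\log^{a}(t/u)}\,du$ and their $\cos$/extra-$u^{m}$ analogues — precisely the integrals evaluated by the contour-integration argument of the preceding subsection, which give $\tfrac{\pi}{2\log^{a}(t)}+O(\log^{-a-1}(t))$; assembling the constants via the definition of $c_{b}$ produces the leading profile $\tfrac{-br}{t^{2}\log^{b}(t)}$ together with the first error term $\partial_{t}^{j}\partial_{r}^{k}\bigl(\tfrac{r}{t^{2}\log^{b+1}(t)}\bigr)$. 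The cubic remainder contributes $\int e_{j}(t\xi)(r\xi)^{3}(\cdots)\,d\xi$, which after the same rescaling gives the second error term $\partial_{t}^{j}\partial_{r}^{k}\bigl(\tfrac{r^{2}}{t^{3}\log^{b}(t)}\bigr)$; and for $\partial_{r}^{2}v_{2}$, where the relevant Taylor coefficient of $\xi^{2}J_{1}''(r\xi)$ is $-\tfrac38 r\xi^{3}$, the identical computation yields the $Cr/(t^{4}\log^{b}(t))$ bound. Differentiation is handled uniformly: each $\partial_{r}$ inserts a factor $\xi$, each $\partial_{t}$ inserts $\xi$ and swaps $\sin\leftrightarrow\cos$; after $u=t\xi$ these become factors $u/t$, and integration by parts in $u$ returns the integral to canonical form while reproducing the claimed $\partial_{t}^{j}\partial_{r}^{k}$-structure of the error profiles.

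\textbf{Exterior region $r\ge t/2$.} I insert the large-argument asymptotics $J_{1}(x)=\sqrt{2/(\pi x)}\cos(x-\tfrac{3\pi}{4})+O(x^{-3/2})$ where $r\xi\gtrsim1$ and the Taylor expansion where $r\xi\lesssim1$, so that $e_{j}(t\xi)J_{1}(r\xi)$ becomes a sum of pieces with slowly-varying amplitude $\sim(r\xi)^{-1/2}$ and phases $(t+r)\xi$ and $(t-r)\xi$. For the $C/\sqrt{r}$ bound one estimates absolutely, $\int_{0}^{1/4}(r\xi)^{-1/2}|m_{b}(\xi)|\,d\xi=r^{-1/2}\int_{0}^{1/4}\xi^{-1/2}|m_{b}(\xi)|\,d\xi=Cr^{-1/2}$ since $m_{b}$ grows at most like a fixed power of $\log(1/\xi)$, and the low-frequency piece $\xi<1/r$ is smaller still; extra $\partial_{t},\partial_{r}$ only insert powers of $\xi$, improving convergence, so the bound persists for $j+k\le2$. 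For the singular-cone estimate one integrates by parts $1+j+k$ times in $\xi$ in the pieces with phase $(t-r)\xi$ (the $(t+r)\xi$ pieces being better since $t+r\ge|t-r|$): each integration by parts gains $|t-r|^{-1}$, while the boundary contribution at $\xi\sim1/r$ and the derivatives landing on $\chi_{\le1/4}(\xi)\,m_{b}(\xi)\,(r\xi)^{-1/2}$ account for the single factor $\log(r)$ (from $m_{b}(1/r)\lesssim\log(r)$ and from integrating the mildly $\xi^{-1/2-m}$-singular amplitude over $(1/r,1/4)$), giving $\log(r)/|t-r|^{1+j+k}$; taking $j=0,k=2$ gives the $\partial_{r}^{2}v_{2}$ exponent $3$.

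\textbf{Main obstacle.} The delicate point is the near-origin analysis: pinning the leading coefficient down to exactly $-b$, with the sharp one-power-of-$\log$ gain in the error, and — most importantly — verifying that the asymptotic is of \emph{symbol type}, so that $\partial_{t}^{j}\partial_{r}^{k}v_{2}$ equals $\partial_{t}^{j}\partial_{r}^{k}$ applied to the leading profile plus $\partial_{t}^{j}\partial_{r}^{k}$ of the error profiles. This requires running the contour-integration estimate of the previous subsection uniformly in the auxiliary parameters (extra powers of $u$, $\cos$ vs.\ $\sin$) created by differentiation, and in particular controlling the slowly varying amplitude $\log^{1-b}(t/u)$ over $u\in(0,t/2)$ — showing that replacing it by $\log^{1-b}(t)$ costs only one power of $\log$ — while keeping the remainders in the stated differentiated form.
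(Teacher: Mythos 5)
Your treatment of the exterior region $r\ge t/2$ is essentially the paper's: the crude bound $|J_{1}(x)|\lesssim x^{-1/2}$ for \eqref{v2sqrtrest}, a split at $r\xi\sim 1$ with $|J_{1}(x)|\lesssim x$ on the low-frequency piece, and the large-argument representation of $J_{1}$ (the paper uses \eqref{jnktv} with the symbol bounds on $\Phi_{1}$) plus $1+j+k$ integrations by parts in $\xi$ on the high-frequency piece, absorbing the low-frequency contribution via $|t-r|\le r$. That part of your plan is fine.

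The near-origin region is where your route diverges from the paper's, and it has a genuine gap. You expand $J_{1}(r\xi)=\tfrac{r\xi}{2}+O((r\xi)^{3})$ and claim the remainder "after the same rescaling gives the second error term $\partial_{t}^{j}\partial_{r}^{k}(r^{2}/(t^{3}\log^{b}t))$." But in the region $r\le t/2$ the variable $r$ can be as large as $t/2$ while $\xi$ ranges up to $1/4$, so $r\xi$ is large over most of the support of $\chi_{\le 1/4}$ and the cubic bound on the remainder is useless there. The remainder $\int\sin(t\xi)\,[J_{1}(r\xi)-\tfrac{r\xi}{2}]\,\chi_{\le 1/4}(\xi)\log^{1-b}(1/\xi)\,d\xi$ cannot be dispatched by the $u=t\xi$ rescaling: if you integrate by parts twice and take absolute values, the term where both derivatives hit the Bessel factor is of size $\tfrac{r^{2}}{t^{2}}\int |J_{1}''(r\xi)|\log^{1-b}(1/\xi)\,d\xi\sim r^{3/2}/t^{2}$, and if instead you split at $r\xi\sim 1$ you create boundary/cutoff contributions of size $\sim r\log^{1-b}(r)/t^{2}$; both exceed the allowed error $C r/(t^{2}\log^{b+1}t)+Cr^{2}/(t^{3}\log^{b}t)$ once $r$ is, say, larger than a power of $\log t$. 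Beating them requires exploiting the oscillation of $J_{1}(r\xi)$ against $\sin(t\xi)$ (phases $(t\pm r)\xi$) in the interior region as well, which your outline does not do — and which, done carefully, is no longer "the same rescaling." The paper avoids this entirely: it inserts the Poisson representation \eqref{besselrep} for $J_{1}$ (and for $J_{0},J_{2},J_{3}$ when derivatives are taken), so that $\sin(t\xi)\cos(r\xi\cos\theta)$ becomes $\tfrac12[\sin(\xi t_{+})+\sin(\xi t_{-})]$ with $t_{\pm}=t\pm r\cos\theta\in[t/2,3t/2]$; after integrating by parts the appropriate number of times in $\xi$, the contour-integral lemma is applied at the shifted times $t_{\pm}$, and the stated error structure arises from expanding $t_{\pm}^{-2}\log^{-b}(t_{\pm})$ about $t$ (the $r^{2}/t^{3}$ profile from the $t_{\pm}$-expansion, the extra $\log^{-1}$ from the lemma's error), uniformly up to $r=t/2$ and uniformly in the derivative count. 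If you want to keep your decomposition, you would have to supply a genuine oscillatory-integral argument for the Bessel remainder at large $r\xi$ with phases $t\pm r\gtrsim t$, including exact cancellation of the artificial splitting terms; the paper's $\theta$-representation is precisely the device that makes this bookkeeping unnecessary.
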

\begin{proof}
In the course of proving these estimates, we will occasionally make use of the following formula, which can be found, for example, in Appendix B of \cite{ktv}: for $n > -\frac{1}{2}$
\begin{equation}\label{besselrep}J_{n}(x) = \frac{1}{\Gamma(n+\frac{1}{2}) \sqrt{\pi}} \left(\frac{x}{2}\right)^{n} \int_{0}^{\pi}\cos(x \cos(\theta))\sin^{2n}(\theta) d\theta\end{equation}
Similarly, we will need, Lemma 8.1 of \cite{ktv}, which states
\begin{equation}\label{jnktv} J_{\frac{d-2}{2}}(x) = \frac{\left(\frac{x}{2}\right)^{\frac{d-2}{2}}}{\pi^{\frac{d-1}{2}}} \text{Re}\left(e^{-i x} \Phi_{\frac{d-2}{2}}(x)\right), \quad x>0\end{equation}
where
$$|\Phi_{\frac{d-2}{2}}^{(k)}|(x) \leq C_{k} x^{-\frac{(d-1)}{2}-k}$$
We will first prove the lemma for the case $b\neq 1$, and then remark what the identical procedures give, for the case $b=1$.\\ 
We will start with the region $0 \leq r \leq \frac{t}{2}$, and use \eqref{besselrep} for $n=1$ in the formula for $v_{2}$:
\begin{equation}\label{v2betternear0} \begin{split} v_{2}(t,r) &= c_{b} \int_{0}^{\infty} d\xi \sin(t\xi) J_{1}(r \xi) \frac{\chi_{\leq \frac{1}{4}}(\xi)}{\log^{b-1}(\frac{1}{\xi})}\\
&=\frac{c_{b}}{\pi} \int_{0}^{\pi} \sin^{2}(\theta) d\theta \int_{0}^{\infty} d\xi \sin(t\xi) r \xi \cos(r \xi \cos(\theta)) \frac{\chi_{\leq \frac{1}{4}}(\xi)}{\log^{b-1}(\frac{1}{\xi})}\\
&=\frac{c_{b}}{2\pi}\int_{0}^{\pi} \sin^{2}(\theta) d\theta \int_{0}^{\infty} d\xi r \xi \left(\sin(\xi(t+r\cos(\theta)))+\sin(\xi(t-r\cos(\theta)))\right) \frac{\chi_{\leq \frac{1}{4}}(\xi)}{\log^{b-1}(\frac{1}{\xi})}\end{split}\end{equation}
Integrating by parts twice, we get
\begin{equation} v_{2}(t,r) = -\frac{c_{b} r}{2 \pi} \int_{0}^{\pi} \sin^{2}(\theta) d\theta \int_{0}^{\infty} d\xi \partial_{\xi}^{2} \left(\frac{\xi \chi_{\leq \frac{1}{4}}(\xi)}{\log^{b-1}(\frac{1}{\xi})}\right)\left(\frac{\sin(\xi t_{+})}{t_{+}^{2}}+\frac{\sin(\xi t_{-})}{t_{-}^{2}}\right)\end{equation}
where
\begin{equation}\label{tplusminus}t_{+/-} = t\pm r \cos(\theta)\end{equation}
We divide the terms analogously to how similar terms were treated in the previous subsection. In particular, we have
\begin{equation} v_{2}(t,r) = \frac{-c_{b}r}{2\pi} \int_{0}^{\pi} \sin^{2}(\theta) d\theta \int_{0}^{\infty} d\xi \left(\chi_{\leq \frac{1}{4}}(\xi) \left(\frac{b-1}{\xi \log^{b}(\frac{1}{\xi})}+\frac{b(b-1)}{\xi \log^{b+1}(\frac{1}{\xi})}\right)+\psi(\xi)\right)\left(\frac{\sin(\xi t_{+})}{t_{+}^{2}} + \frac{\sin(\xi t_{-})}{t_{-}^{2}}\right)\end{equation}
where $\psi \in C^{\infty}_{c}([\frac{1}{8},\frac{1}{4}])$. So, we can integrate by parts (for example, once) to treat the $\psi$ term:
\begin{equation}\begin{split} &|\frac{-c_{b}r}{2\pi} \int_{0}^{\pi} \sin^{2}(\theta) d\theta \int_{0}^{\infty} d\xi \psi(\xi)\left(\frac{\sin(\xi t_{+})}{t_{+}^{2}} + \frac{\sin(\xi t_{-})}{t_{-}^{2}}\right)| \leq C r \int_{0}^{\pi} d\theta \int_{0}^{\infty} \frac{|\psi'(\xi)| d\xi}{t^{3}}\\
&\leq \frac{C r}{t^{3}}, \quad r \leq \frac{t}{2}\end{split}\end{equation}
where we use
$$\frac{1}{|t_{\pm}|} \leq \frac{C}{t}, \quad r \leq \frac{t}{2}$$
Then, it remains to study 
\begin{equation}\label{v2near0intest}\begin{split} &\frac{-c_{b}r}{2\pi} \int_{0}^{\pi} \sin^{2}(\theta) d\theta \int_{0}^{\infty} d\xi \left(\chi_{\leq \frac{1}{4}}(\xi) \left(\frac{b-1}{\xi \log^{b}(\frac{1}{\xi})}+\frac{b(b-1)}{\xi \log^{b+1}(\frac{1}{\xi})}\right)\right)\left(\frac{\sin(\xi t_{+})}{t_{+}^{2}} + \frac{\sin(\xi t_{-})}{t_{-}^{2}}\right)\\
&=\frac{-c_{b}r}{2\pi} \int_{0}^{\pi} \sin^{2}(\theta)d\theta \int_{0}^{\frac{1}{2}} \left(\frac{b-1}{\xi \log^{b}(\frac{1}{\xi})} + \frac{b(b-1)}{\xi \log^{b+1}(\frac{1}{\xi})}\right) \left(\frac{\sin(\xi t_{+})}{t_{+}^{2}} + \frac{\sin(\xi t_{-})}{t_{-}^{2}}\right)\\
&+\frac{-c_{b}r}{2\pi} \int_{0}^{\pi} \sin^{2}(\theta)d\theta \int_{0}^{\frac{1}{2}} \left(\chi_{\leq \frac{1}{4}}(\xi)-1\right) \left(\frac{b-1}{\xi \log^{b}(\frac{1}{\xi})}+\frac{b(b-1)}{\xi \log^{b+1}(\frac{1}{\xi})}\right)\left(\frac{\sin(\xi t_{+})}{t_{+}^{2}} + \frac{\sin(\xi t_{-})}{t_{-}^{2}}\right)\end{split}\end{equation}
We start with the second line of \eqref{v2near0intest}. Since all other terms will use the identical argument, we first consider the term involving $\frac{1}{\log^{b}(t)}$ and $t_{+}$, namely
\begin{equation}\label{v2near0intest2}\begin{split} &\frac{-c_{b}r}{2\pi} \int_{0}^{\pi} \sin^{2}(\theta) d\theta \left((b-1) \int_{0}^{\frac{t_{+}}{2}} \frac{du \sin(u)}{u \log^{b}(\frac{t_{+}}{u}) t_{+}^{2}}\right)= -\frac{c_{b}r}{2\pi} \int_{0}^{\pi} \sin^{2}(\theta) d\theta (b-1) \left(\frac{\pi}{2 \log^{b}(t_{+}) t_{+}^{2}} + \text{Err}(t,r,\theta) \right)\end{split}\end{equation}
where
$$|\text{Err}(t,r,\theta)| \leq \frac{C}{t_{+}^{2} \log^{b+1}(t_{+})}$$
and we use our calculation of \eqref{sinintegral} from the previous subsection. To treat this last integral, we start with the first term:
\begin{equation}\begin{split} \frac{-c_{b} r}{2 \pi} \cdot (b-1)\frac{\pi}{2} \int_{0}^{\pi} \frac{\sin^{2}(\theta) d\theta}{\log^{b}(t_{+})t_{+}^{2}} &= \frac{-c_{b} r (b-1)}{4} \frac{1}{t^{2} \log^{b}(t)} \int_{0}^{\pi} \frac{\sin^{2}(\theta) d\theta}{(1+\frac{r \cos(\theta)}{t})^{2}} \frac{1}{\left(1+\frac{\log(1+\frac{r \cos(\theta)}{t})}{\log(t)}\right)^{b}}\\
&=\frac{-c_{b} r (b-1)}{4} \left(\frac{\pi}{2 t^{2} \log^{b}(t)} + E_{v_{2},1}(t,r)\right), \quad r \leq \frac{t}{2}\end{split}\end{equation}
with
$$|E_{v_{2},1}(t,r)| \leq C \frac{r}{t^{3} \log^{b}(t)}, \quad r \leq \frac{t}{2}$$
For the second term of \eqref{v2near0intest2}, we have
\begin{equation}\begin{split}&|\frac{-c_{b}r}{2\pi} \int_{0}^{\pi} \sin^{2}(\theta) d\theta (b-1)\text{Err}(t,r,\theta)| \leq C r \int_{0}^{\pi} \frac{d\theta}{t^{2} \log^{b+1}(t)} \\
&\leq \frac{C r}{t^{2} \log^{b+1}(t)}, \quad r \leq \frac{t}{2}\end{split}\end{equation}
where we used
\begin{equation} \frac{1}{t_{+}^{2} \log^{b+1}(t_{+})} \leq \frac{C}{t^{2} \log^{b+1}(t)}, \quad r \leq \frac{t}{2}\end{equation}
Combining these, we get
\begin{equation}\frac{-c_{b}r}{2\pi} \int_{0}^{\pi} \sin^{2}(\theta) d\theta \left((b-1) \int_{0}^{\frac{t_{+}}{2}} \frac{du \sin(u)}{u \log^{b}(\frac{t_{+}}{u}) t_{+}^{2}}\right)= \frac{-b r}{2 t^{2} \log^{b}(t)} + E_{v_{2},2}(t,r), \quad r \leq \frac{t}{2}\end{equation}
$$|E_{v_{2},2}(t,r)| \leq C\left(\frac{r^{2}}{t^{3} \log^{b}(t)}+C\frac{r}{t^{2} \log^{b+1}(t)}\right), \quad r \leq \frac{t}{2}$$
We use the identical procedure to treat all the other terms in the second line of \eqref{v2near0intest}, and get
\begin{equation} \begin{split} &\frac{-c_{b}r}{2\pi} \int_{0}^{\pi} \sin^{2}(\theta)d\theta \int_{0}^{\frac{1}{2}} d\xi \left(\frac{b-1}{\xi \log^{b}(\frac{1}{\xi})} + \frac{b(b-1)}{\xi \log^{b+1}(\frac{1}{\xi})}\right) \left(\frac{\sin(\xi t_{+})}{t_{+}^{2}} + \frac{\sin(\xi t_{-})}{t_{-}^{2}}\right)\\
&= \frac{-b r}{t^{2} \log^{b}(t)} + E_{v_{2},3}(t,r)\end{split}\end{equation}
with
$$|E_{v_{2},3}(t,r)| \leq C\left(\frac{r}{t^{2} \log^{b+1}(t)}+\frac{r^{2}}{t^{3} \log^{b}(t)}\right), \quad r \leq \frac{t}{2}$$
Finally, for the third line of \eqref{v2near0intest}, we integrate by parts, identically to how a similar term was treated in the last subsection, and get
\begin{equation} |\frac{-c_{b}r}{2\pi} \int_{0}^{\pi} \sin^{2}(\theta)d\theta \int_{0}^{\frac{1}{2}} \left(\chi_{\leq \frac{1}{4}}(\xi)-1\right) \left(\frac{b-1}{\xi \log^{b}(\frac{1}{\xi})}+\frac{b(b-1)}{\xi \log^{b+1}(\frac{1}{\xi})}\right)\left(\frac{\sin(\xi t_{+})}{t_{+}^{2}} + \frac{\sin(\xi t_{-})}{t_{-}^{2}}\right)| \leq \frac{C r}{t^{3}}, \quad r \leq \frac{t}{2}\end{equation}
This completes the proof of \eqref{v2precisenearorigin} (for $v_{2}$).\\
\\
We will need two more estimates on $v_{2}$. One is given by using the simple estimate
$$|J_{1}(x)| \leq \frac{C}{\sqrt{x}}$$
which yields
\begin{equation} |v_{2}(t,r)| \leq \frac{C}{\sqrt{r}}\end{equation}
We will obtain another estimate in the region $r \geq \frac{t}{2}$, by first decomposing $v_{2}$ as follows:
\begin{equation}\label{v2forest}\begin{split} &v_{2}(t,r) = c_{b}\int_{0}^{\infty} d\xi \sin(t\xi) J_{1}(r \xi) \frac{\chi_{\leq \frac{1}{4}}(\xi)}{\log^{b-1}(\frac{1}{\xi})}\\
&=c_{b}\int_{0}^{\infty} d\xi \chi_{\leq 1}(r \xi) \sin(t\xi) J_{1}(r \xi) \frac{\chi_{\leq \frac{1}{4}}(\xi)}{\log^{b-1}(\frac{1}{\xi})}+c_{b}\int_{0}^{\infty} d\xi \left(1-\chi_{\leq 1}(r \xi)\right) \sin(t\xi) J_{1}(r \xi) \frac{\chi_{\leq \frac{1}{4}}(\xi)}{\log^{b-1}(\frac{1}{\xi})}\end{split}\end{equation}
where $\chi_{\leq 1} \in C^{\infty}_{c}([0,\infty))$, $0 \leq \chi_{\leq 1} \leq 1$,
$$\chi_{\leq 1}(x) = \begin{cases} 1, \quad x \leq \frac{1}{2}\\
0, \quad x \geq 1\end{cases}$$
and $\chi_{\leq 1}$ is otherwise arbitrary.\\
We start with the second line of \eqref{v2forest}. Using the simple estimate
$$|J_{1}(x)| \leq C x$$
we get
\begin{equation}\begin{split}|\int_{0}^{\infty} d\xi \chi_{\leq 1}(r \xi) \sin(t\xi) J_{1}(r \xi) \frac{\chi_{\leq \frac{1}{4}}(\xi)}{\log^{b-1}(\frac{1}{\xi})}|& \leq C r \int_{0}^{\infty} d\xi \chi_{\leq 1}(r \xi) \xi \frac{\chi_{\leq \frac{1}{4}}(\xi)}{\log^{b-1}(\frac{1}{\xi})}\\
&\leq \frac{C}{r \log^{b-1}(r)}, \quad r \geq \frac{t}{2} > 4\end{split}\end{equation}\\
\\
We need only estimate the remaining integral in \eqref{v2forest}, namely,
\begin{equation}\int_{0}^{\infty} d\xi \left(1-\chi_{\leq 1}(r \xi)\right) \sin(t\xi) J_{1}(r \xi) \frac{\chi_{\leq \frac{1}{4}}(\xi)}{\log^{b-1}(\frac{1}{\xi})}\end{equation}
Note that this integral vanishes if $r<2$, by the support properties of $\chi_{\leq 1}$ and $\chi_{\leq \frac{1}{4}}$. We use \eqref{jnktv}, for $d=4$.
Then, for $r>4$,  we have
\begin{equation}\label{awayfromcone}\begin{split} |\int_{0}^{\infty} d\xi \left(1-\chi_{\leq 1}(r \xi)\right) \sin(t\xi) J_{1}(r \xi) \frac{\chi_{\leq \frac{1}{4}}(\xi)}{\log^{b-1}(\frac{1}{\xi})}| &= |\text{Re}\left(\int_{0}^{\infty} d\xi \left(1-\chi_{\leq 1}(r \xi)\right) \sin(t\xi) \frac{r \xi}{2 \pi^{3/2}} e^{-i r \xi} \Phi_{1}(r \xi) \frac{\chi_{\leq \frac{1}{4}}(\xi)}{\log^{b-1}(\frac{1}{\xi})}\right)|\\
&\leq \frac{C}{|t-r|} \int_{0}^{\infty} d\xi |\partial_{\xi}\left(\left(1-\chi_{\leq 1}(r \xi)\right) r \xi \Phi_{1}(r \xi) \frac{\chi_{\leq \frac{1}{4}}(\xi)}{\log^{b-1}(\frac{1}{\xi})}\right)|\end{split}\end{equation}
Note that \begin{equation}\begin{split} \int_{0}^{\infty} d \xi |\chi_{\leq 1}'(r \xi)| r |r \xi \Phi_{1}(r \xi)| \frac{|\chi_{\leq \frac{1}{4}}(\xi)|}{\log^{b-1}(\frac{1}{\xi})}&\leq C r \int_{0}^{1/r}\frac{d\xi}{\log^{b-1}(\frac{1}{\xi})}\\
&\leq \frac{C}{\log^{b-1}(r)}, \quad r\geq\frac{t}{2}\end{split}\end{equation} 
where we used the support properties of $\chi$ and symbol property of $\Phi_{1}$.\\
\\
Next, \begin{equation} \begin{split} \int_{0}^{\infty} d\xi \left(1-\chi_{\leq 1}(r \xi)\right) |\partial_{\xi}(r \xi \Phi_{1}(r \xi))| \frac{|\chi_{\leq \frac{1}{4}}(\xi)|}{\log^{b-1}(\frac{1}{\xi})} &\leq C r \int_{0}^{\infty} d\xi \frac{\left(1-\chi_{\leq 1}(r \xi)\right)}{(r \xi)^{3/2}} \frac{|\chi_{\leq \frac{1}{4}}(\xi)|}{\log^{b-1}(\frac{1}{\xi})}\\
&\leq \frac{C}{\sqrt{r}} \int_{\frac{1}{2r}}^{\frac{1}{4}} \frac{d\xi}{\xi^{3/2}\log^{b-1}(\frac{1}{\xi})}\\
&\leq  \frac{C}{\log^{b-1}(r)}, \quad r\geq\frac{t}{2}\end{split}\end{equation}

Finally, we use the same procedure to get \begin{equation} \begin{split} \int_{0}^{\infty} d\xi \left(1-\chi_{\leq 1}(r \xi)\right) |r \xi \Phi_{1}(r \xi)| |\partial_{\xi}\left(\frac{\chi_{\leq \frac{1}{4}}(\xi)}{\log^{b-1}(\frac{1}{\xi})}\right)|&\leq \frac{C}{\sqrt{r}} \int_{\frac{1}{2 r}}^{\frac{1}{4}} d\xi \left(\frac{|\chi_{\leq \frac{1}{4}}'(\xi)|}{\sqrt{\xi}\log^{b-1}(\frac{1}{\xi})} + \frac{1}{\xi^{3/2} \log^{b}(\frac{1}{\xi})}\right)\\
&\leq \frac{C}{\log^{b}(r)}, \quad r \geq \frac{t}{2}\end{split}\end{equation}\\
\\
Returning to \eqref{awayfromcone}, we get, for $r \geq \frac{t}{2}$,
\begin{equation}\begin{split}\frac{C}{|t-r|} \int_{0}^{\infty} d\xi |\partial_{\xi}\left(\left(1-\chi_{\leq 1}(r \xi)\right) r \xi \phi(r \xi) \frac{\chi_{\leq \frac{1}{4}}(\xi)}{\log^{b-1}(\frac{1}{\xi})}\right)| \leq \frac{C}{|t-r|\log^{b-1}(r)}\end{split}\end{equation}
\\
\\
Note that, if $\frac{t}{2} \leq r$, then, $|t-r| \leq r$, so $$\frac{C}{r \log^{b-1}(r)} \leq \frac{C}{|t-r|\log^{b-1}(r)}, \quad r \geq \frac{t}{2}, \quad r \neq t $$
So, we can combine the previous two estimates to conclude \eqref{v2singularconeest}, thereby concluding the proof of the $v_{2}$ estimates (in the case $b \neq 1$). \\
\\
We now proceed to prove the similar statements about $\partial_{r}v_{2}$. We have
\begin{equation} \partial_{r}v_{2}(t,r) = \frac{c_{b}}{2} \int_{0}^{\infty} \sin(t \xi) \xi \left(J_{0}(r \xi)-J_{2}(r \xi)\right) \frac{\chi_{\leq \frac{1}{4}}(\xi)}{\log^{b-1}(\frac{1}{\xi})} d\xi\end{equation}
and start by treating the $J_{0}$ term. Using \eqref{besselrep} for $n=0$, we have
\begin{equation}\begin{split}&\frac{c_{b}}{2} \int_{0}^{\infty} \sin(t \xi) \xi J_{0}(r \xi) \frac{\chi_{\leq \frac{1}{4}}(\xi)}{\log^{b-1}(\frac{1}{\xi})} d\xi=\frac{c_{b}}{4\pi} \int_{0}^{\pi} d\theta \int_{0}^{\infty} d\xi \left(\sin(\xi t_{+})+\sin(\xi t_{-})\right)\frac{\xi \chi_{\leq \frac{1}{4}}(\xi)}{\log^{b-1}(\frac{1}{\xi})}\\
&=\frac{-c_{b}}{4\pi} \int_{0}^{\pi} d\theta \int_{0}^{\infty} d\xi \left(\frac{\sin(\xi t_{+})}{t_{+}^{2}}+\frac{\sin(\xi t_{-})}{t_{-}^{2}}\right)\partial_{\xi}^{2}\left(\frac{\xi \chi_{\leq \frac{1}{4}}(\xi)}{\log^{b-1}(\frac{1}{\xi})}\right)\end{split}\end{equation}
We follow the identical procedure used to prove \eqref{v2precisenearorigin} for $v_{2}$, the only major difference being an extra $\sin^{2}(\theta)$ in the $\theta$ integral in the $v_{2}$ case. We get
\begin{equation}\frac{c_{b}}{2} \int_{0}^{\infty} \sin(t \xi) \xi J_{0}(r \xi) \frac{\chi_{\leq \frac{1}{4}}(\xi)}{\log^{b-1}(\frac{1}{\xi})} d\xi = \frac{-b}{t^{2} \log^{b}(t)} + E_{\partial_{r}v_{2},1}(t,r)\end{equation}
where
$$|E_{\partial_{r}v_{2}}(t,r)| \leq \frac{C r}{t^{3} \log^{b}(t)} + \frac{C}{t^{2} \log^{b+1}(t)}, \quad r \leq \frac{t}{2}$$
Now, we treat the $J_{2}$ term, again using \eqref{besselrep}, this time for $n=2$.
\begin{equation}\begin{split}&-\frac{c_{b}}{2} \int_{0}^{\infty} d\xi \sin(t\xi) \xi J_{2}(r\xi) \frac{\chi_{\leq \frac{1}{4}}(\xi)}{\log^{b-1}(\frac{1}{\xi})}\\
&=\frac{-c_{b}r^{2}}{6\pi} \int_{0}^{\infty} d\xi \sin(t\xi) \xi^{3} \int_{0}^{\pi} \cos(r\xi \cos(\theta))\sin^{4}(\theta)d\theta \frac{\chi_{\leq \frac{1}{4}}(\xi)}{\log^{b-1}(\frac{1}{\xi})}\\
&=\frac{-c_{b} r^{2}}{12\pi} \int_{0}^{\pi} \sin^{4}(\theta) d\theta \int_{0}^{\infty} d\xi \left(\sin(\xi(t+r\cos(\theta)))+\sin(\xi(t-r\cos(\theta)))\right) \frac{\xi^{3}\chi_{\leq \frac{1}{4}}(\xi)}{\log^{b-1}(\frac{1}{\xi})}\\
&=\frac{-c_{b} r^{2}}{12\pi} \int_{0}^{\pi} \sin^{4}(\theta)d\theta \int_{0}^{\infty} d\xi \left(\frac{\sin(\xi(t+r\cos(\theta)))}{(t+r\cos(\theta))^{4}}+\frac{\sin(\xi(t-r\cos(\theta)))}{(t-r\cos(\theta))^{4}}\right)\partial_{\xi}^{4}\left(\frac{\xi^{3} \chi_{\leq \frac{1}{4}}(\xi)}{\log^{b-1}(\frac{1}{\xi})}\right)\end{split}\end{equation}
Note that
\begin{equation} \begin{split} \partial_{\xi}^{4}\left(\frac{\xi^{3} \chi_{\leq \frac{1}{4}}(\xi)}{\log^{b-1}(\frac{1}{\xi})}\right)&=\chi_{\leq \frac{1}{4}}(\xi)\left( \frac{11b(b-1)}{\xi \log^{b+1}(\frac{1}{\xi})}+\frac{6(b-1)}{\xi \log^{b}(\frac{1}{\xi})}+\frac{b(b-1)(b+1)(b+2)}{\xi\log^{b+3}(\frac{1}{\xi})}+\frac{6b(b-1)(b+1)}{\xi\log^{b+2}(\frac{1}{\xi})}\right)\\
&+\psi(\xi)\end{split}\end{equation}
where $$\psi \in C^{\infty}_{c}([\frac{1}{8},\frac{1}{4}])$$
We treat the $\psi$ term in the same way as in the $v_{2}$ estimate, and use the same argument as in the $v_{2}$ estimate to handle the other terms. In total, we get
\begin{equation}\begin{split} |-\frac{c_{b}}{2} \int_{0}^{\infty} d\xi \sin(t\xi) \xi J_{2}(r\xi) \frac{\chi_{\leq \frac{1}{4}}(\xi)}{\log^{b-1}(\frac{1}{\xi})}| &\leq \frac{C r^{2}}{t^{4}\log^{b}(t)}, \quad r \leq \frac{t}{2}\end{split}\end{equation}
Combining our results, we then get \eqref{v2precisenearorigin} for $\partial_{r}v_{2}$.\\
\\
Now, we study the region $r \geq \frac{t}{2}$. Again using the simple estimate
$$|J_{1}'(x)| \leq \frac{C}{\sqrt{x}}$$
we get \begin{equation}|\partial_{r}v_{2}(t,r)| \leq \frac{C}{\sqrt{r}}\end{equation}
Then, we decompose
\begin{equation} \partial_{r}v_{2}(t,r) = I_{r}+II_{r}\end{equation}
with
\begin{equation} I_{r} = \frac{c_{b}}{2} \int_{0}^{\infty} d\xi \sin(t\xi) \xi \chi_{\leq 1}(r\xi) \left(J_{0}(r\xi)-J_{2}(r\xi)\right) \frac{\chi_{\leq \frac{1}{4}}(\xi)}{\log^{b-1}(\frac{1}{\xi})}\end{equation}
Using the simple estimate
$$|J_{k}(x)| \leq C, \quad k=0,2$$
we get \begin{equation}\label{largerIr}\begin{split} |I_{r}| &\leq |\frac{c_{b}}{2 } \int_{0}^{\infty} \sin(t \xi) \chi_{\leq 1}(r \xi)  \xi \left(J_{0}(r\xi) - J_{2}(r \xi)\right) \frac{\chi_{\leq \frac{1}{4}}(\xi)}{\log^{b-1}(\frac{1}{\xi})} d\xi|\\
&\leq C\int_{0}^{\text{min}\{\frac{1}{r},\frac{1}{4}\}} \frac{\xi d \xi}{\log^{b-1}(\frac{1}{\xi})} \leq \frac{C}{r^{2} \log^{b-1}(r)}, \quad r \geq \frac{t}{2} \geq 4\end{split}\end{equation}
It remains to estimate $II_{r}$. We write 
$$II_{r}=II_{r,0}+II_{r,2}$$
with
$$II_{r,0} = \frac{c_{b}}{2} \int_{0}^{\infty} \sin(t\xi) \left(1-\chi_{\leq 1}(r \xi)\right) \xi J_{0}(r \xi) \frac{\chi_{\leq \frac{1}{4}}(\xi)}{\log^{b-1}(\frac{1}{\xi})}d\xi$$
$$II_{r,2} = -\frac{c_{b}}{2} \int_{0}^{\infty} \sin(t\xi) \left(1-\chi_{\leq 1}(r \xi)\right) \xi J_{2}(r \xi) \frac{\chi_{\leq \frac{1}{4}}(\xi)}{\log^{b-1}(\frac{1}{\xi})}d\xi$$ 
Again using \eqref{jnktv} for $d=2$, we get
\begin{equation}\label{IIr0} \begin{split} |II_{r,0}| &\leq C \int_{0}^{\infty} |\partial_{\xi}^{2}\left(\left(1-\chi_{\leq 1}(r \xi)\right)\xi \Phi_{0}(r \xi) \frac{\chi_{\leq \frac{1}{4}}(\xi)}{\log^{b-1}(\frac{1}{\xi})}\right)| \frac{d\xi}{(t-r)^{2}}\leq \frac{C}{(t-r)^{2}} \int_{0}^{\infty} \frac{\mathbbm{1}_{\{r \xi \geq \frac{1}{2}\}} \mathbbm{1}_{\{\xi \leq \frac{1}{4}\}}}{\sqrt{r}\xi^{3/2}\log^{b-1}(\frac{1}{\xi})} d\xi \\
&\leq \frac{C}{(t-r)^{2}} \int_{\frac{1}{2r}}^{\frac{1}{4}} \frac{d\xi}{\sqrt{r}\xi^{3/2} \log^{b-1}(\frac{1}{\xi})}\leq \frac{C}{(t-r)^{2}} \frac{1}{\sqrt{r}} \int_{\frac{1}{2 r}}^{\frac{1}{4}} \frac{d\xi}{\xi^{3/2}\log^{b-1}(\frac{1}{\xi})}\\
&\leq \frac{C}{(t-r)^{2}\log^{b-1}(r)}, \quad r \geq \frac{t}{2}\end{split}\end{equation} 

For $II_{r,2}$, we use \eqref{jnktv} for $d=6$. Using the same argument as for $II_{r,0}$, we get \begin{equation}\begin{split} |II_{r,2}| &\leq C \int_{0}^{\infty} |\partial_{\xi}^{2}\left(\left(1-\chi_{\leq 1}(r \xi)\right) \xi (r\xi)^{2} \Phi_{2}(r \xi) \frac{\chi_{\leq \frac{1}{4}}(\xi)}{\log^{b-1}(\frac{1}{\xi})}\right)|\frac{d\xi}{(t-r)^{2}}\\
&\leq \frac{C}{(t-r)^{2}\log^{b-1}(r)}, \quad r \geq \frac{t}{2}\end{split}\end{equation} 
Combining these, we have $$|II_{r}| \leq \frac{C}{(t-r)^{2}\log^{b-1}(r)}, \quad r \geq \frac{t}{2}$$
Using the same reasoning as was used to prove the analogous estimate for $v_{2}$, if $r \geq \frac{t}{2}, \quad r \neq t$, we have
\begin{equation} \frac{1}{r^{2}\log^{b-1}(r)} \leq \frac{C}{(t-r)^{2}\log^{b-1}(r)}\end{equation}
which gives \eqref{v2singularconeest} for $\partial_{r}v_{2}$.\\
\\
Next, we study $\partial_{t}v_{2}$, in the region $0 \leq r \leq \frac{t}{2}$. Using \eqref{besselrep}, we have
\begin{equation}\begin{split}\partial_{t}v_{2}(t,r) &= c_{b} \int_{0}^{\infty} d\xi \cos(t\xi) \frac{\xi J_{1}(r\xi) \chi_{\leq \frac{1}{4}}(\xi)}{\log^{b-1}(\frac{1}{\xi})}\\
&=\frac{c_{b}}{\pi} \int_{0}^{\pi} \sin^{2}(\theta) d\theta \int_{0}^{\infty} d\xi \xi \cos(t\xi) r \xi \cos(r \xi \cos(\theta)) \frac{\chi_{\leq \frac{1}{4}}(\xi)}{\log^{b-1}(\frac{1}{\xi})}\\
&=\frac{c_{b} r}{\pi} \int_{0}^{\pi} \sin^{2}(\theta) d\theta \int_{0}^{\infty} \frac{d\xi \xi^{2}}{2} \left(\cos(\xi(t+r\cos(\theta)))+\cos(\xi(t-r\cos(\theta)))\right)\frac{\chi_{\leq \frac{1}{4}}(\xi)}{\log^{b-1}(\frac{1}{\xi})}\\
&=\frac{c_{b} r}{2\pi} \int_{0}^{\pi} \sin^{2}(\theta) d\theta \int_{0}^{\infty} d\xi \left(\frac{\sin(\xi(t+r\cos(\theta)))}{(t+r\cos(\theta))^{3}}+\frac{\sin(\xi(t-r\cos(\theta)))}{(t-r\cos(\theta))^{3}}\right) \partial_{\xi}^{3}\left(\frac{\xi^{2}\chi_{\leq \frac{1}{4}}(\xi)}{\log^{b-1}(\frac{1}{\xi})}\right)\end{split}\end{equation}
We note that
\begin{equation} \partial_{\xi}^{3}\left(\frac{\xi^{2}}{\log^{b-1}(\frac{1}{\xi})}\chi_{\leq \frac{1}{4}}(\xi)\right) = \left(\frac{(b-1)b(b+1)}{\xi\log^{b+2}(\frac{1}{\xi})} + \frac{3 b(b-1)}{\xi\log^{b+1}(\frac{1}{\xi})} + \frac{2(b-1)}{\xi\log^{b}(\frac{1}{\xi})}\right)\chi_{\leq \frac{1}{4}}(\xi)+\psi(\xi)\end{equation}
where
$$\psi \in C^{\infty}_{c}([\frac{1}{8},\frac{1}{4}])$$
The integral to estimate is therefore of exactly the same form as that treated in estimating $v_{2}$, and repeating this procedure gives \eqref{v2precisenearorigin} for $\partial_{t}v_{2}$.
For the region $r \geq \frac{t}{2}$, we can again use
$$|J_{1}(x)| \leq \frac{C}{\sqrt{x}}$$ to get
\begin{equation}|\partial_{t}v_{2}(t,r)| \leq \frac{C}{\sqrt{r}}\end{equation}

We now consider the region $r \geq \frac{t}{2}$. First, let us write
\begin{equation} \partial_{t}v_{2}(t,r) = I_{t}(t,r)+II_{t}(t,r)\end{equation}
where
$$I_{t} = c_{b}\int_{0}^{\infty} \chi_{\leq 1}(r\xi) \xi \cos(t\xi) J_{1}(r \xi) \frac{\chi_{\leq \frac{1}{4}}(\xi)}{\log^{b-1}(\frac{1}{\xi})} d\xi$$

$$II_{t}=c_{b}\int_{0}^{\infty}\left(1-\chi_{\leq 1}(r\xi)\right) \xi \cos(t\xi) J_{1}(r \xi) \frac{\chi_{\leq \frac{1}{4}}(\xi)}{\log^{b-1}(\frac{1}{\xi})} d\xi$$
Then, we use $$|J_{1}(x)| \leq C x$$ to get
\begin{equation}\label{largerIt} |I_{t}| \leq C \int_{0}^{\text{min}\{\frac{1}{r},\frac{1}{4}\}}\frac{r\xi^{2}}{\log^{b-1}(\frac{1}{\xi})} d\xi \leq \frac{C}{r^{2}\log^{b-1}(r)}, \quad r \geq 4\end{equation}
As usual, we use \eqref{jnktv} with $d=4$ to get
\begin{equation} \begin{split} II_{t}&=\frac{c_{b}}{4 \pi^{3/2}} \text{Re}\left(\int_{0}^{\infty} \left(1-\chi_{\leq 1}(r\xi)\right) \xi \left(e^{i(t-r)\xi}+e^{-i(t+r)\xi}\right)\frac{r \xi \Phi_{1}(r\xi) \chi_{\leq \frac{1}{4}}(\xi)}{\log^{b-1}(\frac{1}{\xi})}d\xi \right)\end{split}\end{equation}
and this gives
\begin{equation} |II_{t}| \leq C \int_{0}^{\infty} \frac{1}{|t-r|^{2}} |\partial_{\xi}^{2} \left(\frac{\left(1-\chi_{\leq 1}(r\xi)\right)\xi^{2} r \Phi_{1}(r\xi) \chi_{\leq \frac{1}{4}}(\xi)}{\log^{b-1}(\frac{1}{\xi})}\right)| d\xi\end{equation}
But, comparing this expression with \eqref{IIr0}, we see that the only difference is
$$ \Phi_{0}(r\xi)\text{ is replaced by } \Phi_{1}(r\xi)\cdot r\xi$$
Comparing the symbol-type estimates on $\Phi_{d}$ which follow \eqref{jnktv}, we can use the same procedure used to treat \eqref{IIr0} to get
\begin{equation} |II_{t}| \leq \frac{C}{(t-r)^{2}\log^{b-1}(r)}, \quad r \geq \frac{t}{2}\end{equation} 
This gives \eqref{v2singularconeest} for $\partial_{t}v_{2}$. \\
\\
Next, we obtain estimates on $\partial_{r}^{2} v_{2}$, using the same procedure as above. In particular,
we use
\begin{equation} J_{1}''(x) = \frac{1}{4} (J_{3}(x)-3 J_{1}(x))\end{equation}
to get
\begin{equation} \partial_{r}^{2}v_{2}(t,r) = \frac{c_{b}}{4} \int_{0}^{\infty} \sin(t\xi) \xi^{2}\left(-3 J_{1}(r\xi) + J_{3}(r\xi)\right) \frac{\chi_{\leq \frac{1}{4}}(\xi)}{\log^{b-1}(\frac{1}{\xi})} d\xi\end{equation}
Then, we use \eqref{besselrep} for $J_{k}, \quad k=1,3$. For the term involving $J_{k}$, for $k=1$ or $k=3$, we integrate by parts $k+3$ times in $\xi$ and get:

\begin{equation} \begin{split} \partial_{r}^{2}v_{2}(t,r) &= \frac{-3 c_{b} r}{8 \pi} \int_{0}^{\pi} d\theta \sin^{2}(\theta) \int_{0}^{\infty} \left(\frac{\sin(\xi(t+r\cos(\theta)))}{(t+r \cos(\theta))^{4}}+\frac{\sin((t-r\cos(\theta))\xi)}{(t-r \cos(\theta))^{4}}\right) \partial_{\xi}^{4} \left(\frac{\xi^{3} \chi_{\leq \frac{1}{4}}(\xi)}{\log^{b-1}(\frac{1}{\xi})}\right)d\xi\\
&-\frac{c_{b} r^{3}}{120 \pi} \int_{0}^{\pi} d\theta \sin^{6}(\theta) \int_{0}^{\infty} \left(\frac{\sin(\xi(t+r\cos(\theta)))}{(t+r\cos(\theta))^{6}}+\frac{\sin(\xi(t-r\cos(\theta)))}{(t-r\cos(\theta))^{6}}\right) \partial_{\xi}^{6} \left(\frac{\xi^{5} \chi_{\leq \frac{1}{4}}(\xi)}{\log^{b-1}(\frac{1}{\xi})}\right)d\xi\end{split}\end{equation}

We will not need as precise of a description of $\partial_{r}^{2} v_{2}$ here, since such a description for small $r$ will be obtained later using the equation solved by $v_{2}$. So, we use the identical procedure used for $\partial_{r}v_{2}$, and get
\begin{equation}\begin{split} |\partial_{r}^{2}v_{2}(t,r)| &\leq \frac{C r}{t^{4} \log^{b}(t)}, \quad r \leq \frac{t}{2}\end{split}\end{equation}
For the larger $r$ estimates, we can again use $$|J_{k}(x)| \leq \frac{C}{\sqrt{x}}, \quad k=1,3$$ to get
\begin{equation} |\partial_{r}^{2}v_{2}(t,r)| \leq C \int_{0}^{\infty} \frac{\xi^{2}}{\sqrt{r\xi}} \frac{\chi_{\leq \frac{1}{4}}(\xi)}{\log^{b-1}(\frac{1}{\xi})} d\xi \leq \frac{C}{\sqrt{r}}\end{equation}
Lastly, for another estimate for large $r$, we first split $\partial_{r}^{2}v_{2}$ as follows
\begin{equation} \partial_{r}^{2}v_{2}(t,r) = I_{rr}+II_{rr}\end{equation}
and
\begin{equation} I_{rr} = \frac{c_{b}}{4} \int_{0}^{\infty} \sin(t\xi) \xi^{2} \chi_{\leq 1}(r\xi) \left(-3 J_{1}(r\xi) + J_{3}(r\xi)\right) \frac{\chi_{\leq \frac{1}{4}}(\xi)}{\log^{b-1}(\frac{1}{\xi})} d\xi\end{equation}
\begin{equation} II_{rr} = \frac{c_{b}}{4} \int_{0}^{\infty} \sin(t\xi) \xi^{2} \left(1-\chi_{\leq 1}(r\xi)\right) \left(-3 J_{1}(r\xi) + J_{3}(r\xi)\right) \frac{\chi_{\leq \frac{1}{4}}(\xi)}{\log^{b-1}(\frac{1}{\xi})} d\xi\end{equation}
which give
\begin{equation} |I_{rr}| \leq C \int_{0}^{\text{min}\{\frac{1}{r}, \frac{1}{4}\}} \frac{\xi^{2} r\xi}{\log^{b-1}(\frac{1}{\xi})} d\xi \leq \frac{C}{r^{3}\log^{b-1}(r)}, \quad r \geq 4\end{equation}
Next, we again use \eqref{jnktv} for $J_{k}, \quad k=1,3$, and integrate by parts 3 times, to get
\begin{equation}\begin{split} |II_{rr}| &\leq \sum_{k \in \{1,3\}} \frac{C}{|t-r|^{3}} \int_{0}^{\infty} |\partial_{\xi}^{3}\left(\frac{\xi^{2} \chi_{\leq \frac{1}{4}}(\xi)}{\log^{b-1}(\frac{1}{\xi})} (r\xi)^{k} \Phi_{k}(r\xi) \left(1-\chi_{\leq 1}(r\xi)\right)\right)|d\xi\\
&\leq \frac{C}{|t-r|^{3}} \int_{0}^{\infty}\left(\frac{\mathbbm{1}_{\{r\xi \geq \frac{1}{2}\}} \mathbbm{1}_{\{\xi \leq \frac{1}{4}\}}}{\sqrt{r} \xi^{3/2} \log^{b-1}(\frac{1}{\xi})} + \sqrt{\frac{r}{\xi}} \frac{\mathbbm{1}_{\{1 \geq r \xi \geq \frac{1}{2}\}} \mathbbm{1}_{\{\xi \leq \frac{1}{4}\}}}{\log^{b-1}(\frac{1}{\xi})}\right)d\xi\\
&\leq \frac{C}{|t-r|^{3}\log^{b-1}(r)}, \quad r > \frac{t}{2}\end{split}\end{equation}
Combining the above estimates, we get \eqref{drrv2mainest}

Next, we study $\partial_{t}^{2} v_{2}$ in the region $r \leq \frac{t}{2}$. We proceed just as for $v_{2}$:
\begin{equation} \begin{split} \partial_{t}^{2}v_{2}(t,r) &= \frac{-c_{b}}{2\pi} \int_{0}^{\pi} \sin^{2}(\theta)d\theta \int_{0}^{\infty} d\xi r \left(\frac{\sin(\xi t_{+})}{t_{+}^{4}}+\frac{\sin(\xi t_{-})}{t_{-}^{4}}\right) \partial_{\xi}^{4} \left(\frac{\chi_{\leq \frac{1}{4}}(\xi) \cdot \xi^{3}}{\log^{b-1}(\frac{1}{\xi})}\right)\end{split}\end{equation}
We then use
\begin{equation}\begin{split}&\partial_{\xi}^{4} \left(\frac{\chi_{\leq \frac{1}{4}}(\xi) \cdot \xi^{3}}{\log^{b-1}(\frac{1}{\xi})}\right)\\
&= \chi_{\leq \frac{1}{4}}(\xi)\left(\frac{6 b \left(b^2-1\right) }{\xi \log ^{b+2}\left(\frac{1}{\xi }\right) }+\frac{(b-1) b (b+1) (b+2) }{\xi \log ^{b+3}\left(\frac{1}{\xi }\right) }+\frac{11 (b-1) b }{\xi \log ^{b+1}\left(\frac{1}{\xi }\right) }+\frac{6 (b-1) }{\xi \log ^{b}\left(\frac{1}{\xi }\right) }\right)+\psi(\xi)\\
&, \quad \psi \in C^{\infty}_{c}([\frac{1}{8},\frac{1}{4}])\end{split}\end{equation}
The integral is of the same form as that considered during the $v_{2}$ estimates, and we get \eqref{v2precisenearorigin} for $\partial_{t}^{2}v_{2}$. (Note that any other estimates on $\partial_{t}^{2} v_{2}$ which we may need can be obtained using the other estimates of the lemma, and the equation solved by $v_{2}$).

Finally, we will need a similar formula for $\partial_{tr}v_{2}$: We have
\begin{equation} \partial_{tr}v_{2}(t,r) = \frac{c_{b}}{2} \int_{0}^{\infty} \cos(t\xi) \xi^{2} (J_{0}(r\xi)-J_{2}(r\xi)) \frac{\chi_{\leq \frac{1}{4}}(\xi) d\xi}{\log^{b-1}(\frac{1}{\xi})}\end{equation}
Applying \eqref{besselrep}, the $J_{0}$ term is
\begin{equation} \frac{c_{b}}{4\pi} \int_{0}^{\pi} d\theta \int_{0}^{\infty} d\xi \left(\frac{\sin(\xi t_{+})}{t_{+}^{3}}+\frac{\sin(\xi t_{-})}{t_{-}^{3}}\right) \partial_{\xi}^{3} \left(\frac{\xi^{2} \chi_{\leq \frac{1}{4}}(\xi)}{\log^{b-1}(\frac{1}{\xi})}\right)\end{equation}
Comparing this to the analogous integral treated while studying $\partial_{t}v_{2}$, we get
\begin{equation}\begin{split}&\frac{c_{b}}{4\pi} \int_{0}^{\pi} d\theta \int_{0}^{\infty} d\xi \left(\frac{\sin(\xi t_{+})}{t_{+}^{3}}+\frac{\sin(\xi t_{-})}{t_{-}^{3}}\right) \partial_{\xi}^{3} \left(\frac{\xi^{2} \chi_{\leq \frac{1}{4}}(\xi)}{\log^{b-1}(\frac{1}{\xi})}\right)\\
&=\frac{2b}{t^{3} \log^{b}(t)} + E_{\partial_{12}v_{2},1}(t,r)\end{split}\end{equation}
where
$$|E_{\partial_{12}v_{2},1}(t,r)| \leq C\left(\frac{r}{t^{4} \log^{b}(t)}+\frac{1}{t^{3} \log^{b+1}(t)}\right),\quad r \leq \frac{t}{2}$$
For the $J_{2}$ term, we use \eqref{besselrep}, and get
\begin{equation} \begin{split} &\frac{c_{b}}{2} \int_{0}^{\infty} \cos(t\xi) \xi^{2} (-J_{2}(r\xi)) \frac{\chi_{\leq \frac{1}{4}}(\xi) d\xi}{\log^{b-1}(\frac{1}{\xi})}\\
&=\frac{c_{b}r^{2}}{12 \pi} \int_{0}^{\pi} \sin^{4}(\theta)d\theta \int_{0}^{\infty} \left(\frac{\sin(\xi t_{+})}{t_{+}^{5}}+\frac{\sin(\xi t_{-})}{t_{-}^{5}}\right) \partial_{\xi}^{5}\left(\frac{\xi^{4} \chi_{\leq \frac{1}{4}}(\xi)}{\log^{b-1}(\frac{1}{\xi})}\right) d\xi\end{split}\end{equation}
we then treat this integral in exactly the same was as the $J_{0}$ term, and get
\begin{equation} |\frac{c_{b}}{2} \int_{0}^{\infty} \cos(t\xi) \xi^{2} (-J_{2}(r\xi)) \frac{\chi_{\leq \frac{1}{4}}(\xi) d\xi}{\log^{b-1}(\frac{1}{\xi})}| \leq \frac{C r^{2}}{t^{5}\log^{b}(t)}, \quad r \leq \frac{t}{2}\end{equation}
This gives \eqref{v2precisenearorigin} for $\partial_{tr}v_{2}$.\\
\\
Finally, \eqref{v2sqrtrest} is proven for $\partial_{t}^{2}v_{2}$ and $\partial_{tr}v_{2}$, and \eqref{v2singularconeest} is proven for $\partial_{tr}v_{2}$ in exactly the same way as for the other derivatives.\\
\\
The identical procedure shows that \eqref{v2precisenearorigin} is still true for $b=1$. In the region $r \geq \frac{t}{2}$, \eqref{v2sqrtrest} is still true for $b=1$, and for the second estimate in the region $r \geq \frac{t}{2}$, we get\\
 for $0 \leq j \leq 1, \quad 0 \leq k \leq 1, \quad j+k \leq 2$
\begin{equation}|\partial_{t}^{j}\partial_{r}^{k}v_{2}(t,r)| \leq \frac{C}{|t-r|^{1+j+k}} \log(\log(r)), \quad t \neq r \geq \frac{t}{2}, \quad b=1\end{equation}
(Note the difference with the case $b \neq 1$, which had $\frac{C}{|t-r|^{1+j+k} \log^{b-1}(r)}$ on the right hand side). In any case, \eqref{v2singularconeest} is true for all $b>0$ (for simplicity, we use $\log(r)$ instead of $\begin{cases} \log^{1-b}(r), \quad b \neq 1\\
\log(\log(r)), \quad b=1\end{cases}$ in \eqref{v2singularconeest}). \end{proof}

\subsubsection{The inner product of the (rescaled) $v_{3}$ linear error term}
\begin{lemma}For $v_{3}$ defined in \eqref{v3def}, we have
\begin{equation} \begin{split} &\int_{0}^{\infty} \left(\frac{\cos(2Q_{1}(R))-1}{R^{2}\lambda(t)^{2}}\right)v_{3}(t,R\lambda(t)) \phi_{0}(R) R dR\\
&= \frac{16}{\lambda(t)} \int_{t}^{\infty} K_{3}(s-t,\lambda(t)) \lambda''(s) ds + \int_{0}^{\infty} \left(\frac{\cos(2Q_{1}(R))-1}{R^{2}\lambda(t)^{2}}\right)E_{5}(t,R\lambda(t)) \phi_{0}(R) R dR\end{split}\end{equation}
where
$E_{5}$ is as in \eqref{v3preciseforip}, 
\begin{equation}\begin{split}K_{3}(w,\lambda(t))&= \left(\frac{w}{1+w^{2}}-\frac{w}{\lambda(t)^{2-2\alpha}+w^{2}}\right)\frac{w^{4}}{4(w^{2}+36\lambda(t)^{2})^{2}}\end{split}\end{equation}
and
\begin{equation}\label{k3minusk30} \int_{0}^{\infty}|K_{3}(w,\lambda(t))-K_{3,0}(w,\lambda(t))|dw \leq C \end{equation}
where 
$$K_{3,0}(w,\lambda(t)) = -\frac{1}{4(\lambda(t)^{1-\alpha}+w)(1+w)^{3}}$$
\end{lemma}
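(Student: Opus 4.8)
The plan is to combine the decomposition \eqref{v3preciseforip} of $v_3$ from the previous lemma with the elementary identities $\frac{\cos(2Q_1(R))-1}{R^2}=\frac{-8}{(1+R^2)^2}$ (immediate from $Q_1(R)=2\arctan R$, as already used in \eqref{innerproduct}) and $\phi_0(R)=\frac{2R}{1+R^2}$. Substituting $r=R\lambda(t)$ into \eqref{v3preciseforip} and into the inner product, the piece coming from $E_5$ is carried over verbatim and produces the second term on the right-hand side of the claimed identity, while the weight in front of the main piece collapses to
$$\left(\frac{\cos(2Q_1(R))-1}{R^2\lambda(t)^2}\right)\phi_0(R)\,R=\frac{-16\,R^2}{\lambda(t)^2(1+R^2)^3}.$$
Thus the main contribution equals
$$\frac{16}{\lambda(t)}\int_0^\infty\frac{R^3\,dR}{(1+R^2)^3}\int_{t+6R\lambda(t)}^\infty ds\,\lambda''(s)(s-t)\left(\frac{1}{1+(s-t)^2}-\frac{1}{\lambda(t)^{2-2\alpha}+(s-t)^2}\right),$$
and I would first check absolute convergence — the $s$-integral is bounded uniformly in $R$ by $C\int_t^\infty|\lambda''(s)|(s-t)^{-3}\,ds<\infty$ using the bound on $\lambda''$ in \eqref{lambdarestr}, and $R\mapsto R^3(1+R^2)^{-3}$ is integrable — so that Fubini applies and the order of integration may be swapped.

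After the swap, for fixed $s$ the condition $t+6R\lambda(t)<s$ becomes $0\le R<\frac{s-t}{6\lambda(t)}$, so the inner integral becomes $\int_0^A\frac{R^3\,dR}{(1+R^2)^3}$ with $A=\frac{s-t}{6\lambda(t)}$. The substitution $u=R^2$ gives $\int_0^A\frac{R^3\,dR}{(1+R^2)^3}=\frac12\int_0^{A^2}\frac{u\,du}{(1+u)^3}=\frac{A^4}{4(1+A^2)^2}$, and inserting $A=\frac{s-t}{6\lambda(t)}$ turns this into $\frac{(s-t)^4}{4\big((s-t)^2+36\lambda(t)^2\big)^2}$. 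Multiplying by the remaining factor $(s-t)\big(\frac{1}{1+(s-t)^2}-\frac{1}{\lambda(t)^{2-2\alpha}+(s-t)^2}\big)$ and writing $w=s-t$ reproduces precisely $K_3(w,\lambda(t))$; hence the main term is $\frac{16}{\lambda(t)}\int_t^\infty K_3(s-t,\lambda(t))\lambda''(s)\,ds$, which is the asserted formula.

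For \eqref{k3minusk30} I would first combine the two fractions in the definition of $K_3$ to the closed form
$$K_3(w,\lambda)=\frac{(\lambda^{2-2\alpha}-1)\,w^5}{4\,(1+w^2)(\lambda^{2-2\alpha}+w^2)(w^2+36\lambda^2)^2},$$
which is negative for $\lambda<\frac12$, matching the sign of $K_{3,0}$. Then I would estimate $|K_3-K_{3,0}|$ separately on $0\le w\le\lambda^{1-\alpha}$, $\lambda^{1-\alpha}\le w\le 1$, and $w\ge1$, using throughout that $\lambda<\lambda^{1-\alpha}<1$. On $[0,\lambda^{1-\alpha}]$ both $|K_3|$ and $|K_{3,0}|$ are $O(\lambda^{-(1-\alpha)})$, so their integral over an interval of length $\lambda^{1-\alpha}$ is $O(1)$; on $[1,\infty)$ both kernels decay and $|K_3-K_{3,0}|\lesssim w^{-3}$, which is integrable with $O(1)$ bound. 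The only delicate range is $[\lambda^{1-\alpha},1]$, where $K_3=-\frac{1}{4w}+O\big(\frac{\lambda^{1-\alpha}}{w^2}+\frac{\lambda^2}{w^3}+w\big)$ and $K_{3,0}=-\frac{1}{4w}+O\big(\frac{\lambda^{1-\alpha}}{w^2}+w\big)$, so the leading $-\frac{1}{4w}$ singularities cancel and the remainder integrates to $O(1)$ uniformly in $\lambda$. The identity making this cancellation transparent is
$$\frac{w}{\lambda^{2-2\alpha}+w^2}-\frac{1}{\lambda^{1-\alpha}+w}=\frac{\lambda^{1-\alpha}(w-\lambda^{1-\alpha})}{(\lambda^{2-2\alpha}+w^2)(\lambda^{1-\alpha}+w)},$$
whose right side is $O(\lambda^{1-\alpha}w^{-2})$ for $w\gtrsim\lambda^{1-\alpha}$; replacing $\frac{w^4}{(w^2+36\lambda^2)^2}$ by $1$ costs $O(\lambda^2w^{-2})$ and replacing $(1+w^2)^{-1}$ by $(1+w)^{-3}$ costs $O(w)$, both integrable on $[\lambda^{1-\alpha},1]$. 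I expect the main obstacle to be purely organizational: arranging this last estimate so that the transitions at the three scales $w\sim\lambda$, $w\sim\lambda^{1-\alpha}$, $w\sim1$ are all absorbed into a constant independent of $\lambda$ (and hence of $t$); the rest is bookkeeping together with the one elementary integral above.
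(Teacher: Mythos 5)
Your proposal is correct and follows essentially the same route as the paper: substitute the decomposition \eqref{v3preciseforip} into the inner product, interchange the order of integration, evaluate $\int_0^A \frac{R^3\,dR}{(1+R^2)^3}=\frac{A^4}{4(1+A^2)^2}$ with $A=\frac{s-t}{6\lambda(t)}$ to obtain $K_3$, and then prove \eqref{k3minusk30} by splitting at the scales $w\sim\lambda(t)^{1-\alpha}$ and $w\sim 1$, just as the paper does. The one slip is your Fubini justification -- $\int_t^\infty|\lambda''(s)|(s-t)^{-3}\,ds$ diverges at $s=t$ -- but absolute convergence is immediate from $(s-t)\bigl|\tfrac{1}{1+(s-t)^2}-\tfrac{1}{\lambda(t)^{2-2\alpha}+(s-t)^2}\bigr|\le C\lambda(t)^{\alpha-1}$ together with $\int_t^\infty|\lambda''(s)|\,ds<\infty$, so the argument stands.
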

\begin{proof}

We start with
\begin{equation}\label{v3ip} \begin{split} &\int_{0}^{\infty} \left(\frac{\cos(2Q_{1}(R))-1}{R^{2}\lambda(t)^{2}}\right)v_{3}(t,R\lambda(t)) \phi_{0}(R) R dR\\
&=\frac{16}{\lambda(t)} \int_{0}^{\infty} \frac{R^{3}}{(1+R^{2})^{3}} \int_{t+6 R \lambda(t)}^{\infty} ds \lambda''(s) \left(\frac{s-t}{1+(s-t)^{2}}-\frac{(s-t)}{\lambda(t)^{2-2\alpha}+(s-t)^{2}}\right)\\
&+\int_{0}^{\infty} \left(\frac{\cos(2Q_{1}(R))-1}{R^{2}\lambda(t)^{2}}\right)E_{5}(t,R\lambda(t)) \phi_{0}(R) R dR\end{split}\end{equation}
where we recall the decomposition \eqref{v3preciseforip}.\\
We study the second line of \eqref{v3ip} in more detail, since the third line is as in the lemma statement.
\begin{equation} \begin{split} &\frac{16}{\lambda(t)} \int_{0}^{\infty} \frac{R^{3}}{(1+R^{2})^{3}} \int_{t+6 R \lambda(t)}^{\infty} ds \lambda''(s) \left(\frac{s-t}{1+(s-t)^{2}}-\frac{(s-t)}{\lambda(t)^{2-2\alpha}+(s-t)^{2}}\right)\\
&=\frac{16}{\lambda(t)} \int_{t}^{\infty} K_{3}(s-t,\lambda(t)) \lambda''(s) ds\end{split}\end{equation}
where
\begin{equation}\begin{split}K_{3}(w,\lambda(t))&= \left(\frac{w}{1+w^{2}}-\frac{w}{\lambda(t)^{2-2\alpha}+w^{2}}\right)\frac{w^{4}}{4(w^{2}+36\lambda(t)^{2})^{2}}\end{split}\end{equation}

We now estimate
\begin{equation} \int_{0}^{\infty}|K_{3}(w,\lambda(t))-K_{3,0}(w,\lambda(t))|dw \end{equation}

where 
$$K_{3,0}(w,\lambda(t)) = -\frac{1}{4(\lambda(t)^{1-\alpha}+w)(1+w)^{3}}$$

We start with
\begin{equation}\begin{split} &\int_{0}^{\lambda(t)^{1-\alpha}} |K_{3}(w,\lambda(t))| dw + \int_{0}^{\lambda(t)^{1-\alpha}} \frac{dw}{4(\lambda(t)^{1-\alpha}+w)(1+w)^{3}}\\
&\leq C \int_{0}^{\lambda(t)^{1-\alpha}} \frac{w dw}{(1+w^{2})} + C \int_{0}^{\lambda(t)^{1-\alpha}} \frac{w dw}{\lambda(t)^{2-2\alpha}} +C \int_{0}^{\lambda(t)^{1-\alpha}} \frac{dw}{\lambda(t)^{1-\alpha}}\\
&\leq C\end{split}\end{equation}
Next,
\begin{equation}\label{v3ip2terms}\begin{split} &\int_{\lambda(t)^{1-\alpha}}^{\frac{1}{2}} |K_{3}(w,\lambda(t))+\frac{1}{4(\lambda(t)^{1-\alpha}+w)(1+w)^{3}}|dw\\
&\leq \int_{\lambda(t)^{1-\alpha}}^{\frac{1}{2}} |w \left(\frac{1}{1+w^{2}}-\frac{1}{\lambda(t)^{2-2\alpha}+w^{2}}\right)\left(\frac{w^{4}}{4(w^{2}+36\lambda(t)^{2})^{2}}-\frac{1}{4}\right)|dw\\
&+\int_{\lambda(t)^{1-\alpha}}^{\frac{1}{2}} |\frac{w}{4}\left(\frac{1}{1+w^{2}}-\frac{1}{\lambda(t)^{2-2\alpha}+w^{2}}\right)+\frac{1}{4(\lambda(t)^{1-\alpha}+w)}| dw\\
&+\int_{\lambda(t)^{1-\alpha}}^{\frac{1}{2}} |\frac{-1}{4(\lambda(t)^{1-\alpha}+w)}+\frac{1}{4(\lambda(t)^{1-\alpha}+w)(1+w)^{3}}| dw\end{split}\end{equation}
For the second line of \eqref{v3ip2terms}, we have
\begin{equation}\begin{split}&\int_{\lambda(t)^{1-\alpha}}^{\frac{1}{2}} |w \left(\frac{1}{1+w^{2}}-\frac{1}{\lambda(t)^{2-2\alpha}+w^{2}}\right)\left(\frac{w^{4}}{4(w^{2}+36\lambda(t)^{2})^{2}}-\frac{1}{4}\right)|dw\\
&\leq C \int_{\lambda(t)^{1-\alpha}}^{\frac{1}{2}} w\left(\frac{1}{\lambda(t)^{2-2\alpha}+w^{2}}-\frac{1}{1+w^{2}}\right)\frac{\lambda(t)^{2}}{w^{2}} dw\\
&\leq C \lambda(t)^{2\alpha} |\log(\lambda(t)^{2-2\alpha})| \leq C\end{split}\end{equation}
For the third line of \eqref{v3ip2terms}, we have
\begin{equation}\begin{split}&\int_{\lambda(t)^{1-\alpha}}^{\frac{1}{2}} |\frac{w}{4}\left(\frac{1}{1+w^{2}}-\frac{1}{\lambda(t)^{2-2\alpha}+w^{2}}\right)+\frac{1}{4(\lambda(t)^{1-\alpha}+w)}| dw\\
&\leq \int_{\lambda(t)^{1-\alpha}}^{\frac{1}{2}} \frac{w dw}{4(1+w^{2})} + \int_{\lambda(t)^{1-\alpha}}^{\frac{1}{2}} dw |\frac{1}{\lambda(t)^{1-\alpha}+w}-\frac{w}{\lambda(t)^{2-2\alpha}+w^{2}}|\\
&\leq C \end{split}\end{equation}
Finally, the last line of \eqref{v3ip2terms} is estimated as follows:
\begin{equation}\begin{split}&\int_{\lambda(t)^{1-\alpha}}^{\frac{1}{2}} |\frac{-1}{4(\lambda(t)^{1-\alpha}+w)}+\frac{1}{4(\lambda(t)^{1-\alpha}+w)(1+w)^{3}}| dw\\
&\leq C \int_{\lambda(t)^{1-\alpha}}^{\frac{1}{2}} \frac{w dw}{(\lambda(t)^{1-\alpha}+w)}\\
&\leq C\end{split}\end{equation}
Then, we consider
\begin{equation} \begin{split} &\int_{\frac{1}{2}}^{\infty} dw |K_{3}(w,\lambda(t))+\frac{1}{4(\lambda(t)^{1-\alpha}+w)(1+w)^{3}}|\\
&\leq \int_{\frac{1}{2}}^{\infty} dw |K_{3}(w,\lambda(t))| + \int_{\frac{1}{2}}^{\infty} \frac{dw}{(\lambda(t)^{1-\alpha}+w)(1+w)^{3}}\\
&\leq C \int_{\frac{1}{2}}^{\infty} dw \frac{w|\lambda(t)^{2-2\alpha}-1|}{w^{4}} + C\\
&\leq C\end{split}\end{equation}

Combining the above, we get that
\begin{equation} \int_{0}^{\infty}|K_{3}(w,\lambda(t))-K_{3,0}(w,\lambda(t))|dw \leq C \end{equation}
\end{proof}

\subsubsection{Leading behavior of $\lambda$ and set-up of the modulation equation}
From the previous subsections, we have
\begin{equation} \begin{split} &\langle F_{4}(t,\cdot \lambda(t)), \phi_{0}\rangle \\
&= \frac{-16}{\lambda(t)^{3}} \int_{t}^{\infty} ds \lambda''(s) K_{1}(s-t,\lambda(t)) + \frac{4 b}{\lambda(t) t^{2}\log^{b}(t)} + \frac{4 \alpha \log(\lambda(t)) \lambda''(t)}{\lambda(t)}+ \frac{16}{\lambda(t)} \int_{t}^{\infty} ds \lambda''(s) K_{3,0}(s-t,\lambda(t))\\
&+E_{0,1}(\lambda(t),\lambda'(t),\lambda''(t)) + \frac{16}{\lambda(t)} \int_{t}^{\infty} ds \lambda''(s) \left(K_{3}(s-t,\lambda(t))-K_{3,0}(s-t,\lambda(t))\right) \\
&+\frac{-16}{\lambda(t)^{3}} \int_{t}^{\infty} ds \lambda''(s) K(s-t,\lambda(t)) + E_{v_{2},ip}(t,\lambda(t)) \\
&+ \langle \left(\frac{\cos(2Q_{\frac{1}{\lambda(t)}})-1}{r^{2}}\right)\left((v_{4}+v_{5})\left(1-\chi_{\geq 1}(\frac{4r}{t})\right)+E_{5}-\chi_{\geq 1}(\frac{2r}{\log^{N}(t)})\left(v_{1}+v_{2}+v_{3}\right)\right)\vert_{r=R\lambda(t)},\phi_{0}\rangle\\
&-\langle \chi_{\geq 1}(\frac{2r}{\log^{N}(t)}) F_{0,2}(t,r) \vert_{r=R\lambda(t)},\phi_{0}\rangle \end{split}\end{equation}
where
\begin{equation}\begin{split}E_{0,1}(\lambda(t),\lambda'(t),\lambda''(t)) &= 2\frac{\lambda'(t)^{2}}{\lambda(t)^{2}} + \frac{2 \lambda''(t)}{\lambda(t)}-\frac{4 \alpha \lambda''(t) \log(\lambda(t))}{\lambda(t)} \left(\frac{1}{-1+\lambda(t)^{2\alpha}}+1\right)\end{split}\end{equation}
So, the equation resulting from 
$$\langle F_{4}(t),\phi_{0}(\frac{\cdot}{\lambda(t)})\rangle =0$$
is what we recorded at the beginning of this section, namely \eqref{modulationfinal1}. For convenience, we repeat the equation here.
\begin{equation}\label{modulationfinal}\begin{split} &-4 \int_{t}^{\infty} \frac{\lambda''(s)}{1+s-t} ds + \frac{4 b}{t^{2}\log^{b}(t)} + 4 \alpha \log(\lambda(t)) \lambda''(t) - 4 \int_{t}^{\infty} \frac{\lambda''(s)}{(\lambda(t)^{1-\alpha}+s-t)(1+s-t)^{3}} ds\\
&= -\lambda(t) E_{0,1}(\lambda(t),\lambda'(t),\lambda''(t)) - 16 \int_{t}^{\infty} \lambda''(s) \left(K_{3}(s-t,\lambda(t))-K_{3,0}(s-t,\lambda(t))\right) ds\\
&+\frac{16}{\lambda(t)^{2}} \int_{t}^{\infty} K(s-t,\lambda(t)) \lambda''(s) ds - \lambda(t) E_{v_{2},ip}(t,\lambda(t)) +\frac{16}{\lambda(t)^{2}} \int_{t}^{\infty} ds \lambda''(s) \left(K_{1}(s-t,\lambda(t))-\frac{\lambda(t)^{2}}{4(1+s-t)}\right)\\
&- \lambda(t) \langle \left(\frac{\cos(2Q_{\frac{1}{\lambda(t)}})-1}{r^{2}}\right)\left((v_{4}+v_{5})\left(1-\chi_{\geq 1}(\frac{4r}{t})\right)+E_{5}-\chi_{\geq 1}(\frac{2r}{\log^{N}(t)})\left(v_{1}+v_{2}+v_{3}\right)\right)\vert_{r=R\lambda(t)},\phi_{0}\rangle\\
&+\lambda(t) \langle \chi_{\geq 1}(\frac{2r}{\log^{N}(t)}) F_{0,2}(t,r) \vert_{r=R\lambda(t)},\phi_{0}\rangle\\
&:=G(t,\lambda(t))\end{split}\end{equation}

The key point is that there is leading order cancellation between the four terms on the first line of \eqref{modulationfinal} (which means cancellation of terms of size $\frac{1}{t^{2}\log^{b}(t)}$ and terms of size $\frac{\log(\log(t))}{t^{2} \log^{b+1}(t)}$) when we substitute $\lambda=\lambda_{0}$ into these terms, where
$$ \lambda_{0}(t) = \frac{1}{\log^{b}(t)}+ \int_{t}^{\infty} \int_{t_{1}}^{\infty} \frac{-b^{2} \log(\log(t_{2}))}{t_{2}^{2}\log^{b+2}(t_{2})}dt_{2}dt_{1}:=\lambda_{0,0}+\lambda_{0,1}$$
In order to show this cancellation, we first let $\lambda(t) = \lambda_{0}(t) + e(t)$, and re-write the equation for $e$ in the following way:

\begin{equation}\label{eeqnv1}\begin{split} &-4 \int_{t}^{\infty} \frac{\lambda_{0,0}''(s)}{1+s-t} ds + \frac{4b}{t^{2}\log^{b}(t)}\\
&-4 \int_{t}^{\infty} \frac{\lambda_{0,1}''(s)}{1+s-t} ds + 4 \alpha \log(\lambda_{0,0}(t)) \lambda_{0,0}''(t) - 4 \int_{t}^{\infty} \frac{\lambda_{0,0}''(s) ds}{(\lambda_{0,0}(t)^{1-\alpha}+s-t)(1+s-t)^{3}}\\
&-4 \int_{t}^{\infty} \frac{e''(s) ds}{1+s-t} + 4 \alpha \log(\lambda_{0}(t)) e''(t) - 4 \int_{t}^{\infty} \frac{e''(s) ds}{(\lambda_{0}(t)^{1-\alpha}+s-t)(1+s-t)^{3}}\\
& + 4 \alpha (\log(\lambda(t))-\log(\lambda_{0,0}(t)))\lambda_{0,0}''(t)+4 \alpha \log(\lambda(t)) \left(\lambda_{0}''(t)-\lambda_{0,0}''(t)\right) + 4\alpha e''(t) \left(\log(\lambda(t))-\log(\lambda_{0}(t))\right)\\
&-4 \int_{t}^{\infty} \frac{e''(s) ds}{(1+s-t)^{3}}\left(\frac{1}{\lambda(t)^{1-\alpha}+s-t}-\frac{1}{\lambda_{0}(t)^{1-\alpha}+s-t}\right)\\
&-4 \int_{t}^{\infty} \frac{(\lambda_{0}''(s)-\lambda_{0,0}''(s))ds}{(\lambda_{0,0}(t)^{1-\alpha}+s-t)(1+s-t)^{3}}-4 \int_{t}^{\infty} \frac{\lambda_{0}''(s) ds}{(1+s-t)^{3}} \left(\frac{1}{(\lambda(t)^{1-\alpha}+s-t)}-\frac{1}{(\lambda_{0,0}(t)^{1-\alpha}+s-t)}\right)\\
&=G(t,\lambda(t))\end{split}\end{equation}

Now, we will show that the terms in each of the first two lines of \eqref{eeqnv1} cancel to leading order. More precisely, this means that both terms in the first line of \eqref{eeqnv1} are of size $\frac{1}{t^{2} \log^{b}(t)}$, but their sum has size bounded above by $\frac{1}{t^{2} \log^{b+1}(t)}$. Similarly, each term on the second line of \eqref{eeqnv1} is of size $\frac{\log(\log(t))}{t^{2}\log^{b+1}(t)}$, but their sum has size bounded above by $\frac{1}{t^{2} \log^{b+1}(t)}$.

\begin{equation}\begin{split}-4 \int_{t}^{\infty} ds \frac{\lambda_{0,0}''(s)}{1+s-t} &= -4\int_{t}^{2t} ds \frac{\lambda_{0,0}''(s)}{1+s-t} -4\int_{2t}^{\infty} ds \frac{\lambda_{0,0}''(s)}{1+s-t}
\\&=-4 \lambda_{0,0}''(t)\int_{t}^{2t} \frac{ds}{1+s-t} -4\int_{t}^{2t} ds \frac{\lambda_{0,0}''(s)-\lambda_{0,0}''(t)}{1+s-t} -4\int_{2t}^{\infty} ds \frac{\lambda_{0,0}''(s)}{1+s-t}\\
&=-\frac{4b}{t^{2}\log^{b+1}(t)}\log(1+t) +O(\frac{1}{t^{2}\log^{b+1}(t)}) +\text{Err}\end{split}\end{equation} 
where
\begin{equation}\begin{split}|\text{Err}| &\leq 4 \int_{t}^{2t} ds \frac{||\lambda_{0,0}'''||_{L^{\infty}(t,2t)}(s-t)}{1+s-t} + \frac{4}{1+t}\int_{2t}^{\infty} ds |\lambda_{0,0}''(s)|\\
&\leq \frac{C}{t^{2}\log^{b+1}(t)}\end{split}\end{equation}
So, we get 
\begin{equation}\label{lambda00comp} -4 \int_{t}^{\infty} ds \frac{\lambda_{0,0}''(s)}{1+s-t} = -\frac{4b}{t^{2}\log^{b}(t)}+E_{\lambda_{0,0}}\end{equation}
where
$$|E_{\lambda_{0,0}}| \leq \frac{C}{t^{2}\log^{b+1}(t)}$$
Next, we have
\begin{equation}\begin{split} -4 \int_{t}^{\infty} \frac{\lambda_{0,0}''(x) dx}{(\lambda_{0,0}(t)^{1-\alpha}+x-t)(1+x-t)^{3}} &= -4 \int_{t}^{\infty} \frac{b}{x^{2}\log^{b+1}(x)} \frac{dx}{(\lambda_{0,0}(t)^{1-\alpha}+x-t)(1+x-t)^{3}} + E_{v_{3,ip}}\end{split}\end{equation}
where
$$|E_{v_{3,ip}}| \leq C \int_{t}^{\infty} \frac{dx}{x^{2}\log^{b+2}(x) (\lambda_{0,0}(t)^{1-\alpha}+x-t)(1+x-t)^{3}}$$
Then, we have
\begin{equation} \begin{split} &|-4b \int_{t}^{t+\lambda_{0,0}(t)^{1-\alpha}} \frac{dx}{x^{2}\log^{b+1}(x) (\lambda_{0,0}(t)^{1-\alpha} +x-t)(1+x-t)^{3}}|\\
&\leq C \int_{t}^{t+\lambda_{0,0}(t)^{1-\alpha}} \frac{dx}{x^{2} \log^{b+1}(x)} \frac{1}{\lambda_{0,0}(t)^{1-\alpha}}\\
&\leq \frac{C}{t^{2} \log^{b+1}(t)}\end{split}\end{equation}

The second term to consider is
\begin{equation}\label{v3lambda001b}\begin{split} &-4 b \int_{t+\log^{(\alpha-1)b}(t)}^{t+\log^{(\alpha-1)b}(t)+\frac{1}{2}} \frac{1}{x^{2}\log^{b+1}(x)} \frac{1}{(\log^{(\alpha-1)b}(t)+x-t)}\frac{dx}{(1+x-t)^{3}}\\
&=\frac{-4b}{t^{2}\log^{b+1}(t)} \int_{t+\log^{(\alpha-1)b}(t)}^{t+\log^{(\alpha-1)b}(t)+\frac{1}{2}} \frac{dx}{(\log^{(\alpha-1)b}(t)+x-t)}\\
&-4 b \int_{t+\log^{(\alpha-1)b}(t)}^{t+\log^{(\alpha-1)b}(t)+\frac{1}{2}} \left(\frac{1}{x^{2}\log^{b+1}(x)}-\frac{1}{t^{2}\log^{b+1}(t)}\right) \frac{1}{(\log^{(\alpha-1)b}(t)+x-t)} dx\\
&-4b \int_{t+\log^{(\alpha-1)b}(t)}^{t+\log^{(\alpha-1)b}(t)+\frac{1}{2}} \frac{dx}{x^{2}\log^{b+1}(x)} \frac{1}{(\log^{(\alpha-1)b}(t)+x-t)} \left(\frac{1}{(1+x-t)^{3}}-1\right)\end{split}\end{equation}

The second line of \eqref{v3lambda001b} is  treated as follows:
\begin{equation}\begin{split}&\frac{-4b}{t^{2}\log^{b+1}(t)} \int_{t+\log^{(\alpha-1)b}(t)}^{t+\log^{(\alpha-1)b}(t)+\frac{1}{2}} \frac{dx}{(\log^{(\alpha-1)b}(t)+x-t)}|\\
&= \frac{-4b}{t^{2}\log^{b+1}(t)} \left(\log(2\log^{(\alpha-1)b}(t)+\frac{1}{2}) - \log(2\log^{(\alpha-1)b}(t))\right)\\
&=\frac{4b^{2}(\alpha-1) \log(\log(t))}{t^{2}\log^{b+1}(t)} + E_{v_{3,ip},1b}\end{split}\end{equation}

where
$$|E_{v_{3,ip},1b}| \leq \frac{C}{t^{2}\log^{b+1}(t)}$$
The third line of \eqref{v3lambda001b} is estimated by:
\begin{equation}\begin{split}&|-4 b \int_{t+\log^{(\alpha-1)b}(t)}^{t+\log^{(\alpha-1)b}(t)+\frac{1}{2}} \left(\frac{1}{x^{2}\log^{b+1}(x)}-\frac{1}{t^{2}\log^{b+1}(t)}\right) \frac{1}{(\log^{(\alpha-1)b}(t)+x-t)} dx|\\
&\leq \frac{C}{t^{3}\log^{b+1}(t)} \int_{t+\log^{(\alpha-1)b}(t)}^{t+\log^{(\alpha-1)b}(t)+\frac{1}{2}} \frac{(x-t) dx}{(\log^{(\alpha-1)b}(t)+x-t)}\\
&\leq \frac{C}{t^{3}\log^{b+1}(t)}\end{split}\end{equation}
The last line of \eqref{v3lambda001b} is estimated by
\begin{equation} \begin{split}&|-4b \int_{t+\log^{(\alpha-1)b}(t)}^{t+\log^{(\alpha-1)b}(t)+\frac{1}{2}} \frac{dx}{x^{2}\log^{b+1}(x)} \frac{1}{(\log^{(\alpha-1)b}(t)+x-t)} \left(\frac{1}{(1+x-t)^{3}}-1\right)|\\
&\leq C \int_{t+\log^{(\alpha-1)b}(t)}^{t+\log^{(\alpha-1)b}(t)+\frac{1}{2}} \frac{dx}{x^{2}\log^{b+1}(x)} \frac{x-t}{(\log^{(\alpha-1)b}(t)+x-t)}\\
&\leq \frac{C}{t^{2}\log^{b+1}(t)}\end{split}\end{equation}

The thrid term to consider is
\begin{equation}\begin{split} &|-4 b \int_{t+\log^{(\alpha-1)b}(t)+\frac{1}{2}}^{\infty} \frac{1}{x^{2}\log^{b+1}(x)} \frac{1}{(\log^{(\alpha-1)b}(t)+x-t)} \frac{dx}{(1+x-t)^{3}}|\\
& \leq \frac{C}{t^{2}\log^{b+1}(t)} \int_{t+\log^{(\alpha-1)b}(t)+\frac{1}{2}}^{\infty} \frac{dx}{(x-t)^{4}}\\
&\leq \frac{C}{t^{2}\log^{b+1}(t)}\end{split}\end{equation}

Then, we estimate $E_{v_{3},ip}$:
\begin{equation}\begin{split} |E_{v_{3},ip}| &\leq \frac{C}{t^{2}\log^{b+2}(t)} \int_{t}^{\infty} \frac{dx}{(\log^{(\alpha-1)b}(t)+x-t)(1+x-t)^{3}}\\
&\leq \frac{C \log(\log(t))}{t^{2}\log^{b+2}(t)}\end{split}\end{equation}

In total, we have
\begin{equation} -4 \int_{t}^{\infty} \frac{\lambda_{0,0}''(x) dx}{(\log^{(\alpha-1)b}(t)+x-t)(1+x-t)^{3}} = E_{v_{3},ip,f}+\frac{4b^{2} (\alpha-1)\log(\log(t))}{t^{2}\log^{b+1}(t)}\end{equation}
where
$$|E_{v_{3},ip,f}| \leq \frac{C}{t^{2}\log^{b+1}(t)}$$
By the same procedure used to study the analogous term involving $\lambda_{0,0}$, we have
\begin{equation} -4 \int_{t}^{\infty} \frac{\lambda_{0,1}''(s) ds}{1+s-t} = \frac{4b^{2} \log(\log(t))}{t^{2}\log^{b+1}(t)} + E_{v_{3,ip},01}\end{equation}
where
$$|E_{v_{3,ip},01}| \leq \frac{C \log(\log(t))}{t^{2}\log^{b+2}(t)}$$
Combining the above, we can show the cancellation, to leading order, of the terms on each of the first two lines of \eqref{eeqnv1}:
\begin{equation}\begin{split}&-4 \int_{t}^{\infty} \frac{\lambda_{0,0}''(s)}{1+s-t} ds + \frac{4b}{t^{2}\log^{b}(t)}=E_{\lambda_{0,0}}\end{split}\end{equation}
with
$$|E_{\lambda_{0,0}}| \leq \frac{C}{t^{2}\log^{b+1}(t)}$$
and
\begin{equation}\begin{split}
&-4 \int_{t}^{\infty} \frac{\lambda_{0,1}''(s)}{1+s-t} ds + 4 \alpha \log(\lambda_{0,0}(t)) \lambda_{0,0}''(t) - 4 \int_{t}^{\infty} \frac{\lambda_{0,0}''(s) ds}{(\lambda_{0,0}(t)^{1-\alpha}+s-t)(1+s-t)^{3}}\\
&=E_{v_{3},ip,01}+E_{v_{3},ip,f}+E_{\lambda_{0,1}}\end{split}\end{equation}
with
$$E_{\lambda_{0,1}}(t) = 4 \alpha \log(\lambda_{0,0}(t)) \left(\lambda_{0,0}''(t) - \frac{b}{t^{2}\log^{b+1}(t)}\right)$$
$$|E_{v_{3},ip,01}| + |E_{v_{3},ip,f}|+|E_{\lambda_{0,1}}| \leq \frac{C}{t^{2}\log^{b+1}(t)}$$

We recall that we are considering \eqref{modulationfinal} for $t \in [T_{0},\infty)$, with $T_{0}$ satisfying \eqref{T0initialconstraint}.\\
\\
The equation for $e$ can be written as follows:

\begin{equation}\label{emodulation}\begin{split}&-4 \int_{t}^{\infty} \frac{e''(s) ds}{\log(\lambda_{0}(s))(1+s-t)} + 4 \alpha e''(t) - 4 \int_{t}^{\infty} \frac{e''(s) ds}{\log(\lambda_{0}(s))(\lambda_{0}(t)^{1-\alpha}+s-t)(1+s-t)^{3}}\\
&= \frac{1}{\log(\lambda_{0}(t))}\left(-E_{\lambda_{0,0}}-E_{v_{3},ip,01}-E_{v_{3},ip,f}-E_{\lambda_{0,1}}\right)\\
&+\frac{1}{\log(\lambda_{0}(t))}\left(G(t,\lambda(t))- 4 \alpha (\log(\lambda(t))-\log(\lambda_{0,0}(t)))\lambda_{0,0}''(t)-4 \alpha \log(\lambda(t)) \left(\lambda_{0}''(t)-\lambda_{0,0}''(t)\right)\right)\\
& +\frac{1}{\log(\lambda_{0}(t))}\left(- 4\alpha e''(t) \left(\log(\lambda(t))-\log(\lambda_{0}(t))\right)\right)\\
&+\frac{1}{\log(\lambda_{0}(t))}\left(4 \int_{t}^{\infty} \frac{e''(s) ds}{(1+s-t)^{3}}\left(\frac{1}{\lambda(t)^{1-\alpha}+s-t}-\frac{1}{\lambda_{0}(t)^{1-\alpha}+s-t}\right)\right)\\
&+\frac{4}{\log(\lambda_{0}(t))} \int_{t}^{\infty} \frac{(\lambda_{0}''(s)-\lambda_{0,0}''(s))ds}{(\lambda_{0,0}(t)^{1-\alpha}+s-t)(1+s-t)^{3}}\\
&+\frac{4}{\log(\lambda_{0}(t))} \int_{t}^{\infty} \frac{\lambda_{0}''(s) ds}{(1+s-t)^{3}} \left(\frac{1}{(\lambda(t)^{1-\alpha}+s-t)}-\frac{1}{(\lambda_{0,0}(t)^{1-\alpha}+s-t)}\right)\\
&+4 \int_{t}^{\infty} e''(s) \left(\frac{1}{\log(\lambda_{0}(t))}-\frac{1}{\log(\lambda_{0}(s))}\right) \frac{1}{(1+s-t)} ds\\
&+4 \int_{t}^{\infty} e''(s) \left(\frac{1}{\log(\lambda_{0}(t))}-\frac{1}{\log(\lambda_{0}(s))}\right)\frac{1}{(\lambda_{0}(t)^{1-\alpha}+s-t)(1+s-t)^{3}} ds\\
&:=RHS(e,t)\end{split}\end{equation}
where $\lambda(t) = \lambda_{0}(t)+e(t)$.
In order to study this equation, let us first consider the problem of solving an equation of the form
$$-\int_{t}^{\infty} \frac{y(s)}{\log(\lambda_{0}(s))}\left(\frac{1}{1+s-t}+\frac{1}{(\lambda_{0}(t)^{1-\alpha}+s-t)(1+s-t)^{3}}\right) ds + \alpha  y(t) = F(t), \quad t \geq T_{0}$$
where 
\begin{equation}\label{Fassump}F \in C([T_{0},\infty)), \quad |F(x)| \leq \frac{C}{x^{2}}\end{equation}
If $$x(t) = y(-t), \quad H(t) = \frac{F(-t)}{\alpha}, \quad t \leq -T_{0}$$
then our equation becomes
$$-\int_{-\infty}^{t} \frac{x(s)}{\alpha \log(\lambda_{0}(-s))} \left(\frac{1}{1-s+t}+\frac{1}{(\lambda_{0}(-t)^{1-\alpha} -s+t)(1-s+t)^{3}}\right) ds + x(t) = H(t), \quad t \leq -T_{0}$$
which can be written as
\begin{equation}\label{conveqn}\int_{J}  x(s) K(t,s) ds + x(t) = H(t), \quad t \in J\end{equation}
where
$$J=(-\infty, -T_{0}]$$
and, for $(t,s) \in J^{2}$,
\begin{equation}\begin{split}K(t,s) &= -\frac{\mathbbm{1}_{\leq 0}(s-t)}{\alpha \log(\lambda_{0}(-s))} \left(\frac{1}{1-s+t}+\frac{1}{(\lambda_{0}(-t)^{1-\alpha}-s+t)(1-s+t)^{3}}\right)\\
&=\frac{\mathbbm{1}_{\leq 0}(s-t)}{\alpha |\log(\lambda_{0}(-s))|} \left(\frac{1}{1-s+t}+\frac{1}{(\lambda_{0}(-t)^{1-\alpha}-s+t)(1-s+t)^{3}}\right)\end{split}\end{equation}
Note that $K(t,s)=0$ when $s>t$, so $K$ is a (non-negative) Volterra kernel on $J^{2}$.\\
\\
Moreover, $K$ is of type $L^{\infty}_{\text{loc}}$ on $J$. To see this, it suffices to consider $C=[a,d] \subset J$ a compact subinterval, $g \in L^{1}(C), f \in L^{\infty}(C)$ with $||g||_{L^{1}(C)} \leq 1, ||f||_{L^{\infty}(C)} \leq 1$ and estimate
\begin{equation}\begin{split} &\int_{C} \int_{C}  |g(t)| |K(t,s)| |f(s)| ds dt\leq \int_{C} \frac{|g(t)|}{2\alpha} \left(\int_{a}^{t} \frac{ds}{1-s+t} + \frac{1}{\lambda_{0}(-t)^{1-\alpha}} \int_{-\infty}^{t} \frac{ds}{(1-s+t)^{3}}\right)dt\\ 
&\leq\frac{\log(1+d-a)+\frac{1}{2\lambda_{0}(-a)^{1-\alpha}}}{2\alpha} \end{split}\end{equation}
where we used the facts (which follow from \eqref{T0initialconstraint} and \eqref{lambdarestr}) 
$$t \mapsto \frac{1}{\lambda_{0}(-t)^{1-\alpha}} \text{ is decreasing}, \quad \frac{1}{|\log(\lambda_{0}(-t))|} \leq \frac{1}{2}, \quad t \in J$$\\
\\
Moreover, if $s \leq u \leq v \leq t$, and $(t,v,u,s) \in J^{4}$, then, 
\begin{equation} \label{kineq} K(v,s)K(t,u) \leq K(t,s)K(v,u)\end{equation}
To verify this, let us note that, by the given conditions on $s,u,v,t$, we have
$$1=\mathbbm{1}_{\leq 0}(s-v) =\mathbbm{1}_{\leq 0}(u-t) =\mathbbm{1}_{\leq 0}(s-t) = \mathbbm{1}_{\leq 0}(u-v)$$
and
$$K(t,s) = \frac{\mathbbm{1}_{\leq 0}(s-t)}{\alpha |\log(\lambda_{0}(-s))|} k(t,s)$$
for
$$k(t,s) = \frac{1}{1-s+t} + \frac{1}{(\lambda_{0}(-t)^{1-\alpha}-s+t)(1-s+t)^{3}}$$
So, it suffices to show that
$$\frac{\partial^{2}}{\partial s \partial t}\log(k(t,s)) \leq 0, \quad s \leq t$$ 
Then, we note that
$$\log(k(t,s)) = \log(\frac{1}{1-s+t})+\log\left(1+\frac{1}{(\lambda_{0}(-t)^{1-\alpha}-s+t)(1-s+t)^{2}}\right)$$
and we have
\begin{equation} \begin{split} &\partial_{st} \log\left(1+\frac{1}{(\lambda_{0}(-t)^{1-\alpha}-s+t)(1-s+t)^{2}}\right)\\
&=\left(\frac{-\left(2(1-s+t)(\lambda_{0}(-t)^{1-\alpha}-s+t)+(1-s+t)^{2}(-\lambda_{0}'(-t)(1-\alpha)\lambda_{0}(-t)^{-\alpha}+1)\right)}{((1-s+t)^{2}(\lambda_{0}(-t)^{1-\alpha}-s+t)+1)^{2}}\right)\\
&\cdot\left(\frac{2}{1-s+t}+\frac{1}{(\lambda_{0}(-t)^{1-\alpha}-s+t)}\right)\\
&+\left(\frac{1}{(1-s+t)^{2}(\lambda_{0}(-t)^{1-\alpha}-s+t)+1}\right)\left(\frac{-2}{(1-s+t)^{2}}-\frac{(-(1-\alpha)\lambda_{0}(-t)^{-\alpha}\lambda_{0}'(-t)+1)}{(\lambda_{0}(-t)^{1-\alpha}-s+t)^{2}}\right)\\
&\leq 0\end{split}\end{equation}
where we recall $\alpha \leq \frac{1}{4}$ and $\lambda_{0}'(x) \leq 0, \quad x \geq T_{0}$. This completes the verification of \eqref{kineq}.

Now, by Theorem 8.6 (sec. 9.8, pg. 259) of \cite{inteqns}, $K$ has a non-negative resolvent, $r$, which is locally of type $L^{\infty}$ on $J^{2}$. Recall that the resolvent kernel, $r$, satisfies (a.e) the equations
\begin{equation}\label{resolventeqns} r + K * r =K, \quad r+r * K =K \end{equation}
where the $*$ operation, (as defined in \cite{inteqns}, Definition 2.3) between two measureable functions on $J^{2}$, or between a measureable function on $J^{2}$ and one on $J$ (when the integrands are integrable)  is 
\begin{equation}\begin{split} (a*b)(t,s) &= \int_{J} a(t,u)b(u,s) du\\
(a*f)(t) &= \int_{J} a(t,u)f(u) du\end{split}\end{equation}

In fact, we also have 
\begin{equation}\label{kquotest} \frac{K(t,s)}{K(u,s)} \leq 2, \quad s \leq u \leq t\end{equation}
because
\begin{equation} \begin{split} \frac{K(t,s)}{K(u,s)}&=\frac{\frac{1}{1-s+t}+\frac{1}{(\lambda_{0}(-t)^{1-\alpha}-s+t)(1-s+t)^{3}}}{\frac{1}{1-s+u}+\frac{1}{(\lambda_{0}(-u)^{1-\alpha}-s+u)(1-s+u)^{3}}}\\
&\leq \frac{1-s+u}{1-s+t} + \frac{(\lambda_{0}(-u)^{1-\alpha}-s+u)(1-s+u)^{3}}{(\lambda_{0}(-t)^{1-\alpha}-s+t)(1-s+t)^{3}}\\
&\leq 2, \quad s \leq u \leq t \end{split}\end{equation}
Hence, by Theorem 8.5(sec. 9.8, pg. 258) of \cite{inteqns} $r$ is in fact a non-negative Volterra kernel of type $L^{\infty}$, and we have the estimate 
\begin{equation}\label{restimate} \int_{-\infty}^{t} r(t,u) du \leq 2, \quad \text{a.e. } t \in J\end{equation}
By our assumptions on $F$, namely \eqref{Fassump}, $H$ satisfies the property that (e.g) \begin{equation} \label{hdecay}H(\cdot)\left(\cdot\right)^{2} \in L^{\infty}(J)\end{equation}
Then, we have a solution to \eqref{conveqn} (a.e.) given by the formula
\begin{equation}\label{existenceresult} x=H-(r * H)\end{equation}
To see this, we first note that \eqref{conveqn} is
$$K * x+x=H$$
Then, $$K*(r*H) = (K*r)*H$$ by Fubini's theorem. Fubini's theorem is applicable because
\begin{equation}\begin{split}&\int_{J} \int_{J} |K(t,u)| |r(u,s)| |H(s)| ds du  =\int_{-\infty}^{t}\int_{-\infty}^{u} |K(t,u)| |r(u,s)| |H(s)| ds du\\
&\leq ||H(\cdot)\left(\cdot\right)^{2}||_{L^{\infty}(J)} \int_{-\infty}^{t}\int_{-\infty}^{u}K(t,u) \frac{r(u,s)}{s^{2}} ds du \leq||H(\cdot)\left(\cdot\right)^{2}||_{L^{\infty}(J)} \int_{-\infty}^{t} \int_{-\infty}^{u} \frac{K(t,u)}{u^{2}} r(u,s) ds du\\
&\leq 2 ||H(\cdot)\left(\cdot\right)^{2}||_{L^{\infty}(J)} \int_{-\infty}^{t} \frac{K(t,u)}{u^{2}} du \leq \frac{2 ||H(\cdot)\left(\cdot\right)^{2}||_{L^{\infty}(J)}}{\alpha |\log(\lambda_{0}(T_{0}))|} \left(1+\frac{1}{\lambda_{0}(-t)^{1-\alpha}}\right)\int_{-\infty}^{t} \frac{du}{u^{2}}\\
&\leq \frac{2 ||H(\cdot)\left(\cdot\right)^{2}||_{L^{\infty}(J)}}{\alpha |\log(\lambda_{0}(T_{0}))|} \left(1+\frac{1}{\lambda_{0}(-t)^{1-\alpha}}\right)\frac{1}{|t|},\quad t \in J\end{split}\end{equation}  by \eqref{restimate}, \eqref{hdecay}, and inspection of the formula for $K$. Now, substituting 
$$x=H-(r*H)$$
using Fubini's theorem as above, and using the resolvent equation 
$$r+K*r=K$$
we see that \eqref{existenceresult} is a solution to \eqref{conveqn}.  
\\
But, this means that we have a solution to the equation
\begin{equation}  \alpha  y(t) = F(t)+\int_{t}^{\infty} \frac{y(s)}{\log(\lambda_{0}(s))}\left(\frac{1}{1+s-t}+\frac{1}{(\lambda_{0}(t)^{1-\alpha}+s-t)(1+s-t)^{3}}\right) ds, \quad \text{a.e. } t \geq T_{0}\end{equation}
Since we are considering this equation for $F$ satisfying \eqref{Fassump} the right-hand side of the equation above is a continuous function of $t$. So, $y$ agrees with a continuous function a.e., and hence, we may extend $y$ given a.e. by \eqref{existenceresult} to a continuous function of $t \in [T_{0},\infty)$.\\
\\
\eqref{existenceresult}, written in terms of $y$ reads
\begin{equation}\label{lambdasolution} y(t) = \frac{F(t)}{\alpha} -\int_{t}^{\infty}  \frac{F(s)}{\alpha} r(-t,-s) ds\end{equation}
and \eqref{restimate} implies that
\begin{equation}\label{rintest} \int_{t}^{\infty} r(-t,-z) dz \leq 2, \quad \text{a.e. } t \geq T_{0}\end{equation}
Now, we are finally ready to solve \eqref{emodulation}. We recall the complete, normed vector space $(X,||\cdot||_{X})$ defined in the beginning of this section by
$$X=\{f \in C^{2}([T_{0},\infty))| ||f||_{X}< \infty\}$$
where
\begin{equation}||f||_{X} = \text{sup}_{t \geq T_{0}} \left(|f(t)| b \log^{b}(t) \sqrt{\log(\log(t))} + |f'(t)| t \log^{b+1}(t) \sqrt{\log(\log(t))} +|f''(t)| t^{2} \log^{b+1}(t) \sqrt{\log(\log(t))}\right)\end{equation}
We quickly remark that all previous manipulations done on $v_{k}$, including estimates and representations of inner products, are valid for all $\lambda = \lambda_{0} + e, \quad e \in \overline{B_{1}(0)} \subset X$, since \eqref{lambdarestr} is valid for all $\lambda$ of this form. For $e \in \overline{B_{1}(0)} \subset X$, $RHS(e,t)$ is a continuous function of $t \in [T_{0},\infty)$. We will now estimate $RHS(e,t)$ for an arbitrary $e \in \overline{B_{1}(0)} \subset X$. The estimate we will obtain will then allow us to define a map, $T$, on $\overline{B_{1}}(0) \subset X$, and prove some properties about it, using the discussion above. Eventually, we will show that $T$ has a fixed point.

We start by estimating all the terms of $RHS(e,t)$, except for the one involving $G$, for $e \in \overline{B_{1}(0)} \subset X$. From our previous calculations, we have
\begin{equation} |E_{\lambda_{0,0}}| + |E_{v_{3},ip,01}|+|E_{v_{3},ip,f}|+|E_{\lambda_{0,1}}| \leq \frac{C}{t^{2}\log^{b+1}(t)}\end{equation}
So, the first line of $RHS(e,t)$ is bounded above in absolute value by
$$\frac{C}{\log(\log(t))t^{2}\log^{b+1}(t)}$$
Next, we note that
\begin{equation}\begin{split} \lambda_{0,1}(t) &= \int_{t}^{\infty}\int_{t_{1}}^{\infty} \frac{-b^{2}\log(\log(t_{2}))}{t_{2}^{2}\log^{b+2}(t_{2})}dt_{2}dt_{1}=\frac{-b^{2}\log(\log(t))}{(b+1)\log^{b+1}(t)} + O\left(\frac{1}{\log^{b+1}(t)}\right)\end{split}\end{equation}
Then, using the fact that $e \in \overline{B_{1}(0)} \subset X$, we get that the terms in the second line of $RHS(e,t)$, except for the one depending on $G$ is bounded above in absolute value by
$$\frac{C}{(\log(\log(t)))^{3/2} t^{2}\log^{b+1}(t)}$$
Similarly, the third, fourth, fifth, sixth, seventh, and eighth lines of $RHS(e,t)$ are bounded above in absolute value (respectively) by
\begin{equation} \begin{split} &\frac{C}{(\log(\log(t)))^{2} t^{2}\log^{b+1}(t)} +  \frac{C}{(\log(\log(t)))^{2} \log^{b+1}(t) t^{2}} + \frac{C \log(\log(t))}{t^{2}\log^{b+2}(t)}+ \frac{C}{(\log(\log(t)))^{3/2} t^{2}\log^{b+1}(t)} \\
&+ \frac{C}{t^{2}\log^{b+2}(t) (\log(\log(t))^{5/2}} + \frac{C}{(\log(\log(t))^{5/2} t^{3}\log^{b+2}(t)}\end{split}\end{equation}

Now, we proceed to estimate the terms from $G$. In the below expressions, we note that $\lambda(t) = \lambda_{0}(t) + e(t)$, and $e \in \overline{B}_{1}(0) \subset X$ is arbitrary. Recalling the definition of $E_{0,1}$, we have
\begin{equation} |\lambda(t) E_{0,1}(\lambda(t),\lambda'(t),\lambda''(t))| \leq \frac{C}{t^{2} \log^{b+1}(t)}\end{equation}
Then, from \eqref{k3minusk30}, we have
\begin{equation} |-16 \int_{t}^{\infty} \lambda''(s) \left(K_{3}(s-t,\lambda(t))-K_{3,0}(s-t,\lambda(t))\right) ds| \leq \frac{C}{t^{2}\log^{b+1}(t)}\end{equation}
From \eqref{ev2ipest}, we have
\begin{equation} |\lambda(t) E_{v_{2},ip}(t,\lambda(t))| \leq \frac{C}{t^{2}\log^{b+1}(t)}\end{equation} 
Using \eqref{kintegralestimate} and \eqref{k1diffint}, we get
\begin{equation} \begin{split} &|\frac{16}{\lambda(t)^{2}} \int_{t}^{\infty} ds \lambda''(s)K(s-t,\lambda(t))| \leq \frac{C}{t^{2}\log^{b+1}(t)}\\
&|\frac{16}{\lambda(t)^{2}} \int_{t}^{\infty} dx \lambda''(x)\left(K_{1}(x-t,\lambda(t))-\frac{\lambda(t)^{2}}{4(1+x-t)}\right)| \leq \frac{C}{t^{2}\log^{b+1}(t)}\end{split}\end{equation}
In order to proceed, we will use some pointwise estimates on $v_{3}$, $v_{4}$ and $v_{5}$.
\subsubsection{Pointwise estimates on $v_{3},\partial_{r}^{j}v_{3}$}
Here, we will prove two simple pointwise estimate on $v_{3}$ which do not directly follow from \eqref{v3preciseforip}. As with all of our work from now on, every estimate is valid for any $\lambda$ of the form 
$$\lambda(t) = \lambda_{0}(t)+e(t), \quad e \in \overline{B}_{1}(0) \subset X$$
\begin{lemma} We have the following pointwise estimates on $\partial_{r}^{j}v_{3}, \quad j=0,1,2$:
\begin{equation}\label{v3laterest} |v_{3}(t,r)| \leq \frac{C r \log(\log(t))}{t^{2} \log^{b+1}(t)}\end{equation}
\begin{equation}\label{v3largerest} |v_{3}(t,r)| \leq \frac{C}{r} \int_{t}^{\infty} ds |\lambda''(s)|(s-t)\end{equation}
\begin{equation}\label{drv3est} |\partial_{r}v_{3}(t,r)| \leq \frac{C}{t^{2}\log^{b}(t)}\end{equation}

\end{lemma}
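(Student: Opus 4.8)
The plan is to read off all three bounds from the explicit representation \eqref{v3def} and from the decomposition $v_{3}=v_{3,1}+v_{3,2}$ already produced in the proof of \eqref{v3preciseforip}, throughout using that we now restrict to $\lambda=\lambda_{0}+e$ with $e\in\overline{B_{1}(0)}\subset X$. From $\|e\|_{X}\le 1$, the formula for $\lambda_{0}$, and \eqref{T0initialconstraint}, one has for $T_{0}$ large and all $s\ge t\ge T_{0}$ the elementary facts $|\lambda''(s)|\le \tfrac{C}{s^{2}\log^{b+1}(s)}\le \tfrac{C}{t^{2}\log^{b+1}(t)}$, $\tfrac{1}{2}\log^{-b}(t)\le\lambda(t)\le 2\log^{-b}(t)$, and $\lambda(t+1)\ge\tfrac12\lambda(t)$; these are what is used below. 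Estimate \eqref{v3largerest} is immediate: in \eqref{v3def} both bracketed factors are bounded by $1$ in absolute value (for the second, write its denominator as $\sqrt{4r^{2}\lambda(s)^{2\alpha-2}+(1-(r^{2}-\rho^{2})\lambda(s)^{2\alpha-2})^{2}}$ as in the proof of \eqref{v3preciseforip}, so the numerator is dominated by it), and $\int_{0}^{s-t}\tfrac{\rho\,d\rho}{\sqrt{(s-t)^{2}-\rho^{2}}}=s-t$, whence $|v_{3}(t,r)|\le \tfrac{2}{r}\int_{t}^{\infty}|\lambda''(s)|(s-t)\,ds$.

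For \eqref{v3laterest} I treat $v_{3,2}$ and $v_{3,1}$ separately. The two bounds on $v_{3,2}$ obtained in the proof of \eqref{v3preciseforip} are $Cr\sup_{x\ge t}|\lambda''(x)|$ and $Cr\sup_{x\ge t}\bigl(|\lambda''(x)|\lambda(x)^{\alpha-1}|\lambda(x)^{\alpha-1}-\lambda(t)^{\alpha-1}|\bigr)\lambda(t)^{2-2\alpha}$; inserting the facts above together with $\lambda(x)\ge\tfrac12\log^{-b}(x)$ one checks directly that both are $\le \tfrac{Cr}{t^{2}\log^{b+1}(t)}$. For $v_{3,1}$, split the $s$-integral at $s=t+6r$: on $[t,t+6r]$ the integrand is $\le \tfrac{2}{r}\cdot\tfrac{\rho}{s-t}|\lambda''(s)|$, giving a contribution $\le Cr\sup_{x\ge t}|\lambda''(x)|$; for $s\ge t+6r$ one arrives, exactly as in the proof of \eqref{v3preciseforip} but \emph{without} the replacement $\lambda(t+w)\rightsquigarrow\lambda(t)$, at $-2r\int_{6r}^{\infty}\lambda''(t+w)\,w\bigl(\tfrac{1}{2(1+w^{2})}-\tfrac{1}{2(\lambda(t+w)^{2-2\alpha}+w^{2})}\bigr)dw+O(r\sup_{x\ge t}|\lambda''(x)|)$. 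Bounding $\bigl|\tfrac{1}{1+w^{2}}-\tfrac{1}{\lambda(t+w)^{2-2\alpha}+w^{2}}\bigr|=\tfrac{1-\lambda(t+w)^{2-2\alpha}}{(1+w^{2})(\lambda(t+w)^{2-2\alpha}+w^{2})}$ by $w^{-4}$ for $w\ge1$ and, using $\lambda(t+w)\ge c\lambda(t)$ for $0\le w\le 1$, by $\tfrac{C}{\lambda(t)^{2-2\alpha}+w^{2}}$ for $w\le1$, the $w$-integral is $\le C(1+\log(1+\lambda(t)^{-(2-2\alpha)}))\le C\log\log(t)$, so $|v_{3,1}|\le Cr(\log\log t)\sup_{x\ge t}|\lambda''(x)|\le \tfrac{Cr\log\log(t)}{t^{2}\log^{b+1}(t)}$; adding the pieces gives \eqref{v3laterest}.

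Estimate \eqref{drv3est} is the main obstacle, precisely because \eqref{v3preciseforip} controls $E_{5}$ only pointwise and not $\partial_{r}E_{5}$, so one must differentiate \eqref{v3def} itself. Writing $v_{3}=-\tfrac1r H$ one has $\partial_{r}v_{3}=-\tfrac{v_{3}}{r}-\tfrac1r\partial_{r}H$; the first term is $\le \tfrac{C\log\log t}{t^{2}\log^{b+1}(t)}\le \tfrac{C}{t^{2}\log^{b}(t)}$ by \eqref{v3laterest}. For $\partial_{r}H$ one computes $\partial_{r}g(r,\rho,\mu)=T_{1}(r,\rho)-\mu^{\alpha-1}T_{1}(\mu^{\alpha-1}r,\mu^{\alpha-1}\rho)$ with $T_{1}(x,y)=\tfrac{4x(x^{2}+y^{2}+1)}{((y^{2}-x^{2}+1)^{2}+4x^{2})^{3/2}}$; the first summand is exactly the $r$-derivative appearing in the $\partial_{r}v_{1}$ computation \eqref{drv1est}, and the second is its rescaling by $(x,y)\mapsto(\mu^{\alpha-1}x,\mu^{\alpha-1}y)$. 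One then repeats, term by term, the argument proving \eqref{drv1smallrest} and \eqref{largerdrv1}: split $\tfrac{1}{\sqrt{(s-t)^{2}-\rho^{2}}}=\bigl(\tfrac{1}{\sqrt{(s-t)^{2}-\rho^{2}}}-\tfrac{1}{s-t}\bigr)+\tfrac{1}{s-t}$; using the algebraic identity $(1+y^{2}-x^{2})^{2}+4x^{2}=((x-y)^{2}+1)((x+y)^{2}+1)$ (already invoked for \eqref{v3preciseforip}) and $(x+y)^{2}+1\ge x^{2}+y^{2}+1$ one gets $\tfrac1x T_{1}(x,y)\le \tfrac{C}{\sqrt{1+x^{2}}\,((x-y)^{2}+1)^{3/2}}$, from which $\int_{0}^{\infty}\rho\,\tfrac1r T_{1}(r,\rho)\,d\rho$ and its $\mu^{\alpha-1}$-rescaled analogue are bounded uniformly in $r$, so the difference part contributes $\le C\sup_{x\ge t}|\lambda''(x)|$ by Fubini; the $\tfrac1{s-t}$ part is evaluated in closed form in $\rho$ and, after combining the two summands of $\partial_{r}g$, takes the shape $\int_{t}^{\infty}\lambda''(s)(s-t)\,\kappa(r,s-t,\lambda(s))\,ds$ with $\kappa(r,w,\cdot)=O(w^{-2})$ for $w\gtrsim 1+r$ — exactly as for $v_{1}$ — hence is controlled by $\int_{t}^{\infty}\tfrac{|\lambda''(s)|}{1+s-t}\,ds+C\sup_{x\ge t}\bigl(|\lambda''(x)|(1+(x-t)^{2})\bigr)\le \tfrac{C}{t^{2}\log^{b}(t)}$, where $\log(1+t)/\log^{b+1}(t)\le C/\log^{b}(t)$ supplies the required $t^{-2}\log^{-b}$ rather than merely $t^{-1}\log^{-(b+1)}$ decay.

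The genuine difficulty is confined to \eqref{drv3est}, and it is one of bookkeeping rather than of new ideas: one must carry the unscaled piece $T_{1}(r,\rho)$ and the $\lambda(s)^{\alpha-1}$-scaled piece $\lambda(s)^{\alpha-1}T_{1}(\lambda(s)^{\alpha-1}r,\lambda(s)^{\alpha-1}\rho)$ through the same $\rho$- and $s$-integrals handled in the $v_{1}$ and $v_{3}$ lemmas \emph{simultaneously}, keep every estimate uniform in $r$ all the way down to $r=0$ so that the prefactor $1/r$ in $\partial_{r}v_{3}=-\tfrac{v_{3}}{r}-\tfrac1r\partial_{r}H$ is genuinely absorbed, and extract the extra decay $|\lambda''(s)|(s-t)\lesssim(s\log^{b+1}(s))^{-1}$ (rather than the crude $\int|\lambda''|(s-t)\,ds$) at the step where the $\tfrac1{s-t}$-term is integrated in $s$, which is what upgrades the final bound from $\log^{-b}(t)$ to $t^{-2}\log^{-b}(t)$.
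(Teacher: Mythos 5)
Your handling of \eqref{v3largerest} and \eqref{v3laterest} is correct and is essentially the paper's own argument (decompose $v_{3}=v_{3,1}+v_{3,2}$, reuse the $v_{3,2}$ bounds from \eqref{v3preciseforip}, split the tail kernel difference at $w=1$ and use $\lambda(t+w)\gtrsim\lambda(t)$ there), and the identification $\partial_{r}\bigl[G(r,\rho)+F_{3}\bigr]=T_{1}(r,\rho)-\mu^{\alpha-1}T_{1}(\mu^{\alpha-1}r,\mu^{\alpha-1}\rho)$ together with the uniform bound $\tfrac1x T_{1}(x,y)\le C\bigl(\sqrt{1+x^{2}}\,(1+(x-y)^{2})^{3/2}\bigr)^{-1}$ is a legitimate way to control the ``difference'' part of $-\tfrac1r\partial_{r}H$ (the paper instead pulls out $\sup_{x\ge t}(|\lambda''(x)|\lambda(x)^{4\alpha-4})$ and freezes $\lambda(s)\rightsquigarrow\lambda(t)$ via monotonicity; your Fubini step needs that same care, since $\sup_{s}$ of the rescaled kernel alone is not integrable in $\rho$, but this is fixable).

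The genuine gap is in the decisive step of \eqref{drv3est}, the $\tfrac1{s-t}$ part. Your concluding bound ``$\int_{t}^{\infty}\tfrac{|\lambda''(s)|}{1+s-t}\,ds+C\sup_{x\ge t}\bigl(|\lambda''(x)|(1+(x-t)^{2})\bigr)\le \tfrac{C}{t^{2}\log^{b}(t)}$'' is false: taking $x\sim 2t$ gives $\sup_{x\ge t}\bigl(|\lambda''(x)|(1+(x-t)^{2})\bigr)\gtrsim \log^{-(b+1)}(t)$, with no $t^{-2}$ decay at all. In the $v_{1}$ lemma this quantity only ever occurs with the prefactor $C/r^{2}$ in the large-$r$ estimate \eqref{largerdrv1}; here you need a bound uniform down to $r=0$, so importing it without that prefactor destroys the estimate. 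Moreover the structural claim ``exactly as for $v_{1}$'' is not right: computing the combined closed form gives $\tfrac1r\int_{0}^{w}\rho\,\partial_{r}(G+F_{3})\,d\rho=f(zr,zw)-f(r,w)$ with $f(x,y)=(1+x^{2}-y^{2})/\sqrt{(1+x^{2}-y^{2})^{2}+4y^{2}}$ and $z=\lambda(s)^{\alpha-1}$, so the leading $\pm1$'s cancel (there is no surviving $\int\tfrac{\lambda''}{1+s-t}$ main term), while near $s=t$ the combined kernel vanishes like $(s-t)^{2}$ only for $s-t\lesssim\lambda(s)^{1-\alpha}$ and is merely $O(1)$ in the window $\lambda^{1-\alpha}\lesssim s-t\lesssim 1$ (for small $r$), so the crude bound by $2$ produces a logarithmically divergent $\int ds/(s-t)$ unless you split at the scale $\lambda^{1-\alpha}$ — this intermediate regime is exactly where the paper does its real work (it keeps the two summands separate, bounds the $G$-part crudely for $s-t\ge\tfrac12$ to get $\sup(x|\lambda''(x)|)\log(t)/t$, and for the $F_{3}$-part switches the order of integration and splits the $\rho$-integral at $\lambda(t)^{1-\alpha}$ and $t$), and your proposal never carries it out. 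The approach of differentiating the combined kernel can be made to work, but only after redoing this region analysis (splitting at $s-t\sim\lambda^{1-\alpha}$, and, to keep uniformity in $r$, also at $s-t\sim r$), which is precisely the content you have skipped.
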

\begin{proof}
We again make the decomposition
$$v_{3} = v_{3,1}+v_{3,2}$$
and use the same estimates on $v_{3,2}$ proven while obtaining \eqref{v3preciseforip}, to get
\begin{equation} |v_{3,2}(t,r)| \leq \frac{C r}{t^{2} \log^{b+1}(t)}\end{equation}
For $v_{3,1}$, whose definition is \eqref{v31def}, we have
\begin{equation} |v_{3,1}(t,r)| \leq \frac{C r}{t^{2} \log^{b+1}(t)} + Cr \int_{6r}^{\infty} dw |\lambda''(t+w)| w |\frac{1}{(\lambda(t+w)^{2-2\alpha}+w^{2})}-\frac{1}{1+w^{2}}|\end{equation}
In order to estimate this integral, we use the fact that $\lambda'(x) \leq 0, \quad x \geq T_{0}$, and get
\begin{equation}\label{v31bi1initial}\begin{split} \int_{0}^{1} dw w \frac{|\lambda''(t+w)|}{(\lambda(t+w)^{2-2\alpha}+w^{2})} &\leq \frac{C}{t^{2} \log^{b+1}(t)} \frac{1}{\log^{(2\alpha -2)b}(t)} \int_{0}^{1} \frac{w dw}{1+w^{2}\lambda(t)^{2\alpha -2}}\\
&\leq \frac{C \log(\log(t))}{t^{2} \log^{b+1}(t)}\end{split}\end{equation}
Also,
\begin{equation} \int_{0}^{1} \frac{w dw |\lambda''(t+w)|}{1+w^{2}} \leq \frac{C}{t^{2} \log^{b+1}(t)}\end{equation}
On the other hand, if $w \geq 1$, then, 
\begin{equation} |\frac{1}{(\lambda(t+w)^{2-2\alpha} +w^{2})}-\frac{1}{1+w^{2}}| \leq \frac{C}{w^{4}}\end{equation}
which gives
\begin{equation} \int_{1}^{\infty} dw |\lambda''(t+w)| w |\frac{1}{(\lambda(t+w)^{2-2\alpha} + w^{2})} - \frac{1}{1+w^{2}}| \leq \frac{C}{t^{2} \log^{b+1}(t)}\end{equation}
In total, we get 
\begin{equation} |v_{3,1}(t,r)| \leq \frac{C r \log(\log(t))}{t^{2} \log^{b+1}(t)}\end{equation}
which gives \eqref{v3laterest}. For the second pointwise estimate on $v_{3}$ in the lemma, we use
\begin{equation}\begin{split} |v_{3}(t,r)| &\leq \frac{1}{r} \int_{t}^{\infty} ds \int_{0}^{s-t} \frac{\rho d\rho}{\sqrt{(s-t)^{2}-\rho^{2}}} |\lambda''(s)|\cdot 2\\
&\leq \frac{C}{r} \int_{t}^{\infty} ds |\lambda''(s)|(s-t)\end{split}\end{equation}

We now prove the estimate on $\partial_{r}v_{3}$ in the lemma statement. We recall the definition of $v_{3}$
\begin{equation}\label{v3recall} v_{3}(t,r) = \frac{-1}{r}\int_{t}^{\infty} ds \int_{0}^{s-t} \frac{\rho d\rho}{\sqrt{(s-t)^{2}-\rho^{2}}} \lambda''(s) \left(\frac{-1-\rho^{2}+r^{2}}{\sqrt{(1+\rho^{2}-r^{2})^{2}+4r^{2}}}+F_{3}(r,\rho,\lambda(s))\right)\end{equation}

Then, we make a decomposition analogous to $v_{3}=v_{3,1}+v_{3,2}$, used previously, and treat each term seperately:
\begin{equation}\begin{split} &|\int_{t}^{\infty} ds \int_{0}^{s-t} \rho d\rho \left(\frac{1}{\sqrt{(s-t)^{2}-\rho^{2}}}-\frac{1}{(s-t)}\right)\frac{\lambda''(s)}{r}\partial_{r}\left(\frac{-1-\rho^{2}+r^{2}}{\sqrt{(1+\rho^{2}-r^{2})^{2}+4r^{2}}}\right)|\\
&\leq C \left(\sup_{x \geq t} |\lambda''(x)|\right) \int_{0}^{\infty} \rho d\rho \left(\frac{1+\rho^{2}+r^{2}}{(4r^{2}+(1+\rho^{2}-r^{2})^{2})^{3/2}}\right)\\
&\leq C \sup_{x \geq t} |\lambda''(x)|\end{split}\end{equation}

\begin{equation}\begin{split} &|\int_{t}^{\infty} ds \int_{0}^{s-t} \rho d\rho \left(\frac{1}{\sqrt{(s-t)^{2}-\rho^{2}}}-\frac{1}{(s-t)}\right)\frac{\lambda''(s)}{r}\partial_{r}\left(F_{3}(r,\rho,\lambda(s))\right)|\\
&\leq C \sup_{x \geq t}\left(|\lambda''(x)|\lambda(x)^{4\alpha-4}\right) \int_{0}^{\infty} \rho d\rho \frac{(\rho^{2}+r^{2}+\lambda(t)^{2-2\alpha})}{(1+2(\rho^{2}+r^{2})\lambda(t)^{2\alpha-2}+(\rho^{2}-r^{2})^{2}\lambda(t)^{4\alpha-4})^{3/2}}\\
&\leq C \sup_{x \geq t}\left(|\lambda''(x)| \lambda(x)^{4\alpha-4}\right) \lambda(t)^{4-4\alpha}\end{split}\end{equation}
where we used
\begin{equation} \label{drf3est}\frac{|\partial_{r}F_{3}|}{r} \leq \frac{C ((\rho^{2}+r^{2})\lambda(s)^{4\alpha -4} +\lambda(s)^{2\alpha -2})}{(1+2(\rho^{2}+r^{2})\lambda(s)^{2\alpha -2}+(\rho^{2}-r^{2})^{2} \lambda(s)^{4\alpha -4})^{3/2}}\end{equation}

Next, we have the term where the $r$ derivative acts on the $\frac{1}{r}$ factored out of the integrals in \eqref{v3recall}. For this term, we simply note that
\begin{equation} \frac{1}{r^{2}} \int_{t}^{\infty} ds \int_{0}^{s-t} \frac{\rho d\rho \lambda''(s)}{\sqrt{(s-t)^{2}-\rho^{2}}} \left(\frac{-1-\rho^{2}+r^{2}}{\sqrt{(1+\rho^{2}-r^{2})^{2}+4r^{2}}}+F_{3}(r,\rho,\lambda(s))\right) = \frac{-v_{3}(t,r)}{r}\end{equation}

Then, using \eqref{v3laterest}, we have
\begin{equation} \begin{split}&|\frac{1}{r^{2}} \int_{t}^{\infty} ds \int_{0}^{s-t} \frac{\rho d\rho \lambda''(s)}{\sqrt{(s-t)^{2}-\rho^{2}}} \left(\frac{-1-\rho^{2}+r^{2}}{\sqrt{(1+\rho^{2}-r^{2})^{2}+4r^{2}}}+F_{3}(r,\rho,\lambda(s))\right)|\\
&\leq C \frac{\log(\log(t))}{t^{2}\log^{b+1}(t)} \end{split}\end{equation}
The last term to estimate is
\begin{equation} -\int_{t}^{\infty} \frac{ds}{(s-t)} \int_{0}^{s-t} \rho d\rho \frac{\lambda''(s)}{r} \partial_{r}\left(\frac{-1-\rho^{2}+r^{2}}{\sqrt{(1+\rho^{2}-r^{2})^{2}+4r^{2}}}+F_{3}(r,\rho,\lambda(s))\right)\end{equation}
If $s-t \leq \frac{1}{2}$, we start with
\begin{equation}\begin{split}&|\int_{0}^{s-t}\frac{\rho d\rho}{r}\partial_{r}\left(\frac{-1-\rho^{2}+r^{2}}{\sqrt{(1+\rho^{2}-r^{2})^{2}+4r^{2}}}\right)|\\
&=|\frac{-1-r^{2}+(s-t)^{2}}{\sqrt{4(s-t)^{2}+(1+r^{2}-(s-t)^{2})^{2}}}+1|\\
& = \frac{4(s-t)^{2}}{\sqrt{4(s-t)^{2}+(1+r^{2}-(s-t)^{2})^{2}}(1+r^{2}-(s-t)^{2}+\sqrt{(1+r^{2}-(s-t)^{2})^{2}+4(s-t)^{2}})}\\
&\leq C (s-t)^{2}, \quad s-t \leq \frac{1}{2} \end{split}\end{equation}

This gives
\begin{equation} \begin{split}&|-\int_{t}^{t+\frac{1}{2}} \frac{ds}{(s-t)} \int_{0}^{s-t} \rho d\rho \frac{\lambda''(s)}{r} \partial_{r}\left(\frac{-1-\rho^{2}+r^{2}}{\sqrt{(1+\rho^{2}-r^{2})^{2}+4r^{2}}}\right)|\\
&\leq \frac{C}{t^{2}\log^{b+1}(t)}\end{split}\end{equation}
On the other hand, we have
\begin{equation}\begin{split} &|-\int_{t+\frac{1}{2}}^{\infty} \frac{ds}{(s-t)} \int_{0}^{s-t} \rho d\rho \frac{\lambda''(s)}{r} \partial_{r}\left(\frac{-1-\rho^{2}+r^{2}}{\sqrt{(1+\rho^{2}-r^{2})^{2}+4r^{2}}}\right)|\\
&\leq \int_{t+\frac{1}{2}}^{\infty} \frac{ds}{(s-t)} |\lambda''(s)| \cdot 2\\
&\leq C \sup_{x \geq t}\left(x|\lambda''(x)|\right) \frac{\log(t)}{t}\end{split}\end{equation}

Now, we have to treat the $F_{3}$ related terms. We start by recalling \eqref{drf3est}.\\
Then, we use a slightly different procedure:
\begin{equation} \begin{split} &|-\int_{t}^{\infty} \frac{ds}{(s-t)} \int_{0}^{s-t} \rho d\rho \frac{\lambda''(s)}{r} \partial_{r}\left(F_{3}(r,\rho,\lambda(s))\right)|\\
&\leq C \int_{0}^{\infty} \rho d\rho \int_{\rho+t}^{\infty} \frac{ds |\lambda''(s)|}{(s-t)} \frac{(\rho^{2}+r^{2}+\lambda(s)^{2-2\alpha}) \lambda(s)^{4\alpha -4}}{(1+2(\rho^{2}+r^{2})\lambda(s)^{2\alpha -2} +(\rho^{2}-r^{2})^{2} \lambda(s)^{4\alpha -4})^{3/2}}\\
&\leq C \int_{0}^{\infty} \rho d\rho \int_{\rho+t}^{\infty} \frac{ds |\lambda''(s)|}{(s-t)} \frac{(\rho^{2}+r^{2}+\lambda(t)^{2-2\alpha}) \lambda(s)^{4\alpha -4}}{(1+2(\rho^{2}+r^{2})\lambda(t)^{2\alpha -2} +(\rho^{2}-r^{2})^{2} \lambda(t)^{4\alpha -4})^{3/2}}\\
&\leq C \int_{0}^{\infty} \frac{\rho d\rho (\rho^{2}+r^{2}+\lambda(t)^{2-2\alpha})}{(1+2(\rho^{2}+r^{2})\lambda(t)^{2\alpha -2} +(\rho^{2}-r^{2})^{2} \lambda(t)^{4\alpha -4})^{3/2}} \frac{1}{\log^{(4\alpha -4)b}(t) t \log^{b+1}(t)} \int_{\rho+t}^{\infty} \frac{ds}{s(s-t)}\\
&\leq C \int_{0}^{t} \frac{\rho d\rho (\rho^{2}+r^{2}+\lambda(t)^{2-2\alpha})}{(1+2(\rho^{2}+r^{2})\lambda(t)^{2\alpha -2} +(\rho^{2}-r^{2})^{2} \lambda(t)^{4\alpha -4})^{3/2}} \frac{\log(1+\frac{t}{\rho})}{\log^{(4\alpha -4)b}(t) t^{2} \log^{b+1}(t)} \\
&+ C \int_{t}^{\infty} \frac{\rho d\rho (\rho^{2}+r^{2}+\lambda(t)^{2-2\alpha})}{(1+2(\rho^{2}+r^{2})\lambda(t)^{2\alpha -2} +(\rho^{2}-r^{2})^{2} \lambda(t)^{4\alpha -4})^{3/2}} \frac{1}{\log^{(4\alpha -4)b}(t) t^{2} \log^{b+1}(t)}\end{split}\end{equation}
We finally consider two subsets of the region $r \leq t$ seperately:
\begin{equation} \begin{split} &\int_{0}^{\lambda(t)^{1-\alpha}} \frac{\rho d\rho (\rho^{2}+r^{2}+\lambda(t)^{2-2\alpha})}{(1+2(\rho^{2}+r^{2})\lambda(t)^{2\alpha -2} +(\rho^{2}-r^{2})^{2} \lambda(t)^{4\alpha -4})^{3/2}}\frac{(\log(t)+|\log(\rho)|)}{\log^{(4\alpha -4)b}(t) t^{2}\log^{b+1}(t)}\\
&\leq C \int_{0}^{\lambda(t)^{1-\alpha}} \frac{\rho d\rho \lambda(t)^{2-2\alpha} (\log(t)+|\log(\rho)|)}{(1+2(\rho^{2}+r^{2})\lambda(t)^{2\alpha -2} +(\rho^{2}-r^{2})^{2} \lambda(t)^{4\alpha -4})^{1/2}} \frac{1}{\log^{(4\alpha -4)b}(t) t^{2} \log^{b+1}(t)}\\
&\leq \frac{C}{t^{2} \log^{b}(t)}\end{split}\end{equation}
Then,
\begin{equation} \begin{split} &\int_{\lambda(t)^{1-\alpha}}^{t} \frac{\rho d\rho (\rho^{2}+r^{2}+\lambda(t)^{2-2\alpha})}{(1+2(\rho^{2}+r^{2})\lambda(t)^{2\alpha -2} +(\rho^{2}-r^{2})^{2} \lambda(t)^{4\alpha -4})^{3/2}}\frac{(\log(t)+|\log(\rho)|)}{\log^{(4\alpha -4)b}(t) t^{2}\log^{b+1}(t)}\\
&\leq \frac{C\log(t)}{t^{2} \log^{(4\alpha -4)b}(t) \log^{b+1}(t)} \int_{0}^{\infty} \frac{\rho d\rho (\rho^{2}+r^{2}+\lambda(t)^{2-2\alpha})}{(1+2(\rho^{2}+r^{2})\lambda(t)^{2\alpha -2} +(\rho^{2}-r^{2})^{2} \lambda(t)^{4\alpha -4})^{3/2}}\\
&\leq \frac{C}{t^{2} \log^{b}(t)}\end{split}\end{equation}
The final integral to estimate is then
\begin{equation}\begin{split} &\int_{t}^{\infty} \frac{\rho d\rho (\rho^{2}+r^{2}+\lambda(t)^{2-2\alpha})}{(1+2(\rho^{2}+r^{2})\lambda(t)^{2\alpha -2} +(\rho^{2}-r^{2})^{2} \lambda(t)^{4\alpha -4})^{3/2}}\frac{1}{\log^{(4\alpha -4)b}(t) t^{2}\log^{b+1}(t)}\\
&\leq \frac{C}{t^{2} \log^{b+1}(t)}\end{split}\end{equation}
This gives \eqref{drv3est}.

\end{proof}

\subsubsection{Pointwise estimates on $v_{4}$, $\partial_{r}v_{4}$}
In this section, we prove
\begin{lemma} For all $\lambda$ of the form
$$\lambda(t) = \lambda_{0}(t)+e(t), \quad e \in \overline{B}_{1}(0) \subset X$$
we have the pointwise estimates

\begin{equation} \label{v4finalest} |v_{4}(t,r)| \leq \begin{cases} \frac{C r}{t^{2} \log^{3b+2N-1}(t)}, \quad r \leq \frac{t}{2}\\
\frac{C}{\sqrt{r} t \log^{\frac{3N}{2}+3b-1}(t)}, \quad r > \frac{t}{2}\end{cases}\end{equation}

\begin{equation}\label{drv4finalest} |\partial_{r}v_{4}(t,r)| \leq \begin{cases} \frac{C}{t^{2}\log^{3b+2N-1}(t)}, \quad r \leq \frac{t}{2}\\
\frac{C}{\sqrt{r}t\log^{3b-1+\frac{5N}{2}}(t)}, \quad r \geq \frac{t}{2}\end{cases}\end{equation}
and
\begin{equation} \label{dtv4largerest} |\partial_{t}v_{4}(t,r)| \leq \frac{C}{\sqrt{r}t\log^{3b-1+\frac{5N}{2}}(t)}, \quad r \geq \frac{t}{2}\end{equation}
We also have the $L^{2}$ estimates
\begin{equation}\label{v4energyest} ||\partial_{t}v_{4}||_{L^{2}(r dr)} + ||\partial_{r}v_{4}||_{L^{2}(r dr)}+||\frac{v_{4}}{r}||_{L^{2}(r dr)} \leq \frac{C}{t \log^{2N+3b}(t)}\end{equation}
\end{lemma}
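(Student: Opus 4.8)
\emph{Outline.} The strategy is to reduce every bound in the lemma to pointwise and $L^{2}$ control of the source $v_{4,c}$, and then insert this into the Duhamel formulas already derived for $v_{4}$. The crucial structural fact is that $v_{4,c}(s,\cdot)$ is supported in $\{r\geq\tfrac12\log^{N}(s)\}$, where $v_{1},v_{2},v_{3}$ and $F_{0,2}$ are all very small. Concretely, in \textbf{Step 1} I would establish, using (i) the identity $\tfrac{\cos(2Q_{1}(r/\lambda))-1}{r^{2}}=\tfrac{-8\lambda^{2}}{(\lambda^{2}+r^{2})^{2}}$; (ii) the algebraic cancellation $F_{0,2}(t,r)=-\tfrac{2r\lambda''(1-\lambda^{2\alpha})}{(\lambda^{2}+r^{2})(1+r^{2}\lambda^{2\alpha-2})}+\tfrac{4r\lambda(\lambda')^{2}}{(\lambda^{2}+r^{2})^{2}}$, obtained by cancelling the $\tfrac{2\lambda''r}{1+r^{2}}$ and $\partial_{t}^{2}Q_{\frac{1}{\lambda(t)}}$ contributions against the first term of $F_{0,1}$; and (iii) the large-$r$ estimates \eqref{v1largerest}, \eqref{largerdrv1} for $v_{1}$, \eqref{v2precisenearorigin}, \eqref{v2sqrtrest}, \eqref{v2singularconeest} for $v_{2}$, and \eqref{v3laterest}, \eqref{v3largerest}, \eqref{drv3est} for $v_{3}$ (which give $|v_{k}(s,r)|\leq Crs^{-2}\log^{-b}(s)$, up to $\log\log$ factors, for $r\leq\tfrac s2$, with $v_{2}$ dominant), the bounds
$$|v_{4,c}(s,r)|\leq\frac{C}{r^{3}s^{2}\log^{3b}(s)},\qquad |\partial_{r}v_{4,c}(s,r)|+\frac{|v_{4,c}(s,r)|}{r}\leq\frac{C}{r^{4}s^{2}\log^{3b}(s)},\qquad \tfrac12\log^{N}(s)\leq r\leq\tfrac s2,$$
together with the complementary bounds $|v_{4,c}(s,r)|\leq C\lambda(s)^{2}r^{-9/2}$, $|\partial_{r}v_{4,c}(s,r)|\leq C\lambda(s)^{2}r^{-11/2}$ for $r\geq\tfrac s2$ (analogous bounds for $\partial_{r}^{2}v_{4,c}$ follow the same way). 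Derivatives landing on the cutoff $\chi_{\geq1}(2r/\log^{N}(s))$ cost only $\log^{-N}(s)\lesssim r^{-1}$, so are harmless.

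\textbf{Step 2 (the $L^{2}$ estimate \eqref{v4energyest}).} Since $v_{4}$ solves \eqref{rsquaredpoteqn} with source $v_{4,c}$ and zero Cauchy data at infinity, Duhamel's principle (as for $v_{1}$) gives $v_{4}(t)=\int_{t}^{\infty}v_{4,s}(t)\,ds$, where $v_{4,s}$ solves \eqref{rsquaredpoteqn} with data $(0,v_{4,c}(s,\cdot))$ at time $s$. Conservation of $E(\cdot,\partial_{t}\cdot)$ for \eqref{rsquaredpoteqn} gives $||\partial_{t}v_{4,s}(t)||_{L^{2}(r dr)}^{2}+||v_{4,s}(t)||_{\dot{H}^{1}_{e}}^{2}=||v_{4,c}(s,\cdot)||_{L^{2}(r dr)}^{2}$, hence
$$||\partial_{t}v_{4}(t)||_{L^{2}(r dr)}+||v_{4}(t)||_{\dot{H}^{1}_{e}}\leq\int_{t}^{\infty}||v_{4,c}(s,\cdot)||_{L^{2}(r dr)}\,ds.$$
By Step 1, $||v_{4,c}(s,\cdot)||_{L^{2}(r dr)}^{2}\leq C\int_{\log^{N}(s)/2}^{s/2}\tfrac{r\,dr}{r^{6}s^{4}\log^{6b}(s)}+(\text{smaller terms from }r\geq\tfrac s2\text{ and from }F_{0,2})\leq\tfrac{C}{s^{4}\log^{4N+6b}(s)}$, and so the $s$-integral is $\leq\tfrac{C}{t\log^{2N+3b}(t)}$, which is \eqref{v4energyest}.

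\textbf{Step 3 (the pointwise estimates \eqref{v4finalest}, \eqref{drv4finalest}, \eqref{dtv4largerest}).} Here I would use the representation \eqref{v4repformsimp}; for $\partial_{r}v_{4}$ one differentiates it, which brings in $\partial_{2}^{2}G$ hence $\partial_{r}^{2}v_{4,c}$, and for $\partial_{t}v_{4}$ one uses the analogous Duhamel formula with the source in the position slot. In all three cases the relevant kernel satisfies $|\partial_{2}G(s,r\beta,\rho)|\leq C\int_{0}^{2\pi}(|\partial_{r}v_{4,c}(s,w)|+w^{-1}|v_{4,c}(s,w)|)\,d\theta$ with $w=\sqrt{(r\beta)^{2}+2r\beta\rho\cos\theta+\rho^{2}}$, and the support constraint forces $w\geq\tfrac12\log^{N}(s)$, hence $r\beta+\rho\geq\tfrac12\log^{N}(s)$. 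For $r\leq\tfrac t2$ this localizes the $\rho$-integral to $\rho\gtrsim\log^{N}(s)$ unless already $r\gtrsim\log^{N}(s)$; plugging in the Step 1 decay, evaluating the explicit integral $\int_{0}^{2\pi}w^{-4}\,d\theta=\tfrac{2\pi((r\beta)^{2}+\rho^{2})}{|(r\beta)^{2}-\rho^{2}|^{3}}$, then integrating $\rho$ against $((s-t)^{2}-\rho^{2})^{-1/2}$ and finally $s$, and using the prefactor $r$ in \eqref{v4repformsimp}, yields the claimed bound with one power of $\log$ lost relative to the $L^{2}$ bound (from a crude estimate of the light-cone integral near $r\beta=\rho$), i.e. $\log^{-(3b+2N-1)}(t)$. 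For $r>\tfrac t2$ the dominant contributions come from $w$ comparable to the light cone, where \eqref{v2singularconeest} for $v_{2}$ combines with the $r^{-4}$ decay of the coefficient to produce the $r^{-1/2}t^{-1}$ behavior; the number of factors $r^{-1}\lesssim\log^{-N}$ produced by the coefficient, respectively by the coefficient together with one derivative, gives $\log^{-(3b+3N/2-1)}(t)$ for $v_{4}$ and $\log^{-(3b+5N/2-1)}(t)$ for $\partial_{r}v_{4}$ and $\partial_{t}v_{4}$.

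\textbf{Main obstacle.} Steps 1 and 2 are essentially mechanical given the earlier lemmas. The hard part is Step 3: extracting the sharp powers of $\log$ from the support localization of $v_{4,c}$ while simultaneously handling the weak $((s-t)^{2}-\rho^{2})^{-1/2}$ singularity, the $\beta$-average, and the $\int_{0}^{2\pi}w^{-4}\,d\theta$ integrals (which degenerate when $r\beta\approx\rho$), and keeping the bookkeeping consistent between the $r\leq\tfrac t2$ and $r>\tfrac t2$ regimes and between $v_{4}$ and its derivatives.
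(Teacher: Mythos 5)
Your Steps 1 and 2, and your treatment of the region $r\leq\frac{t}{2}$ in Step 3, follow the paper's own route: the estimates on $v_{4,c}$ exploiting the support in $\{r\gtrsim\log^{N}(s)\}$ and the fact that $s-|\beta x+y|\geq\frac{t}{2}$ there, the representation \eqref{v4repformsimp} with the kernel split $\frac{1}{\sqrt{(s-t)^{2}-|y|^{2}}}=\frac{1}{s-t}+(\cdots)$, and the energy/Duhamel argument $\|\partial_{t}v_{4}\|_{L^{2}}+\|v_{4}\|_{\dot H^{1}_{e}}\leq\int_{t}^{\infty}\|v_{4,c}(s)\|_{L^{2}}\,ds$ are all essentially what the paper does (the paper phrases the $L^{2}$ bound through the Hankel transform of order $0$ and then separates $\|\partial_{r}v_{4}\|^{2}+\|v_{4}/r\|^{2}$ using the boundary decay, but this is the same energy estimate).

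The genuine gap is your treatment of the region $r>\frac{t}{2}$, i.e.\ the second lines of \eqref{v4finalest} and \eqref{drv4finalest} and the bound \eqref{dtv4largerest}. You propose to extract the $r^{-1/2}$ decay and the exponents $\frac{3N}{2}+3b-1$ and $\frac{5N}{2}+3b-1$ from the physical-space spherical-means formula, but (i) the representation \eqref{v4repformsimp} carries an explicit prefactor $r$ and was derived (via $G(s,0,\rho)=0$ and the $\beta$-average) precisely to be useful near the origin, so it cannot be used as written for large $r$; (ii) producing $r^{-1/2}$ from the two-dimensional wave kernel in physical space requires a quantitative light-cone/oscillation analysis that your sketch does not supply; and (iii) the half-integer powers of $\log$ are asserted without any mechanism. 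In the paper these exponents arise from a Fourier-side argument: one writes $v_{4}(t,r)=\int_{t}^{\infty}\int_{0}^{\infty}J_{1}(r\xi)\sin((t-x)\xi)\widehat{v_{4,c}}(x,\xi)\,d\xi\,dx$, proves pointwise bounds on $\widehat{v_{4,c}}(x,\xi)$ for $\xi\leq\log^{-N}(x)$ (the case analysis leading to \eqref{v4chatest}) and the $L^{2}$ bound $\|\xi\widehat{v_{4,c}}(x,\cdot)\|_{L^{2}(\xi d\xi)}\lesssim x^{-2}\log^{-(3b+3N)}(x)$ via the Hankel isometry applied to $(\partial_{r}+\frac{1}{r})v_{4,c}$ (\eqref{xidxil2est}), and then uses $|J_{1}(z)|,|J_{1}'(z)|\lesssim z^{-1/2}$ together with a Cauchy--Schwarz split of the frequency integral at $\xi=\log^{-N}(x)$; it is exactly this split that produces $\log^{3N/2}$ for $v_{4}$ and $\log^{5N/2}$ for $\partial_{r}v_{4}$, $\partial_{t}v_{4}$, and it also justifies the differentiation under the integral sign needed for \eqref{dtv4largerest}. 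Without this (or an equally precise substitute), your Step 3 for $r>\frac{t}{2}$ does not establish the stated estimates.
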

\begin{proof}
We start by considering $v_{4}(t,r)$, for $\frac{t}{2}>r >0$. In order to ease notation, let $x \in \mathbb{R}^{2}$ be defined by $x = r \textbf{e}_{1}$. Then, we recall \eqref{v4repformsimp} and get
\begin{equation}\begin{split} v_{4}(t,r) = \frac{-r}{2 \pi} \int_{0}^{1} d\beta \int_{t}^{\infty} ds \int_{B_{s-t}(0)} \frac{dA(y)}{\sqrt{(s-t)^{2}-|y|^{2}}} &\left(\frac{\partial_{2}v_{4,c}(s,|\beta x+y|)\left((\beta x+y)\cdot \hat{x}\right)^{2}}{|\beta x+y|^{2}}\right.\\
&-\frac{v_{4,c}(s,|\beta x+y|)\left((\beta x+y)\cdot \hat{x}\right)^{2}}{|\beta x+y|^{3}}\\
&\left. + \frac{v_{4,c}(s,|\beta x+y|)}{|\beta x+y|}\right)\end{split}\end{equation}

Note that, for $|x| \leq \frac{t}{2}$, $0<\beta <1$, and $|y| \leq s-t$, we have
\begin{equation} s-|\beta x+y| \geq \frac{t}{2}\end{equation}
This means that, for the purposes of estimating $v_{4}$ in the region $ r \leq \frac{t}{2}$, we can use \eqref{v2singularconeest} to estimate $v_{2}$, for all $r \geq \frac{t}{2}$. We then combine this with the estimates for $v_{1},v_{3}$, and $F_{0,2}$, to get
\begin{equation} \label{v4cest1} |v_{4,c}(t,r)| \leq  C |\chi_{\geq 1}(\frac{2r}{\log^{N}(t)})| \begin{cases} \frac{1}{t^{2} r^{3} \log^{3b}(t)} , \quad r \leq \frac{t}{2}\\
\frac{\log(r)}{\log^{2b}(t) r^{4}|t-r|} + \frac{\log^{2b\alpha}(t)}{t^{2}r^{3}\log^{3b+1}(t)}, \quad r \geq \frac{t}{2}\end{cases}\end{equation}
and similarly, for the derivatives, we have
\begin{equation}\begin{split} |\partial_{r}v_{4,c}(t,r)| &\leq C \chi_{\geq 1}(\frac{2r}{\log^{N}(t)}) \begin{cases} \frac{1}{r^{4}t^{2}\log^{3b}(t)}, \quad r \leq \frac{t}{2}\\
\frac{\log(r)}{\log^{2b}(t) r^{4}(t-r)^{2}} + \frac{1}{\log^{3b}(t) t^{2}r^{4}}, \quad r \geq \frac{t}{2}\end{cases}\\
&+\frac{C |\chi'(\frac{2r}{\log^{N}(t)})|}{\log^{5N+2b}(t)} \frac{r}{t^{2}\log^{b}(t)} \end{split}\end{equation}
Then, we get
\begin{equation}\label{v4formulaforestimates}\begin{split}  |v_{4}(t,r)| &\leq C r \int_{0}^{1} d\beta \int_{t}^{\infty} ds \int_{B_{s-t}(0) \cap B_{\frac{s}{2}}(-\beta x)} \frac{dA(y)}{\sqrt{(s-t)^{2}-|y|^{2}}} |\partial_{2}v_{4,c}(s,|\beta x+y|)| \\
&+C r \int_{0}^{1} d\beta \int_{t}^{\infty} ds \int_{B_{s-t}(0) \cap B_{\frac{s}{2}}(-\beta x)} \frac{dA(y)}{\sqrt{(s-t)^{2}-|y|^{2}}} \frac{|v_{4,c}(s,|\beta x+y|)|}{|\beta x+y|}\\
&+C r \int_{0}^{1} d\beta \int_{t}^{\infty} ds \int_{B_{s-t}(0) \cap (B_{\frac{s}{2}}(-\beta x))^{c}} \frac{dA(y)}{\sqrt{(s-t)^{2}-|y|^{2}}} |\partial_{2}v_{4,c}(s,|\beta x+y|)|\\
&+C r \int_{0}^{1} d\beta \int_{t}^{\infty} ds \int_{B_{s-t}(0) \cap (B_{\frac{s}{2}}(-\beta x))^{c}} \frac{dA(y)}{\sqrt{(s-t)^{2}-|y|^{2}}}\frac{|v_{4,c}(s,|\beta x+y|)}{|\beta x+y|}\end{split}\end{equation}

Each line of \eqref{v4formulaforestimates} is then further split into two terms, based on the decomposition  $$\frac{1}{\sqrt{(s-t)^{2}-|y|^{2}}}=\frac{1}{s-t}+\left(\frac{1}{\sqrt{(s-t)^{2}-|y|^{2}}}-\frac{1}{s-t}\right)$$
and estimated seperately. For the first term of the first line, we have

\begin{equation}\begin{split}&r \int_{0}^{1} d\beta \int_{t}^{\infty} ds \int_{B_{s-t}(0) \cap B_{\frac{s}{2}}(-\beta x)} \frac{dA(y)}{(s-t)} |\partial_{2}v_{4,c}(s,|\beta x+y|)| \\
&\leq C r \int_{0}^{1}d\beta \int_{t}^{t+\frac{1}{2}} ds \int_{B_{s-t}(0)} \frac{dA(y)}{(s-t)} \frac{1}{\log^{4N}(s) s^{2}\log^{3b}(s)} +C r \int_{0}^{1} d\beta \int_{t+\frac{1}{2}}^{\infty} \frac{ds}{(s-t)} \int_{B_{\frac{s}{2}}(-\beta x)} \frac{dA(y) \mathbbm{1}_{\{|\beta x+y| \geq \log^{N}(s)\}}}{|\beta x+y|^{4} s^{2}\log^{3b}(s)}\\
&\leq C r \int_{0}^{1}d\beta \int_{t}^{t+\frac{1}{2}} ds \frac{(s-t)}{s^{2}\log^{3b+4N}(s)} +C r \int_{0}^{1}d\beta \int_{t+\frac{1}{2}}^{\infty} \frac{ds}{(s-t)} \int_{B_{\frac{s}{2}}(0)} \frac{dA(z) \mathbbm{1}_{\{|z| \geq \log^{N}(s)\}}}{|z|^{4}s^{2}\log^{3b}(s)}\\
&\leq \frac{C r}{t^{2}\log^{3b+4N}(t)}+ C r \int_{0}^{1}d\beta \int_{t+\frac{1}{2}}^{\infty} \frac{ds}{(s-t)} \int_{\log^{N}(s)}^{\frac{s}{2}} \rho d\rho \int_{0}^{2\pi} \frac{d\theta}{\rho^{4}s^{2}\log^{3b}(s)}\\
&\leq \frac{C r}{\log^{3b+2N-1}(t)t^{2}}\end{split}\end{equation}
where we used 
$$\frac{|\chi'(\frac{|\beta x+y|}{\log^{N}(s)})| |\beta x+y|}{\log^{5N+3b}(s) s^{2}} \leq C \frac{\mathbbm{1}_{\{|\beta x+y| \geq \log^{N}(s)\}}}{|\beta x+y|^{4}\log^{3b}(s)s^{2}}$$
Next, we estimate
\begin{equation}\label{term1b}\begin{split}&r \int_{0}^{1} d\beta \int_{t}^{\infty} ds \int_{B_{s-t}(0)\cap B_{\frac{s}{2}}(-\beta x)} dA(y) \left(\frac{1}{\sqrt{(s-t)^{2}-|y|^{2}}}-\frac{1}{(s-t)}\right) |\partial_{r}v_{4,c}(s,|\beta x+y|)|\\
&\leq C r \int_{0}^{1} d\beta \int_{t}^{\infty} ds \int_{B_{s-t}(0)} dA(y) \left(\frac{1}{\sqrt{(s-t)^{2}-|y|^{2}}}-\frac{1}{(s-t)}\right) \frac{1}{\log^{2N}(s)}\frac{1}{(\log^{2N}(t)+|\beta x+y|^{2})s^{2}\log^{3b}(s)}\\
&\leq C r \int_{0}^{1} d\beta \int_{0}^{\infty} \rho d\rho \int_{0}^{2\pi} \frac{d\theta}{\log^{2N}(t)} \frac{1}{(\log^{2N}(t)+\beta^{2} r^{2} + \rho^{2} + 2 \beta r \rho \cos(\theta))} \frac{1}{\log^{3b}(t)} \int_{\rho +t}^{\infty} \frac {ds}{s^{2}}\left(\frac{1}{\sqrt{(s-t)^{2}-\rho^{2}}}-\frac{1}{(s-t)}\right)\\
&\leq C r \int_{0}^{1} d\beta \int_{0}^{\infty} \rho d\rho \int_{0}^{2\pi} \frac{d\theta}{\log^{2N+3b}(t)} \frac{1}{(\log^{2N}(t) + \beta^{2}r^{2}+\rho^{2}+2 \beta r \rho \cos(\theta))}\frac{1}{(\rho+t)^{2}}\\
&\leq \frac{C r}{\log^{2N+3b}(t)} \int_{0}^{1}d\beta \int_{0}^{\infty} \frac{\rho d\rho}{(\rho+t)^{2}} \frac{1}{\sqrt{(\log^{2N}(t)+(\beta r+\rho)^{2})(\log^{2N}(t)+(\beta r-\rho)^{2})}}\\
&\leq \frac{C r}{\log^{2N+3b}(t)} \int_{0}^{1}d\beta \int_{0}^{t} \frac{\rho d\rho}{(\rho+t)^{2}} \frac{1}{\sqrt{(\log^{2N}(t)+(\beta r+\rho)^{2})(\log^{2N}(t)+(\beta r-\rho)^{2})}}+\frac{C r}{\log^{2N+3b}(t)} \int_{0}^{1}d\beta \int_{t}^{\infty} \frac{\rho d\rho}{\rho^{4}}\\
&\leq \frac{C r (\log(2+r^{2})+\log(2+t^{2}))}{t^{2}\log^{2N+3b}(t)}\\
&\leq \frac{C r}{t^{2}\log^{2N+3b-1}(t)}, \quad r \leq \frac{t}{2}\end{split}\end{equation}
where, we used the fact that 
$$\rho \geq t, \quad r \leq \frac{t}{2} \implies |\beta r -\rho| = \rho-\beta r \geq \frac{\rho}{2}$$
Next, we estimate \begin{equation}\begin{split} &r\int_{0}^{1}d\beta \int_{t}^{\infty} ds \int_{B_{s-t}(0) \cap B_{\frac{s}{2}}(-\beta x)} \frac{dA(y)}{(s-t)} \frac{|v_{4,c}(s,|\beta x+y|)|}{|\beta x+y|}\\
&\leq C r \int_{0}^{1}d\beta \int_{t}^{\infty} ds \int_{B_{s-t}(0) \cap B_{\frac{s}{2}}(-\beta x)} \frac{dA(y)}{(s-t)} \frac{\chi_{\geq 1}(\frac{|\beta x+y|}{\log^{N}(s)})}{\log^{3b}(s) s^{2}|\beta x+y|^{4}}\end{split}\end{equation}
The last line in the above equation has already been estimated above, and we get
\begin{equation}\begin{split} &r \int_{0}^{1}d\beta \int_{t}^{\infty} ds \int_{B_{s-t}(0) \cap B_{\frac{s}{2}}(-\beta x)} \frac{dA(y)}{(s-t)} \frac{|v_{4,c}(s,|\beta x+y|)|}{|\beta x+y|}\leq \frac{C r}{t^{2}\log^{3b+2N-1}(t)}\end{split}\end{equation}
Next, we have
\begin{equation}\begin{split} &r \int_{0}^{1}d\beta \int_{t}^{\infty} ds \int_{B_{s-t}(0) \cap B_{\frac{s}{2}}(-\beta x)} dA(y)\left(\frac{1}{\sqrt{(s-t)^{2}-|y|^{2}}}-\frac{1}{(s-t)}\right) \frac{|v_{4,c}(s,|\beta x+y|)|}{|\beta x+y|}\\
&\leq C r \int_{0}^{1}d\beta \int_{t}^{\infty} ds \int_{B_{s-t}(0) \cap B_{\frac{s}{2}}(-\beta x)} dA(y) \left(\frac{1}{\sqrt{(s-t)^{2}-|y|^{2}}}-\frac{1}{(s-t)}\right) \frac{\chi_{\geq 1}(\frac{|\beta x+y|}{\log^{N}(s)})}{|\beta x+y|^{4} s^{2}\log^{3b}(s)}\end{split}\end{equation}
The last line in the above equation has also been estimated above. Next, we recall that
$$r \leq \frac{t}{2}, |y| \leq s-t \implies s-|\beta x+y| \geq \frac{t}{2}$$
and consider
\begin{equation}\begin{split} &r \int_{0}^{1}d\beta \int_{t}^{\infty} ds \int_{B_{s-t}(0)\cap (B_{\frac{s}{2}}(-\beta x))^{c}} \frac{dA(y)}{(s-t)} |\partial_{2}v_{4,c}(s,|\beta x+y|)|\end{split}\end{equation}
We first treat the portion of the integral when $s-t \leq \frac{1}{2}$:
\begin{equation}\begin{split} &r \int_{0}^{1}d\beta \int_{t}^{t+\frac{1}{2}} ds \int_{B_{s-t}(0)\cap (B_{\frac{s}{2}}(-\beta x))^{c}} \frac{dA(y)}{(s-t)} |\partial_{2}v_{4,c}(s,|\beta x+y|)|\\
&\leq Cr \int_{0}^{1}d\beta \int_{t}^{t+\frac{1}{2}} ds \int_{B_{s-t}(0)} \frac{dA(y)}{(s-t)} \left(\frac{1}{\log^{2b-1}(s) s^{4}t^{2}} + \frac{1}{\log^{3b}(s) s^{6}}\right)\\
&\leq \frac{C r}{t^{6}\log^{2b-1}(t)}\end{split}\end{equation}
We next consider the region $s-t \geq \frac{1}{2}$, and get
\begin{equation}\begin{split} &r \int_{0}^{1}d\beta \int_{t+\frac{1}{2}}^{\infty} ds \int_{B_{s-t}(0)\cap (B_{\frac{s}{2}}(-\beta x))^{c}} \frac{dA(y)}{(s-t)} |\partial_{2}v_{4,c}(s,|\beta x+y|)|\\
&\leq C r \int_{0}^{1}d\beta \int_{t+\frac{1}{2}}^{\infty} \frac{ds}{(s-t)} \int_{(B_{\frac{s}{2}}(0))^{c}}dA(z) \left(\frac{\log(|z|)}{\log^{2b}(s) |z|^{4} t^{2}}+\frac{1}{\log^{3b}(s) s^{2}|z|^{4}}\right)\\
&\leq C r \int_{t+\frac{1}{2}}^{\infty} \frac{ds}{(s-t)} \frac{1}{s^{2}t^{2}\log^{2b-1}(s)}\\
&\leq \frac{Cr}{t^{4}\log^{2b-2}(t)}\end{split}\end{equation}
Next, we have
\begin{equation}\begin{split} &r \int_{0}^{1}d\beta \int_{t}^{\infty} ds \int_{B_{s-t}(0)\cap (B_{\frac{s}{2}}(-\beta x))^{c}}dA(y)\left(\frac{1}{\sqrt{(s-t)^{2}-|y|^{2}}}-\frac{1}{(s-t)}\right) |\partial_{2}v_{4,c}(s,|\beta x+y|)|\\
&\leq C r \int_{0}^{1} d\beta \int_{t}^{\infty} ds \int_{B_{s-t}(0)} dA(y) \left(\frac{1}{\sqrt{(s-t)^{2}-|y|^{2}}}-\frac{1}{(s-t)}\right) \frac{1}{s^{2}}\frac{\log(s)}{(t^{2}+|\beta x+y|^{2})t^{2}\log^{2b}(s)}\\
&\leq C r \int_{0}^{1}d\beta \int_{0}^{\infty} \rho d\rho \int_{0}^{2\pi} \frac{d\theta}{(\rho+t) t^{3} \log^{2b-1}(t) (t^{2}+\beta^{2} r^{2} + \rho^{2} + 2 \beta r \rho \cos(\theta))}\end{split}\end{equation}
We can then treat the last line of the above equation in the same way as we treated \eqref{term1b}. This results in
\begin{equation}\begin{split} &r \int_{0}^{1}d\beta \int_{t}^{\infty} ds \int_{B_{s-t}(0)\cap (B_{\frac{s}{2}}(-\beta x))^{c}}dA(y)\left(\frac{1}{\sqrt{(s-t)^{2}-|y|^{2}}}-\frac{1}{(s-t)}\right) |\partial_{2}v_{4,c}(s,|\beta x+y|)|\\
&\leq \frac{C r}{t^{4} \log^{2b-2}(t)}\end{split}\end{equation}
Next, we have
\begin{equation} \begin{split}&r \int_{0}^{1} d\beta \int_{t}^{\infty} ds \int_{B_{s-t}(0) \cap(B_{\frac{s}{2}}(-\beta x))^{c}} \frac{dA(y)}{(s-t)} |\frac{v_{4,c}(s,|\beta x+y|)}{|\beta x+y|}|\end{split}\end{equation}
The contribution to the integral from the region $s-t \leq \frac{1}{2}$ is treated in an identical manner as above, and we estimate the other contribution, using the same procedure as used above:
\begin{equation}\begin{split} &r \int_{0}^{1} d\beta \int_{t+\frac{1}{2}}^{\infty} ds \int_{B_{s-t}(0) \cap(B_{\frac{s}{2}}(-\beta x))^{c}} \frac{dA(y)}{(s-t)} \frac{|v_{4,c}(s,|\beta x+y|)|}{|\beta x+y|}\\
&\leq C r \int_{0}^{1}d\beta \int_{t+\frac{1}{2}}^{\infty} \frac{ds}{(s-t)} \int_{\frac{s}{2}}^{\infty} \rho d\rho \int_{0}^{2\pi} d\theta \left(\frac{1}{t \log^{2b-1}(s) \rho^{5}}+\frac{\log^{2b\alpha}(s)}{s^{2} \rho^{4} \log^{3b+1}(s)}\right)\\
&\leq C r \int_{t+\frac{1}{2}}^{\infty} \frac{ds}{(s-t) t s^{3}\log^{2b-1}(s)} + C r \int_{t+\frac{1}{2}}^{\infty} \frac{\log^{2b\alpha -3b-1}(s) ds}{(s-t) s^{4}}\\
&\leq \frac{C r}{t^{4}\log^{2b-2}(t)}\end{split}\end{equation}
The last term to estimate is
\begin{equation}\begin{split} &r \int_{0}^{1} d\beta \int_{t}^{\infty} ds \int_{B_{s-t}(0) \cap (B_{\frac{s}{2}}(-\beta x))^{c}} dA(y)\left(\frac{1}{\sqrt{(s-t)^{2}-|y|^{2}}}-\frac{1}{(s-t)}\right)\frac{|v_{4,c}(s,|\beta x+y|)|}{|\beta x+y|}\\
&\leq C r \int_{0}^{1} d\beta \int_{t}^{\infty} ds \int_{B_{s-t}(0)} dA(y)\left(\frac{1}{\sqrt{(s-t)^{2}-|y|^{2}}}-\frac{1}{(s-t)}\right) \left(\frac{\log(s)}{\log^{2b}(s) s^{5}t}+\frac{\log^{2b\alpha}(s)}{s^{6} \log^{3b+1}(s)}\right)\\
&\leq C r \int_{0}^{\infty} \rho d\rho \int_{\rho+t}^{\infty} ds \left(\frac{1}{\sqrt{(s-t)^{2}-\rho^{2}}}-\frac{1}{(s-t)}\right) \left(\frac{1}{(\rho+t)^{4} t^{2} \log^{2b-1}(t)} + \frac{\log^{2b\alpha -3b-1}(t)}{(\rho+t)^{6}}\right)\\
&\leq \frac{C r}{t^{4}\log^{2b-1}(t)}\end{split}\end{equation}
Combining these estimates, we conclude that
\begin{equation} |v_{4}(t,r)| \leq \frac{C r}{t^{2}\log^{3b+2N-1}(t)}, \quad r \leq \frac{t}{2}\end{equation}

Next, we estimate $\partial_{r}v_{4}$, starting with the region $r \leq \frac{t}{2}$. We recall the function $G$ defined in \eqref{gforv4}:
\begin{equation}\begin{split} G(s,r,\rho) &=\int_{0}^{2\pi} d\theta \frac{v_{4,c}(s,\sqrt{r^{2}+2 r \rho \cos(\theta) + \rho^{2}})}{\sqrt{r^{2}+2 r \rho \cos(\theta) + \rho^{2}}} \left(r+\rho \cos(\theta)\right)\\
& s \geq t,\quad r \geq 0,\quad s-t \geq \rho \geq 0\end{split}\end{equation} 
and start with
\begin{equation}\begin{split}v_{4}(t,r) &= \frac{-1}{2\pi} \int_{t}^{\infty} ds \int_{0}^{s-t} \frac{\rho d\rho}{\sqrt{(s-t)^{2}-\rho^{2}}} G(s,r,\rho)\end{split}\end{equation}
Note that, when we estimated $v_{4}$ in the region $r \leq \frac{t}{2}$, we used $$G(s,r,\rho) = r \int_{0}^{1} \partial_{2}G(s,r\beta,\rho) d\beta$$
Now, we have
\begin{equation} \partial_{r}v_{4}(t,r) = \frac{-1}{2\pi} \int_{t}^{\infty} ds \int_{0}^{s-t}\frac{\rho d\rho}{\sqrt{(s-t)^{2}-\rho^{2}}} \partial_{2}G(s,r,\rho)\end{equation}
Therefore, the identical procedure gives the estimate
\begin{equation} |\partial_{r}v_{4}(t,r)| \leq \frac{C}{t^{2}\log^{3b+2N-1}(t)}, \quad r \leq \frac{t}{2}\end{equation}
Now, we treat the region $r \geq \frac{t}{2}$. Here, we use a different combination of the $v_{2}$ estimates in the various regions, to obtain
\begin{equation}\label{v4cest2} |v_{4,c}(t,r)| \leq C |\chi_{ \geq 1}(\frac{2r}{\log^{N}(t)})| \begin{cases} \frac{1}{r^{3}t^{2}\log^{3b}(t)}, \quad r \leq \frac{t}{2}\\
\frac{\log(r)}{\log^{2b}(t) r^{4}|t-r|}+\frac{\log^{2b\alpha}(t)}{t^{2} r^{3} \log^{3b+1}(t)}, \quad \frac{t}{2} \leq r \leq t-\sqrt{t}, \quad r > t+\sqrt{t}\\
\frac{1}{\log^{2b}(t)r^{9/2}}, \quad t-\sqrt{t} \leq r \leq t+\sqrt{t}\end{cases}\end{equation}
and
\begin{equation} \label{drv4cforfourier} \begin{split} |\partial_{r}v_{4,c}(t,r)|&\leq \frac{C |\chi'(\frac{2r}{\log^{N}(t)})|}{\log^{2b+N}(t) r^{4}} \left( \frac{r}{t^{2}\log^{b}(t)}\right)\\
&+\frac{C \chi_{\geq 1}(\frac{2r}{\log^{N}(t)})}{r^{4}\log^{2b}(t)} \begin{cases} \frac{1}{t^{2}\log^{b}(t)}, \quad r \leq \frac{t}{2}\\
\frac{1}{t^{2}\log^{b}(t)} + \frac{\log(r)}{(t-r)^{2}}, \quad \frac{t}{2} \leq r \leq t-t^{1/4} \text{, or }r \geq t+t^{1/4}\\
\frac{1}{\sqrt{r}}, \quad t-t^{1/4} \leq r \leq t+t^{1/4}\end{cases}\end{split}\end{equation} 

Next, we will use a different representation formula for $v_{4}$ to estimate $v_{4}$ and $\partial_{r}v_{4}$ in the region $r \geq \frac{t}{2}$. In particular, we have
\begin{equation} v_{4}(t,r) = \int_{t}^{\infty} dx \int_{0}^{\infty} d\xi J_{1}(r\xi) \sin((t-x)\xi) \widehat{v_{4,c}}(x,\xi)\end{equation}
So, it suffices to estimate $\widehat{v_{4,c}}$. To do this, we consider seperately the regions
\begin{equation}\frac{1}{\xi} \geq t+\sqrt{t}, \quad t+\sqrt{t}\geq \frac{1}{\xi} \geq t-\sqrt{t}, \quad t-\sqrt{t} \geq \frac{1}{\xi} \geq \frac{t}{2}, \quad \frac{t}{2} \geq \frac{1}{\xi}\geq \log^{N}(t)\end{equation}
Then, for example, in the case $\frac{1}{\xi} \geq t+\sqrt{t}$, we have
\begin{equation}\begin{split} \int_{0}^{\infty} J_{1}(r \xi) r v_{4,c}(t,r) dr&=\sum_{k=1}^{5}\int_{I_{k}} J_{1}(r\xi) r v_{4,c}(t,r) dr\end{split}\end{equation}
where
\begin{equation} I_{1}=[\frac{1}{\xi},\infty), \quad I_{2}=[t+\sqrt{t},\frac{1}{\xi}], \quad I_{3}=[t-\sqrt{t},t+\sqrt{t}], \quad I_{4}=[\frac{t}{2},t-\sqrt{t}], \quad I_{5}=[0,\frac{t}{2}]\end{equation}
and we use $$|J_{1}(x)| \leq \begin{cases} Cx, \quad 0<x<1\\
\frac{C}{\sqrt{x}}, \quad x>1\end{cases}$$
and \eqref{v4cest2}. The analogous decomposition is done for all cases of $\xi$ mentioned above.\\
\\
This procedure results in
\begin{equation}\label{v4chatest} |\widehat{v_{4,c}(t,\xi)}| \leq \begin{cases} \frac{C \xi \log(t) (\log(t)+|\log(\xi)|)}{t^{2}\log^{2b}(t)} + \frac{C \xi^{2}|\log(\xi)|\cdot |\log(1-t\xi)|}{t \log^{2b}(t)}, \quad \xi \leq \frac{1}{t+\sqrt{t}}\\
\frac{C \xi}{t^{2} \log^{2b-2}(t)}, \quad \frac{1}{t+\sqrt{t}} \leq \xi \leq \frac{1}{t-\sqrt{t}}\\
\frac{C\xi\log^{2}(t)}{t^{2}\log^{2b}(t)}, \quad \frac{1}{t-\sqrt{t}} \leq \xi \leq \frac{2}{t}\\
\frac{C \xi(|\log(\xi)|+\log(\log(t)))}{t^{2}\log^{3b}(t)}+\frac{C}{\sqrt{\xi}t^{7/2} \log^{2b-2}(t)}, \quad \frac{2}{t} \leq \xi \leq \frac{1}{\log^{N}(t)}\end{cases}\end{equation}
Rather than recording pointwise estimates on $\widehat{v_{4,c}}(t,\xi)$ in the region $\frac{1}{\xi} \leq \log^{N}(t)$, we use the following argument to infer an integral estimate on $\widehat{v_{4,c}}(t,\xi)$. From \eqref{v4cest2} and \eqref{drv4cforfourier}, 
\begin{equation}\begin{split} &||\left(\partial_{r}+\frac{1}{r}\right)v_{4,c}(t,r)||_{L^{2}(r dr)} \leq ||\partial_{r}v_{4,c}(t,r)||_{L^{2}(r dr)} + ||\frac{v_{4,c}(t,r)}{r}||_{L^{2}(r dr)}\\
&\leq \frac{C}{t^{2} \log^{3b+3N}(t)}\end{split}\end{equation}

On the other hand, 
\begin{equation}\begin{split} &\int_{0}^{\infty} J_{0}(r \xi) \left(\partial_{r}v_{4,c}(t,r) + \frac{v_{4,c}(t,r)}{r}\right) r dr = -\int_{0}^{\infty} v_{4,c}(t,r) J_{0}'(r\xi) \xi r dr = \xi \int_{0}^{\infty} v_{4,c}(t,r) J_{1}(r\xi) r dr \\
&=\xi \widehat{v_{4,c}}(t,\xi)\end{split}\end{equation}
where the vanishing of the boundary terms arising from integration by parts is justified by \eqref{v4cest2}.\\
By the $L^{2}$ isometry property of the Hankel transform of order $0$, this implies
\begin{equation}\label{xidxil2est}\frac{C}{t^{2} \log^{3b+3N}(t)} \geq ||\partial_{r}v_{4,c}(t,r) + \frac{v_{4,c}(t,r)}{r}||_{L^{2}(r dr)} = ||\xi \widehat{v_{4,c}}(t,\xi)||_{L^{2}(\xi d\xi)}\end{equation}

Then, we use $$|J_{1}(x)| + |J_{1}'(x)|\leq \frac{C}{\sqrt{x}}$$
in the formulae
\begin{equation} v_{4}(t,r) = \int_{t}^{\infty} dx \int_{0}^{\infty} d\xi J_{1}(r\xi) \sin((t-x)\xi) \widehat{v_{4,c}}(x,\xi)\end{equation}
and
\begin{equation} \partial_{r}v_{4}(t,r) = \int_{t}^{\infty} dx \int_{0}^{\infty} d\xi \xi J_{1}'(r\xi) \sin((t-x)\xi) \widehat{v_{4,c}}(x,\xi)\end{equation}
(Note that the differentiation under the integral sign is justified by the pointwise estimates on $\widehat{v_{4,c}}$, as well as \eqref{xidxil2est}).\\ 
Thus, we get
\begin{equation}\begin{split} |v_{4}(t,r)| &\leq C \int_{t}^{\infty} \frac{dx}{\sqrt{r}} \int_{0}^{\frac{1}{\log^{N}(x)}} d\xi \frac{|\widehat{v_{4,c}}(x,\xi)|}{\sqrt{\xi}} + C \int_{t}^{\infty} \frac{dx}{\sqrt{r}} \int_{\frac{1}{\log^{N}(x)}}^{\infty} d\xi \frac{|\widehat{v_{4,c}}(x,\xi)|}{\sqrt{\xi}} \cdot \frac{\xi^{3/2}}{\xi^{3/2}}\\
&\leq C \int_{t}^{\infty} \frac{dx}{\sqrt{r}} \int_{0}^{\frac{1}{\log^{N}(x)}} d\xi \frac{|\widehat{v_{4,c}}(x,\xi)|}{\sqrt{\xi}} + C \int_{t}^{\infty} \frac{dx}{\sqrt{r}} \left(\int_{\frac{1}{\log^{N}(x)}}^{\infty} \frac{d\xi}{\xi^{4}}\right)^{1/2} ||\xi \widehat{v_{4,c}}(x,\xi)||_{L^{2}(\xi d\xi)}\end{split}\end{equation}
and
\begin{equation}\begin{split} |\partial_{r}v_{4}(t,r)| &\leq C \int_{t}^{\infty} \frac{dx}{\sqrt{r}} \int_{0}^{\frac{1}{\log^{N}(x)}} d\xi |\widehat{v_{4,c}}(x,\xi)|\sqrt{\xi} + C \int_{t}^{\infty} \frac{dx}{\sqrt{r}} \int_{\frac{1}{\log^{N}(x)}}^{\infty} d\xi |\widehat{v_{4,c}}(x,\xi)|\sqrt{\xi} \cdot \frac{\xi}{\xi}\\
&\leq C \int_{t}^{\infty} \frac{dx}{\sqrt{r}} \int_{0}^{\frac{1}{\log^{N}(x)}} d\xi |\widehat{v_{4,c}}(x,\xi)|\sqrt{\xi} + C \int_{t}^{\infty} \frac{dx}{\sqrt{r}} \left(\int_{\frac{1}{\log^{N}(x)}}^{\infty} \frac{d\xi}{\xi^{2}}\right)^{1/2} ||\xi \widehat{v_{4,c}}(x,\xi)||_{L^{2}(\xi d\xi)}\end{split}\end{equation} 
Using our pointwise estimates on $\widehat{v_{4,c}}$, as well as \eqref{xidxil2est}, we get
\begin{equation} |v_{4}(t,r)| \leq \frac{C}{\sqrt{r} t \log^{\frac{3N}{2}+3b-1}(t)}, \quad r \geq \frac{t}{2}\end{equation}
and
\begin{equation} |\partial_{r}v_{4}(t,r)| \leq \frac{C}{\sqrt{r}t\log^{3b-1+\frac{5N}{2}}(t)}, \quad r \geq \frac{t}{2}\end{equation}
In addition, we have
\begin{equation} \partial_{r}v_{4}(t,r)+\frac{v_{4}(t,r)}{r} = \int_{t}^{\infty} dx \int_{0}^{\infty} \xi d\xi J_{0}(r\xi) \sin((t-x)\xi) \widehat{v_{4,c}}(x,\xi)\end{equation}

First using Minkowski's inequality, then the $L^{2}$ isometry property of the Hankel transform of order $0$, and then that of the Hankel transform of order $1$, we get
\begin{equation}\begin{split} ||\partial_{r}v_{4}(t,r)+\frac{v_{4}(t,r)}{r}||_{L^{2}(r dr)} &\leq \int_{t}^{\infty} dx ||\int_{0}^{\infty} \xi d\xi J_{0}(r\xi) \sin((t-x)\xi) \widehat{v_{4,c}}(x,\xi)||_{L^{2}(r dr)}\\
&\leq \int_{t}^{\infty} dx ||\sin((t-x)\xi) \widehat{v_{4,c}}(x,\xi)||_{L^{2}(\xi d\xi)} \leq \int_{t}^{\infty} dx ||\widehat{v_{4,c}}(x,\xi)||_{L^{2}(\xi d\xi)}\\
&\leq \int_{t}^{\infty} dx ||v_{4,c}(x)||_{L^{2}(r dr)} \leq \int_{t}^{\infty} dx \frac{C}{\log^{2N+3b}(x) x^{2}}\\
&\leq \frac{C}{t \log^{2N+3b}(t)} \end{split}\end{equation}
where we used \eqref{v4cest2}. Note that this  is simply the energy estimate for the equation solved by $v_{4}$. Next, we note that the pointwise estimates established for $v_{4}$ imply that 
$$v_{4}(t,0)=0, \quad \lim_{r \rightarrow \infty} v_{4}(t,r) =0$$
So, 
\begin{equation}\begin{split} ||\left(\partial_{r}+\frac{1}{r}\right)v_{4}||_{L^{2}(r dr)}^{2} &= \int_{0}^{\infty} \left((\partial_{r} v_{4})^{2}+ \frac{\partial_{r}(v_{4}^{2})}{r} + \frac{v_{4}^{2}}{r^{2}}\right) r dr\\
&=||\partial_{r}v_{4}||_{L^{2}(r dr)}^{2}+||\frac{v_{4}}{r}||_{L^{2}(r dr)}^{2}\end{split}\end{equation}
\\
We now treat $\partial_{t}v_{4}$: 
\begin{equation} \partial_{t}v_{4}(t,r) =\int_{t}^{\infty} dx \int_{0}^{\infty} d\xi J_{1}(r\xi) \cos((t-x) \xi) \xi \widehat{v_{4,c}}(x,\xi)\end{equation}
where the differentiation under the integral sign is again justified by the pointwise estimates on $\widehat{v_{4,c}}$ and \eqref{xidxil2est}.\\
This gives
\begin{equation} |\partial_{t}v_{4}(t,r)| \leq \frac{C}{\sqrt{r}}\int_{t}^{\infty} dx \int_{0}^{\infty} d\xi \sqrt{\xi} |\widehat{v_{4,c}}(x,\xi)|\end{equation}
So, the same exact procedure used for $\partial_{r}v_{4}$ pointwise estimates in the region $r \geq \frac{t}{2}$ also applies to $\partial_{t}v_{4}$.\\
Finally,
\begin{equation}\begin{split} ||\partial_{t}v_{4}(t,r)||_{L^{2}(r dr)} &\leq \int_{t}^{\infty} dx ||\int_{0}^{\infty} \xi d\xi J_{1}(r\xi) \cos((t-x)\xi) \widehat{v_{4,c}}(x,\xi)||_{L^{2}(r dr)}\\
&\leq \int_{t}^{\infty} dx ||\cos((t-x)\xi) \widehat{v_{4,c}}(x,\xi)||_{L^{2}(\xi d\xi)} \leq \int_{t}^{\infty} dx ||\widehat{v_{4,c}}(x,\xi)||_{L^{2}(\xi d\xi)}\\
&\leq \int_{t}^{\infty} dx ||v_{4,c}(x)||_{L^{2}(r dr)} \leq \int_{t}^{\infty} dx \frac{C}{\log^{2N+3b}(x) x^{2}}\\
&\leq \frac{C}{t \log^{2N+3b}(t)} \end{split}\end{equation} 
This completes the proof of the lemma. 
\end{proof}
\subsubsection{Pointwise estimates on $v_{5}$}
\begin{lemma} For all $\lambda$ of the form
$$\lambda(t) = \lambda_{0}(t)+e(t), \quad e \in \overline{B_{1}(0)} \subset X$$
we have the pointwise estimates
\begin{equation}\label{v5finalest} |v_{5}(t,r)| \leq \begin{cases} \frac{C r}{t^{7/2} \log^{3b-3+\frac{5N}{2}}(t)}, \quad r \leq \frac{t}{2}\\
\frac{C \log^{4}(t)}{\sqrt{r} t^{3/2}}, \quad r > \frac{t}{2}\end{cases}\end{equation}

\begin{equation}\label{drv5finalest} |\partial_{r}v_{5}(t,r)| \leq \begin{cases} \frac{C}{t^{7/2} \log^{3b-3+\frac{5N}{2}}(t)}, \quad r \leq \frac{t}{2}\\
\frac{C \log^{3}(t)}{\sqrt{r} t^{3/2}}, \quad r > \frac{t}{2}\end{cases}\end{equation}

\begin{equation} |\partial_{t}v_{5}(t,r)| \leq \frac{C \log^{3}(t)}{\sqrt{r} t^{3/2}}, \quad r > \frac{t}{2}\end{equation}
In addition, we have the $L^{2}$ estimates
\begin{equation}\label{v5energyest} ||\partial_{t}v_{5}||_{L^{2}(r dr)} + ||\partial_{r}v_{5}||_{L^{2}(r dr)}+||\frac{v_{5}}{r}||_{L^{2}(r dr)} \leq C \frac{\log^{3}(t)}{t^{7/4}}\end{equation}

\end{lemma}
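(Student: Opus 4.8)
\noindent\emph{Proof strategy.} The plan is to mirror, essentially line by line, the proof of the preceding lemma for $v_4$, with the function $v_{4,c}$ replaced by the nonlinear source $N_2(f_{v_5})$ and with $G$ replaced by $G_5$. The first step is to record pointwise estimates on $N_2(f_{v_5})(t,r)$ and $\partial_r N_2(f_{v_5})(t,r)$ in the regions $r \le \tfrac{t}{2}$ and $r > \tfrac{t}{2}$ (and, in the latter, separately in the cone region $|t-r| \le \sqrt{t}$ versus away from it). Here $f_{v_5} = v_1+v_2+v_3+v_4$. I would combine the elementary bounds $|\cos(2f)-1| \le \min\{2,2f^2\}$, $|\sin(2f)-2f| \le \min\{4|f|,\tfrac{4}{3}|f|^3\}$ with $|\sin(2Q_{\frac{1}{\lambda(t)}}(r))| \le C\min\{1,r/\lambda(t)\}$ and $|\cos(2Q_{\frac{1}{\lambda(t)}}(r))-1| \le C\min\{1,r^2/\lambda(t)^2\}$, and then insert the already-established pointwise estimates for $v_1$ (from \eqref{v1smallrest}, \eqref{v1largerest}), $v_2$ (from \eqref{v2precisenearorigin}, \eqref{v2sqrtrest}, \eqref{v2singularconeest}, \eqref{drrv2mainest}), $v_3$ (from \eqref{v3laterest}, \eqref{v3largerest}, \eqref{drv3est}) and $v_4$ (from \eqref{v4finalest}, \eqref{drv4finalest}). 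The quadratic/cubic structure of $N_2$ is what produces the extra smallness: schematically $N_2(f_{v_5}) = O\!\big(\tfrac{1}{r\lambda(t)}f_{v_5}^2\big)+O\!\big(\tfrac{1}{r^2}f_{v_5}^3\big)$, and the slowest-decaying contribution comes from the $v_2$-terms, which is what produces the power $t^{-7/2}\log^{-(3b-3+5N/2)}(t)$ for $r\le t/2$ and the $\log^{3}(t)\,t^{-3/2}$ behavior for $r>t/2$ claimed in the lemma.

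For the region $r \le \tfrac{t}{2}$ I would use the representation formula $v_5(t,r) = \frac{-r}{2\pi}\int_t^\infty ds \int_0^{s-t}\frac{\rho\,d\rho}{\sqrt{(s-t)^2-\rho^2}}\int_0^1 \partial_2 G_5(s,r\beta,\rho)\,d\beta$ and repeat verbatim the decomposition used for $v_4$: split the $\rho,\theta$ integration hidden inside $G_5$ according to whether the argument $|\beta x + y|$ of $N_2(f_{v_5})$ lies in $B_{s/2}(-\beta x)$ or its complement (noting that $r\le t/2$ and $|y|\le s-t$ force $s-|\beta x+y|\ge t/2$, so only cone estimates on $v_2,v_4$ can be needed and only in the first piece), and then split $\tfrac{1}{\sqrt{(s-t)^2-\rho^2}} = \tfrac{1}{s-t} + \big(\tfrac{1}{\sqrt{(s-t)^2-\rho^2}}-\tfrac{1}{s-t}\big)$, using $\int_{\rho+t}^\infty \big(\tfrac{1}{\sqrt{(s-t)^2-\rho^2}}-\tfrac{1}{s-t}\big)\,ds = \log 2$ for the difference term. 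The same computation, with $\partial_2 G_5$ replaced by $\partial_2^2 G_5$ (equivalently, differentiating under the integral sign), yields the bound on $\partial_r v_5$ for $r \le t/2$.

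For the region $r > \tfrac{t}{2}$ I would instead use the Duhamel/Hankel representation $v_5(t,r) = \int_t^\infty dx \int_0^\infty J_1(r\xi)\sin((t-x)\xi)\,\widehat{N_2(f_{v_5})}(x,\xi)\,d\xi$, so that it suffices to control $\widehat{N_2(f_{v_5})}(x,\xi)$. For $\xi$ of size $\lesssim 1$, I would split the $r$-integral defining the order-$1$ Hankel transform into the regions $r\le x/2$, $x/2\le r\le x-\sqrt{x}$, $|r-x|\le\sqrt{x}$, $r\ge x+\sqrt{x}$, using $|J_1(z)|\le C\min\{z,z^{-1/2}\}$ and the Step-1 pointwise bounds to get pointwise estimates on $\widehat{N_2(f_{v_5})}(x,\xi)$. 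For the high-frequency tail I would, exactly as in the $v_4$ proof, bound $\|\xi\,\widehat{N_2(f_{v_5})}(x,\cdot)\|_{L^2(\xi\,d\xi)} = \|(\partial_r+\tfrac1r)N_2(f_{v_5})(x,\cdot)\|_{L^2(r\,dr)} \le \|\partial_r N_2(f_{v_5})(x)\|_{L^2(r\,dr)}+\|N_2(f_{v_5})(x)/r\|_{L^2(r\,dr)}$ via the $L^2$-isometry of the order-$0$ Hankel transform, then apply Cauchy--Schwarz on $\{\xi\ge\log^{-N}(x)\}$. Replacing $J_1$ by $J_1'$ (with $|J_1'(z)|\le C\min\{z,z^{-1/2}\}$), respectively by $\xi\cos((t-x)\xi)$, gives the same bounds for $\partial_r v_5$ and $\partial_t v_5$ in $r>t/2$, with the stated $\log^{3}(t)$ loss.

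Finally, the $L^2$ estimates \eqref{v5energyest} follow from the energy inequality for \eqref{rsquaredpoteqn}, which gives $\|\partial_t v_5(t)\|_{L^2(r\,dr)} + \|(\partial_r+\tfrac1r)v_5(t)\|_{L^2(r\,dr)} \le \int_t^\infty \|N_2(f_{v_5})(x)\|_{L^2(r\,dr)}\,dx$; the Step-1 pointwise bounds give $\|N_2(f_{v_5})(x)\|_{L^2(r\,dr)}$ with enough decay in $x$ that the $x$-integral produces $C\log^{3}(t)\,t^{-7/4}$, and then, since the pointwise estimates already proved show $v_5(t,0)=0$ and $v_5(t,r)\to 0$ as $r\to\infty$, an integration by parts gives $\|(\partial_r+\tfrac1r)v_5\|_{L^2(r\,dr)}^2 = \|\partial_r v_5\|_{L^2(r\,dr)}^2 + \|v_5/r\|_{L^2(r\,dr)}^2$. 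I expect the main obstacle to be the bookkeeping in the cone region underlying Steps 1--3: there $v_2$ and $v_4$ decay only like $|t-r|^{-1}$ and their $r$-derivatives like $|t-r|^{-2}$, so $N_2(f_{v_5})$ is of size $\sim|t-r|^{-2}$, which is borderline for the spherical-means and Hankel propagators; one must carry the exact powers of $|t-r|$, $t$ and $\log(t)$ through the bilinear combination to confirm the advertised rates and to verify that the very small contributions involving $v_4$ do not in fact dominate.
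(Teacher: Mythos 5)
Your proposal follows essentially the same route as the paper: pointwise bounds on $N_2(f_{v_5})$ and $\partial_r N_2(f_{v_5})$ assembled from the already-proved estimates on $v_1,\dots,v_4$, the spherical-means representation with the $B_{s/2}(-\beta x)$ splitting and the $\frac{1}{\sqrt{(s-t)^2-\rho^2}}=\frac{1}{s-t}+\big(\cdots\big)$ decomposition for $r\le t/2$, the Hankel representation with a case analysis in $\xi$ plus the $L^2$-isometry/Cauchy--Schwarz treatment of the high-frequency tail for $r>t/2$ (the paper splits at $\xi=1$ rather than $\log^{-N}(x)$, but either works), and the energy inequality together with the integration-by-parts identity for the $L^2$ bounds. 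The borderline cone contribution you flag is resolved in the paper exactly along the lines you anticipate: the term $\log^3(|\beta x+y|)\,|\beta x+y|^{-2}(s-|\beta x+y|)^{-4}$ is isolated, $s-|\beta x+y|\ge t/2$ is used on all but one factor, and the remaining singular factor is integrated against the wave kernel (using the $\sqrt{s-t-\rho}$ weight) to produce $C r\log^{3}(t)t^{-4}$, consistent with the claimed rates.
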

\begin{proof}
We start with the region $\frac{t}{2}>r >0$. In order to ease notation, let $x \in \mathbb{R}^{2}$ be defined by $x = r \textbf{e}_{1}$. Then,
\begin{equation}\begin{split} v_{5}(t,r) = \frac{-r}{2 \pi} \int_{0}^{1} d\beta \int_{t}^{\infty} ds \int_{B_{s-t}(0)} \frac{dA(y)}{\sqrt{(s-t)^{2}-|y|^{2}}} &\left(\frac{\partial_{2}N_{2}(f_{v_{5}})(s,|\beta x+y|)\left((\beta x+y)\cdot \hat{x}\right)^{2}}{|\beta x+y|^{2}}\right.\\
&-\frac{N_{2}(f_{v_{5}})(s,|\beta x+y|)\left((\beta x+y)\cdot \hat{x}\right)^{2}}{|\beta x+y|^{3}}\\
&\left. + \frac{N_{2}(f_{v_{5}})(s,|\beta x+y|)}{|\beta x+y|}\right)\end{split}\end{equation}
We then decompose $$v_{5}(t,r)=v_{5,1}(t,r)+v_{5,2}(t,r)$$
where
\begin{equation}\begin{split} v_{5,1}(t,r) = \frac{-r}{2 \pi} \int_{0}^{1} d\beta \int_{t}^{\infty} ds \int_{B_{s-t}(0)\cap B_{\frac{s}{2}}(-\beta x)} \frac{dA(y)}{\sqrt{(s-t)^{2}-|y|^{2}}} &\left(\frac{\partial_{2}N_{2}(f_{v_{5}})(s,|\beta x+y|)\left((\beta x+y)\cdot \hat{x}\right)^{2}}{|\beta x+y|^{2}}\right.\\
&-\frac{N_{2}(f_{v_{5}})(s,|\beta x+y|)\left((\beta x+y)\cdot \hat{x}\right)^{2}}{|\beta x+y|^{3}}\\
&\left. + \frac{N_{2}(f_{v_{5}})(s,|\beta x+y|)}{|\beta x+y|}\right)\end{split}\end{equation}
and
\begin{equation} v_{5,2}(t,r) = v_{5}(t,r)-v_{5,1}(t,r)\end{equation}
Finally, we use the pointwise estimates on $v_{1},v_{2},v_{3},v_{4}$ to record pointwise estimates on $N_{2}(f_{v_{5}})$:
We start with
\begin{equation} |N_{2}(f)(t,r)| \leq \frac{C r(f(t,r))^{2}}{\lambda(t) (1+\frac{r^{2}}{\lambda(t)^{2}}) r^{2}} + C \frac{|f(t,r)|^{3}}{r^{2}}\end{equation}
The first set of estimates we will require on $N_{2}(f_{v_{5}})$ concern the regions $r \leq \frac {t}{2}$, and $t > r > \frac{t}{2}$. In the region $r \leq \frac{t}{2}$, we get
\begin{equation} |N_{2}(f_{v_{5}})(t,r)| \leq \frac{C r}{(\lambda(t)^{2}+r^{2})t^{4}\log^{3b}(t)}+\frac{C r}{t^{6}\log^{3b}(t)}, \quad r \leq \frac{t}{2}\end{equation}

In the region $t > r > \frac{t}{2}$, we use \eqref{v2singularconeest} to estimate $v_{2}$ in the region $t > r \geq \frac{t}{2}$, exactly as was done while studying $v_{4}$, and we get
\begin{equation} |N_{2}(f_{v_{5}})(t,r)| \leq \frac{C \log^{3}(r)}{r^{2}|t-r|^{3}} + \frac{C \lambda(t)}{r^{7/2}t^{5/2}\log^{3N+6b-2}(t)}, \quad t> r \geq \frac{t}{2}\end{equation}

Next, we consider $\partial_{r}N_{2}(f_{v_{5}})$:
\begin{equation} |\partial_{r}(N_{2}(f))(t,r)| \leq \frac{C (f(t,r))^{2}}{r^{2}\lambda(t)\left(1+\frac{r^{2}}{\lambda(t)^{2}}\right)} +  \frac{C|f(t,r) \partial_{r}f(t,r)|}{\lambda(t) r \left(1+\frac{r^{2}}{\lambda(t)^{2}}\right)} +  \frac{C|f(t,r)|^{3}}{r^{3}} +  \frac{C|f(t,r)^{2} \partial_{r}f(t,r)|}{r^{2}}\end{equation}
We again treat the regions $t > r > \frac{t}{2}$ and $r \leq \frac{t}{2}$. We get
\begin{equation}|\partial_{r}N_{2}(f_{v_{5}})(t,r)| \leq  \frac{C}{t^{4}\log^{3b}(t) (\lambda(t)^{2}+r^{2})}+\frac{C}{t^{6}\log^{3b}(t)}, \quad r \leq \frac{t}{2}\end{equation}

\begin{equation}\begin{split}|\partial_{r}N_{2}(f_{v_{5}})(t,r)| &\leq \frac{C}{r^{3} t^{3} \log^{4N+7b-2}(t)}+\frac{C \lambda(t) \log(r)}{r^{3} t^{3/2} |t-r| \log^{3b-1+\frac{5N}{2}}(t)} + \frac{C \log^{2}(r)}{r^{2} t^{3/2}(t-r)^{2} \log^{3b-1+\frac{5N}{2}}(t)}\\
&+ \frac{C \log^{3}(r)}{r^{2}(t-r)^{4}}, \quad t> r \geq \frac{t}{2}\end{split}\end{equation}
where we used
$$\frac{1}{r} \leq \frac{1}{|t-r|}, \quad r\geq \frac{t}{2}$$
So,
\begin{equation} \label{v51near0} |v_{5,1}(t,r)| \leq C r \int_{0}^{1} d\beta \int_{t}^{\infty} ds \int_{B_{s-t}(0) \cap B_{\frac{s}{2}}(-\beta x)} \frac{dA(y)}{\sqrt{(s-t)^{2}-|y|^{2}}} \left(\frac{1}{s^{4}\log^{b}(s)\left(1+\frac{|\beta x+y|^{2}}{\lambda(s)^{2}}\right)} +\frac{1}{s^{6}\log^{3b}(s)}\right)\end{equation}

We treat several terms comprising \eqref{v51near0} sperately. First, we have
\begin{equation}\begin{split} & r \int_{0}^{1} d\beta \int_{t}^{t+\frac{1}{2}} ds \int_{B_{s-t}(0) \cap B_{\frac{s}{2}}(-\beta x)} \frac{dA(y)}{(s-t)} \frac{1}{s^{4}\log^{b}(s)\left(1+\frac{|\beta x+y|^{2}}{\lambda(s)^{2}}\right)}\\
&\leq C r \int_{0}^{1} d\beta \int_{t}^{t+\frac{1}{2}} ds \int_{0}^{s-t} \frac{\rho d\rho}{(s-t)} \int_{0}^{2\pi} \frac{d\theta}{s^{4} \log^{b}(s)}\\
&\leq \frac{C r}{t^{4} \log^{b}(t)}\end{split}\end{equation} 

Next,
\begin{equation}\begin{split} &r \int_{0}^{1} d\beta \int_{t+\frac{1}{2}}^{\infty} ds \int_{B_{s-t}(0) \cap B_{\frac{s}{2}}(-\beta x)} \frac{dA(y)}{(s-t)} \frac{1}{s^{4}\log^{b}(s)\left(1+\frac{|\beta x+y|^{2}}{\lambda(s)^{2}}\right)}\\
&\leq C r \int_{0}^{1}d\beta \int_{t+\frac{1}{2}}^{\infty} \frac{ds}{(s-t)} \int_{0}^{s-t} \rho d\rho \int_{0}^{2\pi} \frac{d\theta}{s^{2}(\rho+t)^{2} \log^{b}(s)(1+\beta^{2}r^{2}+\rho^{2}+2\beta r \rho\cos(\theta))}\\
&\leq C r \int_{0}^{1}d\beta \int_{t+\frac{1}{2}}^{\infty} ds \frac{(\log(2+r^{2}\beta^{2})+\log(2+t^{2}))}{(s-t) s^{2} \log^{b}(s) t^{2}}\\
&\leq \frac{C r}{t^{4} \log^{b-2}(t)}, \quad r \leq \frac{t}{2}\end{split}\end{equation}
where we used the same procedure that we used in \eqref{term1b}.\\
\\
The third term to consider is
\begin{equation}\begin{split} &r \int_{0}^{1} d\beta \int_{t}^{\infty} ds \int_{B_{s-t}(0) \cap B_{\frac{s}{2}}(-\beta x)} dA(y) \left(\frac{1}{\sqrt{(s-t)^{2}-|y|^{2}}}-\frac{1}{(s-t)}\right) \left(\frac{1}{s^{4}\log^{b}(s)\left(1+\frac{|\beta x+y|^{2}}{\lambda(s)^{2}}\right)}\right)\\
&\leq C r \int_{0}^{1}d\beta \int_{0}^{\infty} \frac{\rho d\rho}{(\rho+t)^{2}} \int_{0}^{2\pi} \frac{d\theta}{(1+\beta^{2}r^{2}+\rho^{2}+2 \beta r \rho \cos(\theta))} \int_{\rho+t}^{\infty} ds \left(\frac{1}{\sqrt{(s-t)^{2}-\rho^{2}}}-\frac{1}{(s-t)}\right) \frac{1}{s^{2} \log^{b}(s)}\\
&\leq C r \int_{0}^{1} d\beta \left(\frac{\log(2+r^{2}\beta^{2})+\log(2+t^{2})}{t^{2}}\right) \frac{1}{t^{2}\log^{b}(t)}\\
&\leq \frac{C r}{t^{4} \log^{b-1}(t)}, \quad r \leq \frac{t}{2}\end{split}\end{equation}

Finally, the last term to consider is
\begin{equation}\begin{split} &r \int_{0}^{1} d\beta \int_{t}^{\infty} ds \int_{B_{s-t}(0) \cap B_{\frac{s}{2}}(-\beta x)} \frac{dA(y)}{\sqrt{(s-t)^{2}-|y|^{2}}} \frac{1}{s^{6}\log^{3b}(s)}\\
&\leq C r \int_{0}^{1} d\beta \int_{t}^{\infty} ds \frac{1}{s^{6}\log^{3b}(s)} \int_{0}^{s-t} \rho d\rho \int_{0}^{2 \pi} \frac{d\theta}{\sqrt{(s-t)^{2}-\rho^{2}}}\\
&\leq C r \int_{t}^{\infty} ds  \frac{(s-t)}{s^{6}\log^{3b}(s)}\\
&\leq \frac{C r}{t^{4} \log^{3b}(t)}\end{split}\end{equation}

Combining the above estimates, we conclude:
\begin{equation}\begin{split} |v_{5,1}(t,r)| &\leq C \frac{r}{t^{4}\log^{b-2}(t)}, \quad r \leq \frac{t}{2} \end{split} \end{equation}
Next, we treat $v_{5,2}$.
\begin{equation}\begin{split} v_{5,2}(t,r) = \frac{-r}{2\pi} \int_{0}^{1}d\beta \int_{t}^{\infty} ds \int_{B_{s-t}(0) \cap (B_{\frac{s}{2}}(-\beta x))^{c}} \frac{dA(y)}{\sqrt{(s-t)^{2}-|y|^{2}}} &\left(\frac{\partial_{2}N_{2}(f)(s,|\beta x+y|) \left((\beta x+y)\cdot \hat{x}\right)^{2}}{|\beta x+y|^{2}}\right. \\
&- \frac{N_{2}(f)(s,|\beta x+y|)\left((\beta x+y)\cdot \hat{x}\right)^{2}}{|\beta x+y|^{3}} \\
&\left.+ \frac{N_{2}(f)(s,|\beta x+y|)}{|\beta x+y|}\right)\end{split}\end{equation}
So,
\begin{equation} \begin{split} &|v_{5,2}(t,r)| \\
&\leq C r \int_{0}^{1} d\beta \int_{t}^{\infty} ds \int_{B_{s-t}(0) \cap (B_{\frac{s}{2}}(-\beta x))^{c}} \frac{dA(y)}{\sqrt{(s-t)^{2}-|y|^{2}}} \left(|\partial_{2}N_{2}(f)(s,|\beta x+y|)|+\frac{|N_{2}(f)(s,|\beta x+y|)|}{|\beta x+y|}\right)\\
&\leq C r \int_{0}^{1}d\beta \int_{t}^{\infty} ds \int_{B_{s-t}(0)\cap(B_{\frac{s}{2}}(-\beta x))^{c}}\frac{dA(y)}{\sqrt{(s-t)^{2}-|y|^{2}}}\text{integrand}_{v_{5,2}}(s,|\beta x+y|)\end{split}\end{equation}
where
\begin{equation}\begin{split} \text{integrand}_{v_{5,2}}(s,|\beta x+y|)&= \frac{1}{|\beta x+y|^{3} s^{3} \log^{4N+7b-2}(s)} + \frac{\lambda(s) \log(|\beta x+y|)}{|\beta x +y|^{3} |s-|\beta x+y|| s^{3/2} \log^{3b-1+\frac{5N}{2}}(s)}\\
&+\frac{\log^{2}(|\beta x+y|)}{|\beta x+y|^{2} s^{3/2} (s-|\beta x+y|)^{2} \log^{3b-1+\frac{5N}{2}}(s)} + \frac{\log^{3}(|\beta x +y|)}{|\beta x+y|^{2}(s-|\beta x+y|)^{4}}\end{split}\end{equation}

Exactly as in the case of $v_{4}$, we note that, for $z \in B_{s-t}(\beta x) \cap (B_{\frac{s}{2}}(0))^{c}$, we have
\begin{equation} |z| =|z-\beta x+\beta x| \leq |z-\beta x| + r < s-t+r \leq s-\frac{t}{2}\end{equation}
So,
\begin{equation} s-|z| \geq \frac{t}{2}\end{equation}
We use this estimate for every term in $\text{integrand}_{v_{5,2}}$, except for the term $\frac{\log^{3}(|\beta x+y|)}{|\beta x+y|^{2}(s-|\beta x+y|)^{4}} \leq \frac{C \log^{3}(s)}{s^{2} t^{3}(s-|\beta x+y|)}$.
Then, 
\begin{equation}\label{v52intest} \begin{split} &|v_{5,2}(t,r)| \\
&\leq C r \int_{0}^{1}d\beta \int_{t}^{\infty} ds \int_{0}^{s-t} \frac{\rho d\rho}{\sqrt{(s-t)^{2}-\rho^{2}}} \int_{0}^{2\pi} d\theta \left(\frac{ \log^{2}(s)}{s^{7/2} t^{2} \log^{3b-1+\frac{5N}{2}}(s)}+\frac{\log^{3}(s)}{s^{3}t^{3}}\right)\\
&+C r \int_{0}^{1}d\beta \int_{t}^{\infty} ds \int_{0}^{s-t} \frac{\rho d\rho}{\sqrt{(s-t)^{2}-\rho^{2}}} \int_{0}^{2\pi}d\theta \frac{\log^{3}(s)}{s^{2} t^{3}(s-\sqrt{\beta^{2}r^{2}+\rho^{2}+2 \beta r \rho \cos(\theta)})}\\
&\leq \frac{C r}{t^{7/2} \log^{3b-3+\frac{5N}{2}}(t)}\\
&+C r \int_{0}^{1}d\beta \int_{t}^{\infty} ds \int_{0}^{s-t} \frac{\rho d\rho}{\sqrt{(s-t)^{2}-\rho^{2}}} \int_{0}^{2\pi}d\theta \frac{\log^{3}(s)}{s^{2}t^{3}(s-\sqrt{\beta^{2}r^{2}+\rho^{2}+2 \beta r \rho \cos(\theta)})}\end{split}\end{equation}
We need only continue to estimate the last line of \eqref{v52intest}.
\begin{equation}\label{v5specialterm} \begin{split} & r \int_{0}^{1} d\beta \int_{t}^{\infty} ds \int_{0}^{s-t} \frac{\rho d\rho}{ \sqrt{(s-t)^{2}-\rho^{2}}} \int_{0}^{2\pi} d\theta \frac{\log^{3}(s)}{s^{2} t^{3}(s-\sqrt{\beta^{2}r^{2}+\rho^{2}+2 \beta r \rho \cos(\theta)})}\\
&\leq C r \int_{0}^{1} d\beta \int_{0}^{\infty} \rho d\rho \int_{0}^{2 \pi}\frac{d\theta}{t^{3}} \int_{t+\rho}^{\infty} \frac{ds}{\sqrt{(s-t)^{2}-\rho^{2}}} \frac{\log^{3}(s)}{s^{2}} \frac{1}{(s-\sqrt{\rho^{2}+2 \rho \beta r \cos(\theta) + \beta^{2}r^{2}})}\\
&\leq C r \int_{0}^{1} d\beta \int_{0}^{\infty} \rho d\rho \int_{0}^{2\pi} \frac{d\theta}{t^{3}} \int_{t+\rho}^{\infty} \frac{ds}{\sqrt{s-t+\rho}} \frac{1}{\sqrt{s-t-\rho}} \frac{\log^{3}(s)}{s^{2}} \frac{1}{(s-\sqrt{\rho^{2}+2 \rho \beta r \cos(\theta) + \beta^{2}r^{2}})}\\
&\leq C r \int_{0}^{1} d\beta \int_{0}^{\infty} \rho d\rho \int_{0}^{2 \pi} \frac{d\theta}{t^{3}} \frac{1}{\sqrt{\rho}} \frac{\log^{3}(t+\rho)}{(t+\rho)^{2}} \int_{t+\rho}^{\infty} \frac{ds}{\sqrt{s-t-\rho}} \frac{1}{(s-\sqrt{\rho^{2}+2 \rho \beta r \cos(\theta) + \beta^{2}r^{2}})}\\
&\leq C r \int_{0}^{1} d\beta \int_{0}^{\infty} \frac{\sqrt{\rho} \log^{3}(t+\rho)}{(t+\rho)^{2}} d\rho \int_{0}^{2 \pi} \frac{d\theta}{t^{3}} \frac{1}{\sqrt{t}}\\
&\leq C \frac{r \log^{3}(t)}{t^{4}}\end{split}\end{equation}
So, we get
\begin{equation} |v_{5,2}(t,r)| \leq \frac{C r}{t^{7/2} \log^{3b-3+\frac{5N}{2}}(t)}, \quad r \leq \frac{t}{2}\end{equation}

In total, we have
\begin{equation}\begin{split} |v_{5}(t,r)| &\leq \frac{C r}{t^{7/2}\log^{3b-3+\frac{5N}{2}}(t)}, \quad r \leq \frac{t}{2}\end{split}\end{equation}\\
\\
Exactly as for $\partial_{r}v_{4}$, we have
\begin{equation}\label{drv5formula}\begin{split} |\partial_{r}v_{5}(t,r)| \leq C \int_{t}^{\infty} ds \int_{0}^{s-t} &\frac{\rho d\rho}{\sqrt{(s-t)^{2}-\rho^{2}}} \int_{0}^{2\pi} d\theta \\
&\left(|\partial_{2}N_{2}(f_{v_{5}})(s,\sqrt{r^{2}+\rho^{2}+2 r \rho \cos(\theta)})| + \frac{|N_{2}(f_{v_{5}})(s,\sqrt{r^{2}+\rho^{2}+2 r \rho \cos(\theta)})|}{\sqrt{r^{2}+\rho^{2}+2 r \rho \cos(\theta)}}\right)\end{split}\end{equation}
As was also the case for $v_{4}$, the integrals appearing in \eqref{drv5formula} were already estimated above. So, we get
\begin{equation}\label{drv5near0} |\partial_{r}v_{5}(t,r)| \leq \frac{C}{t^{7/2}\log^{3b-3+\frac{5N}{2}}(t)}, \quad r \leq \frac{t}{2}\end{equation}
We will now prove estimates on $v_{5}(t,r)$ and $\partial_{r}v_{5}$ in the region $r \geq \frac{t}{2}$. For the next steps, we will use slightly different combinations of the estimates \eqref{v2singularconeest} and \eqref{v2sqrtrest} for $v_{2}$ in various subsets of the region $r \geq \frac{t}{2}$, in order to estimate $N_{2}(f_{v_{5}})$, resulting in
\begin{equation}\label{n2estpart2} |N_{2}(f_{v_{5}})(t,r)| \leq \begin{cases} \frac{C r}{(\lambda(t)^{2}+r^{2})t^{4}\log^{3b}(t)}+\frac{C r}{t^{6}\log^{3b}(t)}, \quad r \leq \frac{t}{2}\\
\frac{C \log^{3}(r)}{r^{2}|t-r|^{3}} + \frac{C}{r^{7/2}t^{5/2}\log^{3N+7b-2}(t)}, \quad \frac{t}{2} \leq r \leq t-\sqrt{t} \text{ or } r\geq t+\sqrt{t}\\
\frac{C}{r^{7/2}}, \quad t-\sqrt{t} \leq r \leq t+\sqrt{t}\end{cases}\end{equation}
We will also need estimates on $\partial_{r}N_{2}(f_{v_{5}})$. To obtain these, we use  \eqref{v2precisenearorigin} in the region $r \leq \frac{t}{2}$. In the region $r > \frac{t}{2}$, we use \eqref{v2sqrtrest} in the region $t-\sqrt{t} \leq r \leq t+\sqrt{t}$, and \eqref{v2singularconeest} in the regions $\frac{t}{2} < r < t-\sqrt{t}$ and $r>t+\sqrt{t}$ for $v_{2}$. For $\partial_{r}v_{2}$, we use \eqref{v2sqrtrest} in the region $t-t^{1/4} \leq r \leq t+t^{1/4}$ and \eqref{v2singularconeest} in the regions $\frac{t}{2} \leq r \leq t-t^{1/4}$ and $t+t^{1/4} \leq r $. Then, we get
\begin{equation} |\partial_{r}N_{2}(f_{v_{5}})(t,r)| \leq  \frac{C}{t^{4}\log^{3b}(t) (\lambda(t)^{2}+r^{2})}+\frac{C}{t^{6}\log^{3b}(t)}, \quad r \leq \frac{t}{2}\end{equation}
\begin{equation}\begin{split}|\partial_{r}N_{2}(f_{v_{5}})(t,r)| &\leq \frac{C}{r^{3} t^{3} \log^{4N+7b-2}(t)} + \frac{C \log(r)}{r^{3} |t-r| t^{3/2} \log^{4b-1+\frac{5N}{2}}(t)} + \frac{C \log^{2}(r)}{r^{2} t^{3/2} (t-r)^{2} \log^{3b-1+\frac{5N}{2}}(t)}\\
&+\frac{C \log^{3}(r)}{r^{2}(t-r)^{4}},\quad t-\sqrt{t} > r > \frac{t}{2}\text{  or  } r > t+\sqrt{t}\end{split}\end{equation}
\begin{equation}\begin{split}|\partial_{r}N_{2}(f_{v_{5}})(t,r)| &\leq \begin{cases} \frac{C \log(r)}{r^{3}(t-r)^{2}}, \quad t-\sqrt{t} \leq r \leq t-t^{1/4} \text{, or } t+t^{1/4} \leq r \leq t+\sqrt{t}\\
\frac{C}{r^{7/2}}, \quad t-t^{1/4} \leq r \leq t+t^{1/4}\end{cases}\end{split}\end{equation}
Then, we have
\begin{equation} v_{5}(t,r) = \int_{t}^{\infty} ds \int_{0}^{\infty} d\xi J_{1}(r\xi) \sin((t-s)\xi) \widehat{N_{2}(f_{v_{5}})}(s,\xi)\end{equation}
and we will prove estimates on $\widehat{N_{2}(f_{v_{5}})}(t,\xi)$, for $\frac{1}{\xi} \geq 1$. We consider seperately the cases
$$\frac{1}{\xi} \geq t+\sqrt{t}, \quad t-\sqrt{t} \leq \frac{1}{\xi} \leq t+\sqrt{t}, \quad \frac{t}{2} \leq \frac{1}{\xi} \leq t-\sqrt{t}, \quad 1 \leq \frac{1}{\xi} \leq \frac{t}{2}$$
and proceed in exactly the same manner as was done for $\widehat{v_{4,c}}$.\\
\\
This results in
\begin{equation}\begin{split} |\widehat{N_{2}(f_{v_{5}})}(t,\xi)| &\leq \frac{C \xi}{t} \left(\log^{3}(t)+|\log(\xi)|^{3}\right), \quad \frac{1}{\xi} \geq t+\sqrt{t}\end{split}\end{equation}
\begin{equation}\begin{split} |\widehat{N_{2}(f_{v_{5}})}(t,\xi)| &\leq \frac{C \log^{3}(t)}{t^{2}}, \quad t-\sqrt{t} \leq \frac{1}{\xi} \leq t+\sqrt{t}\end{split}\end{equation}
\begin{equation}\begin{split} |\widehat{N_{2}(f_{v_{5}})}(t,\xi)| &\leq \frac{C\log^{3}(t)}{t^{2}}, \quad \frac{t}{2} \leq \frac{1}{\xi} \leq t-\sqrt{t}\end{split}\end{equation}
\begin{equation}\begin{split} |\widehat{N_{2}(f_{v_{5}})}(t,\xi)| &\leq  \frac{C}{\xi t^{4} \log^{3b}(t)} + \frac{C}{\xi^{3} t^{6} \log^{3b}(t)} + \frac{C \log^{3}(t)}{\sqrt{\xi} t^{5/2}} , \quad 1 \leq \frac{1}{\xi} \leq \frac{t}{2}\end{split}\end{equation}

For the region $\frac{1}{\xi} <1$, we again use the following argument:\\
From \eqref{n2estpart2} and the $\partial_{r}N_{2}(f_{v_{5}})$ estimates which follow it, 
\begin{equation}\begin{split} &||\left(\partial_{r}+\frac{1}{r}\right)N_{2}(f_{v_{5}})(t,r)||_{L^{2}(r dr)} \leq ||\partial_{r}N_{2}(f_{v_{5}})(t,r)||_{L^{2}(r dr)} + ||\frac{N_{2}(f_{v_{5}})(t,r)}{r}||_{L^{2}(r dr)}\\
&\leq \frac{C \log(t)}{t^{23/8}}\end{split}\end{equation}
Then, the same procedure used for $v_{4,c}$ gives

\begin{equation}\label{xidxil2estn2}\frac{C \log(t)}{t^{23/8}} \geq ||\partial_{r}N_{2}(f_{v_{5}})(t,r) + \frac{N_{2}(f_{v_{5}})(t,r)}{r}||_{L^{2}(r dr)} = ||\xi \widehat{N_{2}(f_{v_{5}})}(t,\xi)||_{L^{2}(\xi d\xi)}\end{equation}

Now, we return to \begin{equation} v_{5}(t,r) = \int_{t}^{\infty} ds \int_{0}^{\infty} d\xi J_{1}(r\xi) \sin((t-s)\xi) \widehat{N_{2}(f_{v_{5}})}(s,\xi)\end{equation}
use $$|J_{1}(x)| \leq \frac{C}{\sqrt{x}}$$ 
and the same procedure used for $v_{4}$:
\begin{equation}\begin{split} &|v_{5}(t,r)| \leq \frac{C}{\sqrt{r}} \int_{t}^{\infty} ds \int_{0}^{1} \frac{d\xi}{\sqrt{\xi}} |\widehat{N_{2}(f_{v_{5}})}(s,\xi)| + \frac{C}{\sqrt{r}} \int_{t}^{\infty} ds \left(\int_{1}^{\infty} \frac{d\xi}{\xi^{4}}\right)^{1/2} ||\xi \widehat{N_{2}(f_{v_{5}})}(s,\xi)||_{L^{2}(\xi d\xi)}\\
&\leq \frac{C}{\sqrt{r}} \frac{\log^{4}(t)}{t^{3/2}}\end{split}\end{equation}

Next, the pointwise estimates on $\widehat{N_{2}(f_{v_{5}})}$, as well as \eqref{xidxil2estn2} justify the following differentiation under the integral sign:
\begin{equation} \partial_{r}v_{5}(t,r) = \int_{t}^{\infty} ds \int_{0}^{\infty} d\xi \xi J_{1}'(r\xi) \sin((t-s)\xi) \widehat{N_{2}(f_{v_{5}})}(s,\xi)\end{equation}
and we also have
$$|J_{1}'(x)| \leq \frac{C}{\sqrt{x}}$$
So,
\begin{equation}\begin{split} &|\partial_{r}v_{5}(t,r)| \leq \frac{C}{\sqrt{r}}\int_{t}^{\infty} ds \int_{0}^{1} d\xi \sqrt{\xi} |\widehat{N_{2}(f_{v_{5}})}(s,\xi)| + \frac{C}{\sqrt{r}}\int_{t}^{\infty} ds ||\xi \widehat{N_{2}(f_{v_{5}})}(s)||_{L^{2}(\xi d\xi)}\\
&\leq \frac{C \log^{3}(t)}{\sqrt{r}t^{3/2}}\end{split}\end{equation}

For $\partial_{t}v_{5}$, we can similarly differentiate under the integral sign to get
\begin{equation} \partial_{t}v_{5}(t,r) = \int_{t}^{\infty} dx \int_{0}^{\infty} d\xi J_{1}(r \xi) \cos((t-x)\xi) \xi \widehat{N_{2}(f_{v_{5}})}(x,\xi)\end{equation}
so, the same argument used for $\partial_{r}v_{5}$ also applies to $\partial_{t}v_{5}$.

Finally, the same procedure used for the energy estimates for $v_{4}$ also applies here, to give
\begin{equation}\begin{split} ||\partial_{t}v_{5}||_{L^{2}(r dr)} + ||\left(\partial_{r}+\frac{1}{r}\right)v_{5}||_{L^{2}(r dr)} &\leq \int_{t}^{\infty} ||N_{2}(f_{v_{5}})(x,r)||_{L^{2}(r dr)} dx \\
&\leq C \frac{\log^{3}(t)}{t^{7/4}}\end{split}\end{equation}
(We again have
$$||\left(\partial_{r}+\frac{1}{r}\right)v_{5}||_{L^{2}(r dr)}^{2}=||\partial_{r}v_{5}||_{L^{2}(r dr)}^{2}+||\frac{v_{5}}{r}||^{2}_{L^{2}(r dr)}$$
by the same argument used while studying $v_{4}$).
\end{proof}
\subsubsection{Solving the modulation equation}

The main result of this section is to prove Proposition \ref{lambdaexist}, which we recall is the following statement:\\
There exists $T_{3}>0$ such that, for all $T_{0} \geq T_{3}$, there exists $\lambda \in C^{2}([T_{0},\infty))$ which solves \eqref{modulationfinal}. Moreover, 
\begin{equation} \lambda(t) = \lambda_{0}(t) + e_{0}(t), \quad ||e_{0}||_{X} \leq 1\end{equation}
($||\cdot||_{X}$ was defined in \eqref{ynorm}).\\
\\
We use our pointwise estimates on $v_{k},E_{5}$ to get
\begin{equation} \begin{split} &|-\lambda(t) \langle \left(\frac{\cos(2Q_{\frac{1}{\lambda(t)}})-1}{r^{2}}\right)v_{4}\left(1-\chi_{\geq 1}(\frac{4r}{t})\right)\vert_{r=R\lambda(t)},\phi_{0}\rangle|\leq  \frac{C}{\lambda(t)} \int_{0}^{\frac{t}{2 \lambda(t)}} \frac{R^{2}}{(1+R^{2})^{3}} |v_{4}(t,R\lambda(t))| dR\\
&\leq \frac{C}{t^{2}\log^{3b+2N-1}(t)}\end{split}\end{equation}
\begin{equation} \begin{split} |-\lambda(t) \langle \left(\frac{\cos(2Q_{\frac{1}{\lambda(t)}})-1}{r^{2}}\right)v_{5}\left(1-\chi_{\geq 1}(\frac{4r}{t})\right)\vert_{r=R\lambda(t)},\phi_{0}\rangle|&\leq \frac{C}{t^{7/2} \log^{3b-3+\frac{5N}{2}}(t)}\end{split}\end{equation}
\begin{equation} \begin{split} |-\lambda(t) \langle \left(\frac{\cos(2Q_{\frac{1}{\lambda(t)}})-1}{r^{2}}\right)E_{5}\vert_{r=R\lambda(t)},\phi_{0}\rangle|&\leq \frac{C}{t^{2}\log^{b+1}(t)}\end{split}\end{equation}
\begin{equation} \begin{split} &|-\lambda(t) \langle \left(\left(\frac{\cos(2Q_{\frac{1}{\lambda(t)}})-1}{r^{2}}\right)\left(v_{1}+v_{2}+v_{3}\right)+F_{0,2}\right)\chi_{\geq 1}(\frac{2r}{\log^{N}(t)}) \vert_{r=R\lambda(t)},\phi_{0}\rangle|\\
&\leq C\lambda(t) \int_{0}^{\infty} |v_{4,c}(t,R\lambda(t))| \phi_{0}(R) R dR \leq \frac{C}{t^{2}\log^{3b+2N}(t)}\end{split}\end{equation}

Combining the above with our previous estimates, there exists a constant $D_{1}>0$ such that
$$|G(t,\lambda(t))| \leq \frac{D_{1}}{t^{2}\log^{b+1}(t)}, \quad \lambda = \lambda_{0} + e, \quad e \in \overline{B}_{1}(0) \subset X$$ 
Then, combining this with the estimate of all the terms comprising $RHS$ except for $G$, we get, for a constant $D_{2}$ independent of $e$, $T_{0}$:
\begin{equation} |RHS(e,t)| \leq \frac{D_{2}}{\log(\log(t))t^{2}\log^{b+1}(t)}, \quad e \in \overline{B}_{1}(0) \subset X\end{equation}
Let $T_{0,2}>e^{e^{900}}$ be such that the following three inequalities are true:
\begin{equation}\frac{1}{\sqrt{\log(\log(T_{0,2}))}} \leq \frac{1}{100} \cdot \frac{\alpha}{3 D_{2}}\end{equation}
\begin{equation} \frac{(b+1)}{\log(T_{0,2})} + \frac{1}{2 \log(T_{0,2}) \log(\log(T_{0,2}))} \leq \frac{1}{100}\end{equation}
\begin{equation}\begin{split}&\frac{1}{2 b \log(\log(T_{0,2}))}+\frac{b}{\log(T_{0,2})}+\frac{b}{2(b+1)\log(T_{0,2})\log(\log(T_{0,2}))} \leq \frac{1}{100}\end{split}\end{equation}
Finally, there exists $T_{0,3}>700+700b$, such that
$$\frac{\log^{700+700b}(T_{0,3})}{T_{0.3}} < \frac{1}{2}$$
From now on, we will work under the additional restriction that 
$$T_{0} \geq 2e^{e^{1000(b+1)}} +T_{0,1}+T_{0,2}+T_{0,3}$$
(Recall that \eqref{T0initialconstraint} was, up to now, the only constraint on $T_{0}$). From the discussion preceeding \eqref{lambdasolution}, and our estimates on $RHS(e,t)$ above, for $e \in \overline{B_{1}}(0) \subset X$, the map
$$x \mapsto \frac{RHS(e,x)}{4\alpha}-\int_{x}^{\infty} \frac{RHS(e,z)}{4\alpha}r(-x,-z)dz, \text{ a.e. } x \in [T_{0},\infty)$$
a priori defined only almost everywhere, admits a continuous extension to $[T_{0},\infty)$. Then, we define $T$ on $\overline{B_{1}}(0) \subset X$ by
$$T(f)(t) = \int_{t}^{\infty} dq_{1} \int_{q_{1}}^{\infty} dq_{2} S_{f}(q_{2})$$
where we define $S_{f}$ by (the above mentioned continuous extension of)
$$S_{f}(x) = \frac{RHS(f,x)}{4\alpha}-\int_{x}^{\infty} \frac{RHS(f,z)}{4\alpha}r(-x,-z)dz$$
By definition of $T$, we have 
$$T(e)''(t) = S_{e}(t) = \frac{RHS(e,t)}{4\alpha} - \int_{t}^{\infty} \frac{RHS(e,z)}{4\alpha} r(-t,-z) dz$$
Using \eqref{rintest}, we get 
\begin{equation}\begin{split} |T(e)''(t)| &\leq \frac{3 D_{2}}{\alpha \log(\log(t))t^{2}\log^{b+1}(t)}\\
&\leq \frac{1}{100 \sqrt{\log(\log(t))} t^{2}\log^{b+1}(t)}, \quad e \in \overline{B}_{1}(0) \subset X\end{split}\end{equation}
Next, we have
\begin{equation}\begin{split} &T(e)'(t) = -\int_{t}^{\infty} T(e)''(x) dx\\
&|T(e)'(t)| \leq \frac{1}{100} \int_{t}^{\infty} \frac{dx}{\sqrt{\log(\log(x))} x^{2}\log^{b+1}(x)}\end{split}\end{equation}
Then, we note that 
\begin{equation}\begin{split} \int_{t}^{\infty} \frac{dx}{\sqrt{\log(\log(x))} x^{2}\log^{b+1}(x)} &= \frac{1}{t \log^{b+1}(t) \sqrt{\log(\log(t))}} -(b+1) \int_{t}^{\infty} \frac{dx}{x^{2}\log^{b+2}(x) \sqrt{\log(\log(x))}}\\
&-\frac{1}{2} \int_{t}^{\infty} \frac{dx}{x^{2}} \frac{1}{\log^{b+2}(x) (\log(\log(x)))^{3/2}}\\
&= \frac{1}{t \log^{b+1}(t) \sqrt{\log(\log(t))}} +E_{int,1}\end{split}\end{equation}
where
\begin{equation}\begin{split} |E_{int,1}| &\leq \frac{(b+1)}{t \log^{b+2}(t) \sqrt{\log(\log(t))}} + \frac{1}{2 t \log^{b+2}(t) (\log(\log(t)))^{3/2}} \\
&\leq \frac{1}{100} \frac{1}{t \log^{b+1}(t) \sqrt{\log(\log(t))}}, \quad t \geq T_{0}\end{split}\end{equation}
which gives
\begin{equation} |T(e)'(t)| \leq \frac{1}{100} \left(1+\frac{1}{100}\right) \left(\frac{1}{t \log^{b+1}(t) \sqrt{\log(\log(t))}}\right), \quad t \geq T_{0}\end{equation}
Similarly,
\begin{equation} T(e)(t) = \int_{t}^{\infty} dx_{1}\int_{x_{1}}^{\infty} dx_{2} T(f)''(x_{2})\end{equation}
Then we use
\begin{equation} \begin{split} &\int_{t}^{\infty} dx_{1}\int_{x_{1}}^{\infty} \frac{dx_{2}}{\sqrt{\log(\log(x_{2}))} x_{2}^{2}\log^{b+1}(x_{2})}=\frac{1}{b \log^{b}(t) \sqrt{\log(\log(t))}} + E_{int,2}\end{split}\end{equation}
where
\begin{equation}\begin{split} |E_{int,2}| &\leq \frac{1}{2 b^{2} (\log(\log(t)))^{3/2} \log^{b}(t)} + \frac{1}{\sqrt{\log(\log(t))} \log^{b+1}(t)}+\frac{1}{2(b+1) (\log(\log(t)))^{3/2} \log^{b+1}(t)}\\
&\leq \frac{1}{100} \frac{1}{b \log^{b}(t) \sqrt{\log(\log(t))}}, \quad t \geq T_{0}\end{split}\end{equation}
to get
\begin{equation} |T(e)(t)| \leq \frac{1}{100} \left(1+\frac{1}{100}\right) \frac{1}{b \log^{b}(t) \sqrt{\log(\log(t))}}, \quad t \geq T_{0}\end{equation}
Then, we conclude that $T(e) \in \overline{B_{1}(0)}$ for $e \in \overline{B_{1}(0)}$.\\
\\
Now, we will show that $T$ is a contraction on the space $\overline{B_{1}(0)} \subset X$.  We start by estimating, for $e_{1},e_{2} \in \overline{B_{1}(0)} \subset X$, the expression
\begin{equation}\label{Flip}RHS(e_{1}(t),t)-RHS(e_{2}(t),t)\end{equation}
In particular, we will prove the following proposition. 
\begin{proposition}\label{rhslipprop} There exists $C_{lip}>0$ \emph{independent} of $T_{0}$, such that for $e_{1},e_{2} \in \overline{B_{1}(0)} \subset X$,
\begin{equation} |RHS(e_{1},t)-RHS(e_{2},t)| \leq \frac{C_{lip} ||e_{1}-e_{2}||_{X}}{t^{2}\log^{b+1}(t) (\log(\log(t)))^{3/2}}\end{equation}\end{proposition}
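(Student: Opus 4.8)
The strategy is to go through every term in the definition of $RHS(e,t)$ (the nine-or-so lines of \eqref{emodulation}) and estimate the difference of that term evaluated at $e_1$ versus at $e_2$, exploiting that all of these terms depend on $\lambda = \lambda_0 + e$ and are built out of the corrections $v_k$, which themselves depend on $\lambda''$ only through explicit integral formulas. The key mechanism throughout is that a difference such as $\lambda_1(t)-\lambda_2(t) = e_1(t)-e_2(t)$ is controlled by $\|e_1-e_2\|_X / (b\log^b(t)\sqrt{\log\log t})$, and correspondingly $e_1''(t)-e_2''(t)$ by $\|e_1-e_2\|_X/(t^2\log^{b+1}(t)\sqrt{\log\log t})$. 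Since each term in $RHS$ already carries a gain of at least one extra power of $\log$ and one extra factor $1/\sqrt{\log\log t}$ relative to the "bare" size $1/(t^2\log^b t)$ coming from the linear terms, differencing replaces one $\lambda$-dependence by an $e_1-e_2$ factor and yields exactly the claimed $1/(t^2\log^{b+1}(t)(\log\log t)^{3/2})$ bound. I would organize this as: (i) the explicitly-$e$-linear terms (the $e''$-integral terms in line three of \eqref{emodulation} and the $\alpha e''$ term), where the difference is literally linear in $e_1''-e_2''$ and one just reuses the Volterra-kernel $L^\infty_{\mathrm{loc}}$ bound together with \eqref{lambdarestr}; (ii) the terms involving $\log(\lambda_0(t))$ or $\lambda_0$-only quantities, which do \emph{not} depend on $e$ at all and hence contribute zero to the difference; (iii) the genuinely nonlinear-in-$e$ terms, namely the $G(t,\lambda(t))$ contribution and the mixed terms like $4\alpha e''(t)(\log\lambda(t)-\log\lambda_0(t))$ and the kernel differences $\tfrac{1}{\lambda(t)^{1-\alpha}+s-t}-\tfrac{1}{\lambda_0(t)^{1-\alpha}+s-t}$.

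For step (iii), the main work is differencing $G(t,\lambda(t))$, which decomposes into: $\lambda(t)E_{0,1}(\lambda,\lambda',\lambda'')$; the kernel-difference integrals $\int_t^\infty \lambda''(s)(K_3-K_{3,0})\,ds$, $\tfrac{1}{\lambda^2}\int_t^\infty K(s-t,\lambda)\lambda''(s)\,ds$, $\tfrac{1}{\lambda^2}\int_t^\infty \lambda''(s)(K_1-\tfrac{\lambda^2}{4(1+s-t)})\,ds$; the $E_{v_2,ip}$ term; and the inner products of $v_4,v_5,E_5$, and the $\chi_{\geq 1}$-localized pieces, with $\phi_0$. For each of these I would use the mean value theorem in $\lambda$: write the term as $\Phi(\lambda(t),\lambda'(t),\lambda''(t);t)$ (with the integral kernels viewed as functions of the parameter $\lambda(t)$), and estimate $|\Phi(\lambda_1,\dots)-\Phi(\lambda_2,\dots)|$ by $\sup|\partial_\lambda \Phi|\cdot|\lambda_1-\lambda_2| + \sup|\partial_{\lambda'}\Phi|\cdot|\lambda_1'-\lambda_2'| + \sup|\partial_{\lambda''}\Phi|\cdot|\lambda_1''-\lambda_2''|$, where the $\lambda$-derivatives of $K, K_1, K_3$ etc. are bounded by re-running the pointwise estimates already established (e.g. \eqref{k1ptwseest}, \eqref{k1diffptwse}, \eqref{k3minusk30}, \eqref{kintegralestimate}) but now with one extra $\partial_\lambda$; the crucial point is that these derivatives are no larger, up to $\log$ factors, than the original kernels, because everything depends on $\lambda(t)$ in a symbol-type way on the range $\lambda_0+B$. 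For the $v_k$ inner products one similarly needs Lipschitz-in-$e$ versions of the pointwise estimates in the lemmas for $v_3,v_4,v_5$; since those estimates were proved by differentiating explicit representation formulas in which $\lambda$ enters only through $\lambda''(s)$ and through the algebraic kernels, one obtains the Lipschitz bounds by the same computations with $\lambda_1''-\lambda_2''$ in place of $\lambda''$ and with $|\lambda_1-\lambda_2|$ extracted from the kernel differences via the MVT — each such extraction costing exactly the factor $\|e_1-e_2\|_X/(b\log^b(t)\sqrt{\log\log t})$ and leaving enough room to absorb it into the target bound. The terms multiplied by $\lambda(t)$ out front (the inner-product terms) additionally produce a contribution $|\lambda_1(t)-\lambda_2(t)|\cdot|\langle\cdots\rangle|$ from differencing the prefactor, which is again controlled because the inner product itself is $O(t^{-2}\log^{-3b-2N}t)$ or better.

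The step I expect to be the main obstacle is the Lipschitz estimate for the $v_4$ and $v_5$ inner-product contributions to $G$, specifically handling the dependence of $v_{4,c}$ (and hence $\widehat{v_{4,c}}$, and hence $v_4$) on $\lambda$ through the cutoff $\chi_{\geq 1}(2r/\log^N t)$ and through $Q_{1/\lambda(t)}$ inside $(\cos(2Q_{1/\lambda})-1)/r^2$ and inside $F_{0,2}$: differencing in $\lambda$ there is more delicate than for the algebraic Volterra kernels, because one must re-derive the Fourier-side estimates \eqref{v4chatest}, \eqref{xidxil2est} in a differenced form. However, since these terms already carry the enormous gain $\log^{-(3b+2N-1)}t$ (with $N$ huge), there is a large margin, so even a lossy Lipschitz bound — say losing several powers of $\log$ — comfortably fits under $1/(t^2\log^{b+1}(t)(\log\log t)^{3/2})$. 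Concretely I would not attempt a sharp differenced Fourier analysis; instead I would bound the $v_4$, $v_5$ differences crudely by $\|e_1-e_2\|_X$ times the \emph{sum} of the two individual pointwise/energy bounds (which are uniform over $e\in\overline{B_1(0)}$) divided by one factor of $b\log^b t\sqrt{\log\log t}$, using that the representation formulas depend continuously and with controlled $\lambda$-derivative on the parameters. Once Proposition~\ref{rhslipprop} is in hand, the contraction property of $T$ follows immediately by integrating twice in $t$ and choosing $T_0$ large enough that $2C_{lip}/(b\,\log^{?}(T_0)\cdots) < 1$, exactly as in the self-mapping estimates just carried out.
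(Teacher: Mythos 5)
Your overall architecture coincides with the paper's proof: split $RHS(e_1,t)-RHS(e_2,t)$ into the $e$-linear Volterra pieces, the purely $\lambda_0$-dependent pieces (which drop out), and the difference $G(t,\lambda_1)-G(t,\lambda_2)$; then difference the algebraic kernels $K$, $K_1$, $K_3-K_{3,0}$ by the mean value theorem in the parameter $\lambda(t)$ (the paper verifies your ``symbol-type'' heuristic for $K$ by computing $\int_0^1\!\!\int_0^\infty|\partial_2K(w,z_q)|\,dw\,dq=\tfrac14\log 2\,(z_1+z_2)$ using positivity of the integrand and Fubini, and for $K_1$ by the bound $|\partial_z(K_1(w,z)-\tfrac{z^2}{4(1+w)})|\le Cz/(1+w^2)$), and handle $E_{v_2,ip}$ through derivative bounds on $\psi_{v_2}$ and $F_{v_2}$ together with one more integration by parts in $\xi$. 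Up to that point your plan and the paper's proof are essentially identical, and the bookkeeping of the factors $\|e_1-e_2\|_X/(\log^b t\sqrt{\log\log t})$ versus $1/(t^2\log^{b+1}t)$, with the extra $(\log\log t)^{-1}$ coming from the prefactor $1/\log(\lambda_0(t))$, is correct.

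The genuine gap is in your treatment of the inner-product contributions involving $v_3$, $E_5$, $v_4$ and $v_5$. The ``crude'' bound you propose --- $\|e_1-e_2\|_X$ times the sum of the two individual pointwise/energy bounds divided by one factor of $b\log^b t\sqrt{\log\log t}$ --- does not follow from the individual estimates: two functions each bounded by $M$ have difference bounded only by $2M$, and the factor $\|e_1-e_2\|_X$ you insert is precisely the Lipschitz property that has to be proven. The phrase ``the representation formulas depend continuously and with controlled $\lambda$-derivative on the parameters'' is an assertion of that property, not a proof of it: $v_4^{\lambda}$ depends on $\lambda$ through $v_{4,c}^{\lambda}$ (which contains $v_1^{\lambda}$, $v_3^{\lambda}$, $F_{0,2}$, $Q_{1/\lambda}$ and the cutoffs) and then through the wave propagator, so one must actually estimate $v_{4,c}^{\lambda_1}-v_{4,c}^{\lambda_2}$ on the whole backward light cone (including the region $r\ge t/2$, where the $v_2$ cone estimates enter) and re-run the spherical-means and Hankel-transform arguments for the difference. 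This is what the paper does, in a specific order forced by the dependencies: first the Lipschitz lemma \eqref{v3lip} for $v_3^{\lambda_1}-v_3^{\lambda_2}$ (the difference solves the wave equation with source $F_{0,1}^{\lambda_1}-F_{0,1}^{\lambda_2}$), then the $E_5^{\lambda_1}-E_5^{\lambda_2}$ estimate, then \eqref{v4lipfinalest} for $v_4^{\lambda_1}-v_4^{\lambda_2}$ (which uses the linearity $v_1^{\lambda_1}-v_1^{\lambda_2}=v_1^{\lambda_1-\lambda_2}$, the $v_3$ Lipschitz bound inside $v_{4,c}$, and a differenced $L^2$ estimate \eqref{v4c12l2est} for $\widehat{v_{4,c}^{\lambda_1}-v_{4,c}^{\lambda_2}}$ for the region $r\ge t/2$), and finally \eqref{v5lipest} for $v_5^{\lambda_1}-v_5^{\lambda_2}$ via the four-term decomposition of $N_2(f_{v_5}^{\lambda_1})-N_2(f_{v_5}^{\lambda_2})$. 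You are right that the huge gain in $N$ means only lossy versions of these differenced estimates are required (in particular only the $L^2$-in-$\xi$ bound, not a differenced analogue of the full pointwise table \eqref{v4chatest}), but some version of these lemmas must be proved; as written, your step (iii) for these terms is circular.
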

To prove the proposition, it will be useful to define the functions
$$\lambda_{i}(t) = \lambda_{0}(t) + e_{i}(t), \quad i =1,2$$

We start with the terms of $RHS$ which don't involve $G$:
\begin{equation} \begin{split} |\frac{-4 \alpha \lambda_{0}''(t) \left(\log(\lambda_{1}(t))-\log(\lambda_{2}(t))\right)}{\log(\lambda_{0}(t))}| &\leq \frac{C}{t^{2}\log^{b+1}(t) \log(\log(t))} \frac{|\lambda_{1}(t)-\lambda_{2}(t)|}{\lambda_{0,0}(t)}\\
&\leq \frac{C ||e_{1}-e_{2}||_{X}}{t^{2} \log^{b+1}(t) (\log(\log(t)))^{3/2}}\end{split}\end{equation}

\begin{equation}\begin{split} &|\frac{-4 \alpha}{\log(\lambda_{0}(t))} \left(e_{1}''(t) \left(\log(\lambda_{1}(t))-\log(\lambda_{0}(t))\right)-e_{2}''(t) \left(\log(\lambda_{2}(t))-\log(\lambda_{0}(t))\right)\right)|\\
&\leq \frac{C}{\log(\log(t))} \left(|e_{1}''(t)-e_{2}''(t)| \log(\frac{\lambda_{0}(t)+e_{1}(t)}{\lambda_{0}(t)}) + |e_{2}''(t)| \left(\log(\lambda_{1}(t))-\log(\lambda_{2}(t))\right)\right)\\
&\leq C \frac{||e_{1}-e_{2}||_{X}}{t^{2}\log^{b+1}(t) (\log(\log(t)))^{2}}\end{split}\end{equation}

\begin{equation}\begin{split} &|\frac{4}{\log(\lambda_{0}(t))} \left(\int_{t}^{\infty} \frac{(e_{1}''(s)-e_{2}''(s))}{(1+s-t)^{3}}\left(\frac{1}{\lambda_{1}(t)^{1-\alpha}+s-t} - \frac{1}{\lambda_{0}(t)^{1-\alpha}+s-t}\right)ds\right.\\
&\left.+\int_{t}^{\infty} \frac{e_{2}''(s)}{(1+s-t)^{3}} \left(\frac{1}{\lambda_{1}(t)^{1-\alpha}+s-t}-\frac{1}{\lambda_{2}(t)^{1-\alpha}+s-t}\right) ds\right)|\\
&\leq  C \frac{||e_{1}-e_{2}||_{X}}{(\log(\log(t)))^{2} t^{2}\log^{b+1}(t)}\end{split}\end{equation} 

\begin{equation} \begin{split} \frac{1}{\log(\log(t))} \int_{t}^{\infty} \frac{ds}{s^{2}\log^{b+1}(s)(1+s-t)^{3}} |\frac{1}{\lambda_{1}(t)^{1-\alpha}+s-t}-\frac{1}{\lambda_{2}(t)^{1-\alpha}+s-t}| \leq \frac{C ||e_{1}-e_{2}||_{X}}{(\log(\log(t)))^{3/2} t^{2} \log^{b+1}(t)}\end{split}\end{equation}

\begin{equation} \begin{split}  \int_{t}^{\infty} |e_{1}''(s)-e_{2}''(s)| |\frac{1}{\log(\lambda_{0}(t))}-\frac{1}{\log(\lambda_{0}(s))}| \frac{ds}{1+s-t} \leq \frac{C ||e_{1}-e_{2}||_{X}}{t^{2} \log^{b+2}(t) (\log(\log(t)))^{5/2}}\end{split}\end{equation}

\begin{equation} \begin{split} &\int_{t}^{\infty} |e_{1}''(s)-e_{2}''(s)| |\frac{1}{\log(\lambda_{0}(t))}-\frac{1}{\log(\lambda_{0}(s))}| \frac{ds}{(\lambda_{0}(t)^{1-\alpha} +s-t)(1+s-t)^{3}}\\
&\leq \frac{C ||e_{1}-e_{2}||_{X}}{t^{3}\log^{b+2}(t) (\log(\log(t)))^{5/2}}\end{split}\end{equation}

We now proceed to study each term appearing in the expression
$$G(t,\lambda_{1}(t))-G(t,\lambda_{2}(t))$$

By writing
\begin{equation}\begin{split} &\lambda_{1}(t) E_{0,1}(\lambda_{1}(t),\lambda_{1}'(t),\lambda_{1}''(t)) - \lambda_{2}(t) E_{0,1}(\lambda_{2}(t),\lambda_{2}'(t),\lambda_{2}''(t))\\
&=\int_{0}^{1} DF_{0,0,1}(\lambda_{\sigma}) \cdot (\lambda_{1}(t)-\lambda_{2}(t),\lambda_{1}'(t)-\lambda_{2}'(t),\lambda_{1}''(t)-\lambda_{2}''(t)) d\sigma\end{split}\end{equation}
where
$$F_{0,0,1}(x,y,z) = x E_{0,1}(x,y,z), \quad \lambda_{\sigma} = \sigma (\lambda_{1}(t),\lambda_{1}'(t),\lambda_{1}''(t)) + (1-\sigma) (\lambda_{2}(t),\lambda_{2}'(t),\lambda_{2}''(t))$$
we get
\begin{equation} \begin{split} &\lambda_{1}(t) E_{0,1}(\lambda_{1}(t),\lambda_{1}'(t),\lambda_{1}''(t)) - \lambda_{2}(t) E_{0,1}(\lambda_{2}(t),\lambda_{2}'(t),\lambda_{2}''(t))\\
&\leq \frac{C ||e_{1}-e_{2}||_{X}}{t^{2}\log^{b+1}(t) \sqrt{\log(\log(t))}}\end{split}\end{equation}

Next, we consider 
\begin{equation} \begin{split} &K_{3}(w,\lambda_{1}(t))-K_{3,0}(w,\lambda_{1}(t)) - \left(K_{3}(w,\lambda_{2}(t))-K_{3,0}(w,\lambda_{2}(t))\right)\\
&= \frac{w^{5}}{4(1+w^{2})}\left(\frac{1}{(w^{2}+36\lambda_{1}(t)^{2})^{2}}-\frac{1}{(w^{2}+36\lambda_{2}(t)^{2})^{2}}\right)\\
&-\frac{1}{4}\left(\frac{w^{5}}{(\lambda_{1}(t)^{2-2\alpha}+w^{2})(w^{2}+36\lambda_{1}(t)^{2})^{2}}-\frac{w^{5}}{(\lambda_{2}(t)^{2-2\alpha}+w^{2})(w^{2}+36\lambda_{1}(t)^{2})^{2}}\right.\\
&+\frac{w^{5}}{(\lambda_{2}(t)^{2-2\alpha}+w^{2})(w^{2}+36\lambda_{1}(t)^{2})^{2}} - \frac{w^{5}}{(\lambda_{2}(t)^{2-2\alpha}+w^{2})(w^{2}+36\lambda_{2}(t)^{2})^{2}}\\
&\left.-\frac{1}{(\lambda_{1}(t)^{1-\alpha}+w)(1+w)^{3}}+\frac{1}{(\lambda_{2}(t)^{1-\alpha}+w)(1+w)^{3}}\right)\end{split}\end{equation}

So,
\begin{equation} \begin{split} \int_{t}^{\infty}  &|K_{3}(s-t,\lambda_{1}(t))-K_{3,0}(s-t,\lambda_{1}(t)) - \left(K_{3}(s-t,\lambda_{2}(t))-K_{3,0}(s-t,\lambda_{2}(t))\right)| ds\\
&\leq \frac{C ||e_{1}-e_{2}||_{X}}{\log^{b}(t)\sqrt{\log(\log(t))}} + C \frac{||e_{1}-e_{2}||_{X}}{\sqrt{\log(\log(t))}}+\frac{C ||e_{1}-e_{2}||_{X}}{\log^{b\alpha}(t)\sqrt{\log(\log(t))}} + \frac{C ||e_{1}-e_{2}||_{X}}{\sqrt{\log(\log(t))}}\\
&\leq \frac{C||e_{1}-e_{2}||_{X}}{\sqrt{\log(\log(t))}}\end{split}\end{equation}

From this, we get
\begin{equation} \begin{split} &|\int_{t}^{\infty} ds \lambda_{1}''(s) \left(K_{3}(s-t,\lambda_{1}(t))-K_{3,0}(s-t,\lambda_{1}(t))\right) - \int_{t}^{\infty} ds \lambda_{2}''(s) \left(K_{3}(s-t,\lambda_{2}(t))-K_{3,0}(s-t,\lambda_{2}(t))\right)|\\
&\leq C \int_{t}^{\infty} |e_{1}''(s)-e_{2}''(s)| |K_{3}(s-t,\lambda_{1}(t))-K_{3,0}(s-t,\lambda_{1}(t))| ds\\
&+ C \int_{t}^{\infty} |\lambda_{2}''(s)| |K_{3}(s-t,\lambda_{1}(t))-K_{3,0}(s-t,\lambda_{1}(t)) - \left(K_{3}(s-t,\lambda_{2}(t))-K_{3,0}(s-t,\lambda_{2}(t))\right)|\\
&\leq C \frac{||e_{1}-e_{2}||_{X}}{t^{2}\log^{b+1}(t) \sqrt{\log(\log(t))}} \int_{t}^{\infty} |K_{3}(s-t,\lambda_{1}(t))-K_{3,0}(s-t,\lambda_{1}(t))| ds\\
&+\frac{C}{t^{2}\log^{b+1}(t)} \int_{t}^{\infty} |K_{3}(s-t,\lambda_{1}(t))-K_{3,0}(s-t,\lambda_{1}(t)) - \left(K_{3}(s-t,\lambda_{2}(t))-K_{3,0}(s-t,\lambda_{2}(t))\right)|\\
&\leq C \frac{||e_{1}-e_{2}||_{X}}{t^{2}\log^{b+1}(t) \sqrt{\log(\log(t))}}+\frac{C||e_{1}-e_{2}||_{X}}{t^{2}\log^{b+1}(t)\sqrt{\log(\log(t))}}\\
&\leq \frac{C||e_{1}-e_{2}||_{X}}{t^{2}\log^{b+1}(t)\sqrt{\log(\log(t))}}\end{split}\end{equation}
(where we recall that $\int_{t}^{\infty} |K_{3}(s-t,\lambda_{1}(t))-K_{3,0}(s-t,\lambda_{1}(t))| ds$ was previously estimated).

Next, let us consider the following term which appears in the expression \\
$G(t,\lambda_{0}(t)+e_{1}(t))-G(t,\lambda_{0}(t)+e_{2}(t))$:
\begin{equation}\begin{split}&\frac{16}{(\lambda_{0}(t)+e_{1}(t))^{2}}\int_{t}^{\infty} dx (\lambda_{0}''(x)+e_{1}''(x))\left(K_{1}(x-t,\lambda_{0}(t)+e_{1}(t))-\frac{(\lambda_{0}(t)+e_{1}(t))^{2}}{4(1+x-t)}\right)\\
&-\frac{16}{(\lambda_{0}(t)+e_{2}(t))^{2}}\int_{t}^{\infty} dx (\lambda_{0}''(x)+e_{2}''(x))\left(K_{1}(x-t,\lambda_{0}(t)+e_{2}(t))-\frac{(\lambda_{0}(t)+e_{2}(t))^{2}}{4(1+x-t)}\right)
\end{split}\end{equation}
which we re-write as
\begin{equation}\label{g1}\begin{split} &\left(\frac{16}{(\lambda_{0}(t)+e_{1}(t))^{2}}-\frac{16}{(\lambda_{0}(t)+e_{2}(t))^{2}}\right)\int_{t}^{\infty} dx (\lambda_{0}''(x)+e_{1}''(x))\left(K_{1}(x-t,\lambda_{0}(t)+e_{1}(t))-\frac{(\lambda_{0}(t)+e_{1}(t))^{2}}{4(1+x-t)}\right)\\&+\frac{16}{(\lambda_{0}(t)+e_{2}(t))^{2}} \int_{t}^{\infty} dx (e_{1}''(x)-e_{2}''(x))\left(K_{1}(x-t,\lambda_{0}(t)+e_{1}(t))-\frac{(\lambda_{0}(t)+e_{1}(t))^{2}}{4(1+x-t)}\right)\\
&+\frac{16}{(\lambda_{0}(t)+e_{2}(t))^{2}} \int_{t}^{\infty} dx (\lambda_{0}''(x)+e_{2}''(x))\left(L_{1}(x-t,\lambda_{0}(t)+e_{1}(t))-L_{1}(x-t,\lambda_{0}(t)+e_{2}(t))\right)
\end{split}\end{equation}
where
$$L_{1}(w,z) = K_{1}(w,z)-\frac{z^{2}}{4(1+w)}$$
From \eqref{k1ptwseest} and \eqref{k1diffptwse} there exists $C>0$, independent of $z$ such that 
$$|K_{1}(w,z)-\frac{z^{2}}{4(1+w)}| \leq \frac{C z^{2}}{1+w^{2}}, \quad |z| \leq \frac{1}{2}$$
(Recall that, by the choice of $T_{0}$, $|\lambda_{0}(t)+f_{i}(t)| \leq \frac{1}{2}, \quad i=1,2$).
Finally, let us note that
\begin{equation} |\partial_{z} \left(K_{1}(w,z)-\frac{z^{2}}{4(1+w)}\right)| \leq  \frac{C z}{1+w^{2}}, \quad w \geq 0\end{equation}

Using the facts that 
$$|\frac{16}{(\lambda_{0}(t)+e_{1}(t))^{2}}-\frac{16}{(\lambda_{0}(t)+e_{2}(t))^{2}}| \leq C \frac{\log^{3b}(t) ||e_{1}-e_{2}||_{X}}{\sqrt{\log(\log(t))} \log^{b}(t)}$$
and
$$|L_{1}(w,z_{1})-L_{1}(w,z_{2})| \leq ||\partial_{2}L_{1}(w,sz_{1}+(1-s)z_{2})||_{L^{\infty}_{s}([0,1])} |z_{1}-z_{2}|$$
we get that the absolute value of \eqref{g1} is bounded above by
$$\frac{C ||e_{1}-e_{2}||_{X}}{t^{2}\log^{b+1}(t) \sqrt{\log(\log(t))}}$$

Another term arising in\\
$G(t,\lambda_{0}(t)+e_{1}(t))-G(t,\lambda_{0}(t)+e_{2}(t))$ is
\begin{equation}\begin{split} &\frac{16}{(\lambda_{0}(t)+e_{1}(t))^{2}} \int_{t}^{\infty} dx (\lambda_{0}''(x)+e_{1}''(x)) K(x-t,\lambda_{0}(t)+e_{1}(t))\\
&-\frac{16}{(\lambda_{0}(t)+e_{2}(t))^{2}} \int_{t}^{\infty} dx (\lambda_{0}''(x)+e_{2}''(x)) K(x-t,\lambda_{0}(t)+e_{2}(t))
\end{split}\end{equation}
This can be split analogously to the previous term:\\
\\
\begin{equation}\label{g2} \begin{split} &\left(\frac{16}{(\lambda_{0}(t)+e_{1}(t))^{2}}-\frac{16}{(\lambda_{0}(t)+e_{2}(t))^{2}}\right)\int_{t}^{\infty} dx (\lambda_{0}''(x)+e_{1}''(x)) K(x-t,\lambda_{0}(t)+e_{1}(t))\\
&+\frac{16}{(\lambda_{0}(t)+e_{2}(t))^{2}} \int_{t}^{\infty} dx (e_{1}''(x)-e_{2}''(x)) K(x-t,\lambda_{0}(t)+e_{1}(t))\\
&+\frac{16}{(\lambda_{0}(t)+e_{2}(t))^{2}} \int_{t}^{\infty} dx (\lambda_{0}''(x)+e_{2}''(x))\left(K(x-t,\lambda_{0}+e_{1}(t))-K(x-t,\lambda_{0}(t)+e_{2}(t))\right)\end{split}\end{equation}
Note that
\begin{equation} K(x-t,z_{1})-K(x-t,z_{2}) = (z_{1}-z_{2})\int_{0}^{1} dq \partial_{2}K(x-t,z_{2}+q(z_{1}-z_{2}))\end{equation}
So, 
\begin{equation}\label{kiterationest}\begin{split} \int_{t}^{\infty} dx |K(x-t,z_{1})-K(x-t,z_{2})| &\leq |z_{1}-z_{2}| \int_{0}^{1} dq \int_{0}^{\infty} dw |\partial_{2}K(w,z_{2}+(z_{1}-z_{2})q)|\end{split}\end{equation}
But, using the formula for $K$ from the previous sections, we see that $\partial_{z}K(w,z)$ has a sign:
\begin{equation} \partial_{z}K(w,z) = \int_{0}^{\infty} dR \int_{0}^{w} d\rho \left(\frac{4 p R^3 z  \left(p^2+R^2 z^2+1\right)}{\left(R^2+1\right)^3  \left(\left(p^2-R^2 z^2+1\right)^2+4 R^2 z^2\right)^{3/2}}\right)\left(\frac{1}{\sqrt{w^{2}-\rho^{2}}}-\frac{1}{w}\right) \geq 0\end{equation}
So, to control the integral in \eqref{kiterationest}, we can note the following: If $$z_{q} = z_{2}+(z_{1}-z_{2})q$$
then
\begin{equation}\label{d2kintest}\begin{split} &\int_{0}^{1} dq \int_{0}^{\infty} dw |\partial_{2}K(w,z_{q})|\\
&= \int_{0}^{1} dq \int_{0}^{\infty} dw \int_{0}^{\infty} dR \int_{0}^{w} d\rho \left(\frac{4 p R^3 z_{q}  \left(p^2+R^2 z_{q}^2+1\right)}{\left(R^2+1\right)^3  \left(\left(p^2-R^2 z_{q}^2+1\right)^2+4 R^2 z_{q}^2\right)^{3/2}}\right)\left(\frac{1}{\sqrt{w^{2}-\rho^{2}}}-\frac{1}{w}\right) \\
&=\int_{0}^{1} dq \int_{0}^{\infty} dR \int_{0}^{\infty} d\rho \int_{\rho}^{\infty} dw \left(\frac{4 p R^3 z_{q}  \left(p^2+R^2 z_{q}^2+1\right)}{\left(R^2+1\right)^3  \left(\left(p^2-R^2 z_{q}^2+1\right)^2+4 R^2 z_{q}^2\right)^{3/2}}\right)\left(\frac{1}{\sqrt{w^{2}-\rho^{2}}}-\frac{1}{w}\right)\\
&=\int_{0}^{1} dq \int_{0}^{\infty} dR \int_{0}^{\infty} d\rho\frac{4 p R^3 z_{q} \log (2) \left(p^2+R^2 z_{q}^2+1\right)}{\left(R^2+1\right)^3 \left(\left(p^2-R^2 z_{q}^2+1\right)^2+4 R^2 z_{q}^2\right)^{3/2}}\\
&=\int_{0}^{1} dq \int_{0}^{\infty} dR\frac{R^3 z_{q} \log (4)}{\left(R^2+1\right)^3}\\
&=\int_{0}^{1} dq \frac{1}{2} z_{q} \log (2)\\
&=\frac{1}{4} \log (2) (z_{1}+z_{2})
\end{split}\end{equation}
Using \eqref{kintegralestimate}, we see that the absolute value of \eqref{g2} is bounded above by
$$\frac{C ||e_{1}-e_{2}||_{X}}{t^{2}\log^{b+1}(t) \sqrt{\log(\log(t))}}$$

Next, we will consider the terms involving $E_{v_{2},ip}$, starting with the case $b \neq 1$. We recall
\begin{equation}\label{ev2ip2}\begin{split} \lambda(t) E_{v_{2},ip}(t,\lambda(t)) &= 2 c_{b}\lambda(t) \int_{0}^{\infty} d\xi \frac{\sin(t\xi)}{t^{2}} \psi_{v_{2}}(\xi,\lambda(t)) + 2 c_{b}\lambda(t) \int_{0}^{\infty} d\xi \chi_{\leq \frac{1}{4}}(\xi) \frac{\sin(t\xi)}{t^{2}} F_{v_{2}}(\xi,\lambda(t))\\
&+2 c_{b} \int_{0}^{\frac{1}{2}} d\xi \left(\chi_{\leq \frac{1}{4}}(\xi)-1\right) \frac{\sin(t\xi)}{t^{2}}\left(\frac{(b-1)}{\xi\log^{b}(\frac{1}{\xi})} + \frac{b(b-1)}{\xi \log^{b+1}(\frac{1}{\xi})}\right)\\
&+2 c_{b}\left(\int_{0}^{\frac{t}{2}} \frac{du \sin(u)(b-1)}{t^{2}u\log^{b}(\frac{t}{u})} - \frac{(b-1)\pi}{2t^{2}\log^{b}(t)} + \int_{0}^{\frac{t}{2}} \frac{du \sin(u) b(b-1)}{t^{2}u\log^{b+1}(\frac{t}{u})}\right)\end{split}\end{equation}
where
$F_{v_{2}}$ and $\psi_{v_{2}}$ were defined in \eqref{Fdef2} and \eqref{psidef2}, respectively. Notice that the last two lines of \eqref{ev2ip2} are \emph{independent} of $\lambda(t)$. So,
\begin{equation}\label{ev2ipdiff}\begin{split} &\lambda_{1}(t)E_{v_{2},ip}(t,\lambda_{1}(t))-\lambda_{2}(t)E_{v_{2},ip}(t,\lambda_{2}(t))\\
&=2 c_{b}\lambda_{1}(t) \int_{0}^{\infty} d\xi \frac{\sin(t\xi)}{t^{2}} \psi_{v_{2}}(\xi,\lambda_{1}(t))-2 c_{b}\lambda_{2}(t) \int_{0}^{\infty} d\xi \frac{\sin(t\xi)}{t^{2}} \psi_{v_{2}}(\xi,\lambda_{2}(t))\\
&+2 c_{b}\lambda_{1}(t) \int_{0}^{\infty} d\xi \chi_{\leq \frac{1}{4}}(\xi) \frac{\sin(t\xi)}{t^{2}} F_{v_{2}}(\xi,\lambda_{1}(t))-2 c_{b}\lambda_{2}(t) \int_{0}^{\infty} d\xi \chi_{\leq \frac{1}{4}}(\xi) \frac{\sin(t\xi)}{t^{2}}F_{v_{2}}(\xi,\lambda_{2}(t))\end{split}\end{equation}
First, we consider the second line of \eqref{ev2ipdiff}:
\begin{equation}\label{ev2ipintdiff}\begin{split} &|2 c_{b}\lambda_{1}(t) \int_{0}^{\infty} d\xi \frac{\sin(t\xi)}{t^{2}} \psi_{v_{2}}(\xi,\lambda_{1}(t))-2 c_{b}\lambda_{2}(t) \int_{0}^{\infty} d\xi \frac{\sin(t\xi)}{t^{2}} \psi_{v_{2}}(\xi,\lambda_{2}(t))|\\
&\leq |2c_{b}(e_{1}(t)-e_{2}(t)) \int_{0}^{\infty} d\xi \frac{\sin(t\xi)}{t^{2}}\psi_{v_{2}}(\xi,\lambda_{1}(t))|\\
&+|2 c_{b}\lambda_{2}(t) \int_{0}^{\infty} d\xi \frac{\sin(t\xi)}{t^{2}}\left(\psi_{v_{2}}(\xi,\lambda_{1}(t))-\psi_{v_{2}}(\xi,\lambda_{2}(t))\right)|\\
\end{split}\end{equation}
The second line of \eqref{ev2ipintdiff} is estimated as follows:
\begin{equation} \begin{split} &|2c_{b}(e_{1}(t)-e_{2}(t)) \int_{0}^{\infty} d\xi \frac{\sin(t\xi)}{t^{2}}\psi_{v_{2}}(\xi,\lambda_{1}(t))|\\
&\leq C |e_{1}(t)-e_{2}(t)| |\int_{0}^{\infty} d\xi \frac{\sin(t\xi)}{t^{2}}\psi_{v_{2}}(\xi,\lambda_{1}(t))|\\
&\leq C \frac{|e_{1}(t)-e_{2}(t)|}{t^{3}\lambda_{1}(t)}\end{split}\end{equation}
where we use the same procedure as used in the previous subsections to estimate the integral. For the third line of \eqref{ev2ipintdiff}, we first note that
\begin{equation} \psi_{v_{2}}(\xi,\lambda(t)) = 2 \chi_{\leq \frac{1}{4}}'(\xi) \partial_{\xi} \left(\frac{\xi^{2}K_{1}(\xi \lambda(t))}{\log^{b-1}(\frac{1}{\xi})}\right) + \frac{\chi_{\leq \frac{1}{4}}''(\xi) \xi^{2}K_{1}(\xi\lambda(t))}{\log^{b+1}(\frac{1}{\xi})}\end{equation}
\begin{equation}\begin{split} \partial_{\xi} \psi_{v_{2}}(\xi,\lambda(t))&=3 \chi_{\leq \frac{1}{4}}''(\xi) \partial_{\xi}\left(\frac{\xi^{2}K_{1}(\xi\lambda(t))}{\log^{b-1}(\frac{1}{\xi})}\right)+2 \chi_{\leq \frac{1}{4}}'(\xi) \partial_{\xi}^{2}\left(\frac{\xi^{2}K_{1}(\xi\lambda(t))}{\log^{b-1}(\frac{1}{\xi})}\right)\\
&+\chi_{\leq \frac{1}{4}}'''(\xi) \frac{\xi^{2}K_{1}(\xi\lambda(t))}{\log^{b-1}(\frac{1}{\xi})}\end{split}\end{equation}
Then, we use 
\begin{equation}\begin{split} K_{1}'(y) &= -\left(\frac{y K_{0}(y)+K_{1}(y)}{y}\right)\\
K_{1}''(y) &= K_{1}(y)+\frac{K_{2}(y)}{y}\\
K_{1}'''(y)&=\frac{-(3+y^{2})}{y^{2}} K_{2}(y)\end{split}\end{equation}

and the estimates, for $0 <x<\frac{1}{2}$ and $\frac{1}{8} \leq \xi \leq \frac{1}{4}$,
\begin{equation}\label{kjestimates} |K_{1}(\xi x)| \leq \frac{C}{\xi x}, \quad |K_{0}(\xi x)| \leq C |\log(\xi x)|, \quad |K_{2}(\xi x)| \leq \frac{C}{\xi^{2}x^{2}}\end{equation}
Since $\text{supp}\left(\chi^{(j)}_{\leq \frac{1}{4}}\right) \subset [\frac{1}{8},\frac{1}{4}]$, we have
\begin{equation}\label{d12psiest}\begin{split} |\partial_{12}\psi_{v_{2}}(\xi,y)| &\leq \frac{C \mathbbm{1}_{[\frac{1}{8},\frac{1}{4}]}(\xi)}{\xi y^{2} \log^{s-1}(\frac{1}{\xi})}, \quad 0<y<\frac{1}{2}\end{split}\end{equation}
Note that 
\begin{equation}\begin{split} &\int_{0}^{\infty} d\xi \frac{\sin(t\xi)}{t^{2}}\left(\psi_{v_{2}}(\xi,\lambda_{1}(t))-\psi_{v_{2}}(\xi,\lambda_{2}(t))\right)=\int_{0}^{\infty} d\xi \frac{\cos(t\xi)}{t^{3}}\partial_{\xi}\left(\psi_{v_{2}}(\xi,\lambda_{1}(t))-\psi_{v_{2}}(\xi,\lambda_{2}(t))\right)\end{split}\end{equation}
and
\begin{equation}\begin{split} &|\partial_{\xi}\left(\psi_{v_{2}}(\xi,\lambda_{1}(t))-\psi_{v_{2}}(\xi,\lambda_{2}(t))\right)|\leq \sup_{x \in [\frac{\lambda_{0}(t)}{2},\frac{1}{2}]} |\partial_{12}\psi_{v_{2}}(\xi,x)| |\lambda_{1}(t)-\lambda_{2}(t)|\end{split}\end{equation}
where we used the fact that, for $e_{i} \in \overline{B_{1}(0)} \subset X, \quad i=1,2$, we have
\begin{equation} \frac{1}{2} \lambda_{0}(t)<\lambda_{i}(t)<2\lambda_{0}(t)<\frac{1}{2}, \quad i=1,2\end{equation}
Now, we use \eqref{d12psiest} to get
\begin{equation} \begin{split} &|\int_{0}^{\infty} d\xi \frac{\cos(t\xi)}{t^{3}}\partial_{\xi}\left(\psi_{v_{2}}(\xi,\lambda_{1}(t))-\psi_{v_{2}}(\xi,\lambda_{2}(t))\right)|\\
&\leq \frac{C}{t^{3}} \int_{0}^{\infty} \sup_{x \in [\frac{\lambda_{0}(t)}{2},\frac{1}{2}]} |\partial_{12}\psi_{v_{2}}(\xi,x)| |\lambda_{1}(t)-\lambda_{2}(t)|\\
&\leq C \frac{|e_{1}(t)-e_{2}(t)|}{t^{3}} \int_{0}^{\infty}d\xi\frac{\mathbbm{1}_{[\frac{1}{8},\frac{1}{4}]}(\xi)}{\lambda_{0}(t)^{2} \xi \log^{s-1}(\frac{1}{\xi})} \\
&\leq C \frac{|e_{1}(t)-e_{2}(t)| \log^{2b}(t)}{t^{3}}\end{split}\end{equation}
We conclude that
\begin{equation}\begin{split}&|2c_{b}(\lambda_{0}(t)+e_{2}(t))\int_{0}^{\infty} d\xi \frac{\cos(t\xi)}{t^{3}}\left(\partial_{\xi}\psi_{v_{2}}(\xi,\lambda_{1}(t))-\partial_{\xi}\psi_{v_{2}}(\xi,\lambda_{2}(t))\right)|\\
&\leq \frac{C ||e_{1}-e_{2}||_{X}}{t^{3}\sqrt{\log(\log(t))}}\end{split}\end{equation}
We now need to consider the third line of \eqref{ev2ipdiff}. Here, we use a similar procedure as above:
\begin{equation}\label{fv2diff2}\begin{split}&|2 c_{b}\lambda_{1}(t)\int_{0}^{\infty} \chi_{\leq \frac{1}{4}}(\xi) \frac{\sin(t\xi)}{t^{2}} F_{v_{2}}(\xi,\lambda_{1}(t))-2c_{b}\lambda_{2}(t) \int_{0}^{\infty} d\xi \chi_{\leq \frac{1}{4}}(\xi) \frac{\sin(t\xi)}{t^{2}} F_{v_{2}}(\xi,\lambda_{2}(t))|\\
&\leq C |\left(e_{1}(t)-e_{2}(t)\right)\int_{0}^{\infty} d\xi \chi_{\leq \frac{1}{4}}(\xi) \frac{\sin(t\xi)}{t^{2}} F_{v_{2}}(\xi,\lambda_{1}(t))|\\
&+C \lambda_{2}(t) |\int_{0}^{\infty} d\xi \frac{\chi_{\leq \frac{1}{4}}(\xi) \sin(t\xi)}{t^{2}} \left(F_{v_{2}}(\xi,\lambda_{1}(t))-F_{v_{2}}(\xi,\lambda_{2}(t))\right)|\end{split}\end{equation}
For the second line of \eqref{fv2diff2}, we use the same procedure to estimate the integral as that used in previous subsections, to get
\begin{equation} \begin{split} &|\left(e_{1}(t)-e_{2}(t)\right)\int_{0}^{\infty} d\xi \chi_{\leq \frac{1}{4}}(\xi) \frac{\sin(t\xi)}{t^{2}} F_{v_{2}}(\xi,\lambda_{1}(t))|\leq C |e_{1}(t)-e_{2}(t)| \frac{\lambda_{1}(t) |\log(\lambda_{1}(t))|}{t^{3}}\end{split}\end{equation}
For the third line of \eqref{fv2diff2}, we first integrate by parts once to get
\begin{equation} \begin{split} &\int_{0}^{\infty} d\xi \frac{\chi_{\leq \frac{1}{4}}(\xi) \sin(t\xi)}{t^{2}} \left(F_{v_{2}}(\xi,\lambda_{1}(t))-F_{v_{2}}(\xi,\lambda_{2}(t))\right)\\
&=\int_{0}^{\infty} d\xi \frac{\cos(t\xi)}{t^{3}}\left(\chi_{\leq \frac{1}{4}}'(\xi) \left(F_{v_{2}}(\xi,\lambda_{1}(t))-F_{v_{2}}(\xi,\lambda_{2}(t))\right)+\chi_{\leq \frac{1}{4}}(\xi) \partial_{\xi}\left(F_{v_{2}}(\xi,\lambda_{1}(t))-F_{v_{2}}(\xi,\lambda_{2}(t))\right)\right)\end{split}\end{equation}
Now, we recall that
$$F_{v_{2}}(\xi,y) = \partial_{\xi}^{2}\left(\frac{\xi^{2}}{\log^{b-1}(\frac{1}{\xi})}\left(K_{1}(\xi y)-\frac{1}{\xi y}\right)\right)$$
Using the same estimates on $K_{j}$ as used to obtain \eqref{d12psiest}, we get
$$|\partial_{2}F_{v_{2}}(\xi,y)| \leq \frac{C\xi (|\log(\xi)|+|\log(y)|)}{\log^{b-1}(\frac{1}{\xi})}, \quad 0 < \xi < \frac{1}{4}, \quad 0<y<\frac{1}{2}$$
and
$$|\partial_{12}F_{v_{2}}(\xi,y)| \leq \frac{C(|\log(\xi)|+|\log(y)|)}{\log^{b-1}(\frac{1}{\xi})} , \quad 0 < y < \frac{1}{2}, \quad 0 < \xi < \frac{1}{4}$$
We then get 
$$|F_{v_{2}}(\xi,\lambda_{2}(t))-F_{v_{2}}(\xi,\lambda_{1}(t))| \leq \left(\sup_{x \in [\frac{\lambda_{0}(t)}{2},\frac{1}{2}]}|\partial_{2}F_{v_{2}}(\xi,x)|\right)|\lambda_{1}(t)-\lambda_{2}(t)|$$
and 
$$|\partial_{1}F_{v_{2}}(\xi,\lambda_{2}(t))-\partial_{1}F_{v_{2}}(\xi,\lambda_{1}(t))| \leq \left(\sup_{x \in [\frac{\lambda_{0}(t)}{2},\frac{1}{2}]}|\partial_{12}F_{v_{2}}(\xi,x)|\right)|\lambda_{1}(t)-\lambda_{2}(t)|$$
Using the above estimates, we obtain, 
\begin{equation}\begin{split} &|2c_{b}\lambda_{2}(t) \int_{0}^{\infty} d\xi \chi_{\leq \frac{1}{4}}(\xi) \frac{\sin(t\xi)}{t^{2}}\left(F_{v_{2}}(\xi,\lambda_{1}(t))-F_{v_{2}}(\xi,\lambda_{2}(t))\right)|\leq C \frac{|\lambda_{1}(t)-\lambda_{2}(t)| \lambda_{0}(t) |\log(\lambda_{0}(t))|}{t^{3}}\\
&\leq C \frac{||e_{1}-e_{2}||_{X} \sqrt{\log(\log(t))}}{t^{3}\log^{2b}(t)}\end{split}\end{equation}
The same procedure for the case $b=1$ yields the same estimates, since $\psi_{v_{2}}$ and $F_{v_{2}}$ have the same form for all $b$.\\
Combining the above estimates then gives, for all $b >0$:
\begin{equation}\begin{split}&|-\lambda_{1}(t)E_{v_{2},ip}(t,\lambda_{1}(t))+\lambda_{2}(t)E_{v_{2},ip}(t,\lambda_{2}(t))| \leq \frac{C ||e_{1}-e_{2}||_{X}}{t^{3}\sqrt{\log(\log(t))}} \end{split}\end{equation}

If 
$$G_{2}(t,\lambda(t)) = - \lambda(t) \langle \left(\frac{\cos(2Q_{\frac{1}{\lambda(t)}})-1}{r^{2}}\right)\left((v_{4}+v_{5})\left(1-\chi_{\geq 1}(\frac{4r}{t})\right)+E_{5}-\chi_{\geq 1}(\frac{2r}{\log^{N}(t)})\left(v_{1}+v_{2}+v_{3}\right)\right)\vert_{r=R\lambda(t)},\phi_{0}\rangle$$
Then, we need to estimate
$$G_{2}(t,\lambda_{1}(t))-G_{2}(t,\lambda_{2}(t))$$

From now on, the fact that $v_{k}, \quad k \neq 2$ depends on $\lambda(t)$ is important, so we will denote these functions by $v_{k}^{\lambda}$  to emphasize the dependence of these functions on $\lambda$.

We first note that 
\begin{equation}\begin{split}& \lambda(t) \langle \left(\frac{\cos(2Q_{\frac{1}{\lambda(t)}})-1}{r^{2}}\right)v_{4}^{\lambda}\left(1-\chi_{\geq 1}(\frac{4r}{t})\right)\vert_{r=R\lambda(t)}, \phi_{0}\rangle\\
&= \frac{1}{\lambda(t)} \int_{0}^{\infty} \left(\frac{\cos(2Q_{1}(\frac{r}{\lambda(t)}))-1}{r^{2}}\right) v_{4}^{\lambda}(t,r)\left(1-\chi_{\geq 1}(\frac{4r}{t})\right) \phi_{0}(\frac{r}{\lambda(t)}) r dr\end{split}\end{equation}

so,
\begin{equation}\begin{split} & \lambda_{1}(t) \langle \left(\frac{\cos(2Q_{\frac{1}{\lambda_{1}(t)}})-1}{r^{2}}\right)v_{4}^{\lambda_{1}}\left(1-\chi_{\geq 1}(\frac{4r}{t})\right)\vert_{r=R\lambda_{1}(t)}, \phi_{0}\rangle\\
&-\left( \lambda_{2}(t) \langle \left(\frac{\cos(2Q_{\frac{1}{\lambda_{2}(t)}})-1}{r^{2}}\right)v_{4}^{\lambda_{2}}\left(1-\chi_{\geq 1}(\frac{4r}{t})\right)\vert_{r=R\lambda_{2}(t)}, \phi_{0}\rangle\right)\\
&= -16 \int_{0}^{\infty} \left(\frac{\lambda_{1}(t)^{2}}{(r^{2}+\lambda_{1}(t)^{2})^{3}} - \frac{\lambda_{2}(t)^{2}}{(r^{2}+\lambda_{2}(t)^{2})^{3}}\right) v_{4}^{\lambda_{1}}(t,r)\left(1-\chi_{\geq 1}(\frac{4r}{t})\right) r^{2} dr\\
&+ \int_{0}^{\infty} \left(\frac{\cos(2Q_{1}(\frac{r}{\lambda_{2}(t)})-1}{r^{2} \lambda_{2}(t)}\right) \phi_{0}(\frac{r}{\lambda_{2}(t)}) \left(v_{4}^{\lambda_{1}}-v_{4}^{\lambda_{2}}\right)(t,r)\left(1-\chi_{\geq 1}(\frac{4r}{t})\right) r dr\end{split}\end{equation}
and we have the analogous formulae for all the other terms in $G_{2}$. Using the pointwise estimates for $v_{i}, E_{5}, \quad 1 \leq i \leq 5$, we get
\begin{equation} \begin{split} &|-16 \int_{0}^{\infty} \left(\frac{\lambda_{1}(t)^{2}}{(r^{2}+\lambda_{1}(t)^{2})^{3}} - \frac{\lambda_{2}(t)^{2}}{(r^{2}+\lambda_{2}(t)^{2})^{3}}\right) \left((v_{4}^{\lambda_{1}}+v_{5}^{\lambda_{1}})\left(1-\chi_{\geq 1}(\frac{4r}{t})\right)+E_{5}^{\lambda_{1}}-\chi_{\geq 1}(\frac{2r}{\log^{N}(t)})\left(v_{1}^{\lambda_{1}}+v_{2}+v_{3}^{\lambda_{1}}\right)\right) r^{2} dr|\\
&\leq \frac{C ||e_{1}-e_{2}||_{X}}{t^{2} \log^{3b+2N-1}(t) \sqrt{\log(\log(t))}} + \frac{C ||e_{1}-e_{2}||_{X}}{t^{7/2} \log^{3b-3+\frac{5N}{2}}(t) \sqrt{\log(\log(t))}} + \frac{C ||e_{1}-e_{2}||_{X}}{t^{2} \log^{b+1}(t) \sqrt{\log(\log(t))}}\\
& + \frac{C ||e_{1}-e_{2}||_{X}}{t^{2}\log^{3b+2N}(t) \sqrt{\log(\log(t))}}\\
&\leq \frac{C ||e_{1}-e_{2}||_{X}}{t^{2} \log^{b+1}(t) \sqrt{\log(\log(t))}}\end{split}\end{equation}

In order to estimate the rest of the terms arising in $G_{2}(t,\lambda_{1})-G_{2}(t,\lambda_{2})$, we must first prove estimates on $v_{3,1,2}$ defined by 
$$v_{3,1,2}:=v_{3}^{\lambda_{1}}-v_{3}^{\lambda_{2}}$$
\begin{lemma} \begin{equation}\label{v3lip} |v_{3,1,2}(t,r)| \leq \begin{cases} \frac{C r ||e_{1}-e_{2}||_{X}}{t^{2} \sqrt{\log(\log(t))} \log^{b}(t)}, \quad r \leq \frac{t}{2}\\
\frac{C ||e_{1}-e_{2}||_{X}}{r \log^{b}(t) \sqrt{\log(\log(t))}}, \quad r > \frac{t}{2}\end{cases}\end{equation}\end{lemma}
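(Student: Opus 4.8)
The plan is to split the difference according to the two ways $v_3$ depends on $\lambda$. Recalling the representation \eqref{v3def}, the integrand of $v_3$ is
$$\lambda''(s)\left(\frac{-1-\rho^2+r^2}{\sqrt{(1+\rho^2-r^2)^2+4r^2}} + F_3(r,\rho,\lambda(s))\right),$$
divided by $-r$ and integrated against $\frac{\rho\,d\rho}{\sqrt{(s-t)^2-\rho^2}}\,ds$, where the first term in the bracket is $\lambda$-independent. Writing $\lambda_i = \lambda_0 + e_i$ and subtracting, I would decompose $v_{3,1,2} = A + B$, where $A$ carries only the prefactor difference $\lambda_1''(s)-\lambda_2''(s) = e_1''(s)-e_2''(s)$ (retaining $F_3(r,\rho,\lambda_1(s))$ and the $\lambda$-independent piece), and $B$ carries only the difference $F_3(r,\rho,\lambda_1(s)) - F_3(r,\rho,\lambda_2(s))$ (retaining the prefactor $\lambda_2''(s)$).

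For $A$: this is \emph{literally} the formula \eqref{v3def} with $\lambda''$ replaced by $e_1''-e_2''$. Since $e_1,e_2\in\overline{B_1(0)}\subset X$, the norm \eqref{ynorm} gives $|e_1''(s)-e_2''(s)|\leq \|e_1-e_2\|_X\, s^{-2}\log^{-b-1}(s)(\log\log s)^{-1/2}$, i.e. the bound \eqref{lambdarestr} used throughout the $v_3$ estimates, improved by the factor $\|e_1-e_2\|_X(\log\log s)^{-1/2}$. Re-running the proof of \eqref{v3laterest} verbatim (through the splitting $v_3=v_{3,1}+v_{3,2}$ of \eqref{v31def}) with this prefactor yields, for $r\leq t/2$, the bound $Cr\|e_1-e_2\|_X(\log\log t)^{1/2}/(t^2\log^{b+1}t)$, which is dominated by the claimed $Cr\|e_1-e_2\|_X/(t^2\log^{b}t\sqrt{\log\log t})$ because $\log\log t\leq\log t$; while the crude bound \eqref{v3largerest} with the same prefactor gives, for $r>t/2$, $\tfrac{C}{r}\int_t^\infty|e_1''-e_2''|(s-t)\,ds\leq C\|e_1-e_2\|_X (r\log^b t\sqrt{\log\log t})^{-1}$, using $\int_t^\infty\frac{ds}{s\log^{b+1}s\sqrt{\log\log s}}\leq C\log^{-b}(t)(\log\log t)^{-1/2}$.

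For $B$: I would use the fundamental theorem of calculus in the $\lambda$-slot exactly as in the proof of \eqref{v3preciseforip}, so that $F_3(r,\rho,\lambda_1(s))-F_3(r,\rho,\lambda_2(s)) = (\lambda_1(s)^{\alpha-1}-\lambda_2(s)^{\alpha-1})\int_0^1(\partial_z F_3)(\cdots)\,d\sigma$ with the inner $z$-variable a convex combination of $\lambda_1(s)^{\alpha-1}$ and $\lambda_2(s)^{\alpha-1}$, hence trapped between $\lambda_0(s)^{\alpha-1}$ and $2^{1-\alpha}\lambda_0(s)^{\alpha-1}$ (using $\tfrac12\lambda_0\leq\lambda_i\leq2\lambda_0$, valid on $\overline{B_1(0)}$). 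Then $|\lambda_1(s)^{\alpha-1}-\lambda_2(s)^{\alpha-1}|\leq C\lambda_0(s)^{\alpha-2}|e_1(s)-e_2(s)|\leq C\lambda_0(s)^{\alpha-1}\|e_1-e_2\|_X(\log\log s)^{-1/2}$, and the $\partial_z F_3$ factor is bounded just as in \eqref{drf3est}. Thus $B$ becomes structurally the $v_3$-integral (or its $v_{3,1},v_{3,2}$ pieces) with the $F_3$ contribution replaced by a quantity of size $\lambda_0(t)^{\alpha-1}\|e_1-e_2\|_X(\log\log t)^{-1/2}$ times a kernel with the same $\rho$- and $s$-decay; re-running the region decomposition $\rho\lessgtr\lambda_0(t)^{1-\alpha}$, $r\lessgtr\lambda_0(t)^{1-\alpha}$ from the proofs of \eqref{v3preciseforip} and \eqref{drv3est}, with $|\lambda_2''(s)|\leq Cs^{-2}\log^{-b-1}s$ on the prefactor, gives the $r\leq t/2$ bound, and the crude estimate $|F_3(\lambda_1)-F_3(\lambda_2)|\leq C$ together with $\int_0^{s-t}\frac{\rho\,d\rho}{\sqrt{(s-t)^2-\rho^2}}=s-t$ gives the $r>t/2$ bound.

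The main obstacle will be the power bookkeeping in $B$: one must verify that the $\log\log t$ generated by the $\rho$-integral of type $\int_0^1 w\,dw/(1+w^2\lambda_0(t)^{2\alpha-2})$ (compare \eqref{v31bi1initial}) combines with the gain $(\log\log t)^{-1/2}$ coming from $|e_1-e_2|$ and with the various powers of $\lambda_0(t)\sim\log^{-b}(t)$ so as to land inside the target $\log^{-b}(t)(\log\log t)^{-1/2}$ — which it does, with room to spare, precisely because the right-hand side of this lemma carries $\log^{b}$ rather than the $\log^{b+1}$ occurring in the $v_3$ estimates themselves. Everything else is a routine repetition of the already-established computations for $v_3$.
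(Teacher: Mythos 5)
Your overall strategy is the paper's own: split the difference into a piece carrying the prefactor $e_1''-e_2''$ (with $F_3(r,\rho,\lambda_1(s))$ retained) and a piece carrying $\lambda_2''$ times $F_3(r,\rho,\lambda_1(s))-F_3(r,\rho,\lambda_2(s))$, bound the latter by a mean-value estimate in the $\lambda$-slot using $\tfrac12\lambda_{0}\le\lambda_i\le 2\lambda_{0}$, and re-run the region decompositions from the $v_3$ estimates (the paper performs this split inside each of $v_{3,1}$, $v_{3,2,a}$, $v_{3,2,b}$ rather than globally, but that is cosmetic). Your treatment of the $A$-term in both regions and of the $B$-term for $r\le t/2$, including the bookkeeping that turns $\log\log t\cdot(\log\log t)^{-1/2}$ and the loss of one power of $\log t$ into the stated $\log^{-b}(t)(\log\log t)^{-1/2}$, is exactly what the paper does.

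There is, however, a genuine gap in your $r>\tfrac t2$ estimate for $B$: you propose the crude bound $|F_3(r,\rho,\lambda_1(s))-F_3(r,\rho,\lambda_2(s))|\le C$. That bound discards the difference structure, so the resulting estimate $\tfrac{C}{r}\int_t^\infty|\lambda_2''(s)|(s-t)\,ds\le \tfrac{C}{r\log^{b}(t)}$ contains neither the factor $||e_1-e_2||_X$ nor the factor $(\log\log t)^{-1/2}$, and hence does not prove \eqref{v3lip} (and would be useless for the contraction argument, whose smallness comes precisely from $||e_1-e_2||_X$). The correct treatment, which is what the paper does, is to keep the mean-value bound in the exterior region as well: by \eqref{drf3est}-type estimates, $|F_3(r,\rho,\lambda_1(s))-F_3(r,\rho,\lambda_2(s))|\le C\,r^2\lambda_{0,0}(s)^{2\alpha-3}|e_1(s)-e_2(s)|\bigl(1+\lambda_{0,0}(s)^{4\alpha-4}(\rho^2-r^2)^2+2\lambda_{0,0}(s)^{2\alpha-2}(\rho^2+r^2)\bigr)^{-1}$, and then bound the denominator below by $2r^{2}\lambda_{0,0}(s)^{2\alpha-2}$. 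The $r^2$ cancels, the powers of $\lambda_{0,0}$ collapse to $\lambda_{0,0}(s)^{-1}\sim\log^{b}(s)$, and together with $|e_1(s)-e_2(s)|\le C||e_1-e_2||_X\log^{-b}(s)(\log\log s)^{-1/2}$ the integrand reduces to $|\lambda_2''(s)|(s-t)\,||e_1-e_2||_X(\log\log s)^{-1/2}$, whose $s$-integral gives exactly $\tfrac{C||e_1-e_2||_X}{r\log^{b}(t)\sqrt{\log\log t}}$. With this replacement your argument coincides with the paper's proof.
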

\begin{proof}
Note that $v_{3,1,2}$ solves the following equation with $0$ Cauchy data at infinity:
\begin{equation} -\partial_{tt}v_{3,1,2}+\partial_{rr}v_{3,1,2}+\frac{1}{r} \partial_{r} v_{3,1,2} - \frac{v_{3,1,2}}{r^{2}} = F_{0,1}^{\lambda_{1}}(t,r) - F_{0,1}^{\lambda_{2}}(t,r)\end{equation}
With the definitions from the $v_{3}$ subsection:
\begin{equation} v^{\lambda}_{3,1}(t,r)= \frac{-1}{r} \int_{t}^{\infty} ds \int_{0}^{s-t} \frac{\rho d\rho}{(s-t)} \lambda''(s) \left(\frac{-1-\rho^{2}+r^{2}}{\sqrt{(1+\rho^{2}-r^{2})^{2}+4r^{2}}} + F_{3}(r,\rho,\lambda(s))\right)\end{equation}
and
\begin{equation} v^{\lambda}_{3,2} = v^{\lambda}_{3} - v^{\lambda}_{3,1}\end{equation}
we then consider
\begin{equation} v_{3,2,a}^{\lambda}(t,r) := \frac{-1}{r} \int_{t}^{\infty} ds \int_{0}^{s-t} \rho d\rho \left(\frac{1}{\sqrt{(s-t)^{2}-\rho^{2}}}-\frac{1}{(s-t)}\right) \lambda''(s) \left(\frac{-1-\rho^{2}+r^{2}}{\sqrt{(1+\rho^{2}-r^{2})^{2}+4r^{2}}} + 1\right)\end{equation}
\begin{equation} v_{3,2,b}^{\lambda}(t,r) := \frac{-1}{r} \int_{t}^{\infty} ds \int_{0}^{s-t} \rho d\rho \left(\frac{1}{\sqrt{(s-t)^{2}-\rho^{2}}}-\frac{1}{(s-t)}\right) \lambda''(s) \left(-1+F_{3}(r,\rho,\lambda(s))\right)\end{equation}
and get
\begin{equation} \begin{split} &|v_{3,2,a}^{\lambda_{1}}(t,r) - v_{3,2,a}^{\lambda_{2}}(t,r)| \\
&\leq \frac{C}{r} \int_{t}^{\infty} ds \int_{0}^{s-t} \rho d\rho \left(\frac{1}{\sqrt{(s-t)^{2}-\rho^{2}}}-\frac{1}{(s-t)}\right) |\lambda_{1}''(s)-\lambda_{2}''(s)| \left(\frac{-1-\rho^{2}+r^{2}}{\sqrt{(1+\rho^{2}-r^{2})^{2}+4r^{2}}} +1\right)\\
&\leq \frac{C}{r} \int_{0}^{\infty} \rho d\rho \left(1+\frac{-1-\rho^{2}+r^{2}}{\sqrt{(1+\rho^{2}-r^{2})^{2}+4r^{2}}}\right) \int_{\rho + t}^{\infty} ds \left(\frac{1}{\sqrt{(s-t)^{2}-\rho^{2}}}-\frac{1}{(s-t)}\right) \frac{||e_{1}-e_{2}||_{X}}{s^{2} \log^{b+1}(s) \sqrt{\log(\log(s))}}\\
\end{split}\end{equation}
which gives
\begin{equation} |v_{3,2,a}^{\lambda_{1}}(t,r) - v_{3,2,a}^{\lambda_{2}}(t,r)| \leq \frac{C r ||e_{1}-e_{2}||_{X} }{t^{2}\log^{b+1}(t) \sqrt{\log(\log(t))}}\end{equation}
For $v_{3,2,b}$, we have
\begin{equation}\label{v32b}\begin{split} &v_{3,2,b}^{\lambda_{1}}(t,r) - v_{3,2,b}^{\lambda_{2}}(t,r) \\
&= \frac{-1}{r} \int_{t}^{\infty} ds \int_{0}^{s-t} \rho d\rho \left(\frac{1}{\sqrt{(s-t)^{2}-\rho^{2}}}-\frac{1}{(s-t)}\right) \left(\lambda_{1}''(s) - \lambda_{2}''(s)\right) \left(-1+F_{3}(r,\rho,\lambda_{1}(s)\right)\\
&+\frac{-1}{r} \int_{t}^{\infty} ds \int_{0}^{s-t} \rho d\rho \left(\frac{1}{\sqrt{(s-t)^{2}-\rho^{2}}}-\frac{1}{(s-t)}\right) \lambda_{2}''(s) \left(F_{3}(r,\rho,\lambda_{1}(s))-F_{3}(r,\rho,\lambda_{2}(s))\right)\end{split}\end{equation}
The first line on the right-hand side of \eqref{v32b} is treated identically to the analogous term arising in the pointwise estimates for $v_{3}$, and it is bounded above in absolute value by
$$\frac{C r ||e_{1}-e_{2}||_{X}}{t^{2}\log^{b+1}(t) \sqrt{\log(\log(t))}}$$
For the second line on the right-hand side of \eqref{v32b}, we start with 
\begin{equation} |F_{3}(r,\rho,z_{1}) - F_{3}(r,\rho,z_{2})| \leq \max_{\sigma \in [0,1]} |\partial_{3}F_{3}(r,\rho,\sigma z_{1}+(1-\sigma) z_{2})| |z_{1}-z_{2}|\end{equation}
we then note that
\begin{equation} \partial_{3}F_{3}(r,\rho,z) = \frac{-4(-1+\alpha) r^{2} z^{-3+2\alpha} (1+(r^{2}-\rho^{2})z^{2\alpha-2})}{((1+(r^{2}-\rho^{2})z^{2\alpha-2})^{2}+4\rho^{2}z^{2\alpha-2})^{3/2}}\end{equation}
so, for $z = \sigma \lambda_{1}(s)+(1-\sigma)\lambda_{2}(s)$, 
\begin{equation} \begin{split}|\partial_{3}F_{3}(r,\rho,z)| &\leq \frac{C r^{2} \lambda_{0,0}(s)^{2\alpha-3}}{(1+(-\rho^{2}+r^{2})\lambda_{0,0}(s)^{2\alpha-2})^{2}+4 \rho^{2} \lambda_{0,0}(s)^{2\alpha-2}}\\
&\leq \frac{C r^{2} \lambda_{0,0}(s)^{2\alpha-3}}{1+2(\rho^{2}+r^{2})\lambda_{0,0}(t)^{2\alpha-2}+(\rho^{2}-r^{2})^{2} \lambda_{0,0}(t)^{4\alpha-4}}\end{split}\end{equation}
Then, we get
\begin{equation} \begin{split} &|\frac{-1}{r} \int_{t}^{\infty} ds \int_{0}^{s-t} \rho d\rho \left(\frac{1}{\sqrt{(s-t)^{2}-\rho^{2}}}-\frac{1}{(s-t)}\right) \lambda_{2}''(s) \left(F_{3}(r,\rho,\lambda_{1}(s))-F_{3}(r,\rho,\lambda_{2}(s))\right)|\\
&\leq \frac{C}{r} \int_{0}^{\infty} \frac{\rho d\rho}{t^{2}\log^{b+1}(t)} \frac{r^{2} ||e_{1}-e_{2}||_{X}}{\log^{b(2\alpha-3)}(t) \log^{b}(t) \sqrt{\log(\log(t))}}\frac{1}{(1+2(\rho^{2}+r^{2})\lambda_{0,0}(t)^{2\alpha-2}+(\rho^{2}-r^{2})^{2} \lambda_{0,0}(t)^{4\alpha-4})}\end{split}\end{equation}
The $\rho$ integral appearing on the last line of the above estimate has been treated in the $v_{3}$ pointwise estimates, and, in total, we get

\begin{equation} |v_{3,2,b}^{\lambda_{1}}(t,r) - v_{3,2,b}^{\lambda_{2}}(t,r)| \leq \frac{C r ||e_{1}-e_{2}||_{X}}{t^{2}\log^{b+1}(t) \sqrt{\log(\log(t))}}  \end{equation}

We similarly divide $v_{3,1}^{\lambda_{1}}-v_{3,1}^{\lambda_{2}}$ into two parts. First, we consider
\begin{equation}\begin{split} &|\frac{-1}{r} \int_{t}^{\infty} ds \int_{0}^{s-t} \frac{\rho d\rho}{(s-t)} \left(\lambda_{1}''(s)-\lambda_{2}''(s)\right) \left(\frac{-1-\rho^{2}+r^{2}}{\sqrt{(1+\rho^{2}-r^{2})^{2}+4r^{2}}}+F_{3}(r,\rho,\lambda_{1}(s))\right)|\\
&\leq \frac{C}{r} \int_{t}^{t+6r} ds \int_{0}^{s-t} \frac{\rho d\rho}{(s-t)} |\lambda_{1}''(s)-\lambda_{2}''(s)|\\
&+ |\int_{t+6r}^{\infty} \frac{ds}{r} \int_{0}^{s-t} \frac{\rho d\rho}{(s-t)} \left(\lambda_{1}''(s)-\lambda_{2}''(s)\right) \left(\frac{-1-\rho^{2}+r^{2}}{\sqrt{(-1-\rho^{2}+r^{2})^{2}+4r^{2}}}+F_{3}(r,\rho,\lambda_{1}(s))\right)|\\
&\leq \frac{C r ||e_{1}-e_{2}||_{X}}{t^{2}\log^{b+1}(t) \sqrt{\log(\log(t))}} + II\end{split}\end{equation}
where, using a procedure similar to the analogous term treated in the $v_{3}$ pointwise estimates, we have
\begin{equation}\begin{split} II &= |-2 r \int_{6r}^{\infty} dw \left(\lambda_{1}''(t+w)-\lambda_{2}''(t+w)\right)\left(\frac{w}{2(1+w^{2})} - \frac{w}{2(\lambda_{1}(t+w)^{2-2\alpha}+w^{2})}\right)| +E_{II}\end{split}\end{equation}
with $$|E_{II}| \leq \frac{C r ||e_{1}-e_{2}||_{X}}{t^{2}\log^{b+1}(t) \sqrt{\log(\log(t))}}$$
We already estimated the integral appearing in the above expression in \eqref{v31bi1initial} (except with the replacement of $\lambda''$ with $\lambda_{1}''-\lambda_{2}''$). Therefore, the same procedure used there shows
\begin{equation}\begin{split} |II| &\leq C r \log(\log(t)) \sup_{x \geq t}|\lambda_{1}''(x)-\lambda_{2}''(x)|+ \frac{C r ||e_{1}-e_{2}||_{X}}{t^{2}\log^{b+1}(t) \sqrt{\log(\log(t))}}\\
&\leq \frac{C r \log(\log(t)) ||e_{1}-e_{2}||_{X}}{t^{2} \log^{b+1}(t) \sqrt{\log(\log(t))}} +\frac{C r ||e_{1}-e_{2}||_{X}}{t^{2}\log^{b+1}(t) \sqrt{\log(\log(t))}} \end{split}\end{equation}

Next, we consider the second part of $v_{3,1}^{\lambda_{1}}-v_{3,1}^{\lambda_{2}}$:
\begin{equation} \begin{split} &|\frac{-1}{r} \int_{t}^{\infty} ds \int_{0}^{s-t} \frac{\rho d\rho}{(s-t)} \lambda_{2}''(s) \left(F_{3}(r,\rho,\lambda_{1}(s))-F_{3}(r,\rho,\lambda_{2}(s))\right)|\\
&\leq \frac{C}{r} \int_{t}^{\infty} ds \int_{0}^{s-t} \frac{\rho d\rho}{(s-t)} |\lambda_{2}''(s)| \frac{r^{2}(\lambda_{0,0}(s)^{2\alpha-3}) |\lambda_{1}(s)-\lambda_{2}(s)|}{(1+2(\rho^{2}+r^{2})\lambda_{0,0}(t)^{2\alpha-2}+(\rho^{2}-r^{2})^{2}\lambda_{0,0}(t)^{4\alpha-4})}\\
&\leq C r \int_{t}^{t+\lambda_{0,0}(t)^{1-\alpha}} ds \int_{0}^{s-t} \frac{\rho d\rho}{(s-t)} |\lambda_{2}''(s)| |e_{1}(s)-e_{2}(s)| \lambda_{0,0}(s)^{2\alpha-3}\\
&+ C r \int_{t+\lambda_{0,0}(t)^{1-\alpha}}^{\infty} \frac{ds}{(s-t)} |\lambda_{2}''(s)| \frac{||e_{1}-e_{2}||_{X} \lambda_{0,0}(s)^{2\alpha-3}}{\log^{b}(s) \sqrt{\log(\log(s))}} \int_{0}^{\infty}\frac{\rho d\rho}{(1+2(\rho^{2}+r^{2})\lambda_{0,0}(t)^{2\alpha-2} + (\rho^{2}-r^{2})^{2} \lambda_{0,0}(t)^{4\alpha-4})}\\
&\leq C r \int_{t}^{t+\lambda_{0,0}(t)^{1-\alpha}} ds (s-t) |\lambda_{2}''(s)| |e_{1}(s)-e_{2}(s)| \lambda_{0,0}(s)^{2\alpha -3} + \frac{C r ||e_{1}-e_{2}||_{X}}{\log^{b}(t) \sqrt{\log(\log(t))}} \int_{t+\lambda_{0,0}(t)^{1-\alpha}}^{\infty} \frac{\lambda_{0,0}(s)^{2\alpha-3} \lambda_{0,0}(t)^{2-2\alpha}ds}{(s-t) s^{2} \log^{b+1}(s)}\\
&\leq \frac{C r ||e_{1}-e_{2}||_{X}}{t^{2}\log^{b+1}(t) \sqrt{\log(\log(t))}}+ \frac{C r ||e_{1}-e_{2}||_{X}}{t^{2}\sqrt{\log(\log(t))} \log^{b}(t)}\\
&\leq \frac{C r ||e_{1}-e_{2}||_{X}}{t^{2}\sqrt{\log(\log(t))} \log^{b}(t)}\end{split}\end{equation}

Combining the above estimates, we get
\begin{equation} \begin{split} |(v_{3}^{\lambda_{1}}-v_{3}^{\lambda_{2}})(t,r)| &\leq \frac{C r ||e_{1}-e_{2}||_{X}}{t^{2} \sqrt{\log(\log(t))} \log^{b}(t)}\end{split}\end{equation} 
Next, we prove an estimate on $v_{3,1,2}$ which is more useful for large $r$.
\begin{equation} v_{3}^{\lambda}(t,r) = \frac{-1}{r} \int_{t}^{\infty} ds \int_{0}^{s-t} \frac{\rho d\rho}{\sqrt{(s-t)^{2}-\rho^{2}}} \lambda''(s) \left(\frac{-1-\rho^{2}+r^{2}}{\sqrt{(-1-\rho^{2}+r^{2})^{2}+4r^{2}}} + F_{3}(r,\rho,\lambda(s))\right)\end{equation}
and get
\begin{equation}\label{v3liplarger} \begin{split} v_{3}^{\lambda_{1}}-v_{3}^{\lambda_{2}} (t,r) &= \frac{-1}{r} \int_{t}^{\infty} ds \int_{0}^{s-t} \frac{\rho d\rho}{\sqrt{(s-t)^{2}-\rho^{2}}} \left(\lambda_{1}''(s)-\lambda_{2}''(s)\right) \left(\frac{-1-\rho^{2}+r^{2}}{\sqrt{(-1-\rho^{2}+r^{2})^{2}+4r^{2}}} + F_{3}(r,\rho,\lambda_{1}(s))\right)\\
&-\frac{1}{r} \int_{t}^{\infty} ds \int_{0}^{s-t} \frac{\rho d\rho}{\sqrt{(s-t)^{2}-\rho^{2}}} \lambda_{2}''(s) \left(F_{3}(r,\rho,\lambda_{1}(s))-F_{3}(r,\rho,\lambda_{2}(s))\right)\end{split}\end{equation} 
For the first line of the right-hand side of \eqref{v3liplarger}, we have
\begin{equation}\begin{split} &|\frac{-1}{r} \int_{t}^{\infty} ds \int_{0}^{s-t} \frac{\rho d\rho}{\sqrt{(s-t)^{2}-\rho^{2}}} \left(\lambda_{1}''(s)-\lambda_{2}''(s)\right) \left(\frac{-1-\rho^{2}+r^{2}}{\sqrt{(-1-\rho^{2}+r^{2})^{2}+4r^{2}}} + F_{3}(r,\rho,\lambda_{1}(s))\right)|\\
&\leq \frac{C}{r} \int_{t}^{\infty} ds (s-t) \frac{||e_{1}-e_{2}||_{X}}{s^{2} \log^{b+1}(s) \sqrt{\log(\log(s))}}\\
&\leq \frac{C ||e_{1}-e_{2}||_{X}}{r \log^{b}(t) \sqrt{\log(\log(t))}}\end{split}\end{equation}

For the second line on the right-hand side of \eqref{v3liplarger}, we use the same method used previously to estimate 
$$F_{3}(r,\rho,\lambda_{1}) - F_{3}(r,\rho,\lambda_{2})$$
and we get
\begin{equation}\begin{split} &|-\frac{1}{r} \int_{t}^{\infty} ds \int_{0}^{s-t} \frac{\rho d\rho}{\sqrt{(s-t)^{2}-\rho^{2}}} \lambda_{2}''(s) \left(F_{3}(r,\rho,\lambda_{1}(s))-F_{3}(r,\rho,\lambda_{2}(s))\right)|\\
&\leq \frac{C}{r} \int_{t}^{\infty} ds \int_{0}^{s-t} \frac{\rho d\rho}{\sqrt{(s-t)^{2}-\rho^{2}}} |\lambda_{2}''(s)| \frac{\lambda_{0,0}(s)^{-3+2\alpha}}{\log^{b}(s) \sqrt{\log(\log(s))}} \frac{r^{2} ||e_{1}-e_{2}||_{X}}{(1+\lambda_{0,0}(s)^{4\alpha -4}(\rho^{2}-r^{2})^{2}+2\lambda_{0,0}(s)^{2\alpha-2}(\rho^{2}+r^{2}))}\\
&\leq \frac{C}{r} \int_{t}^{\infty} ds \int_{0}^{s-t} \frac{\rho d\rho}{\sqrt{(s-t)^{2}-\rho^{2}}} \frac{|\lambda_{2}''(s)| \lambda_{0,0}(s)^{-3+2\alpha}}{\log^{b}(s) \sqrt{\log(\log(s))}} \frac{r^{2} ||e_{1}-e_{2}||_{X}}{r^{2} \lambda_{0,0}(s)^{2\alpha -2}}\\
&\leq \frac{C ||e_{1}-e_{2}||_{X}}{r \log^{b}(t) \sqrt{\log(\log(t))}}\end{split}\end{equation}
whence, we conclude the final estimate
\begin{equation} |v_{3}^{\lambda_{1}}-v_{3}^{\lambda_{2}}|(t,r) \leq \frac{C ||e_{1}-e_{2}||_{X}}{r \log^{b}(t) \sqrt{\log(\log(t))}}, \quad r > \frac{t}{2}\end{equation}
\end{proof}
Next, we recall the function $E_{5}$ defined in the $v_{3}$ subsection:
\begin{equation} v_{3}^{\lambda}(t,r) = -r \int_{t+6r}^{\infty} ds \lambda''(s) (s-t) \left(\frac{1}{1+(s-t)^{2}}-\frac{1}{\lambda(t)^{2-2\alpha}+(s-t)^{2}}\right) + E_{5}^{\lambda}(t,r) \end{equation}

We will need to estimate $E_{5}^{\lambda_{1}}-E_{5}^{\lambda_{2}}$. For this purpose, some of the estimates already proven for $v_{3}^{\lambda_{1}}-v_{3}^{\lambda_{2}}$ will suffice, but we will need to use a slightly more complicated argument for some terms. In particular, we have
\begin{equation} E_{5}^{\lambda}(t,r) = v_{3,1,a}^{\lambda}(t,r) + v_{3,1,b,i,1}^{\lambda}(t,r) + v_{3,1,b,ii}^{\lambda}(t,r) + v_{3,2}^{\lambda}(t,r)\end{equation}
with
$$v_{3,1,b,i,1}^{\lambda}(t,r) = -2 r \int_{6r}^{\infty} dw \lambda''(t+w) w \left(\frac{-1}{2(\lambda(t+w)^{2-2\alpha}+w^{2})}+\frac{1}{2 (\lambda(t)^{2-2\alpha}+w^{2})}\right)$$
\begin{equation}\begin{split}v_{3,1,b,ii}^{\lambda}(t,r) &= \frac{-1}{r} \int_{t+6r}^{\infty}  \int_{0}^{s-t} \frac{\rho}{(s-t)} \lambda''(s) \left(\left(\frac{-1-\rho^{2}+r^{2}}{\sqrt{(-1-\rho^{2}+r^{2})^{2}+4r^{2}}}+F_{3}(r,\rho,\lambda(s))\right)\right.\\
&-\left. \left(\frac{2 r^{2}}{(1+\rho^{2})^{2}}-\frac{2r^{2}\lambda(s)^{2-2\alpha}}{(\lambda(s)^{2-2\alpha}+\rho^{2})^{2}}\right)\right)d\rho ds\end{split}\end{equation}
and
\begin{equation} v_{3,1,a}^{\lambda}(t,r) = \frac{-1}{r} \int_{t}^{t+6r} ds \int_{0}^{s-t} \frac{\rho d\rho}{(s-t)} \lambda''(s) \left(\frac{-1-\rho^{2}+r^{2}}{\sqrt{(-1-\rho^{2}+r^{2})^{2}+4r^{2}}}+F_{3}(r,\rho,\lambda(s))\right)\end{equation}
(Note that $v_{3,2}$ was already defined when proving estimates on $v_{3}^{\lambda_{1}}-v_{3}^{\lambda_{2}}$).

\begin{equation} \begin{split} v_{3,1,a}^{\lambda_{1}}-v_{3,1,a}^{\lambda_{2}} &= -\frac{1}{r} \int_{t}^{t+6r} ds \int_{0}^{s-t} \frac{\rho d\rho}{(s-t)} \left(\lambda_{1}''(s)-\lambda_{2}''(s)\right) \left(\frac{-1-\rho^{2}+r^{2}}{\sqrt{(-1-\rho^{2}+r^{2})^{2}+4r^{2}}}+F_{3}(r,\rho,\lambda_{1}(s))\right)\\
&-\frac{1}{r} \int_{t}^{t+6r} ds \int_{0}^{s-t} \frac{\rho d\rho}{(s-t)} \lambda_{2}''(s) \left(F_{3}(r,\rho,\lambda_{1}(s))-F_{3}(r,\rho,\lambda_{2}(s))\right)\end{split}\end{equation}
This gives
\begin{equation} \begin{split} |v_{3,1,a}^{\lambda_{1}}-v_{3,1,a}^{\lambda_{2}}| &\leq \frac{C}{r} \int_{t}^{t+6r} ds \int_{0}^{s-t} \frac{\rho d\rho}{(s-t)} |e_{1}''(s)-e_{2}''(s)| \\
&+ \frac{C}{r} \int_{t}^{t+6r} ds \int_{0}^{s-t} \frac{\rho d\rho}{(s-t)} \frac{|\lambda_{2}''(s)| (r^{2} \lambda_{0,0}(t)^{2\alpha -2}) \frac{\lambda_{0,0}(s)^{2\alpha -3}}{\lambda_{0,0}(t)^{2\alpha -2}} |e_{1}(s)-e_{2}(s)|}{(1+2(\rho^{2}+r^{2})\lambda_{0,0}(t)^{2\alpha -2} +(\rho^{2}-r^{2})^{2} \lambda_{0,0}(t)^{4\alpha -4})}\\
&\leq \frac{C r ||e_{1}-e_{2}||_{X}}{t^{2}\log^{b+1}(t) \sqrt{\log(\log(t))}} + \frac{C}{r} \int_{t}^{t+6r}ds  \int_{0}^{s-t} \frac{\rho d\rho}{(s-t)} |\lambda_{2}''(s)| \frac{\lambda_{0,0}(s)^{2\alpha -3}}{\lambda_{0,0}(t)^{2\alpha -2}} |e_{1}(s)-e_{2}(s)|\\
&\leq \frac{C r ||e_{1}-e_{2}||_{X}}{t^{2}\log^{b+1}(t) \sqrt{\log(\log(t))}}\end{split}\end{equation}

For the next term, we have
\begin{equation}\label{v3b1i1lip} \begin{split} &v_{3,1,b,i,1}^{\lambda_{1}}-v_{3,1,b,i,1}^{\lambda_{2}}\\
&= -2 r \int_{6r}^{\infty} dw \left(\lambda_{1}''(t+w)-\lambda_{2}''(t+w)\right) \left(\frac{-w}{2(\lambda_{1}(t+w)^{2-2\alpha}+w^{2})}+\frac{w}{2(\lambda_{1}(t)^{2-2\alpha}+w^{2})}\right)\\
&-2 r \int_{6r}^{\infty} dw \lambda_{2}''(t+w) \left(\frac{-w}{2(\lambda_{1}(t+w)^{2-2\alpha}+w^{2})}+\frac{w}{2(\lambda_{1}(t)^{2-2\alpha}+w^{2})}\right)\\
&+2 r \int_{6r}^{\infty} dw \lambda_{2}''(t+w) \left(\frac{-w}{2(\lambda_{2}(t+w)^{2-2\alpha}+w^{2})}+\frac{w}{2(\lambda_{2}(t)^{2-2\alpha}+w^{2})}\right)\end{split}\end{equation}
For the first line on the right-hand side of \eqref{v3b1i1lip}, we have
\begin{equation}\begin{split} &|-2 r \int_{6r}^{\infty} dw \left(\lambda_{1}''(t+w)-\lambda_{2}''(t+w)\right)\left(\frac{-w}{2(\lambda_{1}(t+w)^{2-2\alpha}+w^{2})}+\frac{w}{2(\lambda_{1}(t)^{2-2\alpha}+w^{2})}\right)|\\
&\leq C r \int_{6r}^{\infty} dw \frac{|e_{1}''(t+w)-e_{2}''(t+w)| w \left(w \lambda_{0,0}(t)^{1-2\alpha} |\lambda_{0,0}'(t)|\right)}{w^{2} \lambda_{0,0}(t+w)^{2-2\alpha} (1+w^{2}\lambda_{0,0}(t)^{2\alpha-2})}\\
&\leq \frac{C r ||e_{1}-e_{2}||_{X}}{t^{3} \log^{b +2}(t)\sqrt{\log(\log(t))}} \int_{0}^{\infty} \frac{dw}{(1+w^{2}\lambda_{0,0}(t)^{2\alpha-2})}\\
&\leq \frac{C r ||e_{1}-e_{2}||_{X}}{t^{3}\log^{b+2}(t) \sqrt{\log(\log(t))}}\end{split}\end{equation}

The second and third lines of \eqref{v3b1i1lip} can be treated together:
\begin{equation}\begin{split} &-2 r \int_{6r}^{\infty} dw \lambda_{2}''(t+w) \left(\frac{-w}{2(\lambda_{1}(t+w)^{2-2\alpha}+w^{2})}+\frac{w}{2(\lambda_{1}(t)^{2-2\alpha}+w^{2})}\right)\\
&+2 r \int_{6r}^{\infty} dw \lambda_{2}''(t+w) \left(\frac{-w}{2(\lambda_{2}(t+w)^{2-2\alpha}+w^{2})}+\frac{w}{2(\lambda_{2}(t)^{2-2\alpha}+w^{2})}\right)\\
&= -r \int_{6r}^{\infty} dw \lambda_{2}''(t+w)w \left(F(t+w)-F(t)\right)\end{split}\end{equation}

where $$F(x) = \frac{\lambda_{1}(x)^{2-2\alpha}-\lambda_{2}(x)^{2-2\alpha}}{(\lambda_{1}(x)^{2-2\alpha}+w^{2})(\lambda_{2}(x)^{2-2\alpha}+w^{2})}$$
We first note that
\begin{equation} |F'(\sigma s +(1-\sigma) t)| \leq \frac{C \lambda_{0,0}(t)^{1-2\alpha} ||e_{1}-e_{2}||_{X}}{t \log^{b}(t) \log^{-b}(s) \log^{b+1}(t) \sqrt{\log(\log(t))} (w^{2}+\lambda_{0,0}(s)^{2-2\alpha})^{2}}, \quad 0 \leq \sigma \leq 1\end{equation}

Then, we get
\begin{equation}\begin{split} &|-r \int_{6r}^{\infty} dw \lambda_{2}''(t+w)w \left(F(t+w)-F(t)\right)| \\
&\leq \frac{C r ||e_{1}-e_{2}||_{X} \log^{-b(1-2\alpha)}(t)}{t \log^{b+1}(t) \sqrt{\log(\log(t))}} \int_{6r}^{\infty} \frac{|\lambda_{2}''(t+w)| dw}{(w^{2}+\lambda_{0,0}(s)^{2-2\alpha})\log^{b}(t) \log^{-b}(t+w)}\\
&\leq \frac{C r \log^{-b(1-2\alpha)}(t) ||e_{1}-e_{2}||_{X}}{t \log^{b+1}(t) \sqrt{\log(\log(t))} \log^{b}(t)} \int_{t+6r}^{\infty} \frac{ds}{s^{2}\log(s) \lambda_{0,0}(s)^{2-2\alpha} (\lambda_{0,0}(s)^{2\alpha -2}(s-t)^{2}+1)}\\
&\leq \frac{C r ||e_{1}-e_{2}||_{X}}{t^{3}\log^{b+2}(t) \sqrt{\log(\log(t))}}\end{split}\end{equation}

It then remains to consider 
\begin{equation}\label{v31biilip}\begin{split}& v_{3,1,b,ii}^{\lambda_{1}}-v_{3,1,b,ii}^{\lambda_{2}} \\
&= \frac{-1}{r} \int_{t+6r}^{\infty}  \int_{0}^{s-t} \frac{\rho }{(s-t)} \left(\lambda_{1}''(s)-\lambda_{2}''(s)\right) \left(\frac{-1-\rho^{2}+r^{2}}{\sqrt{(-1-\rho^{2}+r^{2})^{2}+4r^{2}}}+F_{3}(r,\rho,\lambda_{1}(s)) \right.\\
&\left.- \left(\frac{2 r^{2}}{(1+\rho^{2})^{2}}-\frac{2 r^{2} \lambda_{1}(s)^{2-2\alpha}}{(\lambda_{1}(s)^{2-2\alpha} +\rho^{2})^{2}}\right)\right)d\rho ds\\
&-\frac{1}{r} \int_{t+6r}^{\infty} ds \int_{0}^{s-t} \frac{\rho d\rho}{(s-t)} \lambda_{2}''(s) \left(F_{3}(r,\rho,\lambda_{1}(s) + \frac{2 r^{2} \lambda_{1}(s)^{2-2\alpha}}{(\lambda_{1}(s)^{2-2\alpha}+\rho^{2})^{2}} - \left(F_{3}(r,\rho,\lambda_{2}(s) + \frac{2 r^{2} \lambda_{2}(s)^{2-2\alpha}}{(\lambda_{2}(s)^{2-2\alpha}+\rho^{2})^{2}}\right)\right)\end{split}\end{equation}

The first two lines on the right-hand side of \eqref{v31biilip} are estimated in the same way as the analogous term appearing in the $v_{3}$ pointwise estimates, and we get
\begin{equation}\begin{split}&|\frac{-1}{r} \int_{t+6r}^{\infty}  \int_{0}^{s-t} \frac{\rho }{(s-t)} \left(\lambda_{1}''(s)-\lambda_{2}''(s)\right) \left(\frac{-1-\rho^{2}+r^{2}}{\sqrt{(-1-\rho^{2}+r^{2})^{2}+4r^{2}}}+F_{3}(r,\rho,\lambda_{1}(s)) \right.\\
&\left.- \left(\frac{2 r^{2}}{(1+\rho^{2})^{2}}-\frac{2 r^{2} \lambda_{1}(s)^{2-2\alpha}}{(\lambda_{1}(s)^{2-2\alpha} +\rho^{2})^{2}}\right)\right)d\rho ds|\\
&\leq \frac{C r ||e_{1}-e_{2}||_{X}}{t^{2}\log^{b+1}(t) \sqrt{\log(\log(t))}}\end{split}\end{equation}

To treat the third line on the right-hand side of \eqref{v31biilip}, we start with defining $G_{3,1}$ by
\begin{equation} G_{3,1}(w,r,\lambda) := \int_{0}^{w} \rho \left( F_{3}(r,\rho,\lambda(s)) + \frac{2 r^{2} \lambda(s)^{2-2\alpha}}{(\lambda(s)^{2-2\alpha}+\rho^{2})^{2}}\right)d\rho\end{equation}
Then, we get 
\begin{equation} |G_{3,1}(w,r,\lambda_{1}(s))-G_{3,1}(w,r,\lambda_{2}(s))| \leq \frac{|e_{1}(s)-e_{2}(s)| \lambda_{0,0}(s)^{-3+2\alpha} r^{4}}{(1+\lambda_{0,0}(s)^{2\alpha-2}w^{2})^{2}}\end{equation}
which gives
\begin{equation}\begin{split}&|-\frac{1}{r} \int_{t+6r}^{\infty} ds \int_{0}^{s-t} \frac{\rho d\rho}{(s-t)} \lambda_{2}''(s) \left(F_{3}(r,\rho,\lambda_{1}(s)) + \frac{2 r^{2} \lambda_{1}(s)^{2-2\alpha}}{(\lambda_{1}(s)^{2-2\alpha}+\rho^{2})^{2}} - \left(F_{3}(r,\rho,\lambda_{2}(s)) + \frac{2 r^{2} \lambda_{2}(s)^{2-2\alpha}}{(\lambda_{2}(s)^{2-2\alpha}+\rho^{2})^{2}}\right)\right)|\\
&\leq \frac{C}{r} \int_{t+6r}^{\infty} ds \frac{|\lambda_{0,0}''(s)|}{(s-t)} \frac{|e_{1}(s)-e_{2}(s)| \lambda_{0,0}(s)^{1-2\alpha} r^{4}}{(\lambda_{0,0}(s)^{2-2\alpha})((s-t)^{2})}\\
&\leq \frac{C r ||e_{1}-e_{2}||_{X}}{t^{2}\log^{b+1}(t) \sqrt{\log(\log(t))}}\end{split}\end{equation}

Combining the above, we get
\begin{equation} |\left(E_{5}^{\lambda_{1}}-E_{5}^{\lambda_{2}}\right)(t,r)| \leq \frac{C r ||e_{1}-e_{2}||_{X}}{t^{2}\log^{b+1}(t) \sqrt{\log(\log(t))}}\end{equation}

\begin{lemma}We have the following estimates
\begin{equation}\label{v4lipfinalest}|v_{4}^{\lambda_{1}}-v_{4}^{\lambda_{2}}|(t,r) \leq \begin{cases} \frac{C ||e_{1}-e_{2}||_{X}}{t^{2}\sqrt{\log(\log(t))}\log^{N+3b-1}(t)}, \quad r \leq \frac{t}{2}\\
\frac{C ||e_{1}-e_{2}||_{X}}{t \log^{3b+2N}(t) \sqrt{\log(\log(t))}}, \quad r \geq \frac{t}{2}\end{cases}\end{equation}\end{lemma}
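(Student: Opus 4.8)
The plan is to prove \eqref{v4lipfinalest} by mirroring, essentially line for line, the proof of the pointwise estimates \eqref{v4finalest}--\eqref{dtv4largerest} for $v_4$ itself, with every input estimate replaced by its Lipschitz counterpart. The starting point is that $w:=v_4^{\lambda_1}-v_4^{\lambda_2}$ solves the linear equation
\begin{equation}
-\partial_{tt}w+\partial_{rr}w+\frac{1}{r}\partial_{r}w-\frac{w}{r^{2}}=v_{4,c}^{\lambda_1}(t,r)-v_{4,c}^{\lambda_2}(t,r)
\end{equation}
with $0$ Cauchy data at infinity, so that the representation formula \eqref{v4repformsimp} (which was used for $r\le t/2$) and the Hankel representation $w(t,r)=\int_t^\infty dx\int_0^\infty d\xi\,J_1(r\xi)\sin((t-x)\xi)\,\widehat{v_{4,c}^{\lambda_1}-v_{4,c}^{\lambda_2}}(x,\xi)$ (used for $r\ge t/2$) both apply verbatim with $v_{4,c}$ replaced by $v_{4,c}^{\lambda_1}-v_{4,c}^{\lambda_2}$. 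Hence the whole problem reduces to proving, for $v_{4,c}^{\lambda_1}-v_{4,c}^{\lambda_2}$, pointwise bounds of exactly the shape of \eqref{v4cest1}, \eqref{v4cest2}, \eqref{drv4cforfourier} together with an $L^2$ bound of the shape of the one preceding \eqref{xidxil2est}, all carrying an extra factor $\|e_1-e_2\|_X/\sqrt{\log(\log(t))}$ and with the displayed powers of $\log(t)$ adjusted to match \eqref{v4lipfinalest}.

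To produce those bounds we exploit that $\chi_{\geq 1}(2r/\log^N(t))$ and $v_2$ do not depend on $\lambda$ and write
\begin{align*}
v_{4,c}^{\lambda_1}-v_{4,c}^{\lambda_2}&=\chi_{\geq 1}\!\Big(\tfrac{2r}{\log^N(t)}\Big)\Big[\Big(\tfrac{\cos(2Q_1(r/\lambda_1(t)))-1}{r^2}-\tfrac{\cos(2Q_1(r/\lambda_2(t)))-1}{r^2}\Big)\big(v_1^{\lambda_1}+v_2+v_3^{\lambda_1}\big)\\
&\qquad+\Big(\tfrac{\cos(2Q_1(r/\lambda_2(t)))-1}{r^2}\Big)\big((v_1^{\lambda_1}-v_1^{\lambda_2})+(v_3^{\lambda_1}-v_3^{\lambda_2})\big)+\big(F_{0,2}^{\lambda_1}-F_{0,2}^{\lambda_2}\big)\Big].
\end{align*}
The first term is handled by the identity $\tfrac{\cos(2Q_1(r/\lambda))-1}{r^2}=-\tfrac{8\lambda^2}{(\lambda^2+r^2)^2}$, whose $\lambda$-derivative is $\tfrac{16\lambda(\lambda^2-r^2)}{(\lambda^2+r^2)^3}$, the mean value theorem, the bound $|\lambda_1(t)-\lambda_2(t)|\le \|e_1-e_2\|_X/(b\log^b(t)\sqrt{\log(\log(t))})$, and the already-proven pointwise estimates on $v_1,v_2,v_3$. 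The second term uses the pointwise bounds on $\tfrac{\cos(2Q_1(r/\lambda))-1}{r^2}$, the estimate \eqref{v3lip} for $v_3^{\lambda_1}-v_3^{\lambda_2}$, and the fact that $v_1^{\lambda_1}-v_1^{\lambda_2}$ is given by the formula \eqref{v1formula} with $\lambda''$ replaced by $e_1''-e_2''$, so that \eqref{v1smallrest} and \eqref{v1largerest} hold for it with $\sup_{x\ge t}|\lambda''(x)|$ and $\int_t^\infty|\lambda''(s)|(s-t)\,ds$ replaced by their $(e_1''-e_2'')$-analogues, which by the $X$-norm are $\le C\|e_1-e_2\|_X/(t^2\log^{b+1}(t)\sqrt{\log(\log(t))})$ and $\le C\|e_1-e_2\|_X/(\log^{b}(t)\sqrt{\log(\log(t))})$ respectively. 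For $F_{0,2}^{\lambda_1}-F_{0,2}^{\lambda_2}$ we use $F_{0,2}=-F_{0,1}+\tfrac{2\lambda''(t)r}{1+r^2}+\partial_t^2Q_{\frac{1}{\lambda(t)}}$, view each summand as a smooth function of $(r,\lambda(t),\lambda'(t),\lambda''(t))$ with symbol-type bounds in the last three variables, apply the mean value theorem, and use $|\lambda_1^{(j)}(t)-\lambda_2^{(j)}(t)|\le \|e_1-e_2\|_X/(t^j\log^{b+1}(t)\sqrt{\log(\log(t))})$ for $j=1,2$ together with the slightly better $j=0$ bound above.

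With the pointwise and $L^2$ estimates on $v_{4,c}^{\lambda_1}-v_{4,c}^{\lambda_2}$ in hand, the proof is finished by running the $v_4$-lemma argument unchanged: for $r\le t/2$ one uses \eqref{v4repformsimp} and the splitting \eqref{v4formulaforestimates} with $v_{4,c}$ replaced by $v_{4,c}^{\lambda_1}-v_{4,c}^{\lambda_2}$; for $r\ge t/2$ one estimates $\widehat{v_{4,c}^{\lambda_1}-v_{4,c}^{\lambda_2}}$ region by region in $\xi$ as in \eqref{v4chatest}, supplements it for $\xi\ge 1/\log^N$ by the analogue of \eqref{xidxil2est}, and then applies $|J_1(x)|+|J_1'(x)|\le C/\sqrt{x}$ in the Duhamel--Hankel formulas for $w$, $\partial_r w$ and $\partial_t w$. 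The main obstacle I anticipate is not the method but the bookkeeping in the middle step: one must verify that the coefficient-difference term and, especially, the $F_{0,2}^{\lambda_1}-F_{0,2}^{\lambda_2}$ term produce a source with enough spatial decay, and enough $L^2$ smallness on $\{r\ge t/2\}$, to feed the Hankel-transform argument without losing more powers of $\log(t)$ than are recorded in \eqref{v4lipfinalest}; this is where an inattentive estimate of either term would break the argument.
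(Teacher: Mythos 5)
Your middle step is exactly the paper's: the paper also writes $v_{4,c}^{\lambda_1}-v_{4,c}^{\lambda_2}$ as (coefficient difference)$\times(v_1^{\lambda_1}+v_2+v_3^{\lambda_1})$ plus (coefficient)$\times\big((v_1^{\lambda_1}-v_1^{\lambda_2})+(v_3^{\lambda_1}-v_3^{\lambda_2})\big)$ plus $F_{0,2}^{\lambda_1}-F_{0,2}^{\lambda_2}$, uses the linearity of the $v_1$-equation in $\lambda''$ so that $v_1^{\lambda_1}-v_1^{\lambda_2}=v_1^{e_1-e_2}$, invokes \eqref{v3lip}, and arrives at \eqref{v4cestlip}, \eqref{v4cestlip2}, which have the same shape as \eqref{v4cest1}, \eqref{v4cest2} with the extra factor $\|e_1-e_2\|_X/\sqrt{\log\log t}$. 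The divergence — and the gap — is in your final step, where you propose to rerun the $v_4$ lemma verbatim. First, \eqref{v4repformsimp}/\eqref{v4formulaforestimates} and the $\|\xi\widehat{\cdot}\|_{L^2}$ input \eqref{xidxil2est} require $\partial_r$ of the source, so your route needs Lipschitz bounds on $\partial_r v_{4,c}$, hence on $\partial_r(v_3^{\lambda_1}-v_3^{\lambda_2})$ and $\partial_r(F_{0,2}^{\lambda_1}-F_{0,2}^{\lambda_2})$; these are not provided by \eqref{v3lip} and would force you to redo the $\partial_r v_3$ analysis in difference form. The paper avoids all of this: for $r\le t/2$ it uses the pre-$\beta$ representation \eqref{v4prelimformula} with the angular factor $|r+\rho\cos\theta|/\sqrt{r^2+\rho^2+2r\rho\cos\theta}\le 1$, so only $|v_{4,c}^{\lambda_1}-v_{4,c}^{\lambda_2}|$ itself is needed, and for $r\ge t/2$ it uses a single Cauchy--Schwarz splitting at $\xi=1/r$ with only $\|\widehat{v_{4,c}^{\lambda_1}-v_{4,c}^{\lambda_2}}(s,\cdot)\|_{L^2(\xi d\xi)}=\|(v_{4,c}^{\lambda_1}-v_{4,c}^{\lambda_2})(s,\cdot)\|_{L^2(rdr)}$, i.e. the source difference with no derivative and no region-by-region $\widehat{\cdot}$ bounds.

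Second, and more seriously, even granting the derivative bounds, mirroring the $v_4$ proof produces the $r$-weighted estimate $Cr\|e_1-e_2\|_X/(t^2\log^{2N+3b-1}(t)\sqrt{\log\log t})$ for $r\le t/2$, whereas the lemma claims the $r$-uniform bound $C\|e_1-e_2\|_X/(t^2\log^{N+3b-1}(t)\sqrt{\log\log t})$. For $\log^{N}(t)\lesssim r\le t/2$ — precisely the support of $\chi_{\ge1}(2r/\log^N(t))$, so the difference does not vanish there — the $r$-weighted bound is strictly weaker and does not imply the stated one, so the lemma as written would not be proven on that range. (The loss of one power of $N$ and of the factor $r$ in the statement is exactly the signature of the derivative-free representation: integrating the undifferentiated $1/|z|^3$ source over $|z|\ge\log^N$ gives $\log^{-N}$ rather than the $r\cdot\log^{-2N}$ of the $\beta$-trick argument.) For $r\ge t/2$ your stronger bound $C\|e_1-e_2\|_X/(\sqrt r\,t\log^{3N/2+3b-1}(t)\sqrt{\log\log t})$ would be acceptable, since $1/\sqrt r\le C/\sqrt t$ and $\sqrt t$ dominates the missing $\log^{N/2+1}(t)$ for $t\ge T_0$, but there too it is simpler to follow the paper and use only the $L^2(rdr)$ norm of the source difference. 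So either adopt the cruder (derivative-free) representations at the last step, or be aware that your plan proves a differently shaped estimate that does not contain the stated one near $r\sim t/2$.
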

\begin{proof}
We proceed to estimate $v_{4,1,2}:=v_{4}^{\lambda_{1}}-v_{4}^{\lambda_{2}}$, by noting that $v_{4,1,2}$ solves the following equation with $0$ Cauchy data at infinity:
\begin{equation} -\partial_{tt}v_{4,1,2}+\partial_{rr}v_{4,1,2}+\frac{1}{r}\partial_{r}v_{4,1,2}-\frac{v_{4,1,2}}{r^{2}} = v_{4,c}^{\lambda_{1}}-v_{4,c}^{\lambda_{2}}\end{equation}

So, we start by estimating $v_{4,c}^{\lambda_{1}}-v_{4,c}^{\lambda_{2}}$:
\begin{equation} \label{v4clip}\begin{split} |v_{4,c}^{\lambda_{1}}-v_{4,c}^{\lambda_{2}}| &\leq C \frac{\chi_{\geq 1}(\frac{2r}{\log^{N}(t)}) ||e_{1}-e_{2}||_{X}}{\log^{2b}(t) \sqrt{\log(\log(t))}} \frac{|v_{1}^{\lambda_{1}}+v_{2}+v_{3}^{\lambda_{1}}|}{(\lambda_{0,0}(t)^{2}+r^{2})^{2}}\\
&+ C \chi_{\geq 1}(\frac{2r}{\log^{N}(t)}) \frac{||e_{1}-e_{2}||_{X}}{\log^{b}(t) \sqrt{\log(\log(t))}}\frac{1}{t^{2}\log^{2b+1-2\alpha b}(t)r^{3}}\\
&+C\frac{\chi_{\geq 1}(\frac{2r}{\log^{N}(t)}) \lambda_{0,0}(t)^{2}}{(r^{2}+\lambda_{0,0}(t)^{2})^{2}} \left(|v_{1}^{\lambda_{1}}-v_{1}^{\lambda_{2}}|+|v_{3}^{\lambda_{1}}-v_{3}^{\lambda_{2}}|\right)\end{split}\end{equation}
where we used the explicit formula for $F_{0,2}$.\\
\\
We note that the right-hand side of the equation for $v_{1}^{\lambda}$ depends linearly on $\lambda''$. Therefore\\
$v_{1}^{\lambda_{1}}-v_{1}^{\lambda_{2}}=v_{1}^{\lambda_{1}-\lambda_{2}}$, so we can use our estimates for $v_{1}^{\lambda}$ which were previously recorded.\\
We thus have, in addition to \eqref{v3lip}, the following estimates, valid for any $\lambda$ of the form 
$$\lambda(t) = \lambda_{0}(t) + f(t), \quad f \in \overline{B}_{1}(0) \subset X$$
\begin{equation} |v_{1}^{\lambda}+v_{2}+v_{3}^{\lambda}| \leq \begin{cases} \frac{C r}{t^{2}\log^{b}(t)}, \quad r \leq \frac{t}{2}\\
\frac{C \log(r)}{|t-r|}, \quad  t > r \geq \frac{t}{2}\end{cases}\end{equation}

\begin{equation} |v_{1}^{\lambda_{1}}-v_{1}^{\lambda_{2}}| \leq \begin{cases} \frac{C r ||e_{1}-e_{2}||_{X}}{t^{2}\log^{b}(t) \sqrt{\log(\log(t))}}, \quad r \leq \frac{t}{2}\\
\frac{C ||e_{1}-e_{2}||_{X}}{r \sqrt{\log(\log(t))} \log^{b}(t)}, \quad r >\frac{t}{2}\end{cases}\end{equation}
This gives
\begin{equation} \label{v4cestlip}|v_{4,c}^{\lambda_{1}}-v_{4,c}^{\lambda_{2}}| \leq C \chi_{\geq 1}(\frac{2r}{\log^{N}(t)}) ||e_{1}-e_{2}||_{X} \begin{cases} \frac{1}{r^{3}t^{2} \log^{3b}(t) \sqrt{\log(\log(t))}}, \quad r \leq \frac{t}{2}\\
\frac{\log(r)}{r^{4} \log^{2b}(t) \sqrt{\log(\log(t))} |t-r|} + \frac{1}{t^{2} \log^{3b+1-2\alpha b}(t) \sqrt{\log(\log(t))} r^{3}}, \quad t> r > \frac{t}{2}\end{cases}\end{equation}
Now, we start with

\begin{equation}\begin{split} &\left(v_{4}^{\lambda_{1}}-v_{4}^{\lambda_{2}}\right)(t,r) \\
&= \frac{-1}{2\pi} \int_{t}^{\infty} ds \int_{0}^{s-t} \frac{\rho d\rho}{\sqrt{(s-t)^{2}-\rho^{2}}} \int_{0}^{2\pi} d\theta \frac{(r+\rho \cos(\theta))(v_{4,c}^{\lambda_{1}}-v_{4,c}^{\lambda_{2}})(s,\sqrt{r^{2}+\rho^{2}+2 r \rho \cos(\theta)})}{\sqrt{r^{2}+\rho^{2}+2 r \rho \cos(\theta)}}\end{split}\end{equation}
which gives
\begin{equation} |v_{4}^{\lambda_{1}}-v_{4}^{\lambda_{2}}| \leq C \int_{t}^{\infty} ds \int_{0}^{s-t} \frac{\rho d\rho}{\sqrt{(s-t)^{2}-\rho^{2}}} \int_{0}^{2\pi} d\theta |v_{4,c}^{\lambda_{1}}-v_{4,c}^{\lambda_{2}}|(s,\sqrt{r^{2}+\rho^{2}+2 r \rho \cos(\theta)})\end{equation}

Note that \eqref{v4cestlip} has the same right-hand side as \eqref{v4cest1}, except for an extra factor of $\frac{||e_{1}-e_{2}||_{X}}{\sqrt{\log(\log(t))}}$. So, we can inspect the intermediate steps of the procedure used when obtaining pointwise estimates on $v_{4}$, thereby getting
\begin{equation} |v_{4}^{\lambda_{1}}-v_{4}^{\lambda_{2}}|(t,r) \leq \frac{C ||e_{1}-e_{2}||_{X}}{t^{2}\sqrt{\log(\log(t))}\log^{N+3b-1}(t)}, \quad r \leq \frac{t}{2} \end{equation}

In the region $r \geq \frac{t}{2}$, we use the following, slightly different estimate for $v_{1}^{\lambda}+v_{2}+v_{3}^{\lambda}$, again valid for all $\lambda(t) = \lambda_{0}(t) + f(t), \quad f \in \overline{B}_{1}(0) \subset X$:
\begin{equation} |v_{1}^{\lambda}+v_{2}+v_{3}^{\lambda}| \leq \begin{cases} \frac{C r}{t^{2}\log^{b}(t)}, \quad r \leq \frac{t}{2}\\
\frac{C}{\sqrt{r}}, \quad t-\sqrt{t} \leq r \leq t+\sqrt{t}\\
\frac{C \log(r)}{|t-r|}, \quad \frac{t}{2} \leq r \leq t-\sqrt{t}, \text{  or  } r \geq t+\sqrt{t}\end{cases}\end{equation} 
We then get
\begin{equation} \label{v4cestlip2}\begin{split}|v_{4,c}^{\lambda_{1}}-v_{4,c}^{\lambda_{2}}| &\leq C \chi_{\geq 1}(\frac{2r}{\log^{N}(t)}) ||e_{1}-e_{2}||_{X} \\
&\cdot \begin{cases} \frac{1}{r^{3}t^{2} \log^{3b}(t) \sqrt{\log(\log(t))}}, \quad r \leq \frac{t}{2}\\
\frac{1}{\log^{2b}(t) \sqrt{\log(\log(t))} r^{9/2}}, \quad t-\sqrt{t} \leq r \leq t+\sqrt{t}\\
\frac{\log(r)}{r^{4} \log^{2b}(t) \sqrt{\log(\log(t))} |t-r|} + \frac{1}{t^{2} \log^{3b+1-2\alpha b}(t) \sqrt{\log(\log(t))} r^{3}}, \quad t-\sqrt{t} > r > \frac{t}{2} \text{ or } r>t+\sqrt{t}\end{cases}\end{split}\end{equation}
Note that \eqref{v4cestlip2} again has the same right-hand side as \eqref{v4cest2}, except for the extra factor of $\frac{||e_{1}-e_{2}||_{X}}{\sqrt{\log(\log(t))}}$, so, we can infer the following estimates from our study of $v_{4}$:
 \begin{equation} \label{v4c12l2est} ||\widehat{v_{4,c}^{\lambda_{1}}-v_{4,c}^{\lambda_{2}}}(x,\xi)||_{L^{2}(\xi d\xi)} \leq \frac{C ||e_{1}-e_{2}||_{X}}{x^{2} \log^{3b+2N}(x) \sqrt{\log(\log(x))}}\end{equation}

Then, we simply note:
\begin{equation} v_{4,1,2}(t,r) = \int_{t}^{\infty} ds \int_{0}^{\infty} d\xi J_{1}(r\xi) \sin((t-s)\xi) \left(\widehat{v_{4,c}^{\lambda_{1}}-v_{4,c}^{\lambda_{2}}}(s,\xi)\right)\end{equation}
So,
\begin{equation} \begin{split} |v_{4,1,2}(t,r)| &\leq C \int_{t}^{\infty} ds \int_{0}^{\frac{1}{r}}  r \xi |\widehat{v_{4,c}^{\lambda_{1}}-v_{4,c}^{\lambda_{2}}}|(s,\xi) d\xi + C \int_{t}^{\infty} ds \int_{\frac{1}{r}}^{\infty} d\xi \frac{|\widehat{v_{4,c}^{\lambda_{1}}-v_{4,c}^{\lambda_{2}}}|(s,\xi)}{\sqrt{r\xi}}\\
&\leq C r \int_{t}^{\infty} ds \left(\int_{0}^{\frac{1}{r}} \xi d\xi \right)^{1/2} \left(\int_{0}^{\frac{1}{r}} \xi |\widehat{v_{4,c}^{\lambda_{1}}-v_{4,c}^{\lambda_{2}}}(s,\xi)|^{2}d\xi\right)^{1/2} \\
&+ \frac{C}{\sqrt{r}}\int_{t}^{\infty} ds \left(\int_{\frac{1}{r}}^{\infty} \frac{d\xi}{\xi^{2}}\right)^{1/2} \left(\int_{\frac{1}{r}}^{\infty} \xi |\widehat{v_{4,c}^{\lambda_{1}}-v_{4,c}^{\lambda_{2}}}(s,\xi)|^{2} d\xi\right)^{1/2}\\
&\leq C \int_{t}^{\infty} ds ||\widehat{v_{4,c}^{\lambda_{1}}-v_{4,c}^{\lambda_{2}}}(s,\xi)||_{L^{2}(\xi d\xi)}\\
&\leq \frac{C ||e_{1}-e_{2}||_{X}}{t \sqrt{\log(\log(t))} \log^{3b+2N}(t)}\end{split}\end{equation}
which finishes the proof of the lemma.
\end{proof}
\begin{lemma}We have the following pointwise estimate

 \begin{equation} \label{v5lipest} |v_{5}^{\lambda_{1}}-v_{5}^{\lambda_{2}}(t,r)| \leq C \frac{||e_{1}-e_{2}||_{X} \log^{2}(t)}{t^{3} \sqrt{\log(\log(t))} \log^{b}(t)}, \quad r \leq \frac{t}{2}\end{equation}
\end{lemma}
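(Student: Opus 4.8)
The plan is to mimic exactly the argument that was just used to prove the Lipschitz estimate \eqref{v4lipfinalest} for $v_{4}^{\lambda_{1}}-v_{4}^{\lambda_{2}}$, since $v_{5}$ is constructed from the very same Duhamel formula, only with $N_{2}(f_{v_{5}})$ in place of $v_{4,c}$. First I would observe that $v_{5,1,2}:=v_{5}^{\lambda_{1}}-v_{5}^{\lambda_{2}}$ solves
\begin{equation} -\partial_{tt}v_{5,1,2}+\partial_{rr}v_{5,1,2}+\frac{1}{r}\partial_{r}v_{5,1,2}-\frac{v_{5,1,2}}{r^{2}} = N_{2}(f_{v_{5}}^{\lambda_{1}})-N_{2}(f_{v_{5}}^{\lambda_{2}})\end{equation}
with $0$ Cauchy data at infinity, so the whole task reduces to (i) a pointwise bound on the difference $N_{2}(f_{v_{5}}^{\lambda_{1}})-N_{2}(f_{v_{5}}^{\lambda_{2}})$ in the region $r\le t/2$, together with the companion bound in $r>t/2$ needed to run the Hankel-transform argument, and (ii) feeding these into the same two representation formulas for $v_{5}$ used in the proof of the $v_{5}$ estimates (the spherical-means formula for $r\le t/2$, and $v_{5}(t,r)=\int_{t}^{\infty}ds\int_{0}^{\infty}d\xi\,J_{1}(r\xi)\sin((t-s)\xi)\widehat{N_{2}(f_{v_{5}})}(s,\xi)$ for $r\ge t/2$).

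For step (i) I would write $N_{2}(f)=\frac{\sin(2Q_{1/\lambda})}{2r^{2}}(\cos(2f)-1)+\frac{\cos(2Q_{1/\lambda})}{2r^{2}}(\sin(2f)-2f)$ and split the difference $N_{2}(f_{v_{5}}^{\lambda_{1}})-N_{2}(f_{v_{5}}^{\lambda_{2}})$ into the part coming from the $\lambda$-dependence of the coefficients $\sin(2Q_{1/\lambda}), \cos(2Q_{1/\lambda})$ (where $|\partial_\lambda Q_{1/\lambda}|$ produces a factor $\frac{r}{\lambda(\lambda^2+r^2)}\cdot|\lambda_1-\lambda_2|$ and hence, using $|\lambda_1-\lambda_2|\le \frac{C\|e_1-e_2\|_X}{\log^b(t)\sqrt{\log\log t}}$, a gain of $\frac{\|e_1-e_2\|_X}{\sqrt{\log\log t}}$), and the part coming from $f_{v_{5}}^{\lambda_{1}}-f_{v_{5}}^{\lambda_{2}}=(v_{1}^{\lambda_1}-v_{1}^{\lambda_2})+(v_{3}^{\lambda_1}-v_{3}^{\lambda_2})+(v_{4}^{\lambda_1}-v_{4}^{\lambda_2})$, for which I would plug in the already-established \eqref{v3lip}, \eqref{v4lipfinalest} and the $v_{1}$ Lipschitz estimate (recalling $v_{1}^{\lambda_1}-v_{1}^{\lambda_2}=v_{1}^{\lambda_1-\lambda_2}$ by linearity). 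Since $N_{2}$ is quadratic-plus-cubic in $f$, the difference will be controlled by $|f^{\lambda_1}|+|f^{\lambda_2}|$ times $|f^{\lambda_1}-f^{\lambda_2}|$, and the point is that every term on the right-hand side of \eqref{n2estpart2} acquires the extra factor $\frac{\|e_1-e_2\|_X}{\sqrt{\log\log t}}$ and no worse time-decay — exactly parallel to how \eqref{v4cestlip} has the same shape as \eqref{v4cest1}.

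For step (ii), once the pointwise difference bound on $N_{2}(f_{v_{5}}^{\lambda_{1}})-N_{2}(f_{v_{5}}^{\lambda_{2}})$ is in hand with the same $(r,t)$-profile as \eqref{n2estpart2} up to the factor $\frac{\|e_1-e_2\|_X}{\sqrt{\log\log t}}$, I would simply re-run the estimates of the $v_5$ lemma verbatim: the spherical-means decomposition $v_{5,1,2}=v_{5,1,2}^{(1)}+v_{5,1,2}^{(2)}$ into the near-cone and away-from-cone pieces (intersecting $B_{s-t}(0)$ with $B_{s/2}(-\beta x)$ and its complement), followed by the $\frac{1}{\sqrt{(s-t)^2-\rho^2}}=\frac{1}{s-t}+(\cdots)$ splitting, gives the $r\le t/2$ bound; and for $r\ge t/2$ the Hankel-transform bounds on $\widehat{N_{2}(f_{v_{5}})}$ carry over with the same gain, so the $L^2_\xi$-in-$\xi$ estimate plus $|J_1(x)|\le C/\sqrt{x}$ yields the claim. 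The main obstacle is purely bookkeeping: one must verify that the borderline term $\frac{\log^3(|\beta x+y|)}{|\beta x+y|^2(s-|\beta x+y|)^4}$ and its analogue in the derivative of $N_2$ — the one handled by the delicate computation \eqref{v5specialterm} — again close with the factor $\frac{\|e_1-e_2\|_X}{\sqrt{\log\log t}}$ and still lose only $t^{-3}\log^{2}(t)$ overall after the $\int d\beta\int d\rho$ integrations; since that term's $\lambda$-dependence enters only through the $v_k$'s and not through any new structure, the same chain of estimates applies, and I expect no genuinely new difficulty, only care in tracking exponents.
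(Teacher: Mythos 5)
Your proposal is essentially the paper's argument: difference the source, writing $N_{2}(f_{v_{5}}^{\lambda_{1}})-N_{2}(f_{v_{5}}^{\lambda_{2}})$ as in \eqref{n2diffmain} (splitting the $\lambda$-dependence of the $Q_{\frac{1}{\lambda}}$ coefficients from $f_{v_{5}}^{\lambda_{1}}-f_{v_{5}}^{\lambda_{2}}$ and feeding in \eqref{v3lip}, \eqref{v4lipfinalest} and $v_{1}^{\lambda_{1}}-v_{1}^{\lambda_{2}}=v_{1}^{\lambda_{1}-\lambda_{2}}$), and then re-running the spherical-means/Duhamel estimates for the difference equation with $0$ data at infinity. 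One caution: since the target bound carries no factor of $r$, you should use the representation that bounds $v_{5}^{\lambda_{1}}-v_{5}^{\lambda_{2}}$ directly by the spherical means of $|N_{2}(f_{v_{5}}^{\lambda_{1}})-N_{2}(f_{v_{5}}^{\lambda_{2}})|$ (exactly as in the $v_{4}$ Lipschitz proof), rather than literally re-running the original $v_{5}$ proof through $\partial_{2}G_{5}$, because that version would require Lipschitz control of $\partial_{r}(v_{3}^{\lambda_{1}}-v_{3}^{\lambda_{2}})$ and $\partial_{r}(v_{4}^{\lambda_{1}}-v_{4}^{\lambda_{2}})$, which has not been established; the paper avoids this, and the only \eqref{v5specialterm}-type care needed is for the near-cone term in \eqref{n2diffestfinal}, whose decay (note it is not simply \eqref{n2estpart2} times the small factor) is what produces the weaker $t^{-3}\log^{2}(t)\log^{-b}(t)$ rate in the statement.
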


\begin{proof}
As was the case for $v_{4}$, we start with noting that
$$v_{5,1,2}:=v_{5}^{\lambda_{1}}-v_{5}^{\lambda_{2}}$$ solves the equation (with 0 Cauchy data at infinity)
$$-\partial_{tt}v_{5,1,2}+\partial_{rr}v_{5,1,2}+\frac{1}{r}\partial_{r}v_{5,1,2}-\frac{v_{5,1,2}}{r^{2}} = N_{2}(f_{v_{5}}^{\lambda_{1}})-N_{2}(f_{v_{5}}^{\lambda_{2}})$$
where
$$f_{v_{5}}^{\lambda} = v_{1}^{\lambda}+v_{2}+v_{3}^{\lambda}+v_{4}^{\lambda}$$
Collecting the estimates from previous subsections, one estimate on $f(v_{5}^{\lambda})$, valid for all \\
$\lambda= \lambda_{0}+f, \quad f \in \overline{B}_{1}(0) \subset X$, is
\begin{equation} |f_{v_{5}}^{\lambda}| \leq \begin{cases} \frac{C r}{t^{2} \log^{b}(t)}, \quad r \leq \frac{t}{2}\\
\frac{C\log(r)}{|t-r|}, \quad t > r > \frac{t}{2}\end{cases}\end{equation}

Now, we start to estimate the right-hand side of the $v_{5,1,2}$ equation. 
\begin{equation}\label{n2diffmain}\begin{split} N_{2}(f_{v_{5}}^{\lambda_{1}})-N_{2}(f_{v_{5}}^{\lambda_{2}})&= \left(\frac{\sin(2Q_{\frac{1}{\lambda_{1}(t)}}(r))}{2r^{2}}-\frac{\sin(2Q_{\frac{1}{\lambda_{2}(t)}}(r))}{2r^{2}}\right)\left(\cos(2f_{v_{5}}^{\lambda_{1}})-1\right)\\
&+\frac{\sin(2Q_{\frac{1}{\lambda_{2}(t)}}(r))}{2r^{2}}\left(\cos(2f_{v_{5}}^{\lambda_{1}})-\cos(2f_{v_{5}}^{\lambda_{2}})\right)\\
&+\left(\frac{\cos(2Q_{\frac{1}{\lambda_{1}(t)}}(r))-\cos(2Q_{\frac{1}{\lambda_{2}(t)}}(r))}{2r^{2}}\right)\left(\sin(2f_{v_{5}}^{\lambda_{1}})-2f_{v_{5}}^{\lambda_{1}}\right)\\
&+\frac{\cos(2Q_{\frac{1}{\lambda_{2}(t)}}(r))}{2r^{2}}\left(\sin(2f_{v_{5}}^{\lambda_{1}})-2f_{v_{5}}^{\lambda_{1}}-\left(\sin(2f_{v_{5}}^{\lambda_{2}})-2f_{v_{5}}^{\lambda_{2}}\right)\right)\end{split}\end{equation}
For the first line of \eqref{n2diffmain}, we have
\begin{equation}\begin{split}&|\left(\frac{\sin(2Q_{\frac{1}{\lambda_{1}(t)}}(r))}{2r^{2}}-\frac{\sin(2Q_{\frac{1}{\lambda_{2}(t)}}(r))}{2r^{2}}\right)\left(\cos(2f_{v_{5}}^{\lambda_{1}})-1\right)|\\
&\leq \frac{C ||e_{1}-e_{2}||_{X}}{r \log^{b}(t) \sqrt{\log(\log(t))}} \frac{\left(f_{v_{5}}^{\lambda_{1}}\right)^{2}}{(\lambda_{0,0}(t)^{2}+r^{2})}\\
&\leq \frac{C ||e_{1}-e_{2}||_{X}}{r \log^{b}(t) \sqrt{\log(\log(t))}} \begin{cases} \frac{r^{2}}{t^{4} \log^{2b}(t) (\lambda_{0,0}(t)^{2}+r^{2})}, \quad r \leq \frac{t}{2}\\
\frac{\log^{2}(r)}{r^{2}(t-r)^{2}} , \quad t > r \geq \frac{t}{2}\end{cases}\end{split}\end{equation}
For the second line of \eqref{n2diffmain}, we have
\begin{equation}\begin{split}&|\frac{\sin(2Q_{\frac{1}{\lambda_{2}(t)}}(r))}{2r^{2}}\left(\cos(2f_{v_{5}}^{\lambda_{1}})-\cos(2f_{v_{5}}^{\lambda_{2}})\right)|\\
&\leq \frac{C \lambda_{2}(t)}{(r^{2}+\lambda_{2}(t)^{2}) r} \left(|f_{v_{5}}^{\lambda_{1}}|+|f_{v_{5}}^{\lambda_{2}}|\right)\left(|v_{1}^{\lambda_{1}}-v_{1}^{\lambda_{2}}|+|v_{3}^{\lambda_{1}}-v_{3}^{\lambda_{2}}|+|v_{4}^{\lambda_{1}}-v_{4}^{\lambda_{2}}|\right)\\
&\leq \begin{cases} \frac{C r \lambda_{0,0}(t) ||e_{1}-e_{2}||_{X}}{(r^{2}+\lambda_{0,0}(t)^{2}) t^{4} \log^{2b}(t) \sqrt{\log(\log(t))}} + \frac{C ||e_{1}-e_{2}||_{X} \lambda_{0,0}(t)}{(r^{2}+\lambda_{0,0}(t)^{2}) t^{4} \sqrt{\log(\log(t))} \log^{N+4b-1}(t)}, \quad r \leq \frac{t}{2}\\
\frac{C \log(t) ||e_{1}-e_{2}||_{X}}{r^{3} |t-r| t \log^{2b}(t) \sqrt{\log(\log(t))}}, \quad t> r > \frac{t}{2}\end{cases}\end{split}\end{equation}

We consider the third line of \eqref{n2diffmain}, and get
\begin{equation}\begin{split}&|\left(\frac{\cos(2Q_{\frac{1}{\lambda_{1}(t)}}(r))-\cos(2Q_{\frac{1}{\lambda_{2}(t)}}(r))}{2r^{2}}\right)\left(\sin(2f_{v_{5}}^{\lambda_{1}})-2f_{v_{5}}^{\lambda_{1}}\right)|\\
&\leq \frac{C ||e_{1}-e_{2}||_{X}}{\log^{2b}(t) \sqrt{\log(\log(t))}(r^{2}+\lambda_{0,0}(t)^{2})^{2}}\begin{cases} \frac{r^{3}}{t^{6} \log^{3b}(t)}, \quad r \leq \frac{t}{2}\\
\frac{ \log^{3}(r)}{|t-r|^{3}}, \quad t > r > \frac{t}{2}\end{cases}\end{split}\end{equation}

For the fourth line of \eqref{n2diffmain}, we have 
\begin{equation}\begin{split}&|\frac{\cos(2Q_{\frac{1}{\lambda_{2}(t)}}(r))}{2r^{2}}\left(\sin(2f_{v_{5}}^{\lambda_{1}})-2f_{v_{5}}^{\lambda_{1}}-\left(\sin(2f_{v_{5}}^{\lambda_{2}})-2f_{v_{5}}^{\lambda_{2}}\right)\right)|\\
&\leq \frac{C}{r^{2}} \left(|f_{v_{5}}^{\lambda_{1}}|^{2}+|f_{v_{5}}^{\lambda_{2}}|^{2}\right)|f_{v_{5}}^{\lambda_{1}}-f_{v_{5}}^{\lambda_{2}}|\\
&\leq \begin{cases} \frac{C r ||e_{1}-e_{2}||_{X}}{t^{6} \sqrt{\log(\log(t))} \log^{3b}(t)} + \frac{C ||e_{1}-e_{2}||_{X}}{t^{6} \sqrt{\log(\log(t))} \log^{N+5b-1}(t)}, \quad r \leq \frac{t}{2}\\
\frac{C ||e_{1}-e_{2}||_{X} \log^{2}(t)}{r^{3} (t-r)^{2} \log^{b}(t) \sqrt{\log(\log(t))}}, \quad t > r > \frac{t}{2}\end{cases} \end{split}\end{equation}

Combining all of the above estimates, we obtain
\begin{equation}\label{n2diffestfinal}\begin{split} &|N_{2}(f_{v_{5}}^{\lambda_{1}})(t,r)-N_{2}(f_{v_{5}}^{\lambda_{2}})(t,r)|\\
&\leq \begin{cases} \frac{C r \lambda_{0,0}(t) ||e_{1}-e_{2}||_{X}}{(r^{2}+\lambda_{0,0}(t)^{2}) t^{4} \log^{2b}(t) \sqrt{\log(\log(t))}}+\frac{C ||e_{1}-e_{2}||_{X} \lambda_{0,0}(t)}{(r^{2}+\lambda_{0,0}(t)^{2}) t^{4} \sqrt{\log(\log(t))} \log^{N+4b-1}(t)}, \quad r \leq \frac{t}{2}\\
\frac{C \log(t) ||e_{1}-e_{2}||_{X}}{r^{3} \log^{b}(t) \sqrt{\log(\log(t))}}\left(\frac{\log(t)}{(t-r)^{2}}+\frac{1}{t|t-r| \log^{b}(t)} + \frac{\log^{2}(t)}{\log^{b}(t) r |t-r|^{3}}\right), \quad t > r > \frac{t}{2}\end{cases}\end{split}\end{equation}
We now proceed to estimate $v_{5,1,2}$ in the region $r \leq \frac{t}{2}$. We start with
\begin{equation}\begin{split} v_{5,1,2}(t,r) &= \frac{-1}{2\pi} \int_{t}^{\infty} ds \int_{0}^{s-t}\frac{\rho d\rho}{\sqrt{(s-t)^{2}-\rho^{2}}} \\
&\int_{0}^{2\pi} d\theta \left(N_{2}(f_{v_{5}}^{\lambda_{1}})-N_{2}(f_{v_{5}}^{\lambda_{2}})\right)(s,\sqrt{r^{2}+\rho^{2}+2 \rho r \cos(\theta)}) \frac{(r+\rho \cos(\theta))}{\sqrt{r^{2}+2 r \rho \cos(\theta) + \rho^{2}}}\end{split}\end{equation}
We then estimate as follows
\begin{equation}\begin{split} |v_{5,1,2}(t,r)| &\leq C \int_{t}^{\infty} ds \int_{B_{s-t}(0)} \frac{dA(y)}{\sqrt{(s-t)^{2}-|y|^{2}}} |N_{2}(f_{v_{5}}^{\lambda_{1}})-N_{2}(f_{v_{5}}^{\lambda_{2}})|(s,|x+y|)\\
&\leq C \int_{t}^{\infty} ds \int_{B_{s-t}(0) \cap B_{\frac{s}{2}}(-x)} \frac{dA(y)}{\sqrt{(s-t)^{2}-|y|^{2}}} |N_{2}(f_{v_{5}}^{\lambda_{1}})-N_{2}(f_{v_{5}}^{\lambda_{2}})|(s,|x+y|)\\
&+C \int_{t}^{\infty} ds \int_{B_{s-t}(0) \cap (B_{\frac{s}{2}}(-x))^{c}} \frac{dA(y)}{\sqrt{(s-t)^{2}-|y|^{2}}} |N_{2}(f_{v_{5}}^{\lambda_{1}})-N_{2}(f_{v_{5}}^{\lambda_{2}})|(s,|x+y|)\end{split}\end{equation}
Now, we carry out the identical steps which were done to obtain the $v_{5}^{\lambda}$ estimates, and get
\begin{equation} |v_{5,1,2}(t,r)| \leq C \frac{||e_{1}-e_{2}||_{X} \log^{2}(t)}{t^{3} \sqrt{\log(\log(t))} \log^{b}(t)}, \quad r \leq \frac{t}{2}\end{equation}
which completes the proof of the lemma.
\end{proof}

We can now use all of the above estimates to get
\begin{equation} \begin{split} &|\int_{0}^{\infty} \left(\frac{\cos(2Q_{1}(\frac{r}{\lambda_{2}(t)}))-1}{r^{2}\lambda_{2}(t)}\right) \phi_{0}(\frac{r}{\lambda_{2}(t)}) r \left(v_{4}^{\lambda_{1}}-v_{4}^{\lambda_{2}}\right)(t,r)\left(1-\chi_{\geq 1}(\frac{4r}{t})\right) dr| \\
&\leq C \lambda_{0}(t)^{2} \int_{0}^{\frac{t}{2}} \frac{r^{2}}{(r^{2}+\lambda_{0}(t)^{2})^{3}} \frac{||e_{1}-e_{2}||_{X}}{t^{2} \sqrt{\log(\log(t))} \log^{N+3b-1}(t)}dr\leq \frac{C ||e_{1}-e_{2}||_{X}}{t^{2} \sqrt{\log(\log(t))} \log^{N+2b-1}(t)}\end{split}\end{equation}

\begin{equation} \begin{split} &|\int_{0}^{\infty} \left(\frac{\cos(2Q_{1}(\frac{r}{\lambda_{2}(t)}))-1}{r^{2}\lambda_{2}(t)}\right) \phi_{0}(\frac{r}{\lambda_{2}(t)}) r \left(v_{5}^{\lambda_{1}}-v_{5}^{\lambda_{2}}\right)(t,r)\left(1-\chi_{\geq 1}(\frac{4r}{t})\right) dr| \\
&\leq C \lambda_{0}(t)^{2} \int_{0}^{\frac{t}{2}} \frac{r^{2}}{(r^{2}+\lambda_{0}(t)^{2})^{3}} \frac{||e_{1}-e_{2}||_{X} \log^{2}(t)}{t^{3} \sqrt{\log(\log(t))} \log^{b}(t)}dr \leq \frac{C ||e_{1}-e_{2}||_{X} \log^{2}(t)}{t^{3} \sqrt{\log(\log(t))}}\end{split}\end{equation}

\begin{equation} \begin{split} &|\int_{0}^{\infty} \left(\frac{\cos(2Q_{1}(\frac{r}{\lambda_{2}(t)}))-1}{r^{2}\lambda_{2}(t)}\right) \phi_{0}(\frac{r}{\lambda_{2}(t)}) r \left(E_{5}^{\lambda_{1}}-E_{5}^{\lambda_{2}}\right)(t,r) dr|\\
&\leq C \int_{0}^{\infty} \frac{\lambda_{2}(t)^{2} r^{2} }{(\lambda_{2}(t)^{2}+r^{2})^{3}} \left(\frac{r ||e_{1}-e_{2}||_{X}}{t^{2} \log^{b+1}(t) \sqrt{\log(\log(t))}}\right)dr \leq \frac{C ||e_{1}-e_{2}||_{X}}{t^{2}\log^{b+1}(t) \sqrt{\log(\log(t))}} \int_{0}^{\infty} \frac{u^{3} du}{(1+u^{2})^{3}}\\
&\leq \frac{C ||e_{1}-e_{2}||_{X}}{t^{2}\log^{b+1}(t) \sqrt{\log(\log(t))}}\end{split}\end{equation}

\begin{equation} \begin{split} &|\int_{0}^{\infty} \left(\frac{\cos(2Q_{1}(\frac{r}{\lambda_{2}(t)}))-1}{r^{2}\lambda_{2}(t)}\right) \phi_{0}(\frac{r}{\lambda_{2}(t)}) r \chi_{\geq 1}(\frac{2r}{\log^{N}(t)}) \left(-v_{1}^{\lambda_{1}}+v_{1}^{\lambda_{2}}-v_{3}^{\lambda_{1}}+v_{3}^{\lambda_{2}}\right) dr|\\
&\leq C \int_{\frac{\log^{N}(t)}{2}}^{\frac{t}{2}} \frac{\lambda_{2}(t)^{2} r^{2} }{r^{6}} \left(\frac{||e_{1}-e_{2}||_{X} r}{t^{2} \log^{b}(t) \sqrt{\log(\log(t))}}\right)dr + C \int_{\frac{t}{2}}^{\infty} \frac{\lambda_{2}(t)^{2} r^{2} }{r^{6}} \frac{||e_{1}-e_{2}||_{X}}{r \sqrt{\log(\log(t))} \log^{b}(t)}dr\\
&\leq \frac{C ||e_{1}-e_{2}||_{X}}{t^{2} \log^{3b +2N}(t) \sqrt{\log(\log(t))}}\end{split}\end{equation}

\begin{equation} \begin{split} &|\int_{0}^{\infty} \chi_{\geq 1}(\frac{2r}{\log^{N}(t)}) (F_{0,2}^{\lambda_{1}}-F_{0,2}^{\lambda_{2}})(t,r) \frac{\phi_{0}(\frac{r}{\lambda_{1}(t)})}{\lambda_{1}(t)} r dr|+|\int_{0}^{\infty} \chi_{\geq 1}(\frac{2r}{\log^{N}(t)}) F_{0,2}^{\lambda_{2}}(t,r)\left(\frac{\phi_{0}(\frac{r}{\lambda_{1}(t)})}{\lambda_{1}(t)} - \frac{\phi_{0}(\frac{r}{\lambda_{2}(t)})}{\lambda_{2}(t)}\right) r dr|\\
&\leq \frac{C ||e_{1}-e_{2}||_{X}}{t^{2} \log^{3b+1-2\alpha b}(t) \sqrt{\log(\log(t))} \log^{2N}(t)}\end{split}\end{equation}

Combining the above estimates, we firstly have
\begin{equation} |G(t,\lambda_{1}(t))-G(t,\lambda_{2}(t))| \leq \frac{C ||e_{1}-e_{2}||_{X}}{t^{2}\sqrt{\log(\log(t))} \log^{b+1}(t)}\end{equation}

Then, combining this with the earlier estimates of the terms in $RHS(e_{1},t)-RHS(e_{2},t)$ which don't involve $G$, we finally get: there exists $C_{lip}>0$ \emph{independent} of $T_{0}$, such that for $e_{1},e_{2} \in \overline{B_{1}(0)} \subset X$
\begin{equation} |RHS(e_{1},t)-RHS(e_{2},t)| \leq \frac{C_{lip} ||e_{1}-e_{2}||_{X}}{t^{2}\log^{b+1}(t) (\log(\log(t)))^{3/2}}\end{equation}
(Recall that $G$ appears in the expression of $RHS$ in the term $\frac{G(t,\lambda(t))}{\log(\lambda_{0}(t))}$). This concludes the proof of Proposition \ref{rhslipprop}.\\
\\
Using the $L^{1}$ estimate on the resolvent kernel, $r$, we get
\begin{equation}\begin{split} |(T(e_{1})-T(e_{2}))''(t)| &\leq \frac{|RHS(e_{1},t)-RHS(e_{2},t)|}{\alpha} + \frac{2}{\alpha}\sup_{z \geq t}\left(|RHS(e_{1},z)-RHS(e_{2},z)|\right)\\
&\leq \frac{3 C_{lip}}{\alpha} \frac{||e_{1}-e_{2}||_{X}}{t^{2}\log^{b+1}(t) (\log(\log(t)))^{3/2}} \end{split}\end{equation} 
Then, with the same procedure used when estimating $|T(e)'(t)|$ earlier, we get
\begin{equation} |(T(e_{1})-T(e_{2}))'(t)| \leq \frac{3 C_{lip}}{\alpha}\left(1+\frac{1}{100}\right) \frac{||e_{1}-e_{2}||_{X}}{t \log^{b+1}(t) (\log(\log(t)))^{3/2}}\end{equation}
\begin{equation} |(T(e_{1})-T(e_{2}))(t)| \leq \frac{3 C_{lip}}{\alpha}\left(1+\frac{1}{100}\right) \frac{||e_{1}-e_{2}||_{X}}{b \log^{b}(t) (\log(\log(t)))^{3/2}}\end{equation}
whence,
\begin{equation} ||T(e_{1})-T(e_{2})||_{X} \leq \frac{10 C_{lip}}{\alpha \log(\log(T_{0}))} ||e_{1}-e_{2}||_{X}, \quad e_{1},e_{2} \in \overline{B_{1}(0)}\end{equation}
Since $C_{lip}$ is independent of $T_{0}$, and the above estimates are valid for all $T_{0}$ satisfying $$T_{0} > 2e^{e^{1000(b+1)}} +T_{0,1}+T_{0,2}+T_{0,3}$$ if we further restrict $T_{0}$ to satisfy
\begin{equation} T_{0} > 2e^{e^{1000(b+1)}} +T_{0,1}+T_{0,2}+T_{0,3} + e^{e^{\frac{1500 (C_{lip}+1000(1+\frac{1}{b}))}{\alpha}}}\end{equation}
then, $T$ is a strict contraction on the complete metric space $\overline{B_{1}(0)} \subset X$. By Banach's fixed point theorem, $T$ has a fixed point, say $e_{0}$, in $\overline{B_{1}(0)} \subset X$. Then, if $$\lambda(t)=\lambda_{0}(t)+e_{0}(t)$$
we have $$\lambda \in C^{2}([T_{0},\infty)), \quad ||e_{0}||_{X} \leq 1$$
and $\lambda$ solves \eqref{modulationfinal}.

\subsubsection{Estimating $\lambda'''(t)$}
The main proposition of this section is:
\begin{proposition}$\lambda \in C^{3}([T_{0},\infty))$, with 
\begin{equation} |\lambda'''(t)| \leq \frac{C}{t^{3} \log^{b+1}(t)}, \quad t \geq T_{0}\end{equation}
In addition, we have
\begin{equation} \label{dtv1finalest} |\partial_{t}v_{1}(t,r)| \leq \begin{cases} \frac{C r}{t^{3} \log^{b}(t)}, \quad r\leq \frac{t}{2}\\
\frac{C}{r t \log^{b+1}(t)}, \quad r > \frac{t}{2}\end{cases}\end{equation}

\begin{equation} \label{dtv3finalest} |\partial_{t}v_{3}(t,r)| \leq \begin{cases} \frac{C r \log(\log(t))}{t^{3} \log^{b+1}(t)}, \quad r \leq \frac{t}{2}\\
\frac{C}{r t \log^{b+1}(t)}, \quad r > \frac{t}{2}\end{cases}\end{equation}

\begin{equation}\label{dtrv3finalest} |\partial_{tr} v_{3}(t,r)| \leq \frac{C}{t^{3} \log^{b}(t)}\end{equation}

\begin{equation}\label{dtrv1finalest} |\partial_{tr}v_{1}(t,r)| \leq \begin{cases} \frac{C}{t^{3}\log^{b}(t)}, \quad r \leq \frac{t}{2}\\
\frac{C}{r^{2}t \log^{b+1}(t)}, \quad r > \frac{t}{2}\end{cases}\end{equation} 

\begin{equation} \label{dtv4finalest} |\partial_{t}v_{4}(t,r)| \leq \begin{cases} \frac{C r}{t^{3} \log^{3b+2N-2}(t)}, \quad r \leq \frac{t}{2}\\
\frac{C}{t^{2} \log^{3b+2N-1}(t)}, \quad r > \frac{t}{2}\end{cases}\end{equation}

\end{proposition}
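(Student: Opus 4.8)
The plan is to differentiate the modulation equation \eqref{modulationfinal} (equivalently, the integral equation for $e$) in $t$ and bootstrap regularity one order at a time. First I would reorganize \eqref{modulationfinal} so that $\lambda''(t)$ appears isolated via the Volterra structure: recalling from the discussion around \eqref{emodulation} that the equation can be written as $\alpha\, y(t) = F(t) + \int_t^\infty \frac{y(s)}{\log(\lambda_0(s))}\bigl(\frac{1}{1+s-t} + \frac{1}{(\lambda_0(t)^{1-\alpha}+s-t)(1+s-t)^3}\bigr)\,ds$ with $y = \lambda''$ (modulo the $\lambda_{0,0}, \lambda_{0,1}$ source terms and the terms collected in $G$), and that the solution is represented by \eqref{lambdasolution} as $y(t) = \frac{F(t)}{\alpha} - \int_t^\infty \frac{F(s)}{\alpha}\, r(-t,-s)\,ds$. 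Since $\lambda \in C^2$ is already known, the right-hand side $F(t)$ — which depends on $\lambda, \lambda', \lambda''$ and on the corrections $v_k$ and the kernels $K, K_1, K_3$ — is at least $C^1$ in $t$ by the estimates of Section 4 (the $v_k$ are smooth in $t$ away from the light cone, and the explicit kernels are smooth for $s > t$). Then I would differentiate the representation formula \eqref{lambdasolution} in $t$: the boundary term from the lower limit $s = t$ vanishes because $r(-t,-t)$ contributes against $F(t)$ in a way that cancels (or more carefully, one differentiates $\int_t^\infty F(s) r(-t,-s)\,ds$ using $\partial_t r(-t,-s)$ plus the boundary term, and uses the resolvent equation $r + r*K = K$ to re-express $\partial_t r$), obtaining an equation for $\lambda'''$ of the same Volterra type, whose forcing is $F'(t)$ plus controlled terms.

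The second main step is to estimate $F'(t)$. This requires differentiated estimates on all the ingredients of $G(t,\lambda(t))$ and of the kernel terms: $\partial_t$ of $\langle(\cos(2Q_{1/\lambda})-1)/r^2 \cdot v_k|_{r=R\lambda},\phi_0\rangle$ for $k=1,3,4,5$, $\partial_t E_{v_2,ip}$, $\partial_t E_{0,1}$, and $\partial_t$ of the integrals against $K, K_1 - \frac{\lambda^2}{4(1+s-t)}, K_3 - K_{3,0}$. For the explicit kernels this is a direct (if lengthy) computation producing the same decay with one extra power of $1/t$; the key structural point is that $\partial_t \lambda''(s)$ does not appear (the $s$-integrals are over $s \geq t$ so $\partial_t$ hits only the $t$-dependence of the kernels and the lower limit), so no loss of derivatives occurs. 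For the $v_k$ terms I would use the representation formulas (Duhamel / spherical means) to get $\partial_t v_k$ estimates — this is exactly what the listed estimates \eqref{dtv1finalest}, \eqref{dtv3finalest}, \eqref{dtv4finalest}, \eqref{dtrv1finalest}, \eqref{dtrv3finalest} record, and I would prove these alongside the $\lambda'''$ bound, since the $v_1, v_3$ estimates follow by differentiating \eqref{v1formula}, \eqref{v3def} in $t$ (which, because these are Duhamel integrals $\int_t^\infty v_s\,ds$, produces $-v_t(t,r)|_{s=t} + \int_t^\infty \partial_t v_s\,ds$; the first term vanishes since $v_s(s,\cdot) = 0$, and the second is a solution of the same linear equation with right-hand side $\lambda'''$), and the $v_4, v_5$ estimates follow from the Hankel-transform representation already used, replacing $v_{4,c}$ by $\partial_t v_{4,c}$, etc.

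Once $F'(t)$ is shown to satisfy $|F'(t)| \leq C/(t^3 \log^{b+1}(t))$, applying the $L^1$ bound \eqref{restimate} on the resolvent (equivalently \eqref{rintest}) to the differentiated representation formula yields $|\lambda'''(t)| \leq C/(t^3 \log^{b+1}(t))$, and continuity of $\lambda'''$ follows from continuity of the (now established) right-hand side, giving $\lambda \in C^3([T_0,\infty))$. I expect the main obstacle to be the bookkeeping for $\partial_t F$ of the terms in $G$ that involve $v_4$ and $v_5$: these corrections are themselves defined by iterated Duhamel formulas whose right-hand sides depend on $\lambda''$, so one must verify that differentiating in $t$ really does not require $\lambda'''$ before it is known — i.e., that the $t$-derivative falls on the propagator / the explicit coefficients rather than on the source — and that all the spatial integrals defining the inner products still converge after differentiation (the $\partial_t$ can worsen decay near the light cone $r = t$, but the cutoffs $1 - \chi_{\geq 1}(4r/t)$ and the $r \leq t/2$ localization in the pointwise estimates keep everything away from $r = t$ in the relevant regions). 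A secondary technical point is justifying differentiation under the integral sign in \eqref{lambdasolution}, which uses the pointwise and $L^1$ control of $r$ and $\partial_t r$ together with the decay of $F$ and $F'$; the monotonicity property \eqref{kineq} and the quotient bound \eqref{kquotest} of the kernel, already established, are what make $\partial_t r$ controllable.
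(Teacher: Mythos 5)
There is a genuine gap, and it sits exactly at the point you flag as the ``key structural point.'' When you differentiate the modulation equation (or its solution formula) in $t$, it is not true that $\lambda'''$ is absent from the resulting right-hand side: it reappears both pointwise and inside the $s$-integrals. Concretely, $\partial_t E_{0,1}(\lambda,\lambda',\lambda'')$ contains $\lambda'''(t)$; differentiating $\int_t^\infty \lambda''(s)K_1(s-t,\lambda(t))\,ds$ (after the change of variable $s=t+w$) produces $\int_t^\infty \lambda'''(s)K_1(s-t,\lambda(t))\,ds$; and the inner-product terms in $G$ involve $\partial_t v_k$, where $\partial_t v_1$ and $\partial_t v_3$ solve the same wave equations as $v_1,v_3$ but with forcing $\lambda'''$, and $\partial_t v_4,\partial_t v_5$ inherit $\lambda'''$ through their sources. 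So the scheme ``bound $F'$ and feed it through the resolvent'' is circular as stated. The paper resolves this with a two-stage argument you would need to reproduce: first it isolates $\lambda''(t)=RHS_2(t)/g_2(t)$ as in \eqref{epppsetup}, which immediately gives $\lambda\in C^3$ and a \emph{preliminary} (much weaker) bound \eqref{lpppprelim} using only $\lambda''$-level estimates on the $v_k$; only then does it differentiate the modulation equation, obtaining the Volterra equation \eqref{epppfinaleqn} for $e'''$ in which every $e'''$-dependent term on the right carries a small factor (of order $(\log\log t)^{-1/2}$ or a power of $\lambda$), and closes by a sup-norm absorption argument on $\sup_{x\ge t}x^{3/2}|e'''(x)|$, which itself needs the preliminary decay to make sense. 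Relatedly, the sharp large-$r$ bounds in \eqref{dtv1finalest} and \eqref{dtv3finalest} (the $C/(rt\log^{b+1}t)$ decay, analogous to \eqref{v1largerest}) cannot be proved before the final $\lambda'''$ bound is known — the paper says so explicitly — so your plan to prove them ``alongside'' in one pass does not order the dependencies correctly.

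A second, independent problem is your treatment of the resolvent. Differentiating \eqref{lambdasolution} in $t$ requires control of $\partial_t r(-t,-s)$, but the resolvent $r$ is only an a.e.-defined kernel of type $L^\infty$ satisfying the $L^1$ bound \eqref{restimate}; the structural facts \eqref{kineq} and \eqref{kquotest} give non-negativity and that $L^1$ bound, not differentiability of $r$ in $t$, and no such regularity is available from the Gripenberg--Londen--Staffans framework as used here. The paper sidesteps this entirely: instead of differentiating the representation formula, it differentiates the \emph{equation}, recasts the result as a fresh Volterra equation of the second kind for $e'''$ with a new kernel $K_2$ (built from $\lambda_{0,0}$), checks that the monotonicity condition \eqref{kineq} survives (it is invariant under multiplying the kernel by a non-negative function of $s$), and invokes a new resolvent $r_2$ with the same $L^1$ bound, obtaining \eqref{eppprep}. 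If you replace your ``differentiate the resolvent formula'' step by this re-derivation, and insert the preliminary-estimate/bootstrap stage described above, your outline matches the paper's proof; without those two repairs the argument does not close.
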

\begin{proof}
Since $\lambda(t)=\lambda_{0}(t)+e_{0}(t)$, it suffices to show that $e_{0} \in C^{3}([T_{0},\infty))$, and estimate $e_{0}'''(t)$. Recall that $\lambda$ solves \eqref{modulationfinal}, which can be re-written as
\begin{equation}\label{epppsetup}\lambda''(t) = \frac{RHS_{2}(t)}{g_{2}(t)}\end{equation}
where
$$g_{2}(t) =\frac{-2}{\lambda(t)} + \frac{4 \alpha \log(\lambda(t))}{\lambda(t)} \left(\frac{1}{-1+\lambda(t)^{2\alpha}}\right) -\frac{2(\lambda(t)^{-2\alpha}-1)}{\lambda(t)} g_{5}(t)$$
$$g_{5}(t) = \int_{0}^{\infty} \frac{\chi_{\geq 1}(\frac{2R \lambda(t)}{\log^{N}(t)}) \phi_{0}(R) R^{2} dR}{(R^{2}+1)(\lambda(t)^{-2\alpha}+R^{2})}$$
and
\begin{equation}\begin{split} RHS_{2}(t)&=-\frac{16}{\lambda(t)^{3}} \int_{t}^{\infty} ds \lambda''(s) \left(K_{1}(s-t,\lambda(t))+K(s-t,\lambda(t))\right) + 2 \frac{(\lambda'(t))^{2}}{\lambda(t)^{2}} \\
&+\frac{4b}{\lambda(t) t^{2} \log^{b}(t)} + E_{v_{2},ip}(t,\lambda(t))\\
&+ \langle \left(\frac{\cos(2Q_{\frac{1}{\lambda(t)}})-1}{r^{2}}\right)\left(v_{3}+(v_{4}+v_{5})\left(1-\chi_{\geq 1}(\frac{4r}{t})\right)-\chi_{\geq 1}\left(\frac{2r}{\log^{N}(t)}\right)\left(v_{1}+v_{2}+v_{3}\right)\right)\vert_{r=R\lambda(t)},\phi_{0}\rangle\\
&-4 \int_{0}^{\infty} \chi_{\geq 1}(\frac{2R \lambda(t)}{\log^{N}(t)}) \frac{(\lambda'(t))^{2} R^{2} \phi_{0}(R) dR}{\lambda(t)^{2} (R^{2}+1)^{2}}\end{split}\end{equation}
The point of this re-writing is that the right-hand side of \eqref{epppsetup} is in $C^{1}([T_{0},\infty))$, since $\lambda \in C^{2}([T_{0},\infty))$. So, $\lambda \in C^{3}([T_{0},\infty))$, and we have
\begin{equation} \lambda'''(t) = \partial_{t}\left(\frac{RHS_{2}(t)}{g_{2}(t)}\right)\end{equation}
We will first prove a preliminary estimate on $\lambda'''(t)$, which will then allow us to obtain a more useful formula for $e_{0}'''$. Using the formula for $g_{2}$, and estimates on $e_{0}$ from the fact that $e_{0} \in \overline{B_{1}(0)}$, we get
$$|\frac{1}{g_{2}(t)}| \leq \frac{C}{\log^{b}(t)\log(\log(t))}, \quad |\left(\frac{1}{g_{2}}\right)'(t)| \leq \frac{C}{t \log^{b+1}(t)\log(\log(t))}$$
and
$$|\lambda'''(t)| \leq \frac{C |RHS_{2}'(t)|}{\log^{b}(t)\log(\log(t))} + \frac{C |RHS_{2}(t)|}{t \log^{b+1}(t)\log(\log(t))}$$
For the preliminary estimate, it will suffice to estimate each term in $RHS_{2}$ seperately, despite the fact that there is some cancellation between some terms in $RHS_{2}$. When we prove the final estimate on $\lambda'''(t)$, we will take this cancellation into account.\\
\\
Using the same procedures and estimates used to estimate $G(t,\lambda_{0}(t)+f(t))$ for arbitrary $f \in \overline{B_{1}(0)} \subset X$,
we get
$$|RHS_{2}(t)| \leq \frac{C}{t^{2}}$$
Now, we estimate $|RHS_{2}'(t)|$. Using estimates on $\lambda^{(j)}(t), \quad j=0,1,2$, we get
\begin{equation}\label{RHS2prime}\begin{split} |RHS_{2}'(t)| &\leq |\partial_{t}\left(\frac{-16}{\lambda(t)^{3}} \int_{t}^{\infty} ds \lambda''(s) \left(K_{1}(s-t,\lambda(t))+K(s-t,\lambda(t))\right)\right)| \\
& + \frac{C}{t^{3} \log^{2}(t)} +\frac{C}{t^{3}} + |\partial_{t} E_{v_{2},ip}(t,\lambda(t))|\\
&+|\partial_{t}\left(\langle \left(\frac{\cos(2Q_{\frac{1}{\lambda(t)}})-1}{r^{2}}\right)\left(v_{3}+(v_{4}+v_{5})\left(1-\chi_{\geq 1}(\frac{4r}{t})\right)-\chi_{\geq 1}(\frac{2r}{\log^{N}(t)})\left(v_{1}+v_{2}+v_{3}\right)\right)\vert_{r=R\lambda(t)},\phi_{0}\rangle\right)|\end{split}\end{equation}
For the $E_{v_{2},ip}(t)$ term, we recall the definition in \eqref{ev2ip2}, and start with
\begin{equation}\begin{split} &\partial_{t}\left(\int_{0}^{\frac{1}{2}} d\xi \left(\chi_{\leq \frac{1}{4}}(\xi)-1\right) \frac{\sin(t\xi)}{t^{2}}\left(\frac{(b-1)}{\xi \log^{b}(\frac{1}{\xi})}+\frac{b(b-1)}{\xi \log^{b+1}(\frac{1}{\xi})}\right)\right)\\
&=\partial_{t}\left(\int_{0}^{\frac{t}{2}} du \left(\chi_{\leq \frac{1}{4}}(\frac{u}{t}) -1\right) \frac{\sin(u)}{t^{2}} \left(\frac{b-1}{u \log^{b}(\frac{t}{u})} + \frac{b(b-1)}{u \log^{b+1}(\frac{t}{u})}\right)\right)
\end{split}\end{equation}
After taking the derivative, and integrating by parts once in the remaining integrals, we get
\begin{equation}\begin{split} &\partial_{t}\left(\int_{0}^{\frac{1}{2}} d\xi \left(\chi_{\leq \frac{1}{4}}(\xi)-1\right) \frac{\sin(t\xi)}{t^{2}}\left(\frac{(b-1)}{\xi \log^{b}(\frac{1}{\xi})}+\frac{b(b-1)}{\xi \log^{b+1}(\frac{1}{\xi})}\right)\right)\\
&=\frac{- \sin(\frac{t}{2})}{t^{3}} \left(\frac{b-1}{\log^{b}(2)}+\frac{b(b-1)}{\log^{b+1}(2)}\right) + \text{Err}\end{split}\end{equation}
where
$$|\text{Err}| \leq \frac{C}{t^{4}}$$
On the other hand,
\begin{equation} \begin{split} \partial_{t}\left(\int_{0}^{\frac{t}{2}} \frac{\sin(u)b(b-1) du}{t^{2}u\log^{b+1}(\frac{t}{u})}\right)&= \frac{ \sin(\frac{t}{2}) b(b-1)}{t^{3}\log^{b+1}(2)} + \int_{0}^{\frac{t}{2}} \frac{\sin(u) b(b-1)}{u} \left(\frac{-2}{t^{3}\log^{b+1}(\frac{t}{u})}-\frac{(b+1)}{\log^{b+2}(\frac{t}{u}) t^{3}}\right)du\end{split}\end{equation}
So,
\begin{equation}\label{partofdtev2ip}\begin{split}E_{t,v_{2},ip,1}:= \partial_{t}&\left(2 c_{b}\int_{0}^{\frac{1}{2}} d\xi \left(\chi_{\leq \frac{1}{4}}(\xi)-1\right) \frac{\sin(t\xi)}{t^{2}} \left(\frac{b-1}{\xi \log^{b}(\frac{1}{\xi})} + \frac{b(b-1)}{\xi \log^{b+1}(\frac{1}{\xi})}\right)\right.\\
&+\left.2 c_{b} \left(\int_{0}^{\frac{t}{2}} \frac{\sin(u)(b-1) du}{t^{2}u \log^{b}(\frac{t}{u})} - \frac{(b-1) \pi}{2 t^{2}\log^{b}(t)} + \int_{0}^{\frac{t}{2}} \frac{\sin(u) b(b-1) du}{t^{2} u \log^{b+1}(\frac{t}{u})}\right)\right)\\
&=2 c_{b} \text{Err} + 2 c_{b}\left(\int_{0}^{\frac{t}{2}} du \sin(u) (b-1)\left(\frac{-2}{t^{3}u \log^{b}(\frac{t}{u})}-\frac{b}{\log^{b+1}(\frac{t}{u}) t^{3} u}\right)\right.\\
& +\left. \int_{0}^{\frac{t}{2}} \frac{du}{u} \sin(u) b(b-1)\left(\frac{-2}{t^{3}\log^{b+1}(t)} - \frac{(b+1)}{t^{3}\log^{b+2}(\frac{t}{u})}\right)\right)\\
&+2 c_{b} \left(\frac{(b-1)\pi}{t^{3}\log^{b}(t)} + \frac{b(b-1)\pi}{2 t^{3}\log^{b+1}(t)}\right)\end{split}\end{equation}
The asymptotics of the integrals in lines 3 and 4 of the above expression were computed previously, in the section which constructed $v_{2}$. The following is the most important asymptotic, which shows that there is some cancellation between the integral in line 3 and the terms on line 5.
\begin{equation} -2 \int_{0}^{\frac{t}{2}} \frac{\sin(u)(b-1) du}{t^{3} u \log^{b}(\frac{t}{u})} = \frac{-2 (b-1)}{t^{3}} \frac{\pi}{2} \left(\frac{1}{\log^{b}(t)} + O\left(\frac{1}{\log^{b+1}(t)}\right)\right)\end{equation}
In total, we get 
$$|E_{t,v_{2},ip,1}(t)| \leq \frac{C}{t^{3}\log^{b+1}(t)}$$
Next, we estimate
\begin{equation} \partial_{t}\left(2 c_{b}\lambda(t) \int_{0}^{\infty} d\xi \frac{\sin(t\xi)}{t^{2}} \psi_{v_{2}}(\xi,\lambda(t))\right)\end{equation}
Here, we use estimates on the derivatives of $\psi_{v_{2}}$, proven earlier, when showing that the map $T$ was a contraction, as well as \eqref{kjestimates}, which shows (among other things) that
$$|\partial_{\xi}^{2} \psi_{v_{2}}(\xi,\lambda(t))| \leq C \frac{p(\xi)}{\lambda(t)}, \quad p \in C^{\infty}_{c}([\frac{1}{8},\frac{1}{4}])$$
This gives us
\begin{equation}|\partial_{t}\left(2 c_{b}\lambda(t) \int_{0}^{\infty} d\xi \frac{\sin(t\xi)}{t^{2}} \psi_{v_{2}}(\xi,\lambda(t))\right)| \leq \frac{C}{t^{4}}\end{equation}
Similarly, we use estimates on derivatives of $F_{v_{2}}$ which were proven while showing that $T$ is a contraction, as well as  \eqref{kjestimates}, and asymptotics of $K_{1}$, to get
\begin{equation} |\partial_{t}\left(2 c_{b}\lambda(t) \int_{0}^{\infty} d\xi \chi_{\leq \frac{1}{4}}(\xi) \frac{\sin(t\xi)}{t^{2}} F_{v_{2}}(\xi,\lambda(t))\right)| \leq \frac{C \log(\log(t))}{t^{4}\log^{2b}(t)}\end{equation}
In total, we have
\begin{equation} |\partial_{t}\left(\lambda(t) E_{v_{2},ip}(t,\lambda(t))\right)| \leq \frac{C}{t^{3}\log^{b+1}(t)}\end{equation}
By the same procedure, this estimate is also true for the case $b=1$.\\
\\
Some of the terms in \eqref{RHS2prime} which remain to be estimated will be estimated in two different ways. One way of estimating these terms will give rise to preliminary estimates on $e'''$, which will then allow us to obtain stronger estimates. For this preliminary estimate, we will start with the first line of \eqref{RHS2prime}:
\begin{equation} \partial_{t}\left(\frac{16}{\lambda(t)^{3}} \int_{t}^{\infty} ds \lambda''(s)\left(K(s-t,\lambda(t))+K_{1}(s-t,\lambda(t))\right)\right)=I+II+III+IV\end{equation}
where
\begin{equation}\begin{split}&I=\frac{16}{\lambda(t)^{3}} \int_{t}^{\infty} ds \lambda''(s)\partial_{2}K(s-t,\lambda(t))\lambda'(t)\end{split}\end{equation}

\begin{equation}II=\frac{16}{\lambda(t)^{3}} \int_{t}^{\infty} ds \lambda''(s)\partial_{2}K_{1}(s-t,\lambda(t))\lambda'(t)\end{equation}

\begin{equation}\label{Eterm}\begin{split} III&= -\frac{16}{\lambda(t)^{3}} \int_{t}^{\infty} ds \lambda''(s)\left(\partial_{1}K(s-t,\lambda(t))+\partial_{1}K_{1}(s-t,\lambda(t))\right)\end{split}\end{equation}

\begin{equation}\begin{split}&IV=\frac{-48 \lambda'(t)}{\lambda(t)^{4}}\int_{t}^{\infty} ds \lambda''(s) \left(K(s-t,\lambda(t))+K_{1}(s-t,\lambda(t))\right)\\
&|IV| \leq \frac{C}{t^{3}\log(t)}\end{split}\end{equation}
In order to estimate $I$, let us start with
\begin{equation}\begin{split} \partial_{2}K(x,\lambda(t)) = \int_{0}^{\infty} dR \frac{R}{(1+R^{2})^{3}} \int_{0}^{x} \rho d\rho \left(\frac{1}{\sqrt{x^{2}-\rho^{2}}}-\frac{1}{x}\right) \left(\frac{4\lambda(t)R^{2}(1+\rho^{2}+\lambda(t)^{2}R^{2})}{(4\lambda(t)^{2}R^{2}+(1+\rho^{2}-R^{2}\lambda(t)^{2})^{2})^{3/2}}\right)\end{split}\end{equation}
Then, the same procedure used to obtain \eqref{d2kintest} gives
 \begin{equation}\begin{split} &|\int_{t}^{\infty} ds \lambda''(s) \partial_{2}K(s-t,\lambda(t)) \lambda'(t)| \leq \frac{C}{t^{3} \log^{2b+2}(t)} \int_{t}^{\infty} ds |\partial_{2}K(s-t,\lambda(t))|\\
&\leq \frac{C}{t^{3}\log^{3b+2}(t)}\end{split}\end{equation} 
So, $$|I| \leq \frac{C}{t^{3}\log^{2}(t)}$$
Next, we recall the definition of $K_{1}$:
\begin{equation} K_{1}(x,\lambda(t)) = \int_{0}^{\infty} \frac{r dr}{\lambda(t)^{2}(1+\frac{r^{2}}{\lambda(t)^{2}})^{3}} \int_{0}^{x} \frac{\rho d\rho}{x}\left(1+\frac{r^{2}-1-\rho^{2}}{\sqrt{(r^{2}-1-\rho^{2})^{2}+4r^{2}}}\right)\end{equation}
So,
\begin{equation} |\partial_{2} K_{1}(x,\lambda(t)) \lambda'(t)| \leq C \lambda(t)^{3}|\lambda'(t)| \int_{0}^{\infty} dr \frac{r}{(r^{2}+\lambda(t)^{2})^{3}} \int_{0}^{x} \frac{\rho d\rho}{x} \left(1+\frac{r^{2}-1-\rho^{2}}{\sqrt{(r^{2}-1-\rho^{2})^{2}+4r^{2}}}\right)\end{equation}
and we get
\begin{equation} \begin{split} &|II|\leq |\frac{16}{\lambda(t)^{3}} \int_{t}^{\infty} dx \lambda''(x) \partial_{2}K_{1}(x-t,\lambda(t)) \lambda'(t)| \\
&\leq C |\lambda'(t)| \int_{t}^{\infty} dx |\lambda''(x)| \int_{0}^{\infty} \frac{r dr}{(r^{2}+\lambda(t)^{2})^{3}} \begin{cases} \frac{1}{(x-t)} \int_{0}^{x-t} 2 \rho d\rho, \quad x-t \leq 1\\
\frac{1}{(x-t)} \int_{0}^{\infty} \rho d\rho \left(1+\frac{r^{2}-1-\rho^{2}}{\sqrt{(r^{2}-1-\rho^{2})^{2}+4r^{2}}}\right), \quad x-t \geq 1\end{cases}\\
&\leq C |\lambda'(t)| \int_{t}^{\infty} dx |\lambda''(x)| \int_{0}^{\infty} \frac{r dr}{(r^{2}+\lambda(t)^{2})^{3}} \begin{cases} (x-t), \quad x-t \leq 1\\
\frac{r^{2}}{(x-t)}, \quad x-t \geq 1\end{cases}\\
&\leq C  |\lambda'(t)| \left(\int_{t}^{t+1}\frac{dx}{x^{2}\log^{b+1}(x)} \frac{(x-t)}{\lambda(t)^{4}} + \int_{t+1}^{\infty} \frac{dx}{x^{2}\log^{b+1}(x) (x-t)} \int_{0}^{\infty} \frac{r^{3} dr}{(r^{2}+\lambda(t)^{2})^{3}}\right)\\
&\leq \frac{C}{t^{3}\log^{1-2b}(t)}\end{split}\end{equation}

We will now treat the term $III$. Let $L=K+K_{1}$. Then, we have
\begin{equation} \partial_{1}L(x,\lambda(t)) = L_{a}(x,\lambda(t))+L_{b}(x,\lambda(t))\end{equation}
with
\begin{equation}\begin{split} L_{a}(x,\lambda(t)) &= \int_{0}^{\infty} dR \frac{R}{(1+R^{2})^{3}} \int_{0}^{1} \frac{u du}{\sqrt{1-u^{2}}} \left(1+\frac{R^{2}\lambda(t)^{2}-1-u^{2}x^{2}}{\sqrt{(R^{2}\lambda(t)^{2}-1-u^{2}x^{2})^{2}+4R^{2}\lambda(t)^{2}}}\right)\\
&=\int_{0}^{\infty} dR \frac{R}{(1+R^{2})^{3}} \int_{0}^{x} \frac{\rho d\rho}{x \sqrt{x^{2}-\rho^{2}}}\left(1+\frac{R^{2}\lambda(t)^{2}-1-\rho^{2}}{\sqrt{(R^{2}\lambda(t)^{2}-1-\rho^{2})^{2}+4R^{2}\lambda(t)^{2}}}\right)\end{split}\end{equation}

\begin{equation}\begin{split} L_{b}(x,\lambda(t))&= \int_{0}^{\infty} dR \frac{R}{(1+R^{2})^{3}} \int_{0}^{1} \frac{u du x}{\sqrt{1-u^{2}}}\left(\frac{-8 R^{2}\lambda(t)^{2}u^{2}x}{(4R^{2}\lambda(t)^{2}+(1-R^{2}\lambda(t)^{2}+u^{2}x^{2})^{2})^{3/2}}\right)\\
&=\int_{0}^{\infty} dR \frac{R}{(1+R^{2})^{3}} \int_{0}^{x} \frac{\rho d\rho}{\sqrt{x^{2}-\rho^{2}}} \left(\frac{-8 R^{2}\lambda(t)^{2}\rho^{2}}{x(4R^{2}\lambda(t)^{2}+(1-R^{2}\lambda(t)^{2}+\rho^{2})^{2})^{3/2}}\right)\end{split}\end{equation}

Now, \begin{equation}\label{laest}\begin{split} &\int_{t}^{\infty} ds |L_{a}(s-t,\lambda(t))| \\
&\leq \int_{0}^{\infty} dR \frac{R}{(1+R^{2})^{3}} \int_{0}^{\infty} \rho d\rho \left(1+\frac{R^{2}\lambda(t)^{2}-1-\rho^{2}}{\sqrt{(R^{2}\lambda(t)^{2}-1-\rho^{2})^{2}+4R^{2}\lambda(t)^{2}}}\right) \int_{\rho+t}^{\infty} \frac{ds}{(s-t)\sqrt{(s-t)^{2}-\rho^{2}}}\\
&\leq C \int_{0}^{\infty} dR \frac{R}{(1+R^{2})^{3}} \int_{0}^{\infty}  d\rho \left(1+\frac{R^{2}\lambda(t)^{2}-1-\rho^{2}}{\sqrt{(R^{2}\lambda(t)^{2}-1-\rho^{2})^{2}+4R^{2}\lambda(t)^{2}}}\right)\\
&\leq C \int_{0}^{\infty} dR \frac{R}{(1+R^{2})^{3}} \int_{2(R\lambda(t)+1)}^{\infty}  d\rho \left(1+\frac{R^{2}\lambda(t)^{2}-1-\rho^{2}}{\sqrt{(R^{2}\lambda(t)^{2}-1-\rho^{2})^{2}+4R^{2}\lambda(t)^{2}}}\right)\\
&+C \int_{0}^{\infty} dR \frac{R}{(1+R^{2})^{3}} \int_{0}^{2(R\lambda(t)+1)}  d\rho \left(1+\frac{R^{2}\lambda(t)^{2}-1-\rho^{2}}{\sqrt{(R^{2}\lambda(t)^{2}-1-\rho^{2})^{2}+4R^{2}\lambda(t)^{2}}}\right) \end{split}\end{equation}
\\
\\
When $\rho > 2(R\lambda(t)+1)$, we have
\begin{equation}\begin{split} |1+\frac{R^{2}\lambda(t)^{2}-1-\rho^{2}}{\sqrt{(R^{2}\lambda(t)^{2}-1-\rho^{2})^{2}+4R^{2}\lambda(t)^{2}}}| &= |1+\frac{(-1-\frac{1}{\rho^{2}}+\frac{R^{2}\lambda(t)^{2}}{\rho^{2}})}{\sqrt{1-2(\frac{R^{2}\lambda(t)^{2}}{\rho^{2}}-\frac{1}{\rho^{2}})+\frac{(R^{2}\lambda(t)^{2}+1)^{2}}{\rho^{4}}}}|\\
&\leq C \frac{(R\lambda(t)+1)^{2}}{\rho^{2}}\end{split}\end{equation}

If $\rho < 2(R\lambda(t)+1)$, then, we use
\begin{equation} |1+\frac{R^{2}\lambda(t)^{2}-1-\rho^{2}}{\sqrt{(R^{2}\lambda(t)^{2}-1-\rho^{2})^{2}+4R^{2}\lambda(t)^{2}}}| \leq 2\end{equation}\\
\\
to continue the estimate in \eqref{laest}
\begin{equation} \begin{split} &\int_{t}^{\infty} ds |L_{a}(s-t,\lambda(t))|\\
&\leq C \int_{0}^{\infty} dR \frac{R}{(1+R^{2})^{3}} \int_{2(R\lambda(t)+1)}^{\infty}  d\rho \left(1+\frac{R^{2}\lambda(t)^{2}-1-\rho^{2}}{\sqrt{(R^{2}\lambda(t)^{2}-1-\rho^{2})^{2}+4R^{2}\lambda(t)^{2}}}\right)\\
&+C \int_{0}^{\infty} dR \frac{R}{(1+R^{2})^{3}} \int_{0}^{2(R\lambda(t)+1)}  d\rho \left(1+\frac{R^{2}\lambda(t)^{2}-1-\rho^{2}}{\sqrt{(R^{2}\lambda(t)^{2}-1-\rho^{2})^{2}+4R^{2}\lambda(t)^{2}}}\right)\\
&\leq C \int_{0}^{\infty} dR \frac{R}{(1+R^{2})^{3}} \int_{2(R\lambda(t)+1)}^{\infty} d\rho \frac{(R\lambda(t)+1)^{2}}{\rho^{2}} + C\int_{0}^{\infty} dR \frac{R}{(1+R^{2})^{3}} (R\lambda(t)+1)\\
&\leq C\end{split}\end{equation}\\
\\
Similarly, we have
\begin{equation}\label{Lbest}\begin{split} &|\int_{t}^{\infty} ds L_{b}(s-t,\lambda(t))|\\
&\leq \int_{0}^{\infty} dR \frac{R}{(1+R^{2})^{3}} \int_{0}^{\infty} \rho d\rho \left(\frac{8 R^{2}\lambda(t)^{2}\rho^{2}}{(4R^{2}\lambda(t)^{2}+(1-R^{2}\lambda(t)^{2}+\rho^{2})^{2})^{3/2}}\right) \int_{\rho+t}^{\infty} \frac{ds}{\sqrt{(s-t)^{2}-\rho^{2}}} \frac{1}{(s-t)}\\
&\leq C \int_{0}^{\infty} dR \frac{R}{(1+R^{2})^{3}} \int_{0}^{\infty} d\rho \frac{R^{2}\lambda(t)^{2}\rho^{2}}{(4R^{2}\lambda(t)^{2}+(1-R^{2}\lambda(t)^{2}+\rho^{2})^{2})^{3/2}}\\
&\leq C \int_{0}^{\infty} dR \frac{R}{(1+R^{2})^{3}} \int_{2(R\lambda(t)+1)}^{\infty} d\rho \frac{R^{2}\lambda(t)^{2}\rho^{2}}{(4R^{2}\lambda(t)^{2}+(1-R^{2}\lambda(t)^{2}+\rho^{2})^{2})^{3/2}}\\
&+ C \int_{0}^{\infty} dR \frac{R}{(1+R^{2})^{3}} \int_{0}^{2(R\lambda(t)+1)} d\rho \frac{R^{2}\lambda(t)^{2}\rho^{2}}{(4R^{2}\lambda(t)^{2}+(1-R^{2}\lambda(t)^{2}+\rho^{2})^{2})^{3/2}} \end{split}\end{equation}

When $\rho>2(R\lambda(t)+1)$, we have
\begin{equation} \frac{1}{(4R^{2}\lambda(t)^{2}+(1-R^{2}\lambda(t)^{2}+\rho^{2})^{2})^{3/2}} \leq \frac{C}{\rho^{6}}\end{equation}

When $\rho < 2(R\lambda(t)+1)$, we use
\begin{equation} \frac{1}{(4R^{2}\lambda(t)^{2}+(1-R^{2}\lambda(t)^{2}+\rho^{2})^{2})^{3/2}} \leq \frac{C}{R^{3}\lambda(t)^{3}}\end{equation}\\
\\
and then continue the estimate from \eqref{Lbest}:
\begin{equation} \begin{split} &|\int_{t}^{\infty} ds L_{b}(s-t,\lambda(t))|\\
&\leq C \int_{0}^{\infty} dR \frac{R}{(1+R^{2})^{3}} \int_{2(R\lambda(t)+1)}^{\infty} d\rho \frac{R^{2}\lambda(t)^{2}\rho^{2}}{(4R^{2}\lambda(t)^{2}+(1-R^{2}\lambda(t)^{2}+\rho^{2})^{2})^{3/2}}\\
&+ C \int_{0}^{\infty} dR \frac{R}{(1+R^{2})^{3}} \int_{0}^{2(R\lambda(t)+1)} d\rho \frac{R^{2}\lambda(t)^{2}\rho^{2}}{(4R^{2}\lambda(t)^{2}+(1-R^{2}\lambda(t)^{2}+\rho^{2})^{2})^{3/2}}\\
&\leq C \int_{0}^{\infty} dR \frac{R}{(1+R^{2})^{3}} R^{2}\lambda(t)^{2} \int_{2(R\lambda(t)+1)}^{\infty} d\rho \frac{\rho^{2}}{\rho^{6}} + C \int_{0}^{\infty} dR \frac{R}{(1+R^{2})^{3}} \int_{0}^{2(R\lambda(t)+1)} d\rho \frac{R^{2}\lambda(t)^{2}\rho^{2}}{R^{3}\lambda(t)^{3}}\\
&\leq \frac{C}{\lambda(t)}\end{split}\end{equation}
Then, we combine the above estimates to get 
\begin{equation} \int_{t}^{\infty} ds |\partial_{1}L(s-t,\lambda(t))| \leq \frac{C}{\lambda(t)}\end{equation}
So,
\begin{equation} |III| \leq C \frac{\log^{3b-1}(t)}{t^{2}}\end{equation}
We finally conclude that
\begin{equation}|I|+|II|+|III|+|IV| \leq \frac{C \log^{3b-1}(t)}{t^{2}}\end{equation}
Next, we estimate the last line of \eqref{RHS2prime}. First, we note that
\begin{equation} \begin{split} \partial_{t} \langle \left(\frac{\cos(2Q_{\frac{1}{\lambda(t)}})-1}{r^{2}}\right) f\vert_{r = R\lambda(t)},\phi_{0}\rangle &= \int_{0}^{\infty} \partial_{t}\left(\left(\frac{\cos(2Q_{1}(\frac{r}{\lambda(t)})-1}{r^{2}}\right)\frac{\phi_{0}(\frac{r}{\lambda(t)})}{\lambda(t)^{2}}\right) f(t,r) r dr\\
&+\int_{0}^{\infty} \left(\frac{\cos(2Q_{1}(\frac{r}{\lambda(t)}))-1}{r^{2}}\right) \frac{\phi_{0}(\frac{r}{\lambda(t)})}{\lambda(t)^{2}} \partial_{t}f(t,r) r dr\end{split}\end{equation}
which gives
\begin{equation}\label{dtipest} \begin{split} |\partial_{t}\langle \left(\frac{\cos(2Q_{\frac{1}{\lambda(t)}})-1}{r^{2}}\right) f\vert_{r = R\lambda(t)},\phi_{0}\rangle| &\leq C \int_{0}^{\infty} \frac{r |\lambda'(t)|}{(r^{2}+\lambda(t)^{2})^{3}} |f(t,r)| r dr + C \int_{0}^{\infty} \frac{r \lambda(t)}{(r^{2}+\lambda(t)^{2})^{3}} |\partial_{t}f(t,r)| r dr\end{split}\end{equation}

We now proceed to estimate $\partial_{t}v_{k}, \quad k=1,3,4,5$. We first prove a preliminary estimate on these quantities in order to obtain the preliminary estimate on $\lambda'''$. Once this is done, we can improve our estimates for $\partial_{t}v_{k}$.
\begin{lemma}[Preliminary Estimates on $\partial_{t}v_{k}, \quad k=1,3,4,5$]
We have the following \emph{preliminary} estimates on $\partial_{t}v_{k}, \quad k=1,3,4,5$.\\
\begin{equation} \label{dtv1prelimest} |\partial_{t}v_{1}(t,r)| \leq \frac{C r}{t^{2} \log^{b+1}(t)}\end{equation}
\begin{equation} \label{dtv3prelimest}|\partial_{t}v_{3}(t,r)| \leq \frac{C r}{t^{2} \log^{1+b\alpha}(t)}\end{equation}
\begin{equation} \label{dtv4prelimest} |\partial_{t}v_{4}(t,r)| \leq \frac{C}{t^{2} \log^{3b+2N-1}(t)}, \quad r \leq \frac{t}{2} \end{equation}
\begin{equation} \label{dtv5prelimest} |\partial_{t}v_{5}(t,r)| \leq \frac{C}{t^{7/2} \log^{3b-3+\frac{5N}{2}}(t)}+\frac{C r \log(t)}{t^{4} \log^{b}(t)}, \quad r \leq \frac{t}{2}\end{equation}

\end{lemma} 
\begin{proof}
For $\partial_{t}v_{3}$, we have
\begin{equation}\label{dtv3prelim}\begin{split} \partial_{t}v_{3}(t,r) &= \frac{1}{r} \int_{t}^{\infty} ds \int_{0}^{s-t} \frac{\rho d\rho}{(s-t)\sqrt{(s-t)^{2}-\rho^{2}}} \lambda''(s) \left(\frac{-1-\rho^{2}+r^{2}}{\sqrt{(1+\rho^{2}-r^{2})^{2}+4r^{2}}} + F_{3}(r,\rho,\lambda(s))\right)\\
&-\frac{1}{r} \int_{t}^{\infty} ds \int_{0}^{s-t} \frac{\rho d\rho}{(s-t)\sqrt{(s-t)^{2}-\rho^{2}}}\lambda''(s) \left(\frac{8 \rho^{2} r^{2}}{(4r^{2}+(1+\rho^{2}-r^{2})^{2})^{3/2}} \right.\\
&+\left. \frac{-8 \lambda(s)^{4\alpha -4}\rho^{2} r^{2}}{(4 \lambda(s)^{2\alpha -2} r^{2}+(1+\lambda(s)^{2\alpha -2}(\rho^{2}-r^{2}))^{2})^{3/2}}\right)\end{split}\end{equation}

For the first line on the right-hand side of \eqref{dtv3prelim}, we first note that 
\begin{equation}\begin{split} |1-F_{3}(r,\rho,\lambda(s))| = 1-F_{3}(r,\rho,\lambda(s)) = F_{3}(0,\rho,\lambda(s))-F_{3}(r,\rho,\lambda(s))\end{split}\end{equation}
So,
\begin{equation}\begin{split} |1-F_{3}(r,\rho,\lambda(s))| &\leq C r \int_{0}^{1} \frac{r_{\sigma} \lambda(s)^{2\alpha-2}(1+\lambda(s)^{2\alpha-2}(\rho^{2}+r_{\sigma}^{2}))}{(1+2(\rho^{2}+r_{\sigma}^{2})\lambda(s)^{2\alpha-2}+(\rho^{2}-r_{\sigma}^{2})^{2}\lambda(s)^{4\alpha-4})^{3/2}}d\sigma\end{split}\end{equation}
where
$$r_{\sigma}=(1-\sigma)r$$ 
Then,  because $\lambda'(x) \leq 0, \quad x \geq T_{0}$, we have $\lambda(s)^{2\alpha -2} \geq \lambda(t)^{2\alpha -2}, \quad s \geq t$. So,
\begin{equation} \begin{split} &|\frac{1}{r} \int_{t}^{\infty} ds \int_{0}^{s-t} \frac{\rho d\rho \lambda''(s)}{(s-t)\sqrt{(s-t)^{2}-\rho^{2}}} \left(\frac{-1-\rho^{2}+r^{2}}{\sqrt{(1+\rho^{2}-r^{2})^{2}+4r^{2}}} + 1 + F_{3}(r,\rho,\lambda(s))-1 \right)|\\
&\leq C r \sup_{x \geq t}|\lambda''(x)| \int_{0}^{\infty}d\rho  \int_{0}^{1} \frac{d\sigma}{(1+2(\rho^{2}+r_{\sigma}^{2})+(\rho^{2}-r_{\sigma}^{2})^{2})^{1/2}} \\
&+ C r \sup_{x \geq t} \left(|\lambda''(x)| \lambda(x)^{4\alpha -4}\right) \int_{0}^{\infty} d\rho   \int_{0}^{1}   \frac{\left(\lambda(t)^{2-2\alpha}+\rho^{2}+r_{\sigma}^{2}\right)}{(1+2(\rho^{2}+r_{\sigma}^{2})\lambda(t)^{2\alpha-2}+(\rho^{2}-r_{\sigma}^{2})^{2}\lambda(t)^{4\alpha-4})^{3/2}}d\sigma\end{split}\end{equation}
and where we obtained the second and third lines by switching the $s$ and $\rho$ integration order, and using
$$\int_{\rho+t}^{\infty} \frac{ds}{(s-t) \sqrt{(s-t)^{2}-\rho^{2}}}\leq \frac{C}{\rho}$$
So, it suffices to estimate
\begin{equation}\label{B3def} \begin{split} &B_{3}(y) = \int_{0}^{\infty} \frac{d\rho}{(1+2(\rho^{2}+y^{2})+(\rho^{2}-y^{2})^{2})^{1/2}} \leq C \int_{0}^{2y}\frac{d\rho}{y} + \int_{2y}^{\infty} \frac{d\rho}{(1+\rho^{2})}\leq C\end{split}\end{equation}
This gives
\begin{equation} \begin{split} &|\frac{1}{r} \int_{t}^{\infty} ds \int_{0}^{s-t} \frac{\rho d\rho \lambda''(s)}{(s-t)\sqrt{(s-t)^{2}-\rho^{2}}} \left(\frac{-1-\rho^{2}+r^{2}}{\sqrt{(1+\rho^{2}-r^{2})^{2}+4r^{2}}} + 1 + F_{3}(r,\rho,\lambda(s))-1 \right)|\\
&\leq C \frac{r}{t^{2} \log^{b+1}(t)} + C \frac{r}{t^{2} \log^{1+b\alpha}(t)}\end{split}\end{equation}
We then consider the second line on the right-hand side of \eqref{dtv3prelim}, and use the same procedure as above:
\begin{equation}\label{dtv3prelimline2} \begin{split} &|\frac{-1}{r} \int_{t}^{\infty} ds \int_{0}^{s-t} \frac{\rho d\rho}{(s-t)} \frac{\lambda''(s)}{\sqrt{(s-t)^{2}-\rho^{2}}} \left(\frac{8 \rho^{2} r^{2}}{(4r^{2}+(1+\rho^{2}-r^{2})^{2})^{3/2}}-\frac{8\lambda(s)^{4\alpha-4}\rho^{2}r^{2}}{(4\lambda(s)^{2\alpha-2}r^{2}+(1+\lambda(s)^{2\alpha-2}(\rho^{2}-r^{2}))^{2})^{3/2}}\right)|\\
&\leq C r \int_{0}^{\infty} d\rho \left(\frac{\rho^{2} \left(\sup_{x \geq t}|\lambda''(x)|\right)}{(4r^{2}+(1+\rho^{2}-r^{2})^{2})^{3/2}} + \frac{\rho^{2} \sup_{x \geq t}\left(|\lambda''(x)| \lambda(x)^{4\alpha -4}\right)}{(1+2\lambda(t)^{2\alpha-2}(r^{2}+\rho^{2})+\lambda(t)^{4\alpha-4}(\rho^{2}-r^{2})^{2})^{3/2}}\right)\\
&\leq C r \int_{0}^{\infty} \frac{1}{t^{2} \log^{b+1}(t)(1+2(\rho^{2}+r^{2})+(\rho^{2}-r^{2})^{2})^{1/2}}d\rho \\
&+ C r \int_{0}^{\infty} \frac{\lambda(t)^{3-3\alpha}dq }{t^{2} \log^{b+1}(t) \log^{-4b+4\alpha b}(t) (1+2(q^{2} +r^{2} \lambda(t)^{2\alpha -2})+(q^{2}-r^{2}\lambda(t)^{2\alpha-2})^{2})^{1/2}}\\
&\leq C r \frac{B_{3}(r)}{t^{2} \log^{b+1}(t)} + C r \frac{B_{3}(r \lambda(t)^{\alpha -1})}{t^{2} \log^{b+1}(t)\log^{-b+\alpha b}(t)}\end{split}\end{equation}

This gives
\begin{equation}\begin{split} &|\frac{-1}{r} \int_{t}^{\infty} ds \int_{0}^{s-t} \frac{\rho d\rho}{(s-t)} \frac{\lambda''(s)}{\sqrt{(s-t)^{2}-\rho^{2}}} \left(\frac{8 \rho^{2} r^{2}}{(4r^{2}+(1+\rho^{2}-r^{2})^{2})^{3/2}}-\frac{8\lambda(s)^{4\alpha-4}\rho^{2}r^{2}}{(4\lambda(s)^{2\alpha-2}r^{2}+(1+\lambda(s)^{2\alpha-2}(\rho^{2}-r^{2}))^{2})^{3/2}}\right)|\\
&\leq C \frac{r}{t^{2} \log^{b+1}(t)} + C \frac{r}{t^{2} \log^{1+b\alpha}(t)}\end{split}\end{equation}

In total, we get \eqref{dtv3prelimest}.\\
\\
Next, we have to estimate $\partial_{t}v_{4}$ in the region $r \leq \frac{t}{2}$. Again, we write the formula for $v_{4}$ as
\begin{equation}\label{v4forprelim} v_{4}(t,r) = \frac{-1}{2\pi} \int_{t}^{\infty} ds \int_{0}^{s-t} \frac{\rho d\rho}{\sqrt{(s-t)^{2}-\rho^{2}}} \int_{0}^{2 \pi} d\theta \frac{v_{4,c}(s,\sqrt{r^{2}+2 r \rho \cos(\theta) + \rho^{2}})}{\sqrt{r^{2}+\rho^{2}+2 r \rho \cos(\theta)}} (r+\rho \cos(\theta))\end{equation}
Like done previously for $v_{4}$, we introduce $x =r \textbf{e}_{1} \in \mathbb{R}^{2}$ to ease notation, and get
\begin{equation}\label{dtv4forprelim}\begin{split} \partial_{t}v_{4}(t,r) &= -\int_{t}^{\infty} ds \frac{v_{4,s}(t,r)}{(s-t)} -\frac{1}{2\pi} \int_{t}^{\infty} ds \int_{B_{s-t}(0)} \frac{dA(y)}{\sqrt{(s-t)^{2}-|y|^{2}}}\text{integrand}_{v_{4,1}}\end{split}\end{equation}
where we recall

\begin{equation} v_{4,s}(t,r) = \frac{-1}{2\pi} \int_{B_{s-t}(0)} \frac{dA(y)}{\sqrt{(s-t)^{2}-|y|^{2}}}\frac{v_{4,c}(s,|x+y|)\left((x+y)\cdot \widehat{x}\right)}{|x+y|}\end{equation}

\begin{equation}\begin{split} \text{integrand}_{v_{4,1}} &= \frac{-\partial_{2}v_{4,c}(s,|x+y|)}{|x+y|^{2}(s-t)} \left((x+y)\cdot y\right)\left(\hat{x}\cdot(x+y)\right)\\
&+\frac{v_{4,c}(s,|x+y|)}{(s-t)|x+y|}\left(-y\cdot \hat{x} + \frac{\left(\hat{x}\cdot (x+y)\right)\left(y\cdot(y+x)\right)}{|x+y|^{2}}\right)\end{split}\end{equation}
So,
\begin{equation} |\text{integrand}_{v_{4,1}}| \leq C |\partial_{2}v_{4,c}(s,|x+y|)| + \frac{C |v_{4,c}(s,|x+y|)|}{|x+y|}\end{equation}
This is the same estimate obtained when estimating the integrand of $v_{4}$, except for a factor of $\frac{1}{r}$. So, the second term of \eqref{dtv4forprelim} is estimated as follows:
\begin{equation} \begin{split} &|-\frac{1}{2\pi} \int_{t}^{\infty} ds \int_{B_{s-t}(0)} \frac{dA(y)}{\sqrt{(s-t)^{2}-|y|^{2}}}\text{integrand}_{v_{4,1}}|\leq \frac{C}{t^{2} \log^{3b+2N-1}(t)}, \quad r \leq \frac{t}{2}\end{split}\end{equation}

For the first term in \eqref{dtv4forprelim}, we have
\begin{equation}\label{dtv4intprelimest} \begin{split} &|\int_{t}^{\infty} ds \frac{v_{4,s}(t,r)}{(s-t)}| \\
&\leq C \int_{t}^{\infty} \frac{ds}{(s-t)} \int_{B_{s-t}(0)} \frac{dA(y)}{\sqrt{(s-t)^{2}-|y|^{2}}} |v_{4,c}(s,|x+y|)|\left(\mathbbm{1}_{B_{\frac{s}{2}}(-x)}(y)+\mathbbm{1}_{(B_{\frac{s}{2}}(-x))^{c}}(y)\right) \\
&\leq C \int_{t}^{\infty} ds \int_{0}^{s-t} \rho d\rho \int_{0}^{2\pi} \frac{d\theta}{(s-t)\sqrt{(s-t)^{2}-\rho^{2}}} \frac{1}{s^{2} \log^{N+3b}(s)} \frac{1}{(\log^{2N}(s)+r^{2}+\rho^{2}+2 r \rho\cos(\theta))}\\
&+ C \int_{t}^{\infty} ds \int_{0}^{s-t} \rho d\rho \int_{0}^{2\pi} \frac{d\theta}{(s-t) \sqrt{(s-t)^{2}-\rho^{2}}} \frac{\log(s)}{\log^{2b}(s) s^{2} t} \frac{1}{(s^{2}+\rho^{2}+r^{2}+2 r p \cos(\theta))}\end{split}\end{equation}
We first estimate the third line of \eqref{dtv4intprelimest}:
\begin{equation}\begin{split} &|\int_{t}^{\infty} ds \int_{0}^{s-t} \rho d\rho \int_{0}^{2\pi} \frac{d\theta}{(s-t)\sqrt{(s-t)^{2}-\rho^{2}}} \frac{1}{s^{2} \log^{N+3b}(s)} \frac{1}{(\log^{2N}(s)+r^{2}+\rho^{2}+2 r \rho \cos(\theta))}|\\
& \leq C \int_{0}^{\infty} \rho d\rho \int_{0}^{2\pi} \frac{d\theta}{(t+\rho)^{2} \log^{3b+N}(t)(\log^{2N}(t) + \rho^{2} + r^{2} + 2 r \rho \cos(\theta))} \int_{\rho+t}^{\infty} \frac{ds}{(s-t) \sqrt{(s-t)^{2}-\rho^{2}}}\\
&\leq C \int_{0}^{\infty} \frac{d\rho}{(t+\rho)^{2} \log^{3b+N}(t)} \int_{0}^{2\pi} \frac{d\theta}{(\log^{2N}(t) + \rho^{2}+r^{2}+2 r \rho \cos(\theta))}\\
&\leq C \int_{0}^{\infty} \frac{d\rho}{(t+\rho)^{2} \log^{3b+N}(t)} \frac{1}{\sqrt{(\log^{2N}(t) +(r+\rho)^{2})(\log^{2N}(t) + (r-\rho)^{2})}}\\
&\leq \frac{C}{\log^{3b+N}(t)} \int_{0}^{\infty} \frac{d\rho}{(t+\rho)^{2}} \frac{1}{\sqrt{\log^{2N}(t)+(r+\rho)^{2}}}\frac{1}{\log^{N}(t)}\\
&\leq \frac{C}{t^{2}\log^{3b+2N-1}(t)}\end{split}\end{equation}
Next, we have
\begin{equation}\begin{split} &|\int_{t}^{\infty} ds \int_{0}^{s-t} \rho d\rho \int_{0}^{2\pi} \frac{d\theta}{(s-t) \sqrt{(s-t)^{2}-\rho^{2}}} \frac{\log(s)}{\log^{2b}(s) (s^{2}+\rho^{2}+r^{2}+2 r \rho\cos(\theta)) s^{2} t}|\\
&\leq C \int_{0}^{\infty} \rho d\rho \int_{0}^{2\pi} \frac{d\theta}{\log^{2b}(t)} \frac{\log(t)}{(\rho^{2}+r^{2}+2 r \rho \cos(\theta) + t^{2})} \frac{1}{(\rho+t)}\frac{1}{t^{2}} \int_{\rho+t}^{\infty} \frac{ds}{(s-t) \sqrt{(s-t)^{2}-\rho^{2}}}\\
&\leq \frac{C}{t^{2} \log^{2b}(t)} \int_{0}^{\infty} d\rho \int_{0}^{2\pi} \frac{\log(t) d\theta}{(\rho^{2}+r^{2}+2 r \rho \cos(\theta) +t^{2})(\rho+t)}\\
&\leq \frac{C \log(t)}{t^{3} \log^{2b}(t)} \int_{0}^{\infty} \frac{d\rho}{(\rho+t) \sqrt{t^{2}+\rho^{2}}}\\
& \leq \frac{C\log(t)}{t^{4} \log^{2b}(t)}\end{split}\end{equation}
Combining these, we get \eqref{dtv4prelimest}.\\
\\
Now, we are ready to estimate $\partial_{t}v_{5}$. Again, we have
\begin{equation} \begin{split} \partial_{t}v_{5}&= \frac{1}{2\pi} \int_{t}^{\infty} ds \int_{0}^{s-t} \frac{\rho d\rho}{(s-t)} \frac{1}{\sqrt{(s-t)^{2}-\rho^{2}}} \int_{0}^{2\pi} d\theta \frac{N_{2}(f_{v_{5}})(s,\sqrt{r^{2}+\rho^{2}+2 r \rho \cos(\theta)})(r+\rho \cos(\theta))}{\sqrt{r^{2}+\rho^{2}+2 r \rho \cos(\theta)}} \\
&-\frac{1}{2\pi} \int_{t}^{\infty} ds \int_{B_{s-t}(0)} \frac{dA(y)}{\sqrt{(s-t)^{2}-|y|^{2}}} \text{integrand}_{v_{5,1}}\end{split}\end{equation}
with
\begin{equation}|\text{integrand}_{v_{5,1}}| \leq C |\partial_{2}N_{2}(f_{v_{5}})(s,|x+y|)| + C \frac{|N_{2}(f_{v_{5}})(s,|x+y|)}{|x+y|}\end{equation}
Again, this is the same estimate for the integrand which appeared in the $v_{5}$ pointwise estimates, aside from an extra factor of $\frac{1}{r}$. So, we get
\begin{equation} |-\frac{1}{2\pi} \int_{t}^{\infty} ds \int_{B_{s-t}(0)} \frac{dA(y)}{\sqrt{(s-t)^{2}-|y|^{2}}} \text{integrand}_{v_{5,1}}| \leq \frac{C}{t^{7/2} \log^{3b-3+\frac{5N}{2}}(t)}, \quad r \leq \frac{t}{2}\end{equation}
Then, we need to estimate 
\begin{equation}\begin{split} &\int_{t}^{\infty} ds \int_{B_{s-t}(0)} \frac{dA(y)}{\sqrt{(s-t)^{2}-|y|^{2}}} \frac{|N_{2}(f_{v_{5}})|(s,|x+y|)}{(s-t)}\left(\mathbbm{1}_{B_{\frac{s}{2}}(-x)}(y)+\mathbbm{1}_{(B_{\frac{s}{2}}(-x))^{c}}(y)\right)\\
&\leq C \int_{t}^{\infty} ds \int_{0}^{s-t} \rho d\rho \int_{0}^{2\pi}\frac{d\theta}{\sqrt{(s-t)^{2}-\rho^{2}}} \frac{1}{(s-t)} \left(\frac{r+\rho}{(\lambda(s)^{2} + r^{2}+\rho^{2}+2 r \rho \cos(\theta))s^{4} \log^{3b}(s)} + \frac{s}{s^{6} \log^{3b}(s)}\right)\\
&+ C \int_{t}^{\infty} ds \int_{0}^{s-t} \rho d\rho \int_{0}^{2\pi} \frac{d\theta}{\sqrt{(s-t)^{2}-\rho^{2}}} \frac{1}{(s-t)} \left(\frac{\log^{3}(s)}{s^{2} t^{3}} + \frac{1}{s^{6} \log^{3N+7b-2}(s)}\right)\\
&\leq C \int_{0}^{\infty} \rho d\rho \int_{0}^{2\pi} \frac{d\theta}{\rho} \frac{(r+\rho)}{(1+r^{2}+\rho^{2}+2 r \rho \cos(\theta))}\frac{1}{\log^{b}(t) (\rho+t)^{4}}+ C \int_{t}^{\infty} \frac{ds}{s^{5} \log^{3b}(s)} \\
&+ C \int_{t}^{\infty} ds \left(\frac{\log^{3}(s)}{s^{2} t^{3}} + \frac{1}{s^{6} \log^{3N+7b-2}(s)}\right)\\
&\leq C \frac{(r+1) \log(t)}{t^{4} \log^{b}(t)}+C \frac{\log^{3}(t)}{t^{4}}\\
&\leq C \frac{r \log(t)}{t^{4} \log^{b}(t)} + C \frac{\log^{3}(t)}{t^{4}}, \quad r \leq \frac{t}{2}\end{split}\end{equation}
In total, we have \eqref{dtv5prelimest}.\\
\\
Finally, we estimate $\partial_{t}v_{1}$. Using the same procedure used to estimate $\partial_{t}v_{3}$, we get
\begin{equation}\label{dtv1prelim}\begin{split} \partial_{t}v_{1}(t,r) &= \int_{t}^{\infty} ds \frac{-\lambda''(s)}{r} \int_{0}^{s-t} \frac{\rho d\rho}{(s-t) \sqrt{(s-t)^{2}-\rho^{2}}} \left(1+\frac{r^{2}-1-\rho^{2}}{\sqrt{(r^{2}-1-\rho^{2})^{2}+4r^{2}}}\right)\\
&+\int_{t}^{\infty} ds \frac{\lambda''(s)}{r} \int_{0}^{s-t} \frac{\rho d\rho}{\sqrt{(s-t)^{2}-\rho^{2}}} \frac{8 \rho^{2} r^{2}}{(s-t) (4r^{2}+(1+\rho^{2}-r^{2})^{2})^{3/2}}\end{split}\end{equation}
So, recalling \eqref{B3def}, we have
\begin{equation}\begin{split} |\partial_{t}v_{1}(t,r)| &\leq \frac{C r}{t^{2} \log^{b+1}(t)} \int_{0}^{1}B_{3}(r(1-\sigma)) d\sigma + \frac{C r B_{3}(r)}{t^{2}\log^{b+1}(t)} \leq \frac{C r}{t^{2} \log^{b+1}(t)}\end{split}\end{equation}
\end{proof}

Now, we return to \eqref{dtipest}, and get
\begin{equation}\begin{split} |\partial_{t}\langle \left(\frac{\cos(2Q_{\frac{1}{\lambda(t)}})-1}{r^{2}}\right) v_{3}\vert_{r = R\lambda(t)},\phi_{0}\rangle|&\leq \frac{C \log(\log(t))}{t^{3} \log^{2}(t)} + \frac{C}{t^{2} \log^{-b +1 +b\alpha}(t)}\end{split}\end{equation}
\begin{equation}\begin{split} |\partial_{t}\langle \left(\frac{\cos(2Q_{\frac{1}{\lambda(t)}})-1}{r^{2}}\right) v_{4}\left(1-\chi_{\geq 1}(\frac{4r}{t})\right)\vert_{r = R\lambda(t)},\phi_{0}\rangle|&\leq \frac{C}{t^{3} \log^{2b+2N}(t)} + \frac{C}{t^{2} \log^{b +2N -1}(t)}\end{split}\end{equation}
\begin{equation}\begin{split} |\partial_{t}\langle \left(\frac{\cos(2Q_{\frac{1}{\lambda(t)}})-1}{r^{2}}\right) v_{5}\left(1-\chi_{\geq 1}(\frac{4r}{t})\right)\vert_{r = R\lambda(t)},\phi_{0}\rangle|&\leq \frac{C}{t^{9/2} \log^{2b-2+\frac{5N}{2}}(t)} + \frac{C}{t^{7/2} \log^{b-3+\frac{5N}{2}}(t)}\end{split}\end{equation}
\begin{equation}\begin{split} |\partial_{t}\langle \left(\frac{\cos(2Q_{\frac{1}{\lambda(t)}})-1}{r^{2}}\right) \chi_{\geq 1}(\frac{2r}{\log^{N}(t)})\left(v_{1}+v_{2}+v_{3}\right)\vert_{r = R\lambda(t)},\phi_{0}\rangle|&\leq \frac{C}{t^{3} \log^{2b+2N+1}(t)} + \frac{C}{t^{2} \log^{1+b\alpha +b+2N}(t)}\end{split}\end{equation}
Combining these estimates, we finally conclude
\begin{equation} |RHS_{2}'(t)| \leq \frac{C}{t^{2}\log^{1-3b}(t)}\end{equation} which implies
\begin{equation}\label{lpppprelim} |\lambda'''(t)| \leq \frac{C}{t^{2}\log(\log(t))\log^{1-2b}(t)}\end{equation}
Since $\lambda(t) = \lambda_{0}(t) + e_{0}(t)$, we get 
\begin{equation} \label{epppprelim} |e_{0}'''(t)| \leq \frac{C}{t^{2} \log(\log(t)) \log^{1-2b}(t)}\end{equation}
Now that we have this preliminary estimate, we return to \eqref{epppsetup}, and write $$\lambda(t) = \lambda_{0,0}(t) + e(t)$$
 Note that the function $e$ defined in the above expression is different from the function $e$ which appeared in the construction of $\lambda$ as a solution to the modulation equation. It will suffice for our purposes to have an estimate of the form $|e'''(t)| \leq C |\lambda_{0,0}'''(t)|$, and this is why we will not need to use the slightly more complicated decomposition 
$$\lambda(t) = \lambda_{0}(t) + e_{0}(t) = \lambda_{0,0}(t)+\lambda_{0,1}(t) + e_{0}(t)$$
Then, we differentiate \eqref{epppsetup}, and write the equation in the following way, with any differentiations under the integral sign justified by the preliminary estimate on $e'''$. (Note that $e'''(t) = e_{0}'''(t) +\lambda_{0,1}'''(t)$).
\begin{equation}\label{epppfinaleqn}\begin{split} &-4 \int_{t}^{\infty} \frac{e'''(s) ds}{\log(\lambda_{0,0}(s)) (1+s-t)} -4 \int_{t}^{\infty} \frac{e'''(s) ds}{(1+s-t)^{3} \log(\lambda_{0,0}(s)) (\lambda_{0,0}(t)^{1-\alpha} +s-t)} + 4 \alpha e'''(t)\\
&=-4 \alpha \lambda_{0,0}'''(t) + \frac{-4 (1-\alpha) \lambda(t)^{-\alpha} \lambda'(t)}{\log(\lambda_{0,0}(t))} \int_{t}^{\infty} \frac{e''(s) ds}{(\lambda(t)^{1-\alpha}+s-t)^{2}(1+s-t)^{3}}\\
&+\frac{1}{\log(\lambda_{0,0}(t))}\left(\frac{-4 \alpha \lambda'(t) \lambda''(t)}{\lambda(t)} + \partial_{t}G(t,\lambda(t)) + 4 \partial_{t}\left(\int_{0}^{\infty} \frac{\lambda_{0,0}''(t+w) dw}{(\lambda(t)^{1-\alpha}+w)(1+w)^{3}}\right)\right)\\
&+\frac{1}{\log(\lambda_{0,0}(t))}\left(-E_{\lambda_{0,0}}'(t)+4\alpha (\log(\lambda_{0,0}(t))-\log(\lambda(t)))\lambda'''(t)\right)\\
&-\frac{4}{\log(\lambda_{0,0}(t))} \int_{t}^{\infty} \frac{e'''(s)}{(1+s-t)^{3}} \left(\frac{1}{\lambda_{0,0}(t)^{1-\alpha} +s-t}-\frac{1}{\lambda(t)^{1-\alpha}+s-t}\right) ds\\
&-4 \int_{t}^{\infty} \frac{e'''(s) ds}{1+s-t} \left(\frac{1}{\log(\lambda_{0,0}(s))}-\frac{1}{\log(\lambda_{0,0}(t))}\right)\\
&-4 \int_{t}^{\infty} \frac{e'''(s) ds}{(1+s-t)^{3} (\lambda_{0,0}(t)^{1-\alpha} +s-t)}\left(\frac{1}{\log(\lambda_{0,0}(s))}-\frac{1}{\log(\lambda_{0,0}(t))}\right)\\
&:=RHS_{3}(t) \end{split}\end{equation}
We now proceed to estimate $RHS_{3}$, starting with the terms which do not involve $G$.
\begin{equation} |-4 \alpha \lambda_{0,0}'''(t)| \leq \frac{C}{t^{3} \log^{b+1}(t)}\end{equation}
\begin{equation}\begin{split} |\frac{-4(1-\alpha) \lambda(t)^{-\alpha} \lambda'(t)}{\log(\lambda_{0,0}(t))} \int_{t}^{\infty} \frac{e''(s) ds}{(\lambda(t)^{1-\alpha}+s-t)^{2}(1+s-t)^{3}}|\leq \frac{C}{t^{3} \log^{b+2}(t) (\log(\log(t)))^{3/2}}\end{split}\end{equation}
\begin{equation} |\frac{-4 \alpha \lambda'(t) \lambda''(t)}{\log(\lambda_{0,0}(t)) \lambda(t)}| \leq \frac{C}{t^{3} \log^{b+2}(t) \log(\log(t))}\end{equation}
\begin{equation} \begin{split} &|\frac{4}{\log(\lambda_{0,0}(t))} \partial_{t}\left(\int_{0}^{\infty} \frac{\lambda_{0,0}''(t+w) dw}{(\lambda(t)^{1-\alpha}+w)(1+w)^{3}}\right)|\\
&\leq \frac{C}{\log(\log(t))} \left(\int_{0}^{\infty} \frac{|\lambda_{0,0}'''(t+w)| dw}{(\lambda(t)^{1-\alpha}+w)(1+w)^{3}} + \int_{0}^{\infty} \frac{|\lambda_{0,0}''(t+w)| dw \lambda(t)^{-\alpha} |\lambda'(t)|}{(\lambda(t)^{1-\alpha}+w)^{2}(1+w)^{3}}\right)\\
&\leq \frac{C}{t^{3} \log^{b+1}(t)}\end{split}\end{equation}
By the same procedure used to estimate $E_{\lambda_{0,0}}$, we have
\begin{equation} \frac{|E_{\lambda_{0,0}}'(t)|}{|\log(\lambda_{0,0}(t))|} \leq \frac{C}{t^{3} \log^{b+1}(t) \log(\log(t))}\end{equation}
In addition,
\begin{equation}\begin{split} |\frac{4 \alpha (\log(\lambda_{0,0}(t))-\log(\lambda(t))) \lambda'''(t)}{\log(\lambda_{0,0}(t))}|&\leq \frac{C}{\log(\log(t))}  |\log(1+\frac{e(t)}{\lambda_{0,0}(t)})|\left(\frac{1}{t^{3}\log^{b+1}(t)}+|e'''(t)|\right)\\
&\leq \frac{C}{(\log(\log(t)))^{3/2}}\left(\frac{1}{t^{3}\log^{b+1}(t)}+|e'''(t)|\right)\end{split}\end{equation}
\begin{equation}\begin{split} &|\frac{-4}{\log(\lambda_{0,0}(t))} \int_{t}^{\infty} \frac{e'''(s)}{(1+s-t)^{3}} \left(\frac{1}{\lambda_{0,0}(t)^{1-\alpha} +s-t}-\frac{1}{\lambda(t)^{1-\alpha} +s-t}\right) ds |\\
&\leq \frac{C}{\log(\log(t))} \int_{t}^{\infty} \frac{|e'''(s)|}{(1+s-t)^{3}} |\frac{\lambda(t)^{1-\alpha}-\lambda_{0,0}(t)^{1-\alpha}}{(\lambda_{0,0}(t)^{1-\alpha}+s-t)^{2}}| ds\\
&\leq \frac{C \sup_{x \geq t}|e'''(x)|}{\log(\log(t))} \int_{t}^{\infty} \frac{ds}{\sqrt{\log(\log(t))}} \frac{1}{(1+s-t)^{3} (\lambda_{0,0}(t)^{1-\alpha}+s-t)}\\
&\leq \frac{C \sup_{x \geq t}|e'''(x)|}{\sqrt{\log(\log(t))}}\end{split}\end{equation}
Similarly, 
\begin{equation} \begin{split} &|-4 \int_{t}^{\infty} \frac{e'''(s) ds}{(1+s-t)} \left(\frac{1}{\log(\lambda_{0,0}(s))}-\frac{1}{\log(\lambda_{0,0}(t))}\right)| \leq \frac{C}{t (\log(\log(t)))^{2} \log(t)} \int_{t}^{\infty} |e'''(s)| ds\\
&\leq \frac{C \sup_{x \geq t} \left(|e'''(x)| x^{3/2}\right)}{t^{3/2} (\log(\log(t)))^{2} \log(t)}\end{split}\end{equation}
Finally,
\begin{equation}\begin{split} &|-4 \int_{t}^{\infty} \frac{e'''(s) ds}{(1+s-t)^{3} (\lambda_{0,0}(t)^{1-\alpha}+s-t)} \left(\frac{1}{\log(\lambda_{0,0}(s))}-\frac{1}{\log(\lambda_{0,0}(t))}\right)|\\
&\leq \frac{C \sup_{x \geq t}|e'''(x)|}{t \log(t) (\log(\log(t)))^{2}}\end{split}\end{equation}
Now, we start to estimate the terms arising from $\partial_{t}G(t,\lambda(t))$:
\begin{equation} |\partial_{t}\left(\lambda(t) E_{0,1}(\lambda(t),\lambda'(t),\lambda''(t))\right)| \leq \frac{C}{t^{3} \log^{b+1}(t)} + C|e'''(t)|\end{equation}
\begin{equation}\label{dtk3mk0}\begin{split} &\partial_{t}\left(-16 \int_{t}^{\infty} \lambda''(s) \left(K_{3}(s-t,\lambda(t))-K_{3,0}(s-t,\lambda(t))\right)ds\right)\\
&= -16 \int_{t}^{\infty} \lambda'''(s) \left(K_{3}(s-t,\lambda(t))-K_{3,0}(s-t,\lambda(t))\right) ds\\
&-16 \int_{t}^{\infty} \lambda''(s) \left(\partial_{2}(K_{3}-K_{3,0})(s-t,\lambda(t))\right) \lambda'(t) ds\end{split}\end{equation}
For the first line of the right-hand side of \eqref{dtk3mk0}, we have
\begin{equation} \begin{split} &|-16 \int_{t}^{\infty} \lambda'''(s) \left(K_{3}(s-t,\lambda(t))-K_{3,0}(s-t,\lambda(t))\right) ds|\\
&\leq C \sup_{x \geq t}|\lambda'''(x)| \int_{t}^{\infty} |K_{3}(s-t,\lambda(t))-K_{3,0}(s-t,\lambda(t))| ds\\
&\leq \frac{C}{t^{3} \log^{b+1}(t)} + C \sup_{x \geq t}|e'''(x)|\end{split}\end{equation}
where we recall that $\int_{t}^{\infty} |K_{3}(s-t,\lambda(t))-K_{3,0}(s-t,\lambda(t))| ds$ was estimated in the $v_{3}$ inner product subsection. For the second line of the right-hand side of \eqref{dtk3mk0}, we start with
\begin{equation} K_{3}(w,\lambda(t))-K_{3,0}(w,\lambda(t)) = \left(\frac{w}{1+w^{2}}-\frac{w}{\lambda(t)^{2-2\alpha}+w^{2}}\right) \frac{w^{4}}{4(w^{2}+36\lambda(t)^{2})^{2}} + \frac{1}{4(\lambda(t)^{1-\alpha}+w)(1+w)^{3}}\end{equation}
Then, we get
\begin{equation}\begin{split} |\partial_{2}(K_{3}-K_{3,0})(w,\lambda(t))| &\leq \frac{C \lambda(t)^{\alpha}}{(1+w)^{3}(\lambda(t)+\lambda(t)^{\alpha}w)^{2}}\\
&+\frac{C \lambda(t)^{1+2\alpha} w^{5}}{(36\lambda(t)^{2}+w^{2})^{2} (\lambda(t)^{2}+\lambda(t)^{2\alpha} w^{2})^{2}}\\
&+\frac{C \lambda(t) w^{5}}{(36\lambda(t)^{2}+w^{2})^{3}} |\frac{1}{1+w^{2}}-\frac{1}{(\lambda(t)^{2-2\alpha}+w^{2})}|\end{split}\end{equation}
Then, we note that
\begin{equation} \int_{0}^{\infty} \frac{\lambda(t)^{-\alpha} dw}{(1+w)^{3}(\lambda(t)^{1-\alpha}+w)^{2}} \leq C \log(\log(t))\log^{b}(t)\end{equation}
\begin{equation} \int_{0}^{\infty} \frac{\lambda(t)^{1-2\alpha} w^{5} dw}{(36\lambda(t)^{2}  + w^{2})^{2} (\lambda(t)^{2-2\alpha}+w^{2})^{2}} \leq \frac{C}{\lambda(t)}\end{equation}
\begin{equation}\begin{split} &\int_{0}^{\infty} \frac{\lambda(t) w^{5}}{(36\lambda(t)^{2}+w^{2})^{3}} |\frac{1}{1+w^{2}}-\frac{1}{(\lambda(t)^{2-2\alpha}+w^{2})}| dw\\
&\leq \frac{C \log(\log(t))}{\lambda(t)}\end{split}\end{equation}
This gives
\begin{equation}\begin{split} &|\partial_{t}\left(-16 \int_{t}^{\infty} \lambda''(s) \left(K_{3}(s-t,\lambda(t))-K_{3,0}(s-t,\lambda(t))\right) ds\right)|\\
&\leq C \sup_{x \geq t} |e'''(x)| + \frac{C}{t^{3} \log^{b+1}(t)}\end{split}\end{equation} 
As mentioned before, some terms arising in $\partial_{t}G(t,\lambda_{0}(t)+e(t))$ will be treated differently, now that we have the preliminary estimate on $e'''$. We start with the term
\begin{equation}\begin{split} &\partial_{t}\left(\frac{16}{\lambda(t)^{2}} \int_{t}^{\infty} ds \lambda''(s) \left(K_{1}(s-t,\lambda(t))-\frac{\lambda(t)^{2}}{4(1+s-t)}\right)\right)\\ 
&=\frac{-32 \lambda'(t)}{\lambda(t)^{3}} \int_{t}^{\infty} ds \lambda''(s) \left(K_{1}(s-t,\lambda(t))-\frac{\lambda(t)^{2}}{4(1+s-t)}\right)\\
&+\frac{16}{\lambda(t)^{2}} \int_{t}^{\infty} ds \lambda'''(s) \left(K_{1}(s-t,\lambda(t))-\frac{\lambda(t)^{2}}{4(1+s-t)}\right)\\
&+\frac{16}{\lambda(t)^{2}} \int_{t}^{\infty} ds \lambda''(s) \left(\partial_{2}K_{1}(s-t,\lambda(t)) \lambda'(t) - \frac{\lambda(t) \lambda'(t)}{2 (1+s-t)}\right)\end{split}\end{equation}
For $s-t \leq 1$, we estimate $\partial_{2}K_{1}$ as follows:
\begin{equation} |\partial_{2}K_{1}(s-t,\lambda(t))| \leq \int_{0}^{\infty} \frac{R dR}{(1+R^{2})^{3}} \int_{0}^{s-t} \frac{\rho d\rho}{(s-t)} \frac{4\lambda(t) R^{2}\left(1+\rho^{2}+R^{2}\lambda(t)^{2}\right)}{(4\lambda(t)^{2}R^{2}+(1+\rho^{2}-R^{2}\lambda(t)^{2})^{2})^{3/2}}\end{equation}
We then note that $$4R^{2}\lambda(t)^{2}+(1+\rho^{2}-R^{2}\lambda(t)^{2})^{2} = (1+(\rho+R \lambda(t))^{2})(1+(\rho-R\lambda(t))^{2})$$
So,
\begin{equation}|\frac{4\lambda(t) R^{2}\left(1+\rho^{2}+R^{2}\lambda(t)^{2}\right)}{(4\lambda(t)^{2}R^{2}+(1+\rho^{2}-R^{2}\lambda(t)^{2})^{2})^{3/2}}| \leq C R^{2}\lambda(t)\end{equation}
Then, we obtain
\begin{equation} |\partial_{2}K_{1}(s-t,\lambda(t))| \leq C \lambda(t)(s-t), \quad s-t \leq 1\end{equation} 
From here, we get
\begin{equation} \int_{t}^{t+1} |\lambda''(x)| |\partial_{2}K_{1}(x-t,\lambda(t)) - \frac{\lambda(t)}{2(1+x-t)}| |\lambda'(t)| dx \leq \frac{C}{t^{3}\log^{3b+2}(t)}\end{equation}
For $s-t \geq 1$, we use
\begin{equation} \begin{split} \partial_{2}K_{1}(s-t,\lambda(t))&=\int_{0}^{\infty} \frac{R dR}{(1+R^{2})^{3}} \int_{0}^{\infty} \frac{\rho d\rho}{(s-t)} \frac{4\lambda(t) R^{2}(1+\rho^{2}+\lambda(t)^{2}R^{2})}{(4\lambda(t)^{2} R^{2}+(1+\rho^{2}-R^{2}\lambda(t)^{2})^{2})^{3/2}}\\
&-\int_{0}^{\infty} \frac{R dR}{(1+R^{2})^{3}} \int_{s-t}^{\infty} \frac{\rho d\rho}{(s-t)} \frac{4 \lambda(t)R^{2}(1+\rho^{2}+R^{2}\lambda(t)^{2})}{(4R^{2}\lambda(t)^{2}+(1+\rho^{2}-R^{2}\lambda(t)^{2})^{2})^{3/2}}\\
&=\frac{\lambda(t)}{2(s-t)} -\int_{0}^{\infty} \frac{R dR}{(1+R^{2})^{3}} \int_{s-t}^{\infty} \frac{\rho d\rho}{(s-t)} \frac{4\lambda(t) R^{2}(1+\rho^{2}+R^{2}\lambda(t)^{2})}{(4R^{2}\lambda(t)^{2}+(1+\rho^{2}-R^{2}\lambda(t)^{2})^{2})^{3/2}}\end{split}\end{equation}
Then,
\begin{equation}\begin{split} &\int_{t+1}^{\infty} |\lambda''(s)||\partial_{2}K_{1}(s-t,\lambda(t))-\frac{\lambda(t)}{2(1+s-t)}| ds |\lambda'(t)| \\
&\leq \frac{C}{t^{3}\log^{2b+2}(t)} \int_{t+1}^{\infty} |\partial_{2}K_{1}(s-t,\lambda(t))-\frac{\lambda(t)}{2(1+s-t)}| ds\\
&\leq \frac{C}{t^{3} \log^{2b+2}(t)} \int_{1}^{\infty} dw \left(\frac{\lambda(t)}{2 w}-\frac{\lambda(t)}{2(1+w)}\right)\\
&+\frac{C}{t^{3}\log^{2b+2}(t)} \int_{0}^{\infty} \frac{R dR}{(1+R^{2})^{3}} \int_{1}^{\infty} \rho \log(\rho) \frac{4 \lambda(t) R^{2} (1+\rho^{2}+R^{2}\lambda(t)^{2})}{(4R^{2}\lambda(t)^{2}+(1+\rho^{2}-R^{2}\lambda(t)^{2})^{2})^{3/2}}d\rho\\
&\leq \frac{C}{t^{3}\log^{3b+2}(t)} + \frac{C}{t^{3}\log^{3b+2}(t)} \int_{0}^{\infty} \frac{R^{3} dR}{(1+R^{2})^{3}} \log(2+R\lambda(t))\\
&\leq \frac{C}{t^{3}\log^{3b+2}(t)}\end{split}\end{equation}
We also have
\begin{equation}\begin{split} &|\frac{-32 \lambda'(t)}{\lambda(t)^{3}} \int_{t}^{\infty} ds \lambda''(s) \left(K_{1}(s-t,\lambda(t))-\frac{\lambda(t)^{2}}{4(1+s-t)}\right)|\\
&\leq \frac{C}{t^{3} \log^{2-b}(t)} \int_{t}^{\infty} |K_{1}(s-t,\lambda(t))-\frac{\lambda(t)^{2}}{4(1+s-t)}| ds\\
&\leq \frac{C}{t^{3} \log^{b+2}(t)}\end{split}\end{equation}
Finally, we have
\begin{equation}\begin{split} &|\frac{16}{\lambda(t)^{2}} \int_{t}^{\infty} dx \lambda'''(x) \left(K_{1}(x-t,\lambda(t)) -\frac{\lambda(t)^{2}}{4(1+x-t)}\right)|\\
&\leq \frac{C}{\lambda(t)^{2}} \sup_{x \geq t}(|\lambda'''(x)|) \int_{t}^{\infty} |K_{1}(x-t,\lambda(t))-\frac{\lambda(t)^{2}}{4(1+x-t)}| dx\\
&\leq C \sup_{x \geq t}|\lambda'''(x)|\end{split}\end{equation}
Combining these, we get
\begin{equation}\begin{split} &|\partial_{t}\left(\frac{16}{\lambda(t)^{2}} \int_{t}^{\infty} ds \lambda''(s) \left(K_{1}(s-t,\lambda(t))-\frac{\lambda(t)^{2}}{4(1+s-t)}\right)\right)|\\
&\leq \frac{C}{t^{3}\log^{b+1}(t)} + C \sup_{x \geq t} |e'''(x)|\end{split}\end{equation}

Next, we consider 
\begin{equation}\begin{split} &\partial_{t}\left(\frac{16}{\lambda(t)^{2}} \int_{t}^{\infty} dx \lambda''(x) K(x-t,\lambda(t))\right) \\
&= \frac{-32 \lambda'(t)}{\lambda(t)^{3}} \int_{t}^{\infty} dx \lambda''(x) K(x-t,\lambda(t))+\frac{16}{\lambda(t)^{2}} \int_{t}^{\infty} dx \lambda'''(x) K(x-t,\lambda(t))\\
&+\frac{16}{\lambda(t)^{2}} \int_{t}^{\infty} dx \lambda''(x) \partial_{2}K(x-t,\lambda(t)) \lambda'(t)\end{split}\end{equation}
Note that
\begin{equation}\begin{split} |\frac{-32 \lambda'(t)}{\lambda(t)^{3}} \int_{t}^{\infty} dx \lambda''(x) K(x-t,\lambda(t))| &\leq \frac{C \log^{3b}(t)}{t^{3}\log^{2b+2}(t)} \int_{t}^{\infty} |K(x-t,\lambda(t))| dx\\
&\leq \frac{C}{t^{3} \log^{b+2}(t)}\end{split}\end{equation}
Next,
\begin{equation}\begin{split} |\frac{16}{\lambda(t)^{2}}\int_{t}^{\infty} dx \lambda'''(x) K(x-t,\lambda(t))|&\leq C \sup_{x \geq t} |\lambda'''(x)|\\
&\leq \frac{C}{t^{3}\log^{b+1}(t)} + C \sup_{x \geq t} |e'''(x)|\end{split}\end{equation}
Finally, the integral
\begin{equation} \frac{16}{\lambda(t)^{2}}\int_{t}^{\infty} ds \lambda''(s) \partial_{2}K(s-t,\lambda(t)) \lambda'(t) \end{equation}
was treated during the prelminary estimates (it is equal to $\lambda(t) \cdot I$ in the notation of that section). So, in total, we get
\begin{equation}\begin{split}& |\partial_{t}\left(\frac{16}{\lambda(t)^{2}} \int_{t}^{\infty} dx \lambda''(x) K(x-t,\lambda(t))\right)| \leq \frac{C}{t^{3}\log^{b+1}(t)} + C \sup_{x \geq t} |e'''(x)|\end{split}\end{equation}

Now, we return to \eqref{v3laterest} to prove an estimate on $\partial_{t}v_{3}$ which will be sufficeint to estimate $\partial_{t}v_{4}$. We start with
\begin{equation}\label{dtv3fornonsharpest}\begin{split} \partial_{t}v_{3}(t,r) &=\frac{-1}{r} \int_{0}^{\infty} dw \int_{0}^{w} \frac{\rho d\rho}{\sqrt{w^{2}-\rho^{2}}} \lambda'''(w+t)\left(\frac{-1-\rho^{2}+r^{2}}{\sqrt{(-1-\rho^{2}+r^{2})^{2}+4r^{2}}}+F_{3}(r,\rho,\lambda(t+w))\right)\\
&-\frac{1}{r} \int_{0}^{\infty} dw \int_{0}^{w} \frac{\rho d\rho}{\sqrt{w^{2}-\rho^{2}}} \lambda''(t+w) \partial_{3}F_{3}(r,\rho,\lambda(t+w)) \lambda'(t+w)\end{split}\end{equation}
The first line of \eqref{dtv3fornonsharpest} is treated in the same manner as $v_{3}$ was, while obtaining \eqref{v3laterest}. In particular, the analog of $v_{3,2}$ which appeared in an intermediate step in obtaining \eqref{v3laterest} is estimated by
\begin{equation} \begin{split} & C r \sup_{x \geq t} |\lambda'''(x)| + C r \sup_{x \geq t} \left(|\lambda'''(x)|\lambda(x)^{\alpha-1}\left(\lambda(x)^{\alpha-1}-\lambda(t)^{\alpha-1}\right)\right)\lambda(t)^{2-2\alpha}\\
&\leq \frac{Cr }{t^{3} \log^{b+1}(t)} +  \frac{Cr }{t \log^{b-b\alpha}(t)} \sup_{x \geq t}\left(\frac{|e'''(x)|x}{\lambda(x)^{1-\alpha}}\right)\end{split}\end{equation}
while the analog of $v_{3,1}$ is estimated by $$C r \sup_{x \geq t}\left(\frac{|\lambda'''(x)|}{\lambda(x)^{2-2\alpha}}\right)\log(\log(t))\lambda(t)^{2-2\alpha}$$
To treat the second line of  \eqref{dtv3fornonsharpest}, we start with
\begin{equation} |\partial_{3}F_{3}(r,\rho,\lambda(s))| \leq \frac{C r^{2} \lambda(s)^{2\alpha-3}}{(1+\lambda(s)^{4\alpha-4}(\rho^{2}-r^{2})^{2}+2\lambda(s)^{2\alpha-2}(\rho^{2}+r^{2}))}\end{equation}
Then, we get
\begin{equation} \begin{split} &|\frac{-1}{r} \int_{t}^{\infty} ds \int_{0}^{s-t} \frac{\rho d\rho}{(s-t)} \lambda''(s) \partial_{3}F_{3}(r,\rho,\lambda(s))\lambda'(s)|\\
&\leq \frac{C}{r} \int_{t}^{t+1} \frac{ds}{(s-t)} \frac{1}{s^{3}\log^{2b+2}(s)} \int_{0}^{s-t} \rho d\rho \frac{r^{2} \lambda(s)^{2\alpha-3}}{(1+2\lambda(s)^{2\alpha-2}\rho^{2})}+\frac{C}{r} \int_{t+1}^{\infty} \frac{ds}{s^{3} \log^{2b+2}(s) (s-t)} \int_{0}^{s-t} \rho d\rho |\partial_{3}F_{3}(r,\rho,\lambda(s))|\\
&\leq \frac{C r}{t^{3}\log^{b+1}(t)}\end{split}\end{equation}
and
\begin{equation}\begin{split}&|\frac{-1}{r} \int_{t}^{\infty} ds \int_{0}^{s-t} \rho d\rho \left(\frac{1}{\sqrt{(s-t)^{2}-\rho^{2}}}-\frac{1}{(s-t)}\right)\lambda''(s) \partial_{3}F_{3}(r,\rho,\lambda(s))\lambda'(s)|\\
&\leq C r \int_{0}^{\infty} \frac{\rho d\rho}{(1+\lambda(t)^{4\alpha-4}(\rho^{2}-r^{2})^{2}+2\lambda(t)^{2\alpha-2}(\rho^{2}+r^{2}))}\frac{\log^{(3-2\alpha)b}(t)}{t^{3} \log^{2b+2}(t)}\\
&\leq \frac{C r}{t^{3} \log^{b+2}(t)}\end{split}\end{equation}
Combining these, we get 
\begin{equation}\label{dtv3forfinaleppp} |\partial_{t}v_{3}(t,r)| \leq \frac{C r \log(\log(t))}{t^{3} \log^{b+1}(t)} + \frac{C r}{t} \sup_{x \geq t} \left(\frac{x |e'''(x)|}{\lambda(x)^{2-2\alpha}}\right) \log(\log(t)) \lambda(t)^{2-2\alpha}\end{equation}
Next, we estimate $\partial_{t}E_{5}$. First, we recall the definition of $E_{5}$:
\begin{equation}\label{e5fordtest} \begin{split} E_{5}(t,r)&= -\frac{1}{r} \int_{t}^{t+6r} ds \int_{0}^{s-t} \frac{\rho d\rho}{(s-t)} \lambda''(s) \left(\frac{-1-\rho^{2}+r^{2}}{\sqrt{(-1-\rho^{2}+r^{2})^{2}+4r^{2}}}+F_{3}(r,\rho,\lambda(s))\right)\\
&-\frac{1}{r} \int_{6r}^{\infty} dw \left(\int_{0}^{w} \frac{\rho d\rho}{w} \lambda''(t+w)  \left(\frac{-1-\rho^{2}+r^{2}}{\sqrt{(-1-\rho^{2}+r^{2})^{2}+4r^{2}}}+F_{3}(r,\rho,\lambda(t+w))\right)\right.\\
&\left. - \frac{\lambda''(t+w)}{w} r^{2} w^{2} \left(\frac{1}{1+w^{2}}-\frac{1}{(\lambda(t+w)^{2-2\alpha}+w^{2})}\right)\right)\\
&-r \int_{t+6r}^{\infty} ds \lambda''(s) (s-t)\left(\frac{1}{(\lambda(t)^{2-2\alpha}+(s-t)^{2})} - \frac{1}{\lambda(s)^{2-2\alpha}+(s-t)^{2}}\right)\\
&+v_{3,2}(t,r)\end{split}\end{equation}
We then estimate the time derivative of each of the lines of \eqref{e5fordtest}. For the first line, we have
\begin{equation}\begin{split}&|\partial_{t} \left(-\frac{1}{r} \int_{0}^{6r} dw \int_{0}^{w} \frac{\rho d\rho}{w} \lambda''(t+w) \left(\frac{-1-\rho^{2}+r^{2}}{\sqrt{(-1-\rho^{2}+r^{2})^{2}+4r^{2}}}+F_{3}(r,\rho,\lambda(t+w))\right)\right)|\\
&\leq \frac{C}{r} \int_{0}^{6r} dw \int_{0}^{w} \frac{\rho d\rho}{w} |\lambda'''(t+w)|\cdot|\frac{-1-\rho^{2}+r^{2}}{\sqrt{(-1-\rho^{2}+r^{2})^{2}+4r^{2}}}+F_{3}(r,\rho,\lambda(t+w))|\\
&+\frac{C}{r} \int_{0}^{6r} dw \int_{0}^{w} \frac{\rho d\rho}{w} |\lambda''(t+w) \partial_{3}F_{3}(r,\rho,\lambda(t+w)) \lambda'(t+w)|\\
&\leq \frac{C}{r} \int_{0}^{6r} dw \int_{0}^{w} \frac{\rho d\rho}{w} |\lambda'''(t+w)| + \frac{Cr}{t^{3}\log^{b+1}(t)}\end{split}\end{equation}
where we note that the third line of the inequality above has been estimated already. Next, we have
\begin{equation}\label{intermediateestdte5}\begin{split} &\partial_{t}\left(-\frac{1}{r} \int_{6r}^{\infty} dw \left(\int_{0}^{w} \frac{\rho d\rho}{w} \lambda''(t+w)  \left(\frac{-1-\rho^{2}+r^{2}}{\sqrt{(-1-\rho^{2}+r^{2})^{2}+4r^{2}}}+F_{3}(r,\rho,\lambda(t+w))\right)\right.\right.\\
&\left.\left. - \frac{\lambda''(t+w)}{w} r^{2} w^{2} \left(\frac{1}{1+w^{2}}-\frac{1}{(\lambda(t+w)^{2-2\alpha}+w^{2})}\right)\right)\right)\\
&=\frac{1}{r} \int_{6r}^{\infty} dw \lambda''(t+w) r^{2} w \left(\frac{(2-2\alpha)}{(\lambda(t+w)^{2-2\alpha}+w^{2})^{2}}\lambda(t+w)^{1-2\alpha} \lambda'(t+w)\right)\\
&-\frac{1}{r} \int_{6r}^{\infty} dw \int_{0}^{w} \frac{\rho d\rho}{w} \lambda'''(t+w) \left(\frac{-1-\rho^{2}+r^{2}}{\sqrt{(-1-\rho^{2}+r^{2})^{2}+4r^{2}}}+F_{3}(r,\rho,\lambda(t+w))\right)\\
&+\frac{1}{r} \int_{6r}^{\infty} dw \lambda'''(t+w) r^{2} w \left(\frac{1}{1+w^{2}}-\frac{1}{(\lambda(t+w)^{2-2\alpha}+w^{2})}\right)\\
&-\frac{1}{r} \int_{6r}^{\infty} dw \int_{0}^{w} \frac{\rho d\rho}{w} \lambda''(t+w) \partial_{3}F_{3}(r,\rho,\lambda(t+w))\lambda'(t+w)\end{split}\end{equation}
We get
\begin{equation} \begin{split}&|\frac{1}{r} \int_{6r}^{\infty} dw \lambda''(t+w) r^{2} w \left(\frac{(2-2\alpha)}{(\lambda(t+w)^{2-2\alpha}+w^{2})^{2}}\lambda(t+w)^{1-2\alpha} \lambda'(t+w)\right)|\\
&\leq C r \int_{6r}^{\infty} dw \frac{|\lambda''(t+w)|w}{(\lambda(t+w)^{2-2\alpha}+w^{2})^{2}} \lambda(t+w)^{1-2\alpha} |\lambda'(t+w)|\\
&\leq \frac{C r}{t^{3} \log^{b+2}(t)}\end{split}\end{equation}
By the identical procedure used to estimate  $E_{4}$ in the $v_{3}$ subsection, we have
\begin{equation}\begin{split} &|-\frac{1}{r} \int_{6r}^{\infty} dw \int_{0}^{w} \frac{\rho d\rho}{w} \lambda'''(t+w) \left(\frac{-1-\rho^{2}+r^{2}}{\sqrt{(-1-\rho^{2}+r^{2})^{2}+4r^{2}}}+F_{3}(r,\rho,\lambda(t+w))\right)\\
&+\frac{1}{r} \int_{6r}^{\infty} dw \lambda'''(t+w) r^{2} w \left(\frac{1}{1+w^{2}}-\frac{1}{(\lambda(t+w)^{2-2\alpha}+w^{2})}\right)|\\
&\leq \frac{C r}{t^{3}\log^{b+1}(t)} + C r \sup_{x \geq t}|e'''(x)|\end{split}\end{equation}
We then note that the last line of \eqref{intermediateestdte5} was already estimated, and is bounded above in absolute value by
$$\frac{C r}{t^{3}\log^{b+1}(t)}$$
Then, we estimate
\begin{equation} \begin{split} &|\partial_{t}\left(r \int_{t+6r}^{\infty} ds \lambda''(s) (s-t)\left(\frac{1}{(\lambda(t)^{2-2\alpha}+(s-t)^{2})} - \frac{1}{\lambda(s)^{2-2\alpha}+(s-t)^{2})}\right)\right)|\\
&\leq C r \int_{6r}^{\infty} dw |\lambda'''(t+w)| |\frac{\lambda(t+w)^{2-2\alpha}-\lambda(t)^{2-2\alpha}}{(\lambda(t+w)^{2-2\alpha}+w^{2})^{2}}| w \\
&+ C r |\int_{6r}^{\infty} dw \lambda''(t+w) w \partial_{t}\left(\frac{1}{(\lambda(t)^{2-2\alpha}+w^{2})} - \frac{1}{\lambda(t+w)^{2-2\alpha}+w^{2}}\right)|\end{split}\end{equation}
The second line of the above expression is estimated by
\begin{equation} \begin{split}& r \int_{6r}^{\infty} dw |\lambda'''(t+w)| |\frac{\lambda(t+w)^{2-2\alpha}-\lambda(t)^{2-2\alpha}}{(\lambda(t+w)^{2-2\alpha}+w^{2})^{2}}| w\\
&\leq C r \int_{6r}^{\infty} dw |\lambda'''(t+w)| \frac{\lambda(t)^{1-2\alpha} |\lambda'(t)| w^{2}}{(\lambda(t+w)^{2-2\alpha}+w^{2})^{2}}\\
&\leq \frac{C r}{t \log^{3b+1-3b\alpha}(t)}\left(\frac{1}{t^{3} \log^{1-b+2\alpha b}(t)} + \sup_{x \geq t} \left(\frac{|e'''(x)|}{\lambda(x)^{2-2\alpha}}\right)\right)\end{split}\end{equation}
On the other hand, we have
\begin{equation}\begin{split}&| r \int_{6r}^{\infty} dw \lambda''(t+w) w \partial_{t}\left(\frac{1}{(\lambda(t)^{2-2\alpha}+w^{2})} - \frac{1}{\lambda(t+w)^{2-2\alpha}+w^{2}}\right)|\\
&\leq \frac{Cr}{t^{3}\log^{b+2}(t)}\end{split}\end{equation}
Finally, we recall
\begin{equation}\begin{split} v_{3,2}(t,r) = \frac{-1}{r} \int_{t}^{\infty} ds \int_{0}^{s-t} \rho d\rho& \left(\frac{1}{\sqrt{(s-t)^{2}-\rho^{2}}}-\frac{1}{(s-t)}\right)\lambda''(s) \\
&\left(\frac{-1-\rho^{2}+r^{2}}{\sqrt{(-1-\rho^{2}+r^{2})^{2}+4r^{2}}}+F_{3}(r,\rho,\lambda(s))\right)\end{split}\end{equation}
After taking the time derivative, we estimate the following term with the same argument used for $v_{3,2}$.
\begin{equation}\begin{split} &|\frac{-1}{r} \int_{0}^{\infty} dw \int_{0}^{w} \rho d\rho \left(\frac{1}{\sqrt{w^{2}-\rho^{2}}}-\frac{1}{w}\right) \lambda'''(t+w) \left(\frac{-1-\rho^{2}+r^{2}}{\sqrt{(-1-\rho^{2}+r^{2})^{2}+4r^{2}}}+F_{3}(r,\rho,\lambda(t+w))\right)|\\
&\leq \frac{C r}{t^{3} \log^{b+1}(t)} + \frac{C r \sup_{x \geq t}\left(\frac{|e'''(x)| x}{\lambda(x)^{3-2\alpha}}\right)}{t \log^{3b-2b\alpha}(t)}\end{split}\end{equation}
Finally,
$$|\frac{-1}{r} \int_{0}^{\infty} dw \int_{0}^{w} \rho d\rho \left(\frac{1}{\sqrt{w^{2}-\rho^{2}}}-\frac{1}{w}\right) \lambda''(t+w) \partial_{3}F_{3}(r,\rho,\lambda(t+w)) \lambda'(t+w)| \leq \frac{C r}{t^{3} \log^{b+2}(t)}$$
where we note that the integral has already been estimated while studying $\partial_{t}v_{3}$.

In total, we get
\begin{equation}\begin{split} |\partial_{t}E_{5}(t,r)| &\leq \frac{C r}{t^{3}\log^{b+1}(t)}+\frac{C r \sup_{x \geq t}\left(\frac{|e'''(x)|x}{\lambda(x)^{3-2\alpha}}\right)}{t\log^{(3-2\alpha)b}(t)}\end{split}\end{equation}
Now, we will obtain an estimate on $\partial_{t}v_{4}$ which is better than the preliminary one, \eqref{dtv4prelimest}. In particular, now that we have the preliminary estimate on $\lambda'''$, we can use the same procedure used to estimate $\partial_{t}v_{3}$, to see that $\partial_{t} v_{1}$ solves, with $0$ Cauchy data at infinity, the same equation as $v_{1}$ does, except with $\lambda''$ replaced by $\lambda'''$. This observation, combined with the estimates on $v_{1}$ gives
\begin{equation}\label{dtv1finalestsmallr} |\partial_{t}v_{1}(t,r)| \leq \frac{C r \left(\log(t)+\log(3+2r)\right)}{t} \sup_{x \geq t}\left(|\lambda'''(x)| x\right)\end{equation}
(Note that we can not directly use the $\partial_{t}v_{1}$ analog of \eqref{v1largerest}, becuase the preliminary estimate on $|e'''|$ is not good enough to justify the steps which would produce such an estimate).\\
Now, we use the estimate \eqref{dtv3forfinaleppp}, combined with the above estimate on $\partial_{t}v_{1}$, and the previous estimate on $\partial_{t}v_{2}$, to get
\begin{equation} \begin{split} |\partial_{t}v_{4,c}(t,r)| &\leq \frac{C |\chi_{\geq 1}'(\frac{2r}{\log^{N}(t)})|}{t \log(t)} \left(\frac{\lambda(t)^{2}}{(\lambda(t)^{2}+r^{2})^{2}} |v_{1}+v_{2}+v_{3}| + |F_{0,2}(t,r)|\right) \\
&+ \frac{C \chi_{\geq 1}(\frac{2r}{\log^{N}(t)}) \lambda(t) |\lambda'(t)|}{(r^{2}+\lambda(t)^{2})^{2}} |v_{1}+v_{2}+v_{3}| + \frac{C\chi_{\geq 1}(\frac{2r}{\log^{N}(t)}) \lambda(t)^{2} |\partial_{t}(v_{1}+v_{2}+v_{3})|}{(\lambda(t)^{2}+r^{2})^{2}} \\
&+ C\chi_{\geq 1}(\frac{2r}{\log^{N}(t)}) |\partial_{t}F_{0,2}(t,r)|\end{split}\end{equation}

Using $|\chi_{\geq 1}'(\frac{2r}{\log^{N}(t)})| \cdot \frac{r}{t \log^{N+1}(t)} \leq  C \frac{\mathbbm{1}_{\{r \geq \frac{\log^{N}(t)}{2}\}}}{t \log(t)}$, we get
\begin{equation} \begin{split} |\partial_{t}v_{4,c}(t,r)| &\leq \frac{C \mathbbm{1}_{\{r \geq \frac{\log^{N}(t)}{2}\}}}{\log^{2b}(t) r^{4}} \begin{cases} \frac{r}{t^{3} \log^{b}(t)}, \quad r \leq \frac{t}{2}\\
\frac{\log(r)}{|t-r|} \left(\frac{1}{|t-r|} + \frac{1}{t \log(t)}\right), \quad t>r> \frac{t}{2}\end{cases}\\
&+ \frac{C \mathbbm{1}_{\{r \geq \frac{\log^{N}(t)}{2}\}}}{r^{3}t^{3}} \left(\frac{\log(t)+\log(r)}{\log^{3b+1}(t)}\right) + C \mathbbm{1}_{\{r \geq \frac{\log^{N}(t)}{2}\}} \frac{\lambda(t)^{2-2\alpha}(\log(t)+\log(r))}{r^{3}t \log^{2b}(t)} \sup_{x \geq t} \left(\frac{|e'''(x)| x}{\lambda(x)^{2-2\alpha}}\right)\end{split}\end{equation}

Then, we note that
\begin{equation} \partial_{t}v_{4}(t,r) = \frac{-1}{2\pi} \int_{0}^{\infty} dw \int_{0}^{w} \frac{\rho d\rho}{\sqrt{w^{2}-\rho^{2}}}\int_{0}^{2\pi} d\theta \frac{\partial_{1}v_{4,c}(t+w,\sqrt{r^{2}+\rho^{2}+2 r \rho \cos(\theta)})}{\sqrt{r^{2}+\rho^{2}+2 r \rho \cos(\theta)}}(r+\rho \cos(\theta))\end{equation}

and, carry out the identical procedure done to obtain the estimate on $v_{4}^{\lambda_{1}}-v_{4}^{\lambda_{2}}$. This results in
\begin{equation} |\partial_{t}v_{4}(t,r)| \leq \frac{C}{t^{3}\log^{3b+N-2}(t)} + \frac{C \lambda(t)^{2-2\alpha}}{t \log^{2b+N-3}(t)}\sup_{x \geq t}\left(\frac{x |e'''(x)|}{\lambda(x)^{2-2\alpha}}\right), \quad r \leq \frac{t}{2}\end{equation}

Like previously, in order to estimate $\partial_{t}v_{4}$ in the region $r \geq \frac{t}{2}$, we first record a slightly different estimate on $\partial_{t}v_{4,c}$. To obtain this, we use \eqref{v2sqrtrest} in the region $t-\sqrt{t} \leq r \leq t+\sqrt{t}$, and \eqref{v2singularconeest} for the other parts of the region $r \geq \frac{t}{2}$ to estimate $v_{2}$. For $\partial_{t} v_{2}$, we use \eqref{v2sqrtrest} in the region $t-t^{1/4} \leq r \leq t+t^{1/4}$, and  \eqref{v2singularconeest} in the other parts of the region $r \geq \frac{t}{2}$.\\
\\
This leads to
\begin{equation} \begin{split} |\partial_{t}v_{4,c}(t,r)| &\leq \frac{C \mathbbm{1}_{\{r \geq \frac{\log^{N}(t)}{2}\}}}{\log^{2b}(t) r^{4}} \begin{cases} \frac{r}{t^{3} \log^{b}(t)}, \quad r \leq \frac{t}{2}\\
\frac{\log(r)}{|t-r|} \left(\frac{1}{|t-r|} + \frac{1}{t \log(t)}\right), \quad \frac{t}{2} \leq r \leq t-\sqrt{t},\text{ or } r\geq t+\sqrt{t}\\
\frac{\log(r)}{(t-r)^{2}}+\frac{1}{\sqrt{r} t \log(t)}, \quad t-\sqrt{t} \leq r \leq t-t^{1/4},\text{ or } t+t^{1/4} \leq r \leq t+\sqrt{t}\\
\frac{1}{\sqrt{r}}, \quad t-t^{1/4} \leq r \leq t+t^{1/4}\end{cases}\\
&+ \frac{C \mathbbm{1}_{\{r \geq \frac{\log^{N}(t)}{2}\}}}{r^{3}t^{3}} \left(\frac{\log(t)+\log(r)}{\log^{3b+1}(t)}\right) + C \mathbbm{1}_{\{r \geq \frac{\log^{N}(t)}{2}\}} \frac{\lambda(t)^{2-2\alpha}(\log(t)+\log(r))}{r^{3}t \log^{2b}(t)} \sup_{x \geq t} \left(\frac{|e'''(x)| x}{\lambda(x)^{2-2\alpha}}\right)\end{split}\end{equation}
 First, we obtain
\begin{equation}\label{dtv4cl2estfordtv4final} ||\partial_{t}v_{4,c}||_{L^{2}(r dr)} \leq \frac{C}{t^{3}\log^{3b+2N}(t)}+\frac{C \sup_{x \geq t}\left(\frac{x |e'''(x)|}{\lambda(x)^{2-2\alpha}}\right)}{t \log^{4b-2b\alpha+2N-1}(t)}\end{equation}
Then, we use the same procedure used to estimate $v_{4}^{\lambda_{1}}-v_{4}^{\lambda_{2}}$ in the region $r \geq \frac{t}{2}$, and get
\begin{equation}\label{dtv4finalestlarger}|\partial_{t}v_{4}(t,r)| \leq \frac{C}{t^{2}\log^{3b+2N}(t)}+\frac{C \sup_{x \geq t}\left(\frac{x |e'''(x)|}{\lambda(x)^{2-2\alpha}}\right)}{\log^{4b-2\alpha b+2N-2}(t)}, \quad r \geq \frac{t}{2}\end{equation}

Now, we recall \eqref{dtipest}, and prove new estimates on the terms involving $v_{4},E_{5},v_{1}+v_{2}+v_{3}$, and $F_{0,2}$
\begin{equation}\begin{split} |\partial_{t}\langle \left(\frac{\cos(2Q_{\frac{1}{\lambda(t)}})-1}{r^{2}}\right) v_{4}\left(1-\chi_{\geq 1}(\frac{4r}{t})\right)\vert_{r = R\lambda(t)},\phi_{0}\rangle| &\leq \frac{C}{t^{3}\log^{b+N-2}(t)}+\frac{C \sup_{x \geq t}\left(\frac{x |e'''(x)|}{\lambda(x)^{2-2\alpha}}\right)}{t \log^{N-3+2b-2b\alpha}(t)}\end{split}\end{equation}
\begin{equation}\begin{split} |\partial_{t}\langle \left(\frac{\cos(2Q_{\frac{1}{\lambda(t)}})-1}{r^{2}}\right) E_{5}\vert_{r = R\lambda(t)},\phi_{0}\rangle| &\leq \frac{C}{t^{3}\log(t)}+\frac{C \sup_{x \geq t}\left(\frac{x|e'''(x)|}{\lambda(x)^{3-2\alpha}}\right)}{\log^{(2-2\alpha)b}(t)t}\end{split}\end{equation}
\begin{equation}\begin{split} &|\partial_{t}\langle \left(\frac{\cos(2Q_{\frac{1}{\lambda(t)}})-1}{r^{2}}\right) \chi_{\geq 1}(\frac{2r}{\log^{N}(t)})\left(v_{1}+v_{2}+v_{3}\right)\vert_{r = R\lambda(t)},\phi_{0}\rangle| \leq  \frac{C}{t^{3}\log^{2b+2N}(t)}+\frac{C \lambda(t)^{2-2\alpha}\sup_{x\geq t}\left(\frac{|e'''(x)|x}{\lambda(x)^{2-2\alpha}}\right)}{t \log^{2N+b-1}(t)}\end{split}\end{equation}
\begin{equation} |\partial_{t}\langle \chi_{\geq 1}(\frac{2r}{\log^{N}(t)}) F_{0,2}\vert_{r=R\lambda(t)},\phi_{0}\rangle| \leq \frac{C}{t^{3} \log^{2b+1-2b\alpha +2N}(t)} + \frac{C |e'''(t)|}{\log^{b-2b\alpha+2N}(t)}\end{equation} 
From our previous estimates, we also have
\begin{equation} \begin{split}&|\lambda'(t)| |\langle  \left(\frac{\cos(2Q_{\frac{1}{\lambda(t)}})-1}{r^{2}}\right) \left((v_{4}+v_{5})\left(1-\chi_{\geq 1}(\frac{4r}{t})\right)+E_{5}-\chi_{\geq 1}(\frac{2r}{\log^{N}(t)})\left(v_{1}+v_{2}+v_{3}\right)\right)\vert_{r = R\lambda(t)},\phi_{0}\rangle|\\
&+|\lambda'(t)| |\langle \chi_{\geq 1}(\frac{2r}{\log^{N}(t)}) F_{0,2}\vert_{r=R\lambda(t)},\phi_{0}\rangle|\\
&\leq \frac{C}{t^{3}\log^{b+2}(t)}\end{split}\end{equation}
Combining these, we get
\begin{equation} \begin{split} &|\partial_{t}\left(\lambda(t) \langle  \left(\frac{\cos(2Q_{\frac{1}{\lambda(t)}})-1}{r^{2}}\right) \left((v_{4}+v_{5})\left(1-\chi_{\geq 1}(\frac{4r}{t})\right)+E_{5}-\chi_{\geq 1}(\frac{2r}{\log^{N}(t)})\left(v_{1}+v_{2}+v_{3}\right)\right)\vert_{r = R\lambda(t)},\phi_{0}\rangle\right)|\\
&+|\partial_{t}\left(\lambda(t) \langle \chi_{\geq 1}(\frac{2r}{\log^{N}(t)})F_{0,2}\vert_{r = R\lambda(t)},\phi_{0}\rangle\right)|\\
&\leq \frac{C}{t^{3}\log^{b+1}(t)} + \frac{C \sup_{x \geq t}\left(\frac{x|e'''(x)|}{\lambda(x)^{3-2\alpha}}\right)}{t \log^{(3-2\alpha)b}(t)}\end{split}\end{equation}
where we use \eqref{dtv5prelimest} to estimate $\partial_{t}v_{5}$. Finally, for all remaining terms in $RHS_{3}$, we use the same estimate that was used in obtaining the preliminary estimate on $e'''$, and conclude
\begin{equation} |RHS_{3}(t)| \leq \frac{C}{t^{3} \log^{b+1}(t)} + \frac{C \sup_{x \geq t}\left(x^{3/2}|e'''(x)|\right)}{t^{3/2}\sqrt{\log(\log(t))}} \end{equation}
Note that the left-hand side of \eqref{epppfinaleqn} is the same as that of \eqref{emodulation}, except with $e''$ replaced with $e'''$, and $\lambda_{0}$ replaced with $\lambda_{0,0}$. This will not cause any major differences in the study of \eqref{epppfinaleqn} relative to \eqref{emodulation}, because the key estimates \eqref{kineq} and \eqref{kquotest} are invariant under multiplication of $K(t,s)$ by any non-negative function of $s$, and \eqref{kineq} is still true with the replacement of $\frac{1}{(\lambda_{0}(t)^{1-\alpha}+s-t)}$ by $\frac{1}{(\lambda_{0,0}(t)^{1-\alpha}+s-t)}$. In particular, if we define 
$$K_{2}(t,s) := \frac{\mathbbm{1}_{\leq 0}(s-t)}{\alpha |\log(\lambda_{0,0}(-s))|} \left(\frac{1}{1-s+t}+\frac{1}{(1-s+t)^{3}(\lambda_{0,0}(-t)^{1-\alpha}-s+t)}\right)$$
then, by the above discussion, the resolvent kernel associated to $K_{2}$ in the same way that $r$ was associated to $K$, exists, and satisfies the same estimate as $r$: \eqref{restimate}. Let us denote this resolvent kernel by $r_{2}$.\\
\\
So far, we have that $e'''$ is a solution to \eqref{epppfinaleqn}, which can be re-cast into the form
$$K_{2} * x+x=H_{2}$$
similarly to \eqref{emodulation}. Next, we carry out the following computation, noting that the step which requires Fubini's theorem is justified by the preliminary estimate on $e'''$, \eqref{epppprelim}.
\begin{equation}\begin{split} r_{2}*(K_{2}*x)+r_{2}*x&=r_{2}*H_{2}\\
(r_{2}*K_{2})*x+r_{2}*x&=r_{2}*H_{2}\\
H_{2}-x=K_{2}*x&=r_{2}*H_{2}\end{split}\end{equation}
where we used the equation
$$r_{2}+r_{2}*K_{2}=K_{2}$$
(Recall that $r_{2}$ solves the $r_{2}$ and $K_{2}$ analogs of \eqref{resolventeqns}). Translating $x$ and $H_{2}$ back to $e'''$ and $RHS_{3}$, we have
\begin{equation}\label{eppprep} e'''(t) = \frac{RHS_{3}(t)}{4 \alpha} -\int_{t}^{\infty} \frac{RHS_{3}(z)}{4\alpha} r_{2}(-t,-z) dz, \quad \text{a.e. }t \geq T_{0}\end{equation}\\
\\
$e''', RHS_{3}$ are continuous functions, and the above equation can be rearranged to yield
\begin{equation} \int_{t}^{\infty} \frac{RHS_{3}(z)}{4 \alpha}r_{2}(-t,-z)dz = \frac{RHS_{3}(t)}{4 \alpha}-e'''(t), \quad \text{a.e. }t \geq T_{0}\end{equation}\\
\\
So, $$t \mapsto \int_{t}^{\infty} \frac{RHS_{3}(z)}{4 \alpha}r_{2}(-t,-z)dz$$ agrees with a continuous function almost everywhere, and can thus be extended to a continuous function. Moreover, using \eqref{restimate}, for $r_{2}$ instead of $r$, we have
\begin{equation}  |\int_{t}^{\infty} \frac{RHS_{3}(z)}{4 \alpha}r_{2}(-t,-z)dz| \leq ||\frac{RHS_{3}}{2 \alpha}||_{L^{\infty}([t,\infty))}, \quad \text{a.e. }t \geq T_{0}\end{equation}\\
\\ 
So, the same estimate holds for all $t \geq T_{0}$, where we now identify $$t \mapsto \int_{t}^{\infty} \frac{RHS_{3}(z)}{4 \alpha}r_{2}(-t,-z)dz$$ with its continuous extension described above. Such an identification will be performed without further mention.
Returning to \eqref{eppprep}, we get
\begin{equation}\begin{split} |e'''(t)| &\leq C \sup_{x \geq t} |RHS_{3}(x)| \\
&\leq \frac{C}{t^{3}\log^{b+1}(t)} + \frac{C \sup_{x \geq t}\left(x^{3/2}|e'''(x)|\right)}{t^{3/2}\sqrt{\log(\log(t))}}\end{split}\end{equation}
Thus,
\begin{equation} t^{3/2} |e'''(t)| \leq \frac{C}{t^{3/2}\log^{b+1}(t)} + \frac{C \sup_{x \geq t}\left(x^{3/2} |e'''(x)|\right)}{\sqrt{\log(\log(t))}}\end{equation}
But, as mentioned earlier, $e'''$ is a continuous function on $[T_{0},\infty)$,\\
 so $t\mapsto t^{3/2}|e'''(t)|$ is also continuous on $[T_{0},\infty)$, and
 $$|e'''(t)| t^{3/2} \rightarrow 0, \quad t \rightarrow \infty$$
 by the preliminary estimate on $e'''$.  So, for all $t \geq T_{0}$, there exists $y(t) \geq t$ such that 
$$\sup_{x \geq t}\left(x^{3/2} |e'''(x)|\right) = y(t)^{3/2}|e'''(y(t))|$$
But, then, 
\begin{equation}\begin{split} \sup_{x \geq t}\left(x^{3/2}|e'''(x)|\right) = y(t)^{3/2} |e'''(y(t))| &\leq \frac{C}{y(t)^{3/2} \log^{b+1}(y(t))} + \frac{C \sup_{x \geq y(t)}\left(x^{3/2} |e'''(x)|\right)}{\sqrt{\log(\log(y(t))}}\\
&\leq \frac{C}{t^{3/2} \log^{b+1}(t)} + \frac{C \sup_{x \geq t}\left(x^{3/2} |e'''(x)|\right)}{\sqrt{\log(\log(t))}}\end{split}\end{equation}
So, there exists some absolute constants $C_{p_{2}},C_{p_{1}}>0$ such that, for all $$t \geq C_{p_{1}}+T_{0}$$ we have
\begin{equation} \sup_{x \geq t}\left(x^{3/2}|e'''(x)|\right) \leq \frac{C_{p_{2}}}{t^{3/2}\log^{b+1}(t)}\end{equation}
But, $e''' \in C([T_{0},\infty))$, so, there exists some absolute constant $C_{p_{3}}>0$ such that \begin{equation} |e'''(t)| \leq \frac{C_{p_{3}}}{t^{3}\log^{b+1}(t)}, \quad t \geq T_{0}\end{equation} Recalling $\lambda(t)=\lambda_{0,0}(t)+e(t)$, we have
\begin{equation}\label{lambdapppfinalest} |\lambda'''(t)| \leq \frac{C}{t^{3}\log^{b+1}(t)}, \quad t \geq T_{0}\end{equation}
To finish, we only need to establish the estimates on $\partial_{t}\partial_{r}^{j}v_{k}$ in the proposition statement. We recall that $\partial_{t}v_{1}$ solves the same equation as $v_{1}$, also with $0$ Cauchy data at infinity, except with $\lambda''$ on the right-hand side replaced with $\lambda'''$. Now that we have established \eqref{lambdapppfinalest}, we can justify the steps leading to the $\partial_{t}v_{1}$ analog of \eqref{v1largerest}, which gives
\begin{equation} |\partial_{t}v_{1}(t,r)| \leq \frac{C}{r t \log^{b+1}(t)}, \quad r > \frac{t}{2}\end{equation}
Combined with \eqref{dtv1finalestsmallr}, this gives \eqref{dtv1finalest}. \\
\\
A similar large $r$ estimate can be also proven for $\partial_{t}v_{3}$: Starting with \eqref{dtv3fornonsharpest}, we get
\begin{equation}\begin{split} |\partial_{t}v_{3}(t,r)| &\leq \frac{C}{r} \int_{t}^{\infty} ds |\lambda'''(s)| (s-t)\\
&+\frac{C}{r} \int_{t}^{\infty} ds \int_{0}^{s-t} \frac{\rho d\rho}{\sqrt{(s-t)^{2}-\rho^{2}}} \frac{|\lambda''(s)| |\lambda'(s)| r^{2} \lambda(s)^{2\alpha -3}}{(1+\lambda(s)^{4\alpha-4}(\rho^{2}-r^{2})^{2} + 2 \lambda(s)^{2\alpha -2}(\rho^{2}+r^{2}))}\\
&\leq \frac{C}{r t \log^{b+1}(t)}, \quad r > \frac{t}{2}\end{split}\end{equation}
Combining this with \eqref{dtv3forfinaleppp} gives \eqref{dtv3finalest}.\\
\\
Next, we estimate  $\partial_{tr}v_{k}, \quad k=1,3$. We start with $\partial_{tr}v_{3}$:
\begin{equation}\label{dtv3fordtrv3}\begin{split} \partial_{t}v_{3}(t,r)&=\frac{-1}{r}\int_{t}^{\infty} ds \int_{0}^{s-t} \frac{\rho d\rho}{\sqrt{(s-t)^{2}-\rho^{2}}} \lambda'''(s) \left(\frac{-1-\rho^{2}+r^{2}}{\sqrt{(1+\rho^{2}-r^{2})^{2}+4r^{2}}}+F_{3}(r,\rho,\lambda(s))\right)\\
&-\frac{1}{r} \int_{t}^{\infty} ds \int_{0}^{s-t} \frac{\rho d\rho}{\sqrt{(s-t)^{2}-\rho^{2}}}\lambda''(s) \partial_{3}F_{3}(r,\rho,\lambda(s))\lambda'(s)\end{split}\end{equation}
Since the first line of \eqref{dtv3fordtrv3} is the same expression as $v_{3}$, except with $\lambda'''$ replacing $\lambda''$, we use the same procedure for this line, as was used to estimate $\partial_{r}v_{3}$ . For the second line of \eqref{dtv3fordtrv3}, we start by noting that
\begin{equation}\begin{split} |\partial_{r}\left(\frac{\partial_{3}F_{3}(r,\rho,\lambda(s))}{r}\right)| &\leq \frac{C \lambda(s)^{2\alpha +1}(\lambda(s)^{6}+(\rho^{2}+r^{2}) \lambda(s)^{4+2\alpha} + (\rho^{2}+r^{2})^{2} \lambda(s)^{2+4\alpha})}{\lambda(s)^{10} (1+(\rho^{2}-r^{2})^{2}\lambda(s)^{4\alpha -4} + 2(\rho^{2}+r^{2})\lambda(s)^{2\alpha -2})^{5/2}}\\
&+\frac{C \lambda(s)^{8\alpha +1}(\rho^{2}-r^{2})^{2} (\rho^{2}+r^{2})}{\lambda(s)^{10}(1+(\rho^{2}-r^{2})^{2} \lambda(s)^{4\alpha -4} + 2(\rho^{2}+r^{2})\lambda(s)^{2\alpha -2})^{5/2}}\end{split}\end{equation}
We then estimate the partial $r$ derivative of the second line of \eqref{dtv3fordtrv3} using the same procedure used before to estimate $\partial_{r}v_{3}$, and get
\begin{equation}\begin{split} &|\partial_{r}\left(-\frac{1}{r} \int_{t}^{\infty} ds \int_{0}^{s-t} \frac{\rho d\rho}{\sqrt{(s-t)^{2}-\rho^{2}}}\lambda''(s) \partial_{3}F_{3}(r,\rho,\lambda(s))\lambda'(s)\right)| \leq \frac{C}{t^{3}\log^{1+b}(t)}\end{split}\end{equation}
(Note that we have the factor $|\lambda'(s)| \leq \frac{C}{s \log^{b+1}(s)}$ in the integrand of the second line of \eqref{dtv3fordtrv3}, which explains the gain relative to the analogous term arising in the $\partial_{r} v_{3}$ estimates). In total, we get
\eqref{dtrv3finalest}.\\
\\
As observed before, $\partial_{t}v_{1}$ has the same exact representation formulae as $v_{1}$, except with an extra derivative on $\lambda''$. Therefore, we can use the identical procedure used to estimate $\partial_{r}v_{1}$, to get \eqref{dtrv1finalest}.\\
\\
This gives
\begin{equation}\label{dtrv4cfinal}\begin{split} |\partial_{tr}v_{4,c}(t,r)| &\leq \frac{C |\chi_{\geq 1}''(\frac{2r}{\log^{N}(t)})|}{r^{3} t^{3} \log^{1+N+3b}(t)} + \frac{C |\chi_{\geq 1}'(\frac{2r}{\log^{N}(t)})|}{r^{3} t^{3} \log^{3b+N}(t)}\\
& + C \chi_{\geq 1}(\frac{2r}{\log^{N}(t)}) \begin{cases} \frac{1}{r^{4} t^{3} \log^{3b}(t)}, \quad r \leq \frac{t}{2}\\
\frac{\log(r)}{\log^{2b}(t) r^{4}} \left(\frac{1}{\log(t) r |t-r|t}+\frac{1}{(t-r)^{2} t} + \frac{1}{(t-r)^{3}}\right) + \frac{1}{t^{3} r^{4} \log^{3b}(t)}, \quad t> r > \frac{t}{2}\end{cases}\end{split}\end{equation}
Using the final estimates on $\lambda'''$, we also get
\begin{equation} \frac{|\partial_{t}v_{4,c}(t,r)|}{r} \leq \frac{C \mathbbm{1}_{\{r \geq \frac{\log^{N}(t)}{2}\}}}{r^{4} t^{3} \log^{3b+1-2\alpha b}(t)} + \frac{C \mathbbm{1}_{\{r \geq \frac{\log^{N}(t)}{2}\}}}{\log^{2b}(t) r^{5}} \begin{cases} \frac{r}{t^{3} \log^{b}(t)}, \quad r \leq \frac{t}{2}\\
\frac{\log(r)}{|t-r|} \left(\frac{1}{t \log(t)} + \frac{1}{|t-r|}\right) + \frac{1}{r t \log^{b+1}(t)}, \quad t > r \geq \frac{t}{2}\end{cases}\end{equation}
 Now, we can prove \eqref{dtv4finalest} in the region $r \leq \frac{t}{2}$, by writing
\begin{equation}\begin{split}& \partial_{t}v_{4}(t,r) \\
&= \frac{-r}{2\pi} \int_{t}^{\infty} ds \int_{0}^{s-t} \frac{\rho d\rho}{\sqrt{(s-t)^{2}-\rho^{2}}}\int_{0}^{1} d\beta \int_{0}^{2\pi} d\theta \partial_{r}\left(\frac{\partial_{1}v_{4,c}(s,\sqrt{r^{2}+\rho^{2}+2 r \rho \cos(\theta)})}{\sqrt{r^{2}+\rho^{2}+2 r \rho \cos(\theta)}} (r+\rho \cos(\theta))\right)(s,\rho,\beta r)\end{split}\end{equation}
and using the same procedure used for $v_{4}$. In total, we get
\begin{equation}\label{dtv4refinementnearorigin} |\partial_{t}v_{4}(t,r)| \leq \frac{C r}{t^{3} \log^{3b+2N-2}(t)}, \quad r \leq \frac{t}{2}\end{equation}
Combining this with \eqref{dtv4finalestlarger} (and the final estimate on $\lambda'''$) gives \eqref{dtv4finalest}.

\end{proof}

\subsubsection{Estimating $\lambda''''$}

\begin{proposition} $\lambda \in C^{4}([T_{0},\infty))$ and we have the following estimates:
\begin{equation} |\lambda''''(t)| \leq \frac{C}{t^{4} \log^{b+1}(t)}, \quad t \geq T_{0} \end{equation}
\begin{equation} \label{dttv1finalest} |\partial_{t}^{2} v_{1}(t,r)| \leq \begin{cases} \frac{C r}{t^{4} \log^{b}(t)}, \quad r \leq \frac{t}{2}\\
\frac{C}{r t^{2} \log^{b+1}(t)}, \quad r > \frac{t}{2}\end{cases}\end{equation}

\begin{equation} \label{dttv3finalest} |\partial_{t}^{2} v_{3}(t,r)| \leq \frac{C r \log(\log(t))}{t^{4} \log^{b+1}(t)}\end{equation}

\begin{equation}\label{dttv4finalest} |\partial_{t}^{2}v_{4}(t,r)| \leq \begin{cases} \frac{C}{t^{4} \log^{3b-2+N}(t)}, \quad r \leq \frac{t}{2}\\
\frac{C}{t^{35/12} \log^{2b-1}(t)}, \quad r > \frac{t}{2} \end{cases}\end{equation}

\begin{equation}\label{dttv5finalest}\begin{split}|\partial_{t}^{2}v_{5}(t,r)| &\leq \frac{C}{t^{4}\log^{3N+b-2}(t)}, \quad r \leq \frac{t}{2}\end{split}\end{equation}

\end{proposition}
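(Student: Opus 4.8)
The plan is to bootstrap the argument of the previous proposition one further level. Recall that $\lambda$ solves \eqref{modulationfinal}, which was rearranged in \eqref{epppsetup} as $\lambda''(t) = RHS_{2}(t)/g_{2}(t)$. Having established $\lambda \in C^{3}([T_{0},\infty))$ with the estimate \eqref{lambdapppfinalest}, the right-hand side of \eqref{epppsetup} is now a $C^{2}$ function of $t$ (every term is built from $\lambda,\lambda',\lambda'',\lambda'''$, the smooth cutoffs, the kernels $K,K_{1},K_{3},K_{3,0}$, $E_{v_{2},ip}$, $F_{0,2}$, and the corrections $v_{k}$, all of which are now known to have one more time derivative), so $\lambda \in C^{4}([T_{0},\infty))$. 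Differentiating once more, and separating the leading integral operators acting on $e''''$ from the remainder exactly as was done to pass from \eqref{modulationfinal} to \eqref{epppfinaleqn}, we obtain an equation of the form
\begin{equation*}
-4\int_{t}^{\infty}\frac{e''''(s)\,ds}{\log(\lambda_{0,0}(s))(1+s-t)} - 4\int_{t}^{\infty}\frac{e''''(s)\,ds}{(1+s-t)^{3}\log(\lambda_{0,0}(s))(\lambda_{0,0}(t)^{1-\alpha}+s-t)} + 4\alpha e''''(t) = RHS_{4}(t),
\end{equation*}
where $RHS_{4}$ collects: $-4\alpha\lambda_{0,0}''''(t)$ (of size $t^{-4}\log^{-b-1}(t)$); the terms coming from $\partial_{t}$ applied to the integral operators involving $K$, $K_{1}$, $K_{3}-K_{3,0}$ and from $\partial_{t}^{2}G(t,\lambda(t))$; the commutator terms of the form $(\log(\lambda_{0,0}(s))^{-1}-\log(\lambda_{0,0}(t))^{-1})$ times $e''''(s)$ with a Volterra kernel; and terms linear in $e''''$ with small coefficients arising from differentiating the coefficients $\lambda(t)^{-\alpha}\lambda'(t)$, $\log(\lambda_{0,0}(t))-\log(\lambda(t))$, and so on.

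Inside $RHS_{4}$ I would use the same two-stage procedure employed for $e'''$. First, establish \emph{preliminary} estimates on $\partial_{t}^{2}v_{k}$ for $k=1,3,4,5$: since $\partial_{t}^{2}v_{1}$ solves the $v_{1}$ equation \eqref{v1eqn} with $\lambda''$ replaced by $\lambda''''$ (the operator is $t$-independent and the zero Cauchy data at infinity persist), and $\partial_{t}^{2}v_{3}$, $\partial_{t}^{2}v_{4}$, $\partial_{t}^{2}v_{5}$ admit the analogous representations with $\lambda''$ replaced by $\lambda''''$ plus strictly lower-order commutator terms in $\partial_{3}F_{3}$, $\partial_{t}v_{4,c}$, and $\partial_{t}N_{2}(f_{v_{5}})$, one re-runs the $v_{k}$ and $\partial_{t}v_{k}$ estimates of the previous subsections verbatim, using only the preliminary $\lambda'''$ bound \eqref{lpppprelim}, to get crude $\partial_{t}^{2}v_{k}$ bounds; fed through \eqref{dtipest}-type inner-product estimates these yield a crude $|RHS_{4}(t)| \leq C t^{-4}\log^{C}(t)$ and hence, after inverting the Volterra operator, a preliminary $|e''''(t)| \leq C t^{-3}\log^{C}(t)$, which justifies all differentiations under the integral signs. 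With this in hand I would revisit each term of $RHS_{4}$, now exploiting the cancellations (as in the $E_{\lambda_{0,0}}$, $K_{1}-\lambda^{2}/(4(1+\cdot))$, and $K$-integral arguments), producing the sharp bound $|RHS_{4}(t)| \leq C t^{-4}\log^{-b-1}(t) + C t^{-2}(\log\log t)^{-1/2}\sup_{x\geq t}(x^{2}|e''''(x)|)$.

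The Volterra inversion then goes through unchanged. The kernel
\begin{equation*}
K_{2}(t,s) = \frac{\mathbbm{1}_{\leq 0}(s-t)}{\alpha|\log(\lambda_{0,0}(-s))|}\left(\frac{1}{1-s+t}+\frac{1}{(1-s+t)^{3}(\lambda_{0,0}(-t)^{1-\alpha}-s+t)}\right)
\end{equation*}
is exactly the one whose resolvent $r_{2}$ was already constructed and shown to satisfy \eqref{restimate}, since \eqref{kineq} and \eqref{kquotest} are invariant under multiplying $K$ by a nonnegative function of $s$ and under replacing $\lambda_{0}$ by $\lambda_{0,0}$. Writing $K_{2}*x+x=H_{2}$ and convolving with $r_{2}$ as before gives $e''''(t) = \tfrac{1}{4\alpha}RHS_{4}(t) - \tfrac{1}{4\alpha}\int_{t}^{\infty}RHS_{4}(z)\,r_{2}(-t,-z)\,dz$ a.e., and the continuity of $e''''$ together with the preliminary decay $t^{2}|e''''(t)|\to 0$ lets me run the same supremum argument (picking $y(t)\geq t$ attaining $\sup_{x\geq t}x^{2}|e''''(x)|$) to absorb the self-referential term for $T_{0}$ large, yielding $|\lambda''''(t)| \leq C t^{-4}\log^{-b-1}(t)$. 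The sharp $\partial_{t}^{2}v_{k}$ estimates in the statement then follow by feeding \eqref{lambdapppfinalest} and the final $\lambda''''$ bound back into the representation formulas, exactly as the sharp $\partial_{t}v_{k}$ estimates of the previous proposition followed from \eqref{lambdapppfinalest}.

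The main obstacle I expect is the bookkeeping for $\partial_{t}^{2}v_{4}$ and $\partial_{t}^{2}v_{5}$ in the region $r\geq t/2$: there one must differentiate the Duhamel/Hankel representation twice in $t$, which produces both a boundary-type contribution of the form $\partial_{t}(v_{4,s}(t,r))/(s-t)$ and an interior term whose integrand carries a double $1/(s-t)$ singularity, and the $v_{2}$ ingredient inside $f_{v_{5}}$ and $F_{0,2}$ must be handled with the sharp near-cone estimates \eqref{v2singularconeest} split against $|J_{1}|$ bounds and the $L^{2}$-in-$\xi$ bound for $\widehat{\partial_{t}^{2}v_{4,c}}$ at high frequency. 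Controlling the resulting near-cone losses is precisely what forces the slightly degraded powers $t^{35/12}$ in \eqref{dttv4finalest}, and it is essentially the only place in this proposition where genuinely new estimates, rather than verbatim repetitions of the earlier arguments, are needed.
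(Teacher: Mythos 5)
Your proposal follows essentially the same two-stage bootstrap as the paper: $C^{4}$ regularity and a preliminary $\lambda''''$ bound obtained by differentiating \eqref{epppsetup}, preliminary estimates on $\partial_{t}^{2}v_{k}$, the refined Volterra equation for $e''''$ with the same kernel $K_{2}$, resolvent $r_{2}$ and sup-absorption argument, and finally the sharp $\partial_{t}^{2}v_{k}$ bounds, including the near-cone treatment of $\partial_{t}^{2}v_{4,c}$ that produces the $t^{35/12}$ power. One caution: in the crude stage you should feed in the final estimate \eqref{lambdapppfinalest} on $\lambda'''$ (already available from the previous proposition), as the paper does, rather than only the preliminary bound \eqref{lpppprelim}; with the latter alone the preliminary decay of $e''''$ is only of order $t^{-2}$ up to logarithms, so $x^{2}|e''''(x)|$ need not tend to zero when $b\geq 1/2$, and the final sup-absorption step would then require an extra bootstrap to be justified.
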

\begin{proof}
Recalling that $\lambda(t) = \lambda_{0,0}(t)+e(t)$, we will show that $e \in C^{4}([T_{0},\infty))$, and estimate $e''''$. Returning to \eqref{epppsetup}, we have
\begin{equation} \lambda'''(t) = \frac{RHS_{2}'(t)}{g_{2}(t)}-\frac{g_{2}'(t)}{g_{2}(t)^{2}} RHS_{2}(t)\end{equation}
Because $\lambda'''\in C^{0}([T_{0},\infty))$, an inspection of the definition of $RHS_{2}$ shows that $RHS_{2} \in C^{2}([T_{0},\infty))$; so, $\lambda'''\in C^{1}([T_{0},\infty))$, with
$$\lambda''''(t) = \partial_{t}\left(\frac{RHS_{2}'(t)}{g_{2}(t)}-\frac{g_{2}'(t)}{g_{2}(t)^{2}} RHS_{2}(t)\right)$$
The previously obtained estimates on $RHS_{2}, RHS_{2}'$ then show that 
\begin{equation}\label{lambdappppprelim1} |\lambda''''(t)| \leq \frac{C |RHS_{2}''(t)|}{\log(\log(t))\log^{b}(t)}+\frac{C}{t^{3}\log^{2-2b}(t)\log(\log(t))}\end{equation}
Again, we will first obtain a preliminary estimate on $\lambda''''$, which will be improved afterwards. We start by recalling the definition of $RHS_{2}$:
\begin{equation}\begin{split} RHS_{2}(t)&=-\frac{16}{\lambda(t)^{3}} \int_{t}^{\infty} ds \lambda''(s) \left(K_{1}(s-t,\lambda(t))+K(s-t,\lambda(t))\right) + 2 \frac{(\lambda'(t))^{2}}{\lambda(t)^{2}} \\
&+\frac{4b}{\lambda(t) t^{2} \log^{b}(t)} + E_{v_{2},ip}(t,\lambda(t))\\
&+ \langle \left(\frac{\cos(2Q_{\frac{1}{\lambda(t)}})-1}{r^{2}}\right)\left(v_{3}+(v_{4}+v_{5})\left(1-\chi_{\geq 1}(\frac{4r}{t})\right)-\chi_{\geq 1}\left(\frac{2r}{\log^{N}(t)}\right)\left(v_{1}+v_{2}+v_{3}\right)\right)\vert_{r=R\lambda(t)},\phi_{0}\rangle\\
&-4 \int_{0}^{\infty} \chi_{\geq 1}(\frac{2R \lambda(t)}{\log^{N}(t)}) \frac{(\lambda'(t))^{2} R^{2} \phi_{0}(R) dR}{\lambda(t)^{2} (R^{2}+1)^{2}}\end{split}\end{equation}
We then note
\begin{equation} |\partial_{t}^{2}\left(\frac{2\lambda'(t)^{2}}{\lambda(t)^{2}}\right)| \leq \frac{C}{t^{4}\log^{2}(t)}\end{equation}
\begin{equation} |\partial_{t}^{2}\left(\frac{4b}{\lambda(t)t^{2}\log^{b}(t)}\right)| \leq \frac{C}{t^{4}}\end{equation}
\begin{equation}\begin{split} |\partial_{t}^{2}\left(\frac{-4 \lambda'(t)^{2}}{\lambda(t)^{2}} \int_{0}^{\infty} \chi_{\geq 1}(\frac{2R \lambda(t)}{\log^{N}(t)}) \frac{R^{2} \phi_{0}(R) dR}{(R^{2}+1)^{2}}\right)| \leq \frac{C}{t^{4} \log^{2+2b+2N}(t)}\end{split}\end{equation}
 To continue estimating $RHS_{2}''$, we start with
\begin{equation}\label{k1relatedtermsinRHS3pp} \begin{split} &\partial_{t}^{2}\left(\frac{16}{\lambda(t)^{2}} \int_{t}^{\infty} \lambda''(x)K_{1}(x-t,\lambda(t))dx\right)\\
&=\left(\frac{96}{\lambda(t)^{4}} \lambda'(t)^{2} -\frac{32 \lambda''(t)}{\lambda(t)^{3}}\right)\int_{t}^{\infty} \lambda''(x) K_{1}(x-t,\lambda(t)) dx - \frac{64 \lambda'(t)}{\lambda(t)^{3}} \int_{t}^{\infty} \lambda'''(x) K_{1}(x-t,\lambda(t)) dx\\
&-\frac{64 \lambda'(t)}{\lambda(t)^{3}} \int_{t}^{\infty} \lambda''(x) \partial_{2}K_{1}(s-t,\lambda(t))\lambda'(t) dx -\frac{16}{\lambda(t)^{2}} \int_{t}^{\infty} \lambda'''(x) \partial_{1}K_{1}(x-t,\lambda(t)) dx \\
&+ \frac{32}{\lambda(t)^{2}}\int_{t}^{\infty} \lambda'''(x) \partial_{2}K_{1}(x-t,\lambda(t)) \lambda'(t) dx\\
&+\frac{16}{\lambda(t)^{2}} \int_{t}^{\infty} \lambda''(x) \left(\partial_{2}^{2}K_{1}(x-t,\lambda(t)) \lambda'(t)^{2} + \partial_{2}K_{1}(x-t,\lambda(t)) \lambda''(t)\right) dx \end{split}\end{equation}
The only term in \eqref{k1relatedtermsinRHS3pp} which we can not immediately estimate from previous estimates is the one involving $\partial_{2}^{2}K_{1}$. The analogous term involving $K$ is also present in $RHS_{3}''(t)$. We start with
\begin{equation}\begin{split} &|\partial_{22}K(x,\lambda(t))(\lambda'(t))^{2}+\partial_{2}K(x,\lambda(t))\lambda''(t)| \\
&\leq \int_{0}^{\infty} dr \frac{C r\lambda(t)^{2}\left(\lambda'(t)^{2}+\lambda(t)\lambda''(t)\right)}{(r^{2}+\lambda(t)^{2})^{3}} \int_{0}^{x} \rho d\rho \left(\frac{1}{\sqrt{x^{2}-\rho^{2}}}-\frac{1}{x}\right)\left(1+\frac{r^{2}-1-\rho^{2}}{\sqrt{(r^{2}-1-\rho^{2})^{2}+4r^{2}}}\right)\end{split}\end{equation}
which gives
\begin{equation}\begin{split} &|\frac{16}{\lambda(t)^{2}} \int_{t}^{\infty} dx \lambda''(x) \left(\partial_{2}^{2} K(x-t,\lambda(t))\lambda'(t)^{2}+\partial_{2}K(x-t,\lambda(t))\lambda''(t)\right)|\\
&\leq \frac{C}{t^{4} \log^{3b+2}(t)}  \int_{t}^{\infty} dx \int_{0}^{\infty} \frac{r dr}{(r^{2}+\lambda(t)^{2})^{3}} \int_{0}^{x-t} \rho d\rho \left(\frac{1}{\sqrt{(x-t)^{2}-\rho^{2}}}-\frac{1}{x-t}\right)\left(1+\frac{r^{2}-1-\rho^{2}}{\sqrt{(r^{2}-1-\rho^{2})^{2}+4r^{2}}}\right)\\
&\leq \frac{C}{t^{4} \log^{3b+2}(t)} \int_{0}^{\infty} \frac{r dr}{(r^{2}+\lambda(t)^{2})^{3}} \int_{0}^{\infty} \rho d\rho \left(1+\frac{r^{2}-1-\rho^{2}}{\sqrt{(r^{2}-1-\rho^{2})^{2}+4r^{2}}}\right) \int_{\rho+t}^{\infty} \left(\frac{1}{\sqrt{(x-t)^{2}-\rho^{2}}}-\frac{1}{x-t}\right) dx\\
&\leq \frac{C}{t^{4} \log^{3b+2}(t)} \int_{0}^{\infty} dr \frac{r^{3}}{(r^{2}+\lambda(t)^{2})^{3}}\\
&\leq \frac{C}{t^{4}\log^{2+b}(t)}\end{split}\end{equation}
Then, we treat the $K_{1}$ term:
\begin{equation}\begin{split}K_{1}(x,\lambda(t))&=\int_{0}^{\infty} \frac{r dr}{\lambda(t)^{2}(1+\frac{r^{2}}{\lambda(t)^{2}})^{3}} \int_{0}^{x} \frac{\rho d\rho}{x} \left(1+\frac{r^{2}-1-\rho^{2}}{\sqrt{(r^{2}-1-\rho^{2})^{2}+4r^{2}}}\right)\end{split}\end{equation}
Proceeding as for $K$, we get
\begin{equation} \begin{split} &|\frac{16}{\lambda(t)^{2}}\int_{t}^{\infty} dx \lambda''(x) \left(\partial_{2}^{2}K_{1}(x-t,\lambda(t))\lambda'(t)^{2}+\partial_{2}K_{1}(x-t,\lambda(t))\lambda''(t)\right)|\\
&\leq C \int_{t}^{\infty} dx |\lambda''(x)| \frac{1}{t^{2}\log^{2b+1}(t)}\int_{0}^{\infty} \frac{r dr}{(r^{2}+\lambda(t)^{2})^{3}} \frac{1}{(x-t)} \int_{0}^{x-t} \rho d\rho \left(1+\frac{r^{2}-1-\rho^{2}}{\sqrt{(r^{2}-1-\rho^{2})^{2}+4r^{2}}}\right)\\
&\leq C \int_{t}^{\infty} dx |\lambda''(x)| \frac{1}{t^{2}\log^{2b+1}(t)} \int_{0}^{\infty} \frac{r dr}{(r^{2}+\lambda(t)^{2})^{3}} \frac{1}{(x-t)} \begin{cases} \int_{0}^{x-t} 2 \rho d\rho, \quad x-t \leq 1\\
\int_{0}^{\infty} \rho d\rho \left(1+\frac{r^{2}-1-\rho^{2}}{\sqrt{(r^{2}-1-\rho^{2})^{2}+4r^{2}}}\right), \quad x-t \geq 1\end{cases}\\
&\leq C \int_{t}^{t+1} \frac{dx (x-t)}{x^{2}\log^{b+1}(x) t^{2}\log^{2b+1}(t)} \frac{1}{\lambda(t)^{4}}\\
&+C \int_{t+1}^{\infty} \frac{dx}{x^{2}\log^{b+1}(x)(x-t)t^{2}\log(t)}\\
&\leq \frac{C}{t^{4} \log^{1-b}(t)}\end{split}\end{equation}
Combining this with estimates for the other terms in \eqref{k1relatedtermsinRHS3pp} (which are deduced from the procedure used in obtaining preliminary estimates on $e'''$), we get
\begin{equation}\begin{split}&|\partial_{t}^{2} \left(\frac{16}{\lambda(t)^{2}} \int_{t}^{\infty} \lambda''(x) K_{1}(x-t,\lambda(t)) +\frac{16}{\lambda(t)^{2}} \int_{t}^{\infty} \lambda''(x) K(x-t,\lambda(t))\right)| \\
&\leq \frac{C}{t^{3}\log^{1-2b}(t)}\end{split}\end{equation}

The next term in $RHS_{2}''(t)$ which we consider is
\begin{equation}\begin{split} &\partial_{t}^{2}\left(2 c_{b}\lambda(t) \int_{0}^{\infty} d\xi \frac{\sin(t\xi)}{t^{2}} \psi_{v_{2}}(\xi,\lambda(t))\right)\\
&=2 c_{b}\lambda''(t) \int_{0}^{\infty} d\xi \frac{\sin(t\xi)}{t^{2}}\psi_{v_{2}}(\xi,\lambda(t)) + 4 c_{b}\lambda'(t) \int_{0}^{\infty} d\xi \frac{-2 \sin(t\xi)}{t^{3}} \psi_{v_{2}}(\xi,\lambda(t))\\
&+4 c_{b}\lambda'(t) \int_{0}^{\infty} d\xi \frac{\xi \cos(t\xi)}{t^{2}}\psi_{v_{2}}(\xi,\lambda(t))+4 c_{b}\lambda'(t) \int_{0}^{\infty} d\xi \frac{\sin(t\xi)}{t^{2}}\partial_{2}\psi_{v_{2}}\lambda'(t)\\
&+4 c_{b}\lambda(t) \int_{0}^{\infty} d\xi \frac{-2 \xi \cos(t\xi)}{t^{3}}\psi_{v_{2}}(\xi,\lambda(t)) + 2 c_{b}\lambda(t)\int_{0}^{\infty} d\xi \frac{6 \sin(t\xi)}{t^{4}} \psi_{v_{2}}(\xi,\lambda(t)) + 4 c_{b}\lambda(t) \int_{0}^{\infty} d\xi \frac{-2 \sin(t\xi)}{t^{3}} \partial_{2}\psi_{v_{2}} \lambda'(t)\\
&-2c_{b}\lambda(t)\int_{0}^{\infty} d\xi \frac{\xi^{2}\sin(t\xi)}{t^{2}}\psi_{v_{2}}(\xi,\lambda(t)) + 4 c_{b}\lambda(t)\int_{0}^{\infty} d\xi \frac{\xi \cos(t\xi)}{t^{2}}\partial_{2}\psi_{v_{2}}\lambda'(t)\\
&+2 c_{b}\lambda(t) \int_{0}^{\infty} d\xi \frac{\sin(t\xi)}{t^{2}}\left(\partial_{22}\psi_{v_{2}}\lambda'(t)^{2}+\partial_{2}\psi_{v_{2}} \lambda''(t)\right)\end{split}\end{equation} 
Only three integrals can not be immediately estimated based on our previous estimates. These three integrals are estimated using the same procedure used before, by estimating $K_{1}(x)$ and various of its derivatives, using the fact that $K_{1}$ appears in $\psi_{v_{2}}(\xi \lambda(t))$ with its argument $x=\xi \lambda(t)$ satisfying $0 < x < \frac{1}{4}$.  We have
\begin{equation} \begin{split}-2 c_{b}\lambda(t) \int_{0}^{\infty} d\xi \frac{\xi^{2} \sin(t\xi)}{t^{2}}\psi_{v_{2}}(\xi,\lambda(t)) &=\frac{2 c_{b}\lambda(t)}{t^{5}} \int_{0}^{\infty} \cos(t\xi) \partial_{\xi}^{3}\left(\xi^{2}\psi_{v_{2}}(\xi,\lambda(t))\right)d\xi\end{split}\end{equation}
\begin{equation}\begin{split}4 c_{b}\lambda(t)\int_{0}^{\infty} d\xi \frac{\xi \cos(t\xi)}{t^{2}}\partial_{2}\psi_{v_{2}}\lambda'(t)&=-4 c_{b}\lambda(t) \int_{0}^{\infty} d\xi \frac{\cos(t\xi)}{t^{4}} \partial_{\xi}^{2}\left(\xi \partial_{2}\psi_{v_{2}}\right) \lambda'(t)\end{split}\end{equation}
and
\begin{equation} 2 c_{b}\lambda(t) \int_{0}^{\infty} d\xi \frac{\sin(t\xi)}{t^{2}}\left(\partial_{22}\psi_{v_{2}}\lambda'(t)^{2}\right)=2 c_{b}\lambda(t) \int_{0}^{\infty} d\xi \frac{\cos(t\xi)}{t^{3}} \partial_{122}\psi_{v_{2}}(\xi,\lambda(t)) \lambda'(t)^{2}\end{equation}
Using these, and the procedure described above, we get
\begin{equation}\begin{split}&|-2 c_{b}\lambda(t) \int_{0}^{\infty} d\xi \frac{\xi^{2} \sin(t\xi)}{t^{2}}\psi_{v_{2}}(\xi,\lambda(t))|+|4 c_{b}\lambda(t)\int_{0}^{\infty} d\xi \frac{\xi \cos(t\xi)}{t^{2}}\partial_{2}\psi_{v_{2}}\lambda'(t)|\\
&+|2 c_{b}\lambda(t) \int_{0}^{\infty} d\xi \frac{\sin(t\xi)}{t^{2}}\left(\partial_{22}\psi_{v_{2}}\lambda'(t)^{2}\right)|\\
&\leq \frac{C}{t^{5}}\end{split}\end{equation}
Every other term arising in $$\partial_{t}^{2}\left(2 c_{b}\lambda(t) \int_{0}^{\infty} d\xi \frac{\sin(t\xi)}{t^{2}} \psi_{v_{2}}(\xi,\lambda(t))\right)$$ can be estimated by using estimates for $\lambda,\lambda',\lambda''$, and the terms estimated when considering
$$\partial_{t}\left(2 c_{b}\lambda(t) \int_{0}^{\infty} d\xi \frac{\sin(t\xi)}{t^{2}} \psi_{v_{2}}(\xi,\lambda(t))\right)$$
In total, we then get
\begin{equation} |\partial_{t}^{2}\left(2 c_{b}\lambda(t) \int_{0}^{\infty} d\xi \frac{\sin(t\xi)}{t^{2}} \psi_{v_{2}}(\xi,\lambda(t))\right)| \leq \frac{C}{t^{5}}\end{equation}
For the term
\begin{equation} \partial_{t}^{2}\left(2c_{b}\lambda(t)\int_{0}^{\infty} d\xi \frac{\sin(t\xi)}{t^{2}} \chi_{\leq \frac{1}{4}}(\xi) F_{v_{2}}(t,\xi)\right)\end{equation}
we use the identical procedure as above, noting that the only difference is that $\psi_{v_{2}}$ is replaced here by $\chi_{\leq \frac{1}{4}}(\xi) F_{v_{2}}$. Then, we get
\begin{equation}\begin{split}&|-2 c_{b}\lambda(t) \int_{0}^{\infty} d\xi \frac{\xi^{2} \sin(t\xi)}{t^{2}}\chi_{\leq \frac{1}{4}}(\xi)F_{v_{2}}(\xi,\lambda(t))|+|4 c_{b}\lambda(t)\int_{0}^{\infty} d\xi \frac{\xi \cos(t\xi)}{t^{2}}\partial_{2}\left(\chi_{\leq \frac{1}{4}}(\xi)F_{v_{2}}(\xi,\lambda(t))\right)\lambda'(t)|\\
&+|2 c_{b}\lambda(t) \int_{0}^{\infty} d\xi \frac{\sin(t\xi)}{t^{2}}\left(\partial_{22}\left(\chi_{\leq \frac{1}{4}}(\xi)F_{v_{2}}(\xi,\lambda(t))\right)\lambda'(t)^{2}\right)|\\
&\leq \frac{C \log(\log(t))}{t^{5} \log^{2b}(t)}\end{split}\end{equation}
In total, we have
\begin{equation} |\partial_{t}^{2}\left(2c_{b}\lambda(t)\int_{0}^{\infty} d\xi \frac{\sin(t\xi)}{t^{2}} \chi_{\leq \frac{1}{4}}(\xi) F_{v_{2}}(t,\xi)\right)| \leq \frac{C \log(\log(t))}{t^{5}\log^{2b}(t)}\end{equation}
The next term to consider is
\begin{equation}\begin{split}&\partial_{t}^{2}\left(\int_{0}^{\frac{1}{2}} d\xi \left(\chi_{\leq \frac{1}{4}}(\xi)-1\right) \frac{\sin(t\xi)}{t^{2}} \left(\frac{b-1}{\xi \log^{b}(\frac{1}{\xi})}+\frac{b(b-1)}{\xi \log^{b+1}(\frac{1}{\xi})}\right)\right)\\
&=\frac{4}{t^{4}} \left(\frac{b-1}{\log^{b}(2)}+\frac{b(b-1)}{\log^{b+1}(2)}\right)\sin(\frac{t}{2}) + \text{Err}\\
& -\left(\frac{2(b-1)}{\log^{b}(2)} + \frac{2b(b-1)}{\log^{b+1}(2)}\right)\frac{\cos(\frac{t}{2})}{4t^{3}} + \frac{\sin(\frac{t}{2})}{t^{4}}\left(\partial_{\xi}\left(\frac{\xi (b-1)}{\log^{b}(\frac{1}{\xi})}+\frac{\xi b(b-1)}{\log^{b+1}(\frac{1}{\xi})}\right)\vert_{\xi =\frac{1}{2}}\right) +E_{2}\end{split}\end{equation}
where we integrate by parts after differentiating the integral, and 
$$|\text{Err}| \leq \frac{C}{t^{5}}, \quad |E_{2}| \leq \frac{C}{t^{5}}$$
On the other hand, we have
\begin{equation} \begin{split} \partial_{t}^{2}\left(\int_{0}^{\frac{t}{2}} \frac{\sin(u)b(b-1) du}{t^{2}u \log^{b+1}(\frac{t}{u})}\right) &=\frac{\cos(\frac{t}{2}) b(b-1)}{2t^{3}\log^{b+1}(2)} -\frac{3 \sin(\frac{t}{2}) b(b-1)}{t^{4}\log^{b+1}(2)}\\
&+\frac{\sin(\frac{t}{2}) b(b-1)}{t}\left(\frac{-2}{t^{3}\log^{b+1}(2)}-\frac{(b+1)}{t^{3}\log^{b+2}(2)}\right)\\
&+\int_{0}^{\frac{t}{2}} \frac{\sin(u)b(b-1)}{u} \left(\frac{6}{t^{4} \log^{b+1}(\frac{t}{u})}+\frac{5(b+1)}{t^{4} \log^{b+2}(\frac{t}{u})} + \frac{(b+2)(b+1)}{t^{4}\log^{b+3}(\frac{t}{u})}\right)du\end{split}\end{equation}
So,
\begin{equation}\begin{split} &\partial_{t}^{2}\left(\int_{0}^{\frac{1}{2}} d\xi \left(\chi_{\leq \frac{1}{4}}(\xi)-1\right)\frac{\sin(t\xi)}{t^{2}}\left(\frac{b-1}{\xi \log^{b}(\frac{1}{\xi})}+\frac{b(b-1)}{\xi \log^{b+1}(\frac{1}{\xi})}\right)\right)\\
&+\partial_{t}^{2}\left(\int_{0}^{\frac{t}{2}}du \frac{\sin(u) (b-1)}{t^{2}u} \left(\frac{b}{\log^{b+1}(\frac{t}{u})}+\frac{1}{\log^{b}(\frac{t}{u})}\right)\right)\\
&=O\left(\frac{1}{t^{5}}\right)+ \int_{0}^{\frac{t}{2}}du \frac{\sin(u)(b-1)}{u}\left(\frac{6}{t^{4}\log^{b}(\frac{t}{u})}+\frac{5b}{t^{4}\log^{b+1}(\frac{t}{u})}+\frac{b(b+1)}{t^{4}\log^{b+2}(\frac{t}{u})}\right)\\
&+\int_{0}^{\frac{t}{2}}du \frac{\sin(u)b(b-1)}{u}\left(\frac{6}{t^{4} \log^{b+1}(\frac{t}{u})}+\frac{5(b+1)}{t^{4}\log^{b+2}(\frac{t}{u})} + \frac{(b+1)(b+2)}{t^{4}\log^{b+3}(\frac{t}{u})}\right)\\
&=O\left(\frac{1}{t^{5}}\right)+\frac{3(b-1)\pi}{t^{4}\log^{b}(t)} + O\left(\frac{1}{t^{4}\log^{b+1}(t)}\right)\end{split}\end{equation}
where we used the computations from the section which constructed $v_{2}$, to obtain the $\frac{1}{t^{4}\log^{b}(t)}$ term above. We conclude the following estimate on one of the terms arising in $E_{v_{2}}''(t)$:
\begin{equation}\begin{split} &|\partial_{t}^{2}\left(\int_{0}^{\frac{1}{2}} d\xi \left(\chi_{\leq \frac{1}{4}}(\xi)-1\right)\frac{\sin(t\xi)}{t^{2}}\left(\frac{b-1}{\xi \log^{b}(\frac{1}{\xi})}+\frac{b(b-1)}{\xi \log^{b+1}(\frac{1}{\xi})}\right)\right)\\
&+\partial_{t}^{2}\left(\int_{0}^{\frac{t}{2}} \frac{\sin(u) (b-1)}{t^{2}u} \left(\frac{b}{\log^{b+1}(\frac{t}{u})}+\frac{1}{\log^{b}(\frac{t}{u})}\right)du-\left(\frac{(b-1)\pi}{2t^{2}\log^{b}(t)}\right)\right)|\\
&\leq \frac{C}{t^{4}\log^{b+1}(t)}\end{split}\end{equation}
In total, we finally get
\begin{equation} |\partial_{t}^{2}\left(\lambda(t) E_{v_{2},ip}(t,\lambda(t))\right)| \leq \frac{C}{t^{4}\log^{b+1}(t)}\end{equation}
By the same procedure, this estimate is also true for the case $b=1$.\\
\\
Now, we record preliminary estimates on $\partial_{t}^{2}v_{k}, \quad k=1,3,4,5$
\begin{lemma}[Preliminary estimates on $\partial_{t}^{2}v_{k}$]
We have the following \emph{preliminary} estimates on $\partial_{t}^{2}v_{k}$:
\begin{equation}\label{dttv1prelimest} |\partial_{t}^{2}v_{1}(t,r)| \leq \frac{C r}{(1+r)} \frac{1}{t^{3}\log^{b+1}(t)}\end{equation}

\begin{equation} \label{dttv3prelimest} |\partial_{t}^{2}v_{3}(t,r)| \leq \frac{C r}{t^{4}\log^{b+1}(t)}+ \frac{C}{t^{3} \log^{1+2\alpha b-b}(t)}\end{equation}

\begin{equation}\label{dttv4prelimest} |\partial_{t}^{2}v_{4}(t,r)|\leq \begin{cases}\frac{C}{t^{3}\log^{b+2N-2}(t)}, \quad r\leq \frac{t}{2}\\
\frac{C}{t^{2}\log^{3N+b}(t)}, \quad r \geq \frac{t}{2}\end{cases}\end{equation}

\begin{equation}\label{dttv5prelimest}\begin{split}|\partial_{t}^{2}v_{5}(t,r)| &\leq \frac{C}{t^{4}\log^{3N+b-2}(t)}, \quad r \leq \frac{t}{2}\end{split}\end{equation}
\end{lemma}
\begin{proof}
First, we estimate $\partial_{t}^{2}v_{3}$. We start with the formula for $\partial_{t}v_{3}$ used in the process of proving the final estimate on $\lambda'''$:
\begin{equation}\label{dtv3forlambdapppp}\begin{split} \partial_{t}v_{3}(t,r) &= \frac{-1}{r} \int_{t}^{\infty} ds \int_{0}^{s-t} \frac{\rho d\rho}{\sqrt{(s-t)^{2}-\rho^{2}}} \lambda'''(s) \left(\frac{-1-\rho^{2}+r^{2}}{\sqrt{(1+\rho^{2}-r^{2})^{2}+4r^{2}}} + F_{3}(r,\rho,\lambda(s))\right)\\
&-\frac{1}{r} \int_{0}^{\infty} dw \int_{0}^{w} \frac{\rho d\rho}{\sqrt{w^{2}-\rho^{2}}} \lambda''(t+w) \partial_{3}F_{3}(r,\rho,\lambda(t+w))\lambda'(t+w)\end{split}\end{equation}

Denote the first line on the right-hand side of \eqref{dtv3forlambdapppp} by $v_{3,1,t}$. Then, we have

\begin{equation}\label{dttv3prelim1}\begin{split} \partial_{t}v_{3,1,t}(t,r) &= \frac{1}{r} \int_{t}^{\infty} ds \int_{0}^{s-t} \frac{\rho d\rho}{(s-t)\sqrt{(s-t)^{2}-\rho^{2}}} \lambda'''(s) \left(\frac{-1-\rho^{2}+r^{2}}{\sqrt{(1+\rho^{2}-r^{2})^{2}+4r^{2}}} + F_{3}(r,\rho,\lambda(s))\right)\\
&-\frac{1}{r} \int_{t}^{\infty} ds \int_{0}^{s-t} \frac{\rho d\rho}{(s-t)\sqrt{(s-t)^{2}-\rho^{2}}}\lambda'''(s) \left(\frac{8 \rho^{2} r^{2}}{(4r^{2}+(1+\rho^{2}-r^{2})^{2})^{3/2}} \right.\\
&+\left. \frac{-8 \lambda(s)^{4\alpha -4}\rho^{2} r^{2}}{(4 \lambda(s)^{2\alpha -2} r^{2}+(1+\lambda(s)^{2\alpha -2}(\rho^{2}-r^{2}))^{2})^{3/2}}\right)\end{split}\end{equation}

We will need to use a more complicated procedure for \eqref{dttv3prelim1} than what was used to treat the analogous term which arose in the course of obtaining \eqref{dtv3prelimest}. For the first line on the right-hand side of \eqref{dttv3prelim1}, we start with
\begin{equation}\begin{split} |\frac{-1-\rho^{2}+r^{2}}{\sqrt{(1+\rho^{2}-r^{2})^{2}+4r^{2}}}+F_{3}(r,\rho,\lambda(s))| &\leq \begin{cases} 2, \quad \rho \leq 4r\\
|\frac{-1-\rho^{2}+r^{2}}{\sqrt{(1+\rho^{2}-r^{2})^{2}+4r^{2}}}+1|+|-1+F_{3}(r,\rho,\lambda(s))|, \quad \rho > 4r\end{cases}\\
&\leq \begin{cases} 2, \quad \rho\leq 4r\\
\frac{r^{2}}{\rho^{2}}, \quad \rho > 4r\end{cases}\end{split}\end{equation}

Then, we have
\begin{equation} \begin{split} &|\frac{1}{r} \int_{t}^{\infty} ds \int_{0}^{s-t} \frac{\rho d\rho}{(s-t)\sqrt{(s-t)^{2}-\rho^{2}}} \lambda'''(s) \left(\frac{-1-\rho^{2}+r^{2}}{\sqrt{(1+\rho^{2}-r^{2})^{2}+4r^{2}}} + F_{3}(r,\rho,\lambda(s))\right)|\\
&\leq \frac{C}{r} \int_{0}^{\infty} \frac{\rho d\rho}{t^{3}\log^{b+1}(t)} \left(\begin{cases} 2, \quad \rho\leq 4r\\
\frac{r^{2}}{\rho^{2}}, \quad \rho > 4r\end{cases}\right) \int_{\rho+t}^{\infty} ds \frac{1}{(s-t)\sqrt{(s-t)^{2}-\rho^{2}}}\\
&\leq \frac{C}{r} \int_{0}^{4r} \frac{d\rho}{t^{3}\log^{b+1}(t)} + \frac{C}{r} \int_{4r}^{\infty} \frac{r^{2} d\rho}{\rho^{2} t^{3} \log^{b+1}(t)}\\
&\leq \frac{C}{t^{3} \log^{b+1}(t)}\end{split}\end{equation}

We then consider the second line on the right-hand side of \eqref{dttv3prelim1}. First, note that, because $\lambda'(x) \leq 0, \quad x \geq T_{0}$, we have $\lambda(s)^{2\alpha -2} \geq \lambda(t)^{2\alpha -2}, \quad s \geq t$. Then, we start with the case $r \leq \frac{\lambda(t)^{1-\alpha}}{4} \leq \frac{1}{4}$. We have $1 + \rho^{2} \geq 16 r^{2}$, and $1+\lambda(t)^{2\alpha -2} \rho^{2} \geq 16 \lambda(t)^{2\alpha -2}r^{2}$. Whence, we have
$$\frac{1}{(4r^{2}+(1-r^{2}+\rho^{2})^{2})^{3/2}} \leq \frac{C}{(1+\rho^{2})^{3}}$$
$$\frac{1}{(4 \lambda(t)^{2\alpha -2} r^{2} +(1+\lambda(t)^{2\alpha -2} \rho^{2} - \lambda(t)^{2\alpha -2}r^{2})^{2})^{3/2}} \leq \frac{C}{(1+\rho^{2}\lambda(t)^{2\alpha -2})^{3}}$$
So,
\begin{equation}\begin{split} &|-\frac{1}{r} \int_{t}^{\infty} ds \int_{0}^{s-t} \frac{\rho d\rho}{(s-t)\sqrt{(s-t)^{2}-\rho^{2}}}\lambda'''(s) \left(\frac{8 \rho^{2} r^{2}}{(4r^{2}+(1+\rho^{2}-r^{2})^{2})^{3/2}} \right.\\
&+\left. \frac{-8 \lambda(s)^{4\alpha -4}\rho^{2} r^{2}}{(4 \lambda(s)^{2\alpha -2} r^{2}+(1+\lambda(s)^{2\alpha -2}(\rho^{2}-r^{2}))^{2})^{3/2}}\right)|\\
&\leq  \frac{C}{r} \int_{0}^{\infty} \frac{d\rho}{t^{3}\log^{b+1}(t)} \left(\frac{\rho^{2} r^{2}}{(1+\rho^{2})^{3}} + \frac{\rho^{2} r^{2}}{(1+\lambda(t)^{2\alpha -2}\rho^{2})^{3}\log^{4\alpha b-4b}(t)}\right)\\
&\leq \frac{C r}{t^{3} \log^{1+b\alpha}(t)}, \quad r \leq \frac{\lambda(t)^{1-\alpha}}{4}\end{split}\end{equation}
If $ r \geq \frac{\lambda(t)^{1-\alpha}}{4}$, then,
\begin{equation}\begin{split} &|-\frac{1}{r} \int_{t}^{\infty} ds \int_{0}^{s-t} \frac{\rho d\rho}{(s-t)\sqrt{(s-t)^{2}-\rho^{2}}}\lambda'''(s) \left(\frac{8 \rho^{2} r^{2}}{(4r^{2}+(1+\rho^{2}-r^{2})^{2})^{3/2}} \right.\\
&+\left. \frac{-8 \lambda(s)^{4\alpha -4}\rho^{2} r^{2}}{(4 \lambda(s)^{2\alpha -2} r^{2}+(1+\lambda(s)^{2\alpha -2}(\rho^{2}-r^{2}))^{2})^{3/2}}\right)|\\
&\leq \frac{C}{r} \left(\frac{r^{2}}{t^{3} \log^{b+1}(t)} \int_{0}^{4r} \frac{\rho^{2} d\rho}{(4r^{2}+(1-r^{2}+\rho^{2})^{2})^{3/2}} + \frac{r^{2}}{t^{3}\log^{b+1}(t)\log^{4\alpha b-4b}(t)} \int_{0}^{4r} \frac{\rho^{2} d\rho}{(4 \lambda(t)^{2\alpha -2} r^{2} + (1+\lambda(t)^{2\alpha -2}(\rho^{2}-r^{2}))^{2})^{3/2}}\right)\\
&+\frac{C}{r t^{3} \log^{b+1}(t)} \int_{4r}^{\infty} d\rho \left(\frac{\rho^{2} r^{2}}{\rho^{6}} + \frac{\rho^{2}r^{2}}{\log^{4\alpha b-4b}(t)\lambda(t)^{6\alpha -6} \rho^{6}}\right)\\
&\leq \frac{C}{r} \left(\frac{r^{3}}{t^{3} \log^{b+1}(t)} \int_{0}^{\infty} \frac{\rho d\rho}{(4r^{2}+(1-r^{2}+\rho^{2})^{2})^{3/2}} + \frac{r^{3} }{t^{3}\log^{b+1}(t)\log^{4\alpha b-4b}(t)} \int_{0}^{\infty} \frac{\rho d\rho}{(4 \lambda(t)^{2\alpha -2} r^{2} + (1+\lambda(t)^{2\alpha -2}(\rho^{2}-r^{2}))^{2})^{3/2}}\right)\\
&+ \frac{C}{r t^{3} \log^{b+1}(t)} \left(\frac{1}{r} + \frac{1}{r \log^{2b-2\alpha b}(t)}\right)\\
&\leq \frac{C}{t^{3} \log^{-b+2\alpha b+1}(t)}\end{split}\end{equation}

In total, we get
\begin{equation} |\partial_{t}v_{3,1,t}(t,r)| \leq \frac{C}{t^{3} \log^{1+2\alpha b-b}(t)}\end{equation}

For the $t$ derivative of the second line of \eqref{dtv3forlambdapppp}, we start with 
\begin{equation} |\partial_{3}^{2}F_{3}(r,\rho,\lambda(s))| \leq \frac{C |\partial_{3}F_{3}(r,\rho,\lambda(s))|}{\lambda(s)} + \frac{C \lambda(s)^{4\alpha -6} r^{2}}{\lambda(s)^{2\alpha -2}\left(1+(\rho^{2}-r^{2})^{2}\lambda(s)^{4\alpha -4} + 2(\rho^{2}+r^{2})\lambda(s)^{2\alpha -2}\right)}\end{equation}
Then, we use our estimate for $\partial_{3}F_{3}$ proven while estimating $\partial_{t}v_{3}$ previously, and note that the estimate above for $\partial_{3}^{2}F_{3}$ gives rise to an estimate of the $t$ derivative of the integrand of the second line of \eqref{dtv3forlambdapppp} which  is of the same form as the estimate on the integrand used to obtain the final estimate on $\partial_{t}v_{3}$. We can therefore read off an estimate from our previous computations, and get that the partial $t$ derivative of the second line of \eqref{dtv3forlambdapppp} is bounded above in absolute value by 
$$\frac{C r}{t^{4}\log^{b+1}(t)}$$
This gives \eqref{dttv3prelimest}.\\
For $\partial_{t}^{2}v_{1}$, we have
\begin{equation}\label{dttv1prelim}\begin{split} \partial_{tt}v_{1}(t,r) &= \int_{t}^{\infty} ds \frac{-\lambda'''(s)}{r} \int_{0}^{s-t} \frac{\rho d\rho}{\sqrt{(s-t)^{2}-\rho^{2}}(s-t)} \left(1+\frac{r^{2}-1-\rho^{2}}{\sqrt{(r^{2}-1-\rho^{2})^{2}+4r^{2}}}\right)\\
&+\int_{t}^{\infty} ds \frac{\lambda'''(s)}{r} \int_{0}^{s-t} \frac{\rho d\rho}{\sqrt{(s-t)^{2}-\rho^{2}}} \frac{8 \rho^{2}r^{2}}{(s-t)(4r^{2}+(1+\rho^{2}-r^{2})^{2})^{3/2}}\end{split}\end{equation}

For the first line of \eqref{dttv1prelim},we get
\begin{equation}\begin{split} &\int_{t}^{\infty} ds \frac{-\lambda'''(s)}{r} \int_{0}^{s-t} \frac{\rho d\rho}{(s-t) \sqrt{(s-t)^{2}-\rho^{2}}} \left(1+\frac{r^{2}-1-\rho^{2}}{\sqrt{(r^{2}-1-\rho^{2})^{2}+4r^{2}}}\right)\\
&=-\int_{0}^{\infty} \rho d\rho \left(1+\frac{r^{2}-1-\rho^{2}}{\sqrt{(r^{2}-1-\rho^{2})^{2}+4r^{2}}}\right) \int_{\rho +t}^{\infty} ds \frac{\lambda'''(s)}{r(s-t)\sqrt{(s-t)^{2}-\rho^{2}}}\end{split}\end{equation}
Next, if $$f_{v_{1},t}(x,y) = 1+\frac{x-y}{\sqrt{(x-y)^{2}+4x}}, \quad x\geq 0, y \geq 1$$
then,
$$|f_{v_{1},t}(x,y)| \leq \frac{C x}{y^{2}}, \quad x \leq \frac{y}{16}$$
Also, we have $$|f_{v_{1},t}(x,y)| \leq 2$$
Note that if $r \leq \frac{1}{4}$, then, $r^{2} \leq \frac{1+\rho^{2}}{16}$. So, we get
\begin{equation}\begin{split} &|\int_{t}^{\infty} ds \frac{-\lambda'''(s)}{r} \int_{0}^{s-t} \frac{\rho d\rho}{(s-t) \sqrt{(s-t)^{2}-\rho^{2}}} \left(1+\frac{r^{2}-1-\rho^{2}}{\sqrt{(r^{2}-1-\rho^{2})^{2}+4r^{2}}}\right)|\\
&\leq C \frac{\sup_{x \geq t} |\lambda'''(x)|}{r} \begin{cases} \int_{0}^{\infty} d\rho |f_{v_{1},t}(r^{2},1+\rho^{2})|, \quad r < \frac{1}{4}\\
 \int_{0}^{4r} 2 d\rho + C \int_{4r}^{\infty} \frac{r^{2}d\rho}{\rho^{4}}, \quad r > \frac{1}{4}\end{cases}\\
&\leq C \sup_{x \geq t}|\lambda'''(x)| \begin{cases} r, \quad r< \frac{1}{4}\\
1, \quad r>\frac{1}{4}\end{cases}\\
&\leq C \frac{r}{1+r} \sup_{x \geq t}|\lambda'''(x)|\end{split}\end{equation}
For the second line of \eqref{dttv1prelim}, we use a similar procedure as above, again treating first the case $r \leq \frac{1}{4}$, and get
\begin{equation}\begin{split}&|\int_{t}^{\infty} ds \frac{\lambda'''(s)}{r} \int_{0}^{s-t} \frac{\rho d\rho}{\sqrt{(s-t)^{2}-\rho^{2}}} \frac{8 \rho^{2} r^{2}}{(s-t) (4r^{2}+(1-r^{2}+\rho^{2})^{2})^{3/2}}|\\
&\leq \frac{C \sup_{x \geq t}|\lambda'''(x)|}{r} \int_{0}^{\infty} \frac{\rho d\rho \rho^{2} r^{2}}{(4r^{2}+(1-r^{2}+\rho^{2})^{2})^{3/2}} \int_{\rho +t}^{\infty} \frac{ds}{\sqrt{(s-t)^{2}-\rho^{2}}} \frac{1}{(s-t)}\\
&\leq C \frac{\sup_{x \geq t} |\lambda'''(x)|}{r} r^{2} \int_{0}^{\infty} \frac{\rho^{2} d\rho}{(1+\rho^{2})^{3}}\\
&\leq C r \sup_{x \geq t}|\lambda'''(x)|, \quad r \leq \frac{1}{4}\end{split}\end{equation}
where we used the fact that 
$$r \leq \frac{1}{4} \implies 1+\rho^{2}-r^{2} \geq C(1+\rho^{2})$$
Next, we consider the second line of \eqref{dttv1prelim} for $ r > \frac{1}{4}$. Here, we get
\begin{equation}\begin{split} &|\int_{t}^{\infty} ds \frac{\lambda'''(s)}{r} \int_{0}^{s-t} \frac{\rho d\rho}{\sqrt{(s-t)^{2}-\rho^{2}}} \frac{8 \rho^{2} r^{2}}{(s-t) (4r^{2}+(1-r^{2}+\rho^{2})^{2})^{3/2}}|\\
&\leq C \frac{\sup_{x \geq t}|\lambda'''(x)|}{r} \left(\int_{0}^{2r} \frac{\rho^{2} r^{2}d\rho}{(4r^{2}+(1-r^{2}+\rho^{2})^{2})^{3/2}} + \int_{2r}^{\infty} \frac{\rho^{2} r^{2} d\rho}{(4r^{2}+(1-r^{2}+\rho^{2})^{2})^{3/2}}\right)\\
&\leq \frac{C \sup_{x \geq t}|\lambda'''(x)|}{r} \left(2 r \int_{0}^{2r} \frac{r^{2} \rho d\rho}{(4r^{2}+(1-r^{2}+\rho^{2})^{2})^{3/2}} + r^{2} \int_{2r}^{\infty} \frac{\rho^{2} d\rho}{\rho^{6}}\right)\\
&\leq C \sup_{x \geq t} |\lambda'''(x)|, \quad  r > \frac{1}{4}\end{split}\end{equation}
In total, we get \eqref{dttv1prelimest}.\\
Now, we will obtain an estimate on $\partial_{t}^{2}v_{4}$. We write
\begin{equation} v_{4}=v_{4}^{0}(t,r)+v_{4}^{1}(t,r)\end{equation}
where $v_{4}^{0}$ solves the same equation as $v_{4}$, with $0$ Cauchy data at infinity, except with right-hand side equal to $v_{4,c}^{0}$, which is given by
$$v_{4,c}^{0}(t,r) = \chi_{\geq 1}(\frac{2r}{\log^{N}(t)}) \left(\left(\frac{\cos(2Q_{1}(\frac{r}{\lambda(t)}))-1}{r^{2}}\right)\left(v_{1}+v_{2}+v_{3}\right)\right)$$
and similarly for $v_{4}^{1}$, where
$$v_{4,c}^{1}(t,r) = \chi_{\geq 1}(\frac{2r}{\log^{N}(t)}) F_{0,2}(t,r)$$ 
We start with
\begin{equation}\label{dtv4fordttv4prelim} \partial_{t}^{2}v_{4}^{0}(t,r) = \frac{-1}{2\pi} \int_{t}^{\infty} ds \int_{0}^{s-t} \frac{\rho d\rho}{\sqrt{(s-t)^{2}-\rho^{2}}} \int_{0}^{2\pi} d\theta \frac{\partial_{1}^{2}v_{4,c}^{0}(s,\sqrt{r^{2}+\rho^{2}+2 r \rho \cos(\theta)}) \left(r+\rho \cos(\theta)\right)}{\sqrt{r^{2}+\rho^{2}+2 r \rho \cos(\theta)}}\end{equation}

Then, we note that
\begin{equation} \begin{split} |\partial_{t}^{2} v_{4,c}^{0}(t,r)| &\leq C \frac{\mathbbm{1}_{\{r \geq \frac{\log^{N}(t)}{2}\}} |v_{1}+v_{2}+v_{3}|}{t^{2} \log^{2b+1}(t) r^{4}} + \frac{C \mathbbm{1}_{\{r \geq \frac{\log^{N}(t)}{2}\}}|\partial_{t}(v_{1}+v_{2}+v_{3})|}{t \log^{2b+1}(t) r^{4}}\\
&+\frac{C \mathbbm{1}_{\{r \geq \frac{\log^{N}(t)}{2}\}} |\partial_{t}^{2}(v_{1}+v_{2}+v_{3})|}{r^{4} \log^{2b}(t)}\end{split}\end{equation}
For the purposes of obtaining a preliminary estimate on $\partial_{t}^{2}v_{4}^{0}$ in the region $r \leq \frac{t}{2}$, we combine all of our previous estimates in the following way:
\begin{equation} |\partial_{t}^{2}v_{4,c}^{0}(t,r)| \leq \mathbbm{1}_{\{r \geq \frac{\log^{N}(t)}{2}\}} \begin{cases}\frac{C}{r^{4} \log^{2b}(t) t^{3}}\left(\frac{1}{\log^{1+2b\alpha-b}(t)}+\frac{1}{\log^{b}(t)}\right), \quad r \leq \frac{t}{2}\\
\frac{C \log(r)}{r^{4} \log^{2b}(t)} \left(\frac{1}{|t-r|^{3}} + \frac{1}{t \log(t) (t-r)^{2}}+\frac{1}{t^{2} \log(t) |t-r|}\right)+\frac{C}{r^{4} t^{3} \log^{1+b+2b\alpha}(t)}, \quad t > r > \frac{t}{2}\end{cases}\end{equation}
Directly inserting this estimate into our previous formula for $\partial_{t}^{2}v_{4}^{0}(t,r)$, and estimating as in previous sections, we get
\begin{equation}|\partial_{t}^{2}v_{4}^{0}(t,r)| \leq \frac{C}{t^{3}\log^{2N+b-2}(t)}, \quad r \leq \frac{t}{2}\end{equation}
We will use the same procedure used for $\partial_{t}v_{4}$ to estimate $\partial_{t}^{2}v_{4}^{0}$ in the region $r \geq \frac{t}{2}$. As in previous cases, we will use a different combination of $\partial_{t}^{j}v_{2}$ estimates to get a different estimate for $\partial_{t}^{2}v_{4,c}^{0}$:
\begin{equation} |\partial_{t}^{2}v_{4,c}^{0}(t,r)| \leq  \begin{cases}\frac{C \mathbbm{1}_{\{r \geq \frac{\log^{N}(t)}{2}\}}}{r^{4} \log^{b}(t) t^{3}}, \quad r \leq \frac{t}{2}\\
\frac{C}{\log^{2b}(t) r^{9/2}}, \quad t-t^{1/6} \leq r \leq t+t^{1/6}\\
\frac{C \log(r)}{r^{4} \log^{2b}(t)} \left(\frac{1}{|t-r|^{3}} + \frac{1}{t \log(t) (t-r)^{2}}+\frac{1}{t^{2} \log(t) |t-r|}\right)+\frac{C}{r^{4} t^{3} \log^{1+b+2b\alpha}(t)}, \quad t-t^{1/6} > r > \frac{t}{2}\text{, or }r>t+t^{1/6}\end{cases}\end{equation}
We then get
\begin{equation} |\partial_{t}^{2}v_{4}^{0}(t,r)| \leq \frac{C}{t^{2}\log^{3N+b}(t)}, \quad r \geq \frac{t}{2}\end{equation}
which gives
\begin{equation} |\partial_{t}^{2}v_{4}^{0}(t,r)|\leq \begin{cases}\frac{C}{t^{3}\log^{2N+b-2}(t)}, \quad r\leq \frac{t}{2}\\
\frac{C}{t^{2}\log^{3N+b}(t)}, \quad r \geq \frac{t}{2}\end{cases}\end{equation}
Now, we estimate $\partial_{t}^{2}v_{4}^{1}$. This time, we follow the procedure used to obtain the preliminary estimate on $\partial_{t}v_{4}$:
\begin{equation}\label{dttv41prelim}\begin{split} \partial_{t}^{2} v_{4}^{1}(t,r) &= \frac{1}{2\pi}\int_{t}^{\infty} \frac{ds}{(s-t)} \int_{0}^{s-t} \frac{\rho d\rho}{\sqrt{(s-t)^{2}-\rho^{2}}} \int_{0}^{2\pi} d\theta \frac{\partial_{1}v_{4,c}^{1}(s,\sqrt{r^{2}+\rho^{2}+2 r \rho \cos(\theta)}) (r+\rho \cos(\theta))}{\sqrt{r^{2}+\rho^{2}+2 r \rho \cos(\theta)}}\\
&-\frac{1}{2\pi} \int_{t}^{\infty} ds \int_{B_{s-t}(0)} \frac{dA(y)}{\sqrt{(s-t)^{2}-|y|^{2}}} \text{integrand}_{v_{4,2}^{1}}\end{split}\end{equation}
where
\begin{equation}\begin{split}\text{integrand}_{v_{4,2}^{1}} &= \frac{-\partial_{12} v_{4,c}^{1}(s,|x+y|)}{|x+y|^{2}(s-t)} ((x+y)\cdot y)(\widehat{x}\cdot(x+y))\\
&+\frac{\partial_{1}v_{4,c}^{1}(s,|x+y|)}{(s-t)|x+y|} \left(-y\cdot \widehat{x} + \frac{(\widehat{x}\cdot(x+y))(y\cdot(y+x))}{|x+y|^{2}}\right)\end{split}\end{equation}
which gives
\begin{equation} |\text{integrand}_{v_{4,2}^{1}}| \leq C |\partial_{12}v_{4,c}^{1}| + C \frac{|\partial_{1}v_{4,c}^{1}|}{|x+y|}\end{equation}
We start with the term on the first line of \eqref{dttv41prelim}:
\begin{equation}\begin{split}&|\frac{1}{2\pi}\int_{t}^{\infty} \frac{ds}{(s-t)} \int_{0}^{s-t} \frac{\rho d\rho}{\sqrt{(s-t)^{2}-\rho^{2}}} \int_{0}^{2\pi} d\theta \frac{\partial_{1}v_{4,c}^{1}(s,\sqrt{r^{2}+\rho^{2}+2 r \rho \cos(\theta)}) (r+\rho \cos(\theta))}{\sqrt{r^{2}+\rho^{2}+2 r \rho \cos(\theta)}}|\\
&\leq C \int_{t}^{\infty} \frac{ds}{(s-t)} \int_{0}^{s-t} \frac{\rho d\rho}{\sqrt{(s-t)^{2}-\rho^{2}}} \int_{0}^{2\pi} \frac{d\theta}{s^{3} \log^{3b+1+N-2b\alpha}(s)(\log^{2N}(s)+\rho^{2}+r^{2}+2 r \rho \cos(\theta))}\\
&\leq C \int_{0}^{\infty} \rho d\rho \int_{0}^{2\pi} \frac{d\theta}{(\rho+t)^{3} \log^{3b+1+N-2b\alpha}(t)(\log^{2N}(t)+r^{2}+\rho^{2}+2r \rho \cos(\theta))} \int_{\rho+t}^{\infty} \frac{ds}{(s-t)\sqrt{(s-t)^{2}-\rho^{2}}}\\
&\leq \frac{C}{t^{3} \log^{3b+2N-2b\alpha}(t)}\end{split}\end{equation}
For the second line of \eqref{dttv41prelim}, we use the same procedure used to estimate $v_{4}$ previously, and we get
\begin{equation}\begin{split}|-\frac{1}{2\pi} \int_{t}^{\infty} ds \int_{B_{s-t}(0)} \frac{dA(y)}{\sqrt{(s-t)^{2}-|y|^{2}}} \text{integrand}_{v_{4,2}^{1}}| &\leq \frac{C}{t^{3} \log^{3b-1+2N-2\alpha b}(t)}\end{split}\end{equation}
In total, we get
\begin{equation} |\partial_{t}^{2}v_{4}^{1}(t,r)| \leq \frac{C}{t^{3} \log^{3b+2N-1-2\alpha b}(t)}, \quad r \geq 0\end{equation}
Combining with the estimate on $\partial_{t}^{2}v_{4}^{0}$ above, we get \eqref{dttv4prelimest}.\\
Next, we estimate $\partial_{t}^{2}v_{5}$. We start with
\begin{equation}\label{dttn2f}\begin{split}|\partial_{t}^{2}N_{2}(f)(t,r)| &\leq \left(\frac{C \lambda(t) \lambda'(t)^{2}}{r(r^{2}+\lambda(t)^{2})^{2}} + \frac{C |\lambda''(t)|}{r(r^{2}+\lambda(t)^{2})}\right)|f(t,r)|^{2}\\
&+\frac{C |\lambda'(t)|}{r(r^{2}+\lambda(t)^{2})} |f(t,r)\partial_{t}f(t,r)| + \frac{C \lambda(t)}{r(r^{2}+\lambda(t)^{2})} \left((\partial_{t}f(t,r))^{2}+|f(t,r)\partial_{tt}f(t,r)|\right)\\
&+\frac{C \left((\lambda'(t))^{2}+\lambda(t)|\lambda''(t)|\right)}{(r^{2}+\lambda(t)^{2})^{2}} |f(t,r)^{3}| + \frac{C \lambda(t)|\lambda'(t)|}{(r^{2}+\lambda(t)^{2})^{2}} |f(t,r)^{2}\partial_{t}f(t,r)|\\
&+\frac{C}{r^{2}}\left(|f(t,r)| (\partial_{t}f(t,r))^{2}+(f(t,r))^{2}|\partial_{tt}f(t,r)|\right)\end{split}\end{equation}
where $f=v_{1}+v_{2}+v_{3}+v_{4}$.\\ 
For the purposes of estimating $\partial_{t}^{2}v_{5}$ in the region $r \leq \frac{t}{2}$, we return to \eqref{dttn2f}, and use \eqref{v2singularconeest} for all quantities involving $v_{2}$ in the region $r \geq \frac{t}{2}$, with the following exceptions. For the term involving $|f_{v_{5}}| \cdot |\partial_{t}f_{v_{5}}|^{2}$ on the last line of \eqref{dttn2f}, we take the following average of \eqref{v2singularconeest} and \eqref{v2sqrtrest} to estimate $(\partial_{t}v_{2})^{2}$ in the region $r \geq \frac{t}{2}$:
$$(\partial_{t}v_{2}(t,r))^{2} \leq C \left(\frac{\log(r)}{|t-r|^{2}}\right)^{3/2} \cdot \frac{1}{r^{1/4}} $$ 
Similarly, for the term involving $f_{v_{5}}^{2} |\partial_{tt}f_{v_{5}}|$, we estimate $v_{2}^{2}|\partial_{tt}v_{2}|$ by
$$(v_{2}(t,r))^{2} |\partial_{tt}v_{2}| \leq \frac{C \log(r)^{2}}{|t-r|^{4} \sqrt{r}}$$
(This is so that we will not have to have an extra argument for a term analogous to the last line of \eqref{v52intest}). This procedure gives
\begin{equation} |\partial_{t}^{2}N_{2}(f)(t,r)| \leq C \begin{cases} \frac{1}{t^{5} \log^{b}(t)(r^{2}+\lambda(t)^{2})}, \quad r \leq \frac{t}{2}\\
\frac{\log(r)}{r^{3} t^{2} |t-r| \log^{3N+2b}(t)} + \frac{\log^{2}(r)}{r^{2}(t-r)^{2} t^{2} \log^{3N+b}(t)} + \frac{\log^{3}(r)}{t^{2} r^{2} \log(t) |t-r|^{3}} + \frac{\log^{5/2}(r)}{r^{9/4} (t-r)^{4}}, \quad \frac{t}{2} \leq r <t\end{cases}\end{equation}
Using these estimates in the formula
\begin{equation}\begin{split} \partial_{t}^{2}v_{5}(t,r) &= \frac{-1}{2\pi} \int_{t}^{\infty}ds \int_{0}^{s-t} \frac{\rho d\rho}{\sqrt{(s-t)^{2}-\rho^{2}}} \int_{0}^{2\pi}d\theta \frac{\partial_{1}^{2}N_{2}(f)(s,\sqrt{r^{2}+\rho^{2}+2 r \rho \cos(\theta)})}{\sqrt{r^{2}+\rho^{2}+2 r \rho \cos(\theta)}}(r+\rho \cos(\theta))\end{split}\end{equation}
gives \eqref{dttv5prelimest}.
\end{proof}
 Then, we use all of our estimates above to get
\begin{equation} |\partial_{t}^{2}\left(\langle\left(\frac{\cos(2Q_{\frac{1}{\lambda(t)}}(r))-1}{r^{2}}\right)v_{3}\vert_{r=R\lambda(t)}, \phi_{0}\rangle\right)| \leq \frac{C}{t^{3}\log^{1+2\alpha b-3b}(t)}\end{equation}
\begin{equation} |\partial_{t}^{2}\left(\langle\left(\frac{\cos(2Q_{\frac{1}{\lambda(t)}}(r))-1}{r^{2}}\right)v_{4}\left(1-\chi_{\geq 1}(\frac{4r}{t})\right)\vert_{r=R\lambda(t)}, \phi_{0}\rangle\right)| \leq \frac{C}{t^{3}\log^{2N-2-b}(t)}\end{equation}
\begin{equation} |\partial_{t}^{2}\left(\langle\left(\frac{\cos(2Q_{\frac{1}{\lambda(t)}}(r))-1}{r^{2}}\right)v_{5}\left(1-\chi_{\geq 1}(\frac{4r}{t})\right)\vert_{r=R\lambda(t)}, \phi_{0}\rangle\right)| \leq \frac{C}{t^{4}\log^{3N-b-2}(t)}\end{equation}
\begin{equation} |\partial_{t}^{2}\left(\langle\left(\frac{\cos(2Q_{\frac{1}{\lambda(t)}}(r))-1}{r^{2}}\right)\left(\chi_{\geq 1}(\frac{2r}{\log^{N}(t)})(v_{1}+v_{2}+v_{3})\right)\vert_{r=R\lambda(t)}, \phi_{0}\rangle\right)| \leq \frac{C}{t^{3}\log^{1+3N+2\alpha b}(t)}\end{equation}

Finally, assembling together all of our previous estimates, we conclude
$$|RHS_{2}''(t)| \leq \frac{C}{t^{3}\log^{1-3b}(t)}$$
Returning to \eqref{lambdappppprelim1}, we get
\begin{equation} |\lambda''''(t)| \leq \frac{C}{t^{3}\log^{1-2b}(t)\log(\log(t))}\end{equation}
Now that we have this preliminary estimate, we return to \eqref{modulationfinal}, substitute 
$$\lambda(t) = \lambda_{0,0}(t)+e(t)$$
and differentitate twice in $t$, to get
\begin{equation}\label{eppppforfinalest}\begin{split} &-4 \int_{t}^{\infty} \frac{e''''(s) ds}{\log(\lambda_{0,0}(s))(1+s-t)} + 4 \alpha e''''(t) - 4 \int_{t}^{\infty} \frac{e''''(s) ds}{\log(\lambda_{0,0}(s))(\lambda_{0,0}(t)^{1-\alpha}+s-t)(1+s-t)^{3}}\\
&=\frac{-E_{\lambda_{0,0}}''(t)}{\log(\lambda_{0,0}(t))} -\frac{4\alpha}{\log(\lambda_{0,0}(t))}\left(\frac{\lambda''(t)}{\lambda(t)}-\frac{\lambda'(t)^{2}}{\lambda(t)^{2}}\right)\lambda''(t) - \frac{8 \alpha \lambda'(t) \lambda'''(t)}{\lambda(t) \log(\lambda_{0,0}(t))}\\
&-4 \alpha \left(\frac{\log(\lambda(t))}{\log(\lambda_{0,0}(t))}-1\right) e''''(t) -\frac{4 \alpha \log(\lambda(t))}{\log(\lambda_{0,0}(t))} \lambda_{0,0}''''(t) + \frac{4}{\log(\lambda_{0,0}(t))} \partial_{t}^{2}\left(\int_{0}^{\infty} \frac{\lambda_{0,0}''(t+w)}{(\lambda(t)^{1-\alpha}+w)(1+w^{3})} dw\right)\\
&+4 \int_{0}^{\infty} \frac{e''''(t+w)}{\log(\lambda_{0,0}(t))} \left(\frac{1}{(\lambda(t)^{1-\alpha}+w)}-\frac{1}{(\lambda_{0,0}(t)^{1-\alpha}+w)}\right)\frac{dw}{(1+w)^{3}} \\
&-\frac{8(1-\alpha)\lambda(t)^{-\alpha}\lambda'(t)}{\log(\lambda_{0,0}(t))} \int_{0}^{\infty} \frac{e'''(t+w)}{(\lambda(t)^{1-\alpha}+w)^{2}(1+w)^{3}} dw\\
&-\frac{4(1-\alpha)(-\alpha \lambda(t)^{-\alpha -1}(\lambda'(t))^{2} + \lambda(t)^{-\alpha} \lambda''(t))}{\log(\lambda_{0,0}(t))} \int_{0}^{\infty} \frac{e''(t+w)dw}{(1+w)^{3}(\lambda(t)^{1-\alpha}+w)^{2}}\\
&+\frac{8}{\log(\lambda_{0,0}(t))} \int_{0}^{\infty} \frac{e''(t+w)}{(1+w)^{3}} \frac{(1-\alpha)^{2} \lambda(t)^{-2\alpha}\lambda'(t)^{2} dw}{(\lambda(t)^{1-\alpha}+w)^{3}} + \frac{\partial_{t}^{2} G(t,\lambda_{0,0}(t)+e(t))}{\log(\lambda_{0,0}(t))}\\
&+4 \int_{t}^{\infty} e''''(s) \left(\frac{1}{\log(\lambda_{0,0}(t))}-\frac{1}{\log(\lambda_{0,0}(s))}\right) \frac{ds}{(1+s-t)}\\
&+4 \int_{t}^{\infty} \frac{e''''(s) ds}{(1+s-t)^{3} (\lambda_{0,0}(t)^{1-\alpha}+s-t)}\left(\frac{1}{\log(\lambda_{0,0}(t))}-\frac{1}{\log(\lambda_{0,0}(s))}\right)\\
&:=RHS_{4}(t)\end{split}\end{equation}
Like before, we start by estimating all the terms on the right-hand side of \eqref{eppppforfinalest} which do not involve $G$:
\begin{equation} |\frac{-E_{\lambda_{0,0}}''(t)}{\log(\lambda_{0,0}(t))}| \leq \frac{C}{t^{4} \log^{b+1}(t) \log(\log(t))}\end{equation}
\begin{equation} |\frac{-4 \alpha}{\log(\lambda_{0,0}(t))} \left(\frac{\lambda''(t)}{\lambda(t)} -\frac{\lambda'(t)^{2}}{\lambda(t)^{2}}\right)\lambda''(t)| \leq \frac{C}{t^{4} \log^{b+2}(t) \log(\log(t))}\end{equation}
\begin{equation} |\frac{-8 \alpha \lambda'(t) \lambda'''(t)}{\lambda(t) \log(\lambda_{0,0}(t))}| \leq \frac{C}{t^{4} \log^{b+2}(t) \log(\log(t))}\end{equation}
\begin{equation} |-4 \alpha \left(\frac{\log(\lambda_{0,0}(t) + e(t))}{\log(\lambda_{0,0}(t))} -1\right) e''''(t)| \leq \frac{C |e''''(t)|}{(\log(\log(t)))^{3/2}}\end{equation}
\begin{equation} |\frac{-4 \alpha \log(\lambda(t))}{\log(\lambda_{0,0}(t))} \lambda_{0,0}''''(t)| \leq \frac{C}{t^{4} \log^{b+1}(t)}\end{equation}
Then, we note that the following estimate:
\begin{equation} |\partial_{t}^{2}\left(\frac{\lambda_{0,0}''(t+w)}{w+\lambda(t)^{1-\alpha}}\right)| \leq \frac{C}{t^{2} (t+w)^{2} \log^{b+1}(t+w) (w+\lambda(t)^{1-\alpha})}\end{equation}
implies
\begin{equation} |\frac{4}{\log(\lambda_{0,0}(t))} \int_{0}^{\infty} \partial_{t}^{2} \left(\frac{\lambda_{0,0}''(t+w)}{w+\lambda(t)^{1-\alpha}}\right) \frac{dw}{(1+w)^{3}}| \leq \frac{C}{t^{4} \log^{b+1}(t)}\end{equation}
Next, we have
\begin{equation} |4 \int_{0}^{\infty} \frac{e''''(t+w)}{\log(\lambda_{0,0}(t))} \left(\frac{\lambda_{0,0}(t)^{1-\alpha}-\lambda(t)^{1-\alpha}}{(\lambda(t)^{1-\alpha}+w)(\lambda_{0,0}(t)^{1-\alpha}+w)}\right) \frac{dw}{(1+w)^{3}}| \leq \frac{C \sup_{x \geq t} |e''''(x)|}{\sqrt{\log(\log(t))}}\end{equation}
\begin{equation} |\frac{-8(1-\alpha) \lambda(t)^{-\alpha} \lambda'(t)}{\log(\lambda_{0,0}(t))} \int_{0}^{\infty} \frac{e'''(t+w) dw}{(\lambda(t)^{1-\alpha}+w)^{2}(1+w)^{3}}| \leq \frac{C}{t^{4} \log^{b+2}(t)}\end{equation}
\begin{equation} |\frac{-4 (1-\alpha)(-\alpha \lambda(t)^{-\alpha -1}(\lambda'(t))^{2} + \lambda(t)^{-\alpha} \lambda''(t))}{\log(\lambda_{0,0}(t))} \int_{0}^{\infty} \frac{e''(t+w) dw}{(1+w)^{3} (\lambda(t)^{1-\alpha}+w)^{2}}| \leq \frac{C}{t^{4} \log^{b+2}(t)}\end{equation}
\begin{equation} |\frac{8}{\log(\lambda_{0,0}(t))} \int_{0}^{\infty} \frac{e''(t+w)}{(1+w)^{3}} \frac{(1-\alpha)^{2} \lambda(t)^{-2\alpha} \lambda'(t)^{2} dw}{(\lambda(t)^{1-\alpha} +w)^{3}}| \leq \frac{C}{t^{4} \log^{b+3}(t)}\end{equation}
\begin{equation} |4 \int_{t}^{\infty} e''''(s) \left(\frac{1}{\log(\lambda_{0,0}(t))} - \frac{1}{\log(\lambda_{0,0}(s))}\right) \frac{ds}{1+s-t}| \leq \frac{C \sup_{x \geq t}\left(x^{2} |e''''(x)|\right)}{t^{2} \log(t) (\log(\log(t)))^{2}}\end{equation}
and
\begin{equation} |4 \int_{t}^{\infty} \frac{e''''(s) ds}{(1+s-t)^{3} (\lambda_{0,0}(t)^{1-\alpha}+s-t)} \left(\frac{1}{\log(\lambda_{0,0}(t))}-\frac{1}{\log(\lambda_{0,0}(s))}\right)| \leq \frac{C \sup_{x \geq t} \left(|e''''(x) x^{2}\right)}{t^{3} \log(t) \log(\log(t))}\end{equation}
Combining these, we get that all of the terms on the right-hand side of \eqref{eppppforfinalest} which do not involve $G$ are bounded above in absolute value by
$$\frac{C}{t^{4} \log^{b+1}(t)} + \frac{C \sup_{x \geq t} \left(x^{2} |e''''(x)|\right)}{\sqrt{\log(\log(t))} t^{2}}$$
We now estimate terms in $\partial_{t}^{2}G(t,\lambda(t))$, starting with the term involving $K_{1}$. We again use the preliminary estimate on $e''''$ to justify any differentiations under the integral sign, to get
\begin{equation}\label{k1dttterms} \begin{split} &\partial_{t}^{2}\left(\frac{16}{\lambda(t)^{2}} \int_{t}^{\infty} dx \lambda''(x) \left(K_{1}(x-t,\lambda(t))-\frac{\lambda(t)^{2}}{4(1+x-t)}\right)\right)\\
&=\left(\frac{96 \lambda'(t)^{2}}{\lambda(t)^{4}}-\frac{32 \lambda''(t)}{\lambda(t)^{3}}\right)\int_{0}^{\infty} dw \lambda''(t+w) \left(K_{1}(w,\lambda(t))-\frac{\lambda(t)^{2}}{4(1+w)}\right)\\
&-\frac{64 \lambda'(t)}{\lambda(t)^{3}} \int_{0}^{\infty} dw \lambda'''(w+t)\left(K_{1}(w,\lambda(t))-\frac{\lambda(t)^{2}}{4(1+w)}\right) \\
&-\frac{64 \lambda'(t)}{\lambda(t)^{3}}\int_{0}^{\infty} dw \lambda''(w+t)\left(\partial_{2}K_{1}(w,\lambda(t))-\frac{\lambda(t)}{2(1+w)}\right)\lambda'(t)\\
&+\frac{16}{\lambda(t)^{2}}\int_{0}^{\infty} dw \lambda''''(w+t)\left(K_{1}(w,\lambda(t))-\frac{\lambda(t)^{2}}{4(1+w)}\right)\\
&+\frac{32}{\lambda(t)^{2}}\int_{0}^{\infty} dw \lambda'''(w+t)\left(\partial_{2}K_{1}(w,\lambda(t))-\frac{\lambda(t)}{2(1+w)}\right)\lambda'(t)\\
&+\frac{16}{\lambda(t)^{2}}\int_{0}^{\infty} dw \lambda''(w+t)\left(\left(\partial_{2}^{2}K_{1}(w,\lambda(t))\lambda'(t)-\frac{\lambda'(t)}{2(1+w)}\right)\lambda'(t)+\left(\partial_{2}K_{1}(w,\lambda(t))-\frac{\lambda(t)}{2(1+w)}\right)\lambda''(t)\right)\end{split}\end{equation}
The only term not involving $\lambda''''$ which has not already been estimated is the first term on the last line of the above expression. For $w \geq 1$, we start with
\begin{equation} K_{1}(w,\lambda(t))=\int_{0}^{\infty} \frac{r dr}{\lambda(t)^{2}(1+\frac{r^{2}}{\lambda(t)^{2}})^{3}} \int_{0}^{w} \frac{\rho d\rho}{w} \left(1+\frac{r^{2}-1-\rho^{2}}{\sqrt{(r^{2}-1-\rho^{2})^{2}+4r^{2}}}\right)\end{equation}
whence
\begin{equation}\begin{split} \partial_{2}^{2}K_{1}(w,\lambda(t))&=\int_{0}^{\infty}dr \frac{6 \lambda(t)^{2} r\left(\lambda(t)^{4}-5\lambda(t)^{2}r^{2}+2r^{4}\right)}{(\lambda(t)^{2}+r^{2})^{5}}\left(\frac{r^{2}}{w}-\int_{w}^{\infty} \frac{\rho d\rho}{w}\left(1+\frac{r^{2}-1-\rho^{2}}{\sqrt{(r^{2}-1-\rho^{2})^{2}+4r^{2}}}\right)\right)\\
&=\frac{1}{2w} -\int_{0}^{\infty}dr \frac{6 \lambda(t)^{2} r(\lambda(t)^{4}-5\lambda(t)^{2}r^{2}+2r^{4})}{(\lambda(t)^{2}+r^{2})^{5}}\int_{w}^{\infty} \frac{\rho d\rho}{w}\left(1+\frac{r^{2}-1-\rho^{2}}{\sqrt{(r^{2}-1-\rho^{2})^{2}+4r^{2}}}\right)\end{split}\end{equation}
Then,
\begin{equation} \begin{split} &|\frac{16}{\lambda(t)^{2}}\int_{1}^{\infty} dw \lambda''(w+t) \left(\partial_{22}K_{1}(w,\lambda(t))\lambda'(t)-\frac{\lambda'(t)}{2(1+w)}\right)\lambda'(t)|\\
&\leq \frac{C |\lambda'(t)|}{t^{3}\log^{2}(t)} \int_{1}^{\infty} dw \left(\frac{1}{2w}-\frac{1}{2(1+w)}\right)\\
&+\frac{C |\lambda'(t)|}{t^{3}\log^{2}(t)} \int_{0}^{\infty} \frac{r \lambda(t)^{2} dr}{(\lambda(t)^{2}+r^{2})^{3}} \int_{1}^{\infty} d\rho \int_{1}^{\rho} \frac{\rho dw}{w} \left(1+\frac{r^{2}-1-\rho^{2}}{\sqrt{(r^{2}-1-\rho^{2})^{2}+4r^{2}}}\right)\\
&\leq \frac{C}{t^{4}\log^{b+3}(t)} + \frac{C \lambda(t)^{2}}{t^{4}\log^{b+3}(t)} \int_{0}^{\infty} \frac{r^{3}\log(2+r^{2})dr}{(\lambda(t)^{2}+r^{2})^{3}}\\
&\leq \frac{C}{t^{4}\log^{b+3}(t)}\end{split}\end{equation}
For $w \leq 1$, we first change variables, and start with
\begin{equation} K_{1}(w,\lambda(t)) = \int_{0}^{\infty} \frac{R dR}{(1+R^{2})^{3}} \int_{0}^{w} \frac{\rho d\rho}{w} \left(1+\frac{R^{2}\lambda(t)^{2}-1-\rho^{2}}{\sqrt{(R^{2}\lambda(t)^{2}-1-\rho^{2})^{2}+4R^{2}\lambda(t)^{2}}}\right)\end{equation}
Then,
\begin{equation} |\partial_{2}^{2}K_{1}(w,\lambda(t))| \leq  C \int_{0}^{\infty} \frac{R dR}{(1+R^{2})^{3}} \int_{0}^{w}\frac{\rho d\rho}{w} R^{2}\end{equation}
which gives 
\begin{equation}  \begin{split} &|\frac{16}{\lambda(t)^{2}}\int_{0}^{1} dw \lambda''(w+t) \left(\partial_{22}K_{1}(w,\lambda(t))\lambda'(t)-\frac{\lambda'(t)}{2(1+w)}\right)\lambda'(t)|\\
&\leq \frac{C}{t^{3}\log^{2}(t)} \int_{0}^{1} dw \left(w+\frac{1}{1+w}\right)\frac{1}{t \log^{b+1}(t)} \leq \frac{C}{t^{4}\log^{b+3}(t)}\end{split}\end{equation}
We now consider the expression \eqref{k1dttterms}, except with all instances of $$K_{1}(x-t,\lambda(t))-\frac{\lambda(t)^{2}}{4(1+x-t)}$$ replaced with $$K(x-t,\lambda(t))$$ (This expression also appears in $\partial_{t}^{2}(G(t,\lambda(t)))$). All of the integrals involving $K$ which do not involve $\lambda''''$ have already been estimated, and we get
\begin{equation}\begin{split} &|\partial_{t}^{2}\left(\frac{16}{\lambda(t)^{2}} \int_{t}^{\infty} dx \lambda''(x) \left(K_{1}(x-t,\lambda(t))-\frac{\lambda(t)^{2}}{4(1+x-t)}\right)\right)|+|\partial_{t}^{2}\left(\frac{16}{\lambda(t)^{2}} \int_{t}^{\infty} dx \lambda''(x) K(x-t,\lambda(t))\right)|\\
&\leq \frac{C }{t^{4}\log^{b+1}(t)} +C \sup_{x \geq t}|e''''(x)|\end{split}\end{equation}
From the explicit expressions, we also get
\begin{equation} |\partial_{t}^{2}\left(-\lambda(t) E_{0,1}(\lambda(t),\lambda'(t),\lambda''(t))\right)| \leq \frac{C}{t^{4}\log^{b+1}(t)}+C |e''''(t)|\end{equation}
Next, we start with
\begin{equation} |\partial_{t}^{2}\left(K_{3}(w,\lambda(t))\right)| \leq \frac{C}{t^{2} \log(t)} |\frac{w}{1+w^{2}}-\frac{w}{\lambda(t)^{2-2\alpha}+w^{2}}| + \frac{C w}{t^{2} \log^{(2-2\alpha)b+1}(t) (\lambda(t)^{2-2\alpha}+w^{2})^{2}}\end{equation}
and
\begin{equation} |\partial_{t}^{2} K_{3,0}(w,\lambda(t))| \leq \frac{C}{t^{2} \log^{b+1-b\alpha}(t)} \frac{1}{(\lambda(t)^{1-\alpha}+w)^{2}(1+w)^{3}}\end{equation}
and use our previous estimates on $K_{3}-K_{3,0}$, and $\partial_{t}\left((K_{3}-K_{3,0})(w,\lambda(t))\right)$ to get
\begin{equation} |\partial_{t}^{2} \left(-16 \int_{t}^{\infty} \lambda''(s) \left(K_{3}(s-t,\lambda(t))-K_{3,0}(s-t,\lambda(t))\right) ds\right)| \leq C \sup_{x \geq t} |e''''(x)| + \frac{C}{t^{4} \log^{b+1}(t)}\end{equation}
Next, we will need an estimate on $\partial_{t}^{2} v_{3}$ which is different from that recorded while obtaining the preliminary estimate on $\lambda''''$. We start with
\begin{equation} \begin{split} \label{dttv3forlambdappppfinal} \partial_{t}^{2} v_{3}(t,r) &= \frac{-1}{r} \int_{0}^{\infty} dw \int_{0}^{w} \frac{\rho d\rho}{\sqrt{w^{2}-\rho^{2}}} \lambda''''(t+w) \left(\frac{-1-\rho^{2}+r^{2}}{\sqrt{(1+\rho^{2}-r^{2})^{2}+4r^{2}}}+F_{3}(r,\rho,\lambda(t+w))\right)\\
&-\frac{2}{r} \int_{0}^{\infty} dw \int_{0}^{w} \frac{\rho d\rho}{\sqrt{w^{2}-\rho^{2}}} \lambda'''(t+w) \partial_{t}(F_{3}(r,\rho,\lambda(t+w)))\\
&-\frac{1}{r} \int_{0}^{\infty} dw \int_{0}^{w} \frac{\rho d\rho}{\sqrt{w^{2}-\rho^{2}}} \lambda''(t+w) \partial_{t}^{2} (F_{3}(r,\rho,\lambda(t+w)))\end{split}\end{equation}
The second and third lines of \eqref{dttv3forlambdappppfinal} were already estimated in the course of obtaining the preliminary estimate on $\lambda''''$, and do not need to be estimated any differently here. The first line of \eqref{dttv3forlambdappppfinal}, with the replacement of $\lambda''''$ with $\lambda'''$ has already been estimated while obtaining the final estimate on $\lambda'''$. Therefore, we can read off that
\begin{equation}\label{dttv3finalsupest} |\partial_{t}^{2} v_{3}(t,r)| \leq C r \sup_{x \geq t} \left(\frac{x |e''''(x)|}{\lambda(x)^{2-2\alpha}}\right) \frac{\log(\log(t)) \lambda(t)^{2-2\alpha}}{t} +\frac{C r \log(\log(t))}{t^{4} \log^{b+1}(t)}\end{equation}
Next, we estimate $\partial_{t}^{2}E_{5}(t,r)$ by proceeding line-by-line on the expression \eqref{e5fordtest}. Denoting, by $E_{5,t,t,i}$, the second partial $t$ derivative of the $i$th line of \eqref{e5fordtest}, we get
\begin{equation}\label{e5tt1}\begin{split} E_{5,t,t,1}(t,r) &= \frac{-1}{r} \int_{0}^{6r} dw \int_{0}^{w} \frac{\rho d\rho}{w} \lambda''''(t+w) \left(\frac{-1-\rho^{2}+r^{2}}{\sqrt{(1+\rho^{2}-r^{2})^{2}+4r^{2}}}+F_{3}(r,\rho,\lambda(t+w))\right)\\
&-\frac{2}{r} \int_{0}^{6r} \int_{0}^{w} \frac{\rho d\rho}{w} \lambda'''(t+w) \partial_{t}(F_{3}(r,\rho,\lambda(t+w)))\\
&-\frac{1}{r} \int_{0}^{6r} dw \int_{0}^{w} \frac{\rho d\rho}{w} \lambda''(t+w) \partial_{t}^{2} (F_{3}(r,\rho,\lambda(t+w)))\end{split}\end{equation}
Note that, with the replacement of $\lambda''''$ with $\lambda'''$, the first line of \eqref{e5tt1} was already estimated in the course of obtaining the final estimate on $\lambda'''$. The same is true for the second line of \eqref{e5tt1}, except with the replacement of $\lambda'''$ with $\lambda''$. On the other hand, for the third line of \eqref{e5tt1}, we use our previous estimate of $\partial_{t}^{2}(F_{3}(r,\rho,\lambda(t+w)))$ to get 
\begin{equation} |E_{5,t,t,1}(t,r)| \leq \frac{C r}{t^{4} \log^{b+1}(t)} + C r \sup_{x \geq t}\left(|e''''(x)|\right)\end{equation}
Next, we have
\begin{equation}\label{e5tt2} \begin{split} E_{5,t,t,2}(t,r) &= \frac{-1}{r} \int_{6r}^{\infty} dw \int_{0}^{w} \frac{\rho d\rho}{w} \lambda''''(t+w) \left(\frac{-1-\rho^{2}+r^{2}}{\sqrt{(1+\rho^{2}-r^{2})^{2}+4r^{2}}}+F_{3}(r,\rho,\lambda(t+w))\right)\\
&-\frac{2}{r} \int_{6r}^{\infty}dw \int_{0}^{w} \frac{\rho d\rho}{w} \lambda'''(t+w) \partial_{t}(F_{3}(r,\rho,\lambda(t+w))) \\
&-\frac{1}{r} \int_{6r}^{\infty} dw \int_{0}^{w} \frac{\rho d\rho}{w} \lambda''(t+w) \partial_{t}^{2}(F_{3}(r,\rho,\lambda(t+w)))\\
&+\frac{1}{r} \int_{6r}^{\infty} dw \lambda''''(t+w) r^{2} w \left(\frac{1}{1+w^{2}}-\frac{1}{\lambda(t+w)^{2-2\alpha}+w^{2}}\right)\\
&+\frac{2}{r} \int_{6r}^{\infty} dw \frac{\lambda'''(t+w) r^{2} w}{(\lambda(t+w)^{2-2\alpha}+w^{2})^{2}} (2-2\alpha) \lambda(t+w)^{1-2\alpha}\lambda'(t+w)\\
&+\frac{1}{r} \int_{6r}^{\infty} dw \frac{\lambda''(t+w) r^{2} w (2-2\alpha) \left((1-2\alpha) \lambda(t+w)^{-2\alpha}(\lambda'(t+w))^{2} + \lambda(t+w)^{1-2\alpha} \lambda''(t+w)\right)}{(\lambda(t+w)^{2-2\alpha}+w^{2})^{2}}\\
&-\frac{2}{r} \int_{6r}^{\infty} dw \frac{\lambda''(t+w) r^{2} w \left((2-2\alpha) \lambda(t+w)^{1-2\alpha} \lambda'(t+w)\right)^{2}}{(\lambda(t+w)^{2-2\alpha}+w^{2})^{3}}\end{split}\end{equation}
Only two lines in \eqref{e5tt2} can not be estimated simply by comparing to an analogous term arising in the $\partial_{t}E_{5}$ estimates. They are estimated as follows: 
\begin{equation} \begin{split} &|\frac{-1}{r} \int_{6r}^{\infty} dw \int_{0}^{w} \frac{\rho d\rho}{w} \lambda''(t+w) \partial_{t}^{2} (F_{3}(r,\rho,\lambda(t+w)))| \\
&\leq \frac{C}{r} \int_{6r}^{\infty} dw \int_{0}^{w} \frac{\rho d\rho}{w(t+w)^{4} \log^{2b+2}(t+w)} \left(\frac{C r^{2} \lambda(t+w)^{2\alpha -3}}{(1+\lambda(t+w)^{4\alpha -4}(\rho^{2}-r^{2})^{2}+2 \lambda(t+w)^{2\alpha -2}(\rho^{2}+r^{2}))}\right)\\
&\leq \frac{C}{r} \int_{0}^{\lambda(t)^{1-\alpha}} \frac{dw}{w t^{4} \log^{2b+2}(t)} \int_{0}^{w} r^{2} \lambda(t+w)^{2\alpha -3} \rho d\rho\\
&+\frac{C}{r} \int_{\lambda(t)^{1-\alpha}}^{\infty} \frac{dw}{w} \int_{0}^{w} \frac{\rho d\rho}{(t+w)^{4} \log^{2b+2}(t+w)} \frac{r^{2} \lambda(t+w)^{2\alpha -3}}{(1+\lambda(t+w)^{4\alpha -4} (\rho^{2}-r^{2})^{2} + 2 \lambda(t+w)^{2\alpha -2} (\rho^{2}+r^{2}))}\\
&\leq \frac{C r}{t^{4} \log^{b+1}(t)}\end{split}\end{equation}
\begin{equation}\begin{split} &|\frac{-1}{r} \int_{6r}^{\infty} \frac{dw \lambda''(t+w) r^{2} w (2-2\alpha) \left((1-2\alpha)\lambda(t+w)^{-2\alpha}(\lambda'(t+w))^{2} + \lambda(t+w)^{1-2\alpha} \lambda''(t+w)\right)}{(\lambda(t+w)^{2-2\alpha}+w^{2})^{2}}|\\
&\leq \frac{C r}{t^{4} \log^{b+2}(t)}\end{split}\end{equation}
where we use the identical procedure used to estimate an analogous term arising while obtaining the final estimate on $\lambda'''$. Then, we get
\begin{equation} |E_{5,t,t,2}(t,r)| \leq \frac{C r}{t^{4} \log^{b+1}(t)} + C r \sup_{x \geq t} |e''''(x)|\end{equation}
Next, we have
\begin{equation} \label{e5tt3} \begin{split} E_{5,t,t,3}(t,r)&= -r \int_{6r}^{\infty} dw \lambda''''(t+w) w \left(\frac{1}{\lambda(t)^{2-2\alpha}+w^{2}}-\frac{1}{\lambda(t+w)^{2-2\alpha}+w^{2})}\right)\\
&-2 r \int_{6r}^{\infty} dw \lambda'''(t+w) w \left(\frac{-(2-2\alpha) \lambda(t)^{1-2\alpha} \lambda'(t)}{(\lambda(t)^{2-2\alpha} + w^{2})^{2}} + \frac{(2-2\alpha) \lambda(t+w)^{1-2\alpha} \lambda'(t+w)}{(\lambda(t+w)^{2-2\alpha} + w^{2})^{2}} \right)\\
&-r \int_{6r}^{\infty} dw \lambda''(t+w) w \partial_{t} \left(\frac{-(2-2\alpha) \lambda(t)^{1-2\alpha} \lambda'(t)}{(\lambda(t)^{2-2\alpha}+w^{2})^{2}} + \frac{(2-2\alpha) \lambda(t+w)^{1-2\alpha} \lambda'(t+w)}{(\lambda(t+w)^{2-2\alpha} + w^{2})^{2}}\right)\end{split}\end{equation}
The first and second lines of \eqref{e5tt3} can be estimated based on estimates of analogous terms arising in the estimation of $\partial_{t}E_{5}$ done while obtaining the final estimate on $\lambda'''$. On the other hand, by appropriately using 
$$|f(t+w) - f(t)| \leq \sup_{\sigma \in [0,1]}|f'(t+\sigma w)| \cdot w$$
we estimate the third line on the right-hand side of \eqref{e5tt3}, and in total, we get
\begin{equation} |E_{5,t,t,3}(t,r)| \leq \frac{C r}{t^{4} \log^{b+2}(t)} + \frac{C r}{t \log^{3b+1-3b\alpha}(t)} \sup_{x \geq t} \left(\frac{|e''''(x)|}{\lambda(x)^{2-2\alpha}}\right)\end{equation}
Finally, we have
\begin{equation}\label{e5tt4}\begin{split} E_{5,t,t,4}(t,r) &= \frac{-1}{r} \int_{0}^{\infty} dw \int_{0}^{w} \rho d\rho \left(\frac{1}{\sqrt{w^{2}-\rho^{2}}}-\frac{1}{w}\right) \lambda''''(t+w) \left(\frac{-1-\rho^{2}+r^{2}}{\sqrt{(1+\rho^{2}-r^{2})^{2}+4r^{2}}}+F_{3}(r,\rho,\lambda(t+w))\right)\\
&-\frac{2}{r} \int_{0}^{\infty} dw \int_{0}^{w} \rho d\rho \left(\frac{1}{\sqrt{w^{2}-\rho^{2}}}-\frac{1}{w}\right) \lambda'''(t+w) \partial_{t}\left(F_{3}(r,\rho,\lambda(t+w))\right)\\
&-\frac{1}{r} \int_{0}^{\infty} dw \int_{0}^{w} \rho d\rho \left(\frac{1}{\sqrt{w^{2}-\rho^{2}}}-\frac{1}{w}\right) \lambda''(t+w) \partial_{t}^{2} \left(F_{3}(r,\rho,\lambda(t+w))\right)\end{split}\end{equation}
The only term in \eqref{e5tt4} which can not be estimated simply by reading off estimates of analogous terms arising in $\partial_{t}E_{5}$ estimates from before is the one involving $\partial_{t}^{2} \left(F_{3}(r,\rho,\lambda(t+w))\right)$. For this term, we have
\begin{equation}\begin{split} &|-\frac{1}{r} \int_{0}^{\infty} dw \int_{0}^{w} \rho d\rho \left(\frac{1}{\sqrt{w^{2}-\rho^{2}}}-\frac{1}{w}\right) \lambda''(t+w) \partial_{t}^{2} \left(F_{3}(r,\rho,\lambda(t+w))\right)|\\
&\leq C r \int_{0}^{\infty} \rho d\rho \int_{\rho}^{\infty} dw \left(\frac{1}{\sqrt{w^{2}-\rho^{2}}}-\frac{1}{w}\right) \frac{\log^{3b-2\alpha b}(t)}{t^{4} \log^{2b+2}(t) (1+\lambda(t)^{4\alpha -4}(\rho^{2}-r^{2})^{2} + 2 \lambda(t)^{2\alpha -2}(\rho^{2}+r^{2}))}\\
&\leq \frac{C r}{t^{4} \log^{b+2}(t)}\end{split}\end{equation}
Combining these, we get
\begin{equation} |\partial_{t}^{2} E_{5}(t,r)| \leq \frac{C r}{t^{4} \log^{b+1}(t)} + \frac{C r}{t \log^{(3-2\alpha)b}(t)} \sup_{x \geq t}\left(\frac{|e''''(x)| x}{\lambda(x)^{3-2\alpha}}\right)\end{equation}
Next, we need to record new estimates on $\partial_{t}^{2}v_{4}$. First, we note that, by the same procedure used to obtain estimates on $\partial_{t}v_{1}$, we have
\begin{equation}\label{dttv1finalsmallrsupest} |\partial_{t}^{2}v_{1}(t,r)| \leq C r \left(\frac{\log(t)+\log(2+r^{2})}{t}\right) \sup_{x \geq t}\left(|\lambda''''(x)| x\right)\end{equation} 
Note that this time, the preliminary estimate on $\lambda''''$ is indeed strong enough to justify the steps leading up to a $\partial_{t}^{2}v_{1}$ analog of \eqref{v1largerest}, but this will be unnecessary for our purposes. We then use \eqref{dttv3finalsupest}, and the same estimates for all other $\partial_{t}^{j}v_{k}$ used to obtain the preliminary estimate on $\partial_{t}^{2}v_{4,c}$, and get
\begin{equation}\label{dttv4cfinalinsidecone}\begin{split} |\partial_{t}^{2}v_{4,c}(t,r)| &\leq \frac{C \mathbbm{1}_{\geq 1}(\frac{2r}{\log^{N}(t)})}{r^{4} \log^{2b}(t)} \begin{cases} \frac{r(\log(t)+\log(2+r^{2}))}{t} \left(\frac{1}{t^{3} \log^{b+1}(t)} + \sup_{x \geq t} \left(\frac{|e''''(x)| x}{\lambda(x)^{2-2\alpha}}\right)\lambda(t)^{2-2\alpha}\right), \quad r \leq \frac{t}{2}\\
\frac{r(\log(t)+\log(2+r^{2}))}{t} \left(\frac{1}{t^{3} \log^{b+1}(t)} + \sup_{x \geq t} \left(\frac{|e''''(x)| x}{\lambda(x)^{2-2\alpha}}\right)\lambda(t)^{2-2\alpha}\right)\\
+\frac{\log(r)}{|t-r|^{3}}+\frac{\log(r)}{t \log(t) (t-r)^{2}}+\frac{\log(r)}{t^{2} |t-r| \log(t)}, \quad t>r>\frac{t}{2}\end{cases}\end{split}\end{equation}
which leads, via the same procedure used to obtain the preliminary estimate on $\partial_{t}^{2} v_{4}$, to
\begin{equation} |\partial_{t}^{2} v_{4}(t,r)| \leq \frac{C}{t^{4} \log^{3b+N-2}(t)} + \frac{C \lambda(t)^{2-2\alpha}}{t \log^{2b-3+N}(t)} \sup_{x \geq t} \left(\frac{|e''''(x)| x}{\lambda(x)^{2-2\alpha}}\right), \quad r \leq \frac{t}{2}\end{equation}
As usual, for the following estimate, we modify \eqref{dttv4cfinalinsidecone} by using $\frac{C}{r^{9/2} \log^{2b}(t)}$ in the region $t-t^{1/6} \leq r \leq t+t^{1/6}$. (Note that the preliminary estimate on $\lambda''''$ implies the following).
$$\sup_{x \geq t}\left(\frac{|e''''(x)| x}{\lambda(x)^{2-2\alpha}}\right) \lambda(t)^{2-2\alpha} \frac{1}{r^{4} \log^{2b-1}(t)} \leq \frac{C}{r^{9/2} \log^{2b}(t)}, \quad t-t^{1/6} \leq r \leq t+t^{1/6}$$ 

This gives
\begin{equation} ||\partial_{t}^{2} v_{4,c}||_{L^{2}(r dr)} \leq \frac{C}{t^{47/12} \log^{2b-1}(t)} + \frac{C \lambda(t)^{2-2\alpha}}{\log^{2N+2b-1}(t) t} \sup_{x \geq t} \left(\frac{|e''''(x)| x}{\lambda(x)^{2-2\alpha}}\right)\end{equation}

In total, this gives
\begin{equation} |\partial_{t}^{2}v_{4}(t,r)| \leq \begin{cases} \frac{C}{t^{4} \log^{3b+N-2}(t)} + \frac{C \lambda(t)^{2-2\alpha}}{t \log^{2b-3+N}(t)} \sup_{x \geq t} \left(\frac{|e''''(x)| x}{\lambda(x)^{2-2\alpha}}\right), \quad r \leq \frac{t}{2}\\
\frac{C}{t^{35/12} \log^{2b-1}(t)} + \frac{C \lambda(t)^{2-2\alpha}}{\log^{2N+2b-2}(t)} \sup_{x \geq t}\left(\frac{|e''''(x)| x}{\lambda(x)^{2-2\alpha}}\right), \quad r \geq \frac{t}{2}\end{cases}\end{equation}

Using  the same estimates that were used for $\partial_{t}^{j}v_{5}$ in the course of proving the preliminary estimate on $\lambda''''$, we get

\begin{equation} |\partial_{t}^{2}\left(-\lambda(t) \langle \left(\frac{\cos(2Q_{\frac{1}{\lambda(t)}}(r))-1}{r^{2}}\right)E_{5}\vert_{r=R\lambda(t)},\phi_{0}\rangle\right)| \leq \frac{C}{t^{4} \log^{b+1}(t)} + \frac{C \sup_{x \geq t} \left(\frac{|e''''(x)| x}{\lambda(x)^{3-2\alpha}}\right)}{t \log^{(3-2\alpha)b}(t)} \end{equation}

\begin{equation} |\partial_{t}^{2}\left(-\lambda(t) \langle \left(\frac{\cos(2Q_{\frac{1}{\lambda(t)}}(r))-1}{r^{2}}\right)v_{4}\left(1-\chi_{\geq 1}(\frac{4r}{t})\right)\vert_{r=R\lambda(t)},\phi_{0}\rangle\right)| \leq \frac{C}{t^{4} \log^{2b+N-2}(t)} + \frac{C \lambda(t)^{2-2\alpha} \sup_{x \geq t} \left(\frac{|e''''(x)| x}{\lambda(x)^{2-2\alpha}}\right)}{t \log^{b-3+N}(t)} \end{equation}

\begin{equation} |\partial_{t}^{2}\left(-\lambda(t) \langle \left(\frac{\cos(2Q_{\frac{1}{\lambda(t)}}(r))-1}{r^{2}}\right)v_{5}\left(1-\chi_{\geq 1}(\frac{4r}{t})\right)\vert_{r=R\lambda(t)},\phi_{0}\rangle\right)| \leq \frac{C}{t^{4}\log^{3N-2}(t)} \end{equation}

\begin{equation}\begin{split} &|\partial_{t}^{2}\left(-\lambda(t) \langle \left(\frac{\cos(2Q_{\frac{1}{\lambda(t)}}(r))-1}{r^{2}}\right)\chi_{\geq 1}(\frac{2r}{\log^{N}(t)})\left(v_{1}+v_{2}+v_{3}\right)\vert_{r=R\lambda(t)},\phi_{0}\rangle\right)| \\
&\leq \frac{C}{t^{4} \log^{3b+2N}(t)} + \frac{C \lambda(t)^{2-2\alpha} \sup_{x \geq t} \left(\frac{|e''''(x)| x}{\lambda(x)^{2-2\alpha}}\right)}{t \log^{2N+2b-1}(t)} \end{split}\end{equation}

\begin{equation}\begin{split} &|\partial_{t}^{2}\left(\lambda(t) \langle \chi_{\geq 1}(\frac{2r}{\log^{N}(t)})F_{0,2}\vert_{r=R\lambda(t)},\phi_{0}\rangle\right)| \leq \frac{C}{t^{4} \log^{3b+1+2N-2b\alpha}(t)} + \frac{C |e''''(t)|}{\log^{2b+2N-2b\alpha}(t)}\end{split}\end{equation}

Combining these, we get
\begin{equation} |\partial_{t}^{2} G(t,\lambda_{0,0}(t)+e(t))| \leq \frac{C}{t^{4} \log^{b+1}(t)} + C \sup_{x \geq t} \left(\frac{|e''''(x)| x}{\lambda(x)^{3-2\alpha}}\right) \frac{1}{t \log^{(3-2\alpha)b}(t)}\end{equation}

and
\begin{equation} |RHS_{4}(t)| \leq \frac{C}{t^{4} \log^{b+1}(t)} + \frac{C \sup_{x \geq t} \left(\frac{x^{2}|e''''(x)|}{\lambda(x)^{3-2\alpha}}\right)}{\sqrt{\log(\log(t))} t^{2} \log^{(3-2\alpha)b}(t)}\end{equation}

We now return to \eqref{eppppforfinalest}, and note that it is of the form
\begin{equation} -4 \int_{t}^{\infty} \frac{e''''(s) ds}{\log(\lambda_{0,0}(s))(1+s-t)} + 4\alpha e''''(t) - 4 \int_{t}^{\infty} \frac{e''''(s) ds}{\log(\lambda_{0,0}(s))(\lambda_{0,0}(t)^{1-\alpha}+s-t)(1+s-t)^{3}}=RHS_{4}(t)\end{equation}
with
\begin{equation} |RHS_{4}(t)| \leq \frac{C}{t^{4} \log^{b+1}(t)} + C \frac{\sup_{x \geq t} \left(\frac{x^{2} |e''''(x)|}{\lambda(x)^{3-2\alpha}}\right)}{\sqrt{\log(\log(t))}t^{2} \log^{(3-2\alpha)b}(t)}\end{equation}
We are now in the same situation as for $e'''$, and repeating the procedure used there, we get
\begin{equation} |e''''(t)| \leq \frac{C}{t^{4}\log^{b+1}(t)}\end{equation}
Using the explicit formula for $\lambda_{0,0}$, we finally conclude
\begin{equation}|\lambda''''(t)| \leq \frac{C}{t^{4}\log^{b+1}(t)}, \quad t \geq T_{0}\end{equation}
To finish the proof of the proposition, we recall \eqref{v1largerest}, and the fact that $\partial_{t}^{2}v_{1}$ has the same formula as $v_{1}$, except for $\lambda''''$ replacing $\lambda''$. Using our estimate on $\lambda''''$, we get
\begin{equation} |\partial_{t}^{2}v_{1}(t,r)| \leq \frac{C}{r t^{2} \log^{b+1}(t)}\end{equation}
Combining this with \eqref{dttv1finalsmallrsupest}, we get \eqref{dttv1finalest}.
\end{proof}
\subsection{Estimates on $\partial_{r}^{k}\partial_{t}^{j}F_{4}$}
Later on, when we start to construct the exact solution to \eqref{wavemaps}, we will utilize the orthogonality 
$$\langle F_{4},\phi_{0}(\frac{\cdot}{\lambda(t)})\rangle =0, \quad t \geq T_{0}$$ 
by integrating by parts in various oscillatory integrals involving $F_{4}$. Therefore, we will need estimates on certain derivatives of $F_{4}$ in order to control the integrands which will arise in this process:
\begin{proposition} For $0 \leq k \leq 2, \quad 0 \leq j \leq 1, \quad j+k \leq 2$, we have
\begin{equation} \label{f4estimates} t^{j} r^{k} |\partial_{r}^{k}\partial_{t}^{j} F_{4}(t,r)| \leq \frac{C \mathbbm{1}_{\{r \leq \log^{N}(t)\}} r}{t^{2} \log^{3b+1-2\alpha b}(t) (r^{2}+\lambda(t)^{2})^{2}}+\frac{C \mathbbm{1}_{\{r \leq \frac{t}{2}\}} r}{t^{2} \log^{5b+2N-2}(t) (r^{2}+\lambda(t)^{2})^{2}}\end{equation}

In addition, we have
\begin{equation} \label{dttf4estimate} |\partial_{t}^{2} F_{4}(t,r)| \leq \frac{C \mathbbm{1}_{\{r \leq \log^{N}(t)\}} r}{t^{4} \log^{3b+1-2\alpha b}(t) (r^{2}+\lambda(t)^{2})^{2}}+\frac{C \mathbbm{1}_{\{r \leq \frac{t}{2}\}} r}{t^{4} \log^{5b+2N-2}(t) (r^{2}+\lambda(t)^{2})^{2}} + \frac{C \mathbbm{1}_{\{r \leq \frac{t}{2}\}}}{t^{4} \log^{5b+N-2}(t) (r^{2}+\lambda(t)^{2})^{2}}\end{equation}
\end{proposition}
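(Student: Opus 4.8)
The plan is to prove the estimates \eqref{f4estimates} and \eqref{dttf4estimate} by directly analyzing the definition \eqref{f4def} of $F_{4}$ term by term, differentiating in $r$ and $t$ at most twice, and inserting the pointwise estimates on $v_{1},\dots,v_{5}$, $E_{5}$, $F_{0,2}$, and their $r$- and $t$-derivatives that were established in the preceding subsections. Recall
$$F_{4}(t,r) = F_{0,2}(t,r)+\left(\tfrac{\cos(2Q_{1/\lambda(t)})-1}{r^{2}}\right)\left(v_{1}+v_{2}+v_{3}+(1-\chi_{\geq 1}(\tfrac{4r}{t}))(v_{4}+v_{5})\right) - \chi_{\geq 1}(\tfrac{2r}{\log^{N}(t)})\left(\tfrac{\cos(2Q_{1/\lambda(t)})-1}{r^{2}}\right)(v_{1}+v_{2}+v_{3}) - \chi_{\geq 1}(\tfrac{2r}{\log^{N}(t)})F_{0,2}.$$
First I would record the elementary facts $\big|\partial_{r}^{k}\partial_{t}^{j}\big(\tfrac{\cos(2Q_{1}(r/\lambda(t)))-1}{r^{2}}\cdot\tfrac{1}{\lambda(t)^{2}}\big)\big| \leq \tfrac{C}{(r^{2}+\lambda(t)^{2})^{2}}\cdot\big(\tfrac{1}{t^{j}}\big)\cdot\big(\tfrac{1}{r^{k}}\big)$-type bounds (using $|\lambda'(t)|\leq C t^{-1}\log^{-b-1}(t)$, $|\lambda''|,|\lambda'''|,|\lambda''''|$ estimates), together with the explicit formula $F_{0,2} = -F_{0,1}+\tfrac{2\lambda''(t)r}{1+r^{2}}+\partial_{t}^{2}Q_{1/\lambda(t)}$, from which one reads off $|\partial_{r}^{k}\partial_{t}^{j}F_{0,2}(t,r)| \leq \tfrac{C r}{t^{2}\log^{b+1}(t)(r^{2}+\lambda(t)^{2})^{2}}\cdot\lambda(t)^{2\cdots}$ with the appropriate time decay. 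These give the cleanest source of the $\log^{3b+1-2\alpha b}$ and $\log^{5b+2N-2}$ weights once combined with the coefficient decay; note $\lambda(t)^{2\alpha}\sim\log^{-2\alpha b}(t)$ is what produces the $-2\alpha b$ in the first exponent.

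Second, for each of $v_{1},v_{3}$ I would use \eqref{v1smallrest}, \eqref{v1largerest}, \eqref{drv1smallrest}, \eqref{largerdrv1}, \eqref{v3laterest}, \eqref{drv3est}, and their time-derivative analogues \eqref{dtv1finalest}, \eqref{dtrv1finalest}, \eqref{dttv1finalest}, \eqref{dtv3finalest}, \eqref{dtrv3finalest}, \eqref{dttv3finalest}; for $v_{2}$ the estimates \eqref{v2precisenearorigin}, \eqref{v2sqrtrest}, \eqref{v2singularconeest}, \eqref{drrv2mainest}; and for $v_{4},v_{5}$ the estimates \eqref{v4finalest}--\eqref{v4energyest}, \eqref{v5finalest}--\eqref{v5energyest} together with \eqref{dtv4finalest}, \eqref{dttv4finalest}, \eqref{dttv5finalest}. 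On the support of $1-\chi_{\geq 1}(\tfrac{4r}{t})$ one has $r\leq t/2$, so the strong interior estimates for $v_{4},v_{5}$ (gaining many powers of $\log$) apply and these terms land safely inside the stated bound. On the support of $\chi_{\geq 1}(\tfrac{2r}{\log^{N}(t)})$ one has $r\geq \tfrac12\log^{N}(t)$, which supplies the extra powers of $\log^{N}$ needed for the second term of \eqref{f4estimates}; here one also has to differentiate the cutoffs, and $|\partial_{r}\chi_{\geq1}(\tfrac{2r}{\log^{N}(t)})|\leq C\log^{-N}(t)\mathbbm{1}_{\{r\asymp\log^{N}(t)\}}$, $|\partial_{t}\chi_{\geq1}(\tfrac{2r}{\log^{N}(t)})|\leq \tfrac{C}{t\log(t)}\mathbbm{1}_{\{r\asymp\log^{N}(t)\}}$, which is harmless since on that annulus $r$ is comparable to $\log^{N}(t)$. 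The key structural point, which I would state once and reuse, is that for $r\leq \log^{N}(t)$ the factor $\tfrac{\cos(2Q_{1/\lambda(t)})-1}{r^{2}}$ behaves like $\tfrac{\lambda(t)^{2}}{(r^{2}+\lambda(t)^{2})^{2}}$, and multiplying the $O(r)$-type near-origin behavior of $v_{1}+v_{2}+v_{3}$ (size $\tfrac{r}{t^{2}\log^{b}(t)}$, modulo the leading linear-in-$r$ piece and the $\lambda''$-kernel pieces which are already bounded by this) by $\tfrac{\lambda(t)^{2}}{(r^{2}+\lambda(t)^{2})^{2}}$ and extracting one more power of $\tfrac{1}{\log(t)}$ from $\lambda(t)^{2}$-type gains plus the precise symbol structure gives exactly the claimed $\tfrac{r}{t^{2}\log^{3b+1-2\alpha b}(t)(r^{2}+\lambda(t)^{2})^{2}}$ after including the $\alpha$-loss coming from $F_{0,1}$ and $v_{3}$.

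Third, for \eqref{dttf4estimate} the only genuinely new feature is the last term $\tfrac{C\mathbbm{1}_{\{r\leq t/2\}}}{t^{4}\log^{5b+N-2}(t)(r^{2}+\lambda(t)^{2})^{2}}$, which lacks the overall factor of $r$. This extra term arises from the second $t$-derivative hitting the cutoff $1-\chi_{\geq 1}(\tfrac{4r}{t})$ (or $\chi_{\geq 1}(\tfrac{2r}{\log^{N}(t)})$) in combination with the coefficient and a $v_{4}$ or $v_{5}$ term: each $t$-derivative of $\chi_{\geq1}(\tfrac{4r}{t})$ contributes $\tfrac{r}{t^{2}}\chi'$ supported near $r\asymp t$, but when localized by $\mathbbm{1}_{\{r\leq t/2\}}$ one instead picks up the $\chi_{\geq 1}(\tfrac{2r}{\log^{N}(t)})$ edge at $r\asymp\log^{N}(t)$, where the $\tfrac{1}{r}$ improvement fails and one only gets a bound without the leading $r$, compensated by the $\log^{N}$ gain and the interior $v_{4}$ bound. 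I would handle this by isolating the terms where both $t$-derivatives fall on a cutoff (or one on a cutoff and one on $\lambda(t)$ inside the cutoff argument) and estimating them separately; all remaining terms, where at least one $t$-derivative falls on a $v_{k}$ or on $F_{0,2}$ or on the coefficient, are bounded by the first two terms of \eqref{dttf4estimate} via \eqref{dttv1finalest}, \eqref{dttv3finalest}, \eqref{dttv4finalest}, \eqref{dttv5finalest}, \eqref{v2precisenearorigin}, and the $t^{-4}$-decay of $\partial_{t}^{2}F_{0,2}$.

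The main obstacle will be bookkeeping: there are on the order of a dozen summands in $F_{4}$, each requiring up to two differentiations, and for each one must choose the right region decomposition (near origin $r\leq\lambda(t)$, intermediate $\lambda(t)\leq r\leq\log^{N}(t)$, exterior $\log^{N}(t)\leq r\leq t/2$, and the cutoff annuli) and the matching $v_{k}$ estimate; the subtle points are (i) verifying that the many powers of $\log$ gained from $v_{4},v_{5}$ and from the $\log^{N}$ cutoff always dominate the corresponding losses, so that the second term of \eqref{f4estimates} is genuinely weaker than the first only through the $N$-dependence and not through a sign error in exponents, and (ii) tracking the $\alpha$-loss: the exponent $3b+1-2\alpha b$ is sharp and comes specifically from the $\lambda(t)^{2\alpha}$ appearing in $F_{0,1}$ and from $v_{3}$'s dependence on $\lambda^{2\alpha-2}$, so one must be careful not to accidentally claim a stronger power. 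I do not anticipate any analytic difficulty beyond what is already contained in the cited lemmas; the proof is an organized assembly.
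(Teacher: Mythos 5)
There is a genuine gap at the heart of your plan: the term-by-term assembly using the \emph{separate} estimates for $v_{1}$, $v_{2}$, $v_{3}$ cannot produce the exponent $3b+1-2\alpha b$. Near the origin each of $v_{1}$ and $v_{2}$ is individually of size $\frac{r}{t^{2}\log^{b}(t)}$ (the leading pieces $r\int_{t}^{\infty}\frac{\lambda''(s)}{1+s-t}ds$ and $-\frac{br}{t^{2}\log^{b}(t)}$ respectively), and the coefficient $\frac{\cos(2Q_{1/\lambda(t)})-1}{r^{2}}\sim\frac{\lambda(t)^{2}}{(r^{2}+\lambda(t)^{2})^{2}}$ only supplies $\log^{-2b}(t)$; multiplying gives $\frac{r}{t^{2}\log^{3b}(t)(r^{2}+\lambda(t)^{2})^{2}}$, which misses the claimed bound by the unbounded factor $\log^{1-2\alpha b}(t)$. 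There is no further power of $\log$ to "extract from $\lambda(t)^{2}$-type gains" as you suggest — that factor has already been spent. The paper's proof hinges on the fact that the \emph{sum} $v_{1}+v_{2}$ enjoys a delicate cancellation of these leading linear-in-$r$ pieces, which is exactly the content of the modulation equation $\langle F_{4},\phi_{0}(\cdot/\lambda(t))\rangle=0$: using that equation (and its first and second $t$-derivatives) one gets $|v_{1}+v_{2}|\leq\frac{Cr\log\log t}{t^{2}\log^{b+1}(t)}$ for $r\leq\log^{N}(t)$, together with the analogous bounds for $\partial_{r}$, $\partial_{t}$, $\partial_{t}^{2}$, $\partial_{tr}$, $r^{2}\partial_{rr}$ of $v_{1}+v_{2}$. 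Without invoking this cancellation your argument proves a strictly weaker estimate, which would not suffice downstream (the choice $\epsilon=2b+\tfrac12(1-2\alpha b)$ in the final iteration relies on the $+1-2\alpha b$ gain). The same issue recurs for derivatives: the cited bound \eqref{drv3est} for $\partial_{r}v_{3}$ is too weak near the origin and must be replaced by the refinement $|\partial_{r}v_{3}|\leq\frac{C(\log\log t+\log(6+6r))}{t^{2}\log^{b+1}(t)}$, and one also needs new near-origin estimates with an explicit factor of $r$ for $\partial_{t}v_{5}$ and $\partial_{tr}v_{4}$ (the earlier $\partial_{t}v_{5}$ bound lacks the $r$ and would destroy the overall $r$-prefactor in the $t^{j}r^{k}\partial_{r}^{k}\partial_{t}^{j}F_{4}$ estimates). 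None of these refinements appear in your outline.

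A smaller but telling point: your explanation of the third term in \eqref{dttf4estimate} is off. It does not come from both $t$-derivatives landing on cutoffs; derivatives of $\chi_{\geq1}(4r/t)$ live on $r\asymp t$ and those of $\chi_{\geq1}(2r/\log^{N}(t))$ on $r\asymp\log^{N}(t)$, and both are harmless. The $r$-less term arises because the interior bounds for $\partial_{t}^{2}v_{4}$ and $\partial_{t}^{2}v_{5}$ (e.g. $|\partial_{t}^{2}v_{4}|\leq\frac{C}{t^{4}\log^{3b+N-2}(t)}$ for $r\leq t/2$) carry no factor of $r$, so the product with the coefficient produces exactly $\frac{C}{t^{4}\log^{5b+N-2}(t)(r^{2}+\lambda(t)^{2})^{2}}$; in particular your claim that all terms where a derivative falls on a $v_{k}$ are controlled by the first two terms of \eqref{dttf4estimate} is false for these contributions.
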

\begin{proof} We start with a lemma which captures a delicate leading order cancellation near the origin between $v_{1}$ and $v_{2}$:
\begin{lemma}[Near origin cancellation between $v_{1}$ and $v_{2}$]
For $0 \leq k,j \leq 2, \quad k+j \leq 2$, we have
\begin{equation} t^{j}r^{k} |\partial_{r}^{k}\partial_{t}^{j} (v_{1}+v_{2})| \leq \frac{C r \log(\log(t))}{t^{2} \log^{b+1}(t)}, \quad r \leq \log^{N}(t)\end{equation}
\end{lemma}
\begin{proof}

We use \eqref{v1smallrest} and \eqref{v2precisenearorigin}, to get
\begin{equation} v_{1}(t,r) + v_{2}(t,r) = r \left(\int_{t}^{\infty} \frac{\lambda''(x)}{1+x-t} dx -\frac{b}{t^{2}\log^{b}(t)}\right) +\text{Err}(t,r) + E_{v_{2}}(t,r), \quad 0 \leq r \leq \frac{t}{2}\end{equation}
Then, using the modulation equation, and the previous estimates on its terms, we get
\begin{equation} \label{v1plusv2delicate} |v_{1}(t,r) + v_{2}(t,r)| \leq \frac{C r \log(\log(t))}{t^{2} \log^{b+1}(t)} + \frac{C r \log(3+2r)}{t^{2} \log^{b+1}(t)} + \frac{Cr}{t^{2} \log^{b+1}(t)} + \frac{Cr^{2}}{t^{3} \log^{b}(t)}, \quad r \leq \frac{t}{2}\end{equation}
Note that, for (e.g) $r \leq \log^{N}(t)$, this estimate is slightly better than what would result from estimating $v_{1}$ and $v_{2}$ seperately.\\
\\
We use \eqref{drv1smallrest} and \eqref{v2precisenearorigin} to obtain the analogous estimate after taking an $r$ derivative:
\begin{equation} \partial_{r}v_{1}(t,r)+\partial_{r}v_{2}(t,r) = \int_{t}^{\infty} \frac{\lambda''(s) ds}{1+s-t} -\frac{b}{t^{2}\log^{b}(t)} + E_{\partial_{r}v_{1}}(t,r)+E_{\partial_{r} v_{2}}(t,r)\end{equation}
Again, the modulation equation, combined with the estimates for $E_{\partial_{r}v_{1}}$ and $E_{\partial_{r}v_{2}}$ from the previous subsections, give
\begin{equation}\label{drv1plusv2delicate} |\partial_{r}v_{1}(t,r) + \partial_{r}v_{2}(t,r)| \leq \frac{C \log(\log(t))}{t^{2} \log^{b+1}(t)} + \frac{C \log(3+2r)}{t^{2} \log^{b+1}(t)} + \frac{C}{t^{2} \log^{b+1}(t)} + \frac{Cr}{t^{3} \log^{b}(t)}, \quad r \leq \frac{t}{2}\end{equation}

We first note that, for $j=1,2$, $\partial_{t}^{j}v_{1}$ solves the same equation with $0$ Cauchy data at infinity as $v_{1}$ does, except with $\lambda^{(2+j)}(t)$ on the right-hand side. Then, we use the fact that $\lambda$ solves
\begin{equation} -4 \int_{t}^{\infty} \frac{\lambda''(x) dx}{1+x-t} + \frac{4b}{t^{2} \log^{b}(t)} + 4 \alpha \log(\lambda(t)) \lambda''(t) - 4 \int_{t}^{\infty} \frac{\lambda''(s) ds}{(\lambda(t)^{1-\alpha}+s-t)(1+s-t)^{3}} = G(t,\lambda(t))\end{equation}
and differentiate $j$ times for $j=1,2$, to get
\begin{equation} |-4\int_{t}^{\infty} \frac{\lambda'''(s)}{1+s-t} ds -\frac{8b}{t^{3} \log^{b}(t)}| \leq \frac{C \log(\log(t))}{t^{3} \log^{b+1}(t)}\end{equation}
and
\begin{equation} |\int_{t}^{\infty} \frac{\lambda''''(s) ds}{1+s-t} -\frac{6b}{t^{4} \log^{b}(t)}| \leq \frac{C \log(\log(t))}{t^{4} \log^{b+1}(t)}\end{equation}
Then, using \eqref{v1smallrest},\eqref{v2precisenearorigin}, we get
\begin{equation}\label{dtv1plusv2delicate} |\partial_{t}(v_{1}+v_{2})(t,r)| \leq C \frac{r \log(\log(t))}{t^{3} \log^{b+1}(t)}, \quad r \leq  \log^{N}(t)\end{equation}
and
\begin{equation}\label{dttv1plusv2delicate} |\partial_{t}^{2}(v_{1}+v_{2})(t,r)| \leq C \frac{r \log(\log(t))}{t^{4} \log^{b+1}(t)}, \quad r \leq  \log^{N}(t)\end{equation}
Next, we use the equations solved by $v_{1},v_{2}$, to get
\begin{equation} r^{2} \partial_{rr}(v_{1}+v_{2}) = r^{2} \left(\frac{-2 r \lambda''(t)}{1+r^{2}}\right) + r^{2} \partial_{tt}(v_{1}+v_{2})-r\partial_{r}(v_{1}+v_{2}) + (v_{1}+v_{2})\end{equation}
which gives
\begin{equation}\label{drrv1plusv2delicate} r^{2} |\partial_{rr}(v_{1}+v_{2})| \leq C \frac{r \log(\log(t))}{t^{2} \log^{b+1}(t)},\quad r \leq \log^{N}(t)\end{equation}
Finally, we study $\partial_{tr}(v_{1}+v_{2})$. Because $\partial_{t}v_{1}$ has the same representation formula as $v_{1}$, except with $\lambda''$ replaced by $\lambda'''$, we can use the same procedure used for $\partial_{r}v_{1}$, to get
\begin{equation} \partial_{tr}v_{1}(t,r) = \int_{t}^{\infty} \frac{\lambda'''(s) ds}{1+s-t} + E_{\partial_{tr}v_{1}}(t,r)\end{equation} 
with
\begin{equation} |E_{\partial_{tr}v_{1}}(t,r)| \leq \frac{C \log(3+2r)}{t^{3} \log^{b+1}(t)}\end{equation}

Then, we use \eqref{v2precisenearorigin} to conclude
\begin{equation}\label{dtrv1plusv2delicate} |\partial_{tr}(v_{1}+v_{2})(t,r)| \leq \frac{C \log(\log(t))}{t^{3} \log^{b+1}(t)}, \quad r \leq \log^{N}(t)\end{equation}
\end{proof}
Next, we note that our previous estimate for $\partial_{r}v_{3}$, \eqref{drv3est}, sufficed for all of our purposes up to now. For estimating derivatives of $F_{4}$, however, we will require a different estimate which will turn out to be better in the region $r \leq  \log^{N}(t)$, and which will require a more complicated argument than before. This refinement will also lead to slightly better estimates on $\partial_{tr}v_{3}$ and $r^{2} \partial_{rr}v_{3}$ near the origin.
\begin{lemma}[Near origin refinements of various $v_{3}$ related estimates]
\begin{equation}\label{drv3nearoriginrefinement} |\partial_{r}v_{3}(t,r)| \leq \frac{C}{t^{2} \log^{b+1}(t)} \left(\log(\log(t))+\log(6+6r)\right)\end{equation}

\begin{equation} |\partial_{tr}v_{3}(t,r)| \leq \frac{C \log(\log(t))}{t^{3} \log^{b+1}(t)}, \quad r \leq \log^{N}(t)\end{equation}

\begin{equation}\label{rsquareddrrv3} r^{2} |\partial_{rr}v_{3}| \leq \frac{C r \log(\log(t))}{t^{2} \log^{b+1}(t)}, \quad r \leq \log^{N}(t)\end{equation}

\end{lemma}
\begin{proof}
 We first use the same decomposition on $\partial_{r}v_{3}$ used before, and only need to use a different argument for one term, namely:
$$ -\int_{t}^{\infty} \frac{ds}{(s-t)} \int_{0}^{s-t} \rho d\rho \frac{\lambda''(s)}{r} \partial_{r}\left(\frac{-1-\rho^{2}+r^{2}}{\sqrt{(-1-\rho^{2}+r^{2})^{2}+4r^{2}}}+F_{3}(r,\rho,\lambda(s))\right)$$
We treat different regions of the variable $s-t$. First, from our previous $\partial_{r}v_{3}$ estimate, we have
\begin{equation}\begin{split} &|\int_{0}^{s-t} \rho d\rho \frac{\lambda''(s)}{r} \partial_{r}\left(\frac{-1-\rho^{2}+r^{2}}{\sqrt{(-1-\rho^{2}+r^{2})^{2}+4r^{2}}}\right)| \leq C(s-t)^{2} |\lambda''(s)|, \quad s-t \leq \frac{1}{2} \end{split}\end{equation}
which gives
\begin{equation}\begin{split} &|-\int_{t}^{t+\frac{1}{2}} \frac{ds}{(s-t)} \int_{0}^{s-t} \rho d\rho \frac{\lambda''(s)}{r} \partial_{r}\left(\frac{-1-\rho^{2}+r^{2}}{\sqrt{(-1-\rho^{2}+r^{2})^{2}+4r^{2}}}\right)|\\
&\leq \frac{C}{t^{2} \log^{b+1}(t)}\end{split}\end{equation}
Next, we have
\begin{equation} \begin{split} &|-\int_{t+\frac{1}{2}}^{t+6} \frac{ds}{(s-t)} \int_{0}^{s-t} \rho d\rho \frac{\lambda''(s)}{r} \partial_{r}\left(\frac{-1-\rho^{2}+r^{2}}{\sqrt{(-1-\rho^{2}+r^{2})^{2}+4r^{2}}}\right)|\\
&\leq \int_{t+\frac{1}{2}}^{t+6} \frac{ds}{(s-t)} \int_{0}^{\infty} \rho d\rho \frac{|\lambda''(s)|}{r} |\partial_{r}\left(\frac{-1-\rho^{2}+r^{2}}{\sqrt{(-1-\rho^{2}+r^{2})^{2}+4r^{2}}}\right)|\\
&\leq C \int_{t+\frac{1}{2}}^{t+6} \frac{ds}{(s-t)} |\lambda''(s)|\\
&\leq C \frac{1}{t^{2}\log^{b+1}(t)} \end{split}\end{equation}
For convenience, we recall the definition of $F_{3}$:
$$F_{3}(r,\rho,\lambda(s))=\frac{1-(r^{2}-\rho^{2})\lambda(s)^{2\alpha -2}}{\sqrt{4r^{2}\lambda(s)^{2\alpha -2} + (1-(r^{2}-\rho^{2})\lambda(s)^{2\alpha -2})^{2}}}$$
Similarly, we have
\begin{equation} |\partial_{r}F_{3}(r,\rho,\lambda(s))| \leq \frac{C r \lambda(s)^{2+2\alpha}((\rho^{2}+r^{2})\lambda(s)^{2\alpha -2} +1)}{\lambda(s)^{4} (1+2(\rho^{2}+r^{2})\lambda(s)^{2\alpha -2}+(\rho^{2}-r^{2})^{2} \lambda(s)^{4\alpha -4})^{3/2}}\end{equation}
For the region $s-t \leq \lambda(t)^{1-\alpha}$, we continue this estimate, and get
\begin{equation} |\partial_{r}F_{3}(r,\rho,\lambda(s))| \leq C r \lambda(s)^{2\alpha -2}\end{equation}
So,
\begin{equation}\begin{split} &|\int_{t}^{t+\lambda(t)^{1-\alpha}} \frac{ds}{(s-t)} \int_{0}^{s-t} \frac{\rho d\rho \lambda''(s)}{r} \partial_{r}F_{3}(r,\rho,\lambda(s))|\leq C \int_{t}^{t+\lambda(t)^{1-\alpha}} \frac{ds}{(s-t)} \int_{0}^{s-t} \frac{\rho d\rho}{r} |\lambda''(s)| r \lambda(s)^{2\alpha -2}\\
&\leq \frac{C}{t^{2} \log^{b+1}(t)}\end{split}\end{equation}

\begin{equation}\begin{split} &|\int_{t+\lambda(t)^{1-\alpha}}^{t+6} \frac{ds}{(s-t)} \int_{0}^{s-t} \frac{\rho d\rho \lambda''(s)}{r} \partial_{r}F_{3}(r,\rho,\lambda(s))| \leq C \int_{t+\lambda(t)^{1-\alpha}}^{t+6} \frac{ds}{(s-t)} \int_{0}^{\infty} \frac{\rho d\rho |\lambda''(s)|}{r} |\partial_{r}F_{3}(r,\rho,\lambda(s))|\\
&\leq \frac{C}{r} \int_{t+\lambda(t)^{1-\alpha}}^{t+6} \frac{ds}{(s-t)} |\lambda''(s)| r \\
&\leq \frac{C \log(\log(t))}{t^{2} \log^{b+1}(t)}\end{split}\end{equation}

Then, we treat the region $ 6 \leq s-t \leq 6+6r$ in a similar fashion:
\begin{equation}\begin{split} &|-\int_{t+6}^{t+6+6r} \frac{ds}{(s-t)} \int_{0}^{s-t} \rho d\rho \frac{\lambda''(s)}{r} \partial_{r}\left(\frac{-1-\rho^{2}+r^{2}}{\sqrt{(-1-\rho^{2}+r^{2})^{2}+4r^{2}}}+F_{3}(r,\rho,\lambda(s))\right)|\\
&\leq \int_{t+6}^{t+6+6r} \frac{ds}{(s-t)} \int_{0}^{\infty} \rho d\rho \frac{|\lambda''(s)|}{r} \left(|\partial_{r}\frac{-1-\rho^{2}+r^{2}}{\sqrt{(-1-\rho^{2}+r^{2})^{2}+4r^{2}}}|+|\partial_{r}F_{3}(r,\rho,\lambda(s))|\right)\\
&\leq C \int_{t+6}^{t+6+6r} \frac{ds}{(s-t)} |\lambda''(s)|\\
&\leq \frac{C \log(6+6r)}{t^{2} \log^{b+1}(t)}\end{split}\end{equation}

Now, we study the region $s-t \geq 6+6r$:

 We first note that
\begin{equation} \int_{0}^{\infty} \partial_{r} \left(\frac{-1-\rho^{2}+r^{2}}{\sqrt{(1+\rho^{2}-r^{2})^{2}+4r^{2}}}+F_{3}(r,\rho,\lambda(s))\right) \rho d\rho =0\end{equation}
which follows from the fact that the integrand of the expression above is equal to
\begin{equation} \partial_{\rho}\left(r\left(\frac{-1+\rho^{2}-r^{2}}{\sqrt{\rho^{4}-2\rho^{2}(-1+r^{2})+(1+r^{2})^{2}}}+\frac{1+(r^{2}-\rho^{2})\lambda(s)^{2\alpha -2}}{\sqrt{1+2(\rho^{2}+r^{2})\lambda(s)^{2\alpha -2}+(\rho^{2}-r^{2})^{2} \lambda(s)^{4\alpha -4}}}\right)\right)\end{equation}

Then, we use
\begin{equation} r\left(\frac{-1+(s-t)^{2}-r^{2}}{\sqrt{(s-t)^{4}-2(s-t)^{2}(-1+r^{2})+(1+r^{2})^{2}}}\right) = r\left(1+E_{\partial_{r}v_{3},1}\right)\end{equation}
with
$$|E_{\partial_{r}v_{3},1}| \leq C \frac{(1+r)^{2}}{(s-t)^{2}}, \quad s-t \geq 6+6r$$

and
\begin{equation}r\left(\frac{1+(r^{2}-(s-t)^{2})\lambda(s)^{2\alpha -2}}{\sqrt{1+2((s-t)^{2}+r^{2})\lambda(s)^{2\alpha -2}+((s-t)^{2}-r^{2})^{2} \lambda(s)^{4\alpha -4}}}\right)=r\left(-1+E_{\partial_{r}v_{3},2}\right)\end{equation}
with
$$|E_{\partial_{r}v_{3},2}| \leq C \frac{(1+r)^{2}}{(s-t)^{2}}, \quad s-t \geq 6+6r$$

which gives 
\begin{equation}\begin{split} &|-\int_{t+6+6r}^{\infty} \frac{ds}{(s-t)} \int_{0}^{s-t} \rho d\rho \frac{\lambda''(s)}{r} \partial_{r}\left(\frac{-1-\rho^{2}+r^{2}}{\sqrt{(-1-\rho^{2}+r^{2})^{2}+4r^{2}}}+F_{3}(r,\rho,\lambda(s))\right)|\\
&=|\int_{t+6+6r}^{\infty} \frac{ds}{(s-t)} \int_{s-t}^{\infty} \rho d\rho \frac{\lambda''(s)}{r} \partial_{r}\left(\frac{-1-\rho^{2}+r^{2}}{\sqrt{(-1-\rho^{2}+r^{2})^{2}+4r^{2}}}+F_{3}(r,\rho,\lambda(s))\right)|\\
&=|\int_{t+6+6r}^{\infty} \frac{ds}{(s-t)} \frac{\lambda''(s)}{r} r\left(E_{\partial_{r}v_{3},1}+E_{\partial_{r}v_{3},2}\right)|\\
&\leq C \int_{t+6+6r}^{\infty} \frac{ds}{(s-t)} |\lambda''(s)| \frac{(1+r)^{2}}{(s-t)^{2}}\\
&\leq \frac{C}{t^{2} \log^{b+1}(t)}\end{split}\end{equation}

In total, we get
\begin{equation} |\partial_{r}v_{3}(t,r)| \leq \frac{C}{t^{2} \log^{b+1}(t)} \left(\log(\log(t))+\log(6+6r)\right)\end{equation}
Note that this estimate is better than \eqref{drv3est}, in the region $r \leq \log^{N}(t)$. 
For $\partial_{tr}v_{3}$, we recall that the partial $r$ derivative of the second line of \eqref{dtv3fordtrv3} was bounded above in absolute value by $\frac{C}{t^{3} \log^{b+1}(t)}$, and an estimate of the $r$ derivative of the first line of \eqref{dtv3fordtrv3} was 
inferred from an estimate on $\partial_{r}v_{3}$. Using the above near origin refinement of the $\partial_{r}v_{3}$ estimate, instead of the previous one, gives
\begin{equation} |\partial_{tr}v_{3}(t,r)| \leq \frac{C \log(\log(t))}{t^{3} \log^{b+1}(t)}, \quad r \leq \log^{N}(t)\end{equation}
Now, we can prove a different estimate for $ r^{2}\partial_{r}^{2} v_{3}$ than what follows from previous work. 
\begin{equation} r^{2} \partial_{rr}v_{3} = r^{2} F_{0,1}(t,r) + r^{2} \partial_{tt}v_{3}-r \partial_{r}v_{3}+v_{3}\end{equation}
Using \eqref{drv3nearoriginrefinement}, as well as our previous estimates on $v_{3}$, we get
\begin{equation} r^{2} |\partial_{rr}v_{3}| \leq \frac{C r \log(\log(t))}{t^{2} \log^{b+1}(t)}, \quad r \leq \log^{N}(t)\end{equation}
\end{proof}
Finally, the estimate for $\partial_{t}v_{5}$ used when estimating $\lambda'''$ was based on estimates for $N_{2}(f)$ and $\partial_{r}N_{2}(f)$ which only used at most two derivatives of $\lambda$. For future use, we will prove a stronger estimate on $\partial_{t}v_{5}$, which uses the final estimate on $\lambda'''$. (Note that this is the reason why no estimates on $\partial_{t}v_{5}$ were presented in the proposition statement in the $\lambda'''$ section). As part of the process, we will also obtain an estimate on $\partial_{tr}v_{4}$.
\begin{lemma}[Improved $\partial_{t}v_{5}$, $\partial_{tr}v_{4}$ estimates]
\begin{equation}\label{dtv5nearoriginrefinement} |\partial_{t}v_{5}(t,r)| \leq \frac{C r}{t^{9/2} \log^{\frac{3N}{2}+3b-2}(t)}, \quad r \leq \frac{t}{2}\end{equation}
\begin{equation} \label{dtrv4finalest} |\partial_{tr}v_{4}(t,r)| \leq \begin{cases} \frac{C}{t^{3} \log^{3b+2N-2}(t)}, \quad r \leq \frac{t}{2}\\
 \frac{C}{t^{35/12} \log^{2b-1}(t)}, \quad r \geq \frac{t}{2}\end{cases}\end{equation}
\end{lemma}
\begin{proof}
 For this estimate, we start with
\begin{equation}\begin{split} \partial_{t}v_{5}(t,r) = \frac{-r}{2 \pi} \int_{0}^{1} d\beta \int_{t}^{\infty} ds \int_{B_{s-t}(0)} \frac{dA(y)}{\sqrt{(s-t)^{2}-|y|^{2}}} &\left(\frac{\partial_{12}N_{2}(f_{v_{5}})(s,|\beta x+y|)\left((\beta x+y)\cdot \hat{x}\right)^{2}}{|\beta x+y|^{2}}\right.\\
&-\frac{\partial_{1}N_{2}(f_{v_{5}})(s,|\beta x+y|)\left((\beta x+y)\cdot \hat{x}\right)^{2}}{|\beta x+y|^{3}}\\
&\left. + \frac{\partial_{1}N_{2}(f_{v_{5}})(s,|\beta x+y|)}{|\beta x+y|}\right)\end{split}\end{equation}  
This gives
\begin{equation}\label{dtv5nearoriginsetup} |\partial_{t}v_{5}(t,r)| \leq C r \int_{0}^{1} d\beta \int_{t}^{\infty} ds \int_{B_{s-t}(0)} \frac{dA(y)}{\sqrt{(s-t)^{2}-|y|^{2}}} \left(|\partial_{12}N_{2}(f_{v_{5}})|(s,|\beta x+y|)+\frac{|\partial_{1}N_{2}(f_{v_{5}})|(s,|\beta x+y|)}{|\beta x+y|}\right)\end{equation}
In order to proceed, we will have to estimate $\partial_{tr}v_{4}$, which was not done previously. We start by noting that, the procedure used to obtain \eqref{dtv4refinementnearorigin} was to write
\begin{equation} \partial_{t}v_{4}(t,r) = r \int_{0}^{1} d\beta \partial_{tr}v_{4}(t,r\beta)\end{equation}
and then, to estimate $\partial_{tr}v_{4}(t,r\beta)$ uniformly for $0 \leq \beta \leq 1$. So, in the region $r \leq \frac{t}{2}$, we have
\begin{equation} |\partial_{tr}v_{4}(t,r)| \leq \frac{C}{t^{3} \log^{3b+2N-2}(t)}, \quad r \leq \frac{t}{2}\end{equation}
For the region $r \geq \frac{t}{2}$, we start by introducing the vector field 
$$V=t \partial_{t}+r\partial_{r}$$
and recalling that
\begin{equation} -\partial_{tt}v_{4}+\partial_{rr}v_{4}+\frac{1}{r} \partial_{r}v_{4}-\frac{v_{4}}{r^{2}}=v_{4,c}\end{equation}
So, we have
\begin{equation}\left(-\partial_{tt}+\partial_{rr}+\frac{1}{r} \partial_{r}-\frac{1}{r^{2}}\right)\left(V(\partial_{t}v_{4})\right) = V\left(\partial_{t}v_{4,c}\right) + 2 \partial_{t}v_{4,c}\end{equation}
We use \eqref{dtrv4cfinal}, \eqref{dttv4cfinalinsidecone}, and 
$$|\partial_{t}^{2} v_{4,c}(t,r)|+|\partial_{tr} v_{4,c}(t,r)| \leq \frac{C}{t^{9/2} \log^{2b}(t)}, \quad t-t^{1/6} \leq r \leq t+t^{1/6}$$   
to get
\begin{equation} |V(\partial_{t}v_{4,c})|(t,r) \leq C  \begin{cases} \frac{\mathbbm{1}_{\{r \geq \frac{\log^{N}(t)}{2}\}}}{r^{3} t^{3} \log^{3b}(t)}, \quad r \leq \frac{t}{2}\\
\frac{\log(r)}{r^{3} \log^{2b}(t) |t-r|^{3}} + \frac{\log(r)}{r^{3} t \log^{2b}(t) (t-r)^{2}} + \frac{\log(r)}{\log^{2b+1}(t) r^{4} t |t-r|} + \frac{\log(r)}{r^{3} \log^{3b+1}(t) t^{3}}, \quad \frac{t}{2} \leq r \leq t-t^{1/6}\text{, or } r\geq t+t^{1/6}\\
\frac{1}{t^{7/2} \log^{2b}(t)}, \quad t-t^{1/6} \leq r \leq t+t^{1/6}\end{cases}\end{equation}
and we get
\begin{equation}||V(\partial_{t}v_{4,c})||_{L^{2}(r dr)} \leq \frac{C}{t^{35/12} \log^{2b-1}(t)}\end{equation}
Next, we recall \eqref{dtv4cl2estfordtv4final}, and get
\begin{equation} ||\partial_{t}v_{4,c}||_{L^{2}(r dr)} \leq \frac{C}{t^{3} \log^{2N+3b}(t)}\end{equation}
Then, we apply the same  procedure used before to estimate (e.g.) $\partial_{t}v_{4}$ in the region $r \geq \frac{t}{2}$, to the equation for $V(\partial_{t}v_{4})$, and get
\begin{equation} |V(\partial_{t}v_{4})|(t,r) \leq \frac{C}{t^{23/12} \log^{2b-1}(t)}\end{equation}
But, then,
\begin{equation}\begin{split} \partial_{tr}v_{4}(t,r) &= \frac{(t\partial_{t}^{2} v_{4} + r \partial_{tr}v_{4})}{r} - \frac{t}{r} \partial_{t}^{2}v_{4}\\
&=\frac{V(\partial_{t}v_{4})}{r} - \frac{t}{r} \partial_{t}^{2}v_{4}\end{split}\end{equation}
and we have already estimated $\partial_{t}^{2} v_{4}$, during our study of $\lambda''''$. So, we get
\begin{equation} |\partial_{tr}v_{4}(t,r)| \leq \frac{C}{t^{35/12} \log^{2b-1}(t)}, \quad r \geq \frac{t}{2}\end{equation}
Now, we can estimate $\partial_{tr}N_{2}(f)$, and get
\begin{equation} |\partial_{tr}N_{2}(f)|(t,r) \leq \begin{cases} \frac{C}{t^{5} \log^{3b}(t) (r^{2}+\lambda(t)^{2})}, \quad r \leq \frac{t}{2}\\
\frac{C \log^{3}(r)}{r^{2}|t-r|^{5}} + \frac{C \log^{3}(r)}{r^{5/2} \sqrt{t} (t-r)^{4}} + \frac{C \log^{2}(r)}{r^{2} t^{3/2} |t-r|^{3} \log^{3b-1+\frac{5N}{2}}(t)} , \quad t>r>\frac{t}{2}\end{cases}\end{equation}
(Note that we only need to estimate $\partial_{tr}N_{2}(f_{v_{5}})$ in the region $r < t$, which is why we used $\frac{1}{|t-r|} \geq \frac{1}{t}$ to simply the above estimate. In previous estimates, we did not proceed analogously because we eventually used a single estimate for the entire region $\frac{t}{2} \leq r \leq t-t^{\gamma} \text{, or }r \geq t+t^{\gamma}$ for some $\gamma>0$).\\
\\
Using the estimates on $\partial_{t}^{j}v_{k}, \quad j=0,1, \quad k=1,2,3,4$, we get
\begin{equation} |\partial_{t}N_{2}(f)(t,r)| \leq \begin{cases} \frac{C r}{t^{5} \log^{3b}(t) (r^{2}+\lambda(t)^{2})}, \quad r \leq \frac{t}{2}\\
\frac{C}{r^{3}t^{7/2} \log^{\frac{3N}{2}+5b}(t)} + \frac{C \log^{3}(r)}{r^{2} (t-r)^{4}}, \quad t>r>\frac{t}{2}\end{cases}\end{equation}
Now, we return to \eqref{dtv5nearoriginsetup}, and use the same procedure used to estimate $v_{5}$ in the region $r \leq \frac{t}{2}$ earlier. (In particular, we handle the contribution to $\partial_{t}v_{5}$ coming from the term $\frac{\log^{3}(r)}{r^{2}|t-r|^{5}}$, which arises in the estimate for $\partial_{tr}N_{2}(f_{v_{5}})$, using the same procedure as in \eqref{v5specialterm}). This results in 
\begin{equation} |\partial_{t}v_{5}(t,r)| \leq \frac{C r}{t^{9/2} \log^{\frac{3N}{2}+3b-2}(t)}, \quad r \leq \frac{t}{2}\end{equation}
\end{proof}
Now, we proceed to estimate $F_{4}$, and various of its derivatives. We recall 
\begin{equation}\begin{split} F_{4}(t,r) &= \left(1-\chi_{\geq 1}(\frac{2r}{\log^{N}(t)})\right)\left(F_{0,2}(t,r)+\left(\frac{\cos(2Q_{\frac{1}{\lambda(t)}}(r))-1}{r^{2}}\right)\left(v_{1}+v_{2}+v_{3}\right)\right)\\
&+\left(1-\chi_{\geq 1}(\frac{4r}{t})\right)\left(\frac{\cos(2Q_{\frac{1}{\lambda(t)}}(r))-1}{r^{2}}\right)\left(v_{4}+v_{5}\right)\end{split}\end{equation}

Then, using \eqref{v3laterest}, \eqref{v4finalest}, \eqref{v5finalest}, and \eqref{v1plusv2delicate}, we get 
\begin{equation}\label{F4pointwise}\begin{split} |F_{4}(t,r)| &\leq C \frac{\left(1-\chi_{\geq 1}(\frac{2r}{\log^{N}(t)})\right)}{(r^{2}+\lambda(t)^{2})^{2}}\left(\frac{r}{t^{2} \log^{3b+1-2b\alpha}(t)}\right)\\
&+C\frac{ \left(1-\chi_{\geq 1}(\frac{4r}{t})\right) }{(r^{2}+\lambda(t)^{2})^{2}}\frac{r}{t^{2} \log^{5b+2N-1}(t)}\end{split}\end{equation}

Next, we will need to record estimates on $\partial_{t}F_{4}$, and $r \partial_{r}F_{4}$. By combining \eqref{dtv4finalest}, and \eqref{dtv5nearoriginrefinement}, we get
\begin{equation} |\partial_{t}v_{4}+\partial_{t}v_{5}|(t,r) \leq \frac{C r}{t^{3} \log^{3b+2N-2}(t)}, \quad r \leq \frac{t}{2}\end{equation}
This, combined with the explicit formula for $F_{0,2}$, \eqref{dtv1plusv2delicate}, and \eqref{dtv3finalest}, gives 
\begin{equation}\label{dtf4pointwise} |\partial_{t}F_{4}(t,r)| \leq \frac{C \mathbbm{1}_{\{r \leq \log^{N}(t)\}} r}{t^{3} \log^{3b+1-2b\alpha}(t)(r^{2}+\lambda(t)^{2})^{2}} + \frac{C \mathbbm{1}_{\{r \leq \frac{t}{2}\}}}{(r^{2}+\lambda(t)^{2})^{2}} \frac{r}{t^{3} \log^{5b+2N-2}(t)}\end{equation}
where we estimate derivatives on the cutoff functions by (e.g.)
$$|\chi_{\geq 1}'(\frac{2r}{\log^{N}(t)})| \frac{N r}{t \log^{N+1}(t)} \leq \frac{C \mathbbm{1}_{\{r \leq \log^{N}(t)\}}}{t \log(t)}$$

Similarly, for the partial $r$ derivative of $F_{4}$, we use the explicit formula for $F_{0,2}$, \eqref{drv3nearoriginrefinement}, \eqref{drv1plusv2delicate}, \eqref{drv4finalest}, and \eqref{drv5finalest}, to get
\begin{equation}\label{rdrf4pointwise} r |\partial_{r}F_{4}(t,r)| \leq \frac{C r \mathbbm{1}_{\{r \leq \log^{N}(t)\}}}{t^{2} \log^{3b+1-2b\alpha}(t)(r^{2}+\lambda(t)^{2})^{2}}+C\frac{ \mathbbm{1}_{\{r \leq \frac{t}{2}\}}}{(r^{2}+\lambda(t)^{2})^{2}}\frac{r}{t^{2} \log^{5b+2N-1}(t)}\end{equation}
Treating the terms involving $\chi_{\geq 1}$ and $F_{0,2}$ as before, and using \eqref{dttv1plusv2delicate}, \eqref{dttv4finalest}, and \eqref{dttv5finalest}, we get
\begin{equation}\label{dttf4pointwise} |\partial_{t}^{2}F_{4}(t,r)| \leq \frac{C \mathbbm{1}_{\{r \leq \log^{N}(t)\}} r}{t^{4} (r^{2}+\lambda(t)^{2})^{2} \log^{3b+1-2b\alpha}(t)} + \frac{C \mathbbm{1}_{\{r \leq \frac{t}{2}\}} r}{t^{4} \log^{5b+2N-2}(t)(r^{2}+\lambda(t)^{2})^{2}} + \frac{C \mathbbm{1}_{\{r \leq \frac{t}{2}\}}}{t^{4} \log^{5b+N-2}(t)(r^{2}+\lambda(t)^{2})^{2}}\end{equation}
Next, we note that, exactly as was the case with $\partial_{tr}v_{4}$, we can infer the following estimate on $\partial_{tr}v_{5}$ by inspecting \eqref{dtv5nearoriginrefinement}, and the procedure used to obtain it:
\begin{equation}\label{dtrv5nearorigin} |\partial_{tr}v_{5}(t,r)| \leq \frac{C }{t^{9/2} \log^{\frac{3N}{2}+3b-2}(t)}, \quad r \leq \frac{t}{2}\end{equation}
We use the same procedure, \eqref{dtrv1plusv2delicate}, \eqref{dtrv5nearorigin}, and \eqref{dtrv4finalest}, to get
\begin{equation}\label{rdtrf4pointwise}r|\partial_{rt}F_{4}(t,r)| \leq C \frac{\mathbbm{1}_{\{r \leq \log^{N}(t)\}} r }{t^{3} \log^{3b+1-2b\alpha}(t)  (r^{2}+\lambda(t)^{2})^{2}} + C \frac{\mathbbm{1}_{\{r \leq \frac{t}{2}\}} r }{t^{3} \log^{5b+2N-2}(t)(r^{2}+\lambda(t)^{2})^{2}}\end{equation}
Next, we use the same procedure used for $v_{1}+v_{2},v_{3}$ to estimate $r^{2}\partial_{r}^{2} v_{4}, r^{2}\partial_{r}^{2}v_{5}$:
\begin{equation} r^{2} \partial_{r}^{2} v_{4} = r^{2} v_{4,c}(t,r) + r^{2} \partial_{tt}v_{4}-r \partial_{r}v_{4}+v_{4}\end{equation}
So,
\begin{equation} \label{rsquareddrrv4}\begin{split}|r^{2} \partial_{r}^{2}v_{4}(t,r)| &\leq \frac{C r}{t^{2} \log^{3b+2N}(t)} + \frac{C r^{2}}{t^{4} \log^{3b-2+N}(t)} + \frac{C r}{t^{2} \log^{3b+2N-1}(t)} + \frac{C r}{t^{2} \log^{3b+2N-1}(t)}\\
&\leq \frac{C r}{t^{2} \log^{3b+2N-1}(t)}, \quad r \leq \frac{t}{2}\end{split}\end{equation}
and
\begin{equation}\label{rsquareddrrv5}\begin{split} |r^{2} \partial_{r}^{2} v_{5}(t,r)| &\leq \frac{C r}{t^{4} \log^{3b}(t)} + \frac{C r^{2}}{t^{4} \log^{3N+b-2}(t)} + \frac{C r}{t^{7/2} \log^{\frac{5N}{2}+3b-3}(t)} + \frac{C r}{t^{7/2} \log^{\frac{5N}{2}+3b-3}(t)}\\
&\leq \frac{C r}{t^{3} \log^{3N+b-2}(t)}, \quad r \leq \frac{t}{2}\end{split}\end{equation}

By using  \eqref{drrv1plusv2delicate}, \eqref{rsquareddrrv3},  \eqref{rsquareddrrv4}, and \eqref{rsquareddrrv5}, we get 
\begin{equation} \label{drrf4pointwise} r^{2}|\partial_{rr}F_{4}(t,r)| \leq C \frac{\mathbbm{1}_{\{r \leq \log^{N}(t)\}}r}{t^{2} \log^{3b+1-2b\alpha}(t)(r^{2}+\lambda(t)^{2})^{2}} + \frac{C \mathbbm{1}_{\{r \leq \frac{t}{2}\}} r}{t^{2} \log^{5b+2N-1}(t)(r^{2}+\lambda(t)^{2})^{2}}\end{equation}
\end{proof}
\subsection{Estimates on $F_{5}$}
Recall that $F_{5}$ was one of the $v_{6}$-independent error terms on the right-hand side of \eqref{v6eqn} which was not included in the modulation equation, and is therefore, not necessarily orthogonal to $\phi_{0}(\frac{\cdot}{\lambda(t)})$. Hence, we must prove that it decays sufficiently quickly in sufficiently many norms. This is the result of the following lemma.
\begin{lemma}
We have the following estimates on $F_{5}$: 
\begin{equation} \label{f5l2est}\frac{1}{\lambda(x)^{2}} ||F_{5}(x,r)||_{L^{2}(r dr)} \leq \frac{C \log^{6+2b}(x)}{x^{17/4}}\end{equation}
\begin{equation}\label{f5h1dotest}\frac{||F_{5}(x,\cdot\lambda(x))||_{\dot{H}^{1}_{e}}}{\lambda(x)} \leq \frac{C \log^{6+b}(x)}{x^{35/8}}\end{equation}
\end{lemma}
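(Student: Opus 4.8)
The plan is to unwind the definition of $F_{5}$ from \eqref{f5def} and estimate each of its three constituent pieces,
$$F_{5} = N_{2}(v_{5}) + \frac{\sin(2(v_{1}+v_{2}+v_{3}+v_{4}))}{2r^{2}}\bigl(\cos(2Q_{\frac{1}{\lambda(t)}}+2v_{5})-\cos(2Q_{\frac{1}{\lambda(t)}})\bigr) + \Bigl(\frac{\cos(2(v_{1}+v_{2}+v_{3}+v_{4}))-1}{2r^{2}}\Bigr)\bigl(\sin(2Q_{\frac{1}{\lambda(t)}}+2v_{5})-\sin(2Q_{\frac{1}{\lambda(t)}})\bigr),$$
using the Taylor expansions $|\sin(2f)-2f|\leq C|f|^{3}$, $|\cos(2f)-1|\leq Cf^{2}$, $|\cos(2Q+2v_{5})-\cos(2Q)|\leq C|v_{5}|$, and $|\sin(2Q+2v_{5})-\sin(2Q)|\leq C|v_{5}|$, together with the pointwise bounds on $Q_{\frac{1}{\lambda(t)}}$ (namely $|\sin(2Q_{\frac{1}{\lambda(t)}}(r))|\leq \frac{Cr\lambda(t)}{r^{2}+\lambda(t)^{2}}$ and $|\cos(2Q_{\frac{1}{\lambda(t)}}(r))-1|\leq \frac{Cr^{2}}{r^{2}+\lambda(t)^{2}}$) that appear throughout Section 4. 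This produces a pointwise bound on $|F_{5}(t,r)|$ that is a sum of terms each carrying factors of $v_{1},\dots,v_{5}$ and their ratios $v_{k}/r$, for which the pointwise estimates \eqref{v1largerest}, \eqref{v1smallrest}, the $v_{2}$ estimates \eqref{v2precisenearorigin}--\eqref{v2singularconeest}, \eqref{v3laterest}, \eqref{v3largerest}, \eqref{v4finalest}, and \eqref{v5finalest} are all available. The key structural point is that $F_{5}$ always contains at least one factor of $v_{5}$ (hence at least one power of the very small $\log^{-(3b-3+\frac{5N}{2})}$ or $\log^{-4}$–type gain from \eqref{v5finalest}, times $t^{-7/2}$ or $t^{-3/2}$) multiplied by other factors that are themselves $O(t^{-2}\log^{-b})$ or better; so the decay in both $t$ and $\log t$ will be ample. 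One then splits the spatial integral into $r\leq \frac{t}{2}$ and $r>\frac{t}{2}$, using the near-origin estimates on the first region (where the $v_{k}$ are $\lesssim r t^{-2}\log^{-b}$ up to logs) and the cone/exterior estimates $|v_{2}|\lesssim \log(r)|t-r|^{-1}$, $|v_{4}|\lesssim r^{-1/2}t^{-1}\log^{-(\frac{3N}{2}+3b-1)}$, $|v_{5}|\lesssim r^{-1/2}t^{-3/2}\log^{4}$ on the second, to carry out the $L^{2}(r\,dr)$ bound \eqref{f5l2est}.

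For the $\dot{H}^{1}_{e}$ estimate \eqref{f5h1dotest}, I would differentiate $F_{5}$ and again use the Leibniz rule: $\partial_{r}F_{5}$ is a sum of terms where the derivative either hits one of the $v_{k}$ (producing a $\partial_{r}v_{k}$ factor, for which \eqref{largerdrv1}, \eqref{drv3est}, \eqref{drv4finalest}, \eqref{drv5finalest}, and the $\partial_{r}v_{2}$ estimates are in hand), or hits a trigonometric factor of $Q_{\frac{1}{\lambda(t)}}$ (producing $\partial_{r}Q_{\frac{1}{\lambda(t)}}=\frac{2\lambda(t)}{r^{2}+\lambda(t)^{2}}$, a bounded-by-$r^{-1}$ quantity that pairs well with the $\frac{1}{r^{2}}$ weights), or hits the $\frac{1}{r^{2}}$ prefactor (producing $\frac{1}{r^{3}}$, which is controlled because each surviving $v_{k}$-combination vanishes like $r^{3}$ near the origin, coming from the triple product structure $\sin(2f)-2f\sim f^{3}$ and $\cos(2f)-1\sim f^{2}$ together with one more $v_{5}$ or $Q$ factor). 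Because the norm in \eqref{f5h1dotest} is evaluated in the rescaled variable (i.e.\ one works with $F_{5}(x,\cdot\lambda(x))$ and the $\dot{H}^{1}_{e}$ norm in $R$), I would pass to the variable $r=R\lambda(x)$ at the end; the weight $\frac{v_{e}}{r}$ part of $\dot H^1_e$ is handled by the same estimates on $\frac{F_{5}}{r}$ used for $L^{2}$, and the $\partial_{r}$ part by the derivative estimates just described. The $\frac{1}{\lambda(x)^{2}}$ and $\frac{1}{\lambda(x)}$ prefactors on the left are exactly the scaling weights that make the right-hand side come out clean, since $\lambda(x)\sim\log^{-b}(x)$, so each power of $\lambda^{-1}$ costs only a power of $\log x$, easily absorbed into the large negative powers of $\log x$ generated by the $v_{4}$ and $v_{5}$ factors.

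The main obstacle, and the place requiring the most care, will be the behavior near the light cone $r\approx t$ in the exterior region $r>\frac{t}{2}$, where $v_{2}$ (and hence $v_{4},v_{5}$ through $f_{v_{5}}$ and $N_{2}$) has its weakest decay, namely the $\frac{\log r}{|t-r|}$-type singularity from \eqref{v2singularconeest}, and where $\partial_{r}v_{2}$ behaves like $\frac{\log r}{|t-r|^{2}}$. When these are cubed or squared-and-differentiated inside $F_{5}$ or $\partial_{r}F_{5}$, one obtains integrands with $|t-r|^{-3}$ or worse, which are not locally integrable; the resolution is that in $F_{5}$ every such dangerous factor is multiplied by a factor of $v_{5}$, which near the cone is bounded by $r^{-7/2}$ (pointwise, from the combination of \eqref{v5finalest} and the cone refinements of $N_{2}(f_{v_{5}})$ already recorded in Section 4) — this extra $\sqrt{r}$-type decay in $v_{5}$ and the $\frac{1}{t-r}$ improvement over the naive estimate near the cone must be tracked honestly, exactly as was done in \eqref{v52intest}–\eqref{v5specialterm} when estimating $v_{5}$ itself. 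I expect the bookkeeping near the cone, and the verification that the various $t$-power and $\log t$-power exponents add up to at least $\frac{17}{4}$ and $\frac{35}{8}$ respectively (with all the explicit constants $N\gg b$ making the $\log$ powers comfortably positive), to be the bulk of the real work; the rest is a routine but lengthy application of the machinery already assembled in the preceding subsections.
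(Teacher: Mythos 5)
Your treatment of the $L^{2}$ bound \eqref{f5l2est} is essentially the paper's: pointwise bounds on $F_{5}$ region by region (with the $|v_{2}|\lesssim r^{-1/2}$ bound on the strip $|t-r|\leq\sqrt{t}$ and $\frac{\log r}{|t-r|}$ outside it), then integration; that part of your plan is sound. The genuine gap is in the $\dot{H}^{1}_{e}$ bound \eqref{f5h1dotest}: you propose to estimate $\partial_{r}F_{5}$ using the \emph{pointwise} derivative bounds \eqref{drv3est} and \eqref{drv5finalest}, but this cannot reach the exponent $35/8$. The problem term is the one where the derivative lands on $v_{5}$ inside the third piece of $F_{5}$, i.e.\ $\frac{\cos(2(v_{1}+\cdots+v_{4}))-1}{r^{2}}\cos(2Q_{\frac{1}{\lambda(t)}}+2v_{5})\,\partial_{r}v_{5}$, bounded by $\frac{(v_{1}+\cdots+v_{4})^{2}}{r^{2}}|\partial_{r}v_{5}|$ (the last line of \eqref{stardrv5}). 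Near the light cone the only available smallness of the prefactor is $(v_{1}+\cdots+v_{4})^{2}\lesssim r^{-1}$ on $|t-r|\leq\sqrt{t}$ (from \eqref{v2sqrtrest}), and \eqref{drv5finalest} gives $|\partial_{r}v_{5}|\lesssim\log^{3}(t)\,r^{-1/2}t^{-3/2}$ with no $(t-r)$-gain; squaring and integrating $r\,dr$ over that strip (and over the matching exterior region with the $\frac{\log r}{|t-r|}$ bound for $v_{2}$) yields only $O(t^{-17/4})$ up to logarithms, which falls short of the claimed $t^{-35/8}$ by a factor $t^{1/8}$.

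The missing idea is to trade the pointwise bound on $\partial_{r}v_{5}$ for its energy bound: pair $\|\frac{v_{1}+\cdots+v_{4}}{r}\|_{L^{\infty}}\lesssim t^{-3/2}$ (squared) with $\|\partial_{r}v_{5}\|_{L^{2}(r\,dr)}\lesssim\log^{3}(t)\,t^{-7/4}$ from \eqref{v5energyest}, which gives $\lesssim\log^{3}(t)\,t^{-19/4}$ for this term; the paper does the same $L^{\infty}\times L^{2}$ splitting wherever $\partial_{r}v_{5}$ appears (in $\partial_{r}N_{2}(v_{5})$ as well), and likewise replaces the pointwise bound on $\partial_{r}v_{3}$ by the energy estimate $\|\partial_{r}v_{3}\|_{L^{2}(r\,dr)}\lesssim\sqrt{\log\log t}/(t\log^{b+1}t)$ obtained directly from the $v_{3}$ equation. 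With that substitution the true bottleneck becomes $\|\frac{v_{1}+\cdots+v_{4}}{r}\|_{L^{\infty}}\cdot\|\frac{v_{5}\,\partial_{r}(v_{1}+v_{2}+v_{4})}{r}\|_{L^{2}}$, driven by $\partial_{r}v_{2}$ on $|t-r|\leq t^{1/4}$, and it is exactly this term that produces $35/8=3/2+23/8$. One small correction to your cone discussion: it is $N_{2}(f_{v_{5}})$, not $v_{5}$, that decays like $r^{-7/2}$ near the cone; $v_{5}$ itself only satisfies \eqref{v5finalest}, $|v_{5}|\lesssim\log^{4}(t)\,r^{-1/2}t^{-3/2}$, which is what your $L^{2}$ computation must (and can) live with.
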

\begin{proof}
We recall \eqref{f5def}:
\begin{equation}\begin{split} F_{5}(t,r)&=N_{2}(v_{5})(t,r)+\frac{\sin(2(v_{1}+v_{2}+v_{3}+v_{4}))}{2r^{2}} \left(\cos(2Q_{\frac{1}{\lambda(t)}}+2v_{5})-\cos(2Q_{\frac{1}{\lambda(t)}})\right)\\
&+\left(\frac{\cos(2(v_{1}+v_{2}+v_{3}+v_{4}))-1}{2r^{2}}\right)\left(\sin(2Q_{\frac{1}{\lambda(t)}}+2v_{5})-\sin(2Q_{\frac{1}{\lambda(t)}})\right)\end{split}\end{equation}
Then, we start with
\begin{equation} |N_{2}(v_{5})|(t,r) \leq \begin{cases} \frac{C r}{(r^{2}+\lambda(t)^{2}) t^{7} \log^{7b-6+5N}(t)}, \quad r \leq \frac{t}{2}\\
\frac{C \log^{8}(t)}{r^{7/2} \log^{b}(t) t^{7/2}}, \quad r \geq \frac{t}{2}\end{cases}\end{equation}
Next, we have
\begin{equation}\begin{split} &|\frac{\sin(2(v_{1}+v_{2}+v_{3}+v_{4}))}{2r^{2}} \left(\cos(2Q_{\frac{1}{\lambda(t)}})\left(\cos(2v_{5})-1\right)-\sin(2Q_{\frac{1}{\lambda(t)}})\sin(2v_{5})\right)|\\
&\leq C \frac{|v_{1}+v_{2}+v_{3}+v_{4}|}{r^{2}} \left(v_{5}(t,r)^{2} + \frac{r \lambda(t)}{r^{2}+\lambda(t)^{2}}|v_{5}(t,r)|\right)\end{split}\end{equation}
\begin{equation} \begin{split} &|\left(\frac{\cos(2(v_{1}+v_{2}+v_{3}+v_{4}))-1}{2r^{2}}\right)\left(\sin(2Q_{\frac{1}{\lambda(t)}})\left(\cos(2v_{5})-1\right)+\cos(2Q_{\frac{1}{\lambda(t)}})\sin(2v_{5})\right)|\\
&\leq \frac{C (v_{1}+v_{2}+v_{3}+v_{4})^{2}}{r^{2}} \left(\frac{r \lambda(t) v_{5}^{2}}{r^{2}+\lambda(t)^{2}} + |v_{5}|\right)\end{split}\end{equation}
Using our previous pointwise estimates on $v_{1},v_{2},v_{3},v_{4},v_{5}$, we get
\begin{equation} |F_{5}(t,r)| \leq \begin{cases} \frac{C r}{(r^{2}+\lambda(t)^{2}) t^{11/2} \log^{5b-3+\frac{5N}{2}}(t)}, \quad r \leq \frac{t}{2}\\
\frac{C \log(r) \log^{4}(t)}{r^{3} t^{2} \log^{b}(t) |t-r|} + \frac{C \log^{4}(t) \log^{2}(r)}{(t-r)^{2} r^{5/2} t^{3/2}} + \frac{C \log^{4}(t)}{r^{7/2} t^{3} \log^{\frac{3N}{2}+4b-1}(t)}, \quad \frac{t}{2} \leq r \leq t-\sqrt{t}, \text{ or }r > t+\sqrt{t}\\
\frac{\log^{4}(t)}{r^{7/2} t^{3/2}}, \quad t-\sqrt{t} \leq r \leq t+\sqrt{t} \end{cases}\end{equation}
This gives
\begin{equation} \frac{1}{\lambda(x)^{2}} ||F_{5}(x,r)||_{L^{2}(r dr)} \leq \frac{C \log^{6+2b}(x)}{x^{17/4}}\end{equation}\\
We then proceed to estimate $||F_{5}(t,\cdot\lambda(t))||_{\dot{H}^{1}_{e}}$\\
We start with
\begin{equation} \label{drn2v5terms}|\partial_{r}N_{2}(v_{5})(t,r)| \leq \frac{C \lambda(t)}{r^{2}(r^{2}+\lambda(t)^{2})} v_{5}(t,r)^{2} + \frac{C \lambda(t) |v_{5}(t,r) \partial_{r}v_{5}(t,r)|}{r(r^{2}+\lambda(t)^{2})} + \frac{C |\partial_{r}v_{5}(t,r)| v_{5}(t,r)^{2}}{r^{2}} + \frac{C |v_{5}(t,r)|^{3}}{r^{3}}\end{equation}
For the two terms on the right-hand side of the above equation which involve $\partial_{r}v_{5}$, we estimate as follows:
\begin{equation} \begin{split} ||\frac{\lambda(t) |v_{5}(t,r) \partial_{r}v_{5}(t,r)|}{r(r^{2}+\lambda(t)^{2})}\vert_{r = R\lambda(t)}||_{L^{2}( R dR)} &\leq C \lambda(t) ||\frac{v_{5}(t,r)}{r(r^{2}+\lambda(t)^{2})}||_{L^{\infty}} \cdot ||(\partial_{2}v_{5})(t,\cdot \lambda(t))||_{L^{2}(R dR)}\\
&\leq \frac{C}{t^{21/4} \log^{b-6+\frac{5N}{2}}(t)}\end{split}\end{equation}
\begin{equation}\begin{split} ||\frac{|\partial_{r}v_{5}(t,r)| v_{5}(t,r)^{2}}{r^{2}}\vert_{r = R\lambda(t)}||_{L^{2}(R dR)} &\leq ||(\partial_{2}v_{5})(t,\cdot \lambda(t))||_{L^{2}(R dR)} \cdot ||\frac{v_{5}^{2}}{r^{2}}||_{L^{\infty}}\\
&\leq \frac{C \log^{11+b}(t)}{t^{31/4}}\end{split}\end{equation}
For the other terms in \eqref{drn2v5terms}, we use the $v_{5}$ pointwise estimates. In total, we get
\begin{equation} ||(\partial_{2}N_{2}(v_{5}))(t,\cdot \lambda(t))||_{L^{2}(R dR)} \leq \frac{C}{t^{21/4} \log^{b-6+\frac{5N}{2}}(t)}\end{equation}
Next, we have
\begin{equation} \begin{split} &\partial_{r}\left(\frac{\sin(2(v_{1}+v_{2}+v_{3}+v_{4}))}{2r^{2}}\left(\cos(2Q_{\frac{1}{\lambda(t)}}+2v_{5})-\cos(2Q_{\frac{1}{\lambda(t)}})\right)\right)\\
&= \frac{\cos(2(v_{1}+v_{2}+v_{3}+v_{4}))\partial_{r}(v_{1}+v_{2}+v_{3}+v_{4})}{r^{2}} \left(\cos(2Q_{\frac{1}{\lambda(t)}}+2v_{5})-\cos(2Q_{\frac{1}{\lambda(t)}})\right)\\
&-\frac{\sin(2(v_{1}+v_{2}+v_{3}+v_{4}))}{r^{3}} \left(\cos(2Q_{\frac{1}{\lambda(t)}}+2v_{5})-\cos(2Q_{\frac{1}{\lambda(t)}})\right)\\
&+\frac{\sin(2(v_{1}+v_{2}+v_{3}+v_{4}))}{r^{2}} \left(-\sin(2Q_{\frac{1}{\lambda(t)}}+2v_{5})\partial_{r}(Q_{\frac{1}{\lambda(t)}}+v_{5})+\sin(2Q_{\frac{1}{\lambda(t)}})\partial_{r}Q_{\frac{1}{\lambda(t)}}\right)\end{split}\end{equation}
For $\partial_{r}v_{k}, \quad k=1,2,4$, we use the pointwise estimates from the previous subsections. On the other hand, for $\partial_{r}v_{3}$, we use the energy estimate procedure, previously used for $\partial_{r}v_{5}$, to get
\begin{equation} \begin{split} &||\partial_{r}v_{3}(t)||_{L^{2}(r dr)} \leq C \int_{t}^{\infty} ||F_{0,1}(s)||_{L^{2}(r dr)} ds \leq C \int_{t}^{\infty} \frac{ \sqrt{\log(\log(s))}}{s^{2} \log^{b+1}(s)} ds\\
&\leq \frac{C \sqrt{\log(\log(t))}}{t \log^{b+1}(t)}\end{split}\end{equation}
Then, we have
\begin{equation}\begin{split} &||\frac{\cos(2(v_{1}+v_{2}+v_{3}+v_{4}))\partial_{r}(v_{1}+v_{2}+v_{3}+v_{4})}{r^{2}} \left(\cos(2Q_{\frac{1}{\lambda(t)}}+2v_{5})-\cos(2Q_{\frac{1}{\lambda(t)}})\right)\vert_{r=R\lambda(t)}||_{L^{2}(R dR)}\\
&\leq ||\frac{\partial_{r}(v_{1}+v_{2}+v_{4})}{r^{2}} \left(\cos(2Q_{\frac{1}{\lambda(t)}}+2v_{5})-\cos(2Q_{\frac{1}{\lambda(t)}})\right)\vert_{r=R\lambda(t)}||_{L^{2}(R dR)}\\
&+||(\partial_{2}v_{3})(t,R \lambda(t))||_{L^{2}(R dR)}||\frac{ \left(\cos(2Q_{\frac{1}{\lambda(t)}}+2v_{5})-\cos(2Q_{\frac{1}{\lambda(t)}})\right)}{r^{2}}||_{L^{\infty}}\\
&\leq \frac{C \sqrt{\log(\log(t))}}{t^{9/2} \log^{2b-2+\frac{5N}{2}}(t)}\end{split}\end{equation}
Using the pointwise estimates on $v_{5}$, we get
\begin{equation}\begin{split} &||-\frac{\sin(2(v_{1}+v_{2}+v_{3}+v_{4}))}{r^{3}} \left(\cos(2Q_{\frac{1}{\lambda(t)}}+2v_{5})-\cos(2Q_{\frac{1}{\lambda(t)}})\right)\vert_{r = R\lambda(t)}||_{L^{2}(R dR)} \\
&\leq \frac{C}{t^{11/2} \log^{3b-3+\frac{5N}{2}}(t)}\end{split}\end{equation}

We use
\begin{equation} \label{3drv5}\begin{split}&|\frac{\sin(2(v_{1}+v_{2}+v_{3}+v_{4}))}{r^{2}} \left(-\sin(2Q_{\frac{1}{\lambda(t)}}+2v_{5})\partial_{r}(Q_{\frac{1}{\lambda(t)}}+v_{5})+\sin(2Q_{\frac{1}{\lambda(t)}})\partial_{r}Q_{\frac{1}{\lambda(t)}}\right)|\\
&\leq \frac{C|v_{1}+v_{2}+v_{3}+v_{4}|}{r^{2}}\left(\frac{r \lambda(t)^{2} v_{5}^{2}}{(r^{2}+\lambda(t)^{2})^{2}} + \frac{\lambda(t) |v_{5}(t,r)|}{r^{2}+\lambda(t)^{2}} + \frac{|\partial_{r}v_{5}| r \lambda(t)}{r^{2}+\lambda(t)^{2}} + |v_{5}\partial_{r}v_{5}|\right)\end{split}\end{equation}
The first term in \eqref{3drv5} which involves $\partial_{r}v_{5}$ is estimated as follows:
\begin{equation}\label{3cdrv5}\begin{split} &||\frac{|v_{1}+v_{2}+v_{3}+v_{4}|}{r^{2}}\left(\frac{|\partial_{r}v_{5}| r \lambda(t)}{r^{2}+\lambda(t)^{2}}\right)\vert_{r=R\lambda(t)}||_{L^{2}(R dR)}\\
&\leq C ||\frac{|v_{1}+v_{2}+v_{3}+v_{4}|}{r^{2}}\left(\frac{|\partial_{r}v_{5}| r \lambda(t)}{r^{2}+\lambda(t)^{2}}\right)\vert_{r=R\lambda(t)}||_{L^{2}((0,\frac{t}{2 \lambda(t)}),R dR)} \\
&+ C ||(\partial_{2}v_{5})(t,R \lambda(t))||_{L^{2}(R dR)} \cdot ||\frac{(v_{1}+v_{2}+v_{3}+v_{4})}{r(r^{2}+\lambda(t)^{2})} \lambda(t)||_{L^{\infty}(r \geq \frac{t}{2})}\end{split}\end{equation}
where the second line of \eqref{3cdrv5} is estimated using the pointwise estimates on $\partial_{r}v_{5}$ in the region $r \leq \frac{t}{2}$. In total, we get
\begin{equation}\begin{split} &||\frac{|v_{1}+v_{2}+v_{3}+v_{4}|}{r^{2}}\left(\frac{|\partial_{r}v_{5}| r \lambda(t)}{r^{2}+\lambda(t)^{2}}\right)\vert_{r=R\lambda(t)}||_{L^{2}(R dR)}\leq \frac{C \log^{4}(t)}{t^{21/4}}\end{split}\end{equation}
The second term of \eqref{3drv5} which involves $\partial_{r}v_{5}$ is estimated as follows:
\begin{equation} ||\frac{v_{1}+v_{2}+v_{3}+v_{4}}{r}||_{L^{\infty}}\cdot ||\frac{v_{5}}{r}||_{L^{\infty}} \cdot ||(\partial_{2}v_{5})(t,\cdot \lambda(t))||_{L^{2}(R dR)} \leq \frac{C \log^{8}(t)}{t^\frac{25}{4} \lambda(t)}\end{equation}
The other terms of \eqref{3drv5} are estimated using the $v_{k}$ pointwise estimates for $1\leq k \leq 5$. In total, we get
\begin{equation}\begin{split} &||\frac{\sin(2(v_{1}+v_{2}+v_{3}+v_{4}))}{r^{2}} \left(-\sin(2Q_{\frac{1}{\lambda(t)}}+2v_{5})\partial_{r}(Q_{\frac{1}{\lambda(t)}}+v_{5})+\sin(2Q_{\frac{1}{\lambda(t)}})\partial_{r}Q_{\frac{1}{\lambda(t)}}\right)\vert_{r=R\lambda(t)}||_{L^{2}(R dR)} \\
&\leq \frac{C \log^{4}(t)}{t^{21/4}}\end{split}\end{equation}
Finally, we have
\begin{equation}\label{stardrv5}\begin{split} &\partial_{r} \left(\left(\frac{\cos(2(v_{1}+v_{2}+v_{3}+v_{4}))-1}{2r^{2}}\right)\left(\sin(2Q_{\frac{1}{\lambda(t)}}+2v_{5})-\sin(2Q_{\frac{1}{\lambda(t)}})\right)\right)\\
&= \frac{-\sin(2(v_{1}+v_{2}+v_{3}+v_{4}))}{r^{2}} \partial_{r}(v_{1}+v_{2}+v_{3}+v_{4}) \left(\sin(2Q_{\frac{1}{\lambda(t)}}+2v_{5})-\sin(2Q_{\frac{1}{\lambda(t)}})\right)\\
&-\frac{(\cos(2(v_{1}+v_{2}+v_{3}+v_{4}))-1)}{r^{3}}\left(\sin(2Q_{\frac{1}{\lambda(t)}}+2v_{5})-\sin(2Q_{\frac{1}{\lambda(t)}})\right)\\
&+\left(\frac{\cos(2(v_{1}+v_{2}+v_{3}+v_{4}))-1}{r^{2}}\right)\left(\cos(2Q_{\frac{1}{\lambda(t)}}+2v_{5})(\partial_{r}Q_{\frac{1}{\lambda(t)}}+\partial_{r}v_{5})-\cos(2Q_{\frac{1}{\lambda(t)}})\partial_{r}Q_{\frac{1}{\lambda(t)}}\right)\end{split}\end{equation}
The second line of \eqref{stardrv5} is estimated by
\begin{equation} \begin{split} &||\frac{-\sin(2(v_{1}+v_{2}+v_{3}+v_{4}))}{r^{2}} \partial_{r}(v_{1}+v_{2}+v_{3}+v_{4}) \left(\sin(2Q_{\frac{1}{\lambda(t)}}+2v_{5})-\sin(2Q_{\frac{1}{\lambda(t)}})\right)\vert_{r = R\lambda(t)}||_{L^{2}(R dR)} \\
&\leq C ||\frac{v_{1}+v_{2}+v_{3}+v_{4}}{r}||_{L^{\infty}} \left(||\frac{v_{5} \partial_{r}(v_{1}+v_{2}+v_{4})}{r}\vert_{r=R\lambda(t)}||_{L^{2}(R dR)}+ ||\frac{v_{5}}{r}||_{L^{\infty}} ||(\partial_{2}v_{3})(t,\cdot\lambda(t))||_{L^{2}(R dR)}\right)\\
&\leq \frac{C \log^{6+b}(t)}{t^{35/8}}\end{split}\end{equation}
The thrid line of \eqref{stardrv5} is estimated by
\begin{equation}\begin{split} &||-\frac{(\cos(2(v_{1}+v_{2}+v_{3}+v_{4}))-1)}{r^{3}}\left(\sin(2Q_{\frac{1}{\lambda(t)}}+2v_{5})-\sin(2Q_{\frac{1}{\lambda(t)}})\right)\vert_{r=R\lambda(t)}||_{L^{2}(R dR)}\\
&\leq C ||\frac{v_{1}+v_{2}+v_{3}+v_{4}}{r}||_{L^{\infty}}^{2} ||\frac{v_{5}}{r}\vert_{r=R\lambda(t)}||_{L^{2}(R dR)}\\
&\leq \frac{C \log^{6+b}(t)}{t^{5}}\end{split}\end{equation}
Finally, the fourth line of \eqref{stardrv5} is estimated by
\begin{equation}\begin{split} &||\left(\frac{\cos(2(v_{1}+v_{2}+v_{3}+v_{4}))-1}{r^{2}}\right)\left(\cos(2Q_{\frac{1}{\lambda(t)}}+2v_{5})(\partial_{r}Q_{\frac{1}{\lambda(t)}}+\partial_{r}v_{5})-\cos(2Q_{\frac{1}{\lambda(t)}})\partial_{r}Q_{\frac{1}{\lambda(t)}}\right)\vert_{r=R\lambda(t)}||_{L^{2}(R dR)}\\
&\leq ||\frac{v_{1}+v_{2}+v_{3}+v_{4}}{r}||_{L^{\infty}}^{2} \left(||\left(\lambda(t) \frac{v_{5}^{2}}{\lambda(t)^{2}+r^{2}}+\frac{r \lambda(t)^{2} v_{5}}{(r^{2}+\lambda(t)^{2})^{2}}\right)\vert_{r=R\lambda(t)}||_{L^{2}(R dR)} + ||(\partial_{2}v_{5})(t,\cdot \lambda(t))||_{L^{2}(R dR)}\right)\\
&\leq \frac{C \log^{5+b}(t)}{t^{19/4}}\end{split}\end{equation}
Finally, we use our pointwise estimates on all $v_{k}$ to get
\begin{equation} \left(\frac{1}{\lambda(t)^{2}} \int_{0}^{\infty} \frac{(F_{5}(t,R\lambda(t)))^{2}}{R^{2}} R dR\right)^{1/2} \leq \frac{C \log^{6+b}(t)}{t^{21/4}}\end{equation}
Combining all of these estimates, we get
\begin{equation} \frac{1}{\lambda(t)}||F_{5}(t,\cdot\lambda(t))||_{\dot{H}^{1}_{e}} \leq \frac{C \log^{6+b}(t)}{t^{35/8}}\end{equation}
\end{proof}

\subsection{Estimates on $F_{6}$}
We prove the analgous estimates on $F_{6}$:
\begin{lemma} 

\begin{equation}\label{f6l2est} \frac{1}{\lambda(t)^{2}} ||F_{6}(t,r)||_{L^{2}(r dr)} \leq \frac{C}{t^{4} \log^{3b+2N-1}(t)}\end{equation}

\begin{equation}\label{f6h1dotest} \frac{1}{\lambda(t)}||F_{6}(t,r)||_{\dot{H}^{1}_{e}} \leq \frac{C}{t^{9/2} \log^{4b-1+\frac{5N}{2}}(t)}\end{equation}
\end{lemma}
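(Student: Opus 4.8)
\textbf{Proof proposal for the $F_{6}$ estimates.}

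The plan is to estimate $F_{6}$ directly from its definition \eqref{f6def}, namely
$$F_{6}(t,r) = \chi_{\geq 1}\left(\tfrac{4r}{t}\right)\left(\frac{\cos(2Q_{\frac{1}{\lambda(t)}})-1}{r^{2}}\right)\left(v_{4}(t,r)+v_{5}(t,r)\right),$$
exploiting the fact that the cutoff $\chi_{\geq 1}(4r/t)$ restricts attention to the region $r \geq t/4$, where all the relevant large-$r$ pointwise and $L^{2}$ estimates on $v_{4}$ and $v_{5}$ are available. First I would record that on the support of $\chi_{\geq 1}(4r/t)$ we have $\lambda(t)^{2}+r^{2} \sim r^{2}$, so that the coefficient $(\cos(2Q_{\frac{1}{\lambda(t)}})-1)/r^{2}$ and its $r$-derivative are bounded by $C\lambda(t)^{2}/r^{4}$ and $C\lambda(t)^{2}/r^{5}$ respectively (using $1-\cos(2Q_{1}(r/\lambda))= 8\lambda^{2}r^{2}/(r^{2}+\lambda^{2})^{2}$, which in this regime is $\sim 8\lambda^{2}/r^{2}$). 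Since $r \geq t/4$ forces $\lambda(t)^{2}/r^{4} \leq C\lambda(t)^{2}/(t^{2}r^{2})$, the worst-case decay is controlled cleanly.

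For \eqref{f6l2est}, I would use $\|F_{6}(t,\cdot)\|_{L^{2}(rdr)} \leq C\lambda(t)^{2}\,\|\,\mathbbm{1}_{\{r\geq t/4\}}\,\tfrac{1}{r^{2}}(v_{4}+v_{5})\,\|_{L^{2}(rdr)}$, and then combine the large-$r$ pointwise bounds $|v_{4}(t,r)| \leq C/(\sqrt{r}\,t\log^{\frac{3N}{2}+3b-1}(t))$ from \eqref{v4finalest} and $|v_{5}(t,r)| \leq C\log^{4}(t)/(\sqrt{r}\,t^{3/2})$ from \eqref{v5finalest} with the crude bound $1/r^{2}\leq 16/(t r)$ on the support; integrating $r^{-3}$ (really $r^{-2}\cdot r^{-1}$, i.e. $\int_{t/4}^{\infty} r^{-1}\cdot r^{-1}\, r\,dr = \int r^{-1}dr$ needs a little care — in fact one gets $\int_{t/4}^{\infty} \tfrac{1}{r^{4}}\cdot\tfrac{1}{r}\,r\,dr \sim t^{-3}$) and multiplying by $\lambda(t)^{2}/\lambda(t)^{2}=1$ (the $1/\lambda(t)^{2}$ prefactor exactly cancels the $\lambda(t)^{2}$ from the coefficient) gives the stated $t^{-4}\log^{-(3b+2N-1)}(t)$ after noting $v_{4}$ is the dominant term since $N$ is large. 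Actually the cleanest route is to use the $L^{2}$ estimate \eqref{v4energyest}, $\|v_{4}/r\|_{L^{2}(rdr)}\leq C/(t\log^{2N+3b}(t))$, together with $\|F_{6}\|_{L^{2}}\leq C\lambda(t)^{2}t^{-2}\|(v_{4}+v_{5})/r\|_{L^{2}(rdr)}$ and the analogous \eqref{v5energyest}; this avoids pointwise integration entirely and directly yields $\lambda(t)^{-2}\|F_{6}\|_{L^{2}}\leq Ct^{-4}\log^{-(3b+2N-1)}(t)$.

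For the $\dot{H}^{1}_{e}$ estimate \eqref{f6h1dotest}, I would differentiate $F_{6}$ by the product rule, producing three groups of terms: $\chi_{\geq 1}'(4r/t)\cdot\tfrac{4}{t}$ times the rest, the $r$-derivative hitting $(\cos(2Q)-1)/r^{2}$, and the $r$-derivative hitting $v_{4}+v_{5}$. On the support of the cutoff (and its derivative) $r\sim t$, so $|\chi_{\geq 1}'(4r/t)|/t \leq C/t\cdot\mathbbm{1}_{\{t/4\leq r\leq t/2\}}$, which is harmless; the coefficient-derivative term gains $1/r$ over $F_{6}$ itself, again harmless since $r\sim t$; and the genuinely new term requires $\|\partial_{r}v_{4}\|$ and $\|\partial_{r}v_{5}\|$, for which I would use the large-$r$ pointwise bounds \eqref{drv4finalest}, \eqref{drv5finalest}, or better the $L^{2}$ bounds \eqref{v4energyest}, \eqref{v5energyest}. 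Rescaling to the $R=r/\lambda(t)$ variable and collecting factors of $\lambda(t)$, the coefficient contributes $\lambda(t)^{2}\cdot\lambda(t)^{-1}$ net after the $1/\lambda(t)$ prefactor, and the $r$-weights from $\|\cdot\|_{\dot{H}^{1}_{e}}$ combine with $r\sim t$ to give the claimed $t^{-9/2}\log^{-(4b-1+5N/2)}(t)$ — here $\partial_{r}v_{4}$ is the dominant contribution (its $L^{2}$ norm carries $\log^{-(2N+3b)}$ while $\partial_{r}v_{5}$ carries only $t^{-7/4}$ powers but no large $N$ gain, so one must check the $t$-powers balance to $9/2$; they do, since the coefficient and $r$-weights supply two extra powers of $t^{-1}$ beyond $\|\partial_{r}v_{4}\|_{L^{2}}\leq Ct^{-1}\log^{-(2N+3b)}$ after accounting for $1/\lambda$).

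The main obstacle — really just bookkeeping rather than a conceptual difficulty — is tracking the powers of $\log(t)$ and the net powers of $\lambda(t)$ through the rescaling $r = R\lambda(t)$, since $v_{5}$ and $v_{4}$ have different decay profiles and one must confirm that $v_{4}$ (with its large power of $N$ in the denominator) is indeed the term that dictates the final exponent in both norms, while $v_{5}$ — which decays faster in $t$ but without the $N$-gain — does not spoil the estimate. One subtlety worth double-checking is the region $t-\sqrt{t}\leq r\leq t+\sqrt{t}$ near the light cone, where the pointwise bounds on $v_{4},v_{5}$ and their derivatives degrade to $r^{-1/2}$-type (respectively $r^{-9/2}$ for $v_{4,c}$); but since this region has measure $\sim\sqrt{t}$ and $r\sim t$ there, its contribution to the $L^{2}$ norms is easily absorbed, so no separate argument beyond the ones already used for $F_{5}$ is needed.
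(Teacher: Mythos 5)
Your proposal is correct and follows essentially the same route as the paper: estimate $F_{6}$ directly from its definition on the support $r \geq t/4$, bounding the coefficient by $C\lambda(t)^{2}/r^{4}$ and combining the large-$r$ pointwise bounds \eqref{v4finalest}, \eqref{v5finalest}, \eqref{drv4finalest} with the energy bound \eqref{v5energyest} for the $\partial_{r}v_{5}$ contribution, exactly as in the paper's argument. One minor bookkeeping remark: in your "cleanest route" for the $L^{2}$ bound, extracting the factor $(v_{4}+v_{5})/r$ leaves a coefficient of size $C\lambda(t)^{2}t^{-3}$ (not $t^{-2}$) on the support; with that correction the energy-bound variant indeed yields the stated decay as well.
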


\begin{proof}

We start with
\begin{equation}\begin{split} ||F_{6}(t,r)||_{L^{2}(r dr)}^{2} &\leq C \int_{\frac{t}{4}}^{\frac{t}{2}} \frac{\lambda(t)^{4} r (v_{4}(t,r)^{2}+v_{5}(t,r)^{2})}{r^{8}} dr + C \int_{\frac{t}{2}}^{\infty} \frac{\lambda(t)^{4} (v_{4}^{2}+v_{5}^{2}) r dr}{r^{8}}\\
&\leq \frac{C}{t^{8} \log^{10b+4N-2}(t)} \end{split}\end{equation}
where we used \eqref{v4finalest}, \eqref{v5finalest}. This concludes the proof of \eqref{f6l2est}. 

Next, we have
\begin{equation} \begin{split} \partial_{r}F_{6}(t,r) &= \chi_{\geq 1}'(\frac{4r}{t}) \left(\frac{4}{t}\right) \left(\frac{\cos(2Q_{\frac{1}{\lambda(t)}}(r))-1}{r^{2}}\right) \left(v_{4}+v_{5}\right)+\chi_{\geq 1}(\frac{4r}{t})\left(v_{4}+v_{5}\right) \partial_{r}\left(\frac{\cos(2Q_{\frac{1}{\lambda(t)}}(r))-1}{r^{2}}\right)\\
&+\chi_{\geq 1}(\frac{4r}{t}) \left(\frac{\cos(2Q_{\frac{1}{\lambda(t)}}(r))-1}{r^{2}}\right) \partial_{r}(v_{4}+v_{5})\end{split}\end{equation}
We estimate the $L^{2}$ norm as follows.
\begin{equation}\begin{split} ||\partial_{r}F_{6}(t,r)||_{L^{2}(r dr)} &\leq \frac{C}{t^{9/2} \log^{5b-1+\frac{5N}{2}}(t)} + ||\partial_{r}v_{5}||_{L^{2}(r dr)}\cdot ||\frac{\chi_{\geq 1}(\frac{4r}{t}) \lambda(t)^{2}}{(r^{2}+\lambda(t)^{2})^{2}}||_{L^{\infty}_{r}}\\
&\leq \frac{C}{t^{9/2} \log^{5b-1+\frac{5N}{2}}(t)}\end{split}\end{equation}
where we used \eqref{v4finalest}, \eqref{drv4finalest}, \eqref{v5finalest}, and \eqref{v5energyest}. The last term to estimate is
\begin{equation} ||\frac{F_{6}(t,r)}{r}||_{L^{2}(r dr)} \leq \frac{C}{t^{5} \log^{5b+2N-1}(t)}\end{equation}
where we used \eqref{v4finalest} and \eqref{v5finalest}. Combining these, we get
\begin{equation} \frac{1}{\lambda(t)}||F_{6}(t,r)||_{\dot{H}^{1}_{e}} \leq \frac{C}{t^{9/2} \log^{4b-1+\frac{5N}{2}}(t)}\end{equation}

\end{proof}
\subsection{Estimates on $v_{corr}$-dependent quantities}
Finally, we define $v_{corr}:=v_{1}+v_{2}+v_{3}+v_{4}+v_{5}$ and record some estimates on $v_{corr}$-dependent quantities which will appear as coefficients of various error terms involving the final correction, which is to be constructed in the next section.
\begin{lemma}\label{vcorrests}
We have the following estimates
\begin{equation}\label{vcorrcof} ||\frac{v_{corr}(x,R\lambda(x))}{R \lambda(x)}||_{L^{\infty}}^{2}+||\frac{v_{corr}(x,R\lambda(x))}{R\lambda(x)^{2}(1+R^{2})}||_{L^{\infty}} \leq \frac{C \log(\log(x))}{x^{2}\log(x)}\end{equation}

\begin{equation}\label{1cof} 1+||\frac{v_{corr}(x,R\lambda(x))}{R}||_{L^{\infty}}+||\partial_{R}(v_{corr}(x,R\lambda(x)))||_{L^{\infty}} \leq C\end{equation}

\begin{equation}\label{vcorrdrvcorrcof}\begin{split} &||\frac{v_{corr}(x,R\lambda(x)) \partial_{R}(v_{corr}(x,R\lambda(x)))}{R\lambda(x)^{2}}||_{L^{\infty}_{R}((0,1))}+||\frac{v_{corr}(x,R\lambda(x)) \partial_{R}(v_{corr}(x,R\lambda(x)))}{R^{2}\lambda(x)^{2}}||_{L^{\infty}_{R}((1,\infty))}\\
&+||\frac{\partial_{R}(v_{corr}(x,R\lambda(x)))}{(1+R^{2})\lambda(x)^{2}}||_{L^{\infty}} \\
&\leq \frac{C \log(\log(x))}{x^{2}\log(x)}\end{split}\end{equation}

\end{lemma}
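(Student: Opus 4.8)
\textbf{Proof proposal for Lemma \ref{vcorrests}.} The plan is to assemble the three displayed estimates \eqref{vcorrcof}, \eqref{1cof}, \eqref{vcorrdrvcorrcof} by decomposing $v_{corr} = v_1 + v_2 + v_3 + v_4 + v_5$ and invoking, for each summand, the pointwise estimates already proven earlier in the section: \eqref{v1smallrest}--\eqref{largerdrv1} (together with \eqref{dtv1finalest} and the $\lambda$-derivative bounds) for $v_1$, \eqref{v2precisenearorigin}--\eqref{v2singularconeest} for $v_2$, \eqref{v3laterest}--\eqref{drv3est} and \eqref{v3preciseforip} for $v_3$, \eqref{v4finalest}--\eqref{drv4finalest} for $v_4$, and \eqref{v5finalest}--\eqref{drv5finalest} for $v_5$. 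The key point is that all of these bounds are ``sub-logarithmic'' in the sense that in renormalized coordinates $r = R\lambda(x)$ they contribute at worst $\frac{C\log(\log(x))}{x^2 \log(x)}$ to the weighted $L^\infty$ norms in question, except for the $v_1+v_2$ combination, which by itself would only give $\frac{Cr}{x^2\log^b(x)}$ and hence must be handled via the delicate near-origin cancellation.

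First I would treat \eqref{1cof}: the bound $\|v_{corr}(x,R\lambda(x))/R\|_{L^\infty} \leq C$ follows immediately because each $v_k(x,r)/r$ is bounded by a negative power of $x$ times at most $\log^{C}(x)$ (from the small-$r$ estimates, where $v_k \sim r \cdot (\text{time decay})$) and by $C/r^{3/2} \leq C$ for $r \geq t/2$ (from the large-$r$ estimates, using $\sqrt{r} \geq \sqrt{t/2}$); similarly $\partial_R(v_{corr}(x,R\lambda(x))) = \lambda(x)\partial_r v_{corr}$ is bounded by combining \eqref{drv1smallrest}, \eqref{largerdrv1}, the $\partial_r v_2$ estimates, \eqref{drv3est}, and \eqref{drv4finalest}, \eqref{drv5finalest}; the constant $1+$ is there for convenience. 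Then for \eqref{vcorrcof} and \eqref{vcorrdrvcorrcof} I would split the spatial integral/supremum into $r \leq t/2$ and $r > t/2$. On $r > t/2$ the weights $\frac{1}{R\lambda(x)}$, $\frac{1}{R^2\lambda(x)^2}$, etc., combined with the $\frac{1}{\sqrt{r}}$-type decay and the explicit powers of $x$ from \eqref{v2singularconeest}, \eqref{v4finalest}, \eqref{v5finalest}, produce decay far stronger than $\frac{\log\log x}{x^2 \log x}$, so that region is harmless. On $r \leq t/2$ the dominant contribution is from $v_1 + v_2$: here I would invoke \eqref{v1plusv2delicate} (the near-origin cancellation lemma already established, giving $|v_1+v_2| \leq \frac{Cr\log\log t}{t^2\log^{b+1}t} + \ldots$), so that $\frac{|v_1+v_2|(x,R\lambda(x))}{R\lambda(x)} \leq \frac{C\log\log x}{x^2 \log^{b+1} x}$ uniformly in $R \leq \frac{t}{2\lambda}$; since $\lambda(x) \sim \log^{-b}(x)$, this is $\frac{C\log\log x}{x^2 \log x}$, which is exactly the claimed bound. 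The remaining pieces $v_3, v_4, v_5$ contribute $\frac{r}{x^2}$ times higher powers of $\log x$ in the numerator or negative powers, so after dividing by $R\lambda(x)$ they are dominated by the same quantity or better.

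For \eqref{vcorrdrvcorrcof} the structure is analogous: one expands $v_{corr}\,\partial_R v_{corr}$ as a sum of products $v_j\,\partial_R v_k$, uses \eqref{1cof} to bound the ``bad'' factor uniformly, and uses the $\frac{\log\log x}{x^2\log x}$-type bound on the other factor $\frac{v_j(x,R\lambda(x))}{R\lambda(x)^2}$ or $\frac{\partial_R(v_k(x,R\lambda(x)))}{\lambda(x)^2}$; the latter quantity is exactly where one needs \eqref{drv1plusv2delicate} (the near-origin cancellation for $\partial_r(v_1+v_2)$, giving $\frac{C\log\log x}{x^2\log^{b+1}x} + \frac{C\log(3+2r)}{x^2\log^{b+1}x}$), since $\frac{\partial_R(v_1(x,R\lambda))+\partial_R(v_2(x,R\lambda))}{\lambda(x)^2} = \frac{\partial_r(v_1+v_2)}{\lambda(x)}$, and $\frac{1}{\lambda(x)} \sim \log^b(x)$ combines with $\log^{-(b+1)}(x)$ to give $\log^{-1}(x)$. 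The factor $\frac{1}{1+R^2}$ and the split into $L^\infty_R((0,1))$ versus $L^\infty_R((1,\infty))$ are used to absorb the extra power of $R$ where needed; on $(1,\infty)$ one has the extra $\frac{1}{R}$, and combined with the decay of $\frac{v_j}{r}$ for larger $R$ this stays bounded. The main obstacle is bookkeeping: ensuring that in every one of the (roughly) $5 \times 5$ product terms, after inserting the correct weight and splitting $r$-regions, one never loses the gain from the $v_1+v_2$ cancellation (which is the only mechanism turning $\log^{-b}$ into $\log^{-1}$) — i.e., checking that the terms where neither factor is $v_1+v_2$ still decay fast enough on their own, which they do because $v_3, v_4, v_5$ all carry at least one extra negative power of $x$ or a net gain in $\log$. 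All the required inputs are the pointwise estimates and derivative estimates listed above, so no genuinely new estimate is needed; the proof is an organized assembly.
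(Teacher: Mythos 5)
Your proposal is correct and follows essentially the same route as the paper: the paper's proof likewise assembles the combined pointwise bounds on $v_{corr}$ and $\partial_{R}(v_{corr}(x,R\lambda(x)))$ from \eqref{v1plusv2delicate}, \eqref{drv1plusv2delicate} (the $v_{1}+v_{2}$ cancellation), together with \eqref{v3laterest}, \eqref{v3largerest}, \eqref{drv3nearoriginrefinement}, \eqref{drv3est}, \eqref{v4finalest}, \eqref{drv4finalest}, \eqref{v5finalest}, \eqref{drv5finalest}, split into the regions $r\leq\log^{N}(x)$, $\log^{N}(x)\leq r\leq x/2$, $r>x/2$, and then reads off the three weighted sup bounds exactly as you describe. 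One sentence of yours is slightly off (the bound on $v_{corr}(x,R\lambda(x))/(R\lambda(x))$ carries no extra $1/\lambda$ factor, so the "$\lambda\sim\log^{-b}$ turns $\log^{-(b+1)}$ into $\log^{-1}$" mechanism applies to the second term of \eqref{vcorrcof} and to \eqref{vcorrdrvcorrcof}, while the first term is saved by being squared), but this does not affect the argument.
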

\begin{proof}
  From \eqref{v1plusv2delicate}, \eqref{v3laterest}, \eqref{v3largerest}, \eqref{v4finalest}, and \eqref{v5finalest}, we get
\begin{equation} |v_{corr}(t,r)| \leq \begin{cases} \frac{C r \log(\log(t))}{t^{2} \log^{b+1}(t)}, \quad r \leq \log^{N}(t)\\
\frac{C r}{t^{2} \log^{b}(t)}, \quad \log^{N}(t) \leq r \leq \frac{t}{2}\\
\frac{C}{\sqrt{r}}, \quad \frac{t}{2} < r\end{cases}\end{equation}
This gives
\begin{equation} ||\frac{v_{corr}(x,R\lambda(x))}{R \lambda(x)}||_{L^{\infty}}^{2}+||\frac{v_{corr}(x,R\lambda(x))}{R\lambda(x)^{2}(1+R^{2})}||_{L^{\infty}} \leq \frac{C \log(\log(x))}{x^{2}\log(x)} \end{equation}
Then, we use \eqref{drv1plusv2delicate},\eqref{drv3nearoriginrefinement}, \eqref{drv3est}, \eqref{drv4finalest}, and \eqref{drv5finalest} to get
\begin{equation} |\partial_{R}(v_{corr}(x,R\lambda(x)))| \leq C\lambda(x) \begin{cases} \frac{\log(\log(x))}{x^{2} \log^{b+1}(x)}, \quad R \lambda(x) \leq \log^{N}(x)\\
\frac{1}{x^{2} \log^{b}(x)}, \quad  \log^{N}(x) \leq R\lambda(x) \leq \frac{x}{2}\\
\frac{1}{\sqrt{x}}, \quad R \lambda(x) > \frac{x}{2}\end{cases}\end{equation}
This implies

\begin{equation} 1+||\frac{v_{corr}(x,R\lambda(x))}{R}||_{L^{\infty}}+||\partial_{R}(v_{corr}(x,R\lambda(x)))||_{L^{\infty}} \leq C\end{equation}

Next, we have
\begin{equation} |\frac{v_{corr}(x,R\lambda(x))}{R \lambda(x)}| \cdot |\frac{\partial_{R}(v_{corr}(x,R\lambda(x)))}{\lambda(x)}| \leq  C \frac{(\log(\log(x)))^{2}}{x^{4} \log^{2b+2}(x)}, \quad R  \leq 1\end{equation}

\begin{equation}|\frac{v_{corr}(x,R\lambda(x))}{R \lambda(x)}| \cdot |\frac{\partial_{R}(v_{corr}(x,R\lambda(x)))}{R \lambda(x)}| \leq \begin{cases} \frac{C (\log(\log(x)))^{2}}{R x^{4} \log^{2b+2}(x)}, \quad 1 \leq R  \leq \frac{\log^{N}(x)}{\lambda(x)}\\
\frac{C}{R x^{4} \log^{2b}(x)}, \quad \log^{N}(x) \leq R \lambda(x) \leq \frac{x}{2}\\
\frac{C}{R^{5/2} \lambda(x)^{3/2} \sqrt{x}}, \quad R \lambda(x) > \frac{x}{2}\end{cases}\end{equation}

and

\begin{equation} \frac{|\partial_{R}(v_{corr}(x,R\lambda(x)))|}{(1+R^{2})\lambda(x)^{2}} \leq \begin{cases} \frac{C \log(\log(x))}{x^{2} \log(x) (1+R^{2})}, \quad R \lambda(x) \leq \log^{N}(x)\\
\frac{1}{x^{2}(1+R^{2})}, \quad \log^{N}(x) \leq R\lambda(x) \leq \frac{x}{2}\\
\frac{\log^{b}(x)}{\sqrt{x}}\cdot \frac{1}{1+R^{2}}, \quad R\lambda(x) > \frac{x}{2}\end{cases}\end{equation}
which imply
\begin{equation}\begin{split} &||\frac{v_{corr}(x,R\lambda(x)) \partial_{R}(v_{corr}(x,R\lambda(x)))}{R\lambda(x)^{2}}||_{L^{\infty}_{R}((0,1))}+||\frac{v_{corr}(x,R\lambda(x)) \partial_{R}(v_{corr}(x,R\lambda(x)))}{R^{2}\lambda(x)^{2}}||_{L^{\infty}_{R}((1,\infty))}\\
&+||\frac{\partial_{R}(v_{corr}(x,R\lambda(x)))}{(1+R^{2})\lambda(x)^{2}}||_{L^{\infty}} \\
&\leq \frac{C \log(\log(x))}{x^{2}\log(x)}\end{split}\end{equation}

\end{proof}

\section{Solving the final equation}
The full equation to solve is \eqref{v6eqn} with 0 Cauchy data at infinity. For ease of notation, let us set $u=v_{6}$, and re-write \eqref{v6eqn} as
\begin{equation}\label{linearproblem}\begin{split}-\partial_{tt}u+\partial_{rr}u+\frac{1}{r} \partial_{r}u-\frac{\cos(2Q_{\frac{1}{\lambda(t)}}(r))}{r^{2}} u &= F(t,r)+F_{3}(t,r)
 \end{split}\end{equation}
where 
$$F(t,r) = F_{4}(t,r) + F_{5}(t,r)+F_{6}(t,r)$$
and we recall that $F_{4},F_{5}$, and $F_{6}$ are defined in \eqref{f4def}, \eqref{f5def}, and \eqref{f6def}, and are estimated in theorem \ref{approxsolnthm}.
and \begin{equation}\label{f3def}F_{3} = N(u)+L_{1}(u)\end{equation}
with
$$N(f) = \left(\frac{\sin(2f)-2f}{2r^{2}}\right)\cos(2Q_{\frac{1}{\lambda(t)}}) + \left(\frac{\cos(2f)-1}{2r^{2}}\right)\sin(2(Q_{\frac{1}{\lambda(t)}}+v_{corr}))$$
$$L_{1}(f)=\frac{\sin(2f)}{2r^{2}} \cos(2Q_{\frac{1}{\lambda(t)}})(\cos(2v_{corr})-1) -\frac{\sin(2f)}{2r^{2}}\sin(2Q_{\frac{1}{\lambda(t)}})\sin(2v_{corr})$$
$$v_{corr}=v_{1}+v_{2}+v_{3}+v_{4}+v_{5}$$

Note that we will utilize the crucial fact that $$\langle F_{4}(t,\cdot),\phi_{0}(\frac{\cdot}{\lambda(t)})\rangle =0$$
\subsection{The equation for $\mathcal{F}(u)$}
We will make appropriate changes of variables in order to (formally) derive the equation for the distorted Fourier transform, discussed in section 4 of \cite{kst}, of $u$. (Note, however, that we will not renormalize the time variable, unlike in \cite{kst}.) We will denote the distorted Fourier transform of a function $f$ by $\mathcal{F}(f)$. Let $$u(t,r) = v(t,\frac{r}{\lambda(t)})$$
Then, if we evaluate the equation for $u$ at the point $(t,R \lambda(t))$, we obtain
\begin{equation} \label{vlinearproblem}\begin{split}&-\partial_{11}v(t,R) +2 \frac{\lambda'(t)}{\lambda(t)} R \partial_{12}v(t,R) + \left(\frac{\lambda''(t)}{\lambda(t)}-2\frac{\lambda'(t)^{2}}{\lambda(t)^{2}}\right)R \partial_{2}v(t,R) -\frac{\lambda'(t)^{2}}{\lambda(t)^{2}} R^{2} \partial_{22}v(t,R) \\
&+ \frac{1}{\lambda(t)^{2}}\left(\partial_{22}v(t,R) + \frac{1}{R}\partial_{2}v(t,R) - \frac{\cos(2Q_{1}(R))}{R^{2}} v(t,R)\right)=F(t,R \lambda(t))+F_{3}(t,R\lambda(t))\end{split}\end{equation}
Now, let $$v(t,R) = \frac{w(t,R)}{\sqrt{R}}$$ to get
\begin{equation}\label{wlinearproblem}\begin{split}&-\partial_{11}w(t,R)-\frac{\lambda'(t)}{\lambda(t)} \partial_{1}w(t,R) + 2\frac{\lambda'(t)}{\lambda(t)} \partial_{1}(R \partial_{2}w)(t,R) + \left(\frac{-\lambda''(t)}{2\lambda(t)}+\frac{1}{4}\frac{\lambda'(t)^{2}}{\lambda(t)^{2}}\right)w(t,R) \\
&+ \left(\frac{\lambda''(t)}{\lambda(t)} - \frac{\lambda'(t)^{2}}{\lambda(t)^{2}}\right) R\partial_{2}w(t,R) -\frac{\lambda'(t)^{2}}{\lambda(t)^{2}} R^{2} \partial_{22}w(t,R) \\
&+ \frac{1}{\lambda(t)^{2}}\left(\partial_{22}w(t,R) -\left(\frac{3}{4R^{2}}-\frac{8}{(1+R^{2})^{2}}\right)w(t,R)\right)=\sqrt{R}F(t,R \lambda(t))+\sqrt{R}F_{3}(t,R\lambda(t))\end{split}\end{equation}
Next, from (5.1) of \cite{kst}, we have
$$\mathcal{F}(R\partial_{R}w) = -2 \xi \partial_{\xi} \mathcal{F}(w)+K(\mathcal{F}(w))$$
where we will use various estimates on $K$, proven in \cite{kst}, later on. Making the final change of variable
$$y(t,\xi) = \mathcal{F}(w)(t,\xi \lambda(t)^{2})$$
and evaluating the distorted Fourier transform of \eqref{wlinearproblem} at the point $(t,\omega \lambda(t)^{2})$, we get
\begin{equation}\label{ylinearproblem}\begin{split} &-\partial_{tt} y(t,\omega) - \omega y(t,\omega) -\frac{\lambda'(t)}{\lambda(t)} \partial_{t}y(t,\omega) + \frac{2 \lambda'(t)}{\lambda(t)} K\left(\partial_{1}y(t,\frac{\cdot}{\lambda(t)^{2}})\right)(\omega \lambda(t)^{2}) +\left(\frac{-\lambda''(t)}{2\lambda(t)}+\frac{\lambda'(t)^{2}}{4 \lambda(t)^{2}}\right)y(t,\omega) \\
&+ \frac{\lambda''(t)}{\lambda(t)} K\left(y(t,\frac{\cdot}{\lambda(t)^{2}})\right)(\omega \lambda(t)^{2}) +2\frac{\lambda'(t)^{2}}{\lambda(t)^{2}}\left([\xi \partial_{\xi},K](y(t,\frac{\cdot}{\lambda(t)^{2}}))\right)(\omega \lambda(t)^{2}) \\
&-\frac{\lambda'(t)^{2}}{\lambda(t)^{2}} K\left(K(y(t,\frac{\cdot}{\lambda(t)^{2}}))\right)(\omega \lambda(t)^{2})=\mathcal{F}(\sqrt{\cdot}F(t,\cdot \lambda(t)))(\omega \lambda(t)^{2})+\mathcal{F}(\sqrt{\cdot}F_{3}(u(y))(t,\cdot \lambda(t)))(\omega \lambda(t)^{2})\end{split}\end{equation}\\
\\
where we write $F_{3}(u(y))$ to emphasize the dependence of $F_{3}$ on $y$, which is related to $u$ via 
\begin{equation}\label{yu}y(t,\xi) = \mathcal{F}(\sqrt{\cdot} u(t,\cdot \lambda(t)))(\xi \lambda(t)^{2})\end{equation}
\subsection{Estimates on $F_{2}$}
Let \begin{equation}\begin{split}F_{2}(y)(t,\omega) &=-\frac{\lambda'(t)}{\lambda(t)} \partial_{t}y(t,\omega) + \frac{2 \lambda'(t)}{\lambda(t)} K\left(\partial_{1}y(t,\frac{\cdot}{\lambda(t)^{2}})\right)(\omega \lambda(t)^{2}) +\left(\frac{-\lambda''(t)}{2\lambda(t)}+\frac{\lambda'(t)^{2}}{4 \lambda(t)^{2}}\right)y(t,\omega) \\
&+ \frac{\lambda''(t)}{\lambda(t)} K\left(y(t,\frac{\cdot}{\lambda(t)^{2}})\right)(\omega \lambda(t)^{2}) +2\frac{\lambda'(t)^{2}}{\lambda(t)^{2}}\left([\xi \partial_{\xi},K](y(t,\frac{\cdot}{\lambda(t)^{2}}))\right)(\omega \lambda(t)^{2}) \\
&-\frac{\lambda'(t)^{2}}{\lambda(t)^{2}} K\left(K(y(t,\frac{\cdot}{\lambda(t)^{2}}))\right)(\omega \lambda(t)^{2})\end{split}\end{equation}\\
\\
Note that \eqref{ylinearproblem} becomes
\begin{equation}\label{finalyproblem} \partial_{tt}y+\omega y = -\mathcal{F}(\sqrt{\cdot}F(t,\cdot \lambda(t)))(\omega \lambda(t)^{2})+F_{2}(y)(t,\omega) -\mathcal{F}(\sqrt{\cdot}F_{3}(u(y))(t,\cdot \lambda(t)))(\omega \lambda(t)^{2})\end{equation}
Our goal is to prove the following proposition:
\begin{proposition}\label{f2prop} There exists $C_{1}>0$ such that, for all $y$ satisfying 
$$ y(t,\omega)  \sqrt{\rho(\omega \lambda(t)^{2})} \langle \omega \lambda(t)^{2}\rangle \in C^{0}_{t}([T_{0},\infty),L^{2}(d\omega))$$
and
$$\partial_{t}y(t,\omega)  \langle \sqrt{\omega} \lambda(t)\rangle \sqrt{\rho(\omega \lambda(t)^{2})} \in C^{0}_{t}([T_{0},\infty), L^{2}(d\omega))$$
we have the following inequalities, for $x \geq T_{0}$:
\begin{equation}\label{yinhomest}||F_{2}(y)(x)||_{L^{2}(\rho(\omega \lambda(x)^{2}) d\omega)} \leq \frac{C_{1}}{x \log(x)}\left(||\partial_{t}y(x)||_{L^{2}(\rho(\omega \lambda(x)^{2})d\omega)} + \frac{1}{x} ||y(x)||_{L^{2}(\rho(\omega \lambda(x)^{2}) d\omega)}\right)\end{equation}\\
\\
and
\begin{equation}\label{lyinhomest}\begin{split} &||\sqrt{\omega} \lambda(x) F_{2}(y)(x)||_{L^{2}(\rho(\omega \lambda(x)^{2}) d\omega)}\\
&\leq \frac{C_{1}}{x\log(x)}\left(||\sqrt{\omega}\lambda(x)\partial_{t}y(x)||_{L^{2}(\rho(\omega \lambda(x)^{2}) d\omega)} + ||\partial_{t}y(x)||_{L^{2}(\rho(\omega \lambda(x)^{2})d\omega)}\right)\\
&+\frac{C_{1}}{x^{2}\log(x)}\left(||\sqrt{\omega} \lambda(x) y(x)||_{L^{2}(\rho(\omega \lambda(x)^{2})d\omega)}+||y(x)||_{L^{2}(\rho(\omega \lambda(x)^{2})d\omega)}\right)\end{split}\end{equation}
\end{proposition}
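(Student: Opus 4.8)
\textbf{Proof plan for Proposition \ref{f2prop}.}

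The plan is to estimate each of the six terms comprising $F_{2}(y)$ separately, in the two weighted $L^2$ norms in the statement, using (i) the derivative bounds on $\lambda$ from Theorem \ref{approxsolnthm} (in particular $|\lambda'(t)/\lambda(t)| \le C/(t\log(t))$ and $|\lambda''(t)/\lambda(t)| \le C/(t^2\log(t))$, which follow from $|\lambda^{(k)}(t)| \lesssim t^{-k}\log^{-b-1}(t)$ together with $\lambda(t) \gtrsim \log^{-b}(t)$), and (ii) the mapping properties of the transference operator $K$ from \cite{kst}. The key structural point is that $K$ and the commutator $[\xi\partial_\xi,K]$ are bounded on the weighted space $L^2(\rho(\xi)d\xi)$ (this is exactly the content of the relevant lemmas in section 5 of \cite{kst}, which are invoked here as stated in the excerpt), and that conjugation by the scaling $\xi \mapsto \xi\lambda(t)^2$ does not change these operator norms because the weight $\rho$ scales compatibly. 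So for the first term, $-\frac{\lambda'(t)}{\lambda(t)}\partial_t y$, one simply pulls the scalar factor out and bounds it by $C/(x\log(x))$; for the term $\frac{2\lambda'(t)}{\lambda(t)}K(\partial_1 y(t,\cdot/\lambda(t)^2))(\omega\lambda(t)^2)$ one uses $L^2(\rho d\omega)$-boundedness of $K$ to dominate it by $\frac{C}{x\log(x)}\|\partial_t y(x)\|_{L^2(\rho(\omega\lambda(x)^2)d\omega)}$ — here one must be slightly careful that $\partial_1 y$ means the partial derivative in the \emph{first} (time) slot evaluated after the frequency rescaling, so this really is $\partial_t y$ up to harmless scalar factors of $\lambda$; the scalar $\frac{\lambda''}{\lambda}$ and $\frac{\lambda'^2}{\lambda^2}$ terms are both $O(t^{-2}\log^{-1} t)$, which is better than needed and goes into the $\frac{1}{x}\|y\|$ contribution on the right; and the $K\circ K$ and $[\xi\partial_\xi,K]$ terms carry the prefactor $\lambda'^2/\lambda^2 = O(t^{-2}\log^{-2} t)$, again more than enough.

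For the second inequality \eqref{lyinhomest}, the only new ingredient is to commute the multiplier $\sqrt{\omega}\lambda(x)$ past $K$ and $[\xi\partial_\xi,K]$. This is where the loss of one power of the weight appears: $K$ does not commute with multiplication by $\sqrt{\omega}$, but the commutator $[\sqrt{\omega}\lambda, K]$ (equivalently $K$ acting between the spaces with weights $\langle\sqrt{\omega}\lambda\rangle\sqrt{\rho}$ and $\sqrt{\rho}$) is bounded with a \emph{lower-order} tail — this is again a property of $K$ recorded in \cite{kst} (the kernel of $K$ decays fast enough off the diagonal that multiplying by $\sqrt{\omega}$ produces only a bounded operator plus a smoothing remainder). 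So $\sqrt{\omega}\lambda\, K(y(\cdot/\lambda^2))$ is controlled by $\|\sqrt{\omega}\lambda\, y\|_{L^2(\rho d\omega)} + \|y\|_{L^2(\rho d\omega)}$, and likewise with $\partial_t y$ in place of $y$; multiplying by the scalar prefactors $\lambda'/\lambda$, $\lambda''/\lambda$, $\lambda'^2/\lambda^2$ then yields exactly the claimed right-hand side, with the first group of terms (prefactor $\lambda'/\lambda \sim (x\log x)^{-1}$) landing on the $\partial_t y$ lines and the scalar $\lambda''/\lambda$, $\lambda'^2/\lambda^2$ terms (prefactor $\sim x^{-2}\log^{-1} x$) landing on the $y$ lines.

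The main obstacle, and the step that needs the most care, is precisely the interaction between the multiplier $\sqrt{\omega}\lambda(x)$ and the transference operator $K$ in \eqref{lyinhomest}: one must verify that the weighted boundedness statements for $K$ from \cite{kst} are robust enough that, after the rescaling $\xi = \omega\lambda(t)^2$, the operator $y \mapsto \sqrt{\omega}\lambda(t)\, K(y(t,\cdot/\lambda(t)^2))(\omega\lambda(t)^2)$ maps $\{y : \langle\sqrt{\omega}\lambda(t)\rangle\sqrt{\rho(\omega\lambda(t)^2)}\, y \in L^2\}$ into $L^2(\rho(\omega\lambda(t)^2)d\omega)$ with a constant independent of $t$. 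Concretely I would: first record the needed $K$-bounds from \cite{kst} in the two scale-invariant forms $\|K f\|_{L^2(\rho)} \le C\|f\|_{L^2(\rho)}$ and $\|\langle\sqrt{\cdot}\rangle K f\|_{L^2(\rho)} \le C\|\langle\sqrt{\cdot}\rangle f\|_{L^2(\rho)}$ (and the analogue for $[\xi\partial_\xi,K]$ and for $K\circ K$); second, check scale-invariance of each bound under $\xi \mapsto \mu\xi$ using that $\rho(\mu\xi)\,\mu\,d\xi$ and the weight $\sqrt{\mu\xi}$ transform compatibly with $\mu = \lambda(t)^{-2}$; third, insert the $\lambda$-derivative bounds from Theorem \ref{approxsolnthm} as scalar prefactors; and fourth, assemble. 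The $F$- and $F_3$-dependent terms of \eqref{finalyproblem} are \emph{not} part of $F_2$, so nothing about $F_4,F_5,F_6$ or the nonlinearity enters this proposition. Everything else is a routine triangle-inequality bookkeeping of six terms against the two target norms.
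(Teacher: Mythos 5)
Your proposal is correct and follows essentially the same route as the paper: term-by-term estimation of the six pieces of $F_{2}$, the $L^{2}(\rho\,d\xi)$-boundedness of $K$, $[\xi\partial_{\xi},K]$ and $K\circ K$ with the extra weight $\langle\xi\rangle^{1/2}$ (assembled from (5.3), Proposition 5.2 and the symbol bounds on $a$ in \cite{kst}), undone and redone by the change of variables $\xi=\omega\lambda(x)^{2}$, together with $|\lambda'/\lambda|\lesssim (x\log x)^{-1}$ and $|\lambda''/\lambda|,\ (\lambda'/\lambda)^{2}\lesssim (x^{2}\log x)^{-1}$. The only cosmetic difference is that you phrase the weighted step as a commutator $[\sqrt{\omega}\lambda,K]$ bound and speak of $\rho$ "scaling compatibly," whereas in fact the rescaling is absorbed simply because the norms in the statement are already taken against $\rho(\omega\lambda(x)^{2})\,d\omega$, which is exactly the verification your final paragraph carries out.
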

\begin{proof}
By the symbol type bounds on $a$ from Proposition 4.7 of \cite{kst}, we have
$$|\frac{\xi \rho'(\xi)}{\rho(\xi)}| = |\frac{\xi(a \overline{a}'+\overline{a}a')}{|a|^{2}}| \leq C$$
Then, by (5.3) of \cite{kst}, there exists a constant $C$ such that, for $f \in C^{\infty}_{c}((0,\infty))$ and $\alpha =0, \frac{1}{2}$,  we have
$$||\langle \xi \rangle ^{\alpha}(Kf)||_{L^{2}(\rho d\xi)} \leq C \left( ||\langle \xi \rangle ^{\alpha} f||_{L^{2}(\rho d\xi)} + ||\langle \xi \rangle^{\alpha} (K_{0}(f))||_{L^{2}(\rho d\xi)}\right)$$
In addition, we use Proposition 5.2 of \cite{kst} to get, for $\alpha=0, \frac{1}{2}$,
$$||\langle \xi \rangle ^{\alpha+1/2}K_{0}f||_{L^{2}(\rho d\xi)} \leq C ||\langle \xi \rangle ^{\alpha}f||_{L^{2}(\rho d\xi)}$$
$$||\langle \xi \rangle^{\alpha}[\xi \partial_{\xi},K_{0}]f||_{L^{2}(\rho d\xi)} \leq C ||\langle \xi \rangle^{\alpha}f||_{L^{2}(\rho d\xi)}$$
So, for $\alpha=0, \frac{1}{2}$
$$||\langle \xi \rangle ^{\alpha}Kf||_{L^{2}(\rho d\xi)} \leq C ||\langle \xi \rangle ^{\alpha}f||_{L^{2}(\rho d\xi)}$$\\
\\
Also, since
$$[\xi \partial_{\xi},K]f = [\xi \partial_{\xi},K_{0}]f - \xi \partial_{\xi}\left(\frac{\xi\rho'(\xi)}{\rho(\xi)}\right) f$$
the symbol type bounds on $a$ imply that for $\alpha = 0,\frac{1}{2}$, 
$$||\langle \xi \rangle^{\alpha}[\xi \partial_{\xi},K]f||_{L^{2}(\rho d\xi)} \leq C ||\langle \xi \rangle^{\alpha}f||_{L^{2}(\rho d\xi)}$$\\
\\
For later convenience, we will record estimates on some terms appearing in \eqref{ylinearproblem}, treating homogeneous components of a norm with weight $\langle \omega \lambda(x)^{2} \rangle^{1/2}$ seperately:
\begin{equation}\begin{split} ||K(\partial_{1}y(x,\frac{\cdot}{\lambda(x)^{2}}))(\omega \lambda(x)^{2})||_{L^{2}(\rho(\omega \lambda(x)^{2}) d\omega)}&=\left(\int_{0}^{\infty} \rho(\xi) \left(K(\partial_{1}y(x,\frac{\cdot}{\lambda(x)^{2}}))(\xi)\right)^{2} \frac{d\xi}{\lambda(x)^{2}}\right)^{1/2}\\
&\leq \frac{C}{\lambda(x)} \left(\int_{0}^{\infty} \rho(\xi) \left(\partial_{1}y(x,\frac{\xi}{\lambda(x)^{2}})\right)^{2} d\xi\right)^{1/2}\\
&\leq C ||\partial_{1}y(x)||_{L^{2}(\rho(\omega \lambda(x)^{2})d\omega)} \end{split}\end{equation}

\begin{equation}\begin{split}&||\sqrt{\omega} \lambda(x) K(\partial_{1}y(x,\frac{\cdot}{\lambda(x)^{2}}))(\omega \lambda(x)^{2})||^{2}_{L^{2}(\rho(\omega \lambda(x)^{2}) d\omega)} \\
&\leq C \int_{0}^{\infty} \left(K(\partial_{1}y(x,\frac{\cdot}{\lambda(x)^{2}}))(\omega \lambda(x)^{2})\right)^{2} \left(1+\lambda(x)^{4}\omega^{2}\right)^{1/2} \rho(\omega \lambda(x)^{2}) d\omega\\
&\leq C \int_{0}^{\infty} \left(K(\partial_{1}y(x,\frac{\cdot}{\lambda(x)^{2}}))(\xi)\right)^{2} \sqrt{1+\xi^{2}} \frac{\rho(\xi) d\xi}{\lambda(x)^{2}}\\
&\leq \frac{C}{\lambda(x)^{2}} \int_{0}^{\infty} \left(\partial_{1}y(x,\frac{\xi}{\lambda(x)^{2}})\right)^{2} \sqrt{1+\xi^{2}} \rho(\xi) d\xi\\
&\leq C \int_{0}^{\infty} \left(\partial_{1}y(x,\omega)\right)^{2} \left(1+\omega \lambda(x)^{2}\right) \rho(\omega \lambda(x)^{2}) d\omega\end{split}\end{equation}\\
\\
So, \begin{equation}\begin{split}&||\sqrt{\omega}\lambda(x) K(\partial_{1}y(x,\frac{\cdot}{\lambda(x)^{2}}))(\omega \lambda(x)^{2}) ||_{L^{2}(\rho(\omega \lambda(x)^{2})d\omega)} \\
&\leq C\left(||\partial_{1}y(x)||_{L^{2}(\rho(\omega \lambda(x)^{2}) d\omega)}+||\sqrt{\omega} \lambda(x)\partial_{1}y(x) ||_{L^{2}(\rho(\omega \lambda(x)^{2}) d\omega)}\right)\end{split}\end{equation}

Similarly, $$||\left([\xi \partial_{\xi},K](y(x,\frac{\cdot}{\lambda(x)^{2}}))\right)(\omega \lambda(x)^{2})||_{L^{2}(\rho(\omega \lambda(x)^{2}) d\omega)} \leq C ||y(x)||_{L^{2}(\rho(\omega \lambda(x)^{2}) d\omega)}$$

$$||K\left(K(y(x,\frac{\cdot}{\lambda(x)^{2}}))\right)(\omega \lambda(x)^{2})||_{L^{2}(\rho(\omega \lambda(x)^{2}) d\omega)} \leq C ||y(x)||_{L^{2}(\rho(\omega \lambda(x)^{2}) d\omega)}$$

$$||K\left(y(x,\frac{\cdot}{\lambda(x)^{2}})\right)(\omega \lambda(x)^{2})||_{L^{2}(\rho(\omega \lambda(x)^{2})d\omega)} \leq C ||y(x)||_{L^{2}(\rho(\omega \lambda(x)^{2}) d\omega)}$$

\begin{equation}\begin{split}&||\sqrt{\omega} \lambda(x)\left([\xi \partial_{\xi},K](y(x,\frac{\cdot}{\lambda(x)^{2}}))\right)(\omega \lambda(x)^{2})||_{L^{2}(\rho(\omega \lambda(x)^{2}) d\omega)} \\
&\leq C\left(||y(x)||_{L^{2}(\rho(\omega \lambda(x)^{2}) d\omega)}+||\sqrt{\omega} \lambda(x) y(x) ||_{L^{2}(\rho(\omega \lambda(x)^{2}) d\omega)}\right)\end{split}\end{equation}

\begin{equation}\begin{split}&||\sqrt{\omega} \lambda(x)\left(K\left(y(x,\frac{\cdot}{\lambda(x)^{2}})\right)(\omega \lambda(x)^{2})\right)||_{L^{2}(\rho(\omega \lambda(x)^{2}) d\omega)} \\
&\leq C\left(||y(x)||_{L^{2}(\rho(\omega \lambda(x)^{2}) d\omega)}+||\sqrt{\omega} \lambda(x) y(x) ||_{L^{2}(\rho(\omega \lambda(x)^{2}) d\omega)}\right)\end{split}\end{equation}

\begin{equation}\begin{split}&||\sqrt{\omega} \lambda(x)\left(K\left(K(y(x,\frac{\cdot}{\lambda(x)^{2}}))\right)(\omega \lambda(x)^{2})\right)||_{L^{2}(\rho(\omega \lambda(x)^{2}) d\omega)} \\
&\leq C\left(||y(x)||_{L^{2}(\rho(\omega \lambda(x)^{2}) d\omega)}+||\sqrt{\omega} \lambda(x) y(x) ||_{L^{2}(\rho(\omega \lambda(x)^{2}) d\omega)}\right)\end{split}\end{equation} 
We thus conclude the inequalities \eqref{yinhomest} and \eqref{lyinhomest}.\end{proof}

\subsection{$F_{3}$ Estimates}
Our goal in this subsection is to prove the following proposition.
\begin{proposition}\label{f3prop} For $y$ satisfying 
$$ y(t,\omega)  \sqrt{\rho(\omega \lambda(t)^{2})} \langle \omega \lambda(t)^{2}\rangle \in C^{0}_{t}([T_{0},\infty),L^{2}(d\omega))$$ 
let $F_{3}(u(y))$ be given by the expression \eqref{f3def}, where
$$u(t,r) = \sqrt{\frac{\lambda(t)}{r}} \mathcal{F}^{-1}\left(y(t,\frac{\cdot}{\lambda(t)^{2}})\right)(\frac{r}{\lambda(t)}), \quad r >0$$
Then, there exists an absolute constant $C>0$ \emph{independent} of $y$, such that
\begin{equation}\label{f3est}\begin{split} &||\mathcal{F}(\sqrt{\cdot}F_{3}(u(y))(t,\cdot \lambda(t)))(\omega \lambda(t)^{2})||_{L^{2}(\rho(\omega \lambda(t)^{2})d\omega)} \\
&\leq C ||y(t)||_{L^{2}(\rho(\omega \lambda(t)^{2})d\omega)} \left(||\frac{v_{corr}(t,R\lambda(t))}{R\lambda(t)}||_{L^{\infty}}^{2}+||\frac{v_{corr}(t,R\lambda(t))}{R\lambda(t)^{2}(1+R^{2})}||_{L^{\infty}}\right)\\
&+C||\langle \omega \lambda(t)^{2}\rangle y(t)||^{3}_{L^{2}(\rho(\omega \lambda(t)^{2})d\omega)}\\
&+\frac{C}{\lambda(t)}||\langle \sqrt{\omega}\lambda(t)\rangle y(t)||^{2}_{L^{2}(\rho(\omega \lambda(t)^{2})d\omega)}\left(1+||\frac{v_{corr}(t,R\lambda(t))}{R}||_{L^{\infty}}\right)\end{split}\end{equation}\\
\\
and
\begin{equation}\label{lf3est}\begin{split}&||\sqrt{\omega}\lambda(t) \mathcal{F}(\sqrt{\cdot}F_{3}(u(y))(t,\cdot\lambda(t)))(\omega \lambda(t)^{2})||_{L^{2}(\rho(\omega \lambda(t)^{2})d\omega)}\\
&\leq C||\langle\sqrt{\omega}\lambda(t)\rangle y(t)||_{L^{2}(\rho(\omega \lambda(t)^{2})d\omega)}\cdot\left(||\frac{v_{corr}(t,R\lambda(t))}{R\lambda(t)}||_{L^{\infty}}^{2}+||\frac{v_{corr}(t,R\lambda(t))}{R\lambda(t)^{2}(1+R^{2})}||_{L^{\infty}}\right.\\
&\left.+||\frac{\partial_{R}(v_{corr}(t,R\lambda(t)))}{(1+R^{2})\lambda(t)^{2}}||_{L^{\infty}}+||\frac{v_{corr}(t,R\lambda(t))\partial_{R}(v_{corr}(t,R\lambda(t)))}{R\lambda(t)^{2}}||_{L^{\infty}_{R}((0,1))}+||\frac{v_{corr}(t,R\lambda(t))\partial_{R}(v_{corr}(t,R\lambda(t)))}{R^{2}\lambda(t)^{2}}||_{L^{\infty}_{R}((1,\infty))}\right)\\
&+C ||\langle \omega \lambda(t)^{2}\rangle y(t)||_{L^{2}(\rho(\omega \lambda(t)^{2})d\omega)}^{3}\\
&+\frac{C}{\lambda(t)}\left(1+||\frac{v_{corr}(t,R\lambda(t))}{R}||_{L^{\infty}}+||\partial_{R}(v_{corr}(t,R\lambda(t)))||_{L^{\infty}}\right)||\langle \omega \lambda(t)^{2}\rangle y(t)||^{2}_{L^{2}(\rho(\omega \lambda(t)^{2})d\omega)}\end{split}\end{equation}
where
$$v_{corr}=v_{1}+v_{2}+v_{3}+v_{4}+v_{5}$$
\end{proposition}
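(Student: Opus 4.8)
The plan is to reduce the estimate on $\mathcal{F}(\sqrt{\cdot}\,F_3(u(y))(t,\cdot\lambda(t)))(\omega\lambda(t)^2)$ to a pointwise bound on $\sqrt{r}\,F_3(u(y))(t,r)$ in a rescaled coordinate, together with the boundedness of the distorted Fourier transform on the weighted $L^2$ spaces appearing in the statement. The first step is to pass from $y$ to $u$ via the chain $y(t,\xi)=\mathcal{F}(\sqrt{\cdot}\,u(t,\cdot\lambda(t)))(\xi\lambda(t)^2)$, using that $\mathcal F$ is an isometry from $L^2(\rho\,d\xi)$ to $L^2(dr)$ (Plancherel for the distorted transform of \cite{kst}). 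In the renormalized variable $r=R\lambda(t)$ this gives control of $\|u(t,R\lambda(t))/\sqrt{R\lambda(t)}\cdot\sqrt{R}\|_{L^2(R\,dR)}$ (an $L^2_R$ norm of $\sqrt{R}\,u(t,R\lambda(t))/\sqrt{R}$, i.e. really an $L^2(R\,dR)$ bound on $u(t,R\lambda(t))$ up to the $\lambda(t)$ factors) in terms of $\|y(t)\|_{L^2(\rho(\omega\lambda(t)^2)d\omega)}$, and similarly $\|\langle\sqrt\omega\lambda(t)\rangle y(t)\|$ controls $\|\partial_R(u(t,R\lambda(t)))\|_{L^2}$-type quantities. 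I would also extract from the $\langle\omega\lambda(t)^2\rangle$-weighted norm a uniform pointwise bound on $u(t,R\lambda(t))$ itself, e.g. via $\|u(t,R\lambda(t))\|_{L^\infty_R}\lesssim \|\langle\omega\lambda(t)^2\rangle y(t)\|_{L^2(\rho d\omega)}$ (two powers of $\langle\xi\rangle$ beating the $H^1\hookrightarrow L^\infty$ type embedding in the spectral variable); this is what produces the cubic $\|\langle\omega\lambda(t)^2\rangle y\|^3$ terms.

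\textbf{Key steps.} (1) Expand $F_3=N(u)+L_1(u)$ using the explicit formulas, and bound $|F_3(u)|$ pointwise by a sum of terms: a linear-in-$u$ piece with coefficient $\big|\tfrac{\cos(2Q_{1/\lambda})-1}{r^2}\big|\lesssim \tfrac{\lambda^2}{(r^2+\lambda^2)^2}$ times $|v_{corr}|$ (and $\tfrac{|v_{corr}|^2}{r^2}$), a quadratic-in-$u$ piece with the $Q$-coefficients, and a cubic-in-$u$ piece $\lesssim |u|^3/r^2$. (2) For each type, evaluate at $r=R\lambda(t)$, multiply by $\sqrt{R\lambda(t)}$, and estimate the resulting $L^2(R\,dR)$ norm: the linear term is handled by pulling the $v_{corr}$-coefficient out in $L^\infty$ (these are exactly the quantities $\|v_{corr}(t,R\lambda(t))/(R\lambda(t))\|_{L^\infty}^2$ and $\|v_{corr}/(R\lambda(t)^2(1+R^2))\|_{L^\infty}$ from Lemma \ref{vcorrests}) and keeping $\|\sqrt{R}\,u(t,R\lambda(t))\|_{L^2(R\,dR)}\sim\|y\|$; the cubic term uses $\|u\|_{L^\infty}^2$ out and one $\|y\|$ in (hence the cube, after noting $\|\langle\omega\lambda^2\rangle y\|$ dominates $\|u\|_{L^\infty}$); the quadratic term uses one $L^\infty$ on $u$ and one $L^2$, plus a factor $1/\lambda(t)$ coming from the Jacobian/scaling of $\tfrac{1}{r^2}$ versus $\tfrac{1}{(R\lambda)^2}$, and the coefficient $\|v_{corr}/R\|_{L^\infty}$ from \eqref{1cof}. (3) Apply $\mathcal F$ and use its $L^2(\rho\,d\xi)$ boundedness (isometry) to convert these $L^2(R\,dR)$ bounds back to the $L^2(\rho(\omega\lambda(t)^2)d\omega)$ norm of $\mathcal F(\sqrt{\cdot}F_3(\cdot\lambda(t)))(\omega\lambda(t)^2)$; the rescaling $\omega\mapsto\omega\lambda(t)^2$ in the argument is absorbed by a change of variables in the $\rho$-weighted integral exactly as in the $F_2$ estimates above.

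\textbf{The weighted ($\sqrt\omega\lambda$) estimate \eqref{lf3est}.} For this I would use that multiplication by $\sqrt\omega\lambda(t)$ on the Fourier side corresponds, up to the transference/commutator operators $K$ already handled in Proposition \ref{f2prop}, to the operator $\sqrt{-\partial_{RR}+\tfrac{3}{4R^2}-\tfrac{8}{(1+R^2)^2}}$ acting on $\sqrt{R}\,u(t,R\lambda(t))$; equivalently I would bound $\|\sqrt\omega\lambda(t)\,\mathcal F(\sqrt{\cdot}F_3)(\omega\lambda(t)^2)\|_{L^2(\rho d\omega)}$ by an $\dot H^1_e$-type norm of $F_3(u(t,\cdot\lambda(t)))$ in the $R$ variable, i.e. by $\|\partial_R(\sqrt{R}F_3(t,R\lambda(t)))/\sqrt R\|$ plus lower order. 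Differentiating the pointwise expression for $F_3$ in $R$ produces exactly the coefficients $\|\partial_R(v_{corr}(t,R\lambda(t)))/((1+R^2)\lambda(t)^2)\|_{L^\infty}$ and $\|v_{corr}\partial_R v_{corr}/(R\lambda^2)\|_{L^\infty_R((0,1))}$, $\|v_{corr}\partial_R v_{corr}/(R^2\lambda^2)\|_{L^\infty_R((1,\infty))}$ appearing in the statement, together with $\|\partial_R(v_{corr}(t,R\lambda(t)))\|_{L^\infty}$ for the term where $\partial_R$ hits the $u$-dependent factor; the cubic term again gives $\|\langle\omega\lambda^2\rangle y\|^3$. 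The main obstacle I anticipate is the careful bookkeeping near $R=0$ and $R=\infty$: the coefficient $\tfrac{\cos(2Q_1(R))-1}{R^2\lambda^2}\phi_0$-type weights behave differently in the two regimes, the $1/R^2$ and $1/R^3$ singularities in $N$ and $L_1$ must be absorbed using the vanishing of $u(t,0)$ (equivalently the $\sqrt R$ prefactor and Hardy's inequality $\|f/R\|_{L^2(RdR)}\lesssim\|\partial_R f\|_{L^2(RdR)}$), and one must verify that the $L^\infty$-in-$R$ coefficient norms that get pulled out are precisely the finite quantities catalogued in Lemma \ref{vcorrests} — so that no uncontrolled $\lambda(t)$ powers are left over. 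Everything else is routine Hölder/Hardy bookkeeping once the pointwise structure of $F_3$ and the mapping properties of $\mathcal F$ and $K$ are in place.
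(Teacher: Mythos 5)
Your overall architecture matches the paper's: convert the Fourier-side norms to physical-space norms of $F_{3}(u(y))(t,R\lambda(t))$, pull the $v_{corr}$-coefficients out in $L^{\infty}_{R}$ (the quantities of Lemma \ref{vcorrests}), and identify the $\sqrt{\omega}\lambda(t)$ weight with a first-order derivative-type norm of $F_{3}$ in $R$. Two points, however, need repair. First, a smaller one: the correspondence between the weight $\sqrt{\omega}\lambda(t)$ and a physical-space operator is not ``up to the transference/commutator operators $K$'' — $K$ plays no role here. The paper gets an exact identity, $\|\sqrt{\xi}\,\mathcal{F}(\sqrt{\cdot}\,g)(\xi)\|_{L^{2}(\rho\,d\xi)}=\|Lg\|_{L^{2}(r\,dr)}$ with $L=\partial_{r}-\tfrac{\cos Q_{1}}{r}$, by passing through the transform $\mathcal{F}_{H}$ of \cite{bkt} adapted to the factorization $L^{*}L$ (and likewise $\|\xi\,\mathcal F(\sqrt\cdot\,g)\|_{L^2(\rho d\xi)}=\|L^{*}Lg\|_{L^{2}(r\,dr)}$). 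Working with $\sqrt{-\partial_{RR}+\tfrac{3}{4R^{2}}-\tfrac{8}{(1+R^{2})^{2}}}$ ``plus lower order'' would force you to justify square-root commutator bounds you never actually need.

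The more serious gap is your treatment of the nonlinear terms near $R=0$. The terms $\tfrac{\sin(2u)-2u}{2r^{2}}$ and $\tfrac{\cos(2u)-1}{2r^{2}}$ behave like $|u|^{3}/R^{2}$ and $|u|^{2}/R^{2}$, and your proposed tools — a plain $L^{\infty}$ bound on $u$ together with ``Hardy's inequality $\|f/R\|_{L^{2}(R\,dR)}\lesssim\|\partial_{R}f\|_{L^{2}(R\,dR)}$'' — do not close this: that Hardy inequality is false for radial functions in two dimensions, and even granting $\|u/R\|_{L^{2}}$ (which is part of $\|u\|_{\dot H^{1}_{e}}$ by definition), a bound like $\|u\|_{L^{\infty}}^{2}\,\|u/R^{2}\|_{L^{2}}$ involves a norm you do not control. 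What actually makes the cubic terms work — and where the full $\langle\omega\lambda(t)^{2}\rangle$ weight is spent — is the refined pointwise bound of Lemma \ref{ytouregularity}, namely $|u(t,R\lambda(t))|\leq C\,R\,\langle\log R\rangle\,\bigl(\|v\|_{L^{2}(R\,dR)}+\|Lv\|_{L^{2}(R\,dR)}+\|L^{*}Lv\|_{L^{2}(R\,dR)}\bigr)$ together with $|v(R)|\leq C\|v\|_{\dot H^{1}_{e}}$ and the coercivity $\|v\|_{\dot H^{1}_{e}}\lesssim\|v\|_{L^{2}}+\|Lv\|_{L^{2}}$, all proved from the small-$r$ expansion of the eigenfunctions $\phi(r,\xi)$ of \cite{kst}. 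With that, $|u|^{2}/R^{2}\lesssim\langle\log R\rangle^{2}\bigl(\|\langle\omega\lambda^{2}\rangle y\|_{L^{2}(\rho\,d\omega)}\bigr)^{2}$ near the origin, the logarithms are square-integrable against $R\,dR$ on $(0,1)$, and the cubic right-hand sides of \eqref{f3est}–\eqref{lf3est} come out exactly as stated; an analogous use of $\int_{0}^{\infty}\tfrac{(Lv)^{2}}{R^{2}(1+R^{2})}R\,dR\lesssim\|L^{*}Lv\|_{L^{2}}^{2}$ is needed when $\partial_{R}$ falls on $u$ in the weighted estimate. Without this lemma (or an equivalent substitute), your bookkeeping near $R=0$ does not go through.
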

\begin{proof}
To prove these estimates, it will be convenient to use the distorted Fourier transforms of \cite{bkt}, associated to the operators $L^{*}L$ and $LL^{*}$, where
$$L=\partial_{r}-\frac{\cos(Q_{1})}{r}, \quad L^{*}=-\partial_{r}-\frac{(\cos(Q_{1})+1)}{r}$$
Let us use $\tilde{\phi}_{\xi}(r)$ to denote the eigenfunctions denoted by $\phi_{\xi}(r)$ in \cite{bkt}, and $\phi(r,\xi)$ to denote the eigenfunctions of \cite{kst}. By the definitions of these eigenfunctions, there exists $f$ such that
\begin{equation}\label{efunctranslation}\sqrt{r}\tilde{\phi}_{\sqrt{\xi}}(r)=f(\sqrt{\xi})\phi(r,\xi)\end{equation}
To determine the expression for $f$, let $g \in \mathcal{S}$, with $g(0)=0$. Using the definitions of the Fourier transforms from \cite{bkt} and \cite{kst}, and \eqref{efunctranslation}, we have
\begin{equation} \begin{split} \mathcal{F}(g)(\xi) = \int_{0}^{\infty} \phi(r, \xi) g(r) dr = \int_{0}^{\infty} \frac{\sqrt{r} \tilde{\phi}_{\sqrt{\xi}}(r)}{f(\sqrt{\xi})} g(r) dr = \frac{1}{f(\sqrt{\xi})} \mathcal{F}_{H}(\frac{g(\cdot)}{\sqrt{\cdot}})(\sqrt{\xi})\end{split}\end{equation}
where $\mathcal{F}_{H}$ is the operator defined in section 3.1 of \cite{bkt}
Using the inversion formula from \cite{kst} we get
 \begin{equation} \begin{split}g(r) = \int_{0}^{\infty} \phi(r,\xi) \mathcal{F}(g)(\xi) \rho(\xi) d\xi = \int_{0}^{\infty} \frac{\sqrt{r}}{f(\sqrt{\xi})^{2}} \mathcal{F}_{H}(\frac{g(\cdot)}{\sqrt{\cdot}})(\sqrt{\xi}) \rho(\xi) \tilde{\phi}_{\sqrt{\xi}}(r) d\xi\end{split}\end{equation} 
So, for $g \in \mathcal{S}$, with $g(0)=0$, we have, for all $r \neq 0$, 
\begin{equation} \frac{g(r)}{\sqrt{r}} = \int_{0}^{\infty} \frac{2}{f(u)^{2}} \mathcal{F}_{H}(\frac{g(\cdot)}{\sqrt{\cdot}})(u) \rho(u^{2}) \tilde{\phi}_{u}(r) u du\end{equation}
Comparing this to the inversion formula from \cite{bkt}, we get
$$f(u) = \sqrt{2 u \rho(u^{2})}$$
In order to estimate the $F_{3}$ terms, we will also use the following lemma
\begin{lemma}\label{ytouregularity}There exists $C>0$ such that, for all $\overline{y}$ with
$\overline{y}(\xi) \langle \xi \rangle \in L^{2}((0,\infty),\rho(\xi) d\xi)$, if $\overline{v}$ is given by
$$\overline{v}(r) = \frac{1}{\sqrt{r}} \int_{0}^{\infty} \overline{y}(\xi) \phi(r,\xi) \rho(\xi) d\xi, \quad r>0$$
then, 
$\overline{v} \in C^{0}(0,\infty)$, and $\frac{\overline{v}(r)}{r \langle \log(r) \rangle}$ admits a continuous extension to $[0,\infty)$ with $\frac{\overline{v}(r)}{r \langle \log(r) \rangle}(0)= \lim_{r \rightarrow \infty} \overline{v}(r) = 0$. 
$$\overline{v},L(\overline{v}), L^{*}L(\overline{v}) \in L^{2}((0,\infty), rdr)$$
with
\begin{equation}\label{l2conversion} ||\overline{v}(r)||_{L^{2}(r dr)} = ||\mathcal{F}^{-1}(\overline{y})(r)||_{L^{2}(dr)} = ||\overline{y}||_{L^{2}(\rho(\xi) d\xi)}\end{equation}
\begin{equation}\label{lconversion}\begin{split} &||\sqrt{\xi} \overline{y}(\xi)||_{L^{2}(\rho(\xi) d\xi)}^{2} = ||L\overline{v}||_{L^{2}(r dr)}^{2}\end{split}\end{equation}
and
\begin{equation}\label{lstarlconversion}\begin{split} &||\xi \overline{y}(\xi)||_{L^{2}(\rho(\xi) d\xi)}^{2} = ||L^{*}L\overline{v}||_{L^{2}(r dr)}^{2}\end{split}\end{equation}
Moreover, $\overline{v} \in \dot{H}^{1}_{e}$, with
\begin{equation}\label{lfcoercive}||\overline{v}||_{\dot{H}^{1}_{e}} \leq C \left(||\overline{v}||_{L^{2}(r dr)} + ||L(\overline{v})||_{L^{2}(r dr)}\right)\end{equation}
\begin{equation}\label{h1dotecoercive2}|\overline{v}(r)| \leq C ||\overline{v}||_{\dot{H}^{1}_{e}}, \quad r \geq 0\end{equation}
\begin{equation}\label{foverxinfinity} |\frac{\overline{v}(r)}{r \langle \log(r) \rangle}| \leq C\left(||\overline{v}||_{L^{2}(r dr)} + ||L(\overline{v})||_{L^{2}(r dr)} + ||L^{*}L\overline{v}||_{L^{2}(r dr)}\right) , \quad r> 0\end{equation}
\begin{equation}\label{lstarcoercive} \int_{0}^{\infty} \left(\frac{\left(L(\overline{v})(r)\right)^{2}}{r^{2}(1+r^{2})}\right) rdr\leq C ||L^{*}L\overline{v}||_{L^{2}(r dr)}^{2}  \end{equation}
\end{lemma}
\begin{proof} 
If $f \in C^{1}([0,\infty))\cap L^{2}((0,\infty), rdr)$, $Lf \in L^{2}((0,\infty), rdr)$, and $f(0)= \lim_{r \rightarrow \infty} f(r) =0$, then, since
$$|\frac{\cos(Q_{1}(r))-1}{r}| \leq 1, \quad r > 0$$
we have
\begin{equation} \partial_{r} f -\frac{f}{r} = Lf + \left(\frac{\cos(Q_{1}(r))-1}{r}\right) f \in L^{2}((0,\infty), rdr)\end{equation}
Then, if $M>1$,
\begin{equation}\begin{split} \int_{\frac{1}{M}}^{M} (\partial_{r} f-\frac{f}{r})^{2} r dr &= \int_{\frac{1}{M}}^{M} \left((\partial_{r}f)^{2}-\frac{\partial_{r}(f^{2})}{r} + \frac{f^{2}}{r^{2}}\right) r dr \\
&=\int_{0}^{\infty} \mathbbm{1}_{[\frac{1}{M},M]}(r) \left((\partial_{r}f)^{2}+\frac{f^{2}}{r^{2}}\right) r dr - (f(M))^{2}+(f(\frac{1}{M}))^{2} \end{split}\end{equation}
Letting $M \rightarrow \infty$, and using the monotone convergence theorem, we have
\begin{equation} \label{lfcoercive2}||f||_{\dot{H}^{1}_{e}}^{2} = \int_{0}^{\infty} \left((\partial_{r}f)^{2}+\frac{f^{2}}{r^{2}}\right) r dr \leq C\left(||Lf||_{L^{2}(r dr)}^{2}+||f||_{L^{2}(r dr)}^{2}\right)\end{equation}

Next, for $f \in C^{1}([0,\infty))\cap \dot{H}^{1}_{e}$ satisfying $f(0)=0$, we have
\begin{equation} f^{2}(r)=\int_{0}^{r} \left(2 f(s) f'(s)\right) ds =2 \int_{0}^{r} \frac{f(s)}{\sqrt{s}} (f'(s) \sqrt{s}) ds \leq 2 ||\frac{f}{r}||_{L^{2}(r dr)} ||f'||_{L^{2}(r dr)} \end{equation}
So, \begin{equation}\label{h1dotecoercive3} ||f||_{\infty} \leq C ||\frac{f}{r}||^{1/2}_{L^{2}(r dr)}||f'||^{1/2}_{L^{2}(r dr)} \leq C ||f||_{\dot{H}^{1}_{e}}\end{equation}

Next, for any $g \in C^{1}((0,\infty)) \cap C^{0}([0,\infty))$ such that $g(0) = \lim_{r \rightarrow \infty} g(r)= 0$, $L^{*}g \in L^{2}((0,\infty), rdr)$, and for any $M \geq 1$,
\begin{equation}\begin{split} &\int_{\frac{1}{M}}^{M} (L^{*} g)^{2} r dr = \int_{\frac{1}{M}}^{M} (g'(r))^{2} r dr + \int_{\frac{1}{M}}^{M} \left(\cos(Q_{1}(r))+1\right) \frac{d}{dr} \left(g^{2}(r)\right) dr + \int_{\frac{1}{M}}^{M} \left(\frac{\cos(Q_{1}(r))+1}{r}\right)^{2} g(r)^{2} r dr\\
&=(\cos(Q_{1}(M))+1)g(M)^{2}-\left(\cos(Q_{1}(\frac{1}{M}))+1\right)g(\frac{1}{M})^{2}+\int_{0}^{\infty} \mathbbm{1}_{[\frac{1}{M},M]}(r) \left((g'(r))^{2}+\frac{4 g(r)^{2}}{r^{2}(1+r^{2})}\right) r dr\end{split}\end{equation}
By the monotone convergence theorem,
\begin{equation}\label{lstarcoercive2} ||L^{*}g||_{L^{2}(r dr)}^{2} = \int_{0}^{\infty} \left((g'(r))^{2}+\frac{4g(r)^{2}}{r^{2}(1+r^{2})}\right) rdr\end{equation}
 If $\overline{y}$ is as in the lemma statement, for $M \geq 4$, define 
$$\overline{v}_{M}(r) := \frac{1}{\sqrt{r}}\int_{0}^{\infty} \overline{y}(\xi) \phi(r,\xi) \chi_{\leq 1}(\frac{\xi}{M}) \rho(\xi) d\xi, \quad r>0$$
We will now record some estimates on $\partial_{r}^{k} \phi(r,\xi), \quad k \leq 1$, which will allow us to prove a certain regularity of $\overline{v}$ and $\overline{v_{M}}$. From \cite{kst}, we have
\begin{equation} \frac{1}{\sqrt{r}} \phi(r,\xi) = \frac{1}{2} \phi_{0}(r) + \frac{1}{r} \sum_{j=1}^{\infty} (r^{2} \xi)^{j} \phi_{j}(r^{2}), \quad r^{2} \xi \leq 4\end{equation}
and
\begin{equation} \frac{1}{\sqrt{r}}\phi(r,\xi) = \frac{2 \text{Re}\left(a(\xi) \psi^{+}(r,\xi)\right)}{\sqrt{r}}, \quad r^{2} \xi > 4\end{equation}
Therefore,
\begin{equation}\label{efuncest}|\frac{1}{r^{3/2} \langle \log(r) \rangle} \phi(r,\xi)| \leq \begin{cases} \frac{C}{\langle \log(r) \rangle} \left(\frac{\phi_{0}(r)}{r}+\frac{\log(1+r^{2})}{r^{2}}\right), \quad r^{2} \xi \leq 4\\
\frac{C |a(\xi)|}{\xi^{1/4} r^{1/2}\cdot r \langle \log(r) \rangle}, \quad r^{2} \xi > 4\end{cases} \end{equation}
We use
$$\frac{\langle \log(\xi)\rangle^{2}}{\langle \log(r)\rangle^{2}} \leq C, \quad r^{2} \xi \leq 4, \quad \xi \geq 1$$
and the fact that 
$$r \mapsto \frac{1}{\sqrt{r} \langle \log(r) \rangle} \text{ is decreasing on  } (0,\infty)$$
which gives
\begin{equation} \frac{1}{\sqrt{r} \langle \log(r) \rangle} \leq \frac{C \xi^{1/4}}{\langle \log(\xi)\rangle}, \quad r^{2} \xi > 4\end{equation} 
in \eqref{efuncest} to get, for all $r \geq 0$

\begin{equation}|\frac{\phi(r,\xi)}{r^{3/2}\langle \log(r)\rangle}| \leq  C \frac{|a(\xi)|\sqrt{\xi}}{\langle \log(\xi)\rangle}+ C \begin{cases}\frac{1}{\langle \log(\xi)\rangle}, \quad \xi \geq 1\\
1 , \quad \xi \leq 1\end{cases}\end{equation}
Then, if 
$$g(\xi) = \left(\frac{|a(\xi)|^{2}\xi}{\langle \log(\xi)\rangle^{2}}+\begin{cases}\frac{1}{\langle \log(\xi)\rangle^{2}}, \quad \xi \geq 1\\
1 , \quad \xi \leq 1\end{cases}\right)\frac{\rho(\xi)}{\langle \xi\rangle^{2}}$$
we have, for all $r \geq 0$
\begin{equation} |\overline{y}(\xi) \frac{\phi(r,\xi)}{r^{3/2} \langle \log(r)\rangle}\rho(\xi)| \leq |\overline{y}(\xi)| \langle \xi \rangle \sqrt{\rho(\xi)} \cdot \left(|\frac{\phi(r,\xi)}{r^{3/2}\langle \log(r)\rangle}| \frac{\sqrt{\rho(\xi)}}{\langle \xi \rangle}\right) \leq C|\overline{y}(\xi)| \langle \xi \rangle \sqrt{\rho(\xi)} \cdot \sqrt{g(\xi)}  \end{equation}
But, $$|\overline{y}(\xi)| \langle \xi \rangle \sqrt{\rho(\xi)} \cdot \sqrt{g(\xi)} \in L^{1}((0,\infty), d\xi)$$
by Cauchy-Schwartz, due to the assumptions on $\overline{y}$ and the fact that
\begin{equation} \int_{0}^{\infty} g(\xi) d\xi \leq C\end{equation}
Therefore, by the dominated convergence theorem, $\overline{v}$, defined in the lemma statement satisfies $\frac{\overline{v}(r)}{r \langle \log(r) \rangle}$ is continuous on $(0,\infty)$, and we have
\begin{equation} |\frac{\overline{v}(r)}{r \langle \log(r) \rangle}| \leq  \int_{0}^{\infty} |\overline{y}(\xi) \frac{\phi(r,\xi)}{r^{3/2} \langle \log(r)\rangle}\rho(\xi)|d\xi \leq C ||\overline{y}(\xi) \langle \xi \rangle||_{L^{2}(\rho(\xi) d\xi)}, \quad r>0\end{equation}
and
\begin{equation} \lim_{r \rightarrow 0} \frac{\overline{v}(r)}{r \langle \log(r) \rangle} = \int_{0}^{\infty} \overline{y}(\xi) \lim_{r\rightarrow 0}\left(\frac{\phi(r,\xi)}{r^{3/2} \langle \log(r) \rangle}\right) \rho(\xi) d\xi=0\end{equation}
Similarly,
\begin{equation} |\frac{\phi(r,\xi)}{\sqrt{r}} \frac{\sqrt{\rho(\xi)}}{\langle \xi \rangle}| \leq C \sqrt{g_{2}(\xi)}\end{equation}
with
$$g_{2}(\xi) = \left( |a(\xi)|^{2}+\begin{cases} 1, \quad \xi \leq 1\\
\frac{1}{\xi}, \quad \xi > 1\end{cases}\right) \frac{\rho(\xi)}{\langle \xi\rangle^{2}} \in L^{1}((0,\infty),d\xi)$$
So, again by the dominated convergence theorem,
\begin{equation} \lim_{r \rightarrow \infty} \overline{v}(r) = \int_{0}^{\infty} \overline{y}(\xi) \lim_{r\rightarrow \infty}\left(\frac{\phi(r,\xi)}{\sqrt{r}}\right) \rho(\xi) d\xi=0\end{equation}
The same argument shows that
\begin{equation}\lim_{r \rightarrow \infty}\overline{v_{M}}(r)=0\end{equation}
From \eqref{efuncest}, we also have
\begin{equation} |\frac{\phi(r,\xi)}{r^{3/2}}| \leq \begin{cases} C, \quad r^{2} \xi \leq 4\\
C |a(\xi)| \sqrt{\xi}, \quad r^{2} \xi >4\end{cases}\end{equation}
whence,
\begin{equation}|\frac{\phi(r,\xi)}{r^{3/2}}| \leq C+ C|a(\xi)| \sqrt{\xi}, \quad r \geq 0\end{equation}
Then, the dominated convergence theorem shows that $\overline{v_{M}}$ satisfies that $\frac{\overline{v_{M}}(r)}{r}$ admits a continuous extension to a function defined on $[0,\infty)$. Similarly,
\begin{equation} |\partial_{r}\left(\frac{\phi(r,\xi)}{\sqrt{r}}\right)| \leq \begin{cases} C\left(|\phi_{0}'(r)| + \xi \log(1+r^{2})\right), \quad r^{2} \xi \leq 4\\
\frac{C |a(\xi)| \xi^{1/4}}{r^{1/2}}, \quad r^{2} \xi > 4\end{cases} \end{equation}
which shows
\begin{equation} |\partial_{r}\left(\frac{\phi(r,\xi)}{\sqrt{r}}\right)| \leq C + C |a(\xi)| \sqrt{\xi}, \quad r \geq 0 \end{equation}
Therefore, $\overline{v_{M}} \in C^{1}([0,\infty))$. Moreover,
\begin{equation} L(\overline{v_{M}})(0) = \int_{0}^{\infty} \overline{y}(\xi) \lim_{r \rightarrow 0}L\left(\frac{\phi(r,\xi)}{\sqrt{r}}\right) \chi_{\leq 1}\left(\frac{\xi}{M}\right) \rho(\xi) d\xi=0\end{equation}
where we used $L(\phi_{0})=0$. Again, by the dominated convergence theorem,
\begin{equation} \lim_{r \rightarrow \infty} \partial_{r}\overline{v_{M}}(r) = \int_{0}^{\infty} \overline{y}(\xi) \lim_{r\rightarrow \infty} \partial_{r}\left(\frac{\phi(r,\xi)}{\sqrt{r}}\right) \chi_{\leq 1}\left(\frac{\xi}{M}\right)\rho(\xi) d\xi=0\end{equation}
Finally, we have
\begin{equation} |\partial_{r}^{2}\left(\frac{\phi(r,\xi)}{\sqrt{r}}\right)| \leq C \left(1+\sqrt{\xi} + \xi |a(\xi)|\right), \quad r \geq 0 \end{equation}
and the same dominated convergence theorem based procedure used above shows that $\overline{v_{M}} \in C^{2}((0,\infty))$.

 Next, we have
\begin{equation} ||\overline{v}(r)||_{L^{2}(r dr)} = ||\mathcal{F}^{-1}(\overline{y})(r)||_{L^{2}(dr)} = ||\overline{y}||_{L^{2}(\rho(\xi) d\xi)}\end{equation}
\begin{equation}\begin{split} &||\sqrt{\xi} \overline{y}(\xi)||_{L^{2}(\rho(\xi) d\xi)}^{2} = \int_{0}^{\infty} \xi |\mathcal{F}(\sqrt{\cdot} \overline{v}(\cdot))(\xi)|^{2} \rho(\xi) d\xi = \int_{0}^{\infty} \xi \frac{|\mathcal{F}_{H}(\overline{v})(\sqrt{\xi})|^{2}}{2 \sqrt{\xi}}d\xi \\
&= \int_{0}^{\infty} \eta^{2} |\mathcal{F}_{H}(\overline{v})(\eta)|^{2} d\eta = ||\eta \mathcal{F}_{H}(\overline{v})(\eta)||_{L^{2}(d\eta)}^{2}\\
&=||L\overline{v}||_{L^{2}(r dr)}^{2}\end{split}\end{equation}
where we used the $L^{2}$ isometry property of $\mathcal{F}_{H}$ from \cite{bkt}. 
Finally,
\begin{equation}\begin{split} &||\xi \overline{y}(\xi)||_{L^{2}(\rho(\xi) d\xi)}^{2} = \int_{0}^{\infty} \xi^{2} \frac{|\mathcal{F}_{H}(\overline{v})(\sqrt{\xi})|^{2}}{2 \sqrt{\xi}} d\xi = \int_{0}^{\infty} \eta^{4} |\mathcal{F}_{H}(\overline{v})(\eta)|^{2} d\eta\\
&=\int_{0}^{\infty} |\mathcal{F}_{H}(L^{*}L \overline{v})(\eta)|^{2} d\eta = ||L^{*}L\overline{v}||_{L^{2}(r dr)}^{2}\end{split}\end{equation}
This shows that $\overline{v},\overline{v_{M}},L(\overline{v}),L(\overline{v_{M}}),L^{*}L(\overline{v}),L^{*}L(\overline{v_{M}}) \in L^{2}((0,\infty),r dr)$. Combining these facts with our estimates on $\frac{\phi(r,\xi)}{\sqrt{r}}\frac{\sqrt{\rho(\xi)}}{\langle \xi \rangle}$ and
\begin{equation} |\overline{v}(r) - \overline{v_{M}}(r)| \leq \int_{0}^{\infty} |\overline{y}(t,\xi) \frac{\phi(r,\xi)}{\sqrt{r}}| \cdot |\chi_{\leq 1}(\frac{\xi}{M})-1| \rho(\xi) d\xi\end{equation}
the Dominated convergence theorem gives
$$\overline{v_{M}} \rightarrow \overline{v}, \text{ pointwise, and in }L^{2}((0,\infty),rdr), \quad M \rightarrow \infty$$
$$L(\overline{v_{M}}) \rightarrow L(\overline{v}), \text{in }L^{2}((0,\infty),rdr), \quad M \rightarrow \infty$$
$$L^{*}L(\overline{v_{M}}) \rightarrow L^{*}L(\overline{v}), \text{in }L^{2}((0,\infty),rdr), \quad M \rightarrow \infty$$
 We conclude the proof of the lemma by noting that \eqref{lfcoercive2} and \eqref{h1dotecoercive3} hold for $\overline{v_{M}}$, and \eqref{lstarcoercive2} holds for $g=L(\overline{v_{M}})$. So, by approximation, we have \eqref{lfcoercive}, \eqref{h1dotecoercive2}, and \eqref{lstarcoercive}. 
\end{proof}

 Now, we can estimate the $F_{3}(u(y))$ terms, for $y$ such that $ y(t,\omega) \sqrt{\rho(\omega \lambda(t)^{2})} \langle \omega \lambda(t)^{2}\rangle \in C^{0}_{t}([T_{0},\infty),L^{2}(d\omega))$. This is sufficient for our purposes, since all $y$ in the space $Z$ ( which is the space in which we will construct a solution to \eqref{ylinearproblem}, and is defined later on) satisfy this condition.
Recall that 
$$F_{3}(u)(t,r) = N(u)(t,r)+L_{1}(u)(t,r)$$ where $u$ and $y$ are related by
\begin{equation}\label{ytou}y(t,\xi) = \mathcal{F}(\sqrt{\cdot} u(t,\cdot \lambda(t)))(\xi \lambda(t)^{2})\end{equation}
and
\begin{equation}\label{l1errorterm}L_{1}(u)(t,r) = \frac{\sin(2 u(t,r))}{2r^{2}}\left(\cos(2Q_{\frac{1}{\lambda(t)}}(r))\left(\cos(2v_{corr})-1\right) - \sin(2Q_{\frac{1}{\lambda(t)}}) \sin(2v_{corr})\right)\end{equation}
\begin{equation}\label{nerrorterm}N(u)(t,r)=\left(\frac{\sin(2u(t,r))-2u(t,r)}{2r^{2}}\right)\cos(2Q_{\frac{1}{\lambda(t)}}(r)) + \left(\frac{\cos(2u(t,r))-1}{2r^{2}}\right)\sin(2Q_{\frac{1}{\lambda(t)}}(r)+2v_{corr})\end{equation}\\
\\
We start with the $L^{2}$ estimate on the $L_{1}(u)$ term. Using the same procedure as in \eqref{l2conversion}, we get
\begin{equation} \begin{split}||\mathcal{F}(\sqrt{\cdot} L_{1}(u)(t,\cdot \lambda(t)))(\omega \lambda(t)^{2})||_{L^{2}(\rho(\omega \lambda(t)^{2})d\omega)}^{2} &=\frac{1}{\lambda(t)^{2}} \int_{0}^{\infty} R (L_{1}(u)(t,R\lambda(t)))^{2} dR\end{split}\end{equation}
Then,
\begin{equation} \begin{split} &\frac{1}{\lambda(t)^{2}} \int_{0}^{\infty} R (L_{1}(u)(t,R\lambda(t)))^{2} dR \\
&\leq \frac{C}{\lambda(t)^{2}} \int_{0}^{\infty} R \frac{(u(t,R\lambda(t)))^{2}}{R^{4}\lambda(t)^{4}} \left(v_{corr}(t,R\lambda(t))\right)^{4} dR+\frac{C}{\lambda(t)^{2}} \int_{0}^{\infty} R \frac{(u(t,R\lambda(t)))^{2}}{R^{4}\lambda(t)^{4}} \frac{R^{2}}{(1+R^{2})^{2}} \left(v_{corr}(t,R\lambda(t))\right)^{2} dR\end{split}\end{equation}
Using the functions $v$ and $w$ (introduced when deriving the equation for $y$) defined by 
$$u(t,r) = v(t,\frac{r}{\lambda(t)})$$
$$v(t,R) = \frac{w(t,R)}{\sqrt{R}}$$
we have
\begin{equation}\begin{split} ||u(t,\cdot \lambda(t))||^{2}_{L^{2}(R dR)} = ||v(t,\cdot)||^{2}_{L^{2}(R dR)} = ||w(t)||^{2}_{L^{2}(dR)} = ||\mathcal{F}(w)(t)||_{L^{2}(\rho(\xi) d\xi)}^{2} = \lambda(t)^{2} ||y(t)||_{L^{2}(\rho(\omega \lambda(t)^{2})d\omega)}^{2}\end{split}\end{equation}\\
\\
So, we end up with \begin{equation}\begin{split} &||\mathcal{F}(\sqrt{\cdot} L_{1}(u)(t,\cdot \lambda(t)))(\omega \lambda(t)^{2})||_{L^{2}(\rho(\omega \lambda(t)^{2})d\omega)} \\
&\leq C ||y(t)||_{L^{2}(\rho(\omega \lambda(t)^{2})d\omega)}\left(||\frac{v_{corr}(t,R \lambda(t))}{R\lambda(t)}||_{L^{\infty}_{R}}^{2}+ ||\frac{v_{corr}(t,R\lambda(t))}{R\lambda(t)^{2}(1+R^{2})}||_{L^{\infty}_{R}}\right)\end{split}\end{equation} \\
\\
Next, we apply \eqref{lconversion} to our current setting to get
\begin{equation} \begin{split} ||\sqrt{\omega} \lambda(t) \mathcal{F}(\sqrt{\cdot} L_{1}(u)(t,\cdot \lambda(t)))(\omega \lambda(t)^{2})||_{L^{2}(\rho(\omega \lambda(t)^{2})d\omega)}^{2}&=\frac{1}{\lambda(t)^{2}}||L(L_{1}(u)(t,\cdot \lambda(t)))||_{L^{2}(R dR)}^{2}\end{split}\end{equation} 
Using \eqref{l1errorterm}, we get 
\begin{equation}\begin{split} |\partial_{R}(L_{1}(u)(t,R\lambda(t)))| &\leq C \left(|Lv(t,R)|+\frac{|v(t,R)|}{R}\right)\left(\frac{(v_{corr}(t,R\lambda(t)))^{2}}{R^{2}\lambda(t)^{2}}+\frac{|v_{corr}(t,R\lambda(t))|}{R\lambda(t)^{2}(1+R^{2})}\right)\\
&+C \frac{|v(t,R)|}{R^{2}\lambda(t)^{2}} \left(\frac{R |\partial_{R}(v_{corr}(t,R\lambda(t)))|}{1+R^{2}}+|\partial_{R}(v_{corr}(t,R\lambda(t))) v_{corr}(t,R\lambda(t))|\right)\end{split}\end{equation}\\
\\
On the other hand, \begin{equation} |\frac{L_{1}(u)(t,R\lambda(t))}{R}| \leq \frac{C |v(t,R)|}{R^{3}\lambda(t)^{2}} \left(v_{corr}(t,R\lambda(t))^{2}+\frac{|v_{corr}(t,R\lambda(t))|R}{(1+R^{2})}\right)\end{equation}\\
\\
So, we get
\begin{equation} \begin{split} &||L(L_{1}(u)(t,\cdot\lambda(t)))||_{L^{2}(R dR)}\\
&\leq C \left(||Lv(t)||_{L^{2}(R dR)}+||v(t)||_{\dot{H}^{1}_{e}}\right)\left(||\frac{v_{corr}(t,R\lambda(t)}{R\lambda(t)}||_{L^{\infty}_{R}}^{2} + ||\frac{v_{corr}(t,R\lambda(t))}{R\lambda(t)^{2}(1+R^{2})}||_{L^{\infty}_{R}}\right)\\
&+C ||v(t)||_{\dot{H}^{1}_{e}}\cdot||\frac{\partial_{R}(v_{corr}(t,R\lambda(t)))}{(1+R^{2})\lambda(t)^{2}}||_{L^{\infty}_{R}} + ||v(t)||_{\dot{H}^{1}_{e}}\cdot ||\frac{v_{corr}(t,R\lambda(t))\partial_{R}(v_{corr}(t,R\lambda(t)))}{R\lambda(t)^{2}}||_{L^{\infty}_{R}((0,1))}\\
&+C ||v(t)||_{L^{2}((1,\infty),R dR)} \cdot ||\frac{\partial_{R}(v_{corr}(t,R\lambda(t)))\cdot v_{corr}(t,R\lambda(t))}{R^{2}\lambda(t)^{2}}||_{L^{\infty}_{R}((1,\infty))}\end{split}\end{equation} \\
\\
Now, we  use \eqref{l2conversion} and \eqref{lconversion} to translate the right-hand side in terms of $y$: 
\begin{equation} ||Lv(t)||_{L^{2}(R dR)} = \lambda(t) ||\sqrt{\omega} \lambda(t) y(t)||_{L^{2}(\rho(\omega \lambda(t)^{2})d\omega)}\end{equation}
\begin{equation} ||v(t)||_{L^{2}(R dR)} = \lambda(t) ||y(t)||_{L^{2}(\rho(\omega \lambda(t)^{2})d\omega)}\end{equation}
Then, we get
\begin{equation}\begin{split} &||\sqrt{\omega} \lambda(t) \mathcal{F}(\sqrt{\cdot}L_{1}(u)(t,\cdot \lambda(t)))(\omega \lambda(t)^{2})||_{L^{2}(\rho(\omega \lambda(t)^{2})d\omega)}\\
&\leq C \left(||\sqrt{\omega} \lambda(t) y(t)||_{L^{2}(\rho(\omega \lambda(t)^{2})d\omega)} + ||y(t)||_{L^{2}(\rho(\omega \lambda(t)^{2}) d\omega)}\right)\\
&\cdot \left(||\frac{v_{corr}(t,R\lambda(t))}{R\lambda(t)}||_{L^{\infty}_{R}}^{2}+||\frac{v_{corr}(t,R\lambda(t))}{R\lambda(t)^{2}(1+R^{2})}||_{L^{\infty}_{R}}+||\frac{\partial_{R}(v_{corr}(t,R\lambda(t)))}{(1+R^{2})\lambda(t)^{2}}||_{L^{\infty}_{R}}\right.\\
&\left. + ||\frac{v_{corr}(t,R\lambda(t))\partial_{R}(v_{corr}(t,R\lambda(t)))}{R\lambda(t)^{2}}||_{L^{\infty}_{R}((0,1))}+||\frac{v_{corr}(t,R\lambda(t))\partial_{R}(v_{corr}(t,R\lambda(t)))}{R^{2}\lambda(t)^{2}}||_{L^{\infty}_{R}((1,\infty))}\right)\end{split}\end{equation}\\
\\
Now, we treat the $N$ terms. Recall that
\begin{equation}\begin{split} N(u)(t,R\lambda(t)) &= \left(\frac{\sin(2v(t,R))-2v(t,R)}{2R^{2}\lambda(t)^{2}}\right)\cos(2Q_{1}(R))\\
&+\left(\frac{\cos(2v(t,R))-1}{2R^{2}\lambda(t)^{2}}\right)\sin(2Q_{1}(R)+2v_{corr}(t,R\lambda(t)))\end{split}\end{equation}\\
\\
We then use \eqref{foverxinfinity} and the previous estimates to conclude that, if $R<1$, then,
\begin{equation} \begin{split} |N(u)(t,R\lambda(t))| &\leq C\left(||v(t)||^{2}_{\dot{H}^{1}_{e}} + ||L^{*}Lv(t)||_{L^{2}(R dR)}^{2}\right)\left(\log^{2}(\frac{1}{R})+1\right) \cdot \frac{|v(t,R)|}{\lambda(t)^{2}}\\
&+C ||v(t)||_{\dot{H}^{1}_{e}}\frac{|v(t,R)|}{R \lambda(t)^{2}} \left(1+\frac{|v_{corr}(t,R\lambda(t))|}{R}\right)\end{split}\end{equation}\\
\\
On the other hand, if $R>1$, then, 
\begin{equation} |N(u)(t,R\lambda(t))| \leq C \frac{|v(t,R)|^{3}}{\lambda(t)^{2}} + C \frac{(v(t,R))^{2}}{\lambda(t)^{2}} \left(1+\frac{|v_{corr}(t,R\lambda(t))|}{R}\right)\end{equation}\\
\\
Then, we get 
\begin{equation}\label{nul2prelimest}\begin{split} \frac{1}{\lambda(t)^{2}} ||N(u)(t,\cdot \lambda(t))||_{L^{2}(R dR)}^{2}&\leq \frac{C}{\lambda(t)^{6}} \left(||v(t)||_{L^{2}(R dR)}^{6}+||Lv(t)||_{L^{2}(R dR)}^{6}+||L^{*}Lv(t)||_{L^{2}(R dR)}^{6}\right)\\
&+\frac{C}{\lambda(t)^{6}} \left(||v(t)||^{4}_{L^{2}(R dR)}+||Lv(t)||^{4}_{L^{2}(R dR)}\right)\left(1+||\frac{v_{corr}(t,R\lambda(t))}{R}||_{L^{\infty}_{R}}^{2}\right)\end{split}\end{equation}\\
\\
Finally, we apply \eqref{lstarlconversion} in our current setting:
\begin{equation} \begin{split} \int_{0}^{\infty} (L^{*}Lv(t,R))^{2} R dR &=\lambda(t)^{2}||\omega \lambda(t)^{2} y(t)||_{L^{2}(\rho(\omega \lambda(t)^{2})d\omega)}^{2}\end{split}\end{equation}\\
\\
Using the same procedure as used for $L_{1}$ above, we then translate the rest of the right hand side, and the left hand side of \eqref{nul2prelimest} in terms of $y$, and $\mathcal{F}(\sqrt{\cdot}N(u)(t,\cdot \lambda(t)))(\omega \lambda(t)^{2})$, respectively, to get
\begin{equation}\begin{split} &||\mathcal{F}(\sqrt{\cdot}N(u)(t,\cdot\lambda(t)))(\omega \lambda(t)^{2})||_{L^{2}(\rho(\omega \lambda(t)^{2})d\omega)}\\
&\leq C \left(||y(t)||^{3}_{L^{2}(\rho(\omega \lambda(t)^{2})d\omega)}+||\sqrt{\omega}\lambda(t)y(t)||^{3}_{L^{2}(\rho(\omega \lambda(t)^{2})d\omega)}+||\omega \lambda(t)^{2} y(t)||^{3}_{L^{2}(\rho(\omega \lambda(t)^{2})d\omega)}\right)\\
&+\frac{C}{\lambda(t)} \left(||y(t)||^{2}_{L^{2}(\rho(\omega \lambda(t)^{2})d\omega)} + ||\sqrt{\omega}\lambda(t)y(t)||^{2}_{L^{2}(\rho(\omega \lambda(t)^{2})d\omega)}\right)\left(1+||\frac{v_{corr}(t,R\lambda(t))}{R}||_{L^{\infty}}\right)\end{split}\end{equation}\\
\\
We will now estimate $||\sqrt{\omega}\lambda(t) \mathcal{F}(\sqrt{\cdot}N(u)(t,\cdot\lambda(t)))(\omega \lambda(t)^{2})||_{L^{2}(\rho(\omega \lambda(t)^{2})d\omega)}$, starting with
\begin{equation} \begin{split}|\partial_{R}(N(u)(t,R\lambda(t)))| &\leq \frac{C |v(t,R)|^{3}}{R^{3}\lambda(t)^{2}} + C \frac{|\partial_{R}v(t,R)| (v(t,R))^{2}}{R^{2}\lambda(t)^{2}}\\
&+C \frac{|Q_{1}'(R)+\partial_{R}(v_{corr}(t,R\lambda(t)))|}{R^{2}\lambda(t)^{2}} (v(t,R))^{2}\\
&+C \frac{|Q_{1}(R)|+|v_{corr}(t,R\lambda(t))|}{R^{2}\lambda(t)^{2}} |v(t,R)|\left(\frac{|v(t,R)|}{R}+|\partial_{R}v(t,R)|\right)\end{split}\end{equation}\\
\\
Then, we use our previous estimates to get, for $R \leq 1$:
\begin{equation}\begin{split} &|\partial_{R}(N(u)(t,R\lambda(t)))| \\
&\leq \frac{C}{\lambda(t)^{2}} \left(||v(t)||_{\dot{H}^{1}_{e}}^{3}+||L^{*}Lv(t)||_{L^{2}(R dR)}^{3}\right)\left(1+\log^{2}(\frac{1}{R})\right)^{3/2}\\
&+C \frac{\langle \log(R) \rangle^{2}}{\lambda(t)^{2}} \left(||v(t)||_{L^{2}(R dR)}^{2} + ||Lv(t)||_{L^{2}(R dR)}^{2}+||L^{*}Lv(t)||_{L^{2}(R dR)}^{2}\right)\left(|Lv(t,R)|+\frac{|v(t,R)|}{R}\right)\\
&+C \frac{\left(1+||\partial_{R}(v_{corr}(t,R\lambda(t)))||_{L^{\infty}}\right)}{\lambda(t)^{2}}\left(||v(t)||^{2}_{L^{2}(R dR)} +||Lv(t)||^{2}_{L^{2}(R dR)} +||L^{*}Lv(t)||_{L^{2}(R dR)}^{2}\right)\left(\log^{2}(\frac{1}{R})+1\right)\\
&+C \frac{\left(||v(t)||_{L^{2}(R dR)} +||Lv(t)||_{L^{2}(R dR)} +||L^{*}Lv(t)||_{L^{2}(R dR)}\right) \langle \log(R)\rangle |\partial_{R}v|}{\lambda(t)^{2}}\left(1+||\frac{v_{corr}(t,R\lambda(t))}{R}||_{L^{\infty}}\right)\\
&+\frac{C}{\lambda(t)^{2}}\left(1+||\frac{v_{corr}(t,R\lambda(t))}{R}||_{L^{\infty}}\right)\left(||v(t)||_{\dot{H}^{1}_{e}}^{2}+||L^{*}Lv(t)||_{L^{2}(R dR)}^{2}\right)\left(\log^{2}(\frac{1}{R})+1\right)\end{split}\end{equation}\\
\\
For $R \geq 1$, we have
\begin{equation} \begin{split} |\partial_{R}(N(u)(t,R\lambda(t)))| &\leq \frac{C}{\lambda(t)^{2}} ||v(t)||_{\dot{H}^{1}_{e}}^{2}|v(t,R)| + \frac{C ||v(t)||_{\dot{H}^{1}_{e}}^{2}}{\lambda(t)^{2}}\left(|Lv(t,R)|+\frac{|v(t,R)|}{R}\right)\\
&+ C \frac{\left(1+||\partial_{R}(v_{corr}(t,R\lambda(t)))||_{L^{\infty}}+||\frac{v_{corr}(t,R\lambda(t))}{R}||_{L^{\infty}}\right)}{\lambda(t)^{2}}||v(t)||_{\dot{H}^{1}_{e}} |v(t,R)|\\
&+\frac{C}{\lambda(t)^{2}}\left(1+||\frac{v_{corr}(t,R\lambda(t))}{R}||_{L^{\infty}}\right)\left(|Lv(t,R)|+\frac{|v(t,R)|}{R}\right)||v(t)||_{\dot{H}^{1}_{e}}\end{split}\end{equation}\\
\\
Then, we get
\begin{equation}\begin{split} &||\partial_{R}(N(u)(t,R\lambda(t)))||_{L^{2}(R dR)} \\
&\leq \frac{C}{\lambda(t)^{2}} \left(||v(t)||_{L^{2}(R dR)}^{3}+||Lv(t)||_{L^{2}(R dR)}^{3}+||L^{*}Lv(t)||_{L^{2}(R dR)}^{3}\right)\\
&+\frac{C}{\lambda(t)^{2}}\left(1+||\partial_{R}(v_{corr}(t,R\lambda(t)))||_{L^{\infty}}+||\frac{v_{corr}(t,R\lambda(t))}{R}||_{L^{\infty}}\right)\\
&\cdot\left(||v(t)||^{2}_{L^{2}(R dR)}+||Lv(t)||^{2}_{L^{2}(R dR)}+||L^{*}Lv(t)||_{L^{2}(R dR)}^{2}\right)\end{split}\end{equation}\\
\\
Finally, we consider, for $R \leq 1$:
\begin{equation}\begin{split} |\frac{N(u)(t,R\lambda(t))}{R}| &\leq \frac{C}{\lambda(t)^{2}} \left(||v(t)||_{\dot{H}^{1}_{e}}^{3}+||L^{*}Lv(t)||_{L^{2}(R dR)}^{3}\right)\left(1+\log^{2}(\frac{1}{R})\right)^{3/2}\\
&+\frac{C}{\lambda(t)^{2}} \left(||v(t)||_{\dot{H}^{1}_{e}}^{2}+||L^{*}Lv(t)||_{L^{2}(R dR)}^{2}\right)\left(1+\log^{2}(\frac{1}{R})\right)\left(1+||\frac{v_{corr}(t,R\lambda(t))}{R}||_{L^{\infty}}\right)\end{split}\end{equation}\\
\\
and for $R >1$, we have
\begin{equation} |\frac{N(u)(t,R\lambda(t))}{R}| \leq C\frac{||v(t)||_{\dot{H}^{1}_{e}}^{2}}{\lambda(t)^{2}}|v(t,R)| + C\frac{||v(t)||_{\dot{H}^{1}_{e}}|v(t,R)|}{\lambda(t)^{2}}\left(1+||\frac{v_{corr}(t,R\lambda(t))}{R}||_{L^{\infty}}\right)\end{equation}\\
\\
Then, we get
\begin{equation}\begin{split} ||L(N(u)(t,\cdot\lambda(t)))||_{L^{2}(R dR)}&\leq \frac{C}{\lambda(t)^{2}}\left(||v(t)||_{L^{2}(R dR)}^{3}+||Lv(t)||^{3}_{L^{2}(R dR)}+||L^{*}Lv(t)||^{3}_{L^{2}(R dR)}\right)\\
&+\frac{C}{\lambda(t)^{2}} \left(1+||\partial_{R}(v_{corr}(t,R\lambda(t)))||_{L^{\infty}}+||\frac{v_{corr}(t,R\lambda(t))}{R}||_{L^{\infty}}\right)\\
&\cdot \left(||v(t)||^{2}_{L^{2}(R dR)}+||Lv(t)||^{2}_{L^{2}(R dR)}+||L^{*}Lv(t)||^{2}_{L^{2}(R dR)}\right)\end{split}\end{equation}\\
\\
We use the same procedure as in the previous estimates, to translate the left and right hand sides of the above estimate, and combine everything, to get \eqref{f3est} and \eqref{lf3est}.\end{proof}

Before we proceed, we will need one more estimate. First, because $\lambda$ is decreasing, if $x \geq t$, then, $\lambda(x) \leq \lambda(t)$. Next, we use Proposition 4.7 b of \cite{kst} to conclude that there exists a constant $C_{1}>0$ such that
\begin{equation} \begin{split} &\frac{1}{C_{1} \xi \log^{2}(\xi)} \leq \rho(\xi) \leq \frac{C_{1}}{\xi \log^{2}(\xi)}, \quad 0 < \xi < \frac{1}{2e^{2}}\\
&\frac{\xi}{C_{1}} \leq \rho(\xi) \leq C_{1}\xi, \quad \xi > \frac{1}{2e^{2}} \end{split}\end{equation} 
Then, if $x \geq t$, if $\omega\lambda(t)^{2} \leq \frac{1}{2 e^{2}}$, then, $\omega \lambda(x)^{2} \leq \frac{1}{2 e^{2}}$, and 
\begin{equation}\begin{split} \frac{\rho(\omega \lambda(t)^{2})}{\rho(\omega \lambda(x)^{2})} &\leq C \frac{\omega \lambda(x)^{2} \log^{2}(\omega \lambda(x)^{2})}{\omega \lambda(t)^{2} \log^{2}(\omega \lambda(t)^{2})} \\
&\leq C\end{split}\end{equation}
where we used the fact that $x\mapsto x \log^{2}(x)$ is increasing on $(0,\frac{1}{2e^{2}})$.\\
\\
If $\omega \lambda(t)^{2} \geq \frac{1}{2e^{2}}$, but $\omega \lambda(x)^{2} \leq \frac{1}{2e^{2}}$, then,
\begin{equation}\begin{split}  \frac{\rho(\omega \lambda(t)^{2})}{\rho(\omega \lambda(x)^{2})} &\leq C \omega \lambda(t)^{2}\omega \lambda(x)^{2}\log^{2}(\omega \lambda(x)^{2})\\
&\leq C \frac{\lambda(t)^{2}}{\lambda(x)^{2}} \left(\omega \lambda(x)^{2}\right)^{2} \log^{2}(\omega \lambda(x)^{2})\\
&\leq \frac{C \lambda(t)^{2}}{\lambda(x)^{2}}\end{split}\end{equation}\\
\\
Finally, if $\omega \lambda(t)^{2} \geq \frac{1}{2e^{2}}$, and $\omega \lambda(x)^{2} \geq \frac{1}{2e^{2}}$, then, 
\begin{equation}\frac{\rho(\omega\lambda(t)^{2})}{\rho(\omega \lambda(x)^{2})} \leq C \frac{\lambda(t)^{2}}{\lambda(x)^{2}}\end{equation}\\
\\
In all cases, we have: there exists $C_{\rho}>0$ such that, if $x \geq t$, then,
\begin{equation} \label{rhoscaling} \frac{\rho(\omega \lambda(t)^{2})}{\rho(\omega \lambda(x)^{2})} \leq C_{\rho} \frac{\lambda(t)^{2}}{\lambda(x)^{2}}\end{equation}

Before setting up the final iteration, we will need to estimate various oscillatory integrals involving $F_{4}$:

\subsection{Estimates on $F_{4}$-related oscillatory integrals}
\begin{lemma} We have the following estimates
\begin{equation} ||\int_{t}^{\infty} \frac{\sin((t-x)\sqrt{\omega})}{\sqrt{\omega}} \mathcal{F}(\sqrt{\cdot}F_{4}(x,\cdot\lambda(x)))(\omega \lambda(x)^{2}) dx||_{L^{2}(\rho(\omega \lambda(t)^{2})d\omega)} \leq \frac{C (\log(\log(t))^{2}}{t^{2}\log^{b+1-2\alpha b}(t)}\end{equation}
\begin{equation}||\int_{t}^{\infty} \cos((t-x)\sqrt{\omega}) \mathcal{F}(\sqrt{\cdot} F_{4}(x,\cdot \lambda(x)))(\omega \lambda(x)^{2}) dx||_{L^{2}(\rho(\omega \lambda(t)^{2}) d\omega)}\leq \frac{C (\log(\log(t)))^{2}}{t^{3} \log^{b+1-2\alpha b}(t)}\end{equation}
\begin{equation} ||\sqrt{\omega}\lambda(t) \int_{t}^{\infty} \frac{\sin((t-x)\sqrt{\omega})}{\sqrt{\omega}} \mathcal{F}(\sqrt{\cdot}F_{4}(x,\cdot\lambda(x)))(\omega \lambda(x)^{2}) dx||_{L^{2}(\rho(\omega \lambda(t)^{2}) d\omega)} \leq \frac{C (\log(\log(t)))^{2}}{t^{2} \log^{1+b-2\alpha b}(t)}\end{equation}
\begin{equation} ||\sqrt{\omega} \lambda(t) \partial_{t}\left(\int_{t}^{\infty} \frac{\sin((t-x)\sqrt{\omega})}{\sqrt{\omega}} \mathcal{F}(\sqrt{\cdot}F_{4}(x,\cdot\lambda(x)))(\omega \lambda(x)^{2}) dx\right)||_{L^{2}(\rho(\omega\lambda(t)^{2})d\omega)} \leq \frac{C (\log(\log(t)))^{2}}{t^{3} \log^{1+b-2\alpha b}(t)} \end{equation}
\begin{equation}||\omega \lambda(t)^{2} \int_{t}^{\infty} \frac{\sin((t-x)\sqrt{\omega})}{\sqrt{\omega}} \mathcal{F}(\sqrt{\cdot} F_{4}(x,\cdot\lambda(x)))(\omega \lambda(x)^{2}) dx||_{L^{2}(\rho(\omega \lambda(t)^{2}) d\omega)}\leq \frac{C}{t^{2} \log^{1+b-2b\alpha}(t)}\end{equation}
\end{lemma}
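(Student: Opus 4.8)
The plan is to exploit the orthogonality condition $\langle F_4(x,\cdot),\phi_0(\tfrac{\cdot}{\lambda(x)})\rangle = 0$ together with the symbol-type pointwise bounds on $F_4$ and its derivatives (the estimates \eqref{f4estimates}, \eqref{dttf4estimate} established in the previous proposition) to integrate by parts in the $x$-variable. The first step is to understand the low-frequency behavior of $\mathcal{F}(\sqrt{\cdot}F_4(x,\cdot\lambda(x)))(\omega\lambda(x)^2)$. Since $\phi(r,\xi) = \tfrac{\sqrt{r}}{2}\phi_0(r) + O(r^{5/2}\xi)$ for $r^2\xi \leq 4$ (from the \cite{kst} expansion recalled in Lemma \ref{ytouregularity}), and since $\mathcal{F}(\sqrt{\cdot}F_4(x,\cdot\lambda(x)))(\eta) = \int_0^\infty \phi(R,\eta/\lambda(x)^2)\,\lambda(x)\sqrt{R}\,F_4(x,R\lambda(x))\,dR$, the leading term in the small-$\eta$ expansion is proportional to $\langle F_4(x,\cdot\lambda(x)), \phi_0\rangle$, which vanishes. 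So $\mathcal{F}(\sqrt{\cdot}F_4(x,\cdot\lambda(x)))(\eta)$ vanishes to first order in $\eta$ as $\eta\to 0$, i.e. it is $O(\eta)$ times (roughly) a first moment of $F_4$ against $R^{5/2}$ times a bounded symbol; the pointwise bounds on $F_4$ make this first moment integrable and give a gain over the naive estimate. I would write $g(x,\omega) := \mathcal{F}(\sqrt{\cdot}F_4(x,\cdot\lambda(x)))(\omega\lambda(x)^2)$ and record the two facts: (i) $\langle\omega\lambda(x)^2\rangle^{\alpha}|g(x,\omega)| \lesssim \min(1,\omega\lambda(x)^2)\cdot(\text{decay in }x)$ for suitable weights, obtained by the $\phi$-expansion and the pointwise $F_4$ bounds; and (ii) $|\partial_x g(x,\omega)|$, $|\partial_x^2 g(x,\omega)|$ satisfy analogous bounds with extra factors of $1/x$ each, which follows from differentiating under the integral sign and using the $\partial_t F_4$, $\partial_t^2 F_4$ bounds (the $t$-derivative also hitting $\lambda(x)$, but $|\lambda'/\lambda| \lesssim 1/(x\log x)$ is favorable), plus the bound $||\partial_t^j F_4(x,\cdot)||$ in the various norms.

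The core computation is then the integration by parts. Write $\int_t^\infty \tfrac{\sin((t-x)\sqrt\omega)}{\sqrt\omega}g(x,\omega)\,dx$ and note $\tfrac{\sin((t-x)\sqrt\omega)}{\sqrt\omega} = \partial_x\big(\tfrac{\cos((t-x)\sqrt\omega)}{\omega}\big)$. Integrating by parts (the boundary term at $x=t$ vanishes because $\cos(0)=1$ but it is $\tfrac{1}{\omega}g(t,\omega)$, so actually it does not vanish — I would instead integrate by parts twice, using $\tfrac{\sin((t-x)\sqrt\omega)}{\sqrt\omega}= -\partial_x^2\big(\tfrac{\sin((t-x)\sqrt\omega)}{\omega^{3/2}}\big)$, whose first antiderivative $\tfrac{\cos((t-x)\sqrt\omega)}{\omega}$ and second antiderivative $\tfrac{\sin((t-x)\sqrt\omega)}{\omega^{3/2}}$ both vanish as $x\to\infty$ after pairing with the decaying $g$, and the boundary terms at $x=t$ of the form $\tfrac{1}{\omega}g(t,\omega)$ and $\tfrac{\sin(0)}{\omega^{3/2}}\partial_x g = 0$ are handled: the $\tfrac{1}{\omega}g(t,\omega)$ boundary term is controlled using the first-order vanishing of $g$ at $\omega=0$, which exactly cancels the $\tfrac1\omega$ singularity against the spectral density $\rho(\omega\lambda(t)^2) \sim \tfrac{1}{\omega\lambda(t)^2\log^2(\omega\lambda(t)^2)}$ near $\omega=0$). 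After two integrations by parts the integrand becomes $\tfrac{\sin((t-x)\sqrt\omega)}{\omega^{3/2}}\partial_x^2 g(x,\omega)$, on which one applies Minkowski's inequality in $x$, then for each fixed $x$ splits the $\omega$-integral at $\omega\lambda(x)^2 = 1$: in the low region use $|g|\lesssim \omega\lambda(x)^2\cdot(\cdots)$ so $\tfrac{1}{\omega^{3/2}}$ is integrable against $\rho$, and in the high region use the rapid $x$-decay of $||\partial_x^2 g||$ in $L^2(\rho\,d\omega)$. The weighted versions (factors $\sqrt\omega\lambda(t)$, $\omega\lambda(t)^2$) are handled the same way, replacing $\rho(\omega\lambda(t)^2)$ by $\langle\omega\lambda(t)^2\rangle^{2\beta}\rho(\omega\lambda(t)^2)$ and using the scaling relation \eqref{rhoscaling} together with the weighted $F_4$ bounds and the identity $\langle\sqrt\omega\lambda(t)\rangle \leq \tfrac{\lambda(t)}{\lambda(x)}\langle\sqrt\omega\lambda(x)\rangle$ for $x\geq t$; for the $\omega\lambda(t)^2$-weighted estimate one more power of $\sqrt\omega$ is absorbed by the $\tfrac{1}{\omega^{3/2}}$, leaving $\tfrac{1}{\omega^{1/2}}$, still fine in the low region because of the first-order vanishing of $g$. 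The $\cos$ integral and the $\partial_t$ of the $\sin$ integral are treated identically; $\partial_t$ produces either a $\cos$ integrand (same as the second estimate) or, when it hits the lower limit, the boundary term $g(t,\omega)$ which is controlled as above.

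The main obstacle I expect is bookkeeping the precise power of $\log(\log(t))$ and the exponent $b+1-2\alpha b$: the pointwise bound on $F_4$ in \eqref{f4estimates} has the factor $\tfrac{r}{t^2\log^{3b+1-2\alpha b}(t)(r^2+\lambda(t)^2)^2}$ supported in $r\leq\log^N(t)$, and converting this through the $\phi$-expansion into a bound on $g$ and its moments requires carefully tracking (a) the factor $\lambda(x) \sim \log^{-b}(x)$ from the change of variables $r = R\lambda(x)$, (b) the logarithmic size of the support in the renormalized variable $R$, which contributes the powers of $\log\log t$ (the region $R\lesssim \log^N(x)/\lambda(x)$), and (c) the two powers of $1/x$ from the double integration by parts hitting $g$ — one must check these combine to give exactly $\tfrac{(\log\log t)^2}{t^2\log^{b+1-2\alpha b}(t)}$ and not something weaker. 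A secondary delicate point is justifying that the boundary term at $x=t$ genuinely vanishes or is subsumed: one needs the expansion $g(t,\omega) = c\,\omega\lambda(t)^2 \cdot h(t,\omega) + (\text{higher order})$ with $h$ bounded (using that $F_4$ is supported where $r\lesssim\log^N t$, so $r^2\xi = r^2\omega/\lambda(t)^2$ can still be large, requiring one to also control the contribution of the region $r^2\omega/\lambda(t)^2 > 4$ to the moment, which is where the $\tfrac{1}{(r^2+\lambda^2)^2}$ decay of $F_4$ and the $|a(\xi)|\xi^{-1/4}$-type bounds on $\phi$ for large argument are used). Once the $\omega$-expansion of $g$ near $0$ with explicit $x$- and $\omega$-dependent bounds is in hand — essentially an application of the estimates in the $F_4$-estimates proposition combined with the $\phi$-asymptotics from \cite{kst} — the rest is routine splitting of integrals as in the $v_4,v_5$ estimates elsewhere in the paper.
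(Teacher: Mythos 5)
Your overall philosophy matches the paper's: exploit the orthogonality $\langle F_4(x,\cdot),\phi_0(\cdot/\lambda(x))\rangle=0$ to gain vanishing of $g(x,\omega):=\mathcal{F}(\sqrt{\cdot}F_4(x,\cdot\lambda(x)))(\omega\lambda(x)^2)$ at low frequency, integrate by parts in $x$, and case-split in $\omega$ using the symbol-type bounds on $F_4$. But there is a genuine quantitative gap in your main step. A single orthogonality condition buys exactly one power of $\omega$ at low frequency (the $j=1$ term of the expansion $\phi(r,\xi)=\widetilde{\phi_0}(r)+\tfrac{1}{\sqrt r}\sum_j(r^2\xi)^j\phi_j(r^2)$ has no reason to vanish), so $|\partial_x^2 g(x,\omega)|\lesssim \omega\lambda(x)^2\cdot D(x)$ is the best you can claim near $\omega=0$. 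After your second integration by parts the integrand carries $\omega^{-3/2}$, and then
\begin{equation}
\int_0^{\epsilon}\frac{|\partial_x^2 g(x,\omega)|^2}{\omega^{3}}\,\rho(\omega\lambda(t)^2)\,d\omega \;\gtrsim\; \int_0^{\epsilon}\frac{\omega^{2}}{\omega^{3}}\cdot\frac{d\omega}{\omega\,\lambda(t)^2\log^{2}(\omega\lambda(t)^2)}\;=\;\frac{1}{\lambda(t)^{2}}\int_0^{\epsilon}\frac{d\omega}{\omega^{2}\log^{2}(\omega\lambda(t)^2)}\;=\;\infty,
\end{equation}
so your claim that ``$\tfrac{1}{\omega^{3/2}}$ is integrable against $\rho$'' in the low region is false: one order of vanishing compensates $\omega^{-1}$ but not $\omega^{-3/2}$. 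The paper therefore integrates by parts only \emph{once}, keeps the boundary term $-\tfrac{g(t,\omega)}{\omega}$ (which, as you correctly note, is square-integrable thanks to the one power of vanishing), and controls that boundary term not by a Taylor expansion at $\omega=0$ but by the ``swap to the tail'' trick: orthogonality replaces $\int_0^{2/(\sqrt\omega\lambda)}r\phi_0 F_4\,dr$ by $-\int_{2/(\sqrt\omega\lambda)}^{\infty}r\phi_0F_4\,dr$, which is then estimated case by case according to where $2/(\sqrt\omega\lambda(t))$ sits relative to $1$, $\log^N(t)/\lambda(t)$, $t/(2\lambda(t))$; the remaining $x$-integral carries only $\omega^{-1}$ and is handled by Minkowski. (Your route could be repaired by using $|\sin((t-x)\sqrt\omega)|\le (x-t)\sqrt\omega$ at low frequency, accepting a factor $(x-t)$ and absorbing it with the $x^{-4}$ decay of $\|\partial_x^2 g/\omega\|$ — but that is not what you wrote, and it requires exactly the same second-derivative estimates the paper develops anyway.)

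A secondary omission: for the weighted estimates with $\sqrt{\omega}\lambda(t)$ and $\omega\lambda(t)^2$, several terms — notably $\mathcal{F}(\sqrt{\cdot}F_4)'(\omega\lambda^2)\cdot 2\omega\lambda\lambda'$ and the $\mathcal{F}''\cdot(\omega\lambda\lambda')^2$ terms — are \emph{not} square-integrable at high frequency after merely multiplying the unweighted pointwise bounds by the weight; the paper must integrate by parts in the radial variable in the oscillatory region $r\sqrt\xi\ge 2$, using $\psi^+(r,\xi)=\xi^{-1/4}e^{ir\sqrt\xi}\sigma(r\sqrt\xi,r)$ and the symbol bounds on $\sigma$ and $a$, to trade the growth for extra decay in $\omega$ (this also produces boundary terms at $r=2/\sqrt\xi$ involving $F_4(x,2/\sqrt\omega)$ that must be estimated separately). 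Your proposal's treatment of the weights via $\langle\omega\lambda(t)^2\rangle^{2\beta}\rho$ and the scaling relation does not address this high-frequency issue.
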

\begin{proof}
We start with
\begin{equation}\begin{split} &\int_{t}^{\infty} \frac{\sin((t-x)\sqrt{\omega})}{\sqrt{\omega}} \mathcal{F}(\sqrt{\cdot} F_{4}(x,\cdot \lambda(x)))(\omega \lambda(x)^{2}) dx\\
&=-\frac{\mathcal{F}(\sqrt{\cdot} F_{4}(t,\cdot\lambda(t)))(\omega \lambda(t)^{2})}{\omega}\\
&-\int_{t}^{\infty} \frac{\cos((t-x) \sqrt{\omega})}{\omega} \left(\mathcal{F}(\sqrt{\cdot} F_{4}(x,\cdot\lambda(x)))'(\omega \lambda(x)^{2}) \cdot 2 \lambda(x) \lambda'(x) \omega + \mathcal{F}(\sqrt{\cdot} \partial_{x}\left(F_{4}(x,\cdot \lambda(x))\right))(\omega \lambda(x)^{2})\right) dx\end{split}\end{equation}

Then,
\begin{equation} \begin{split} -\frac{\mathcal{F}(\sqrt{\cdot} F_{4}(t,\cdot\lambda(t)))(\omega \lambda(t)^{2})}{\omega}&=\frac{-1}{\omega} \int_{0}^{\frac{2}{\sqrt{\omega} \lambda(t)}} \phi(r,\omega \lambda(t)^{2}) \sqrt{r} F_{4}(t,r\lambda(t)) dr\\
&-\frac{1}{\omega} \int_{\frac{2}{\sqrt{\omega} \lambda(t)}}^{\infty} \phi(r,\omega \lambda(t)^{2}) \sqrt{r} F_{4}(t,r\lambda(t)) dr\end{split}\end{equation}

In the region $r \sqrt{\omega \lambda(t)^{2}} \leq 2$, we use proposition 4.4 of \cite{kst} to make the decomposition
\begin{equation} \phi(r,\omega \lambda(t)^{2}) = \widetilde{\phi_{0}}(r) + \frac{1}{\sqrt{r}} \sum_{j=1}^{\infty} (r^{2} \omega \lambda(t)^{2})^{j} \phi_{j}(r^{2})\end{equation}
where we denote, by $\widetilde{\phi_{0}}$, what was denoted by $\phi_{0}$ in \cite{kst}. In our notation, 
$$\widetilde{\phi_{0}}(r) = \frac{\sqrt{r}}{2} \phi_{0}(r)$$
The first term to consider is then
\begin{equation} -\frac{1}{\omega} \int_{0}^{\frac{2}{\sqrt{\omega}\lambda(t)}} \widetilde{\phi_{0}}(r) \sqrt{r}F_{4}(t,r\lambda(t)) dr = \frac{-1}{2\omega} \int_{0}^{\frac{2}{\sqrt{\omega} \lambda(t)}} r \phi_{0}(r) F_{4}(t,r\lambda(t)) dr\end{equation}
We will then consider several cases of $\omega$.\\
\\
\textbf{Case 1}: $1 \leq \frac{2}{\sqrt{\omega} \lambda(t)} \leq \frac{\log^{N}(t)}{\lambda(t)}$. In this region, we use the orthogonality of $F_{4}(t,\cdot \lambda(t))$ to $\phi_{0}$, to get
\begin{equation} \frac{-1}{2\omega} \int_{0}^{\frac{2}{\sqrt{\omega} \lambda(t)}} r \phi_{0}(r) F_{4}(t,r\lambda(t)) dr = \frac{1}{2\omega} \int_{\frac{2}{\sqrt{\omega} \lambda(t)}}^{\infty} \phi_{0}(r) F_{4}(t,r\lambda(t)) r dr\end{equation}
which gives
\begin{equation} \begin{split} &|\frac{-1}{2\omega} \int_{0}^{\frac{2}{\sqrt{\omega} \lambda(t)}} r \phi_{0}(r) F_{4}(t,r\lambda(t)) dr|\\
&\leq \frac{C}{\omega} \int_{\frac{2}{\sqrt{\omega} \lambda(t)}}^{\frac{\log^{N}(t)}{\lambda(t)}} \frac{\phi_{0}(r)}{\lambda(t)^{4} r^{4}} \left(\frac{r \lambda(t)}{t^{2} \log^{3b+1-2\alpha b}(t)}\right) r dr+\frac{C}{\omega} \int_{\frac{2}{\sqrt{\omega} \lambda(t)}}^{\frac{t}{2\lambda(t)}} \frac{\phi_{0}(r)}{r^{4} \lambda(t)^{4} \log^{5b+2N-2}(t)} \frac{r \lambda(t)}{t^{2}} r dr\\
&\leq \frac{C}{t^{2} \log^{2b+1-2\alpha b}(t)}, \quad 1 \leq \frac{2}{\sqrt{\omega} \lambda(t)} \leq \frac{\log^{N}(t)}{\lambda(t)}\end{split}\end{equation}\\
\\
\textbf{Case 2}: $\frac{2}{\sqrt{\omega} \lambda(t)} \leq 1$. Here, we have
\begin{equation}\begin{split} &|\frac{-1}{2\omega} \int_{0}^{\frac{2}{\sqrt{\omega} \lambda(t)}} r \phi_{0}(r) F_{4}(t,r\lambda(t)) dr|\\
&\leq \frac{C}{\omega} \int_{0}^{\frac{2}{\sqrt{\omega} \lambda(t)}} \phi_{0}(r) \left(\frac{r \lambda(t)}{\lambda(t)^{4} (r^{2}+1)^{2} t^{2} \log^{3b+1-2\alpha b}(t)}\right) r dr + \frac{C}{\omega} \int_{0}^{\frac{2}{\sqrt{\omega} \lambda(t)}} \frac{\phi_{0}(r) r \lambda(t) r dr}{\lambda(t)^{4} (r^{2}+1)^{2} \log^{5b+2N-2}(t) t^{2}}\\
&\leq \frac{C}{\omega \left(\sqrt{\omega} \lambda(t)\right)^{4} t^{2} \log^{1-2\alpha b}(t)}, \quad \frac{2}{\sqrt{\omega} \lambda(t)} \leq 1\end{split}\end{equation}\\
\\
\textbf{Case 3}: $\frac{\log^{N}(t)}{\lambda(t)} \leq \frac{2}{\sqrt{\omega} \lambda(t)} \leq \frac{t}{2\lambda(t)}$. We use the orthogonality condition here, and recall that the $v_{1}+v_{2}+v_{3}$ and $F_{0,2}$ terms are supported in the region $r \leq \log^{N}(t)$. Then, in the integral below, only the $v_{4}+v_{5}$ term contributes:
\begin{equation}\begin{split} &|\frac{-1}{2\omega} \int_{0}^{\frac{2}{\sqrt{\omega} \lambda(t)}} r \phi_{0}(r) F_{4}(t,r\lambda(t)) dr| = |\frac{1}{2\omega} \int_{\frac{2}{\sqrt{\omega} \lambda(t)}}^{\infty} \phi_{0}(r) F_{4}(t,r\lambda(t)) r dr|\\
&\leq \frac{C}{\omega} \int_{\frac{2}{\sqrt{\omega} \lambda(t)}}^{\frac{t}{2\lambda(t)}} \frac{r dr}{t^{2} \log^{2b+2N-2}(t)(r^{2}+1)^{2}}\\
&\leq \frac{C}{t^{2} \log^{4b+2N-2}(t)}, \quad \frac{\log^{N}(t)}{\lambda(t)} \leq \frac{2}{\sqrt{\omega} \lambda(t)} \leq \frac{t}{2\lambda(t)}\end{split}\end{equation}\\
\\
\textbf{Case 4}: $\frac{2}{\sqrt{\omega} \lambda(t)} \geq \frac{t}{2\lambda(t)}$. We use the orthogonality condition, and note that $F_{4}(t,r) =0, \quad r \geq \frac{t}{2}$. This gives
\begin{equation}\begin{split} &|\frac{-1}{2\omega} \int_{0}^{\frac{2}{\sqrt{\omega} \lambda(t)}} r \phi_{0}(r) F_{4}(t,r\lambda(t)) dr| = |\frac{1}{2\omega} \int_{\frac{2}{\sqrt{\omega} \lambda(t)}}^{\infty} \phi_{0}(r) F_{4}(t,r\lambda(t)) r dr|=0\end{split}\end{equation}
Combining these estimates, we get
\begin{equation} |\frac{-1}{2\omega} \int_{0}^{\frac{2}{\sqrt{\omega} \lambda(t)}} r \phi_{0}(r) F_{4}(t,r\lambda(t)) dr| \leq \begin{cases} \frac{C}{\omega^{3} \lambda(t)^{4} t^{2} \log^{1-2\alpha b}(t)}, \quad \frac{2}{\sqrt{\omega} \lambda(t)} \leq 1\\
\frac{C}{t^{2} \log^{2b+1-2\alpha b}(t)}, \quad 1 \leq \frac{2}{\sqrt{\omega} \lambda(t)} \leq \frac{\log^{N}(t)}{\lambda(t)}\\
\frac{C}{t^{2} \log^{4b+2N-2}(t)}, \quad \frac{\log^{N}(t)}{\lambda(t)} \leq \frac{2}{\sqrt{\omega} \lambda(t)} \leq \frac{t}{2\lambda(t)}\\
0, \quad \frac{t}{2\lambda(t)} \leq \frac{2}{\sqrt{\omega} \lambda(t)}\end{cases}\end{equation}
Using proposition 4.7 b of \cite{kst}, we have (for example)
$$\rho(x) \leq \begin{cases}\frac{C}{x \log^{2}(x)}, \quad x \leq \frac{1}{4}\\
C x, \quad x \geq \frac{1}{4}\end{cases}$$
This leads to
\begin{equation}\label{1a} ||\frac{-1}{2\omega} \int_{0}^{\frac{2}{\sqrt{\omega} \lambda(t)}} r \phi_{0}(r) F_{4}(t,r\lambda(t)) dr||_{L^{2}(\rho(\omega \lambda(t)^{2})d\omega)} \leq \frac{C}{t^{2} \log^{b+1-2\alpha b}(t)}\end{equation}

The next integral to consider is 
\begin{equation} -\frac{1}{\omega} \sum_{j=1}^{\infty} \int_{0}^{\frac{2}{\sqrt{\omega} \lambda(t)}} r^{2j} \omega ^{j} \lambda(t)^{2j} \phi_{j}(r^{2}) F_{4}(t,r\lambda(t)) dr\end{equation}
For this integral, there is of course no orthogonality to exploit. In all cases of $\omega$, we will use the estimate (from proposition 4.4 of \cite{kst})
$$|\phi_{j}(r^{2})| \leq \frac{3 C_{1}^{j}}{(j-1)!} \log(1+r^{2})$$ 

We again treat various cases of $\omega$.\\
\\
\textbf{Case 1}: $1\leq \frac{2}{\sqrt{\omega} \lambda(t)} \leq \frac{\log^{N}(t)}{\lambda(t)}$. Then, we have
\begin{equation}\begin{split} &|-\frac{1}{\omega} \sum_{j=1}^{\infty} \int_{0}^{\frac{2}{\sqrt{\omega} \lambda(t)}} r^{2j} \omega ^{j} \lambda(t)^{2j} \phi_{j}(r^{2}) F_{4}(t,r\lambda(t)) dr| \\
&\leq \frac{C}{\omega} \sum_{j=1}^{\infty} \int_{0}^{\frac{2}{\sqrt{\omega} \lambda(t)}} r^{2j} \omega^{j} \lambda(t)^{2j} \frac{C_{1}^{j}}{(j-1)!} \left(\frac{\log(\log(t)) r}{\lambda(t)^{3} (r^{2}+1)^{2} t^{2} \log^{3b+1-2b\alpha}(t)} + \frac{\log(t) r \lambda(t)}{t^{2} \lambda(t)^{4} (r^{2}+1)^{2} \log^{5b+2N-2}(t)}\right)dr\\
&\leq \frac{C}{\omega} \sum_{j=1}^{\infty} \left(\int_{0}^{1} \frac{r^{2j+1} \omega^{j} \lambda(t)^{2j} C_{1}^{j}}{(j-1)!} \frac{\log(\log(t))}{t^{2} \log^{1-2b\alpha}(t)} dr + \int_{1}^{\frac{2}{\sqrt{\omega}\lambda(t)}} \frac{\omega^{j} \lambda(t)^{2j} C_{1}^{j}}{(j-1)!} \frac{\log(\log(t))}{t^{2} \log^{1-2b\alpha}(t)} r^{2j-3}dr\right)\\
&\leq \frac{C (\log(\log(t)))^{2}}{t^{2} \log^{1+2b-2b\alpha}(t)}\end{split}\end{equation}
where we used the support properties of the various terms in $F_{4}$, and the largeness of $N$.\\
\\
\textbf{Case 2}: $\frac{2}{\sqrt{\omega} \lambda(t)} \leq 1$. Here, we have
\begin{equation}\begin{split}&|-\frac{1}{\omega} \sum_{j=1}^{\infty} \int_{0}^{\frac{2}{\sqrt{\omega} \lambda(t)}} r^{2j} \omega ^{j} \lambda(t)^{2j} \phi_{j}(r^{2}) F_{4}(t,r\lambda(t)) dr| \\
&\leq \frac{C}{\omega} \sum_{j=1}^{\infty} \int_{0}^{\frac{2}{\sqrt{\omega}\lambda(t)}} r^{2j} \omega^{j} \lambda(t)^{2j} \frac{C_{1}^{j}}{(j-1)!} \log(1+r^{2}) \left(\frac{r \lambda(t)}{\lambda(t)^{4}(r^{2}+1)^{2}} \frac{1}{t^{2}\log^{3b+1-2b\alpha}(t)}+\frac{r \lambda(t)}{\lambda(t)^{4}(r^{2}+1)^{2}} \frac{1}{t^{2} \log^{5b+2N-2}(t)}\right) dr\\
&\leq \frac{C}{\omega} \sum_{j=1}^{\infty} \left(\int_{0}^{\frac{2}{\sqrt{\omega}\lambda(t)}} r^{2j+3} dr\right) \frac{\omega^{j} \lambda(t)^{2j} C_{1}^{j}}{(j-1)! t^{2} \log^{1-2b\alpha}(t)}\\
&\leq \frac{C}{\omega^{3} \lambda(t)^{4} t^{2} \log^{1-2b\alpha}(t)}, \quad \frac{2}{\sqrt{\omega}\lambda(t)} \leq 1\end{split}\end{equation}\\
\\
\textbf{Case 3}: $\frac{\log^{N}(t)}{\lambda(t)} \leq \frac{2}{\sqrt{\omega}\lambda(t)} \leq \frac{t}{2\lambda(t)}$. Again, we use the support properties of the various terms in $F_{4}$, to get
\begin{equation}\begin{split}&|-\frac{1}{\omega} \sum_{j=1}^{\infty} \int_{0}^{\frac{2}{\sqrt{\omega} \lambda(t)}} r^{2j} \omega ^{j} \lambda(t)^{2j} \phi_{j}(r^{2}) F_{4}(t,r\lambda(t)) dr| \\
&\leq \frac{C}{\omega} \sum_{j=1}^{\infty} \left(\int_{0}^{\frac{\log^{N}(t)}{\lambda(t)}} \frac{r^{2j+1} dr}{(r^{2}+1)^{2}}\right) \frac{\omega^{j} \lambda(t)^{2j} C_{1}^{j}}{(j-1)!} \frac{\log(\log(t))}{t^{2} \log^{1-2b\alpha}(t)}+\frac{C}{\omega} \sum_{j=1}^{\infty} \left(\int_{0}^{\frac{2}{\sqrt{\omega}\lambda(t)}} \frac{r^{2j+1} dr}{(r^{2}+1)^{2}}\right) \frac{\omega^{j} \lambda(t)^{2j} C_{1}^{j}}{(j-1)! t^{2} \log^{2b+2N-3}(t)}\\
&\leq \frac{C (\log(\log(t)))^{2}}{t^{2} \log^{1+2b-2b\alpha}(t)}, \quad \frac{\log^{N}(t)}{\lambda(t)} \leq \frac{2}{\sqrt{\omega} \lambda(t)} \leq \frac{t}{2\lambda(t)}\end{split}\end{equation}\\
\\
\textbf{Case 4}: $\frac{2}{\sqrt{\omega}\lambda(t)} \geq \frac{t}{2\lambda(t)}$.
\begin{equation}\begin{split}&|-\frac{1}{\omega} \sum_{j=1}^{\infty} \int_{0}^{\frac{2}{\sqrt{\omega} \lambda(t)}} r^{2j} \omega ^{j} \lambda(t)^{2j} \phi_{j}(r^{2}) F_{4}(t,r\lambda(t)) dr| \\
&\leq \frac{C}{\omega} \sum_{j=1}^{\infty} \left(\int_{0}^{\frac{\log^{N}(t)}{\lambda(t)}} \frac{r^{2j+1} dr}{(r^{2}+1)^{2}}\right) \frac{\omega^{j} \lambda(t)^{2j} C_{1}^{j}}{(j-1)!} \frac{\log(\log(t))}{t^{2} \log^{1-2b\alpha}(t)}+\frac{C}{\omega} \sum_{j=1}^{\infty} \left(\int_{0}^{\frac{t}{2\lambda(t)}} \frac{r^{2j+1} dr}{(r^{2}+1)^{2}}\right) \frac{\omega^{j} \lambda(t)^{2j} C_{1}^{j}}{(j-1)! t^{2} \log^{2b+2N-3}(t)}\\
&\leq \frac{C (\log(\log(t)))^{2}}{t^{2} \log^{1+2b-2b\alpha}(t)}, \quad  \frac{2}{\sqrt{\omega} \lambda(t)} \geq \frac{t}{2\lambda(t)}\end{split}\end{equation}\\
\\
Then, we get
\begin{equation}\label{1b} ||-\frac{1}{\omega} \sum_{j=1}^{\infty} \int_{0}^{\frac{2}{\sqrt{\omega} \lambda(t)}} r^{2j} \omega ^{j} \lambda(t)^{2j} \phi_{j}(r^{2}) F_{4}(t,r\lambda(t)) dr||_{L^{2}(\rho(\omega \lambda(t)^{2}) d\omega)} \leq \frac{C (\log(\log(t)))^{2}}{t^{2} \log^{b+1-2b\alpha}(t)}\end{equation}
The third integral to estimate is
\begin{equation} \begin{split} &-\frac{1}{\omega} \int_{\frac{2}{\sqrt{\omega} \lambda(t)}}^{\infty} \phi(r,\omega \lambda(t)^{2}) \sqrt{r} F_{4}(t,r\lambda(t)) dr = -\frac{2}{\omega} \text{Re}\left(\int_{\frac{2}{\sqrt{\omega}\lambda(t)}}^{\infty} a(\omega \lambda(t)^{2}) \psi^{+}(r,\omega\lambda(t)^{2}) \sqrt{r} F_{4}(t,r\lambda(t)) dr \right)\end{split}\end{equation}
where we used propositions 4.6 and 4.7 of \cite{kst}. The estimates from these propositions also imply
\begin{equation}\begin{split}&|-\frac{1}{\omega} \int_{\frac{2}{\sqrt{\omega} \lambda(t)}}^{\infty} \phi(r,\omega \lambda(t)^{2}) \sqrt{r} F_{4}(t,r\lambda(t)) dr| \leq \frac{C |a(\omega \lambda(t)^{2})|}{\omega^{5/4}\lambda(t)^{1/2}} \int_{\frac{2}{\sqrt{\omega} \lambda(t)}}^{\infty} \sqrt{r} |F_{4}(t,r\lambda(t))| dr\end{split}\end{equation}\\
\\
\textbf{Case 1}: $1 \leq \frac{2}{\sqrt{\omega} \lambda(t)} \leq \frac{\log^{N}(t)}{\lambda(t)}$.
\begin{equation}\begin{split} &|-\frac{1}{\omega} \int_{\frac{2}{\sqrt{\omega} \lambda(t)}}^{\infty} \phi(r,\omega \lambda(t)^{2}) \sqrt{r} F_{4}(t,r\lambda(t)) dr|\\
&\leq \frac{C |a(\omega \lambda(t)^{2})|}{\omega^{5/4}\lambda(t)^{1/2}}\left(\int_{\frac{2}{\sqrt{\omega} \lambda(t)}}^{\frac{\log^{N}(t)}{\lambda(t)}} \frac{\sqrt{r} dr}{\lambda(t)^{3} r^{3} t^{2} \log^{3b+1-2b\alpha}(t)} + \int_{\frac{2}{\sqrt{\omega}\lambda(t)}}^{\infty} \frac{r^{3/2}}{\lambda(t)^{3} r^{4} \log^{5b+2N-2}(t) t^{2}}\right)\\
&\leq C \frac{|a(\omega\lambda(t)^{2})|}{\omega^{5/4} \lambda(t)^{1/2}} \cdot \frac{\omega^{3/4} \lambda(t)^{3/2}}{t^{2} \log^{3b+1-2\alpha b}(t) \lambda(t)^{3}}\\
&\leq \frac{C |a(\omega \lambda(t)^{2})|}{\omega} \cdot \frac{\sqrt{\omega} \lambda(t)}{t^{2} \log^{1-2\alpha b}(t)}, \quad 1 \leq \frac{2}{\sqrt{\omega} \lambda(t)} \leq \frac{\log^{N}(t)}{\lambda(t)}\end{split}\end{equation}\\
\\
\textbf{Case 2}: $\frac{2}{\sqrt{\omega}\lambda(t)} \leq 1$. 
\begin{equation}\begin{split} &|-\frac{1}{\omega} \int_{\frac{2}{\sqrt{\omega} \lambda(t)}}^{\infty} \phi(r,\omega \lambda(t)^{2}) \sqrt{r} F_{4}(t,r\lambda(t)) dr|\\
&\leq \frac{C |a(\omega \lambda(t)^{2})|}{\omega^{5/4} \lambda(t)^{1/2}}\left(\frac{1}{t^{2} \log^{1-2\alpha b}(t)} \int_{0}^{\frac{\log^{N}(t)}{\lambda(t)}} \frac{r^{3/2} dr}{(r^{2}+1)^{2}} + \frac{1}{t^{2} \log^{2b+2N-2}(t)} \int_{0}^{\infty} \frac{r^{3/2} dr}{(r^{2}+1)^{2}}\right)\\
&\leq \frac{C |a(\omega \lambda(t)^{2})|}{\omega^{5/4} \lambda(t)^{1/2} t^{2} \log^{1-2\alpha b}(t)}, \quad \frac{2}{\sqrt{\omega} \lambda(t)} \leq 1\end{split}\end{equation}\\
\\
\textbf{Case 3}: $\frac{\log^{N}(t)}{\lambda(t)} \leq \frac{2}{\sqrt{\omega} \lambda(t)} \leq \frac{t}{2\lambda(t)}$. In this region, the $F_{0,2}$ and $v_{1}+v_{2}+v_{3}$ terms do not contribute to the integral because of the support properties of $1-\chi_{\geq 1}(\frac{r}{\log^{N}(t)})$. We get
\begin{equation}\begin{split}&|-\frac{1}{\omega} \int_{\frac{2}{\sqrt{\omega} \lambda(t)}}^{\infty} \phi(r,\omega \lambda(t)^{2}) \sqrt{r} F_{4}(t,r\lambda(t)) dr|\leq \frac{C |a(\omega \lambda(t)^{2})|}{\omega^{5/4} \lambda(t)^{1/2}} \int_{\frac{2}{\sqrt{\omega}\lambda(t)}}^{\infty} \frac{\sqrt{r} dr}{\lambda(t)^{3} r^{3} \log^{5b+2N-2}(t) t^{2}}\\
&\leq \frac{C |a(\omega \lambda(t)^{2})|}{\omega} \cdot \frac{\sqrt{\omega} \lambda(t)}{t^{2} \log^{2b+2N-2}(t)}\end{split}\end{equation}\\
\\
\textbf{Case 4}: $\frac{2}{\sqrt{\omega} \lambda(t)} \geq \frac{t}{2\lambda(t)}$. In this case, the integral is zero, because of the support properties of $F_{4}$.\\
\\
Combining the above estimates, we get
\begin{equation} |-\frac{1}{\omega} \int_{\frac{2}{\sqrt{\omega} \lambda(t)}}^{\infty} \phi(r,\omega \lambda(t)^{2}) \sqrt{r} F_{4}(t,r\lambda(t)) dr| \leq C |a(\omega \lambda(t)^{2})| \begin{cases} \frac{1}{\omega^{5/4} \lambda(t)^{1/2} t^{2} \log^{1-2\alpha b}(t)}, \quad \frac{2}{\sqrt{\omega} \lambda(t)} \leq 1\\
\frac{1}{\sqrt{\omega} t^{2} \log^{1+b-2\alpha b}(t)}, \quad 1 \leq \frac{2}{\sqrt{\omega} \lambda(t)} \leq \frac{\log^{N}(t)}{\lambda(t)}\\
\frac{1}{\sqrt{\omega} t^{2} \log^{3b+2N-2}(t)}, \quad \frac{\log^{N}(t)}{\lambda(t)} \leq \frac{2}{\sqrt{\omega} \lambda(t)} \leq \frac{t}{2\lambda(t)}\\
0, \quad \frac{t}{2\lambda(t)} \leq \frac{2}{\sqrt{\omega} \lambda(t)}\end{cases}\end{equation}
which gives
\begin{equation}\label{1c} ||-\frac{1}{\omega} \int_{\frac{2}{\sqrt{\omega} \lambda(t)}}^{\infty} \phi(r,\omega \lambda(t)^{2}) \sqrt{r} F_{4}(t,r\lambda(t)) dr||_{L^{2}(\rho(\omega \lambda(t)^{2})d\omega)} \leq \frac{C \sqrt{\log(\log(t))}}{t^{2} \log^{1+b-2\alpha b}(t)}\end{equation}
(Note that, by proposition 4.7 b of \cite{kst}, $\rho(\xi) = \frac{1}{\pi |a(\xi)|^{2}}$).
Now, we consider
\begin{equation} -\int_{t}^{\infty} dx \frac{\cos((t-x) \sqrt{\omega})}{\omega} \mathcal{F}(\sqrt{\cdot} F_{4}(x,\cdot \lambda(x)))'(\omega \lambda(x)^{2}) \cdot 2 \omega \lambda(x) \lambda'(x)\end{equation}
We start by noting that, in the region $r \leq \frac{2}{\sqrt{\xi}}$,
\begin{equation} \partial_{2}\phi(r,\xi) = \partial_{\xi}\left(\widetilde{\phi_{0}}(r) + \frac{1}{\sqrt{r}} \sum_{j=1}^{\infty}(r^{2j} \xi^{j} \phi_{j}(r^{2}))\right)\end{equation}
So,
\begin{equation}\label{ftransprime}\begin{split} \mathcal{F}(\sqrt{\cdot} F_{4}(x,\cdot \lambda(x)))'(\omega \lambda(x)^{2}) &= \int_{0}^{\frac{2}{\sqrt{\omega} \lambda(x)}} \sqrt{r} F_{4}(x,r\lambda(x)) \cdot \frac{1}{\sqrt{r}} \sum_{j=1}^{\infty} (r^{2j} j (\omega \lambda(x)^{2})^{j-1} \phi_{j}(r^{2})) dr\\
&+\int_{\frac{2}{\sqrt{\omega} \lambda(x)}}^{\infty} \sqrt{r} F_{4}(x,r\lambda(x)) \partial_{2}\phi(r,\omega\lambda(x)^{2}) dr\end{split}\end{equation} 
We start with the first term:
\begin{equation}\begin{split} &|-\int_{t}^{\infty} dx \frac{\cos((t-x) \sqrt{\omega})}{\omega} \left(\int_{0}^{\frac{2}{\sqrt{\omega} \lambda(x)}} \sqrt{r} F_{4}(x,r\lambda(x)) \cdot \frac{1}{\sqrt{r}} \sum_{j=1}^{\infty} (r^{2j} j (\omega \lambda(x)^{2})^{j-1} \phi_{j}(r^{2})) dr\right) \cdot 2 \omega \lambda(x) \lambda'(x)|\\
&\leq  C \sum_{j=1}^{\infty} j \frac{\omega^{j}}{\omega} \int_{t}^{\infty} dx \lambda(x)^{2j-1} |\lambda'(x)| \int_{0}^{\frac{2}{\sqrt{\omega}\lambda(x)}} |F_{4}(x,r\lambda(x))| r^{2j} \phi_{j}(r^{2}) dr\\
&\leq C \sum_{j=1}^{\infty} \frac{j \omega^{j} C_{1}^{j}}{(j-1)!} \int_{t}^{\infty} dx \frac{\lambda(x)^{2j-1} |\lambda'(x)|}{\omega} \int_{0}^{\frac{2}{\sqrt{\omega} \lambda(x)}} |F_{4}(x,r\lambda(x))| r^{2j} \log(1+r^{2}) dr\end{split}\end{equation}
The $r$ integral in the last line of the above expression was estimated before, and we get
\begin{equation}\begin{split} &|-\int_{t}^{\infty} dx \frac{\cos((t-x) \sqrt{\omega})}{\omega} \left(\int_{0}^{\frac{2}{\sqrt{\omega} \lambda(x)}} \sqrt{r} F_{4}(x,r\lambda(x)) \cdot \frac{1}{\sqrt{r}} \sum_{j=1}^{\infty} (r^{2j} j (\omega \lambda(x)^{2})^{j-1} \phi_{j}(r^{2})) dr\right) \cdot 2 \omega \lambda(x) \lambda'(x)|\\
&\leq \int_{t}^{\infty} \frac{dx}{x \log(x)} \begin{cases} \frac{C}{\omega^{3} \lambda(x)^{4} x^{2} \log^{1-2b\alpha}(x)}, \quad \frac{2}{\sqrt{\omega}\lambda(x)} \leq 1\\
\frac{C (\log(\log(x)))^{2}}{x^{2} \log^{1+2b-2b\alpha}(x)}, \quad 1 \leq \frac{2}{\sqrt{\omega}\lambda(x)}\end{cases}\end{split}\end{equation}
Finally,
\begin{equation}\label{2a} \begin{split} &||-\int_{t}^{\infty} dx \frac{\cos((t-x) \sqrt{\omega})}{\omega} \left(\int_{0}^{\frac{2}{\sqrt{\omega} \lambda(x)}} \sqrt{r} F_{4}(x,r\lambda(x)) \cdot \frac{1}{\sqrt{r}} \sum_{j=1}^{\infty} (r^{2j} j (\omega \lambda(x)^{2})^{j-1} \phi_{j}(r^{2})) dr\right) \cdot 2 \omega \lambda(x) \lambda'(x)||_{L^{2}(\rho(\omega \lambda(t)^{2}) d\omega)}\\
&\leq C \int_{t}^{\infty} dx ||\frac{1}{\omega}\cdot\left(\int_{0}^{\frac{2}{\sqrt{\omega} \lambda(x)}} \sqrt{r} F_{4}(x,r\lambda(x)) \cdot \frac{1}{\sqrt{r}} \sum_{j=1}^{\infty} (r^{2j} j (\omega \lambda(x)^{2})^{j-1} \phi_{j}(r^{2})) dr\right) \cdot 2 \omega \lambda(x) \lambda'(x)||_{L^{2}(\rho(\omega \lambda(t)^{2}) d\omega)}\\
&\leq C \int_{t}^{\infty} dx \left(\frac{\lambda(t)}{\lambda(x)}\right)||\frac{1}{\omega}\cdot\left(\int_{0}^{\frac{2}{\sqrt{\omega} \lambda(x)}} \sqrt{r} F_{4}(x,r\lambda(x)) \cdot \frac{1}{\sqrt{r}} \sum_{j=1}^{\infty} (r^{2j} j (\omega \lambda(x)^{2})^{j-1} \phi_{j}(r^{2})) dr\right) \cdot 2 \omega \lambda(x) \lambda'(x)||_{L^{2}(\rho(\omega \lambda(x)^{2}) d\omega)}\\
&\leq \frac{C (\log(\log(t)))^{2}}{t^{2} \log^{2+b-2b\alpha}(t)}\end{split}\end{equation}
where we used \eqref{rhoscaling}.
The next integral to treat is 
\begin{equation}\begin{split}&|-2 \omega \int_{t}^{\infty} dx \frac{\cos((t-x)\sqrt{\omega})}{\omega} \lambda(x) \lambda'(x) \int_{\frac{2}{\sqrt{\omega} \lambda(x)}}^{\infty} \sqrt{r} F_{4}(x,r\lambda(x)) \partial_{2}\phi(r,\omega \lambda(x)^{2}) dr|\\
&\leq C \int_{t}^{\infty} \frac{dx}{x \log^{2b+1}(x)} \int_{\frac{2}{\sqrt{\omega}\lambda(x)}}^{\infty} \sqrt{r} |F_{4}(x,r\lambda(x))| |\partial_{2}\phi(r,\omega \lambda(x)^{2})| dr\end{split}\end{equation}
We again use 
$$\phi(r,\xi) = 2 \text{Re}(a(\xi) \psi^{+}(r,\xi)), \quad r \geq \frac{2}{\sqrt{\xi}}$$
and the following symbol type estimate on $a$ from proposition 4.7 of \cite{kst}:
$$|a^{(k)}(\xi)| \leq \frac{C_{k} |a(\xi)|}{\xi^{k}}$$
as well as, from proposition 4.6 of \cite{kst}
\begin{equation} \psi^{+}(r,\xi) = \frac{e^{i r \sqrt{\xi}}}{\xi^{1/4}} \sigma(r\sqrt{\xi},r), \quad r \sqrt{\xi} \geq 2\end{equation}
and the asymptotic series representation for $\sigma$, to get
\begin{equation} |\partial_{2}\phi(r,\xi)| \leq \frac{C |a(\xi)|}{\xi^{3/4}} r, \quad r \sqrt{\xi} \geq 2\end{equation}
This gives
\begin{equation}\begin{split}&|-2 \omega \int_{t}^{\infty} dx \frac{\cos((t-x)\sqrt{\omega})}{\omega} \lambda(x) \lambda'(x) \int_{\frac{2}{\sqrt{\omega} \lambda(x)}}^{\infty} \sqrt{r} F_{4}(x,r\lambda(x)) \partial_{2}\phi(r,\omega \lambda(x)^{2}) dr|\\
&\leq \frac{C |a(\omega \lambda(x)^{2})|}{\omega^{3/4} \lambda(x)^{3/2}} \int_{t}^{\infty} \frac{dx}{x \log^{2b+1}(x)} \int_{\frac{2}{\sqrt{\omega}\lambda(x)}}^{\infty} |F_{4}(x,r\lambda(x))| r^{3/2} dr\end{split}\end{equation}
Note that estimates on this integral can not quite be infered from estimates on a previously treated integral. So, we start considering cases of $\omega$:\\
\\
\textbf{Case 1}: $1 \leq \frac{2}{\sqrt{\omega}\lambda(x)} \leq \frac{\log^{N}(x)}{\lambda(x)}$. Here, we get
\begin{equation}\begin{split}  &\frac{|a(\omega \lambda(x)^{2})|}{\omega^{3/4} \lambda(x)^{3/2}}\int_{\frac{2}{\sqrt{\omega}\lambda(x)}}^{\infty} |F_{4}(x,r\lambda(x))| r^{3/2} dr\\
&\leq C\frac{|a(\omega \lambda(x)^{2})|}{\omega^{3/4} \lambda(x)^{3/2}}\left( \int_{\frac{2}{\sqrt{\omega} \lambda(x)}}^{\frac{\log^{N}(x)}{\lambda(x)}} \frac{r^{5/2} \lambda(x)}{\lambda(x)^{4}(r^{2}+1)^{2} x^{2} \log^{3b+1-2b\alpha}(x)}dr+ \int_{\frac{2}{\sqrt{\omega} \lambda(x)}}^{\frac{x}{2\lambda(x)}} \frac{r^{3/2} dr}{\lambda(x)^{3} r^{3} x^{2} \log^{5b+2N-2}(x)}\right)\\
&\leq \frac{C |a(\omega \lambda(x)^{2})|}{x^{2} \log^{1-2b\alpha}(x) \sqrt{\omega}\lambda(x)}, \quad 1 \leq \frac{2}{\sqrt{\omega} \lambda(x)} \leq \frac{\log^{N}(x)}{\lambda(x)}\end{split}\end{equation}\\
\\
\textbf{Case 2}: $\frac{2}{\sqrt{\omega} \lambda(x)} \leq 1$. 
\begin{equation}\begin{split} &\frac{|a(\omega \lambda(x)^{2})|}{\omega^{3/4} \lambda(x)^{3/2}}\int_{\frac{2}{\sqrt{\omega}\lambda(x)}}^{\infty} |F_{4}(x,r\lambda(x))| r^{3/2} dr\\
&\leq \frac{C |a(\omega \lambda(x)^{2})|}{\omega^{3/4} \lambda(x)^{3/2}} \left(\int_{0}^{\frac{\log^{N}(x)}{\lambda(x)}} \frac{r^{5/2} dr}{(r^{2}+1)^{2} x^{2} \log^{1-2b\alpha}(x)} + \int_{0}^{\infty} \frac{r^{5/2} dr}{x^{2}(r^{2}+1)^{2} \log^{2b+2N-2}(x)}\right)\\
&\leq \frac{C |a(\omega \lambda(x)^{2})|}{\omega^{3/4} \lambda(x)^{3/2} x^{2} \log^{1-2b\alpha}(x)}, \quad \frac{2}{\sqrt{\omega}\lambda(x)} \leq 1\end{split}\end{equation}\\
\\
\textbf{Case 3}: $\frac{x}{2\lambda(x)} \geq \frac{2}{\sqrt{\omega} \lambda(x)} \geq \frac{\log^{N}(x)}{\lambda(x)}$. In this region, only the $v_{4}+v_{5}$ term in $F_{4}$ contributes:
\begin{equation} \begin{split} &\frac{|a(\omega \lambda(x)^{2})|}{\omega^{3/4} \lambda(x)^{3/2}}\int_{\frac{2}{\sqrt{\omega}\lambda(x)}}^{\infty} |F_{4}(x,r\lambda(x))| r^{3/2} dr\\
&\leq \frac{C |a(\omega \lambda(x)^{2})|}{\omega^{3/4}\lambda(x)^{3/2}} \int_{\frac{2}{\sqrt{\omega}\lambda(x)}}^{\frac{x}{2\lambda(x)}} \frac {r^{3/2} dr}{\lambda(x)^{3}r^{3} x^{2} \log^{5b+2N-2}(x)}\\
&\leq \frac{C |a(\omega \lambda(x)^{2})|}{\sqrt{\omega} \lambda(x)^{3/2} x^{2} \log^{\frac{5b}{2}+2N-2}(x)}, \quad \frac{x}{2\lambda(x)} \geq \frac{2}{\sqrt{\omega}\lambda(x)} \geq \frac{\log^{N}(x)}{\lambda(x)}\end{split}\end{equation}\\
\\
\textbf{Case 4}: $\frac{2}{\sqrt{\omega}\lambda(x)} \geq \frac{x}{2\lambda(x)}$. In this region, the integral to estimate is zero, because of the support properties of $F_{4}$.

Combining these estimates, we get
\begin{equation}\label{2bptwse} \frac{|a(\omega \lambda(x)^{2})|}{\omega^{3/4} \lambda(x)^{3/2}}\int_{\frac{2}{\sqrt{\omega}\lambda(x)}}^{\infty} |F_{4}(x,r\lambda(x))| r^{3/2} dr \leq C|a(\omega \lambda(x)^{2})| \cdot \begin{cases} \frac{1}{x^{2} \log^{1-2b\alpha}(x) \sqrt{\omega} \lambda(x)}, \quad 1 \leq \frac{2}{\sqrt{\omega}\lambda(x)} \leq \frac{\log^{N}(x)}{\lambda(x)}\\
\frac{1}{\omega^{3/4}\lambda(x)^{3/2} x^{2} \log^{1-2b\alpha}(x)}, \quad \frac{2}{\sqrt{\omega}\lambda(x)} \leq 1\\
\frac{1}{\sqrt{\omega}\lambda(x)^{3/2} x^{2} \log^{\frac{5b}{2}+2N-2}(x)}, \quad \frac{x}{2\lambda(x)} \geq \frac{2}{\sqrt{\omega}\lambda(x)} \geq \frac{\log^{N}(x)}{\lambda(x)}\\
0, \quad \frac{2}{\sqrt{\omega}\lambda(x)} \geq \frac{x}{2\lambda(x)}\end{cases}\end{equation}
Using the same procedure as in \eqref{2a}, we get

\begin{equation}\label{2b} ||-2 \omega \int_{t}^{\infty} dx \frac{\cos((t-x)\sqrt{\omega})}{\omega} \lambda(x) \lambda'(x) \int_{\frac{2}{\sqrt{\omega} \lambda(x)}}^{\infty} \sqrt{r} F_{4}(x,r\lambda(x)) \partial_{2}\phi(r,\omega \lambda(x)^{2}) dr||_{L^{2}(\rho(\omega\lambda(t)^{2})d\omega)} \leq \frac{C \sqrt{\log(\log(t))}}{t^{2} \log^{2+b-2b\alpha}(t)}\end{equation}
To treat the next integral:
\begin{equation} -\int_{t}^{\infty} \frac{\cos((t-x)\sqrt{\omega})}{\omega} \mathcal{F}(\sqrt{\cdot}\partial_{x}(F_{4}(x,\cdot\lambda(x))))(\omega \lambda(x)^{2}) dx\end{equation}
we first note that
$$\int_{0}^{\infty} \phi_{0}(r) F_{4}(x,r\lambda(x)) r dr =0, \quad x \geq T_{0}$$
implies
\begin{equation} \int_{0}^{\infty} \phi_{0}(r) \partial_{x}(F_{4}(x,r\lambda(x))) r dr =0, \quad x \geq T_{0}\end{equation}
So, $\partial_{x}(F_{4}(x,r \lambda(x)))$ is still orthogonal to $\phi_{0}(r)$. Also, by noting the symbol-type estimates, \eqref{f4estimates}, and inspecting the procedure used to estimate $\frac{\mathcal{F}(\sqrt{\cdot} F_{4}(x,\cdot \lambda(x)))(\omega \lambda(x)^{2})}{\omega}$, we get 
$$||\frac{\mathcal{F}(\sqrt{\cdot}\partial_{x}(F_{4}(x,\cdot \lambda(x))))(\omega\lambda(x)^{2})}{\omega}||_{L^{2}(\rho(\omega \lambda(t)^{2}) d\omega)} \leq \frac{C \lambda(t)}{\lambda(x)} \left(\frac{(\log(\log(x)))^{2}}{x^{3} \log^{b+1-2\alpha b}(x)}\right)$$
where we again use \eqref{rhoscaling}. Using Minkowski's inequality, we get
\begin{equation}\label{3} ||-\int_{t}^{\infty} \frac{\cos((t-x)\sqrt{\omega})}{\omega} \mathcal{F}(\sqrt{\cdot}\partial_{x}(F_{4}(x,\cdot \lambda(x))))(\omega \lambda(x)^{2}) dx||_{L^{2}(\rho(\omega \lambda(t)^{2}) d\omega)} \leq \frac{C (\log(\log(t))^{2}}{t^{2} \log^{b+1-2\alpha b}(t)}\end{equation}
Combining this with our other estimates in this section, we finally get
\begin{equation} ||\int_{t}^{\infty} \frac{\sin((t-x)\sqrt{\omega})}{\sqrt{\omega}} \mathcal{F}(\sqrt{\cdot}F_{4}(x,\cdot\lambda(x)))(\omega \lambda(x)^{2}) dx||_{L^{2}(\rho(\omega \lambda(t)^{2})d\omega)} \leq \frac{C (\log(\log(t))^{2}}{t^{2}\log^{b+1-2\alpha b}(t)}\end{equation}
Next, we estimate 
\begin{equation}\begin{split} &\partial_{t}\left(\int_{t}^{\infty} \frac{\sin((t-x)\sqrt{\omega})}{\sqrt{\omega}} \mathcal{F}(\sqrt{\cdot}F_{4}(x,\cdot\lambda(x)))(\omega \lambda(x)^{2}) dx\right) = \int_{t}^{\infty} \cos((t-x)\sqrt{\omega}) \mathcal{F}(\sqrt{\cdot} F_{4}(x,\cdot\lambda(x)))(\omega \lambda(x)^{2}) dx\\
&=\int_{t}^{\infty} \frac{\sin((t-x)\sqrt{\omega})}{\sqrt{\omega}} \left(\mathcal{F}(\sqrt{\cdot} F_{4}(x,\cdot\lambda(x)))'(\omega \lambda(x)^{2}) \cdot 2 \lambda(x) \lambda'(x) \omega + \mathcal{F}(\sqrt{\cdot} \partial_{x}(F_{4}(x,\cdot \lambda(x))))(\omega \lambda(x)^{2})\right) dx\\
&=-\left(\mathcal{F}(\sqrt{\cdot}F_{4}(t,\cdot\lambda(t)))'(\omega \lambda(t)^{2}) \cdot 2 \lambda(t) \lambda'(t) + \frac{\mathcal{F}(\sqrt{\cdot} \partial_{t}(F_{4}(t,\cdot\lambda(t))))(\omega\lambda(t)^{2})}{\omega}\right)\\
&-\int_{t}^{\infty} \frac{\cos((t-x)\sqrt{\omega})}{\omega} \left(\mathcal{F}(\sqrt{\cdot} F_{4}(x,\cdot\lambda(x)))''(\omega \lambda(x)^{2})\cdot 4 \lambda(x)^{2}\lambda'(x)^{2} \omega^{2} + 2 \mathcal{F}(\sqrt{\cdot} \partial_{x}(F_{4}(x,\cdot\lambda(x))))'(\omega \lambda(x)^{2})\cdot 2 \lambda(x)\lambda'(x)\omega \right.\\
&\left.+ \mathcal{F}(\sqrt{\cdot} F_{4}(x,\cdot\lambda(x)))'(\omega \lambda(x)^{2}) \cdot 2\omega((\lambda'(x))^{2}+\lambda(x)\lambda''(x))+\mathcal{F}(\sqrt{\cdot}\partial_{x}^{2}(F_{4}(x,\cdot\lambda(x))))(\omega\lambda(x)^{2})\right) dx\end{split}\end{equation}
Note that, while obtaining \eqref{2a} and \eqref{2b}, we showed that 
$$||\frac{\mathcal{F}(\sqrt{\cdot} F_{4}(x,\cdot \lambda(x)))'(\omega \lambda(x)^{2})}{\omega}\cdot 2 \omega \lambda(x) \lambda'(x)||_{L^{2}(\rho(\omega \lambda(x)^{2}) d\omega)} \leq C \frac{(\log(\log(x)))^{2}}{x^{3} \log^{2+b-2b\alpha}(x)}$$
Similarly, from the procedure used to obtain \eqref{3}, we infer
\begin{equation} ||\frac{\mathcal{F}(\sqrt{\cdot} \partial_{t}(F_{4}(t,\cdot\lambda(t))))(\omega\lambda(t)^{2})}{\omega}||_{L^{2}(\rho(\omega \lambda(t)^{2}) d\omega)} \leq \frac{C (\log(\log(t)))^{2}}{t^{3} \log^{b+1-2\alpha b}(t)}\end{equation}
Next, we consider the term involving $\mathcal{F}(\sqrt{\cdot}F_{4}(x,\cdot\lambda(x)))''$. For this, we start by studying, with $f(x,r)=\sqrt{r} F_{4}(x,r\lambda(x))$,
\begin{equation} \mathcal{F}(f(x))''(\xi) = \int_{0}^{\frac{2}{\sqrt{\xi}}}f(x,r) \partial_{2}^{2}\phi(r,\xi) dr + \int_{\frac{2}{\sqrt{\xi}}}^{\infty} f(x,r) \partial_{2}^{2}\phi(r,\xi) dr\end{equation}
In the region $r \sqrt{\xi} \leq 2$, we use
\begin{equation} \partial_{2}^{2}\phi(r,\xi) = \frac{1}{\sqrt{r}} \sum_{j=2}^{\infty} j(j-1)\xi^{j-2} r^{2j}\phi_{j}(r^{2}), \quad r \sqrt{\xi} \leq 2\end{equation}
Then, 
\begin{equation}\begin{split} &|-4 \int_{t}^{\infty} dx \cos((t-x)\sqrt{\omega}) \lambda(x)^{2}\lambda'(x)^{2} \omega \sum_{j=2}^{\infty} j(j-1)(\omega \lambda(x)^{2})^{j-2} \int_{0}^{\frac{2}{\sqrt{\omega}\lambda(x)}} F_{4}(x,r\lambda(x)) r^{2j} \phi_{j}(r^{2}) dr|\\
&\leq C \sum_{j=2}^{\infty} j(j-1) \omega^{j-1} \int_{t}^{\infty} dx \left(\frac{\lambda'(x)}{\lambda(x)}\right)^{2} \lambda(x)^{2j} \int_{0}^{\frac{2}{\sqrt{\omega}\lambda(x)}}|F_{4}(x,r\lambda(x))| r^{2j} |\phi_{j}(r^{2})| dr\end{split}\end{equation}
we then apply the same procedure as in \eqref{2a}, and get
\begin{equation}\begin{split} &||-4 \int_{t}^{\infty} dx \cos((t-x)\sqrt{\omega}) \lambda(x)^{2}\lambda'(x)^{2} \omega \sum_{j=2}^{\infty} j(j-1)(\omega \lambda(x)^{2})^{j-2} \int_{0}^{\frac{2}{\sqrt{\omega}\lambda(x)}} F_{4}(x,r\lambda(x)) r^{2j} \phi_{j}(r^{2}) dr||_{L^{2}(\rho(\omega \lambda(t)^{2}) d\omega)} \\
&\leq \frac{C (\log(\log(t)))^{2}}{t^{3} \log^{3+b-2b\alpha}(t)}\end{split}\end{equation}
Next, we have
\begin{equation} \partial_{2}^{2}\phi(r,\xi) = (\partial_{2}^{2}\phi(r,\xi))_{0}+(\partial_{2}^{2}\phi(r,\xi))_{1}\end{equation}
with
\begin{equation}\begin{split} (\partial_{2}^{2}\phi(r,\xi))_{0} &= 2 \text{Re}\left(a''(\xi) \psi^{+}(r,\xi) + 2 a'(\xi) \partial_{2}\psi^{+}(r,\xi)\right)\\
&+2 \text{Re}\left(a(\xi)\left(\partial_{\xi}^{2}\left(\frac{e^{i r \sqrt{\xi}}}{\xi^{1/4}} \sigma(r\sqrt{\xi},r)\right)+\frac{r^{2}}{4 \xi^{5/4}}e^{ir\sqrt{\xi}} \sigma(r\sqrt{\xi},r)\right)\right)\end{split}\end{equation}
and
\begin{equation}(\partial_{2}^{2}\phi(r,\xi))_{1}=-2\text{Re}\left(a(\xi) \frac{r^{2}}{4 \xi^{5/4}}e^{ir\sqrt{\xi}}\sigma(r\sqrt{\xi},r)\right)\end{equation}
\begin{equation}|(\partial_{2}^{2}\phi(r,\xi))_{0}| \leq \frac{C r |a(\xi)|}{\xi^{7/4}}\end{equation}
Again, with 
$$f(x,r)=\sqrt{r}F_{4}(x,r\lambda(x))$$
we have
\begin{equation}\begin{split} &\int_{\frac{2}{\sqrt{\xi}}}^{\infty} f(x,r) (\partial_{2}^{2}\phi(r,\xi))_{1} dr = -2 \text{Re}\left(\frac{a(\xi)}{4\xi^{5/4}}\int_{\frac{2}{\sqrt{\xi}}}^{\infty} f(x,r) r^{2} e^{i r \sqrt{\xi}} \sigma(r\sqrt{\xi},r) dr\right)\\
&=\frac{1}{2}\text{Re}\left(\frac{a(\xi)}{\xi^{5/4}} \frac{4 f(x,\frac{2}{\sqrt{\xi}})}{i \xi^{3/2}}\sigma(2,\frac{2}{\sqrt{\xi}}) e^{2i}\right)\\
&+\frac{1}{2} \text{Re}\left(\frac{a(\xi)}{\xi^{5/4}} \int_{\frac{2}{\sqrt{\xi}}}^{\infty} \frac{e^{i r \sqrt{\xi}}}{i\sqrt{\xi}} \left(\partial_{r}f(x,r) r^{2}\sigma(r\sqrt{\xi},r)+2 f(x,r) r \sigma(r\sqrt{\xi},r) + r^{2} f(x,r) \left(\sqrt{\xi} \partial_{1}\sigma(r\sqrt{\xi},r)+\partial_{2}\sigma(r\sqrt{\xi},r)\right)\right)dr\right)\end{split}\end{equation}
Again, using Proposition 4.6 of \cite{kst} to estimate the $\sigma$ terms, we get
\begin{equation}|\int_{\frac{2}{\sqrt{\xi}}}^{\infty} f(x,r) (\partial_{2}^{2}\phi(r,\xi))_{1} dr| \leq \frac{C |a(\xi)|}{\xi^{11/4}} |f(x,\frac{2}{\sqrt{\xi}})| + C \frac{|a(\xi)|}{\xi^{7/4}} \int_{\frac{2}{\sqrt{\xi}}}^{\infty} \left(r|\partial_{r}f(x,r)|+|f(x,r)|\right) rdr\end{equation}
Then,
\begin{equation}\begin{split}&|-\int_{t}^{\infty} \frac{\cos((t-x)\sqrt{\omega})}{\omega}\left(\int_{\frac{2}{\sqrt{\omega}\lambda(x)}}^{\infty} \sqrt{r} F_{4}(x,r\lambda(x)) \left((\partial_{2}^{2}\phi(r,\xi))_{0}+(\partial_{2}^{2}\phi(r,\xi))_{1}\right)\vert_{\xi=\omega \lambda(x)^{2}}dr\right)\cdot 4 \lambda(x)^{2}\lambda'(x)^{2} \omega^{2} dx|\\
&\leq C \int_{t}^{\infty} \left(\int_{\frac{2}{\sqrt{\omega}\lambda(x)}}^{\infty} \left(|F_{4}(x,r\lambda(x))|r^{3/2}+r^{3/2}\cdot r\lambda(x)|\partial_{2}F_{4}(x,r\lambda(x))|\right)dr\right)\frac{|a(\omega \lambda(x)^{2})|}{\omega^{3/4}\lambda(x)^{3/2}}\frac{dx}{x^{2}\log^{2b+2}(x)}\\
&+C \int_{t}^{\infty} \frac{|a(\omega \lambda(x)^{2})|}{\omega^{2}\lambda(x)^{6}} \frac{|F_{4}(x,\frac{2}{\sqrt{\omega}})|}{x^{2}\log^{4b+2}(x)} dx\end{split}\end{equation}

For the last term, we have
\begin{equation}\begin{split} \int_{t}^{\infty} ||\frac{|a(\omega \lambda(x)^{2})|}{\omega^{2}\lambda(x)^{6}} \frac{|F_{4}(x,\frac{2}{\sqrt{\omega}})|}{x^{2}\log^{4b+2}(x)}||_{L^{2}(\rho(\omega \lambda(t)^{2}) d\omega)} dx  &\leq C\int_{t}^{\infty} \frac{\lambda(t)}{\lambda(x)} \frac{1}{x^{2} \log^{2b+2}(x) \lambda(x)^{4}} \left(\int_{0}^{\infty} |F_{4}(x,y)|^{2} y^{8} \frac{dy}{y^{3}}\right)^{1/2} dx\\
&\leq \frac{C \sqrt{\log(\log(t))}}{t^{3} \log^{b+3-2\alpha b}(t)}\end{split}\end{equation}

So,
\begin{equation}||\int_{t}^{\infty} \frac{|a(\omega \lambda(x)^{2})|}{\omega^{2}\lambda(x)^{6}} \frac{|F_{4}(x,\frac{2}{\sqrt{\omega}})|}{x^{2}\log^{4b+2}(x)} dx||_{L^{2}(\rho(\omega \lambda(t)^{2}) d\omega)}\leq \frac{C \sqrt{\log(\log(t))}}{t^{3} \log^{b+3-2\alpha b}(t)}\end{equation}

On the other hand, by the same procedure used to obtain \eqref{2bptwse} and \eqref{2b}, we get
\begin{equation}\begin{split}&||\int_{t}^{\infty} \left(\int_{\frac{2}{\sqrt{\omega}\lambda(x)}}^{\infty} \left(|F_{4}(x,r\lambda(x))|r^{3/2}+r^{3/2}\cdot r\lambda(x)|\partial_{2}F_{4}(x,r\lambda(x))|\right)dr\right)\frac{|a(\omega \lambda(x)^{2})|}{\omega^{3/4}\lambda(x)^{3/2}}\frac{dx}{x^{2}\log^{2b+2}(x)}||_{L^{2}(\rho(\omega \lambda(t)^{2}) d\omega)} \\
&\leq \frac{C \sqrt{\log(\log(t))}}{t^{3} \log^{3+b-2b\alpha}(t)}\end{split}\end{equation}

Next, by noting the symbol-type character of the estimates \eqref{f4estimates}, and inspecting \eqref{2a} and \eqref{2b}, we deduce the following estimate 
\begin{equation}||-\int_{t}^{\infty} \frac{\cos((t-x)\sqrt{\omega})}{\omega}\cdot 2\mathcal{F}(\sqrt{\cdot} \partial_{x}(F_{4}(x,\cdot\lambda(x))))'(\omega \lambda(x)^{2}) \cdot 2\lambda(x) \lambda'(x) \omega dx||_{L^{2}(\rho(\omega \lambda(t)^{2})d\omega)} \leq \frac{C (\log(\log(t)))^{2}}{t^{3} \log^{2+b-2b\alpha}(t)}\end{equation}

The following term was already estimated via \eqref{2a}, \eqref{2b} (except with different $\lambda$-dependent coefficients). Taking into account the estimates on $\lambda',\lambda''$, we get
\begin{equation} ||-\int_{t}^{\infty} \frac{\cos((t-x)\sqrt{\omega})}{\omega} \mathcal{F}(\sqrt{\cdot}F_{4}(x,\cdot\lambda(x)))'(\omega\lambda(x)^{2}) \cdot 2 \omega\left(\lambda'(x)^{2}+\lambda(x)\lambda''(x)\right) dx||_{L^{2}(\rho(\omega \lambda(t)^{2}) d\omega)} \leq \frac{C (\log(\log(t)))^{2}}{t^{3} \log^{2+b-2b\alpha}(t)}\end{equation}

Finally, we start with estimating $\frac{\mathcal{F}(\sqrt{\cdot} \partial_{x}^{2}(F_{4}(x,\cdot\lambda(x))))(\omega \lambda(x)^{2})}{\omega}$. Firstly, we note that $\partial_{x}^{2}(F_{4}(x,r\lambda(x)))$ is still orthogonal to $\phi_{0}(r)$. Then, we repeat the same procedure used to estimate $\frac{\mathcal{F}(\sqrt{\cdot} F_{4}(x,\cdot\lambda(x)))(\omega \lambda(x)^{2})}{\omega}$. By again noting the symbol-type behavior of \eqref{f4estimates}, the only contributions to $\frac{\mathcal{F}(\sqrt{\cdot} \partial_{x}^{2}(F_{4}(x,\cdot\lambda(x))))(\omega \lambda(x)^{2})}{\omega}$ which need to be checked are those due to the last term in \eqref{dttf4estimate}, which is not directly comparable to terms arising in \eqref{f4estimates}. To be clear, we still use the orthogonality of the full function $\partial_{x}^{2}(F_{4}(x,r\lambda(x)))$ to $\phi_{0}(r)$; but, after using the orthogonality as needed, we can then deduce estimates on all integrals which do not involve the last term in \eqref{dttf4estimate} just by comparison with an analogous term in \eqref{f4estimates}, as described above. We start with\\
\\
\textbf{Case 1}: $\frac{2}{\sqrt{\omega} \lambda(x)} \leq 1$.
\begin{equation}\begin{split}&|\frac{-1}{\omega} \int_{0}^{\frac{2}{\sqrt{\omega} \lambda(x)}} \widetilde{\phi_{0}}(r) \sqrt{r} \frac{\mathbbm{1}_{\{r \lambda(x) \leq \frac{x}{2}\}}}{x^{4} \log^{b+N-2}(x)(r^{2}+1)^{2}} dr| \leq \frac{C}{\omega} \int_{0}^{\frac{2}{\sqrt{\omega}\lambda(x)}} \frac{\phi_{0}(r) r dr}{x^{4} \log^{b+N-2}(x) (r^{2}+1)^{2}}\\
&\leq \frac{C}{\omega^{5/2} \lambda(x)^{3} x^{4} \log^{b+N-2}(x)}, \quad \frac{2}{\sqrt{\omega}\lambda(x)} \leq 1\end{split}\end{equation}\\
\\
\textbf{Case 2}: $1 \leq \frac{2}{\sqrt{\omega} \lambda(x)} \leq \frac{x}{2\lambda(x)}$. Here, we first use the orthogonality of $\partial_{x}^{2}(F_{4}(x,r\lambda(x)))$ to $\phi_{0}(r)$. Using the procedure described above, we only need to estimate
\begin{equation} \begin{split} &|\frac{1}{\omega} \int_{\frac{2}{\sqrt{\omega} \lambda(x)}}^{\infty} \widetilde{\phi_{0}}(r) \sqrt{r} \frac{\mathbbm{1}_{\{r \lambda(x) \leq \frac{x}{2}\}}}{x^{4} \log^{b+N-2}(x)(r^{2}+1)^{2}} dr| \leq \frac{C}{\omega} \int_{\frac{2}{\sqrt{\omega} \lambda(x)}}^{\frac{x}{2\lambda(x)}} \frac{dr}{x^{4} \log^{b+N-2}(x) r^{4}}\\
&\leq \frac{C \sqrt{\omega}}{x^{4} \log^{4b+N-2}(x)}, \quad 1 \leq \frac{2}{\sqrt{\omega}\lambda(x)} \leq \frac{x}{2\lambda(x)}\end{split}\end{equation}\\
\\
\textbf{Case 3}: $\frac{x}{2\lambda(x)} \leq \frac{2}{\sqrt{\omega}\lambda(x)}$. In this case, after again using the orthogonality of $\partial_{x}^{2}(F_{4}(x,r\lambda(x)))$ to $\phi_{0}(r)$, the only integral to be checked is zero, due to the support properties of $F_{4}$. Next, we have\\
\\
\textbf{Case 1}: $1 \leq \frac{2}{\sqrt{\omega}\lambda(x)} \leq \frac{x}{2\lambda(x)}$.
\begin{equation} \begin{split} &\frac{1}{\omega} \sum_{j=1}^{\infty} \int_{0}^{\frac{2}{\sqrt{\omega}\lambda(x)}} r^{2j} \omega^{j} \lambda(x)^{2j} |\phi_{j}(r^{2})| \frac{\mathbbm{1}_{\{r \lambda(x) \leq \frac{x}{2}\}}}{x^{4} \log^{b+N-2}(x)(r^{2}+1)^{2}} dr\leq \frac{C}{\omega} \sum_{j=1}^{\infty} \frac{\omega^{j} \lambda(x)^{2j} C_{1}^{j}}{(j-1)!} \int_{0}^{\frac{2}{\sqrt{\omega}\lambda(x)}} \frac{r^{2j} \log(1+r^{2}) dr}{x^{4} \log^{b+N-2}(x) (r^{2}+1)^{2}}\\
&\leq \frac{C}{x^{4} \log^{3b+N-3}(x)}, \quad 1 \leq \frac{2}{\sqrt{\omega}\lambda(x)} \leq \frac{x}{2\lambda(x)}\end{split}\end{equation}
where we treat the integral in exactly the same way we did in obtaining \eqref{1a}\\
\\
\textbf{Case 2}: $\frac{2}{\sqrt{\omega}\lambda(x)} \leq 1$.
\begin{equation} \begin{split} &\frac{1}{\omega} \sum_{j=1}^{\infty} \int_{0}^{\frac{2}{\sqrt{\omega}\lambda(x)}} r^{2j} \omega^{j} \lambda(x)^{2j} |\phi_{j}(r^{2})| \frac{\mathbbm{1}_{\{r \lambda(x) \leq \frac{x}{2}\}}}{x^{4} \log^{b+N-2}(x)(r^{2}+1)^{2}} dr \leq \frac{C}{\omega} \sum_{j=1}^{\infty} \frac{\omega^{j} \lambda(x)^{2j} C_{1}^{j}}{(j-1)! x^{4} \log^{b+N-2}(x)} \int_{0}^{\frac{2}{\sqrt{\omega}\lambda(x)}} r^{2j+2} dr\\
&\leq \frac{C}{\omega^{5/2} \lambda(x)^{3} x^{4} \log^{b+N-2}(x)}, \quad \frac{2}{\sqrt{\omega}\lambda(x)} \leq 1\end{split}\end{equation}\\
\\
\textbf{Case 3}: $\frac{2}{\sqrt{\omega} \lambda(x)} \geq \frac{x}{2\lambda(x)}$. 
\begin{equation}\begin{split}&\frac{1}{\omega} \sum_{j=1}^{\infty} \int_{0}^{\frac{2}{\sqrt{\omega}\lambda(x)}} r^{2j} \omega^{j} \lambda(x)^{2j} |\phi_{j}(r^{2})| \frac{\mathbbm{1}_{\{r \lambda(x) \leq \frac{x}{2}\}}}{x^{4} \log^{b+N-2}(x)(r^{2}+1)^{2}} dr \leq \frac{C}{\omega} \sum_{j=1}^{\infty} \frac{\omega^{j}\lambda(x)^{2j}C_{1}^{j}}{(j-1)!} \int_{0}^{\frac{x}{2\lambda(x)}} \frac{r^{2j} \log(1+r^{2}) dr}{x^{4} \log^{b+N-2}(x) (r^{2}+1)^{2}}\\
&\leq \frac{C}{x^{4} \log^{3b+N-3}(x)}, \quad \frac{2}{\sqrt{\omega} \lambda(x)} \geq \frac{x}{2\lambda(x)}\end{split}\end{equation}\\
\\
Finally, we consider the following integral in multiple cases:\\
\textbf{Case 1}: $\frac{2}{\sqrt{\omega}\lambda(x)} \leq 1$.
\begin{equation}\begin{split} &\frac{1}{\omega} \int_{\frac{2}{\sqrt{\omega}\lambda(x)}}^{\infty} \frac{2 |\text{Re}\left(a(\omega \lambda(x)^{2}) \psi^{+}(r,\omega\lambda(x)^{2})\right)|\sqrt{r}\mathbbm{1}_{\{r \lambda(x) \leq \frac{x}{2}\}}}{x^{4} \log^{b+N-2}(x)(r^{2}+1)^{2}} dr \leq \frac{C}{\omega} \frac{|a(\omega \lambda(x)^{2})|}{\omega^{1/4} \lambda(x)^{1/2}} \int_{\frac{2}{\sqrt{\omega}\lambda(x)}}^{\frac{x}{2\lambda(x)}} \frac{\sqrt{r} dr}{x^{4} \log^{b+N-2}(x)  (r^{2}+1)^{2}}\\
&\leq \frac{C|a(\omega \lambda(x)^{2})|}{\omega^{5/4} \sqrt{\lambda(x)} x^{4} \log^{b+N-2}(x)}, \quad \frac{2}{\sqrt{\omega}\lambda(x)} \leq 1\end{split}\end{equation}\\
\\
\textbf{Case 2}: $1 \leq \frac{2}{\sqrt{\omega}\lambda(x)} \leq \frac{x}{2\lambda(x)}$
\begin{equation}\begin{split}&\frac{1}{\omega} \int_{\frac{2}{\sqrt{\omega}\lambda(x)}}^{\infty} 2 |\text{Re}\left(a(\omega \lambda(x)^{2}) \psi^{+}(r,\omega\lambda(x)^{2})\right)|\sqrt{r}\frac{\mathbbm{1}_{\{r \lambda(x) \leq \frac{x}{2}\}}}{x^{4} \log^{b+N-2}(x)(r^{2}+1)^{2}} dr\leq \frac{C |a(\omega \lambda(x)^{2})|}{\omega^{5/4}\lambda(x)^{1/2}} \int_{\frac{2}{\sqrt{\omega}\lambda(x)}}^{\frac{x}{2\lambda(x)}} \frac{dr}{r^{7/2} x^{4} \log^{b+N-2}(x)}\\
&\leq \frac{C |a(\omega \lambda(x)^{2})|}{x^{4} \log^{3b+N-2}(x)}, \quad 1 \leq \frac{2}{\sqrt{\omega}\lambda(x)} \leq \frac{x}{2\lambda(x)}\end{split}\end{equation}\\
\\
\textbf{Case 3}: $\frac{2}{\sqrt{\omega}\lambda(x)} > \frac{x}{2\lambda(x)}$. In this case, the integral to estimate is zero. Combining all of the above estimates, and using the procedure described above, we get
\begin{equation} ||-\int_{t}^{\infty} \frac{\cos((t-x)\sqrt{\omega})}{\omega} \mathcal{F}(\sqrt{\cdot}\partial_{x}^{2}(F_{4}(x,\cdot\lambda(x))))(\omega \lambda(x)^{2}) dx||_{L^{2}(\rho(\omega \lambda(t)^{2}) d\omega)} \leq \frac{C (\log(\log(t)))^{2}}{t^{3} \log^{b+1-2\alpha b}(t)}\end{equation}
In total, we finally get
\begin{equation}\label{dtsineintf4}||\int_{t}^{\infty} \cos((t-x)\sqrt{\omega}) \mathcal{F}(\sqrt{\cdot} F_{4}(x,\cdot \lambda(x)))(\omega \lambda(x)^{2}) dx||_{L^{2}(\rho(\omega \lambda(t)^{2}) d\omega)}\leq \frac{C (\log(\log(t)))^{2}}{t^{3} \log^{b+1-2\alpha b}(t)}\end{equation}

The next integral to estimate is
\begin{equation}\begin{split}&\lambda(t) \int_{t}^{\infty} \sin((t-x)\sqrt{\omega}) \mathcal{F}(\sqrt{\cdot}F_{4}(x,\cdot\lambda(x)))(\omega \lambda(x)^{2}) dx\\
&= \lambda(t)\left(-\frac{\mathcal{F}(\sqrt{\cdot} F_{4}(t,\cdot \lambda(t)))(\omega \lambda(t)^{2})}{\sqrt{\omega}}-\int_{t}^{\infty} \frac{\cos((t-x)\sqrt{\omega})}{\sqrt{\omega}} \partial_{x}\left(\mathcal{F}(\sqrt{\cdot}F_{4}(x,\cdot\lambda(x)))(\omega \lambda(x)^{2})\right) dx\right)\\
&=\frac{-\lambda(t) \mathcal{F}(\sqrt{\cdot}F_{4}(t,\cdot\lambda(t)))(\omega \lambda(t)^{2})}{\sqrt{\omega}}-\lambda(t)\int_{t}^{\infty} \frac{\sin((t-x)\sqrt{\omega})}{\omega} \partial_{x}^{2}\left(\mathcal{F}(\sqrt{\cdot}F_{4}(x,\cdot\lambda(x)))(\omega\lambda(x)^{2})\right) dx\end{split}\end{equation}
In order to estimate $\frac{-\lambda(t) \mathcal{F}(\sqrt{\cdot}F_{4}(t,\cdot\lambda(t)))(\omega \lambda(t)^{2})}{\sqrt{\omega}}$ it suffices to multiply the pointwise in $\omega$ estimates that we already obtained on $\frac{\mathcal{F}(\sqrt{\cdot}F_{4}(t,\cdot\lambda(t)))(\omega\lambda(t)^{2})}{\omega}$, by $\sqrt{\omega}\lambda(t)$, and then take the $L^{2}(\rho(\omega \lambda(t)^{2})d\omega)$ norm. In fact, we only need to consider the region $\frac{2}{\sqrt{\omega}\lambda(t)} \leq 1$, since, in the other regions, the factor $\sqrt{\omega}\lambda(t)$ that we multiply by, is less than $2$. Doing this procedure for each of the terms appearing in \eqref{1a}, \eqref{1b}, and \eqref{1c}, and combining the resulting estimates, we get
\begin{equation} |\frac{-\lambda(t) \mathcal{F}(\sqrt{\cdot}F_{4}(t,\cdot\lambda(t)))(\omega \lambda(t)^{2})}{\sqrt{\omega}}| \leq \frac{C}{\omega^{5/2} \lambda(t)^{3} t^{2} \log^{1-2\alpha b}(t)} + \frac{C \lambda(t)^{1/2} |a(\omega \lambda(t)^{2})|}{\omega^{3/4} t^{2} \log^{1-2\alpha b}(t)}, \quad \frac{2}{\sqrt{\omega} \lambda(t)} \leq 1\end{equation}
and
\begin{equation}\left(\int_{\frac{4}{\lambda(t)^{2}}}^{\infty} \rho(\omega \lambda(t)^{2}) \frac{d\omega}{\omega^{5} \lambda(t)^{6} t^{4} \log^{2-4\alpha b}(t)}\right)^{1/2} \leq \frac{C}{t^{2} \log^{1+b-2\alpha b}(t)}\end{equation}
\begin{equation} \left(\int_{\frac{4}{\lambda(t)^{2}}}^{\infty} \frac{|a(\omega \lambda(t)^{2})|^{2}}{\omega^{3/2}} \rho(\omega \lambda(t)^{2}) d\omega\right)^{1/2} \cdot \frac{C}{t^{2} \log^{1+\frac{b}{2}-2\alpha b}(t)} \leq \frac{C}{t^{2} \log^{1+b-2\alpha b}(t)}\end{equation}
Then, by the remarks preceeding these estimates, we get
\begin{equation} ||\frac{\lambda(t) \mathcal{F}(\sqrt{\cdot} F_{4}(t,\cdot\lambda(t)))(\omega \lambda(t)^{2})}{\sqrt{\omega}}||_{L^{2}(\rho(\omega \lambda(t)^{2}) d\omega)} \leq \frac{C (\log(\log(t)))^{2}}{t^{2} \log^{1+b-2\alpha b}(t)}\end{equation}
Then, we use the identical procedure as was used to obtain \eqref{dtsineintf4}, and get
\begin{equation} ||\lambda(t) \int_{t}^{\infty} \frac{\sin((t-x)\sqrt{\omega})}{\omega} \partial_{x}^{2}\left(\mathcal{F}(\sqrt{\cdot} F_{4}(x,\cdot\lambda(x)))(\omega \lambda(x)^{2})\right) dx||_{L^{2}(\rho(\omega \lambda(t)^{2}) d\omega)} \leq \frac{C (\log(\log(t)))^{2}}{t^{3} \log^{2b+1-2\alpha b}(t)}\end{equation}
Combining these, we get
\begin{equation} ||\sqrt{\omega}\lambda(t) \int_{t}^{\infty} \frac{\sin((t-x)\sqrt{\omega})}{\sqrt{\omega}} \mathcal{F}(\sqrt{\cdot}F_{4}(x,\cdot\lambda(x)))(\omega \lambda(x)^{2}) dx||_{L^{2}(\rho(\omega \lambda(t)^{2}) d\omega)} \leq \frac{C (\log(\log(t)))^{2}}{t^{2} \log^{1+b-2\alpha b}(t)}\end{equation}
Next, we consider
\begin{equation}\label{sqrtomegadt}\begin{split} &\sqrt{\omega} \lambda(t) \partial_{t}\left(\int_{t}^{\infty} \frac{\sin((t-x)\sqrt{\omega})}{\sqrt{\omega}} \mathcal{F}(\sqrt{\cdot}F_{4}(x,\cdot\lambda(x)))(\omega \lambda(x)^{2}) dx\right) \\
&= \frac{-\lambda(t) \partial_{t}\left(\mathcal{F}(\sqrt{\cdot} F_{4}(t,\cdot \lambda(t)))(\omega \lambda(t)^{2})\right)}{\sqrt{\omega}} - \int_{t}^{\infty} \lambda(t) \cdot \frac{\cos((t-x)\sqrt{\omega})}{\sqrt{\omega}} \partial_{x}^{2}\left(\mathcal{F}(\sqrt{\cdot}F_{4}(x,\cdot\lambda(x)))(\omega\lambda(x)^{2})\right) dx\end{split}\end{equation}
For $\frac{-\lambda(t) \left(\mathcal{F}(\sqrt{\cdot} \partial_{t}(F_{4}(t,\cdot \lambda(t))))(\omega \lambda(t)^{2})\right)}{\sqrt{\omega}}$, we again need only multiply the pointwise estimates for $\frac{\mathcal{F}(\sqrt{\cdot} \partial_{t}(F_{4}(t,\cdot \lambda(t))))(\omega \lambda(t)^{2})}{\omega}$ (which were previously inferred from pointwise estimates for $\frac{\mathcal{F}(\sqrt{\cdot}(F_{4}(t,\cdot \lambda(t))))(\omega \lambda(t)^{2})}{\omega}$ and noting the symbol-type nature of the estimate \eqref{f4estimates}) by $\sqrt{\omega} \lambda(t)$ in the region $\sqrt{\omega} \lambda(t) \geq 2$, and then take the $L^{2}(\rho(\omega \lambda(t)^{2}) d\omega)$ norm. This results in 
\begin{equation} ||-\sqrt{\omega} \lambda(t) \left(\frac{\mathcal{F}(\sqrt{\cdot} \partial_{t}(F_{4}(t,\cdot\lambda(t))))(\omega\lambda(t)^{2})}{\omega}\right)||_{L^{2}(\rho(\omega \lambda(t)^{2}) d\omega)} \leq \frac{C (\log(\log(t)))^{2}}{t^{3} \log^{1+b-2\alpha b}(t)}\end{equation}
Note, however, that doing this same procedure for the term $-\sqrt{\omega}\lambda(t)\left(\frac{\mathcal{F}(\sqrt{\cdot}F_{4}(t,\cdot\lambda(t)))'(\omega \lambda(t)^{2}) \cdot 2 \omega \lambda(t) \lambda'(t)}{\omega}\right)$ would result in an estimate which is not square integrable against the measure $\rho(\omega \lambda(t)^{2}) d\omega$. Instead, we have to integrate by parts in an appropriate $r$ integral, to gain extra decay in $\omega$. In particular, we first make the decomposition as in \eqref{ftransprime}. Then, we can multiply pointwise estimates on the first term in the decomposition \eqref{ftransprime} by $\sqrt{\omega}\lambda(t)$, and proceed as with our previous estimates. This results in a contribution to the overall $L^{2}(\rho(\omega \lambda(t)^{2}) d\omega)$ norm of $-\sqrt{\omega}\lambda(t)\left(\frac{\mathcal{F}(\sqrt{\cdot}F_{4}(t,\cdot\lambda(t)))'(\omega \lambda(t)^{2}) \cdot 2 \omega \lambda(t) \lambda'(t)}{\omega}\right)$ bounded above by: 
\begin{equation} ||-2 \lambda(t)^{2} \lambda'(t) \sqrt{\omega} \left(\int_{0}^{\frac{2}{\sqrt{\omega} \lambda(t)}} \sqrt{r} F_{4}(t,r\lambda(t)) \partial_{2}\phi(r,\omega \lambda(t)^{2}) dr\right)||_{L^{2}(\rho(\omega \lambda(t)^{2}) d\omega)} \leq \frac{C (\log(\log(t)))^{2}}{t^{3} \log^{b+2-2\alpha b}(t)}\end{equation}
 For the second term in the decomposition as in \eqref{ftransprime},namely,
$$ -2 \sqrt{\omega} \lambda(t)^{2}\lambda'(t) \int_{\frac{2}{\sqrt{\omega}\lambda(t)}}^{\infty} \sqrt{r} F_{4}(t,r\lambda(t)) \partial_{2}\phi(r,\omega\lambda(t)^{2}) dr$$
we start with
\begin{equation} \partial_{2}\phi(r,\xi) = (\partial_{2}\phi(r,\xi))_{0}+ (\partial_{2}\phi(r,\xi))_{1} \end{equation}
where
\begin{equation} (\partial_{2}\phi(r,\xi))_{1} = \text{Re}\left(\frac{a(\xi) i r}{\xi^{3/4}} e^{i r \sqrt{\xi}} \sigma(r\sqrt{\xi},r)\right)\end{equation}
and
\begin{equation} |(\partial_{2}\phi(r,\xi))_{0}| \leq \frac{C |a(\xi)|}{\xi^{5/4}}, \quad r \sqrt{\xi} \geq 2\end{equation}
Then, 
\begin{equation} \begin{split} &-2  \lambda(t)^{2}\lambda'(t) \sqrt{\omega} \int_{\frac{2}{\sqrt{\omega}\lambda(t)}}^{\infty} \sqrt{r} F_{4}(t,r\lambda(t)) (\partial_{2}\phi(r,\omega\lambda(t)^{2}))_{1} dr\\
&=-2 \lambda(t)^{2} \lambda'(t) \sqrt{\omega} \text{Re}\left(\frac{a(\omega \lambda(t)^{2}) i}{\omega^{3/4} \lambda(t)^{3/2}} \int_{\frac{2}{\sqrt{\omega}\lambda(t)}}^{\infty} r^{3/2} F_{4}(t,r\lambda(t)) e^{i r \sqrt{\omega}\lambda(t)} \sigma(r\sqrt{\omega}\lambda(t),r) dr\right)\\
&= \frac{-2 \lambda(t)^{1/2} \lambda'(t)}{\omega^{1/4}} \text{Re}\left(\frac{-a(\omega \lambda(t)^{2}) 2^{3/2} F_{4}(t,\frac{2}{\sqrt{\omega}}) \sigma(2,\frac{2}{\sqrt{\omega}\lambda(t)}) e^{2i}}{\omega^{5/4} \lambda(t)^{5/2}}\right)\\
&+\frac{2 \lambda(t)^{1/2} \lambda'(t)}{\omega^{1/4}}\text{Re}\left(a(\omega \lambda(t)^{2}) \int_{\frac{2}{\sqrt{\omega}\lambda(t)}}^{\infty} \frac{e^{i r \sqrt{\omega}\lambda(t)}}{\sqrt{\omega}\lambda(t)} \partial_{r}\left(r^{3/2} F_{4}(t,r\lambda(t)) \sigma(r\sqrt{\omega}\lambda(t),r)\right) dr\right)\end{split}\end{equation}

So, 
\begin{equation}\begin{split} &|-2 \sqrt{\omega} \lambda(t)^{2}\lambda'(t) \int_{\frac{2}{\sqrt{\omega}\lambda(t)}}^{\infty} \sqrt{r} F_{4}(t,r\lambda(t)) \partial_{2}\phi(r,\omega\lambda(t)^{2}) dr|\\
&\leq \frac{C |\lambda'(t)| |a(\omega \lambda(t)^{2})|}{\omega^{3/2}\lambda(t)^{2}} |F_{4}(t,\frac{2}{\sqrt{\omega}})| + \frac{C |\lambda'(t)| |a(\omega \lambda(t)^{2})|}{\lambda(t)^{1/2} \omega^{3/4}} \int_{\frac{2}{\sqrt{\omega}\lambda(t)}}^{\infty} r^{1/2} \left(|F_{4}(t,r\lambda(t))|+r\lambda(t)|\partial_{2}F_{4}(t,r\lambda(t))|\right) dr\end{split}\end{equation}
where we used the estimates on $\sigma$ following from proposition 4.6 of \cite{kst}.\\
\\
This gives
\begin{equation} \begin{split} &|| -2 \sqrt{\omega} \lambda(t)^{2}\lambda'(t) \int_{\frac{2}{\sqrt{\omega}\lambda(t)}}^{\infty} \sqrt{r} F_{4}(t,r\lambda(t)) \partial_{2}\phi(r,\omega\lambda(t)^{2}) dr||_{L^{2}((\frac{4}{\lambda(t)^{2}},\infty),\rho(\omega \lambda(t)^{2})d\omega)}\\
&\leq \frac{C |\lambda'(t)|}{\lambda(t)^{2}} \left(\int_{0}^{\lambda(t)} |F_{4}(t,y)|^{2} y^{3} dy\right)^{1/2} + \frac{C |\lambda'(t)|}{\lambda(t)^{1/2}} \left(\int_{\frac{4}{\lambda(t)^{2}}}^{\infty} \frac{d\omega}{\omega^{3/2}}\left(\int_{0}^{\infty} \frac{r^{3/2}}{t^{2} \log^{1-2\alpha b}(t) (r^{2}+1)^{2}} dr\right)^{2}\right)^{1/2}\\
&\leq \frac{C}{t^{3} \log^{b+2-2\alpha b}(t)}\end{split}\end{equation}

Combining these estimates, and appropriately using \eqref{2b}, for the region $\sqrt{\omega}\lambda(t) \leq 2$, we get
\begin{equation}\label{sqrtomegaf4hatprime}||-\sqrt{\omega}\lambda(t)\left(\frac{\mathcal{F}(\sqrt{\cdot} F_{4}(t,\cdot\lambda(t)))'(\omega\lambda(t)^{2})\cdot 2 \omega \lambda(t)\lambda'(t)}{\omega}\right)||_{L^{2}(\rho(\omega \lambda(t)^{2}) d\omega)} \leq \frac{C (\log(\log(t)))^{2}}{t^{3} \log^{b+2-2b\alpha}(t)}\end{equation}
Now, we start to treat the term inside the $x$ integral in \eqref{sqrtomegadt}. Here, we expand
\begin{equation}\begin{split} &\partial_{x}^{2}\left(\mathcal{F}(\sqrt{\cdot}F_{4}(x,\cdot\lambda(x)))(\omega \lambda(x)^{2})\right) \\
&= \mathcal{F}(\sqrt{\cdot}F_{4}(x,\cdot\lambda(x)))''(\omega \lambda(x)^{2}) \cdot(2 \omega \lambda(x)\lambda'(x))^{2} + 2 \mathcal{F}(\sqrt{\cdot} \partial_{x}(F_{4}(x,\cdot \lambda(x))))'(\omega \lambda(x)^{2}) \cdot 2 \omega \lambda(x)\lambda'(x)\\
&+\mathcal{F}(\sqrt{\cdot} F_{4}(x,\cdot\lambda(x)))'(\omega \lambda(x)^{2}) \cdot 2 \omega ((\lambda'(x))^{2}+\lambda(x)\lambda''(x)) + \mathcal{F}(\sqrt{\cdot}\partial_{x}^{2}(F_{4}(x,\cdot\lambda(x))))(\omega \lambda(x)^{2})\end{split}\end{equation}
We start by considering
\begin{equation}\label{dxf4hatprime}\frac{\sqrt{\omega}\lambda(t)}{\omega} \cdot \left(2 \mathcal{F}(\sqrt{\cdot} \partial_{x}(F_{4}(x,\cdot \lambda(x))))'(\omega \lambda(x)^{2}) \cdot 2 \omega \lambda(x)\lambda'(x)\right)\end{equation}
Recall that the last term we estimated was
$$-\sqrt{\omega}\lambda(t) \cdot 2 \lambda(t)\lambda'(t) \mathcal{F}(\sqrt{\cdot} F_{4}(t,\cdot\lambda(t)))'(\omega \lambda(t)^{2})$$
and we used only \eqref{F4pointwise}, as well as \eqref{rdrf4pointwise}, because we needed to integrate by parts in the $r$ variable in one of the terms. We then repeat the same procedure, with the only difference being 
$$\partial_{1}F_{4}(x,r\lambda(x))+r \lambda'(x) \partial_{2}F_{4}(x,r\lambda(x))\text{ replacing }F_{4}(t,r\lambda(t))$$ 
and 
$$\lambda(t)\partial_{12}F_{4}(x,r\lambda(x))+\lambda'(x) \partial_{2}F_{4}(x,r\lambda(x))+r \lambda(x) \lambda'(x) \partial_{2}^{2}F_{4}(x,r\lambda(x))\text{ replacing } \lambda(x) \partial_{2}F_{4}(x,r\lambda(x))$$ 
By noting the symbol-type nature of the estimate \eqref{f4estimates}, we get
\begin{equation} \begin{split} &||\sqrt{\omega}\lambda(t) \cdot 4 \lambda(x) \lambda'(x) \mathcal{F}(\sqrt{\cdot} \partial_{x}(F_{4}(x,\cdot \lambda(x))))'(\omega \lambda(x)^{2})||_{L^{2}(\rho(\omega \lambda(t)^{2}) d\omega)}\\
&\leq C \left(\frac{\lambda(t)}{\lambda(x)}\right)^{2} ||\sqrt{\omega}\lambda(x) \cdot 4 \lambda(x) \lambda'(x) \mathcal{F}(\sqrt{\cdot} \partial_{x}(F_{4}(x,\cdot \lambda(x))))'(\omega \lambda(x)^{2})||_{L^{2}(\rho(\omega \lambda(x)^{2}) d\omega)}\\
&\leq C \left(\frac{\lambda(t)}{\lambda(x)}\right)^{2} \frac{(\log(\log(x)))^{2}}{x^{4} \log^{b+2-2b\alpha}(x)}\end{split}\end{equation}
and this gives
\begin{equation} ||-\lambda(t) \int_{t}^{\infty} \frac{\cos((t-x)\sqrt{\omega})}{\sqrt{\omega}} \left(2 \mathcal{F}(\sqrt{\cdot} \partial_{x}(F_{4}(x,\cdot \lambda(x))))'(\omega \lambda(x)^{2}) \cdot 2 \omega \lambda(x)\lambda'(x)\right) dx||_{L^{2}(\rho(\omega \lambda(t)^{2})d\omega)} \leq \frac{C (\log(\log(t)))^{2}}{t^{3} \log^{b+2-2b\alpha}(t)}\end{equation}
Next, we consider
\begin{equation} \frac{\mathcal{F}(\sqrt{\cdot}F_{4}(x,\cdot\lambda(x)))'(\omega \lambda(x)^{2})\cdot 2 \omega ((\lambda'(x))^{2}+\lambda(x)\lambda''(x))\cdot \sqrt{\omega}\lambda(t)}{\omega}\end{equation}
We treat this term identically to how \eqref{sqrtomegaf4hatprime} was treated, noting that the only difference between the two terms is a coefficient which depends on absolute constants and $\lambda$. We therefore get
\begin{equation}\begin{split} &||-\lambda(t) \int_{t}^{\infty} \frac{\cos((t-x)\sqrt{\omega})}{\sqrt{\omega}} \left(\mathcal{F}(\sqrt{\cdot}F_{4}(x,\cdot\lambda(x)))'(\omega \lambda(x)^{2})\cdot 2 \omega ((\lambda'(x))^{2}+\lambda(x)\lambda''(x))\right) dx||_{L^{2}(\rho(\omega \lambda(t)^{2})d\omega)}\\
&\leq \int_{t}^{\infty} dx \frac{C}{x} \left(\frac{\lambda(t)}{\lambda(x)}\right)^{2} \frac{(\log(\log(x)))^{2}}{x^{3} \log^{b+2-2b\alpha}(x)}\\
&\leq \frac{C (\log(\log(t)))^{2}}{t^{3} \log^{b+2-2b\alpha}(t)}\end{split}\end{equation}

Next, we study 
$$\frac{\sqrt{\omega}\lambda(t)}{\omega} \cdot \mathcal{F}(\partial_{x}^{2}\left(\sqrt{\cdot} F_{4}(x,\cdot\lambda(x))\right))(\omega\lambda(x)^{2})$$
by multiplying our previous pointwise in $\omega$ estimates on $\frac{\mathcal{F}(\partial_{x}^{2}\left(\sqrt{\cdot}F_{4}(x,\cdot\lambda(x))\right))(\omega\lambda(x)^{2})}{\omega}$ by $\left(\frac{\lambda(t)}{\lambda(x)}\right)\sqrt{\omega} \lambda(x)$. We need only check the contributions to $ \left(\frac{\lambda(t)}{\lambda(x)}\right) \cdot ||\frac{\sqrt{\omega}\lambda(x)}{\omega} \cdot \mathcal{F}(\partial_{x}^{2}\left(\sqrt{\cdot} F_{4}(x,\cdot\lambda(x))\right))(\omega\lambda(x)^{2})||_{L^{2}(\rho(\omega \lambda(t)^{2}) d\omega)}$ coming from the region $\sqrt{\omega}\lambda(x) \geq 2$, just as for previous terms:\\
The integrals to check are:
\begin{equation}\begin{split} &\left(\frac{\lambda(t)}{\lambda(x)}\right)\left(\int_{\frac{4}{\lambda(x)^{2}}}^{\infty} \frac{\rho(\omega \lambda(t)^{2}) d\omega}{\omega^{4} \lambda(x)^{4} x^{8} \log^{2b+2N-4}(x)}\right)^{1/2}\leq C \left(\frac{\lambda(t)}{\lambda(x)}\right)^{2}\left(\int_{\frac{4}{\lambda(x)^{2}}}^{\infty} \frac{\rho(\omega \lambda(x)^{2}) d\omega}{\omega^{4} \lambda(x)^{4} x^{8} \log^{2b+2N-4}(x)}\right)^{1/2}\\
&\leq C\left(\frac{\lambda(t)}{\lambda(x)}\right)^{2} \frac{1}{x^{4} \log^{2b+N-2}(x)}\end{split}\end{equation}
\begin{equation}\begin{split}&\left(\frac{\lambda(t)}{\lambda(x)}\right)^{2}\left(\int_{\frac{4}{\lambda(x)^{2}}}^{\infty} \frac{\omega \lambda(x)^{2} d\omega}{\omega^{5}}\right)^{1/2} \frac{1}{\lambda(x)^{3}x^{4}\log^{1-2\alpha b}(x)}\leq C \left(\frac{\lambda(t)}{\lambda(x)}\right)^{2} \frac{1}{x^{4} \log^{b+1-2\alpha b}(x)}\end{split}\end{equation}
and
\begin{equation} \begin{split} &\left(\frac{\lambda(t)}{\lambda(x)}\right)^{2} \left(\int_{\frac{4}{\lambda(x)^{2}}}^{\infty} \frac{d\omega}{\omega^{3/2} x^{8} \log^{b+2-4\alpha b}(x)}\right)^{1/2} \leq C \left(\frac {\lambda(t)}{\lambda(x)}\right)^{2} \frac{1}{x^{4} \log^{b+1-2\alpha b}(x)}\end{split}\end{equation}
In total, we get
\begin{equation} ||\sqrt{\omega} \lambda(t) \frac{\mathcal{F}(\partial_{x}^{2}(\sqrt{\cdot} F_{4}(x,\cdot\lambda(x))))(\omega\lambda(x)^{2})}{\omega}||_{L^{2}(\rho(\omega \lambda(t)^{2})d\omega)}\leq C \left(\frac{\lambda(t)}{\lambda(x)}\right)^{2} \frac{(\log(\log(x)))^{2}}{x^{4} \log^{1+b-2\alpha b}(x)}\end{equation}
and this gives
\begin{equation} ||\int_{t}^{\infty} \cos((t-x)\sqrt{\omega})  \mathcal{F}(\sqrt{\cdot}\partial_{x}^{2}(F_{4}(x,\cdot \lambda(x))))(\omega \lambda(x)^{2}) \frac{\sqrt{\omega}\lambda(t)}{\omega} dx ||_{L^{2}(\rho(\omega \lambda(t)^{2})d\omega)}\leq \frac{C (\log(\log(t)))^{2}}{t^{3} \log^{1+b-2\alpha b}(t)}\end{equation}
The last term we need to consider here is 
$$\frac{\sqrt{\omega}\lambda(t)}{\omega}\mathcal{F}(\sqrt{\cdot}F_{4}(x,\cdot\lambda(x)))''(\omega \lambda(x)^{2}) \cdot\left(2\omega \lambda(x)\lambda'(x)\right)^{2}$$
We already estimated 
$$\frac{\mathcal{F}(\sqrt{\cdot}F_{4}(x,\cdot\lambda(x)))''(\omega \lambda(x)^{2}) \cdot\left(2\omega \lambda(x)\lambda'(x)\right)^{2}}{\omega}$$
So, we need only prove new estimates in the region $\sqrt{\omega}\lambda(x) \geq 2$ (by writing $\sqrt{\omega}\lambda(t) = \left(\frac{\lambda(t)}{\lambda(x)}\right) \cdot \sqrt{\omega} \lambda(x)$).\\
So, we again write
\begin{equation}\label{f4hatdoubleprimesetup}\begin{split} &\mathcal{F}(\sqrt{\cdot}F_{4}(x,\cdot\lambda(x)))''(\omega \lambda(x)^{2})\\
&=\int_{0}^{\frac{2}{\sqrt{\omega}\lambda(x)}} \sqrt{r} F_{4}(x,r\lambda(x)) \partial_{2}^{2}\phi(r,\omega\lambda(x)^{2}) dr + \int_{\frac{2}{\sqrt{\omega}\lambda(x)}}^{\infty} \sqrt{r} F_{4}(x,r\lambda(x)) \partial_{2}^{2}\phi(r,\omega\lambda(x)^{2}) dr\end{split}\end{equation}
and for the first term on the second line of \eqref{f4hatdoubleprimesetup}, we simply multiply our previous pointwise in $\omega$ estimate on $\left(\int_{0}^{\frac{2}{\sqrt{\omega}\lambda(x)}} \sqrt{r} F_{4}(x,r\lambda(x)) \partial_{2}^{2}\phi(r,\omega\lambda(x)^{2}) dr\right) \cdot\frac{\left(2\omega \lambda(x)\lambda'(x)\right)^{2}}{\omega}$ by $\frac{\lambda(t)}{\lambda(x)} \sqrt{\omega}\lambda(x)$ in the region $\sqrt{\omega}\lambda(x) \geq 2$, and estimate the $L^{2}(\rho(\omega\lambda(t)^{2}) d\omega)$ norm, as before. We have
\begin{equation} \sqrt{\omega}\lambda(x)\cdot \frac{\omega^{2}\lambda(x)^{2}\lambda'(x)^{2}}{\omega}\cdot|\int_{0}^{\frac{2}{\sqrt{\omega}\lambda(x)}} \sqrt{r} F_{4}(x,r\lambda(x)) \partial_{2}^{2}\phi(r,\omega\lambda(x)^{2}) dr| \leq \frac{C}{x^{4} \log^{3-2b\alpha}(x)\omega^{5/2} \lambda(x)^{3}}, \quad \sqrt{\omega}\lambda(x) \geq 2\end{equation}

and this leads to
\begin{equation}||\sqrt{\omega}\lambda(t)\left(\frac{\int_{0}^{\frac{2}{\sqrt{\omega}\lambda(x)}} \sqrt{r} F_{4}(x,r\lambda(x)) \partial_{2}^{2}\phi(r,\omega\lambda(x)^{2}) dr}{\omega} \cdot \omega^{2}\lambda(x)^{2}(\lambda'(x))^{2}\right)||_{L^{2}(\rho(\omega \lambda(t)^{2})d\omega)} \leq C \left(\frac{\lambda(t)^{2}}{\lambda(x)^{2}}\right)\frac{\log(\log(x)))^{2}}{x^{4}\log^{3+b-2b\alpha}(x)}\end{equation}

For the second term on the second line of \eqref{f4hatdoubleprimesetup}, we can not simply multiply our pointwise in $\omega$ estimates on $\left(\int_{\frac{2}{\sqrt{\omega}\lambda(x)}}^{\infty} \sqrt{r} F_{4}(x,r\lambda(x)) \partial_{2}^{2}\phi(r,\omega\lambda(x)^{2}) dr\right) \cdot\frac{\left(2\omega \lambda(x)\lambda'(x)\right)^{2}}{\omega}$ by $\sqrt{\omega}\lambda(t)$, since doing so would result in an estimate that is not square integrable against the measure $\rho(\omega\lambda(t)^{2}) d\omega$. So, we have to integrate by parts in appropriate $r$ integrals, just as in a previous situation. To be precise, we consider
\begin{equation}\label{new2bterm} \int_{\frac{2}{\sqrt{\omega}\lambda(x)}}^{\infty} \sqrt{r} F_{4}(x,r\lambda(x)) \partial_{2}^{2}\phi(r,\omega\lambda(x)^{2}) dr\end{equation}
and write, for $r \sqrt{\xi} \geq 2$:
\begin{equation}\label{d2phiseconddecomp}\begin{split} \partial_{2}^{2}\phi(r,\xi)&= 2\text{Re}\left(\left(\frac{a''(\xi)}{\xi^{1/4}} \sigma(r\sqrt{\xi},r) + 2 a'(\xi) \left(\frac{-\sigma(r\sqrt{\xi},r)}{4\xi^{5/4}}+\frac{r\partial_{1}\sigma(r\sqrt{\xi},r)}{2\xi^{3/4}}\right)\right.\right.\\
&+\left.\left.a(\xi)\left(\frac{5 \sigma(r\sqrt{\xi},r)}{16 \xi^{9/4}}-\frac{r\partial_{1}\sigma(r\sqrt{\xi},r)}{2\xi^{7/4}}+\frac{i r^{2} \partial_{1}\sigma(r\sqrt{\xi},r)}{2 \xi^{5/4}}+\frac{r^{2} \partial_{1}^{2}\sigma(r\sqrt{\xi},r)}{4\xi^{5/4}}\right)\right)e^{ir\sqrt{\xi}}\right)\\
&+2 \text{Re}\left(\left(2 \frac{a'(\xi) i r \sigma(r\sqrt{\xi},r)}{2 \xi^{3/4}}-\frac{i r a(\xi) \sigma(r\sqrt{\xi},r)}{2\xi^{7/4}}\right)e^{i r \sqrt{\xi}}\right)\\
&-2 \text{Re}\left(\frac{a(\xi) r^{2}}{4 \xi^{5/4}} e^{i r \sqrt{\xi}} \sigma(r\sqrt{\xi},r)\right)\end{split}\end{equation} 
For each term on the third line of \eqref{d2phiseconddecomp}, inserted into \eqref{new2bterm}, we will integrate by parts once in $r$. The insertion of the first term on the third line of \eqref{d2phiseconddecomp} gives
\begin{equation}\label{2atildesetup}\begin{split} &2 \text{Re}\left(\int_{\frac{2}{\sqrt{\omega}\lambda(x)}}^{\infty} \sqrt{r} F_{4}(x,r\lambda(x)) \cdot \frac{ a'(\omega \lambda(x)^{2}) i r}{ \omega^{3/4} \lambda(x)^{3/2}}\sigma(r\sqrt{\omega}\lambda(x),r) e^{i r \sqrt{\omega}\lambda(x)} dr\right)\\
&=2 \text{Re}\left(\frac{i a'(\omega \lambda(x)^{2})}{\omega^{3/4}\lambda(x)^{3/2}}\left(\frac{-2^{3/2} F_{4}(x,\frac{2}{\sqrt{\omega}})}{\omega^{3/4} \lambda(x)^{3/2} i \sqrt{\omega} \lambda(x)} \sigma(2,\frac{2}{\sqrt{\omega}\lambda(x)})e^{2i}-\int_{\frac{2}{\sqrt{\omega}\lambda(x)}}^{\infty} \frac{e^{i r \sqrt{\omega}\lambda(x)}}{i \sqrt{\omega}\lambda(x)} \partial_{r}\left(r^{3/2} F_{4}(x,r\lambda(x))\sigma(r\sqrt{\omega}\lambda(x),r)\right) dr\right)\right)\end{split}\end{equation}
Then we estimate each term seperately:
\begin{equation}\begin{split} &|2 \text{Re}\left(\frac{i a'(\omega \lambda(x)^{2})}{\omega^{3/4}\lambda(x)^{3/2}}\left(\frac{-2^{3/2} F_{4}(x,\frac{2}{\sqrt{\omega}})}{\omega^{3/4} \lambda(x)^{3/2} i \sqrt{\omega} \lambda(x)} \sigma(2,\frac{2}{\sqrt{\omega}\lambda(x)})e^{2i}\right)\right)|\\
&\leq C \frac{|a'(\omega \lambda(x)^{2})|}{\omega^{3/4}\lambda(x)^{3/2}} \frac{|F_{4}(x,\frac{2}{\sqrt{\omega}})|}{\omega^{3/4}\lambda(x)^{3/2} \sqrt{\omega}\lambda(x)}\end{split}\end{equation}
The contribution to the $L^{2}((\frac{4}{\lambda(x)^{2}},\infty),\rho(\omega \lambda(t)^{2}) d\omega)$ norm of the integrand of \eqref{sqrtomegadt} due to the above term is thus estimated by
\begin{equation}\begin{split}&\left(\int_{\frac{4}{\lambda(x)^{2}}}^{\infty} \left(\frac{|a'(\omega \lambda(x)^{2})|}{\omega^{3/4}\lambda(x)^{3/2}} \frac{|F_{4}(x,\frac{2}{\sqrt{\omega}})| }{\omega^{3/4}\lambda(x)^{3/2} \sqrt{\omega}\lambda(x)}\right)^{2} \omega^{4} \cdot \left(\frac{\sqrt{\omega} \lambda(t)}{\omega}\right)^{2} \frac{\rho(\omega \lambda(t)^{2}) d\omega}{\left(x^{2} \log^{4b+2}(x)\right)^{2}}\right)^{1/2}\\
&\leq C \left(\frac{\lambda(t)}{\lambda(x)}\right)^{2} \left(\int_{0}^{\lambda(x)} y^{6} |F_{4}(x,y)|^{2} \frac{dy}{y^{3}}\right)^{1/2} \frac{1}{\lambda(x)^{5} x^{2} \log^{4b+2}(x)}\\
&\leq C \left(\frac{\lambda(t)}{\lambda(x)}\right)^{2} \frac{1}{x^{4} \log^{b+3-2b\alpha}(x)}\end{split}\end{equation}
For the next term of \eqref{2atildesetup}, we use Proposition 4.6 of \cite{kst} again, to get
\begin{equation}\begin{split}&|2 \text{Re}\left(\frac{i a'(\omega \lambda(x)^{2})}{\omega^{3/4}\lambda(x)^{3/2}}\left(-\int_{\frac{2}{\sqrt{\omega}\lambda(x)}}^{\infty} \frac{e^{i r \sqrt{\omega}\lambda(x)}}{i \sqrt{\omega}\lambda(x)} \partial_{r}\left(r^{3/2} F_{4}(x,r\lambda(x))\sigma(r\sqrt{\omega}\lambda(x),r)\right) dr\right)\right)|\\
&\leq \frac{C |a(\omega \lambda(x)^{2})|}{\omega^{7/4}\lambda(x)^{7/2}}\frac{1}{\sqrt{\omega}\lambda(x)} \int_{\frac{2}{\sqrt{\omega}\lambda(x)}}^{\infty} \sqrt{r} \left(|F_{4}(x,r\lambda(x))|+r\lambda(x)|(\partial_{2}F_{4})(x,r\lambda(x))|\right)dr\\
&\leq \frac{C |a(\omega \lambda(x)^{2})|}{\omega^{9/4} \lambda(x)^{9/2} x^{2} \log^{1-2b\alpha}(x)}\end{split}\end{equation}
Then, the contribution of this term to the $L^{2}((\frac{4}{\lambda(x)^{2}},\infty),\rho(\omega \lambda(t)^{2}) d\omega)$ norm of the  integrand of \eqref{sqrtomegadt} is estimated by:
\begin{equation}\begin{split} &\left(\int_{\frac{4}{\lambda(x)^{2}}}^{\infty} \frac{d\omega}{\omega^{9/2}} \omega^{4} \left(\frac{\sqrt{\omega}\lambda(x)}{\omega}\right)^{2}\right)^{1/2} \left(\frac{\lambda(t)}{\lambda(x)}\right)^{2} \frac{1}{x^{2}\log^{1-2b\alpha}(x)} \cdot \frac{1}{\lambda(x)^{9/2}} \cdot \frac{1}{x^{2} \log^{4b+2}(x)}\\
&\leq C \left(\frac{\lambda(t)}{\lambda(x)}\right)^{2} \frac{1}{x^{4} \log^{b+3-2b\alpha}(x)}\end{split}\end{equation}
By comparing the first and second terms on the third line of \eqref{d2phiseconddecomp}, and recalling the symbol-type estimates on $a$ from Proposition 4.7 of \cite{kst}, we can estimate the second term on the third line of \ref{d2phiseconddecomp} by repeating the same exact procedure used to estimate the first term  of the third line of \eqref{d2phiseconddecomp}. \\
\\
Next, we treat the fourth line of \eqref{d2phiseconddecomp}. Here, we integrate by parts twice in the $r$ integral resulting from substitution of the fourth line of \eqref{d2phiseconddecomp} into \eqref{new2bterm}. With $\xi=\omega \lambda(t)^{2}$, we have
\begin{equation}\begin{split}&\int_{\frac{2}{\sqrt{\xi}}}^{\infty} \sqrt{r} F_{4}(x,r\lambda(x)) 2 \text{Re}\left(\frac{-a(\xi) r^{2} e^{i r \sqrt{\xi}} \sigma(r\sqrt{\xi},r)}{4\xi^{5/4}}\right) dr\\
&=\text{Re}\left(\frac{-a(\xi)}{2\xi^{5/4}}\left(\frac{-2^{5/2} F_{4}(x,\frac{2}{\sqrt{\xi}}\lambda(x))\sigma(2,\frac{2}{\sqrt{\xi}})e^{2i}}{i \xi^{7/4}}+\frac{1}{i \sqrt{\xi}} \cdot \left(\partial_{r}\left(r^{5/2} F_{4}(x,r\lambda(x)) \sigma(r\sqrt{\xi},r)\right) \frac{e^{i r \sqrt{\xi}}}{i \sqrt{\xi}}\right)\vert_{r=\frac{2}{\sqrt{\xi}}}\right)\right)\\
&+\text{Re}\left(\frac{-a(\xi)}{2\xi^{5/4}}\cdot \frac{-1}{\xi}\int_{\frac{2}{\sqrt{\xi}}}^{\infty} e^{i r \sqrt{\xi}}\partial_{r}^{2}\left(r^{5/2} F_{4}(x,r\lambda(x))\sigma(r\sqrt{\xi},r)\right) dr\right)\end{split}\end{equation}

Next, we again note the symbol-type character of the estimate \eqref{f4estimates}, to get
\begin{equation} \label{3tildeintermediate}\begin{split}&|\int_{\frac{2}{\sqrt{\xi}}}^{\infty} \sqrt{r} F_{4}(x,r\lambda(x)) 2 \text{Re}\left(\frac{-a(\xi) r^{2} e^{i r \sqrt{\xi}} \sigma(r\sqrt{\xi},r)}{4\xi^{5/4}}\right) dr|\cdot 4 \omega^{2} \lambda(x)^{2}\lambda'(x)^{2} \frac{\sqrt{\omega} \lambda(x)}{\omega}\\
&\leq \frac{C \omega^{2} \lambda(x)}{\sqrt{\omega}x^{2}\log^{4b+2}(x)} \left(\frac{|a(\omega \lambda(x)^{2})|}{\omega^{3}\lambda(x)^{6}} F_{4,est}(x,\frac{2 \lambda(x)}{\sqrt{\xi}})+ \frac{|a(\omega \lambda(x)^{2})|}{\omega^{9/4} \lambda(x)^{9/2}} \int_{\frac{2}{\sqrt{\omega}\lambda(x)}}^{\infty} r^{1/2} F_{4,est}(x,r\lambda(x)) dr\right)\end{split}\end{equation}
where $F_{4,est}$ is the expression which appears on the right-hand side of \eqref{f4estimates}. Then, the same procedure used to treat \eqref{2atildesetup} also applies to treat \eqref{3tildeintermediate}, and we get
\begin{equation}\begin{split}&||\int_{\frac{2}{\sqrt{\xi}}}^{\infty} \sqrt{r} F_{4}(x,r\lambda(x)) 2 \text{Re}\left(\frac{-a(\xi) r^{2} e^{i r \sqrt{\xi}} \sigma(r\sqrt{\xi},r)}{4\xi^{5/4}}\right) dr\cdot 4 \omega^{2} \lambda(x)^{2}\lambda'(x)^{2} \frac{\sqrt{\omega} \lambda(x)}{\omega}\left(\frac{\lambda(t)}{\lambda(x)}\right)||_{L^{2}((\frac{4}{\lambda(x)^{2}},\infty),\rho(\omega \lambda(t)^{2}) d\omega)}\\
&\leq C \left(\frac{\lambda(t)}{\lambda(x)}\right)^{2} \frac{1}{x^{4} \log^{b+3-2b\alpha}(x)}\end{split}\end{equation}
Next, we study the first two lines of \eqref{d2phiseconddecomp}, which are given below:
\begin{equation}\begin{split}(\partial_{2}^{2}\phi(r,\xi))_{2}&=2\text{Re}\left(\left(\frac{a''(\xi)}{\xi^{1/4}} \sigma(r\sqrt{\xi},r) + 2 a'(\xi) \left(\frac{-\sigma(r\sqrt{\xi},r)}{4\xi^{5/4}}+\frac{r\partial_{1}\sigma(r\sqrt{\xi},r)}{2\xi^{3/4}}\right)\right.\right.\\
&+\left.\left.a(\xi)\left(\frac{5 \sigma(r\sqrt{\xi},r)}{16 \xi^{9/4}}-\frac{r\partial_{1}\sigma(r\sqrt{\xi},r)}{2\xi^{7/4}}+\frac{i r^{2} \partial_{1}\sigma(r\sqrt{\xi},r)}{2 \xi^{5/4}}+\frac{r^{2} \partial_{1}^{2}\sigma(r\sqrt{\xi},r)}{4\xi^{5/4}}\right)\right)e^{ir\sqrt{\xi}}\right)\end{split}\end{equation} We note that
\begin{equation}|(\partial_{2}^{2}\phi(r,\xi))_{2}| \leq \frac{C |a(\xi)|}{\xi^{9/4}}, \quad r \geq \frac{2}{\sqrt{\xi}}\end{equation}
So,
\begin{equation} \begin{split}|\int_{\frac{2}{\sqrt{\omega}\lambda(x)}}^{\infty} \sqrt{r} F_{4}(x,r\lambda(x)) (\partial_{2}^{2}\phi(r,\omega\lambda(x)^{2}))_{2} dr|&\leq \frac{C |a(\omega\lambda(x)^{2})|}{\omega^{9/4}\lambda(x)^{9/2}} \int_{\frac{2}{\sqrt{\omega}\lambda(x)}}^{\infty} \sqrt{r}|F_{4}(x,r\lambda(x))| dr\end{split}\end{equation}
Finally,
\begin{equation}\begin{split}&\left(\int_{\frac{4}{\lambda(x)^{2}}}^{\infty} \rho(\omega \lambda(t)^{2}) \left(\left(\int_{\frac{2}{\sqrt{\omega}\lambda(x)}}^{\infty} \sqrt{r} F_{4}(x,r\lambda(x)) (\partial_{2}^{2}\phi(r,\omega\lambda(x)^{2}))_{2} dr\right) \cdot 4 \omega^{2} \lambda(x)^{2} \lambda'(x)^{2} \frac{\sqrt{\omega} \lambda(t)}{\omega}\right)^{2} d\omega\right)^{1/2} \\
&\leq C \left(\frac{\lambda(t)}{\lambda(x)}\right)^{2} \frac{1}{x^{4} \log^{b+3-2b\alpha}(x)}\end{split}\end{equation}

Combining all of our estimates, we get
\begin{equation}||\sqrt{\omega}\lambda(t)\left(\frac{\mathcal{F}(\sqrt{\cdot} F_{4}(x,\cdot\lambda(x)))''(\omega\lambda(x)^{2})}{\omega} \omega^{2} \lambda(x)^{2}\lambda'(x)^{2}\right)||_{L^{2}(\rho(\omega\lambda(t)^{2})d\omega)} \leq C\left(\frac{\lambda(t)}{\lambda(x)}\right)^{2} \frac{\log(\log(x))^{2}}{x^{4} \log^{3+b-2b\alpha}(x)}\end{equation}
and
\begin{equation}\label{dxsquaredintegrand} ||-\int_{t}^{\infty} \cos((t-x)\sqrt{\omega}) \frac{\sqrt{\omega}\lambda(t)}{\omega} \partial_{x}^{2}\left(\mathcal{F}(\sqrt{\cdot}F_{4}(x,\cdot\lambda(x)))(\omega\lambda(x)^{2})\right) dx||_{L^{2}(\rho(\omega \lambda(t)^{2}) d\omega)} \leq \frac{C (\log(\log(t)))^{2}}{t^{3} \log^{1+b-2\alpha b}(t)}\end{equation}

which imply
\begin{equation} ||\sqrt{\omega} \lambda(t) \partial_{t}\left(\int_{t}^{\infty} \frac{\sin((t-x)\sqrt{\omega})}{\sqrt{\omega}} \mathcal{F}(\sqrt{\cdot}F_{4}(x,\cdot\lambda(x)))(\omega \lambda(x)^{2}) dx\right)||_{L^{2}(\rho(\omega\lambda(t)^{2})d\omega)} \leq \frac{C (\log(\log(t)))^{2}}{t^{3} \log^{1+b-2\alpha b}(t)} \end{equation}

The last quantity to estimate is
\begin{equation}\label{omegalambdasquared}\begin{split} &\omega\lambda(t)^{2}\left(\int_{t}^{\infty} \frac{\sin((t-x)\sqrt{\omega}}{\sqrt{\omega}} \mathcal{F}(\sqrt{\cdot}F_{4}(x,\cdot\lambda(x)))(\omega\lambda(x)^{2}) dx\right)\\
&=-\lambda(t)^{2}\mathcal{F}(\sqrt{\cdot}F_{4}(t,\cdot\lambda(t)))(\omega\lambda(t)^{2}) - \sqrt{\omega}\lambda(t)^{2}\int_{t}^{\infty} \frac{\sin((t-x)\sqrt{\omega})}{\omega} \partial_{x}^{2}\left(\mathcal{F}(\sqrt{\cdot}F_{4}(x,\cdot\lambda(x)))(\omega\lambda(x)^{2})\right) dx\end{split}\end{equation}

We can estimate the second term on the second line of \eqref{omegalambdasquared}, simply by using the estimates which gave \eqref{dxsquaredintegrand}. For the first term of the second line of \eqref{omegalambdasquared}, we have, with $\xi = \omega \lambda(t)^{2}$,
\begin{equation}\begin{split} &\lambda(t)^{2}\left(\int_{0}^{\infty} \rho(\omega \lambda(t)^{2}) \left(\mathcal{F}(\sqrt{\cdot} F_{4}(t,\cdot \lambda(t)))(\omega \lambda(t)^{2})\right)^{2} d\omega\right)^{1/2} = \lambda(t)^{2} \left(\int_{0}^{\infty} \rho(\xi) \left(\mathcal{F}(\sqrt{\cdot} F_{4}(t,\cdot \lambda(t)))(\xi)\right)^{2} \frac{d\xi}{\lambda(t)^{2}}\right)^{1/2}\\
&=\lambda(t) \left(\int_{0}^{\infty} r \left(F_{4}(t,r\lambda(t))\right)^{2} dr\right)^{1/2}\\
&\leq C \lambda(t) \left(\int_{0}^{\frac{\log^{N}(t)}{\lambda(t)}} \frac{r^{3} \lambda(t)^{2} dr}{\lambda(t)^{8} (r^{2}+1)^{4} t^{4} \log^{6b+2-4b\alpha}(t)} + \int_{0}^{\frac{t}{2\lambda(t)}} \frac{r^{3} \lambda(t)^{2} dr}{\lambda(t)^{8} (r^{2}+1)^{4} t^{4} \log^{10b+4N-4}(t)} \right)^{1/2}\\
&\leq \frac{C}{t^{2} \log^{b+1-2b\alpha}(t)} + \frac{C}{t^{2} \log^{3b+2N-2}(t)}\\
&\leq \frac{C}{t^{2} \log^{b+1-2b\alpha}(t)}\end{split}\end{equation}

So, 
\begin{equation}||\omega \lambda(t)^{2} \int_{t}^{\infty} \frac{\sin((t-x)\sqrt{\omega})}{\sqrt{\omega}} \mathcal{F}(\sqrt{\cdot} F_{4}(x,\cdot\lambda(x)))(\omega \lambda(x)^{2}) dx||_{L^{2}(\rho(\omega \lambda(t)^{2}) d\omega)}\leq \frac{C}{t^{2} \log^{1+b-2b\alpha}(t)}\end{equation}
which finishes the proof of the lemma \end{proof}

To proceed, we quickly translate our estimates \eqref{f5f6l2thm} and \eqref{f5f6h1thm}, by noting
\begin{equation}\begin{split} ||\mathcal{F}(\sqrt{\cdot} \left(F_{5}+F_{6}\right)(x,\cdot \lambda(x)))(\omega \lambda(x)^{2})||_{L^{2}(\rho(\omega \lambda(x)^{2}) d\omega)}^{2}&=\int_{0}^{\infty} \frac{r}{\lambda(x)^{4}} \left(F_{5}+F_{6}\right)^{2}(x,r) dr\end{split}\end{equation}

\begin{equation} ||\sqrt{\omega} \lambda(x) \mathcal{F}(\sqrt{\cdot} \left(F_{5}+F_{6}\right)(x,\cdot \lambda(x)))(\omega \lambda(x)^{2})||_{L^{2}(\rho(\omega \lambda(x)^{2}) d\omega)}^{2} = \frac{1}{\lambda(x)^{2}} \int_{0}^{\infty} (L(\left(F_{5}+F_{6}\right)(x,\cdot \lambda(x))))^{2}(R) R dR\end{equation}
and
$$|L(f)(r)| \leq C\left(|f'(r)|+\frac{|f(r)|}{r}\right)$$
So, we have
\begin{equation} ||\mathcal{F}(\sqrt{\cdot} \left(F_{5}+F_{6}\right)(x,\cdot \lambda(x)))(\omega \lambda(x)^{2})||_{L^{2}(\rho(\omega \lambda(x)^{2}) d\omega)} \leq \frac{C}{x^{4} \log^{3b+2N-1}(x)}\end{equation}
and
\begin{equation} ||\sqrt{\omega} \lambda(x) \mathcal{F}(\sqrt{\cdot} \left(F_{5}+F_{6}\right)(x,\cdot \lambda(x)))(\omega \lambda(x)^{2})||_{L^{2}(\rho(\omega \lambda(x)^{2}) d\omega)}\leq C \frac{\log^{6+b}(x)}{x^{35/8}}\end{equation}

Now, we recall $F(t,r) = F_{4}(t,r)+F_{5}(t,r)+F_{6}(t,r)$, and estimate the following quantities:
\begin{equation} \label{linsoln}\begin{split} &||\int_{t}^{\infty} \frac{\sin((t-x)\sqrt{\omega})}{\sqrt{\omega}} \mathcal{F}(\sqrt{\cdot}F(x,\cdot\lambda(x)))(\omega \lambda(x)^{2}) dx||_{L^{2}(\rho(\omega \lambda(t)^{2})d\omega)}\\
&=||\int_{t}^{\infty} \frac{\sin((t-x)\sqrt{\omega})}{\sqrt{\omega}} \left(\mathcal{F}(\sqrt{\cdot}F_{4}(x,\cdot\lambda(x)))(\omega \lambda(x)^{2})+\mathcal{F}(\sqrt{\cdot}\left(F_{5}+F_{6}\right)(x,\cdot\lambda(x)))(\omega \lambda(x)^{2})\right) dx||_{L^{2}(\rho(\omega \lambda(t)^{2})d\omega)}\\
&\leq ||\int_{t}^{\infty} \frac{\sin((t-x)\sqrt{\omega})}{\sqrt{\omega}} \mathcal{F}(\sqrt{\cdot}F_{4}(x,\cdot\lambda(x)))(\omega \lambda(x)^{2}) dx||_{L^{2}(\rho(\omega \lambda(t)^{2})d\omega)} \\
&+ ||\int_{t}^{\infty} \frac{\sin((t-x)\sqrt{\omega})}{\sqrt{\omega}} \mathcal{F}(\sqrt{\cdot}\left(F_{5}+F_{6}\right)(x,\cdot\lambda(x)))(\omega \lambda(x)^{2}) dx||_{L^{2}(\rho(\omega \lambda(t)^{2})d\omega)}\\
&\leq \frac{C (\log(\log(t)))^{2}}{t^{2} \log^{b+1-2\alpha b}(t)} + C \int_{t}^{\infty} (x-t) \left(||\mathcal{F}(\sqrt{\cdot} \left(F_{5}+F_{6}\right)(x,\cdot\lambda(x)))(\omega \lambda(x)^{2})||_{L^{2}(\rho(\omega \lambda(t)^{2})d\omega)}\right)dx\\
&\leq \frac{C (\log(\log(t)))^{2}}{t^{2} \log^{b+1-2\alpha b}(t)} + C \int_{t}^{\infty} (x-t) \left(\frac{\lambda(t)}{\lambda(x)}\right) \frac{dx}{x^{4} \log^{3b+2N-1}(x)} \\
&\leq \frac{C (\log(\log(t)))^{2}}{t^{2} \log^{b+1-2\alpha b}(t)}\end{split}\end{equation}
where we used the fact that,
 \begin{equation}\begin{split}&||\mathcal{F}(\sqrt{\cdot}\left(F_{5}+F_{6}\right)(x,\cdot \lambda(x)))(\omega \lambda(x)^{2})||_{L^{2}(\rho(\omega \lambda(t)^{2})d\omega)} \\
& = \left(\int_{0}^{\infty} \left(\mathcal{F}(\sqrt{\cdot}\left(F_{5}+F_{6}\right)(x,\cdot \lambda(x)))(\omega \lambda(x)^{2})\right)^{2} \frac{\rho(\omega \lambda(t)^{2})}{\rho(\omega \lambda(x)^{2})} \rho(\omega \lambda(x)^{2}) d\omega\right)^{1/2}\\
&\leq C \frac{\lambda(t)}{\lambda(x)} ||\mathcal{F}(\sqrt{\cdot}\left(F_{5}+F_{6}\right)(x,\cdot \lambda(x)))(\omega \lambda(x)^{2})||_{L^{2}(\rho(\omega \lambda(x)^{2})d\omega)}\end{split}\end{equation}
where we used \eqref{rhoscaling}.\\
\\
Similarly,
\begin{equation} \label{dtlinsoln} \begin{split} &||\int_{t}^{\infty} \cos((t-x) \sqrt{\omega}) \mathcal{F}(\sqrt{\cdot}F(x,\cdot\lambda(x)))(\omega \lambda(x)^{2}) dx||_{L^{2}(\rho(\omega \lambda(t)^{2}) d\omega)}\\
&\leq ||\int_{t}^{\infty} \cos((t-x) \sqrt{\omega}) \mathcal{F}(\sqrt{\cdot}F_{4}(x,\cdot\lambda(x)))(\omega \lambda(x)^{2}) dx||_{L^{2}(\rho(\omega \lambda(t)^{2}) d\omega)}\\
&+||\int_{t}^{\infty} \cos((t-x) \sqrt{\omega}) \mathcal{F}(\sqrt{\cdot}\left(F_{5}+F_{6}\right)(x,\cdot\lambda(x)))(\omega \lambda(x)^{2}) dx||_{L^{2}(\rho(\omega \lambda(t)^{2}) d\omega)}\\
&\leq \frac{C (\log(\log(t)))^{2}}{t^{3} \log^{b+1-2\alpha b}(t)} + C \int_{t}^{\infty} \left(\frac{\lambda(t)}{\lambda(x)}\right)\left(||\mathcal{F}(\sqrt{\cdot} \left(F_{5}+F_{6}\right)(x,\cdot\lambda(x)))(\omega \lambda(x)^{2})||_{L^{2}(\rho(\omega \lambda(x)^{2})d\omega)} \right) dx\\
&\leq \frac{C (\log(\log(t)))^{2}}{t^{3} \log^{b+1-2\alpha b}(t)}+\frac{C}{t^{3} \log^{3b+2N-1}(t)}\\
&\leq \frac{C (\log(\log(t)))^{2}}{t^{3} \log^{b+1-2\alpha b}(t)}\end{split}\end{equation}

\begin{equation} \label{llinsoln} \begin{split} &||\sqrt{\omega}\lambda(t) \int_{t}^{\infty} \frac{\sin((t-x)\sqrt{\omega})}{\sqrt{\omega}} \mathcal{F}(\sqrt{\cdot}F(x,\cdot\lambda(x)))(\omega\lambda(x)^{2}) dx||_{L^{2}(\rho(\omega \lambda(t)^{2}) d\omega)}\\
&\leq ||\sqrt{\omega}\lambda(t) \int_{t}^{\infty} \frac{\sin((t-x)\sqrt{\omega})}{\sqrt{\omega}} \mathcal{F}(\sqrt{\cdot}F_{4}(x,\cdot\lambda(x)))(\omega\lambda(x)^{2}) dx||_{L^{2}(\rho(\omega \lambda(t)^{2}) d\omega)}\\
&+||\sqrt{\omega}\lambda(t) \int_{t}^{\infty} \frac{\sin((t-x)\sqrt{\omega})}{\sqrt{\omega}} \mathcal{F}(\sqrt{\cdot}\left(F_{5}+F_{6}\right)(x,\cdot\lambda(x)))(\omega\lambda(x)^{2}) dx||_{L^{2}(\rho(\omega \lambda(t)^{2}) d\omega)}\\
&\leq \frac{C(\log(\log(t)))^{2}}{t^{2} \log^{1+b-2\alpha b}(t)} + \lambda(t) \int_{t}^{\infty} \left(\frac{\lambda(t)}{\lambda(x)}\right)\left(||\mathcal{F}(\sqrt{\cdot} \left(F_{5}+F_{6}\right)(x,\cdot\lambda(x)))(\omega \lambda(x)^{2})||_{L^{2}(\rho(\omega \lambda(x)^{2})d\omega)} \right) dx\\
&\leq \frac{C(\log(\log(t)))^{2}}{t^{2} \log^{1+b-2\alpha b}(t)}+\frac{C}{t^{3} \log^{4b+2N-1}(t)}\\
&\leq \frac{C(\log(\log(t)))^{2}}{t^{2} \log^{1+b-2\alpha b}(t)}\end{split}\end{equation}

\begin{equation}\label{ldtlinsoln} \begin{split} &||\int_{t}^{\infty} \cos((t-x)\sqrt{\omega}) \sqrt{\omega}\lambda(x) \left(\frac{\lambda(t)}{\lambda(x)}\right) \mathcal{F}(\sqrt{\cdot} F(x,\cdot\lambda(x)))(\omega \lambda(x)^{2}) dx||_{L^{2}(\rho(\omega\lambda(t)^{2})d\omega)}\\
&\leq ||\int_{t}^{\infty} \cos((t-x)\sqrt{\omega}) \sqrt{\omega}\lambda(x) \left(\frac{\lambda(t)}{\lambda(x)}\right) \mathcal{F}(\sqrt{\cdot} F_{4}(x,\cdot\lambda(x)))(\omega \lambda(x)^{2}) dx||_{L^{2}(\rho(\omega\lambda(t)^{2})d\omega)}\\
&+||\int_{t}^{\infty} \cos((t-x)\sqrt{\omega}) \sqrt{\omega}\lambda(x) \left(\frac{\lambda(t)}{\lambda(x)}\right) \mathcal{F}(\sqrt{\cdot} \left(F_{5}+F_{6}\right)(x,\cdot\lambda(x)))(\omega \lambda(x)^{2}) dx||_{L^{2}(\rho(\omega\lambda(t)^{2})d\omega)}\\
&\leq \frac{C(\log(\log(t)))^{2}}{t^{3} \log^{1+b-2\alpha b}(t)} + C \int_{t}^{\infty} \left(\frac{\lambda(t)}{\lambda(x)}\right)^{2} \left(||\sqrt{\omega} \lambda(x) \mathcal{F}(\sqrt{\cdot} \left(F_{5}+F_{6}\right)(x,\cdot\lambda(x)))(\omega\lambda(x)^{2})||_{L^{2}(\rho(\omega \lambda(x)^{2})d\omega)} \right) dx\\
&\leq \frac{C(\log(\log(t)))^{2}}{t^{3} \log^{1+b-2\alpha b}(t)} +C \int_{t}^{\infty} \left(\frac{\lambda(t)}{\lambda(x)}\right)^{2} \frac{\log^{b+6}(x)}{x^{35/8}} dx\\
&\leq \frac{C(\log(\log(t)))^{2}}{t^{3} \log^{1+b-2\alpha b}(t)}\end{split}\end{equation}
and, finally,
\begin{equation} \label{lstarllinsoln} \begin{split} &||\int_{t}^{\infty} \sin((t-x)\sqrt{\omega}) \sqrt{\omega} \lambda(t)^{2} \mathcal{F}(\sqrt{\cdot} F(x,\cdot\lambda(x)))(\omega\lambda(x)^{2}) dx||_{L^{2}(\rho(\omega \lambda(t)^{2})d\omega)}\\
&\leq  ||\int_{t}^{\infty} \sin((t-x)\sqrt{\omega}) \sqrt{\omega} \lambda(t)^{2} \mathcal{F}(\sqrt{\cdot} F_{4}(x,\cdot\lambda(x)))(\omega\lambda(x)^{2}) dx||_{L^{2}(\rho(\omega \lambda(t)^{2})d\omega)}\\
&+||\int_{t}^{\infty} \sin((t-x)\sqrt{\omega}) \sqrt{\omega} \lambda(t)^{2} \mathcal{F}(\sqrt{\cdot} \left(F_{5}+F_{6}\right)(x,\cdot\lambda(x)))(\omega\lambda(x)^{2}) dx||_{L^{2}(\rho(\omega \lambda(t)^{2})d\omega)}\\
&\leq \frac{C}{t^{2} \log^{1+b-2\alpha b}(t)} + C \int_{t}^{\infty} \lambda(t) \left(\frac{\lambda(t)}{\lambda(x)}\right)^{2} \left(||\lambda(x) \mathcal{F}(\sqrt{\omega} \sqrt{\cdot}\left(F_{5}+F_{6}\right)(x,\cdot\lambda(x)))(\omega\lambda(x)^{2})||_{L^{2}(\rho(\omega\lambda(x)^{2})d\omega)}\right) dx\\
&\leq \frac{C}{t^{2} \log^{1+b-2\alpha b}(t)}+C \int_{t}^{\infty} \frac{\lambda(t)^{3}}{\lambda(x)^{2}} \frac{\log^{b+6}(x)}{x^{35/8}} dx\\
&\leq \frac{C}{t^{2} \log^{1+b-2\alpha b}(t)}\end{split}\end{equation}

\subsection{Setup of the final iteration}
Let $\epsilon$ be given by
$$\epsilon = 2b+\frac{1}{2}(1-2\alpha b)$$
Note that $2b+\frac{1}{2} > \epsilon > 2b$. Also, note that \eqref{vcorrcofthm}, \eqref{1cofthm}, and \eqref{vcorrdrvcorrcofthm} show that

\begin{equation} \label{correst1}||\frac{v_{corr}(x,R\lambda(x))}{R \lambda(x)}||_{L^{\infty}}^{2}+||\frac{v_{corr}(x,R\lambda(x))}{R\lambda(x)^{2}(1+R^{2})}||_{L^{\infty}} \leq \frac{C}{x^{2} \log^{\epsilon-2b}(x)}\end{equation}

\begin{equation}\label{correst2} 1+||\frac{v_{corr}(x,R\lambda(x))}{R}||_{L^{\infty}}+||\partial_{R}(v_{corr}(x,R\lambda(x)))||_{L^{\infty}} \leq C\end{equation}

\begin{equation}\label{correst3}\begin{split} &||\frac{v_{corr}(x,R\lambda(x)) \partial_{R}(v_{corr}(x,R\lambda(x)))}{R\lambda(x)^{2}}||_{L^{\infty}_{R}((0,1))}+||\frac{v_{corr}(x,R\lambda(x)) \partial_{R}(v_{corr}(x,R\lambda(x)))}{R^{2}\lambda(x)^{2}}||_{L^{\infty}_{R}((1,\infty))}\\
&+||\frac{\partial_{R}(v_{corr}(x,R\lambda(x)))}{(1+R^{2})\lambda(x)^{2}}||_{L^{\infty}} \\
&\leq \frac{C}{x^{2}\log^{\epsilon-2b}(x)}\end{split}\end{equation}

Let $(Z,||\cdot||_{Z})$ be the normed vector space defined as follows. $Z$ is the set of (equivalence classes) of measureable functions $y: [T_{0},\infty)\times (0,\infty) \rightarrow \mathbb{R}$ such that
$$ y(t,\omega) t^{2} \log^{\frac{\epsilon}{2}}(t) \sqrt{\rho(\omega \lambda(t)^{2})} \langle \omega \lambda(t)^{2}\rangle \in C^{0}_{t}([T_{0},\infty),L^{2}(d\omega))$$
$$\partial_{t}y(t,\omega) t^{3} \log^{\frac{\epsilon}{2}}(t) \langle \sqrt{\omega} \lambda(t)\rangle \sqrt{\rho(\omega \lambda(t)^{2})} \in C^{0}_{t}([T_{0},\infty), L^{2}(d\omega))$$
and $||y||_{Z} < \infty$
where
\begin{equation}\label{znorm}\begin{split}||y||_{Z} &= \sup_{t \geq T_{0}}\left(t^{2}\log^{\frac{\epsilon}{2}}(t)\left(||y(t)||_{L^{2}(\rho(\omega \lambda(t)^{2})d\omega)}+||\lambda(t)\sqrt{\omega}y(t)||_{L^{2}(\rho(\omega \lambda(t)^{2})d\omega)}+ ||\omega \lambda(t)^{2}y(t)||_{L^{2}(\rho(\omega \lambda(t)^{2})d\omega)}\right) \right.\\
&\left.+ t^{3}\log^{\frac{\epsilon}{2}}(t)\left(||\partial_{t}y(t)||_{L^{2}(\rho(\omega \lambda(t)^{2})d\omega)}+||\lambda(t)\sqrt{\omega} \partial_{t}y(t)||_{L^{2}(\rho(\omega \lambda(t)^{2})d\omega)} \right)\right)\end{split}\end{equation}\\
\\
Define $T$ on $Z$ by
\begin{equation} T(y)(t,\omega) = -\int_{t}^{\infty} \frac{\sin((t-x)\sqrt{\omega})}{\sqrt{\omega}}\left(F_{2}(y)(x,\omega) - \mathcal{F}(\sqrt{\cdot}F(x,\cdot\lambda(x)))(\omega \lambda(x)^{2}) - \mathcal{F}(\sqrt{\cdot}F_{3}(u(y))(x,\cdot\lambda(x)))(\omega \lambda(x)^{2})\right)dx\end{equation}\\
\\
Note that a fixed point of $T$ is a solution to \eqref{ylinearproblem} with $0$ Cauchy data at infinity.  We will prove the following proposition which implies that $T$ indeed has a fixed point in $\overline{B_{1}(0)}\subset Z$.
\begin{proposition} There exists $T_{4}>0$ such that, for all $T_{0}>T_{4}$, $T$ is a strict contraction on $\overline{B}_{1}(0) \subset Z$\end{proposition}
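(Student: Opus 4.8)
The plan is to verify the hypotheses of the Banach fixed point theorem for $T$ on the complete metric space $\overline{B_1(0)} \subset Z$, using the energy representation for solutions of $\partial_{tt}y + \omega y = \text{RHS}$ with zero Cauchy data at infinity, namely
$$ y(t,\omega) = -\int_t^\infty \frac{\sin((t-x)\sqrt\omega)}{\sqrt\omega}\,\text{RHS}(x,\omega)\,dx. $$
First I would record the three basic inequalities for the propagator acting on an inhomogeneity $G(x,\omega)$: for the weighted $L^2_\omega$ norms appearing in $\|\cdot\|_Z$, one has
$$\Big\|\int_t^\infty \tfrac{\sin((t-x)\sqrt\omega)}{\sqrt\omega}G(x)\,dx\Big\|_{L^2(\rho(\omega\lambda(t)^2)d\omega)} \le C\int_t^\infty (x-t)\Big(\tfrac{\lambda(t)}{\lambda(x)}\Big)\|G(x)\|_{L^2(\rho(\omega\lambda(x)^2)d\omega)}\,dx,$$
together with the analogous bounds for $\sqrt\omega\lambda(t)$-weighted, $\omega\lambda(t)^2$-weighted, and $\partial_t$-differentiated versions, all of which follow from \eqref{rhoscaling} and differentiating under the integral sign exactly as in \eqref{linsoln}--\eqref{lstarllinsoln}. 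The factor $\lambda(t)/\lambda(x) \le 1$ (since $\lambda$ is decreasing) is what makes these convergent.

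Next I would estimate $\|T(y)\|_Z$ for $y \in \overline{B_1(0)}$ by splitting $\text{RHS} = -\mathcal{F}(\sqrt\cdot F(x,\cdot\lambda(x))) + F_2(y) - \mathcal{F}(\sqrt\cdot F_3(u(y))(x,\cdot\lambda(x)))$ and handling the three pieces separately. The $F$ term: this is exactly the content of \eqref{linsoln}--\eqref{lstarllinsoln}, giving a contribution bounded by $C(\log\log t)^2/(t^2\log^{b+1-2\alpha b}(t))$ in the $y$-weighted norms and $C(\log\log t)^2/(t^3\log^{b+1-2\alpha b}(t))$ in the $\partial_t y$-weighted norms; since $\epsilon = 2b + \tfrac12(1-2\alpha b) < b+1-2\alpha b$ (using $\alpha$ small and $b>0$), this is $\le \tfrac{1}{100}\cdot t^{-2}\log^{-\epsilon/2}(t)$ for $T_0$ large, giving the "source part" of $\overline{B_1(0)}$-invariance. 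The $F_2(y)$ term: by Proposition \ref{f2prop}, $\|F_2(y)(x)\|_{L^2(\rho(\omega\lambda(x)^2)d\omega)} \le \frac{C_1}{x\log x}(\|\partial_t y(x)\| + \tfrac1x\|y(x)\|) \le \frac{C_1 \|y\|_Z}{x^4 \log^{1+\epsilon/2}(x)}$, and the analogous $\sqrt\omega\lambda$-weighted bound; inserting into the propagator estimate and doing the $x$-integral gains a power of $t$ and a factor $1/\log t$, so this term is bounded by $\frac{C_1\|y\|_Z}{t^2\log^{1+\epsilon/2}(t)}$, which is $\le \tfrac12 t^{-2}\log^{-\epsilon/2}(t)\|y\|_Z$ once $T_0$ is large — this is where the smallness driving the contraction comes from. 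The $F_3(u(y))$ term: by Proposition \ref{f3prop} combined with \eqref{correst1}--\eqref{correst3} and the definition of $\|\cdot\|_Z$, one has (schematically) $\|\mathcal{F}(\sqrt\cdot F_3(u(y))(x,\cdot\lambda(x)))(\omega\lambda(x)^2)\|_{L^2(\rho\,d\omega)} \lesssim \|y\|_Z^2 \cdot \frac{1}{x^4\log^{\epsilon}(x)}\cdot\big(\text{coeff bounds}\big) + \|y\|_Z^3/(x^6\log^{3\epsilon/2}) + \ldots$, where the worst contribution uses the quadratic-in-$y$ term with coefficient $x^{-2}\log^{-(\epsilon-2b)}(x)$; after the propagator integral this is $\le C\|y\|_Z^2/(t^2\log^{\epsilon/2+\delta}(t))$ for some $\delta>0$, hence also small on $\overline{B_1(0)}$.

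Then I would check the contraction estimate: for $y_1, y_2 \in \overline{B_1(0)}$, $T(y_1)-T(y_2)$ involves $F_2(y_1)-F_2(y_2) = F_2(y_1-y_2)$ (linearity of $F_2$) and $F_3(u(y_1))-F_3(u(y_2))$. The first is handled exactly as above with $\|y_1-y_2\|_Z$ in place of $\|y\|_Z$, contributing $\le \tfrac12\|y_1-y_2\|_Z$. For the $F_3$ difference, since $N$ and $L_1$ are (at worst) cubic polynomials in $u$ with smooth coefficients, $F_3(u_1)-F_3(u_2) = (u_1-u_2)\cdot(\text{quadratic in }u_1,u_2 \text{ plus lower order})$, so the same estimates from Proposition \ref{f3prop} apply with one factor of $\|y_1-y_2\|_Z$ and the rest bounded by $1$ on $\overline{B_1(0)}$, together with the small coefficients \eqref{correst1}--\eqref{correst3}; this gives a contribution $\le C/(\log^{\delta}(T_0))\cdot\|y_1-y_2\|_Z$. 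Choosing $T_4$ large enough that all the "small" constants sum to less than $1$ completes the proof. The main obstacle — and the reason the whole preceding analysis was needed — is the $F$ term: without the orthogonality $\langle F_4(t,\cdot),\phi_0(\cdot/\lambda(t))\rangle = 0$ one cannot integrate by parts in $x$ to extract enough decay from $\mathcal{F}(\sqrt\cdot F_4(x,\cdot\lambda(x)))(\omega\lambda(x)^2)$ near $\omega = 0$ (where $\rho$ is singular), so the oscillatory integral estimates in the lemmas preceding this proposition are the crux; everything else is bookkeeping with the propagator bounds and the symbol calculus of $K$ and $\rho$ from \cite{kst}.
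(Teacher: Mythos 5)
Your proposal follows essentially the same route as the paper's proof: the propagator bounds with the $\rho$-scaling factor $\lambda(t)/\lambda(x)$ from \eqref{rhoscaling}, the source term $F=F_{4}+F_{5}+F_{6}$ handled by the oscillatory-integral estimates \eqref{linsoln}--\eqref{lstarllinsoln} (with the orthogonality of $F_{4}$ doing the real work there), the $F_{2}$ term via Proposition \ref{f2prop}, the $F_{3}$ term via Proposition \ref{f3prop} together with the coefficient bounds \eqref{correst1}--\eqref{correst3}, and the contraction estimate from the linearity of $F_{2}$ plus difference estimates for $L_{1}(u_{1})-L_{1}(u_{2})$ and $N(u_{1})-N(u_{2})$, with all small constants absorbed by taking $T_{0}$ large. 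One small correction to your bookkeeping: the inequality you invoke, $\epsilon< b+1-2\alpha b$, is false for $b\geq \tfrac12$ (e.g.\ $b=1$); what is actually needed, since the $Z$-norm weight is $\log^{\epsilon/2}(t)$, is only $\tfrac{\epsilon}{2}=b+\tfrac14-\tfrac{\alpha b}{2}<b+1-2\alpha b$, which holds because $\alpha b<\tfrac{1}{1040!}$, so your conclusion is unaffected.
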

We use \eqref{linsoln}, Propositions \ref{f2prop} and \ref{f3prop}, and the equations \eqref{correst1}, \eqref{correst2}, and \eqref{correst3}, to get the following (note that in the following estimates, $C>0$ denotes a constant (which might involve $C_{\rho}$) whose value may change from line to line, but which is \emph{independent} of $T_{0}$). Also, for ease of notation, we will denote $F_{3}(u(y))$ by $F_{3}$ until otherwise mentioned.
\begin{equation}\begin{split} ||T(y)(t)||_{L^{2}(\rho(\omega \lambda(t)^{2})d\omega)} &\leq \frac{C (\log(\log(t)))^{2}}{t^{2} \log^{b+1-2\alpha b}(t)} + C \int_{t}^{\infty} \frac{\lambda(t)}{\lambda(x)} \frac{1}{x^{3}\log^{\frac{\epsilon}{2}+1}(x)} dx\\
&+C \int_{t}^{\infty} x \left(\frac{\lambda(t)}{\lambda(x)}\right)\left(\frac{1}{x^{2}\log^{\frac{\epsilon}{2}}(x)}\left(\frac{1}{x^{2}\log^{\epsilon-2b}(x)}\right)+\frac{1}{\lambda(x) x^{4}\log^{\epsilon}(x)}\right) dx\\
&\leq \frac{C (\log(\log(t)))^{2}}{t^{2} \log^{b+1-2\alpha b}(t)} + \frac{C}{t^{2}\log^{\frac{\epsilon}{2}+1}(t)}+\frac{C}{t^{2}\log^{3 \frac{\epsilon}{2}-2b}(t)} + \frac{C}{t^{2}\log^{\epsilon-b}(t)}\\
&\leq \frac{C (\log(\log(t)))^{2}}{t^{2} \log^{b+1-2\alpha b}(t)}+ C \frac{\log^{-\epsilon+2b}(t)+\log^{-\frac{\epsilon}{2}+b}(t)+\log^{-1}(t)}{t^{2}\log^{\frac{\epsilon}{2}}(t)}\end{split}\end{equation}\\
\\
Next,
\begin{equation} \partial_{t}T(y)(t,\omega) = -\int_{t}^{\infty} \cos((t-x)\sqrt{\omega})\left(F_{2}(y)(x,\omega)-\mathcal{F}(\sqrt{\cdot}F(x,\cdot \lambda(x)))(\omega \lambda(x)^{2})-\mathcal{F}(\sqrt{\cdot}F_{3}(x,\cdot\lambda(x)))(\omega \lambda(x)^{2})\right) dx\end{equation}\\
\\
and the same procedure as above gives
\begin{equation} \begin{split} ||\partial_{t}T(y)(t)||_{L^{2}(\rho(\omega \lambda(t)^{2})d\omega)} &\leq \frac{C (\log(\log(t)))^{2}}{t^{3} \log^{b+1-2\alpha b}(t)} + C \int_{t}^{\infty} \frac{\lambda(t)}{\lambda(x)} \frac{1}{x^{4}\log^{\frac{\epsilon}{2}+1}(x)} dx\\
&+C \int_{t}^{\infty}  \left(\frac{\lambda(t)}{\lambda(x)}\right)\left(\frac{1}{x^{2}\log^{\frac{\epsilon}{2}}(x)}\left(\frac{1}{x^{2}\log^{\epsilon-2b}(x)}\right)+\frac{1}{\lambda(x) x^{4}\log^{\epsilon}(x)}\right) dx\\
&\leq \frac{C (\log(\log(t)))^{2}}{t^{3} \log^{b+1-2\alpha b}(t)}+ C \frac{ \log^{-\epsilon+2b}(t)+\log^{-\frac{\epsilon}{2}+b}(t)+\log^{-1}(t)}{t^{3}\log^{\frac{\epsilon}{2}}(t)}\end{split}\end{equation}\\
\\
Similarly, \begin{equation} \sqrt{\omega} \lambda(t) T(y)(t,\omega) = -\lambda(t) \int_{t}^{\infty} \sin((t-x)\sqrt{\omega})\left(F_{2}(y)(x,\omega)-\mathcal{F}(\sqrt{\cdot}F(x,\cdot \lambda(x)))(\omega \lambda(x)^{2})-\mathcal{F}(\sqrt{\cdot}F_{3}(x,\cdot\lambda(x)))(\omega \lambda(x)^{2})\right) dx\end{equation}\\
\\
and the identical argument as for the previous two terms gives
\begin{equation}\begin{split} ||\sqrt{\omega} \lambda(t) T(y)(t,\omega)||_{L^{2}(\rho(\omega \lambda(t)^{2})d\omega)}&\leq \frac{C(\log(\log(t)))^{2}}{t^{2} \log^{1+b-2\alpha b}(t)}+ C \lambda(t) \frac{\log^{-\epsilon+2b}(t)+\log^{-\frac{\epsilon}{2}+b}(t)+\log^{-1}(t)}{t^{3}\log^{\frac{\epsilon}{2}}(t)}\\
&\leq \frac{C(\log(\log(t)))^{2}}{t^{2} \log^{1+b-2\alpha b}(t)}+ C  \frac{\log^{-\epsilon+2b}(t)+\log^{-\frac{\epsilon}{2}+b}(t)+\log^{-1}(t)}{t^{3}\log^{\frac{\epsilon}{2}+b}(t)} \end{split}\end{equation}\\
\\
The next term is
\begin{equation}\begin{split} &\lambda(t) \sqrt{\omega} \partial_{t}T(y)(t,\omega) \\
&= -\int_{t}^{\infty} \cos((t-x)\sqrt{\omega}) \left(\frac{\lambda(t)}{\lambda(x)}\right) \sqrt{\omega} \lambda(x) \left(F_{2}(y)(x,\omega) - \mathcal{F}(\sqrt{\cdot}F(x,\cdot \lambda(x)))(\omega \lambda(x)^{2})-\mathcal{F}(\sqrt{\cdot}F_{3}(x,\cdot\lambda(x)))(\omega \lambda(x)^{2})\right) dx \end{split}\end{equation}\\
\\
and we get
\begin{equation}\begin{split} &||\lambda(t) \sqrt{\omega} \partial_{t}T(y)(t,\omega)||_{L^{2}(\rho(\omega \lambda(t)^{2})d\omega)} \\
&\leq\frac{C(\log(\log(t)))^{2}}{t^{3} \log^{1+b-2\alpha b}(t)} +C \int_{t}^{\infty} \left(\frac{\lambda(t)}{\lambda(x)}\right)^{2} \frac{C}{x^{4}\log^{\frac{\epsilon}{2}+1}(x)} dx+\int_{t}^{\infty} \left(\frac{\lambda(t)}{\lambda(x)}\right)^{2} \frac{1}{x^{4} \log^{\frac{3 \epsilon}{2}-2b}(x)} dx\\
&+C \int_{t}^{\infty} \left(\frac{\lambda(t)}{\lambda(x)}\right)^{2} \log^{b}(x) \frac{1}{x^{4} \log^{\epsilon}(x)} dx\\
&\leq \frac{C(\log(\log(t)))^{2}}{t^{3} \log^{1+b-2\alpha b}(t)}+ C \frac{\log^{b-\frac{\epsilon}{2}}(t) + \log^{-1}(t) + \log^{-\epsilon+2b}(t)}{t^{3}\log^{\frac{\epsilon}{2}}(t)}\end{split}\end{equation}\\
\\
\begin{equation}\begin{split} &\omega \lambda(t)^{2} T(y)(t,\omega) \\
&= - \int_{t}^{\infty} \sin((t-x)\sqrt{\omega}) \frac{\lambda(t)^{2}}{\lambda(x)} \lambda(x) \sqrt{\omega}\left(F_{2}(y)(x,\omega) - \mathcal{F}(\sqrt{\cdot}F(x,\cdot \lambda(x)))(\omega \lambda(x)^{2})-\mathcal{F}(\sqrt{\cdot}F_{3}(x,\cdot\lambda(x)))(\omega \lambda(x)^{2})\right) dx \end{split}\end{equation}\\
\\
and the same procedure as above gives
\begin{equation}\begin{split} &||\omega \lambda(t)^{2} T(y)(t,\omega)||_{L^{2}(\rho(\omega \lambda(t)^{2})d\omega)} \\
&\leq \frac{C}{t^{2} \log^{1+b-2\alpha b}(t)} +C \int_{t}^{\infty} \left(\frac{\lambda(t)^{2}}{\lambda(x)}\right)\left(\frac{\lambda(t)}{\lambda(x)}\right) \frac{C}{x^{3}\log^{\frac{\epsilon}{2}}(x)} \frac{1}{x\log(x)} dx\\
&+C \int_{t}^{\infty} \left(\frac{\lambda(t)^{2}}{\lambda(x)}\right)\left(\frac{\lambda(t)}{\lambda(x)}\right) \frac{C}{x^{2}\log^{\frac{\epsilon}{2}}(x)} \frac{1}{x^{2}\log^{\epsilon-2b}(x)} dx\\
&+C \int_{t}^{\infty} \left(\frac{\lambda(t)^{2}}{\lambda(x)}\right)\left(\frac{\lambda(t)}{\lambda(x)}\right)\frac{\log^{b}(x)}{x^{4}\log^{\epsilon}(x)} dx\\
&\leq \frac{C}{t^{2} \log^{1+b-2\alpha b}(t)}+ C \frac{ \log^{-b-1}(t) + \log^{-b-\epsilon+2b}(t)+\log^{-\frac{\epsilon}{2}}(t)}{\log^{\frac{\epsilon}{2}}(t) t^{3}}\end{split}\end{equation}
Moreover, by, for example the Dominated convergence theorem,
\begin{equation}  T(y)(t,\omega) t^{2} \log^{\frac{\epsilon}{2}}(t) \sqrt{\rho(\omega \lambda(t)^{2})} \langle \omega \lambda(t)^{2}\rangle \in C^{0}_{t}([T_{0},\infty),L^{2}(d\omega))\end{equation}
and
\begin{equation}\partial_{t}T(y)(t,\omega) t^{3} \log^{\frac{\epsilon}{2}}(t) \langle \sqrt{\omega} \lambda(t)\rangle \sqrt{\rho(\omega \lambda(t)^{2})} \in C^{0}_{t}([T_{0},\infty), L^{2}(d\omega))\end{equation}
So, if $T_{0}$ is large enough, then, $T(y) \in \overline{B}_{\frac{1}{2}}(0) \subset Z$ if $y \in \overline{B}_{1}(0) \subset Z$. We will now show that $T$ is a strict contraction on $\overline{B}_{1}(0) \subset Z$. Let $y_{1},y_{2} \in Z$ satisfy $$||y_{1}||_{Z},||y_{2}||_{Z} \leq 1$$\\
\\
\begin{equation}\label{Tislip} T(y_{1})-T(y_{2}) = -\int_{t}^{\infty} \frac{\sin((t-x)\sqrt{\omega})}{\sqrt{\omega}} \left(F_{2}(y_{1})-F_{2}(y_{2})-\left(\mathcal{F}(\sqrt{\cdot}(F_{3}(u(y_{1}))-F_{3}(u(y_{2})))(x,\cdot \lambda(x))))(\omega \lambda(x)^{2})\right)\right)dx\end{equation} First, note that $F_{2}$ is linear in $y$, so 
$$F_{2}(y_{1})-F_{2}(y_{2}) = F_{2}(y_{1}-y_{2})$$
Next, we treat $F_{3}$ starting with the $L_{1}$ terms. We will denote by $u_{i}$ the function associated to $y_{i}$ via \eqref{ytou}. We will also use $\overline{v}_{i}, w_{i}$ to denote the functions associated to $u_{i}$ in the same way $v$ and $w$ were used in the above discussion\\
\\
Recall that
\begin{equation}\begin{split} &(L_{1}(u_{1})-L_{1}(u_{2}))(t,r) \\
&= \left(\frac{\sin(2u_{1}(t,r))-\sin(2u_{2}(t,r))}{2r^{2}}\right)\left(\cos(2Q_{1}(\frac{r}{\lambda(t)}))\left(\cos(2v_{corr}(t,r))-1\right)-\sin(2Q_{1}(\frac{r}{\lambda(t)}))\sin(2v_{corr}(t,r))\right)\end{split}\end{equation}\\
\\
Since $$|\sin(2u_{1}) - \sin(2u_{2})| \leq 2 |u_{1}-u_{2}|$$
we get (after esimating in terms of $u_{i}$ and then translating to $y_{i}$ in exactly the same manner as done above)
\begin{equation}\begin{split} &||\mathcal{F}(\sqrt{\cdot}(L_{1}(u_{1})-L_{1}(u_{2}))(t,\cdot \lambda(t)))(\omega \lambda(t)^{2})||_{L^{2}(\rho(\omega \lambda(t)^{2})d\omega)} \\
&\leq C ||y_{1}(t)-y_{2}(t)||_{L^{2}(\rho(\omega \lambda(t)^{2})d\omega)} \left(||\frac{v_{corr}(t,R\lambda(t))}{R \lambda(t)}||^{2}_{L^{\infty}} + ||\frac{v_{corr}(t,R\lambda(t))}{R(1+R^{2})\lambda(t)^{2}}||_{L^{\infty}}\right)\end{split}\end{equation}\\
\\
Next, we estimate 
\begin{equation}\begin{split} &|\partial_{R}\left((L_{1}(u_{1})-L_{1}(u_{2}))(t,R\lambda(t))\right)|\\
&\leq \frac{C \left(|\overline{v}_{1}-\overline{v}_{2}|\cdot|\partial_{R}\overline{v}_{1}| + |\partial_{R}(\overline{v}_{1}-\overline{v}_{2})|\right)}{R^{2}\lambda(t)^{2}}\left(|v_{corr}(t,R\lambda(t)|^{2}+\frac{R}{(1+R^{2})}|v_{corr}(t,R\lambda(t))\right)\\
&+\frac{C |\overline{v}_{1}-\overline{v}_{2}|}{R^{3}\lambda(t)^{2}}\left(|v_{corr}(t,R\lambda(t))|^{2}+\frac{R |v_{corr}|}{(1+R^{2})}\right)\\
&+\frac{C |\overline{v}_{1}-\overline{v}_{2}|}{R^{2}\lambda(t)^{2}}\left(|v_{corr}(t,R\lambda(t))\partial_{R}(v_{corr}(t,R\lambda(t)))| + \frac{R |\partial_{R}(v_{corr}(t,R\lambda(t)))}{(1+R^{2})}\right)\end{split}\end{equation}\\
\\
Then, we get
\begin{equation}\begin{split} &||\partial_{R}((L_{1}(u_{1})-L_{1}(u_{2}))(t,R\lambda(t)))||_{L^{2}(R dR)} \\
&\leq C \left(||\overline{v}_{1}-\overline{v}_{2}||_{L^{2}(R dR)} + ||L(\overline{v}_{1}-\overline{v}_{2})||_{L^{2}(R dR)}\right)\\
&\cdot \left(\left(1+||\partial_{R}(\overline{v}_{1}(t,R\lambda(t)))||_{L^{2}(R dR)}\right)\left(||\frac{v_{corr}(t,R\lambda(t))}{R\lambda(t)}||_{L^{\infty}}^{2} + ||\frac{v_{corr}(t,R\lambda(t))}{R \lambda(t)^{2}(1+R^{2})}||_{L^{\infty}}\right)\right.\\
&\left.+ ||\frac{v_{corr}(t,R\lambda(t))\partial_{R}(v_{corr}(t,R\lambda(t)))}{R\lambda(t)^{2}}||_{L^{\infty}(R \leq 1)} + ||\frac{v_{corr}(t,R\lambda(t)) \partial_{R}(v_{corr}(t,R\lambda(t)))}{R^{2}\lambda(t)^{2}}||_{L^{\infty}(R \geq 1)} + ||\frac{\partial_{R}(v_{corr}(t,R\lambda(t)))}{\lambda(t)^{2}(1+R^{2})}||_{L^\infty}\right)\end{split}\end{equation}\\
\\
Similarly,
\begin{equation}\begin{split}|\frac{L_{1}(u_{1})(t,R\lambda(t))-L_{1}(u_{2})(t,R\lambda(t))}{R}|&\leq \frac{C |\overline{v}_{1}-\overline{v}_{2}|(t,R)}{R^{3}\lambda(t)^{2}}\left(|v_{corr}(t,R\lambda(t))|^{2}+\frac{R |v_{corr}(t,R\lambda(t))|}{(R^{2}+1)}\right)\end{split}\end{equation}\\
\\
We combine these to estimate $||L\left((L_{1}(u_{1})-L_{1}(u_{2}))(t,R\lambda(t))\right)||_{L^{2}(R dR)}$, and then, as in the previous estimates, translate the right-hand side in terms of $y_{i}$, and use the estiamtes on our ansatz to get
\begin{equation}\begin{split} &||\lambda(t) \sqrt{\omega} \mathcal{F}(\sqrt{\cdot} \left((L_{1}(u_{1})-L_{1}(u_{2})(t,\cdot \lambda(t)))\right))(\omega \lambda(t)^{2})||_{L^{2}(\rho(\omega \lambda(t)^{2}) d\omega)} \\
&\leq C \frac{||y_{1}-y_{2}||_{L^{2}(\rho(\omega \lambda(t)^{2}) d\omega)} + ||\sqrt{\omega} \lambda(t) (y_{1}-y_{2})||_{L^{2}(\rho(\omega \lambda(t)^{2}) d\omega)}}{t^{2} \log^{\epsilon-2b}(t)}\\
&\leq C \frac{||y_{1}-y_{2}||_{Z}}{t^{4}\log^{\frac{3 \epsilon}{2}-2b}(t)}\end{split}\end{equation}\\
\\
Now, we treat the nonlinear terms. First, note that, if
$$n_{1}(x) = \sin(2x)-2x$$
then, by the mean value theorem,
\begin{equation} \begin{split} |n_{1}(x_{1})-n_{1}(x_{2})| &\leq |x_{1}-x_{2}| \text{max}_{\theta \in [0,1]} |n_{1}'(\theta x_{1}+(1-\theta)x_{2})|\\
&\leq |x_{1}-x_{2}| \text{max}_{\theta \in [0,1]}|4 \sin^{2}(\theta x_{1}+(1-\theta)x_{2})|\\
&\leq C |x_{1}-x_{2}| \left(|x_{1}|^{2}+|x_{2}|^{2}\right)\end{split}\end{equation}\\
\\
Similarly, 
\begin{equation} |\left(\cos(2 x_{1})-1\right)-\left(\cos(2 x_{2})-1\right)| \leq C \left(|x_{1}|+|x_{2}|\right) |x_{1}-x_{2}|\end{equation}\\
\\
So,
\begin{equation} \begin{split} |(N(u_{1})-N(u_{2}))(t,R\lambda(t))| &\leq C\frac{|\overline{v}_{1}-\overline{v}_{2}|\left(\overline{v}_{1}^{2}+\overline{v}_{2}^{2}\right)}{R^{2}\lambda(t)^{2}}\\
&+C \frac{|\overline{v}_{1}-\overline{v}_{2}|\left(|\overline{v}_{1}|+|\overline{v}_{2}|\right)}{R^{2}\lambda(t)^{2}} \left(|Q_{1}(R)|+|v_{corr}(t,R\lambda(t))|\right)\end{split}\end{equation}\\
\\
If $R <1$, we use the estimates \eqref{foverxinfinity} to get
\begin{equation} \frac{|\overline{v}_{1}(t,R)|}{\sqrt{R}\lambda(t)} \leq C \frac{||\overline{v}_{1}(t)||_{\dot{H}^{1}_{e}}}{\lambda(t)} + C \frac{||L^{*}L\overline{v}_{1}(t)||_{L^{2}(R dR)}}{\lambda(t)}\end{equation}\\
\\
and if $R>1$, then, we use \eqref{h1dotecoercive3} to get
\begin{equation} \frac{|\overline{v}_{1}(t,R)|}{\sqrt{R} \lambda(t)} \leq \frac{C}{\lambda(t)} ||\overline{v}_{1}(t)||_{\dot{H}^{1}_{e}}\end{equation}\\
\\
In total, we obtain
\begin{equation} \begin{split} &||N(u_{1})-N(u_{2})(t,\cdot\lambda(t))||_{L^{2}(R dR)} \\
&\leq C ||\overline{v}_{1}-\overline{v}_{2}||_{\dot{H}^{1}_{e}} \left(||\langle \omega \lambda(t)^{2}\rangle y_{1}||_{L^{2}(\rho(\omega \lambda(t)^{2}) d\omega)}^{2} + ||\langle \omega \lambda(t)^{2}\rangle y_{2}||_{L^{2}(\rho(\omega \lambda(t)^{2}) d\omega)}^{2} \right)\\
&+C ||\overline{v}_{1}-\overline{v}_{2}||_{\dot{H}^{1}_{e}} \left(\frac{||\langle \sqrt{\omega} \lambda(t)\rangle y_{1}||_{L^{2}(\rho(\omega \lambda(t)^{2}) d\omega)}+||\langle \sqrt{\omega} \lambda(t)\rangle y_{2}||_{L^{2}(\rho(\omega \lambda(t)^{2}) d\omega)}}{\lambda(t)}\right)\left(1+||\frac{v_{corr}(t,R\lambda(t))}{R}||_{L^{\infty}}\right)\\
\end{split}\end{equation}
which gives
\begin{equation} \begin{split} &||\mathcal{F}(\sqrt{\cdot} (N(u_{1})-N(u_{2}))(t,\cdot\lambda(t)))(\omega \lambda(t)^{2})||_{L^{2}(\rho(\omega \lambda(t)^{2})d\omega)} \leq \frac{C}{\lambda(t)} \frac{||y_{1}-y_{2}||_{Z}}{t^{4}\log^{\epsilon}(t)}\end{split}\end{equation}
where we used the estimates on the ansatz, and the fact that 
$$||y_{i}||_{Z} \leq 1, \quad i=1,2$$\\
\\
Next, we will estimate $\partial_{R}\left((N(u_{1})-N(u_{2}))(t,R\lambda(t))\right)$, treating the following expression one line at a time.
\begin{equation}\label{drndiff}\begin{split} \partial_{R}\left((N(u_{1})-N(u_{2}))(t,R\lambda(t))\right) &= \frac{\cos(2Q_{1}(R))}{2R^{2}\lambda(t)^{2}} \left(2\left(\cos(2\overline{v}_{1}(t,R))-1\right)\partial_{R}\overline{v}_{1}-2\left(\cos(2\overline{v}_{2})-1\right)\partial_{R}\overline{v}_{2}\right)\\
&+\partial_{R}\left(\frac{\cos(2Q_{1}(R))}{2R^{2}\lambda(t)^{2}}\right)\left(n_{1}(\overline{v}_{1})-n_{2}(\overline{v}_{2})\right)\\
&-2 \left(\frac{\sin(2Q_{1}(R)+2v_{corr}(t,R\lambda(t)))}{2R^{2}\lambda(t)^{2}}\right)\left(\sin(2\overline{v}_{1})\partial_{R}\overline{v}_{1}-\sin(2\overline{v}_{2})\partial_{R}\overline{v}_{2}\right)\\
&+\left(\cos(2\overline{v}_{1})-\cos(2\overline{v}_{2})\right)\partial_{R}\left(\frac{\sin(2Q_{1}(R)+2v_{corr}(t,R\lambda(t)))}{2R^{2}\lambda(t)^{2}}\right)\end{split}\end{equation}\\
\\
For $R \leq 1$, we estimate the first line of \eqref{drndiff} by
\begin{equation}\begin{split}&|\frac{\cos(2Q_{1}(R))}{2R^{2}\lambda(t)^{2}} \left(2\left(\cos(2\overline{v}_{1}(t,R))-1\right)\partial_{R}\overline{v}_{1}-2\left(\cos(2\overline{v}_{2})-1\right)\partial_{R}\overline{v}_{2}\right)| \\
&\leq \frac{C}{\lambda(t)^{2}} ||\frac{\overline{v}_{1}}{\sqrt{R}}||_{L^{\infty}}^{2} |\frac{L(\overline{v}_{1}-\overline{v}_{2})}{R}| + \frac{C}{\lambda(t)^{2}} \frac{\overline{v}_{1}(t,R)^{2}}{R^{2}} \frac{|\overline{v}_{1}-\overline{v}_{2}|}{R}\\
&+\frac{C}{\lambda(t)^{2}} ||\frac{\overline{v}_{1}-\overline{v}_{2}}{\sqrt{R}}||_{L^{\infty}} ||\frac{|\overline{v}_{1}|+|\overline{v}_{2}|}{\sqrt{R}}||_{L^{\infty}} \frac{|L \overline{v}_{2}|}{R}+ \frac{C}{\lambda(t)^{2}} |\frac{\overline{v}_{1}-\overline{v}_{2}}{R}|\left(\frac{|\overline{v}_{1}|+|\overline{v}_{2}|}{R}\right) \frac{|\overline{v}_{2}|}{R}\end{split}\end{equation}\\
\\
where we used the fact that
\begin{equation} \begin{split} &2(\cos(2\overline{v}_{1}(t,R))-1)\partial_{R}\overline{v}_{1}(t,R) - 2(\cos(2\overline{v}_{2}(t,R))-1)\partial_{R}\overline{v}_{2}(t,R) \\
&=2\left(\cos(2\overline{v}_{1}(t,R))-1\right)\left(\partial_{R}(\overline{v}_{1}-\overline{v}_{2})\right)+2\partial_{R}\overline{v}_{2}\left(\cos(2\overline{v}_{1})-\cos(2\overline{v}_{2})\right)\end{split}\end{equation}\\
\\
to get
\begin{equation} \begin{split} &|2(\cos(2\overline{v}_{1}(t,R))-1)\partial_{R}\overline{v}_{1}(t,R) - 2(\cos(2\overline{v}_{2}(t,R))-1)\partial_{R}\overline{v}_{2}(t,R)| \\
& \leq C \overline{v}_{1}(t,R)^{2} |\partial_{R}(\overline{v}_{1}-\overline{v}_{2})| + C |\overline{v}_{1}-\overline{v}_{2}| |\partial_{R}\overline{v}_{2}| \left(|\overline{v}_{1}|+|\overline{v}_{2}|\right)\end{split}\end{equation}\\
\\
The second line of \eqref{drndiff} is estimated by
\begin{equation}\begin{split} &\partial_{R}\left(\frac{\cos(2Q_{1}(R))}{2R^{2}\lambda(t)^{2}}\right)\left(n_{1}(\overline{v}_{1})-n_{2}(\overline{v}_{2})\right)\leq \frac{C}{\lambda(t)^{2}}\left(\frac{\overline{v}_{1}^{2}}{R^{2}}+\frac{\overline{v}_{2}^{2}}{R^{2}}\right)\frac{|\overline{v}_{1}-\overline{v}_{2}|}{R}\end{split}\end{equation}\\
\\
The third line of \eqref{drndiff} is estimated by
\begin{equation}\begin{split} &-2 \left(\frac{\sin(2Q_{1}(R)+2v_{corr}(t,R\lambda(t)))}{2R^{2}\lambda(t)^{2}}\right)\left(\sin(2\overline{v}_{1})\partial_{R}\overline{v}_{1}-\sin(2\overline{v}_{2})\partial_{R}\overline{v}_{2}\right)\\
&\leq \frac{C}{\lambda(t)^{2}}\left(1+||\frac{v_{corr}(t,R\lambda(t))}{R}||_{L^{\infty}}\right)\left(||\overline{v}_{1}||_{\dot{H}^{1}_{e}}\frac{|L(\overline{v}_{1}-\overline{v}_{2})|}{R} + ||\overline{v}_{1}-\overline{v}_{2}||_{\dot{H}^{1}_{e}}|\frac{L\overline{v}_{2}}{R}|+\left(\frac{|\overline{v}_{1}|+|\overline{v}_{2}|}{R}\right)\frac{|\overline{v}_{1}-\overline{v}_{2}|}{R}\right)\end{split}\end{equation}\\
\\
where we used 
\begin{equation}\begin{split} \sin(2\overline{v}_{1})\partial_{R}\overline{v}_{1}-\sin(2\overline{v}_{2})\partial_{R}\overline{v}_{2} &= \sin(2\overline{v}_{1})\left(\partial_{R}(\overline{v}_{1}-\overline{v}_{2})\right) + \left(\sin(2\overline{v}_{1})-\sin(2\overline{v}_{2})\right)\partial_{R}\overline{v}_{2}\end{split}\end{equation}\\
\\
The final line of \eqref{drndiff} is estimated by
\begin{equation} \begin{split} &\left(\cos(2\overline{v}_{1})-\cos(2\overline{v}_{2})\right)\partial_{R}\left(\frac{\sin(2Q_{1}(R)+2v_{corr}(t,R\lambda(t)))}{2R^{2}\lambda(t)^{2}}\right)\\
&\leq C \left(\frac{|\overline{v}_{1}|+|\overline{v}_{2}|}{R}\right) \frac{|\overline{v}_{1}-\overline{v}_{2}|}{R\lambda(t)^{2}} \left(1+||\partial_{R}(v_{corr}(t,R\lambda(t)))||_{L^{\infty}}+||\frac{v_{corr}(t,R\lambda(t))}{R}||_{L^{\infty}}\right)\end{split}\end{equation}\\
\\
Now, we will estimate the contribution of each of these terms to $||\partial_{R}\left((N(u_{1})-N(u_{2}))(t,R\lambda(t))\right)||_{L^{2}((0,1),R dR)}$.  We use \eqref{foverxinfinity} to control $||\frac{\overline{v}_{1}}{\sqrt{R}}||_{L^{\infty}}$, and get
\begin{equation}\begin{split} &\int_{0}^{1} \frac{C}{\lambda(t)^{4}} ||\frac{\overline{v}_{1}}{\sqrt{R}}||_{L^{\infty}}^{4} \left(\frac{L(\overline{v}_{1}-\overline{v}_{2})}{R}\right)^{2} R dR\\
&\leq C \lambda(t)^{2} ||\langle \omega \lambda(t)^{2} \rangle y_{1}||_{L^{2}(\rho(\omega \lambda(t)^{2}) d\omega)}^{4} ||\langle \omega \lambda(t)^{2} \rangle (y_{1}-y_{2})||_{L^{2}(\rho(\omega \lambda(t)^{2}) d\omega)}^{2} \end{split}\end{equation}\\
\\
Next, we get
\begin{equation}\begin{split} &\int_{0}^{1} \frac{C}{\lambda(t)^{4}} \frac{(\overline{v}_{1}(t,R))^{4}}{R^{4}} \frac{|\overline{v}_{1}-\overline{v}_{2}|^{2}}{R} dR\\
&\leq \frac{C}{\lambda(t)^{4}}\int_{0}^{1} \left(\left(||\overline{v}_{1}||_{\dot{H}^{1}_{e}}^{4}+||L^{*}L\overline{v}_{1}||_{L^{2}(R dR)}^{4}\right)\left(\log^{2}(\frac{1}{R})+1\right)^{2}\right)\left(||\overline{v}_{1}-\overline{v}_{2}||_{\dot{H}^{1}_{e}}^{2}+||L^{*}L(\overline{v}_{1}-\overline{v}_{2})||_{L^{2}(R dR)}^{2}\right)\\
&\cdot R \left(\log^{2}(\frac{1}{R})+1\right) dR\\
&\leq C \lambda(t)^{2} ||\langle \omega \lambda(t)^{2}\rangle y_{1}||_{L^{2}(\rho(\omega \lambda(t)^{2}) d\omega)}^{4} ||\langle \omega \lambda(t)^{2}\rangle (y_{1}-y_{2})||_{L^{2}(\rho(\omega \lambda(t)^{2}) d\omega)}^{2}\end{split}\end{equation}\\
\\
The next term to consider is
\begin{equation}\begin{split} &\int_{0}^{1} \frac{C}{\lambda(t)^{4}} ||\frac{\overline{v}_{1}-\overline{v}_{2}}{\sqrt{R}}||_{L^{\infty}}^{2} ||\frac{|\overline{v}_{1}|+|\overline{v}_{2}|}{\sqrt{R}}||_{L^{\infty}}^{2} \frac{(L\overline{v}_{2})^{2}}{R^{2}} R dR\\
&\leq \frac{C}{\lambda(t)^{4}} \left(||\overline{v}_{1}-\overline{v}_{2}||_{\dot{H}^{1}_{e}}^{2} + ||L^{*}L(\overline{v}_{1}-\overline{v}_{2})||_{L^{2}(R dR)}^{2}\right)\\
&\cdot\left(||\overline{v}_{1}||_{\dot{H}^{1}_{e}}^{2}+||L^{*}L\overline{v}_{1}||_{L^{2}(R dR)}^{2}+||\overline{v}_{2}||_{\dot{H}^{1}_{e}}^{2}+||L^{*}L\overline{v}_{2}||_{L^{2}(R dR)}^{2}\right)\int_{0}^{1} \frac{(L\overline{v}_{2}(t,R))^{2}}{R^{2}} R dR\\
&\leq C \lambda(t)^{2} ||\langle \omega \lambda(t)^{2}\rangle (y_{1}-y_{2})||_{L^{2}(\rho(\omega \lambda(t)^{2}) d\omega)}^{2} \cdot \left(||\langle \omega \lambda(t)^{2}\rangle y_{1}||_{L^{2}(\rho(\omega \lambda(t)^{2}) d\omega)}^{2}+||\langle \omega \lambda(t)^{2}\rangle y_{2}||_{L^{2}(\rho(\omega \lambda(t)^{2}) d\omega)}^{2}\right) \\
&\cdot||\langle \omega \lambda(t)^{2}\rangle y_{2}||_{L^{2}(\rho(\omega \lambda(t)^{2}) d\omega)}^{2}\end{split}\end{equation}\\
\\
Next, we have
\begin{equation} \begin{split} & \frac{C}{\lambda(t)^{4}} \int_{0}^{1} \left( |\frac{\overline{v}_{1}-\overline{v}_{2}}{R}|\left(\frac{|\overline{v}_{1}|+|\overline{v}_{2}|}{R}\right) \frac{|\overline{v}_{2}|}{R}\right)^{2} R dR\\
&\leq \frac{C}{\lambda(t)^{4}} \int_{0}^{1} \left(||\overline{v}_{1}-\overline{v}_{2}||_{\dot{H}^{1}_{e}}^{2}+||L^{*}L(\overline{v}_{1}-\overline{v}_{2})||_{L^{2}(R dR)}^{2}\right)\\
&\cdot\left(||\overline{v}_{1}||_{\dot{H}^{1}_{e}}^{4}+||L^{*}L\overline{v}_{1}||_{L^{2}(R dR)}^{4}+||\overline{v}_{2}||_{\dot{H}^{1}_{e}}^{4}+||L^{*}L\overline{v}_{2}||_{L^{2}(R dR)}^{4}\right) \left(\log^{2}(\frac{1}{R})+1\right)^{3} R dR\\
&\leq C \lambda(t)^{2} ||\langle \omega \lambda(t)^{2}\rangle (y_{1}-y_{2})||_{L^{2}(\rho(\omega \lambda(t)^{2}) d\omega)}^{2} \cdot \left(||\langle \omega \lambda(t)^{2}\rangle y_{1}||_{L^{2}(\rho(\omega \lambda(t)^{2}) d\omega)}^{4}+||\langle \omega \lambda(t)^{2} \rangle y_{2}||_{L^{2}(\rho(\omega \lambda(t)^{2}) d\omega)}^{4}\right)\end{split}\end{equation}
For the second line of \eqref{drndiff},
\begin{equation}\begin{split} & \frac{C}{\lambda(t)^{4}} \int_{0}^{1} \left(\frac{\overline{v}_{1}^{4}}{R^{4}}+\frac{\overline{v}_{2}^{4}}{R^{4}}\right)\frac{|\overline{v}_{1}-\overline{v}_{2}|^{2}}{R^{2}} R dR\\
&\leq \frac{C}{\lambda(t)^{4}}\left(||\overline{v}_{1}||_{\dot{H}^{1}_{e}}^{4}+||L^{*}L\overline{v}_{1}||_{L^{2}(R dR)}^{4}+||\overline{v}_{2}||_{\dot{H}^{1}_{e}}^{4}+||L^{*}L\overline{v}_{2}||_{L^{2}(R dR)}^{4}\right)\\
&\cdot \left(||\overline{v}_{1}-\overline{v}_{2}||_{\dot{H}^{1}_{e}}^{2}+||L^{*}L(\overline{v}_{1}-\overline{v}_{2})||_{L^{2}(R dR)}^{2}\right)\int_{0}^{1} \left(\log^{2}(\frac{1}{R})+1\right)^{3} R dR\\
&\leq C \lambda(t)^{2}\left(||\langle \omega \lambda(t)^{2}\rangle y_{1}||_{L^{2}(\rho(\omega \lambda(t)^{2}) d\omega)}^{4}+||\langle \omega \lambda(t)^{2}\rangle y_{2}||_{L^{2}(\rho(\omega \lambda(t)^{2}) d\omega)}^{4}\right)||\langle \omega \lambda(t)^{2}\rangle (y_{1}-y_{2})||_{L^{2}(\rho(\omega \lambda(t)^{2}) d\omega)}^{2}\end{split}\end{equation}
Next, we treat the third line of \eqref{drndiff}, 
\begin{equation} \begin{split} &\int_{0}^{1} \frac{1}{\lambda(t)^{4}} \left(1+||\frac{v_{corr}(t,R\lambda(t))}{R}||_{L^{\infty}}^{2}\right)\\
&\cdot \left(||\overline{v}_{1}||_{\dot{H}^{1}_{e}}^{2}\frac{(L(\overline{v}_{1}-\overline{v}_{2}))^{2}}{R^{2}} +||\overline{v}_{1}-\overline{v}_{2}||_{\dot{H}^{1}_{e}}^{2} \frac{(L\overline{v}_{2})^{2}}{R^{2}} + \frac{(\overline{v}_{1})^{2}+(\overline{v}_{2})^{2}}{R^{2}} \frac{|\overline{v}_{1}-\overline{v}_{2}|^{2}}{R^{2}}\right)R dR\\
&\leq C \left(1+||\frac{v_{corr}(t,R\lambda(t))}{R}||_{L^{\infty}}^{2}\right) ||\langle \omega \lambda(t)^{2}\rangle (y_{1}-y_{2})||_{L^{2}(\rho(\omega \lambda(t)^{2}) d\omega)}^{2}\\
&\cdot \left(||\langle \omega \lambda(t)^{2}\rangle y_{2}||_{L^{2}(\rho(\omega \lambda(t)^{2}) d\omega)}^{2}+||\langle \omega \lambda(t)^{2}\rangle y_{1}||_{L^{2}(\rho(\omega \lambda(t)^{2}) d\omega)}^{2}\right)\end{split}\end{equation}
The fourth line of \eqref{drndiff} is treated as follows.
\begin{equation} \begin{split} &\int_{0}^{1} \left(\frac{|\overline{v}_{1}|+|\overline{v}_{2}|}{R} \frac{|\overline{v}_{1}-\overline{v}_{2}|}{R\lambda(t)^{2}} \left(1+||\partial_{R}(v_{corr}(t,R\lambda(t)))||_{L^{\infty}}+||\frac{v_{corr}(t,R\lambda(t))}{R}||_{L^{\infty}}\right)\right)^{2} R dR\\
&\leq \frac{C}{\lambda(t)^{4}} \left(1+||\partial_{R}(v_{corr}(t,R\lambda(t)))||_{L^{\infty}}^{2}+||\frac{v_{corr}(t,R\lambda(t))}{R}||_{L^{\infty}}^{2}\right)\\
&\cdot \left(||\overline{v}_{1}||_{\dot{H}^{1}_{e}}^{2}+||L^{*}L\overline{v}_{1}||_{L^{2}(R dR)}^{2}+||\overline{v}_{2}||_{\dot{H}^{1}_{e}}^{2}+||L^{*}L\overline{v}_{2}||_{L^{2}(R dR)}^{2}\right)\\
&\cdot\left(||\overline{v}_{1}-\overline{v}_{2}||_{\dot{H}^{1}_{e}}^{2}+||L^{*}L(\overline{v}_{1}-\overline{v}_{2})||_{L^{2}(R dR)}^{2}\right)\int_{0}^{1} \left(\log^{2}(\frac{1}{R})+1\right)^{2} R dR\\
&\leq C \left(1+||\partial_{R}(v_{corr}(t,R\lambda(t)))||_{L^{\infty}}^{2}+||\frac{v_{corr}(t,R\lambda(t))}{R}||_{L^{\infty}}^{2}\right)\\
&\cdot ||\langle \omega \lambda(t)^{2}\rangle (y_{1}-y_{2})||_{L^{2}(\rho(\omega \lambda(t)^{2}) d\omega)}^{2} \left(||\langle \omega \lambda(t)^{2}\rangle y_{1}||_{L^{2}(\rho(\omega \lambda(t)^{2}) d\omega)}^{2}+||\langle \omega \lambda(t)^{2}\rangle y_{2}||_{L^{2}(\rho(\omega \lambda(t)^{2}) d\omega)}^{2}\right)\end{split}\end{equation}
We combine the above estimates to get
\begin{equation} \begin{split} &\int_{0}^{1}\left(\partial_{R}\left((N(u_{1})-N(u_{2}))(t,R\lambda(t))\right)\right)^{2} R dR \\
&\leq C \lambda(t)^{2} \left(||\langle \omega \lambda(t)^{2}\rangle y_{1}||_{L^{2}(\rho(\omega \lambda(t)^{2}) d\omega)}^{4}+||\langle \omega \lambda(t)^{2}\rangle y_{2}||_{L^{2}(\rho(\omega \lambda(t)^{2}) d\omega)}^{4}\right) ||\langle \omega \lambda(t)^{2}\rangle(y_{1}-y_{2})||_{L^{2}(\rho(\omega \lambda(t)^{2}) d\omega)}^{2}\\
&+C \left(1+||\partial_{R}(v_{corr}(t,R\lambda(t)))||_{L^{\infty}}^{2}+||\frac{v_{corr}(t,R\lambda(t))}{R}||_{L^{\infty}}^{2}\right)\\
&\cdot||\langle \omega \lambda(t)^{2}\rangle (y_{1}-y_{2})||_{L^{2}(\rho(\omega \lambda(t)^{2}) d\omega)}^{2}\left(||\langle \omega \lambda(t)^{2}\rangle y_{2}||_{L^{2}(\rho(\omega \lambda(t)^{2}) d\omega)}^{2}+||\langle \omega \lambda(t)^{2}\rangle y_{1}||_{L^{2}(\rho(\omega \lambda(t)^{2}) d\omega)}^{2}\right)\end{split}\end{equation}\\
\\
When $R\geq 1$, we can estimate \eqref{drndiff} by
\begin{equation} \begin{split} &|\partial_{R}\left((N(u_{1})-N(u_{2}))(t,R\lambda(t))\right)|\\
&\leq \frac{C}{\lambda(t)^{2}} ||\overline{v}_{1}||^{2}_{\dot{H}^{1}_{e}}|L(\overline{v}_{1}-\overline{v}_{2})|+\frac{C}{\lambda(t)^{2}} ||\overline{v}_{1}||_{\dot{H}^{1}_{e}}^{2} |\overline{v}_{1}-\overline{v}_{2}| +\frac{C}{\lambda(t)^{2}} ||\overline{v}_{1}-\overline{v}_{2}||_{\dot{H}^{1}_{e}} \left(||\overline{v}_{1}||_{\dot{H}^{1}_{e}}+||\overline{v}_{2}||_{\dot{H}^{1}_{e}}\right)\left(|L\overline{v}_{2}|+|\overline{v}_{2}|\right)\\
&+\frac{C}{\lambda(t)^{2}} \left(||\overline{v}_{1}||^{2}_{\dot{H}^{1}_{e}}+||\overline{v}_{2}||^{2}_{\dot{H}^{1}_{e}}\right) |\overline{v}_{1}-\overline{v}_{2}|\\
&+\frac{C}{\lambda(t)^{2}} \left(||\overline{v}_{1}||_{\dot{H}^{1}_{e}}+||\overline{v}_{2}||_{\dot{H}^{1}_{e}}\right) |\overline{v}_{1}-\overline{v}_{2}| \left(1+||\partial_{R}(v_{corr}(t,R\lambda(t)))||_{L^{\infty}}+||\frac{v_{corr}(t,R\lambda(t))}{R}||_{L^{\infty}}\right)\\
&+\frac{C}{\lambda(t)^{2}} \left(1+||\frac{v_{corr}(t,R\lambda(t))}{R}||_{L^{\infty}}\right)\left(||\overline{v}_{1}||_{\dot{H}^{1}_{e}}|L(\overline{v}_{1}-\overline{v}_{2})| + ||\overline{v}_{1}-\overline{v}_{2}||_{\dot{H}^{1}_{e}} |L\overline{v}_{2}| + \left(||\overline{v}_{1}||_{\dot{H}^{1}_{e}}+||\overline{v}_{2}||_{\dot{H}^{1}_{e}}\right) |\overline{v}_{1}-\overline{v}_{2}|\right) \end{split}\end{equation}\\
\\
Taking the $L^{2}$ norm over $R\geq 1$, and combining with our previous estimates, we get
\begin{equation} \begin{split} &||\partial_{R}\left((N(u_{1})-N(u_{2}))(t,R\lambda(t))\right)||_{L^{2}(R dR)}\\
&\leq C \lambda(t) \left(||\langle \omega \lambda(t)^{2} \rangle y_{1}||_{L^{2}(\rho(\omega \lambda(t)^{2}) d\omega)}^{2}+||\langle \omega \lambda(t)^{2} \rangle y_{2}||_{L^{2}(\rho(\omega \lambda(t)^{2}) d\omega)}^{2}\right) ||\langle \omega \lambda(t)^{2}\rangle (y_{1}-y_{2})||_{L^{2}(\rho(\omega \lambda(t)^{2}) d\omega)}\\
&+C \left(1+||\frac{v_{corr}(t,R\lambda(t))}{R}||_{L^{\infty}} + ||\partial_{R}(v_{corr}(t,R\lambda(t)))||_{L^{\infty}}\right) \\
&\cdot \left(||\langle \omega \lambda(t)^{2} \rangle y_{2}||_{L^{2}(\rho(\omega \lambda(t)^{2}) d\omega)}+||\langle \omega \lambda(t)^{2} \rangle y_{1}||_{L^{2}(\rho(\omega \lambda(t)^{2}) d\omega)}\right)||\langle \omega \lambda(t)^{2}\rangle (y_{1}-y_{2})||_{L^{2}(\rho(\omega \lambda(t)^{2}) d\omega)}\end{split}\end{equation}\\
\\
It only remains to consider the following.
\begin{equation}\label{ndiffoverr}\begin{split} |\frac{N(u_{1})(t,R\lambda(t))-N(u_{2})(t,R\lambda(t))}{R}|&\leq  C \frac{|\overline{v}_{1}-\overline{v}_{2}|\left((\overline{v}_{1})^{2}+(\overline{v}_{2})^{2}\right)}{R^{3}\lambda(t)^{2}}\\
&+\frac{C\left(|\overline{v}_{1}|+|\overline{v}_{2}|\right)|\overline{v}_{1}-\overline{v}_{2}|}{R^{3}\lambda(t)^{2}} \left(|Q_{1}(R)|+|v_{corr}(t,R\lambda(t))|\right)\end{split}\end{equation}\\
\\
So,
\begin{equation}\begin{split} &\int_{0}^{1} \frac{\left(N(u_{1})(t,R\lambda(t))-N(u_{2})(t,R\lambda(t))\right)^{2}}{R^{2}} R dR \\
&\leq \frac{C}{\lambda(t)^{4}} \int_{0}^{1} \left(||\overline{v}_{1}-\overline{v}_{2}||_{\dot{H}^{1}_{e}}^{2}+||L^{*}L(\overline{v}_{1}-\overline{v}_{2})||_{L^{2}(R dR)}^{2}\right)\\
&\cdot\left(||\overline{v}_{1}||_{\dot{H}^{1}_{e}}^{4}+||L^{*}L\overline{v}_{1}||_{L^{2}(R dR)}^{4}+||\overline{v}_{2}||_{\dot{H}^{1}_{e}}^{4}+||L^{*}L\overline{v}_{2}||_{L^{2}(R dR)}^{4}\right)\left(\log^{2}(\frac{1}{R})+1\right)^{3} R dR\\
&+\frac{C}{\lambda(t)^{4}}\left(1+||\frac{v_{corr}(t,R\lambda(t))}{R}||_{L^{\infty}}^{2}\right) \int_{0}^{1} \left(||\overline{v}_{1}||_{\dot{H}^{1}_{e}}^{2}+||L^{*}L\overline{v}_{1}||_{L^{2}(R dR)}^{2}+||\overline{v}_{2}||_{\dot{H}^{1}_{e}}^{2}+||L^{*}L\overline{v}_{2}||_{L^{2}(R dR)}^{2}\right)\\
&\cdot\left(||\overline{v}_{1}-\overline{v}_{2}||_{\dot{H}^{1}_{e}}^{2}+||L^{*}L(\overline{v}_{1}-\overline{v}_{2})||_{L^{2}(R dR)}^{2}\right) \left(\log^{2}(\frac{1}{R})+1\right)^{2} R dR\\
&\leq C \lambda(t)^{2} ||\langle \omega \lambda(t)^{2} \rangle (y_{1}-y_{2})||_{L^{2}(\rho(\omega \lambda(t)^{2}) d\omega)}^{2} \left(||\langle \omega \lambda(t)^{2} \rangle y_{1}||_{L^{2}(\rho(\omega \lambda(t)^{2}) d\omega)}^{4}+||\langle \omega \lambda(t)^{2} \rangle y_{2}||_{L^{2}(\rho(\omega \lambda(t)^{2}) d\omega)}^{4}\right)\\
&+C \left(1+||\frac{v_{corr}(t,R\lambda(t))}{R}||_{L^{\infty}}^{2}\right)\\
&\cdot ||\langle \omega \lambda(t)^{2} \rangle (y_{1}-y_{2})||_{L^{2}(\rho(\omega \lambda(t)^{2}) d\omega)}^{2} \left(||\langle \omega \lambda(t)^{2} \rangle y_{1}||_{L^{2}(\rho(\omega \lambda(t)^{2}) d\omega)}^{2}+||\langle \omega \lambda(t)^{2} \rangle y_{2}||_{L^{2}(\rho(\omega \lambda(t)^{2}) d\omega)}^{2}\right) \end{split}\end{equation}\\
\\
We return to \eqref{ndiffoverr}, and study
\begin{equation} \begin{split} &\left(\int_{1}^{\infty} \frac{\left((N(u_{1})-N(u_{2}))(t,R\lambda(t))\right)^{2}}{R^{2}} R dR\right)^{1/2}\\
&\leq \frac{C}{\lambda(t)^{2}} ||\overline{v}_{1}-\overline{v}_{2}||_{L^{2}(R dR)} \left(||\overline{v}_{1}||_{\dot{H}^{1}_{e}}^{2}+||\overline{v}_{2}||_{\dot{H}^{1}_{e}}^{2}\right)\\
&+\frac{C}{\lambda(t)^{2}} \left(||\overline{v}_{1}||_{\dot{H}^{1}_{e}}+||\overline{v}_{2}||_{\dot{H}^{1}_{e}}\right)||\overline{v}_{1}-\overline{v}_{2}||_{L^{2}(R dR)} \left(1+||\frac{v_{corr}(t,R\lambda(t))}{R}||_{L^{\infty}}\right)\end{split}\end{equation}\\
\\
We now combine the above estimates, translating between norms of $v_{i}$ and norms of $y_{i}$ as previously, and use the fact that $||y_{i}||_{Z} \leq 1$, as well as the estimates of the ansatz to get
\begin{equation}\begin{split} &||\lambda(t) \sqrt{\omega} \mathcal{F}(\sqrt{\cdot}(N(u_{1})-N(u_{2}))(t,\cdot \lambda(t)))(\omega \lambda(t)^{2})||_{L^{2}(\rho(\omega \lambda(t)^{2})d\omega)}\\
&\leq C \frac{||y_{1}-y_{2}||_{Z}}{t^{4}\log^{\epsilon -b}(t)}\end{split}\end{equation}\\
\\
We now return to \eqref{Tislip} to get
\begin{equation} \begin{split} ||(T(y_{1})-T(y_{2}))(t)||_{L^{2}(\rho(\omega \lambda(t)^{2})d\omega)}&\leq C \int_{t}^{\infty} x \left(\frac{\lambda(t)}{\lambda(x)}\right) \left(\frac{||y_{1}-y_{2}||_{Z}}{x^{4}\log^{1+\frac{\epsilon}{2}}(x)}\right) dx\\
&+C \int_{t}^{\infty} x \left(\frac{\lambda(t)}{\lambda(x)}\right) \frac{||y_{1}-y_{2}||_{Z}}{x^{4}\log^{3\frac{\epsilon}{2}-2b}(x)} dx\\
&+C \int_{t}^{\infty} x \left(\frac{\lambda(t)}{\lambda(x)}\right) \frac{||y_{1}-y_{2}||_{Z}}{x^{4}\log^{\epsilon-b}(x)} dx\\
&\leq C ||y_{1}-y_{2}||_{Z} \frac{\log^{-1}(t)+\log^{-\epsilon +2b}(t)+\log^{-\frac{\epsilon}{2}+b}(t)}{t^{2}\log^{\frac{\epsilon}{2}}(t)}\end{split}\end{equation}\\
\\
\begin{equation}\begin{split} \partial_{t}(T(y_{1})-T(y_{2})) &=-\int_{t}^{\infty} \cos((t-x)\sqrt{\omega})F_{2}(y_{1}-y_{2}) dx\\
&+\int_{t}^{\infty} \cos((t-x)\sqrt{\omega}) \mathcal{F}(\sqrt{\cdot} (F_{3}(y_{1})-F_{3}(y_{2})(x,\cdot \lambda(x))))(\omega \lambda(x)^{2}) dx\end{split}\end{equation}\\
\\
So, the same argument as in the previous estimate gives
\begin{equation} ||\partial_{t}(T(y_{1})-T(y_{2}))||_{L^{2}(\rho(\omega \lambda(t)^{2})d\omega)} \leq C ||y_{1}-y_{2}||_{Z} \frac{\log^{-1}(t)+\log^{-\epsilon+2b}(t)+\log^{-\frac{\epsilon}{2}+b}(t)}{t^{3}\log^{\frac{\epsilon}{2}}(t)}\end{equation}\\
\\
Similarly, 
\begin{equation}\begin{split} \sqrt{\omega} \lambda(t)(T(y_{1})-T(y_{2})) &=-\int_{t}^{\infty}\lambda(t) \sin((t-x)\sqrt{\omega})F_{2}(y_{1}-y_{2}) dx\\
&+\lambda(t)\int_{t}^{\infty} \sin((t-x)\sqrt{\omega}) \mathcal{F}(\sqrt{\cdot} (F_{3}(y_{1})-F_{3}(y_{2})(x,\cdot \lambda(x))))(\omega \lambda(x)^{2}) dx\end{split}\end{equation}\\
\\
and the identical argument gives us
\begin{equation} ||\sqrt{\omega} \lambda(t) (T(y_{1})-T(y_{2}))(t)||_{L^{2}(\rho(\omega \lambda(t)^{2})d\omega)} \leq C ||y_{1}-y_{2}||_{Z} \frac{\log^{-1}(t)+\log^{-\epsilon+2b}(t)+\log^{-\frac{\epsilon}{2}+b}(t)}{t^{3}\log^{\frac{\epsilon}{2}+b}(t)}\end{equation}\\
\\
Next, we have 
\begin{equation}\begin{split} \sqrt{\omega} \lambda(t)\partial_{t}(T(y_{1})-T(y_{2})) &=-\int_{t}^{\infty}\left(\frac{\lambda(t)}{\lambda(x)}\right)  \cos((t-x)\sqrt{\omega})\lambda(x) \sqrt{\omega}F_{2}(y_{1}-y_{2}) dx\\
&+\int_{t}^{\infty} \left(\frac{\lambda(t)}{\lambda(x)}\right) \cos((t-x)\sqrt{\omega}) \lambda(x) \sqrt{\omega} \mathcal{F}(\sqrt{\cdot} (F_{3}(y_{1})-F_{3}(y_{2})(x,\cdot \lambda(x))))(\omega \lambda(x)^{2}) dx\end{split}\end{equation}\\
\\
So, \begin{equation}\begin{split}&||\sqrt{\omega} \lambda(t)\partial_{t}(T(y_{1})-T(y_{2}))(t)||_{L^{2}(\rho(\omega \lambda(t)^{2})d\omega)}\\
&\leq C \int_{t}^{\infty} \left(\frac{\lambda(t)}{\lambda(x)}\right)^{2} \frac{||y_{1}-y_{2}||_{Z}}{x^{4}\log^{1+\frac{\epsilon}{2}}(x)} dx\\
&+C \int_{t}^{\infty}\left(\frac{\lambda(t)}{\lambda(x)}\right)^{2} \frac{||y_{1}-y_{2}||_{Z}}{x^{4}\log^{\frac{3 \epsilon}{2}-2b}(x)} dx\\
&+C \int_{t}^{\infty} \left(\frac{\lambda(t)}{\lambda(x)}\right)^{2} \frac{||y_{1}-y_{2}||_{Z}}{x^{4}\log^{\epsilon-b}(x)} dx\\
&\leq C ||y_{1}-y_{2}||_{Z} \frac{\log^{-1}(t)+\log^{-\epsilon+2b}(t)+\log^{-\frac{\epsilon}{2}+b}(t)}{t^{3}\log^{\frac{\epsilon}{2}}(t)}\end{split}\end{equation}\\
\\
The identical procedure shows that
\begin{equation}\begin{split}&||\omega \lambda(t)^{2}(T(y_{1})-T(y_{2}))(t)||_{L^{2}(\rho(\omega \lambda(t)^{2})d\omega)}\\
&\leq C ||y_{1}-y_{2}||_{Z} \frac{\log^{-1}(t)+\log^{-\epsilon+2b}(t)+\log^{-\frac{\epsilon}{2}+b}(t)}{t^{3}\log^{\frac{\epsilon}{2}+b}(t)}\end{split}\end{equation}\\
\\
Thus, $T$ is a strict contraction on $\overline{B}_{1}(0) \subset Z$, for $T_{0}$ large enough; so, $T$ has a fixed point, say $y_{0}$, in $\overline{B}_{1}(0)\subset Z$ \qed

\section{The energy of the solution, and its decomposition as in Theorem 1.1}
Let us define
$$v_{6}(t,r):=\begin{cases}\sqrt{\frac{\lambda(t)}{r}}\left(\mathcal{F}^{-1}(y_{0}(t,\frac{\cdot}{\lambda(t)^{2}}))\right)(\frac{r}{\lambda(t)}), \quad r > 0\\
0, \quad r=0\end{cases}$$
Note that $v_{6}(t,\cdot) \in C^{0}([0,\infty))$, by the same argument as in Lemma \eqref{ytouregularity}.
Inspecting the derivation of \eqref{ylinearproblem}, we see that we have a solution to \eqref{wavemaps}:
\begin{equation}\label{finalsoln}u(t,r) = Q_{\frac{1}{\lambda(t)}}(r) + \sum_{k=1}^{6} v_{k}(t,r)  \end{equation}
Here, we study the energy, \eqref{equivariantenergy}, of our solution, and prove that it has a decomposition as in the main theorem statement. First, we note that $\partial_{t}Q_{\frac{1}{\lambda(t)}} \not \in L^{2}((0,\infty), rdr)$, because $\phi_{0} \not \in L^{2}((0,\infty),rdr)$, so we have to first capture a delicate cancellation between the large $r$ behavior of $\partial_{t}Q_{\frac{1}{\lambda(t)}}$ and $\partial_{t}v_{1}$ before we can even show that the solution has finite energy. We consider the region $r \geq t$, and use the representation formula for $v_{1}$, \eqref{v1formula}, which implies 
\begin{equation}\label{dtv1forenergy}\begin{split} \partial_{t}v_{1}(t,r) &= \int_{t}^{\infty} ds \frac{\lambda'''(s)}{r} \int_{0}^{s-t} \frac{\rho d\rho}{\sqrt{(s-t)^{2}-\rho^{2}}} \left(1+\frac{r^{2}-1-\rho^{2}}{\sqrt{(1+r^{2}+\rho^{2})^{2}-4r^{2}\rho^{2}}}\right)\\
&=\int_{t}^{t+\frac{r}{6}} ds \frac{\lambda'''(s)}{r} \int_{0}^{s-t} \frac{\rho d\rho}{\sqrt{(s-t)^{2}-\rho^{2}}} \left(1+\frac{r^{2}-1-\rho^{2}}{\sqrt{(1+r^{2}+\rho^{2})^{2}-4r^{2}\rho^{2}}}\right)\\
&+\int_{t+\frac{r}{6}}^{\infty} ds \frac{\lambda'''(s)}{r} \int_{0}^{s-t} \frac{\rho d\rho}{(s-t)} \left(1+\frac{r^{2}-1-\rho^{2}}{\sqrt{(1+r^{2}+\rho^{2})^{2}-4r^{2}\rho^{2}}}\right)\\
&+\int_{t+\frac{r}{6}}^{\infty} ds \frac{\lambda'''(s)}{r} \int_{0}^{s-t} \rho d\rho \left(\frac{1}{\sqrt{(s-t)^{2}-\rho^{2}}}-\frac{1}{(s-t)}\right) \left(1+\frac{r^{2}-1-\rho^{2}}{\sqrt{(1+r^{2}+\rho^{2})^{2}-4r^{2}\rho^{2}}}\right)\end{split}\end{equation}
For the second line of \eqref{dtv1forenergy}, we have
\begin{equation}\label{dtv1preciseenergy}\begin{split} &\int_{t}^{t+\frac{r}{6}} ds \frac{\lambda'''(s)}{r} \int_{0}^{s-t} \frac{\rho d\rho}{\sqrt{(s-t)^{2}-\rho^{2}}} \left(1+\frac{r^{2}-1-\rho^{2}}{\sqrt{(1+r^{2}+\rho^{2})^{2}-4r^{2}\rho^{2}}}\right) \\
&= \int_{t}^{t+\frac{r}{6}} \frac{ds \lambda'''(s)}{r} \int_{0}^{s-t} \frac{\rho d\rho}{\sqrt{(s-t)^{2}-\rho^{2}}} \left(1+1+O\left(\frac{r^{2}}{(r^{2}-1-\rho^{2})^{2}}\right)\right) \\
&=2 \int_{t}^{t+\frac{r}{6}} \frac{\lambda'''(s)}{r}(s-t) ds +E_{\partial_{t}v_{1}}(t,r)\end{split}\end{equation}
where
\begin{equation} |E_{\partial_{t} v_{1}}(t,r)| \leq \frac{C}{r^{3}} \int_{t}^{t+\frac{r}{6}} |\lambda'''(s)| (s-t) ds \leq \frac{C}{r^{3} t \log^{b+1}(t)}, \quad r \geq t\end{equation}
For the third line of \eqref{dtv1forenergy}, we have
\begin{equation} \begin{split} &|\int_{t+\frac{r}{6}}^{\infty} ds \frac{\lambda'''(s)}{r} \int_{0}^{s-t} \frac{\rho d\rho}{(s-t)} \left(1+\frac{r^{2}-1-\rho^{2}}{\sqrt{(1+r^{2}+\rho^{2})^{2}-4r^{2}\rho^{2}}}\right)|\\
&\leq C \int_{t+\frac{r}{6}}^{\infty} \frac{|\lambda'''(s)| ds}{r (s-t)} \int_{0}^{\infty} \rho d\rho\left(1+\frac{r^{2}-1-\rho^{2}}{\sqrt{(1+r^{2}+\rho^{2})^{2}-4r^{2}\rho^{2}}}\right)\leq \frac{C}{r^{2} \log^{b+1}(r)}, \quad r \geq t\end{split}\end{equation}
Finally, the fourth line of \eqref{dtv1forenergy} is treated as follows:
\begin{equation}\begin{split}&|\int_{t+\frac{r}{6}}^{\infty} ds \frac{\lambda'''(s)}{r} \int_{0}^{s-t} \rho d\rho \left(\frac{1}{\sqrt{(s-t)^{2}-\rho^{2}}}-\frac{1}{(s-t)}\right) \left(1+\frac{r^{2}-1-\rho^{2}}{\sqrt{(1+r^{2}+\rho^{2})^{2}-4r^{2}\rho^{2}}}\right)|\\
&\leq \frac{C}{r^{3} \log^{b+1}(r)} \cdot \frac{1}{r} \int_{t+\frac{r}{6}}^{\infty} ds \int_{0}^{s-t} \rho d\rho \left(\frac{1}{\sqrt{(s-t)^{2}-\rho^{2}}}-\frac{1}{(s-t)}\right) \left(1+\frac{r^{2}-1-\rho^{2}}{\sqrt{(1+r^{2}+\rho^{2})^{2}-4r^{2}\rho^{2}}}\right)\\
&\leq \frac{C}{r^{4} \log^{b+1}(r)} \int_{0}^{\infty} \rho d\rho \left(1+\frac{r^{2}-1-\rho^{2}}{\sqrt{(1+r^{2}+\rho^{2})^{2}-4r^{2}\rho^{2}}}\right) \int_{\rho+t}^{\infty} ds \left(\frac{1}{\sqrt{(s-t)^{2}-\rho^{2}}}-\frac{1}{(s-t)}\right)\\
&\leq \frac{C}{r^{2} \log^{b+1}(r)}, \quad r \geq t\end{split}\end{equation}
Finally, we further treat the first term on the last line of \eqref{dtv1preciseenergy}
\begin{equation}\begin{split}2 \int_{t}^{t+\frac{r}{6}} \frac{\lambda'''(s)}{r}(s-t) ds &= \frac{2}{r}\left(\frac{r}{6} \lambda''(t+\frac{r}{6})-\lambda'(t+\frac{r}{6})+\lambda'(t)\right)\\
&=\frac{2\lambda'(t)}{r} + E_{\partial_{t}v_{1},1}(t,r)\end{split}\end{equation}
where
\begin{equation} |E_{\partial_{t}v_{1},1}(t,r)| \leq \frac{C}{r^{2} \log^{b+1}(r)}, \quad r \geq t\end{equation}
Then, we note that
\begin{equation} \partial_{t}Q_{\frac{1}{\lambda(t)}}(r) = \frac{-2 r \lambda'(t)}{r^{2}+\lambda(t)^{2}}\end{equation}
So,
\begin{equation} |\partial_{t}\left(Q_{\frac{1}{\lambda(t)}}(r) + v_{1}(t,r)\right)| \leq \frac{C}{r^{2} \log^{b+1}(r)}, \quad r \geq t\end{equation}
Using the estimates on $\partial_{t}v_{1}$, we then get
\begin{equation} ||\partial_{t}\left(Q_{\frac{1}{\lambda(t)}}(r) +v_{1}(t,r)\right)||^{2}_{L^{2}(r dr)} \leq \frac{C}{t^{2} \log^{2b}(t)}\end{equation}
Then, we recall 
$$E(u,v) = \pi \left(||v||_{L^{2}(r dr)}^{2}+||u||_{\dot{H}^{1}_{e}}^{2}\right)$$
and note that energy estimates for the equations solved by $v_{k}$ for $k=3,4,5$ give
\begin{equation}\begin{split}E(v_{3}(t),\partial_{t}v_{3}(t)) &\leq C \left(\int_{t}^{\infty} ||F_{0,1}(s)||_{L^{2}(r dr)} ds\right)^{2} \leq \frac{C \log(\log(t))}{t^{2} \log^{2b+2}(t)}\end{split}\end{equation}

\begin{equation}\begin{split} E(v_{4}(t),\partial_{t}v_{4}) &\leq C \left(\int_{t}^{\infty} ||v_{4,c}(s)||_{L^{2}(r dr)} ds\right)^{2}\leq \frac{C}{t^{2} \log^{4N+4b}(t)}\end{split}\end{equation}

\begin{equation}\begin{split} E(v_{5}(t),\partial_{t}v_{5}(t)) &\leq C \left(\int_{t}^{\infty} ||N_{2}(f_{v_{5}})(s)||_{L^{2}(r dr)} ds\right)^{2} \leq \frac{C \log^{6}(t)}{t^{7/2}}\end{split}\end{equation}

Next, we consider $v_{2}$ for $b \neq 1$. By Plancherel, we have
\begin{equation}\label{v2energy}\begin{split} &E(v_{2}(t),\partial_{t}v_{2}(t))=E(v_{2}(0),\partial_{t}v_{2}(0)) = \frac{16 b^{2}}{\pi (b-1)^{2}} \int_{0}^{\infty} \frac{(\chi_{\leq \frac{1}{4}}(\xi))^{2}}{\log^{2b-2}(\frac{1}{\xi})} d\xi\\
& \frac{16 b^{2}}{\pi(b-1)^{2}} \int_{0}^{\frac{1}{8}} \frac{d\xi}{\log^{2b-2}(\frac{1}{\xi})} \leq E(v_{2},\partial_{t}v_{2}) \leq \frac{16b^{2}}{\pi(b-1)^{2}} \int_{0}^{\frac{1}{4}} \frac{d\xi}{\log^{2b-2}(\frac{1}{\xi})}\end{split}\end{equation}
where we used properties of $\chi_{\leq \frac{1}{4}}$. This gives, for $b \neq 1$,
\begin{equation} \frac{16b^{2} }{\pi(b-1)^{2}}\Gamma(3-2b,\log(8)) \leq E(v_{2},\partial_{t}v_{2}) \leq \frac{16b^{2}}{\pi(b-1)^{2}} \Gamma(3-2b,\log(4))\end{equation}
By inspection of $v_{2,0}$ for $b=1$, we see that $E(v_{2},\partial_{t}v_{2}) < \infty$ for $b=1$.

Using our estimates on $v_{1}$ and $\partial_{r}v_{1}$, we get
\begin{equation} ||v_{1}||^{2}_{\dot{H}^{1}_{e}} \leq \frac{C}{t^{2} \log^{2b}(t)}\end{equation}

Finally, we treat the $v_{6}$ term in \eqref{finalsoln}. Theorem 5.1 of \cite{kst} (the transferrence identity) shows that
\begin{equation}\label{dty0energy}\begin{split} &||\partial_{t}v_{6}||_{L^{2}(r dr)}\leq C ||\frac{\lambda'(t)}{\sqrt{r \lambda(t)}} \mathcal{F}^{-1}(y_{0}(t,\frac{\cdot}{\lambda(t)^{2}}))(\frac{r}{\lambda(t)})||_{L^{2}(r dr)}\\
&+C ||\sqrt{\frac{\lambda(t)}{r}} \mathcal{F}^{-1}(\partial_{1}y_{0}(t,\frac{\cdot}{\lambda(t)^{2}}))(\frac{r}{\lambda(t)})||_{L^{2}(r dr)} + C ||\frac{\lambda'(t)}{\lambda(t)} \sqrt{\frac{\lambda(t)}{r}} \mathcal{F}^{-1}(\mathcal{K}(y_{0}(t,\frac{\cdot}{\lambda(t)^{2}})))(\frac{r}{\lambda(t)})||_{L^{2}(r dr)}\end{split}\end{equation}

The first line of \eqref{dty0energy} is handled by re-scaling, and applying the $L^{2}$ isometry property of $\mathcal{F}(\cdot)$:
\begin{equation}\begin{split} &\int_{0}^{\infty} \frac{(\lambda'(t))^{2}}{ r \lambda(t)} \left(\mathcal{F}^{-1}(y_{0}(t,\frac{\cdot}{\lambda(t)^{2}}))(\frac{r}{\lambda(t)})\right)^{2} r dr = (\lambda'(t))^{2} \lambda(t)^{2} \int_{0}^{\infty} |y_{0}(t,\omega)|^{2} \rho(\omega \lambda(t)^{2}) d\omega\\
&\leq C \frac{\lambda'(t)^{2}\lambda(t)^{2}}{t^{4} \log^{\epsilon}(t)}\end{split}\end{equation}

The second line of \eqref{dty0energy} is treated by  again re-scaling and using the $L^{2}$ isometry property of $\mathcal{F}(\cdot)$, as well as the same estimates on $\mathcal{K}$ that we used while estimating $F_{2}$ to get
\begin{equation}\begin{split} &\int_{0}^{\infty} \frac{\lambda(t)}{r} |\mathcal{F}^{-1}(\partial_{1}y_{0}(t,\frac{\cdot}{\lambda(t)^{2}}))|^{2}(\frac{r}{\lambda(t)}) r dr + \frac{\lambda'(t)^{2}}{\lambda(t)} \int_{0}^{\infty} \frac{1}{r} |\mathcal{F}^{-1}(\mathcal{K}(y_{0}(t,\frac{\cdot}{\lambda(t)})))|^{2}(\frac{r}{\lambda(t)}) r dr\\
&\leq C \lambda(t)^{4} ||\partial_{t}y_{0}(t)||_{L^{2}(\rho(\omega \lambda(t)^{2})d\omega)}^{2} + C \lambda'(t)^{2}\lambda(t)^{2} ||y_{0}(t)||_{L^{2}(\rho(\omega \lambda(t)^{2}) d\omega)}^{2}\\
&\leq \frac{C}{t^{6} \log^{4b+\epsilon}(t)}\end{split}\end{equation}
This gives
\begin{equation} ||\partial_{t}v_{6}||_{L^{2}(r dr)}^{2} \leq \frac{C}{t^{6} \log^{4b+\epsilon}(t)}\end{equation}
From the definition of $v_{6}$, we have (using the same argument used when estimating $F_{2},F_{3}$)
\begin{equation}||v_{6}(t)||_{\dot{H}^{1}_{e}} = ||v_{6}(t,\cdot \lambda(t))||_{\dot{H}^{1}_{e}} \leq C \left(||v_{6}(t,\cdot \lambda(t))||_{L^{2}(R dR)}+||L(v_{6}(t,\cdot \lambda(t)))||_{L^{2}(R dR)}\right)\end{equation}
and
\begin{equation} ||v_{6}(t,\cdot \lambda(t))||_{L^{2}(R dR)} = \lambda(t) ||y_{0}(t)||_{L^{2}(\rho(\omega\lambda(t)^{2})d\omega)}\end{equation}
\begin{equation} ||L(v_{6}(t,\cdot \lambda(t)))||_{L^{2}(R dR)}= \lambda(t) ||\sqrt{\omega}\lambda(t)y_{0}(t)||_{L^{2}(\rho(\omega \lambda(t)^{2})d\omega)}\end{equation}
which gives
\begin{equation}||v_{6}(t)||_{\dot{H}^{1}_{e}} \leq \frac{C}{t^{2} \log^{b+\frac{\epsilon}{2}}(t)}\end{equation} 

Finally, we have
\begin{equation}\begin{split} E_{\text{WM}}(u,\partial_{t}u) &= \pi \left(||\partial_{t}u||_{L^{2}(r dr)}^{2}+||\frac{\sin(u)}{r}||_{L^{2}(r dr)}^{2} + ||\partial_{r}u||_{L^{2}(r dr)}^{2}\right)\end{split}\end{equation}
Then, we use 
\begin{equation}\begin{split} \int_{0}^{\infty} \frac{\sin^{2}(u-Q_{\frac{1}{\lambda(t)}}+Q_{\frac{1}{\lambda(t)}})}{r^{2}} r dr \leq C\left(\int_{0}^{\infty} \frac{\sin^{2}(Q_{\frac{1}{\lambda(t)}})}{r^{2}} r dr + \int_{0}^{\infty} \frac{|u-Q_{\frac{1}{\lambda(t)}}|^{2}}{r^{2}} r dr\right)\end{split}\end{equation}
to get
\begin{equation}\begin{split}&E_{\text{WM}}(u,\partial_{t}u)\\
&\leq C\left(||\partial_{t}\left(Q_{\frac{1}{\lambda(t)}}+v_{1}\right)||_{L^{2}(r dr)}^{2} + \sum_{k=2}^{6}||\partial_{t}v_{k}||_{L^{2}(r dr)}^{2} + ||\partial_{r}Q_{\frac{1}{\lambda(t)}}||_{L^{2}(r dr)}^{2} + ||\frac{\sin(Q_{\frac{1}{\lambda(t)}})}{r}||_{L^{2}(r dr)}^{2} + \sum_{k=1}^{6}||v_{k}||_{\dot{H}^{1}_{e}}^{2}\right)\end{split} \end{equation}
By combining our above estimates, and recalling
$$||\partial_{r}Q_{\frac{1}{\lambda(t)}}||_{L^{2}(r dr)}^{2} + ||\frac{\sin(Q_{\frac{1}{\lambda(t)}})}{r}||_{L^{2}(r dr)}^{2} =4$$
we get
$$E_{\text{WM}}(u,\partial_{t}u) < \infty$$
and
\begin{equation}\label{energydiff}\begin{split} ||\partial_{t}(u-v_{2})||_{L^{2}(r dr)}^{2} + ||u-Q_{\frac{1}{\lambda(t)}}-v_{2}||_{\dot{H}^{1}_{e}}^{2} \leq \frac{C}{t^{2} \log^{2b}(t)}\end{split}\end{equation} 
Finally, we note that the remark after the main theorem concerning the regularity of $v_{6}$ follows from the definition of the space $Z$, the continuity of dilation on $L^{2}$, and lemma 9.1 of \cite{kst}. This completes the proof of the main theorem.
\appendix
\section{Proof of Theorem \ref{genthm}}
In this appendix, we will summarize the extra arguments needed to prove Theorem \ref{genthm}. To prove Theorem \ref{genthm}, we use a slightly different starting point than for the proof of Theorem \ref{mainthm}. In particular, fix $b>0$, $\lambda_{0,0,b} \in \Lambda$, and let $T_{0} > C_{1}$, where $C_{1}>100$ is some sufficiently large constant depending on $\lambda_{0,0,b}$. Then, we define the $X$ norm as in the proof of Theorem \ref{mainthm}, and write $\lambda = \lambda_{0,0,b}+e_{1}$, where $e_{1} \in \overline{B}_{1}(0) \subset X$. For all such $\lambda$, we then define $v_{1}$ exactly as in the main body of the paper. The main difference is a modification of $v_{2}$: Let $\psi \in C^{\infty}([0,\infty))$ satisfy
$$\psi(t) = \begin{cases} 1, \quad t \geq 200\\
0, \quad t \leq 100\end{cases} \quad 0 \leq \psi(t) \leq 1$$
and define
$$F(t) = \left(4 \int_{t}^{\infty} \frac{\lambda_{0,0,b}''(s) ds}{1+s-t}\right) \psi(t)$$
Then, $v_{2}$ is exactly as in the main body of the paper, except with $v_{2,0}$, the initial velocity of $v_{2}$, given below
\begin{equation} \widehat{v_{2,0}}(\xi) = -\frac{1}{\xi \pi} \int_{0}^{\infty} F(t) \sin(t\xi) dt\end{equation}
The point of this definition is that, by the sine transform inversion, $\widehat{v_{2,0}}$ solves
\begin{equation}\label{v2hateqn}-2 \int_{0}^{\infty} \sin(t\xi) \xi \widehat{v_{2,0}}(\xi) d\xi = F(t), \quad t \geq 0\end{equation}
$v_{3}$ is defined exactly as in the main body of the paper. Several estimates on $v_{k}$ in the main body of the paper used the fact that $\lambda'(x) <0$, which we no longer have in the setting of this appendix. Unless specified later on, analogs of all of these estimates are still true in the setting of this appendix, and can be proven by instead using that, for some $C_{2},C_{3}>0$, 
\begin{equation}\label{lambdacomp}\frac{C_{2}}{\log^{b}(t)} \leq \lambda(t) \leq \frac{C_{3}}{\log^{b}(t)}\end{equation}
and $t \rightarrow \frac{1}{\log^{b}(t)} \text{ is decreasing}$. The definitions of $v_{4},v_{5}$, and the equation resulting from $u_{ansatz}$ are the same as previously. The inner product of the $v_{1}$ linear error term with the re-scaled $\phi_{0}$ is unchanged. On the other hand, for $v_{2}$, we have
\begin{equation}\begin{split} \int_{0}^{\infty} \left(\frac{\cos(2Q_{1}(R))-1}{R^{2}\lambda(t)^{2}}\right) v_{2}(t,R\lambda(t)) \phi_{0}(R) R dR &= -2 \int_{0}^{\infty} \frac{\sin(t \xi)}{\lambda(t)} \xi \widehat{v_{2,0}}(\xi)d\xi -2 \int_{0}^{\infty} \sin(t\xi) \xi^{2}\left(K_{1}(\xi \lambda(t))-\frac{1}{\xi \lambda(t)}\right) \widehat{v_{2,0}}(\xi) d\xi\\
&= \frac{4}{\lambda(t)} \int_{t}^{\infty} \frac{\lambda_{0,0,b}''(s) ds}{1+s-t}  + E_{v_{2},ip}(t,\lambda(t)), \quad t \geq T_{0}\end{split}\end{equation}
where
$$E_{v_{2},ip}(t,\lambda(t)) = -2 \int_{0}^{\infty} \sin(t\xi) \xi^{2}\left(K_{1}(\xi\lambda(t))-\frac{1}{\xi\lambda(t)}\right) \widehat{v_{2,0}}(\xi) d\xi$$
and we used the fact that $\psi(t)=1, \quad t \geq 100$. In order to prove the pointwise estimates on $v_{2}$, we require some estimates on $\partial_{\xi}\left(\xi \widehat{v_{2,0}}\right)$. For these, we use 
\begin{equation} \xi \widehat{v_{2,0}}(\xi) = \frac{-1}{\pi} \int_{0}^{\infty} F(\frac{\sigma}{\xi}) \sin(\sigma) \frac{d\sigma}{\xi}\end{equation}
and can then differentiate under the integral sign, using the symbol type estimates on $\lambda_{0,0,b}$. The most crucial point to mention regarding the pointwise behavior of $v_{2}$ is the analogue of \eqref{v2precisenearorigin}, since it is this which allows us to prove the crucial near origin estimates on $F_{4}$. In the setting of this appendix, we prove the analogue of \eqref{v2precisenearorigin} by noting, for $r \leq \frac{t}{2}$,
\begin{equation}\label{v2precisegen}\begin{split} v_{2}(t,r) &= \int_{0}^{\infty} J_{1}(r\xi) \sin(t\xi) \widehat{v_{2,0}}(\xi) d\xi = \frac{r}{\pi} \int_{0}^{\pi} \frac{\sin^{2}(\theta) d\theta}{2} \int_{0}^{\infty} \xi \left(\sin(\xi(t+r\cos(\theta)))+\sin(\xi(t-r\cos(\theta)))\right) \widehat{v_{2,0}}(\xi) d\xi\\
&= \frac{-r}{4\pi} \int_{0}^{\pi} \sin^{2}(\theta) \left(F(t+r\cos(\theta))+F(t-r\cos(\theta))\right) d\theta\end{split}\end{equation}
and then using
$$F(t\pm r \cos(\theta)) = F(t) + \text{Err}(t,r), \quad |\text{Err}(t,r)| \leq \frac{C r }{t^{3} \log^{b}(t)}, \quad r \leq \frac{t}{2}, \quad t \geq T_{0}$$
The arguments for derivatives of $v_{2}$ are done similarly. The linear error term associated to $v_{3}$ is studied similarly to the main body of the paper. The modulation equation for $\lambda$ then has the same form as previously, except with the modified definition of $E_{v_{2},ip}$ given above, and the replacement
$$\frac{4b}{\lambda(t) t^{2} \log^{b}(t)} \rightarrow \frac{4}{\lambda(t)} \int_{t}^{\infty} \frac{\lambda_{0,0,b}''(s) ds}{1+s-t}$$
Recalling that $\lambda = \lambda_{0,0,b}+e_{1}$, we then substitute $e_{1}(t) = \lambda_{0,1}(t) + e(t)$, where
$$\lambda_{0,1}(t) = \int_{t}^{\infty} \int_{t_{1}}^{\infty} \frac{\lambda_{0,0,b}''(t_{2}) \log(\lambda_{0,0,b}(t_{2})))}{\log(t_{2})} dt_{2}dt_{1}$$ 
and proceed exactly as previously. The crucial kernel estimate \eqref{kineq} was previously proven using an argument which used $\lambda_{0,0}'(x)+\lambda_{0,1}'(x) \leq 0$, which is no longer true in this context. On the other hand, \eqref{kineq} is still true in the context of this appendix, and is proven with the same calculation of $\partial_{st} k(s,t)$ as before. Instead of using $\lambda_{0,0}'(t)+\lambda_{0,1}'(t) \leq 0$, we simply use $$-\lambda_{0}'(-t)(1-\alpha)\lambda_{0}(-t)^{-\alpha}+1 \geq \frac{7}{8}, \quad t \leq -T_{0}$$
which follows from the lower bound on $T_{0}$ imposed at the beginning of the entire argument of this appendix, and where $\lambda_{0}(t) = \lambda_{0,0,b}(t)+\lambda_{0,1}(t)$. Similarly, using the fact that $\lambda_{0}$ is comparable to $\frac{1}{\log^{b}(t)}$, which is a decreasing function, we carry out the same procedure as previously, to obtain the important resolvent kernel estimate \eqref{restimate}. In this context, the analog of \eqref{restimate} has some absolute constant $C$, rather than precisely 2, appearing on the right-hand side. Recalling the comment just above \eqref{lambdacomp}, the rest of the estimates on $v_{k}$ required to construct $\lambda(t)$ and estimate $\lambda^{(k)}(t), \quad k \leq 4$ follow from an argument similar to that used previously. \\
\\
Recall that the modulation equation in the setting of this appendix is of the same form as the one in the main body of the paper. In addition, the definition of $v_{1}$ is the same in both settings. Finally, \eqref{v2precisegen} is true. Combined, these imply that we can prove the crucial pointwise estimates on $F_{4}$, and its derivatives with the same procedure as before. Also, all other subsections involved in the construction of the ansatz section can be established similarly. Finally, we can then complete the argument exactly as before, with $E(v_{e},\partial_{t}\left(Q_{\frac{1}{\lambda(t)}}+v_{e}\right))$ having the same decay in $t$ as previously.

Department of Mathematics, University of California, Berkeley\\
\emph{E-mail address:} mpillai@berkeley.edu

\end{document}